\documentclass[A4, 10pt, reqno]{amsart}

\usepackage[utf8]{inputenc}
\usepackage[top=1.0in, bottom=0.9in, left=1.0in, right=1.0in]{geometry}
\usepackage[all]{xy}
\usepackage{amsfonts, amsmath, amssymb, amsthm}
\usepackage{enumerate, enumitem}
\usepackage{xparse, etoolbox}
\usepackage{mathtools, nccmath, textcomp, relsize}
\usepackage[new]{old-arrows}
\usepackage{stmaryrd}
\usepackage{ifthen}
\usepackage{tikz, tikz-cd}
\usepackage[sf, bf]{titlesec}	%sf for sans serif section headings
\usepackage[titles]{tocloft}
\usepackage{verbatim}
\usepackage{xcolor}

\usepackage{mathrsfs}
\usepackage[mathcal]{euscript}
\usepackage[bbgreekl]{mathbbol}

\usepackage[backend=biber,style=numeric, sorting=nty, doi=false, url=false, maxbibnames=99, maxalphanames=99]{biblatex}
\usepackage[T1]{fontenc}
\usepackage{lmodern}
\usepackage{sansmath}

\usepackage{hyperref}
\hypersetup{
	colorlinks=true,
	linkcolor=blue,
	citecolor=blue,
	urlcolor=black,
}

\newcommand{\addperiod}[1]{#1.}
\titleformat{\section}[block]{\scshape\Large\filcenter}{\thesection.}{1em}{}
\titleformat{\subsection}[runin]{\normalfont\large\bfseries}{\thesubsection.}{1em}{\addperiod}
\titleformat{\subsubsection}[runin]{\normalfont\bfseries}{\thesubsubsection.}{1em}{\addperiod}

\numberwithin{equation}{section}

\makeatletter
\def\keywords{\xdef\@thefnmark{}\@footnotetext}
\makeatother

\newcommand{\showdetailed}{0} %set to 1 to show details
\theoremstyle{plain}
\newtheorem{thm}{Theorem}[section]
\newtheorem{cor}[thm]{Corollary}
\newtheorem{lem}[thm]{Lemma}
\newtheorem{prop}[thm]{Proposition}

\theoremstyle{definition}
\newtheorem{defi}[thm]{Definition}

\newtheorem{exam}[thm]{Example}
\newtheorem{rem}[thm]{Remark}
\newtheorem{ques}[thm]{Question}
\AtEndEnvironment{exam}{\qed}
\AtEndEnvironment{const}{\qed}

\theoremstyle{remark}

\newtheorem*{nota}{Notation}

\newcommand{\isomorphic}{\xrightarrow{\hspace{0.5mm} \sim \hspace{0.5mm}}}
\newcommand{\lisomorphic}{\xleftarrow{\hspace{0.5mm} \sim \hspace{0.5mm}}}

\newcommand{\dlog}{d\textup{\hspace{0.5mm}log\hspace{0.3mm}}}
\newcommand{\defeq}{\vcentcolon=}

\DeclareMathAlphabet{\pazocal}{OMS}{zplm}{m}{n}

\newcommand{\calL}{\mathcal{L}}
\newcommand{\calR}{\mathcal{R}}
\newcommand{\calS}{\mathcal{S}}

\newcommand{\AR}{A_{\calR}}
\newcommand{\AS}{A_{\calS}}
\newcommand{\AL}{A_{\calL}}
\newcommand{\BR}{B_{\calR}}
\newcommand{\BS}{B_{\calS}}

\newcommand{\DR}{D_{\calR}}
\newcommand{\DS}{D_{\calS}}
\newcommand{\DL}{D_{\calL}}
\newcommand{\DRn}{D_{\calR,n}}
\newcommand{\DLn}{D_{\calL,n}}
\newcommand{\ER}{E_{\calR}}

\newcommand{\NR}{N_{\calR}}
\newcommand{\NS}{N_{\calS}}
\newcommand{\NL}{N_{\calL}}
\newcommand{\NRn}{N_{\calR,n}}
\newcommand{\NLn}{N_{\calL,n}}
\newcommand{\TR}{T_{\calR}}
\newcommand{\TS}{T_{\calS}}

\newcommand{\GR}{G_{\calR}}
\newcommand{\GS}{G_{\calS}}

\newcommand{\HR}{H_{\calR}}
\newcommand{\HS}{H_{\calS}}

\newcommand{\GammaR}{\Gamma_{\calR}}
\newcommand{\GammaS}{\Gamma_{\calS}}
\newcommand{\GammaL}{\Gamma_{\calL}}
\newcommand{\ALplusp}{A_{\calL}^+(\mathfrak{p})}
\newcommand{\NLp}{N_{\calL}(\mathfrak{p})}

\newcommand{\Acrys}{A_{\textup{cris}}}
\newcommand{\OAcrys}{\pazocal{O} A_{\textup{cris}}}
\newcommand{\Alogcrys}{A_{\textup{log-cris}}}
\newcommand{\OAlogcrys}{\pazocal{O} A_{\textup{log-cris}}}
\newcommand{\Bcrys}{B_{\textup{cris}}}
\newcommand{\OBcrys}{\pazocal{O} B_{\textup{cris}}}
\newcommand{\Blogcrys}{B_{\textup{log-cris}}}
\newcommand{\OBlogcrys}{\pazocal{O} B_{\textup{log-cris}}}
\newcommand{\ODlogcrys}{\pazocal{O} D_{\textup{log-cris},R}}
\newcommand{\ODcrysS}{\pazocal{O} D_{\textup{cris,S}}}
\newcommand{\BdR}{B_{\textup{dR}}}
\newcommand{\OBdR}{\pazocal{O} B_{\textup{dR}}}
\newcommand{\BlogdR}{B_{\textup{log-dR}}}
\newcommand{\OBlogdR}{\pazocal{O} B_{\textup{log-dR}}}
\newcommand{\ODlogdR}{\pazocal{O} D_{\textup{log-dR},R}}
\newcommand{\ODdRS}{\pazocal{O} D_{\textup{dR},S}}
\newcommand{\ODdRL}{\pazocal{O} D_{\textup{dR},L}}
\newcommand{\OBHT}{\pazocal{O} B_{\textup{HT}}}
\newcommand{\OBlogHT}{\pazocal{O} B_{\textup{log-HT}}}
\newcommand{\ODlogHT}{\pazocal{O} D_{\textup{log-HT},R}}
\newcommand{\ODHTS}{\pazocal{O} D_{\textup{HT,S}}}
\newcommand{\Spec}{\textup{Spec}}
\newcommand{\colim}{\mathrm{colim}}

\newcommand{\CRp}{\widehat{\overline{R}}}
\newcommand{\CSp}{\widehat{\overline{S}}}
\newcommand{\Amax}{ A_{\textup{max}}}
\newcommand{\OAmax}{\pazocal{O} A_{\textup{max}}}
\newcommand{\Alogmax}{A_{\textup{log-max}}}
\newcommand{\OAlogmax}{\pazocal{O} A_{\textup{log-max}}}
\newcommand{\Bmax}{B_{\textup{max}}}
\newcommand{\OBmax}{\pazocal{O} B_{\textup{max}}}
\newcommand{\Blogmax}{B_{\textup{log-max}}}
\newcommand{\OBlogmax}{\pazocal{O} B_{\textup{log-max}}}
\newcommand{\OAlog}{\pazocal{O} A_{\textup{log}}}

\newcommand{\OBlog}{\pazocal{O} B_{\textup{log}}}

\newcommand{\ODlog}{\pazocal{O} D_{\mathrm{log}}}

\newcommand{\etale}{\textup{\'et}}
\newcommand{\Fil}{\mathrm{Fil}}
\newcommand{\logPD}{\textup{logPD}}
\newcommand{\PD}{\textup{PD}}

\newcommand{\ARpi}{A_{\calR,\varpi}}
\newcommand{\OARpi}{\pazocal{O} A_{\calR,\varpi}}
\newcommand{\ASpi}{A_{\calS,\varpi}}
\newcommand{\OASpi}{\pazocal{O} A_{\calS,\varpi}}
\newcommand{\ODR}{\pazocal{O} D_{\calR}}

\newcommand{\Fr}{\mathrm{Fr}}

\newcommand{\Gal}{\mathrm{Gal}}
\usepackage{graphicx}

\newcommand{\GRp}{G_R(\mathfrak{p})}
\newcommand{\GRhatp}{\widehat{G}_R(\mathfrak{p})}

\newcommand{\Rbarp}{(\overline{R})_{\mathfrak{p}}}
\newcommand{\Cplusp}{\mathbb{C}^+(\mathfrak{p})}
\newcommand{\Cpplus}{\mathbb{C}^+_{\mathfrak{p}}}
\newcommand{\Cp}{\mathbb{C}_{\mathfrak{p}}}
\newcommand{\Cpplusflat}{\mathbb{C}^{+, \flat}_{\mathfrak{p}}}
\newcommand{\Lbar}{\overline{L}}

\newcommand{\Lbarp}{\overline{L}(\mathfrak{p})}
\newcommand{\OLbarp}{O_{\overline{L}(\mathfrak{p})}}
\newcommand{\pins}{\mathfrak{p} \in \mathscr{P}(\overline{R})}

\newcommand{\Addresses}{
    {\footnotesize
    \rule{2cm}{0.4pt}\vspace{2mm}
    
    \begin{tabular}{ l l l l l }
        \textsc{Abhinandan} &&&& \textsc{Denis Benois}\par\nopagebreak\\
        \textsc{IAZD, Leibniz Universit\"at Hannover} &&&& \textsc{IMB - UMR 5251, Universit\'e de Bordeaux}\par\nopagebreak\\
        \textsc{Welfengarten 1} &&&& \textsc{351, cours de la Lib\'eration}\par\nopagebreak\\
        \textsc{30167 Hannover, Germany} &&&& \textsc{33405 Talence, France}\par\nopagebreak\\
        \href{mailto:abhinandan@math.uni-hannover.de}{abhinandan@math.uni-hannover.de} &&&& \href{mailto:denis.benois@math.u-bordeaux.fr}{denis.benois@math.u-bordeaux.fr}
    \end{tabular}
    }
}

\title{Log-crystalline representations and $(\varphi, \Gamma)$-modules}
\author{Abhinandan, Denis Benois}
\date{\today}

\begin{document}

\keywords{\textit{Keywords}: $p\textrm{-adic}$ Hodge theory, log-crystalline representations, $(\varphi, \Gamma)\textrm{-modules}$}
\keywords{\textit{2020 Mathematics Subject Classification}: 14F20, 14F30, 14F40, 11S23.}

\maketitle{
	\textsc{Abstract.} Let $F$ be an absolutely unramified mixed characteristic local field with rings of integers $O_F$.
	For a small affine algebra $R$ over $O_F$, we consider the logarithmic \'etale fundamental group $G_R$ of its generic fibre equipped with a ``horizontal'' log-structure, in the sense of Fujiwara--Kato.
	We study $p\textrm{-adic}$ representations of $G_R$ and obtain a classification of these representations in terms of \'etale $(\varphi, \Gamma_R)\textrm{-modules}$.
	Moreover, we define and study the notion of log-crystalline representations of $G_R$, a generalisation of crystalline representations from the non-logarithmic/smooth case.
	Furthermore, in the logarithmic setting, we show that log-crystalline representations of $G_R$ are equivalent to Wach modules for $R$, extending our previous results from the (non-logarithmic) crystalline case.
}

\section{Introduction}

\subsection{Crystalline representations and \texorpdfstring{$(\varphi, \Gamma)\textrm{-modules}$}{-}}

The notion of $p\textrm{-adic}$ \textit{crystalline representations} was defined by Fontaine in \cite{fontaine-annals}, which positively answered the \textit{mysterious functor} question of Grothendieck from \cite{grothendieck-barsotti-tate}, and gave a precise meaning to the notion of \textit{good reduction} for $p\textrm{-adic}$ Galois representations.
On the other hand, in his seminal paper \cite{fontaine-phigamma}, Fontaine established the theory of \textit{\'etale $(\varphi, \Gamma)\textrm{-modules}$} classifying all $p\textrm{-adic}$ Galois representations of a local field, and conjectured a precise relationship between absolutely crystalline representations and $(\varphi, \Gamma)\textrm{-modules}$.
A description of absolutely crystalline representations in terms of $(\varphi, \Gamma)\textrm{-modules}$, known as \textit{Wach modules}, was obtained in \cite{wach-free, colmez-hauteur, berger-limites}, thus resolving Fontaine's conjecture.
In the relative case, i.e.\ for $p\textrm{-adic}$ representations of the \'etale fundamental group of certain affinoid algebras, the theory of crystalline representations was developed in \cite{brinon-relatif}, the theory of $(\varphi, \Gamma)\textrm{-modules}$ in \cite{andreatta-phigamma}, and the relation between the two was obtained in \cite{abhinandan-relative-wach-i, abhinandan-relative-wach-ii}.

In terms of applications, classical Wach modules have been useful in understanding Iwasawa theory of $p\textrm{-adic}$ representations and the $p\textrm{-adic}$ Langlands program (for example, see \cite{benois-iwasawa, benois-berger, berger-exponential, berger-breuil-cristallines, lei-loeffler-zerbes-wach, lei-loeffler-zerbes-coleman}).
Additionally, one may also relate Fontaine-Herr complexes of $(\varphi,\Gamma_F)\textrm{-modules}$, computing Galois cohomology of $p\textrm{-adic}$ representations, to complexes of syntomic type with coefficients in Wach modules (see \cite{abhinandan-crystalline-galcoh}).

Relative Wach modules have been used to construct syntomic cohomology with coefficients of crystalline nature in \cite{abhinandan-syntomic}, inspired by the work of Colmez--Nizio{\l} in \cite{colmez-niziol} which treated the case of constant coefficients.  

\subsubsection{The objective of this paper} 

In this paper, we extend some of the results mentioned above to the case of small affine algebras equipped with a ``horizontal'' logarithmic structure associated to a normal crossing divisor.
More concretely, we first obtain a complete classification of $p\textrm{-adic}$ representations of the logarithmic fundamental group of such algebras in terms of $(\varphi, \Gamma)\textrm{-modules}$ (see Theorem \ref{intro_thm:GR_rep_GR0m}), thus generalising the main categorical equivalence of \cite{andreatta-phigamma} to the logarithmic setting.
Then, we construct new rings of $p\textrm{-adic}$ periods $\OBlogdR(-)$ and $\OBlogcrys(-)$, and define the notion of \textit{log-de Rham} and \textit{log-crystalline} representations.
Our constructions are closer in spirit to some constructions of Andreatta--Iovita in \cite{andreatta-iovita-semistable}, however, the category of $p\textrm{-adic}$ log-crystalline representations we consider is \textit{different} from the semistable representations considered in op.\ cit.
Lastly, we show that the category of log-crystalline representations is \textit{equivalent} to the category of Wach modules (appropriately extended to the logarithmic setting, see Theorem \ref{intro_thm:logcrys_wach_equiv}).
This generalisation is \textit{completely novel} and constitutes the \textit{main result} of this paper. 

This work is motivated by our aim to extend the approach to syntomic cohomology with coefficients developed in \cite{abhinandan-syntomic}, to open varieties.
We shall address this question in a forthcoming work. 

In order to motivate the precise results obtained in this paper, let us first look at the case of local fields and the relative case, in a bit more detail.

\subsubsection{\texorpdfstring{$p\textrm{-adic}$}{-} representations of local fields}

Let $\kappa$ be a perfect field of characteristic $p$, set $O_F$ to be the ring of $p\textrm{-typical}$ Witt vectors $W(\kappa)$ and $F$ its field of fractions.
Let $\varphi$ denote the absolute Frobenius on $F$.
Fix an algebraic closure $\overline{F}$ of $F$ and set $G_F = \Gal(\overline{F}/F)$.
Let $F_{\infty}$ denote the cyclotomic $p\textrm{-extension}$ of $F$.
Set $\Gamma_F = \Gal(F_{\infty}/F)$ and denote by $\chi \colon \Gamma_F \isomorphic \mathbb{Z}_p^{\times}$ the $p\textrm{-adic}$ cyclotomic character. 

Let $A_F^+ = O_F\llbracket q-1 \rrbracket$, and set $A_F = A_F^+[1/(q-1)]^{\wedge}$ as the $p\textrm{-adic}$ completion.
These rings are equipped with an $O_F\textrm{-linear}$ action of $\Gamma_F$ with $g(q)=q^{\chi(g)}$ for $g$ in $\Gamma_F$, and a $\Gamma_F\textrm{-equivariant}$ semilinear Frobenius such that $\varphi(q) = q^p$.
A $(\varphi,\Gamma_F)\textrm{-module}$ over $A_F$ is a finitely generated module equipped with $A_F\textrm{-semilinear}$ and continuous actions of $\Gamma_F$ and $\varphi$ commuting with each other.
Fontaine's fundamental result from \cite{fontaine-phigamma} establishes an equivalence
\begin{equation}\label{eq:reps_phigamma_classical}
	D_F \colon \mathrm{Rep}_{\mathbb{Z}_p}(G_F) \isomorphic (\varphi,\Gamma_F)\textup{-Mod}_{A_F}^{\mathrm{\'et}},
\end{equation}
between the category of $\mathbb{Z}_p\textrm{-representations}$ of $G_F$ and the category of \'etale $(\varphi,\Gamma)\textrm{-modules}$ over $A_F$.

We turn our attention to $(\varphi,\Gamma_F)\textrm{-modules}$ associated to crystalline representations of $G_F$.
A \textit{Wach module} over $A_F^+$ is a finite free module $N$ equipped with a semilinear action of $\Gamma_F$ such that its induced action is trivial on $N/\mu N$, and a $\Gamma_F\textrm{-equivariant}$ and $A_F^+\textrm{-linear}$ isomorphism $\varphi_N \colon (\varphi^*N) [1/[p]_q] \isomorphic N [1/[p]_q]$.
Here $\varphi^*N = A_F^+ \otimes_{\varphi, A_F^+} N$, $\mu = q-1$ and $[p]_q = \tfrac{q^p-1}{q-1}$.
For a crystalline representation $V$, there exists a direct relationship between its associated (rational) Wach module and the filtered Dieudonn\'e modules $D_{\mathrm{cris}}(V)$.

\begin{thm}[{\cite[Wach]{wach-free}, \cite[Berger]{berger-limites}}] 
	Let $T$ be a finite free $\mathbb{Z}_p\textrm{-representation}$ of $G_F$ such that $V = T[1/p]$ is crystalline.
	Then,
	\begin{enumerate}
		\item[\textup{(1)}] There exists a unique Wach module $N_F(T) \subset D_F(T)$ such that the natural map $A_F \otimes_{A_F^+} N_F(T) \rightarrow D_F(T)$ is an isomorphism.

		\item[\textup{(2)}] There exists a natural isomorphism $(N_F(T)/\mu N_F(T))[1/p] \isomorphic D_{\mathrm{cris}}(V)$ of filtered Dieudonn\'e modules, where the former is equipped with an induced Frobenius and a filtration induced from the Nygaard filtration on $N_F(T)$.
	\end{enumerate}

	More generally, the following functor induces a natural equivalence of categories:
	\begin{equation*}
		\begin{aligned}
			\textup{Rep}_{\mathbb{Z}_p}^{\textup{cris}}(G_F) &\isomorphic \{\textrm{Wach modules over } A_F^+\}\\
			T &\longmapsto N_F(T).
		\end{aligned}
	\end{equation*}
\end{thm}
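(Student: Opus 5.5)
Here is how I would prove the theorem of Wach and Berger, working with the $q$-variable normalisation $A_F^+ = O_F\llbracket \mu \rrbracket$, $\mu = q-1$. The plan has three stages: first construct $N_F(T)$, then identify its reduction modulo $\mu$, then deduce the equivalence.

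\textbf{Construction of $N_F(T)$.} Set $V = T[1/p]$, let $h \geq 0$ bound the Hodge--Tate weights, and twist so that the weights lie in $[-h,0]$. I would use the overconvergent theory of Cherbonnier--Colmez together with Berger's comparison $B_{\mathrm{rig},F}^{\dagger}\otimes D^{\dagger}_{\mathrm{rig}}(V)$ with $D_{\mathrm{cris}}(V)$. For crystalline $V$, the module $D^+_{\mathrm{rig}}(V) = (B^+_{\mathrm{rig},F}\otimes V)^{H_F}$ is free of rank $d$ over $B^+_{\mathrm{rig},F}$ (functions on the open unit disc in $\mu$). Moreover, $D^+_{\mathrm{rig}}(V)[1/t]$ equals $B^+_{\mathrm{rig},F}[1/t]\otimes_F D_{\mathrm{cris}}(V)$, where $t = \log q$. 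The lattice
\[
\mathcal{M}(D) = \{ y \in B^+_{\mathrm{rig},F}[1/t]\otimes D_{\mathrm{cris}}(V) : \iota_n(y) \in \Fil^0(F_n((t))\otimes D) \ \forall n\geq 1\}
\]
(with $\iota_n \colon \varphi^{-n}$ into $F_n\llbracket t\rrbracket$) is then a free $\Gamma_F$, $\varphi$-stable $B^+_{\mathrm{rig},F}$-module. Next I would descend from $B^+_{\mathrm{rig},F}$ to $B_F^+ = A_F^+[1/p]$. For this I would use Kedlaya-type slope theory, or Berger's direct argument: weak admissibility makes $\mathcal{M}(D)$ pure of slope $0$, and a Lazard/Newton-polygon argument on $\varphi$-modules over the Robba ring produces a unique $\varphi,\Gamma$-stable $B_F^+$-lattice $N$ with $\varphi(N) \subset N$ and $N/\varphi^*N$ killed by $[p]_q^h$. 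Finally I would define the integral object as $N_F(T) = N \cap D_F(T)$. Two points then need checking. First, $N_F(T)$ is free; this follows because $A_F^+$ is a $2$-dimensional regular local ring and the module is reflexive, being an intersection of free modules. Second, the conditions hold: $\Gamma_F$ acts trivially modulo $\mu$, since $\gamma-1$ acts on $B^+_{\mathrm{rig},F}\otimes D$ by a multiple of $t$, hence of $\mu$; and $\varphi$ is an isomorphism after inverting $[p]_q$.

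\textbf{Uniqueness and the comparison (2).} For uniqueness, I would use the characterisation of $N_F(T)$ as the largest $A_F^+$-submodule of $D_F(T)$ of finite type stable under $\varphi$ with $[p]_q^h$-torsion cokernel and with $\Gamma_F$ trivial modulo $\mu$. Two such lattices differ by a $\Gamma_F$-stable ideal-theoretic defect supported at $\mu=0$, and this defect is excluded by the trivial-mod-$\mu$ condition. For (2), the identification $\mathcal{M}(D)/t \cong D$ gives $N[1/\ldots]/\mu \cong D_{\mathrm{cris}}(V)$. The filtration statement amounts to the following computation: $\Fil^k N = \{x : \varphi(x) \in [p]_q^k N\}$ reduces modulo $\mu$ to $\Fil^k D$. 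I would verify this at $\iota_1$, since $[p]_q$ vanishes exactly at $\zeta_p$.

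\textbf{Equivalence.} Full faithfulness follows from $A_F\otimes N_F(T) \cong D_F(T)$ together with uniqueness. For essential surjectivity, given a Wach module $N$, I would set $T = (A\otimes_{A_F^+}N)^{\varphi=1}$; this is a representation by Fontaine's equivalence \eqref{eq:reps_phigamma_classical}. I would then show that $V$ is crystalline by exhibiting $D = (N/\mu)[1/p]$ inside $(B_{\mathrm{cris}}\otimes V)^{G_F}$. To do this, solve $\varphi$ on $A_{\mathrm{cris}}\otimes N$ by a convergent $p$-adic approximation using $\Gamma$-triviality modulo $\mu$.

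\textbf{Main obstacle.} I expect the hardest step to be the descent from $B^+_{\mathrm{rig},F}$ to a bounded lattice over $A_F^+$, that is, the existence of $N$ with bounded denominators. My first approach would be Berger's: realise $N$ as $D^{\dagger}(V)\cap \mathcal{M}(D)$ and use that a pure slope-$0$ $\varphi$-module over the Robba ring has a unique étale model, which I expect to give the boundedness.
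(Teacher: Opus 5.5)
The paper does not prove this statement. It quotes it from Wach and Berger. Its own analogue (Theorem~\ref{thm:Delta_descent_wach_mods}) never builds a Wach module out of $D_{\mathrm{cris}}$: it glues an \'etale module over $\AR$ with an already known Wach module over $\AS^+$ (Theorem~\ref{thm:wachmod_existence}).

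Judged on its own, your outline has a genuine gap exactly at the step you call the main obstacle. The substance of (1) is the existence of a $\varphi$- and $\Gamma_F$-stable $B_F^+$-lattice $N$ of rank $d$ with $B_F\otimes_{B_F^+}N=D_F(V)$. This already contains Colmez's theorem that crystalline representations have finite height, and nothing in your sketch produces it.

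Your opening input is circular. $H_F$ acts trivially on $B^+_{\mathrm{rig},F}$, so $(B^+_{\mathrm{rig},F}\otimes V)^{H_F}$ is not the module you mean. The meaningful assertion, that $B^+_{\mathrm{rig},F}\otimes_{B^+_F}D^+(V)$ is free of rank $d$, presupposes finite height.

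Starting from $\mathcal{M}(D)$ is legitimate. Indeed $B^{\dagger}_{\mathrm{rig},F}\otimes\mathcal{M}(D)\cong D^{\dagger}_{\mathrm{rig}}(V)$ follows from Berger's comparison $D^{\dagger}_{\mathrm{rig}}(V)[1/t]\cong B^{\dagger}_{\mathrm{rig},F}[1/t]\otimes_F D$, with no appeal to weak admissibility. But neither tool you invoke descends it. Lazard/Newton-polygon arguments control divisors of $B^+_{\mathrm{rig},F}$-modules, not boundedness. Uniqueness of the \'etale model of a slope-$0$ module over the Robba ring only returns $D^{\dagger}(V)$, which you already have from Cherbonnier--Colmez.

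What must be shown is that $D^{\dagger}(V)\cap\mathcal{M}(D)$ is free of rank $d$ and spans both modules. Neither $B^+_{\mathrm{rig},F}$ nor $B^{\dagger}_F$ surjects onto $B^{\dagger}_{\mathrm{rig},F}$, so there is no formal patching. The natural Frobenius estimate on the transition matrix $X=P\,\varphi(X)\,Q^{-1}$ (with $P$ the \'etale matrix and $Q$ the matrix of $\varphi$ on a basis of $\mathcal{M}(D)$) bounds $X$ near the boundary only if you already know a basis of $\mathcal{M}(D)$ in which $Q$ is integral and $Q^{-1}$ has only $[p]_q$-denominators. That is essentially the statement to be proved. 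You must either prove a descent theorem for finite-height $\varphi$-modules over $B^+_{\mathrm{rig},F}$ that become \'etale over the Robba ring, or import Colmez's finite-height theorem and run Wach's construction inside $D^+(V)$.

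\textbf{Uniqueness.} The missing point is why two lattices $N_1,N_2\subset D_F(T)$ can differ only at $\mu=0$. Elements of $A_F$ are not quotients of elements of $A_F^+$, so $(N_1+N_2)/N_1$ is not even obviously torsion. One first shows that every effective Wach module satisfies $\mu^sD^+(T)\subset N\subset D^+(T)$, by a $\overline{\mu}$-adic valuation estimate on $\varphi$-fixed vectors. Here $D^+(T)$ is the classical analogue of $\DR^+(T)$, and this is the classical case of Proposition~\ref{prop:wachmod_almost_comp}. Then:
\begin{enumerate}
\item $(N_1+N_2)[1/p]/N_i[1/p]$ is $\mu$-power torsion.
\item $N_1+N_2$ still has trivial $\Gamma_F$-action modulo $\mu$.
\item Comparing determinants forces equality, because $\Gamma_F$ acts on $\mu^kA_F^+/\mu^{k+1}A_F^+$ through $\chi^k$.
\item Finally $N_i=N_i[1/p]\cap D_F(T)$, as in Lemma~\ref{lem:wach_intersection_lemma}.
\end{enumerate}

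\textbf{Two slips.} First, $\gamma-1$ on $B^+_{\mathrm{rig},F}\otimes D$ is divisible by $\mu$, not by $t$: $f((1+\mu)^{\chi(\gamma)}-1)-f(\mu)$ vanishes at $\mu=0$ but not at $\zeta_{p^n}-1$. Second, the correct reduction is $\mathcal{M}(D)/\mu\,\mathcal{M}(D)\cong D$, not modulo $t$. The reason is that $t$ also vanishes at the points $\zeta_{p^n}-1$ ($n\geqslant 1$), where $\mathcal{M}(D)$ differs from $B^+_{\mathrm{rig},F}\otimes D$.

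\textbf{Essential surjectivity.} The object to construct is not a solution for $\varphi$ but $\Gamma_F$-invariants. Set $D=(A^{\mathrm{PD}}\otimes_{A_F^+}N[1/p])^{\Gamma_F}$ in a divided-power ring. Using that $\gamma-1$ is divisible by $\mu$ on $N$, together with the divided powers of $\mu$, the relevant series converge and give $A^{\mathrm{PD}}\otimes_F D\cong A^{\mathrm{PD}}\otimes N[1/p]$. Combining this with $A_{\inf}[1/\mu]\otimes N\cong A_{\inf}[1/\mu]\otimes T$ yields $B_{\mathrm{cris}}\otimes_F D\cong B_{\mathrm{cris}}\otimes V$. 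This is the argument the paper adapts in Proposition~\ref{prop:wach_is_logcrys}.
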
 
 
\subsubsection{Relative \texorpdfstring{$p\textrm{-adic}$}{-} Hodge theory}\label{subsec:relative_padicHT}

Let us fix an integer $d \geqslant 0$ and let $S$ be a $p\textrm{-adically}$ completed \'etale algebra over $O_F\langle x_1^{\pm 1}, \ldots, x_d^{\pm 1}\rangle$ with connected special fibre.
Fix a compatible system $(\zeta_{p^n})_{n\geqslant 0}$ of primitive $p^n\textrm{-th}$ roots of unity, and for each $1 \leqslant i \leqslant d$, fix a compatible system $(x_i^{1/p^n})_{n\geqslant 0}$ of $p^n\textrm{-th}$ roots of $x_i$.
Consider the tower 
\begin{equation*}
	S_n \defeq S[\zeta_{p^n}, x_1^{1/p^n}, \ldots, x_d^{1/p^n}], \quad n\geqslant 1.
\end{equation*}
Set $S_{\infty} = \cup_{n\geqslant 1} S_n$, $\Gamma_S = \Gal(S_{\infty}[1/p]/S[1/p])$ and denote by $\chi \colon \Gamma_S \twoheadrightarrow \Gamma_F \isomorphic \mathbb{Z}_p^{\times}$ the $p\textrm{-adic}$ cyclotomic character.
Each automorphism $g \in \Gamma_S$ is completely determined by its action on $\zeta_{p^n}$ and $x_i^{1/p^n}$ for $1 \leqslant i\leqslant d$ and $n \geqslant 0$.
Therefore, there exist unique maps $\psi_i \colon \Gamma_S \rightarrow \mathbb{Z}_p$ such that 
\begin{equation}\label{eq:action_GammaS}
	g(x_i^{1/p^n}) = x_i^{1/p^n}\zeta_{p^n}^{\psi_i(g)}, \quad \textrm{for all } g \in \Gamma_S \textrm{ and } n \geqslant 0.
\end{equation}

Let $A_S^+ = S\llbracket q-1 \rrbracket$ and let $A_S$ denote the $p\textrm{-adic}$ completion of $A_S^+[1/\mu]$.
We equip $A_S$ with an $O_F\textrm{-linear}$ action of $\Gamma_S$ setting $g(x_i) = q^{\psi_i(g)}x_i$ and $g(q) = q^{\chi(g)}$ for all $g \in \Gamma_S$, and extend the action of $\varphi$ from $A_F$ to $A_S$ by setting  $\varphi(x_i) = x_i^p$ for $1 \leqslant i \leqslant d$. 
A $(\varphi, \Gamma_S)\textrm{-module}$ over $A_S$ is a finitely generated module equipped with continuous and semilinear actions of $\Gamma_S$ and $\varphi$ commuting with each other. 
In \cite{andreatta-phigamma}, Andreatta constructed an equivalence
\begin{equation}\label{eq:reps_phigamma_relative}
	D_S \colon \mathrm{Rep}_{\mathbb{Z}_p}(G_S) \isomorphic (\varphi,\Gamma_S) \textup{-Mod}_{A_S}^{\mathrm{ét}},
\end{equation}
between the category of $\mathrm{Z}_p\textrm{-representations}$ of the \'etale fundamental group $G_S$ of $S[1/p]$ and the category of \'etale $(\varphi,\Gamma_S)\textrm{-modules}$ over $A_S$, generalising \eqref{eq:reps_phigamma_classical}.\footnote{Note that if $d=0$, i.e.\ $S[1/p] = F$, then \eqref{eq:reps_phigamma_relative} agrees with \eqref{eq:reps_phigamma_classical}.}

Independently, and around the same time, Brinon \cite{brinon-relatif} constructed the relative analogues of Fontaine's de Rham and crystalline $p\textrm{-adic}$ period rings, and used them to define the notion of de Rham and crystalline representations of $G_S$. 
The theory of Wach modules for crystalline representations of $G_S$ was developed by the first named author in \cite{abhinandan-relative-wach-i, abhinandan-relative-wach-ii}, and subsequently used to construct syntomic complexes with coefficients \cite{abhinandan-syntomic}.

\subsection{Main results}

Let us now describe the main results obtained in this paper.
We begin by fixing some notation.

\subsubsection{\texorpdfstring{$p\textrm{-adic}$}{-} Galois representations}

Let us fix integers $0 \leqslant a \leqslant d$.
Let us set $R$ to be a \textit{$p\textrm{-adically}$ completed \'etale} algebra over $O_F\langle x_1^{\pm 1}, \ldots, x_a^{\pm 1}, x_{a+1}, \ldots, x_d\rangle$ with connected special fibre, and set $S \defeq R[1/(x_{a+1}\cdots x_d)]^{\wedge}_p$.
Note that $S$ is a $p\textrm{-adically}$ completed \'etale algebra over $O_F\langle x_1^{\pm 1}, \ldots, x_a^{\pm 1}, x_{a+1}^{\pm}, \ldots, x_d^{\pm}\rangle$, and this notation is consistent with that of Section \ref{subsec:relative_padicHT}.
Similar to $S$ above, we may extend the absolute Frobenius on $O_F$ to $R$ by setting $\varphi(x_i) = x_i^p$ for $1 \leqslant i \leqslant d$. 

Let us define $G_R$ to be the logarithmic fundamental group of $R[1/p]$ with respect to the log-structure defined by the divisor $\{x_{a+1} \cdots x_d = 0\}$ (see \cite[Section 10.3]{kato-fontaine-illusie-ii}), which coincides with the tame fundamental group of $R[1/p, 1/(x_{a+1} \cdots x_d)]$ (see \cite{grothendieck-murre}).
By choosing compatible geometric points corresponding to fixed algebraically closed fields $\overline{\mathrm{Fr}(R)} \subset \overline{\mathrm{Fr}(S)}$ we obtain a continuous homomorphism of Galois groups $G_S \rightarrow G_R$; note that this homomorphism is \textit{not} surjective (see Example \ref{eg:gabber_example}).

As in Section \ref{subsec:relative_padicHT}, we consider generalised cyclotomic towers over $R$ and $S$, namely
\begin{equation*}
	R_{n} \defeq R\big[\zeta_{p^n}, x_1^{1/p^n}, \ldots, x_d^{1/p^n}\big], \qquad S_{n} \defeq S\big[\zeta_{p^n}, x_1^{1/p^n}, \ldots, x_d^{1/p^n}\big] = R_n \otimes_R S, \quad n\geqslant 0.
\end{equation*}
More generally, for any $m \geqslant 1$ coprime to $p$, we set
\begin{equation*}
	R_{n,m} \defeq R_n\big[\zeta_{m}, x_1^{1/mp^n}, \ldots, x_d^{1/mp^n}\big], \qquad S_{n,m} \defeq S_n\big[\zeta_{m}, x_1^{1/mp^n}, \ldots, x_d^{1/mp^n}\big] = R_{n,m} \otimes_R S.
\end{equation*}
These extensions  will be used  to kill  ramification along the divisor $\{x_{a+1} \cdots x_d = 0\}$. 

Let us set $R_{\infty} \defeq \cup_{n \geqslant 0} R_n$, $R_{\infty,m} \defeq \cup_{n \geqslant 0} R_{n,m}$, and define the Galois groups $\Gamma_R \defeq \mathrm{Gal}(R_{\infty}[1/p])/R[1/p])$ and $H_R \defeq \textup{Ker}(G_R \twoheadrightarrow \Gamma_R)$.
Analogously, set $S_{\infty} \defeq \cup_{n \geqslant 0} S_n$, $S_{\infty,m} \defeq \cup_{n \geqslant 0} S_{n,m}$, and define the Galois groups $\Gamma_S \defeq \mathrm{Gal}(S_{\infty}[1/p])/S[1/p])$ and $H_R \defeq \textup{Ker}(G_R \twoheadrightarrow \Gamma_R)$.
Note that $\Gamma_R \isomorphic \Gamma_S$.

Let $R^{\mathrm{ét}}_{\infty,m}$ denote the union of finite normal $R_{\infty,m}\textrm{-subalgebras}$ $A \subset \overline{R}$ such that $A[1/p]$ is \'etale over $R_{\infty,m}[1/p]$.
Our first result follows from an application of Abhyankar's Lemma (see \cite[XIII, Proposition 5.2]{sga1}) and purity of the ramification locus (see \cite[\href{https://stacks.math.columbia.edu/tag/0EA4}{Tag 0EA4}]{stacks-project}).
\begin{thm}[{Theorem \ref{thm:GR_rep_GR0m}}]\label{intro_thm:GR_rep_GR0m}
	Let $T$ be a finitely generated $\mathbb{Z}_p\textrm{-representation}$ of $G_R$.
	Then there exists a positive integer $m$ coprime to $p$, depending on $T/pT$, and such that the action of $G_R$ on $T$ factors through $\Gal(R^{\textup{\'et}}_{\infty,m}[1/p]/R[1/p]).$
\end{thm}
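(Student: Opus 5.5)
We first reduce to the finite quotient $T/pT$. The action of $G_R$ on $T$ is continuous and $T$ is finitely generated over $\mathbb{Z}_p$, so the kernel $K$ of $G_R \to \mathrm{Aut}(T/pT)$ is open. Its image in $\mathrm{Aut}(T)$ lies in $1 + p\,\mathrm{End}(T)$ (or in the analogous congruence subgroup once torsion is taken into account), which is a pro-$p$ group. The plan is therefore:
\begin{enumerate}
\item[(a)] find $m$ such that the finite Galois cover $R_K$ of $R[1/p]$ cut out by $K$ becomes \'etale over $R_{\infty,m}[1/p]$, in the sense that its compositum with $R_{\infty,m}$ lies in $R^{\textup{\'et}}_{\infty,m}$;
\item[(b)] show that the remaining pro-$p$ part of the action does not reintroduce tame ramification along the divisor.
\end{enumerate}
For (b), note that $G_R$ is the tame fundamental group of $R[1/p,1/(x_{a+1}\cdots x_d)]$ along $D = \{x_{a+1}\cdots x_d = 0\}$. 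Tame inertia along the components $D_i = \{x_i = 0\}$ is procyclic of order prime to the residue characteristic of the generic point of $D_i$. That residue characteristic is $0$, since we work over $R[1/p]$, so tame inertia is a quotient of $\widehat{\mathbb{Z}}(1)$. Its pro-$p$ part is already killed by adjoining $x_i^{1/p^n}$ for all $n$, which is exactly what happens in $R_\infty$. The prime-to-$p$ part must be killed using the $m$-th roots.

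The main step is (a), via Abhyankar's lemma. Let $L$ be the finite Galois extension of $\mathrm{Fr}(R)$ corresponding to $G_R/K$. It is tamely ramified along the generic points of the $D_i$ (for $a < i \le d$), with ramification indices $e_i$. We split $e_i = p^{s_i} m_i$ with $p \nmid m_i$, and set $m = \mathrm{lcm}_i(m_i)$ and $n = \max_i s_i$. Then $m$ depends only on $T/pT$. By Abhyankar's lemma \cite[XIII, Proposition 5.2]{sga1}, applied to the normal crossing divisor $D$ on the regular scheme $\mathrm{Spec}\, R[1/p]$ (regular since $R$ is \'etale over a polynomial/Laurent algebra), adjoining $x_i^{1/mp^n}$, and $\zeta_m, \zeta_{p^n}$ so the extension is Galois, gives a normalised compositum $L R_{n,m}$ that is unramified over $R_{n,m}[1/p]$ at all codimension-one points lying over $D$. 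Since $L$ was already \'etale away from $D$, the extension $LR_{n,m}[1/p]/R_{n,m}[1/p]$ is unramified in codimension one.

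Purity of the branch locus (Zariski--Nagata, \cite[Tag 0EA4]{stacks-project}) then upgrades this to finite \'etale, provided $R_{n,m}[1/p]$ is regular. This holds because $R_{n,m}[1/p]$ is \'etale over $F(\zeta_{p^n}, \zeta_m)\langle x_1^{\pm 1/mp^n},\ldots, x_a^{\pm1/mp^n}, x_{a+1}^{1/mp^n},\ldots\rangle$, which is regular. Passing to the union over $n$ places the normalisation of $R_{\infty,m}$ in $LR_{\infty,m}$ inside $R^{\textup{\'et}}_{\infty,m}$. Combined with (b), this shows the full action of $G_R$ on $T$ factors through $\Gal(R^{\textup{\'et}}_{\infty,m}[1/p]/R[1/p])$.

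For the $p$-power part: each finite quotient $T/p^kT$ has a kernel $K_k \subset K$ with $K/K_k$ a $p$-group. So the corresponding extension $L_k/L$ has inertia along the $D_i$ of $p$-power order, because tame inertia is procyclic and its image is pro-$p$ by our choice of $L$. Abhyankar over $LR_{\infty,m}$ then needs only $p$-power roots of $x_i$, and these are already present. Hence each $L_k R_{\infty,m}$ is \'etale over $R_{\infty,m}[1/p]$, with the same $m$.

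The main obstacle is the bookkeeping in applying Abhyankar's lemma over the infinite, non-noetherian tower. I would handle it by working at finite level $R_{n,m}$ (for varying $n$) and only passing to the colimit at the end. One also has to check that the inertia of $L_k$ over the base $LR_{n,m}$ is genuinely $p$-primary. This follows because the ramification index is multiplicative and the inertia group of $L_k/\mathrm{Fr}(R)$ along $D_i$ is cyclic, with image in $G_R/K$ killed by the step above.
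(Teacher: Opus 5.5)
Your proposal is correct and follows essentially the same route as the paper. Both apply Abhyankar's lemma with purity to the cover cut out by $\ker(G_R \to \mathrm{Aut}(T/pT))$ to produce $m$ and a level $n$. Then, for each $T/p^kT$, the remaining extension has $p$-group Galois group, so adjoining further $p$-power roots of the $x_i$ and invoking purity of the branch locus makes it finite \'etale over $R_{n',m}[1/p]$ with the same $m$. Your group-theoretic bookkeeping is slightly cleaner than the paper's: you correctly locate the $p$-group as $K/K_k$, whereas the paper's text says $|G/H|$ is a $p$-power, which should read $|H/H'|$.
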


The preceding theorem reduces us from the study of $p\textrm{-adic}$ representations of $G_R$ to the study of $p\textrm{-adic}$ representations of the group $\Gal(R^{\textrm{\'et}}_{\infty,m}[1/p]/R[1/p])$ for some $m$.
For this reason, till we end of this introduction section, we fix some $m \geqslant 1$ coprime to $p$ and set $\calR \defeq R_{0,m}$, $\calR_{\infty} \defeq R_{\infty,m}$, $\calS \defeq S_{0,m}$ and $\calS_{\infty} \defeq S_{\infty,m}$.
We define the Galois groups
\begin{equation*}
	\begin{array}{ l l l }
		G_{\calR/R}^{\etale} \defeq \mathrm{Gal} (\calR_{\infty}^{\mathrm{ét}}[1/p]/R[1/p]), & & \Gamma_{\calR} \defeq \Gal(\calR_{\infty}[1/p]/\calR[1/p]),\\
		H_{\calR}^{\etale} \defeq \Gal(\calR_{\infty}^{\etale}[1/p]/\calR_{\infty}[1/p]), & & \Gamma_{\calS} \defeq \Gal(\calS_{\infty}[1/p]/\calS[1/p]).
	\end{array}
\end{equation*}
We remark that the groups $\Gamma_{\calR}$, $\Gamma_{\calS}$, $\Gamma_{R}$ and $\Gamma_{S}$ are canonically isomorphic.
Moreover we have natural isomorphisms of Galois groups
\begin{equation*}
	\begin{array}{ l }
		\Delta \defeq \mathrm{Gal}(\calR[1/p]/R[1/p]) \isomorphic \Gal(\calR_{\infty}[1/p]/R_{\infty}[1/p]), \\
		G_{\calR/R}^{\etale} /H_{\calR}^{\etale} \isomorphic \Gal(\calR_{\infty}[1/p]/R[1/p]) \isomorphic \Gamma_{\calR} \times \Delta.
	\end{array}
\end{equation*}

\subsubsection{\'Etale \texorpdfstring{$(\varphi, \Gamma_{\calR}\times \Delta)\textrm{-modules}$}{-}}

Recall that we set $\calR = R_{0,m}$ for some fixed $m$ coprime to $p$.
Set $A_{\calR}^+ = \calR \llbracket q-1 \rrbracket$ and $A_{\calR}= A_{\calR}^+[1/(q-1)]^{\wedge}$ as the $p\textrm{-adic}$ completion.
The rings $A_{\calR}^+$ and $A_{\calR}$ are equipped with a natural action of $\varphi$ such that $\varphi(q) = q^{p}$ and $\varphi(x_i) = x_i^p$ for $1 \leqslant i \leqslant d$.
We may now equip these rings with an action of $\Gamma_{\calR} \times \Delta$.
Note that $\Delta$ acts naturally on $\calR$, and we extend this action to $A_{\calR}^+$ and $A_{\calR}$ by setting $d(q) = q$ for each $d \in \Delta$.
The action of $\Gamma_{\calR}$ is defined by the formulas
\begin{itemize}
	\item $g(x_i^{1/m})=  q^{\psi_i(g)/m} x_i^{1/m}$ for $g \in \Gamma_{\calR}$, and where $\psi_i$ was defined in \eqref{eq:action_GammaS}.

	\item $g(q) = q^{\chi(g)}$ for $g \in \Gamma_{\calR}$, and where $\chi$ denote the $p\textrm{-adic}$ cyclotomic character. 
\end{itemize}
Let $\calR_{\infty}^{\flat}$ denote the tilt of $\calR_{\infty}$, and let $\varepsilon = (1, \zeta_p, \zeta_{p^2}, \ldots) \in \calR_{\infty}^{\flat}$.
We remark that in the main body of this paper, we identify $A_{\calR}^+$ and $A_{\calR}$ with some subrings of the period ring $W(\overline{\calR}_{\infty}^{\flat}[1/(\varepsilon-1)])$.

Combining the tilting correspondence of Scholze \cite{scholze-perfectoid} with Katz's theorem \cite[Proposition~4.1.1]{katz73} and the decompletion results of Kedlaya--Liu \cite{kedlaya-liu2}, we obtain the following result\footnote{This result might be known to experts (see the unpublished work of Kedlaya--Liu \cite{kedlaya-liu2}), but to the best of our knowledge, it is not documented in the literature, and we record the proof with all details to fill this gap.}:

\begin{thm}[{Theorem \ref{thm:padic_classify_decomp}}]\label{intro_thm:padic_classify_decomp}
	There exists a natural categorical equivalence between $\mathbb{Z}_p\textrm{-representations}$ of $G_{\calR/R}^{\etale}$ and \'etale $(\varphi, \GammaR \times \Delta)\textrm{-modules}$ over $\AR$:
	\begin{equation*}
		\DR \colon \textup{Rep}_{\mathbb{Z}_p}(G_{\calR/R}^{\etale}) \isomorphic (\varphi, \GammaR \times \Delta)\textup{-Mod}_{\AR}^{\mathrm{ét}}.
	\end{equation*}
	The preceding equivalence restricts to a natural equivalence of categories:
	\begin{equation*}
		\DR \colon \textup{Rep}_{\mathbb{Z}_p}^{\textup{free}}(G_{\calR/R}^{\etale}) \isomorphic (\varphi, \GammaR \times \Delta)\textup{-Mod}_{\AR}^{\etale,\textup{fproj}},
	\end{equation*}
	where the target category consists of finite projective \'etale $(\varphi, \GammaR \times \Delta)\textrm{-modules}$ over $\AR$.
\end{thm}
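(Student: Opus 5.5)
The plan is to follow the Fontaine--Andreatta strategy in three layers: tilting, Katz's theorem, and decompletion. First, I would show that $\calR_{\infty}$ (after $p$-adic completion, with $p$ inverted) is a perfectoid ring. The ring $\calR = R_{0,m}$ is étale over $O_F\langle x_i^{\pm 1/m}\rangle[\zeta_m]$ away from the divisor, but near the divisor it is a log-smooth (toric) algebra. Its $p^\infty$-tower obtained by extracting all $p$-power roots of the $x_i$ and $\zeta$ is nevertheless perfectoid, because Frobenius is surjective modulo $p$ on $\calR_\infty/p$. This holds for monoid algebras $\mathbb{N}[1/p]$ as well as $\mathbb{Z}[1/p]$, and is preserved under étale base change.

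Next, by Scholze's tilting correspondence together with almost purity, finite étale covers of $\widehat{\calR}_\infty[1/p]$ correspond to finite étale covers of $\widehat{\calR}_\infty^\flat[1/\varpi]$. The finite normal subalgebras defining $\calR^{\etale}_\infty$ are exactly the normalizations of such covers. Hence $H^{\etale}_{\calR}$ is identified with the étale fundamental group of $\widehat{\calR}^\flat_\infty[1/(\varepsilon-1)]$. I will need to check that the covers occurring in $\calR^{\etale}_\infty$ exhaust all finite étale covers of the perfectoid ring. This uses the standard limit argument, since finite étale covers of the completed colimit descend to some finite level $\calR_{n}[1/p]$, and those are precisely what $\calR^{\etale}_\infty$ collects.

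Then I would apply Katz's theorem (\cite[Proposition~4.1.1]{katz73}) in its Witt-vector form over the perfect ring $\widetilde{A} = W(\widehat{\calR}^\flat_\infty[1/(\varepsilon-1)])$. Combined with the previous step, it gives an equivalence between $\mathbb{Z}_p$-representations of $H^{\etale}_{\calR}$ and étale $\varphi$-modules over $\widetilde{A}$, via $T \mapsto (W(\overline{\calR}^{\flat}_\infty[1/(\varepsilon-1)]) \otimes T)^{H^{\etale}_\calR}$. I will do this first for $p$-torsion modules and then pass to limits by dévissage. Taking the additional semilinear action of $G^{\etale}_{\calR/R}/H^{\etale}_{\calR} \cong \GammaR \times \Delta$ upgrades this to an equivalence with étale $(\varphi,\GammaR\times\Delta)$-modules over $\widetilde{A}$. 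The free/finite projective refinement follows because finite free $T$ corresponds to finite projective modules. The module is projective since it becomes free after a faithfully flat extension, by the comparison isomorphism $W(\ldots)\otimes T \cong W(\ldots)\otimes D$.

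The final and hardest step is decompletion: descending from $\widetilde{A}$ to $\AR$, identified as a subring via $q=[\varepsilon]$ and $x_i^{1/m}\mapsto [x_i^{\flat,1/m}]$. I would first descend from the perfect ring to the imperfect $\AR$. The tool is the $\varphi$-descent of Kedlaya--Liu \cite{kedlaya-liu2}: the extension $\widetilde{A}$ is the completed perfection of the union $\varphi^{-n}(\AR)$. One shows that base change from étale $\varphi$-modules over $\AR$ to those over $\widetilde{A}$ is an equivalence, first mod $p$ and then by lifting. This is where the real work lies. The mod $p$ statement requires knowing that $\AR/p$ is a regular-enough domain and that $\widetilde{A}/p$ is its completed perfection. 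Near the divisor, with the non-invertible $x_{a+1},\ldots,x_d$, the monoid structure must be checked carefully. I would attempt to verify the Kedlaya--Liu hypotheses, namely a decompleting tower with Tate traces, directly via the explicit basis $x^{\alpha/p^n}$, $q^{\beta/p^n}$. Compatibility of the $\GammaR\times\Delta$-actions is then automatic by functoriality and full faithfulness.
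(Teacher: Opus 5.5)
Your outline is sound in its architecture, but it takes a different route from the paper, and its one substantive step (Step~4) is only announced. The paper never forms $\varphi$-modules at the perfect level. Instead it decompletes \emph{finite \'etale algebras}:
\begin{itemize}
\item It checks that $\{\calR_n\}_{n\geqslant 0}$ is \emph{weakly} decompleting (Lemma~\ref{lem:decompleting_tower}).
\item It identifies the Kedlaya--Liu ring $\mathscr{A}_{\calR}^+$ with the concrete $A_{\calR}^+$, via the ring-of-norms computation $\lim_{\phi}\calR_n/p\calR_n\simeq(\calR/p\calR)\llbracket\overline{\mu}\rrbracket$ (Lemma~\ref{lem:lim_ring_of_norms}, Proposition~\ref{prop:ar+_decomplete}).
\item It obtains $\textup{F\'Et}(A_{\calR})\simeq\textup{F\'Et}(E_{\calR})\simeq\textup{F\'Et}(\tilde{E}_{\calR})\simeq\textup{F\'Et}(\widehat{\calR}_{\infty}[1/p])\simeq\textup{F\'Et}(\calR_{\infty}[1/p])$ (Theorem~\ref{thm:fet_equivalence}). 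Hence $\Gal(E_{\calR}^{\etale}/E_{\calR})\simeq H_{\calR}^{\etale}$, where $E_{\calR}^{\etale}$ is a plain union of finite \'etale $E_{\calR}$-algebras.
\end{itemize}
Katz's argument is then run directly over the noetherian ring $E_{\calR}$, by effective descent along finite Galois covers. The passage to $\mathbb{Z}_p$ is a d\'evissage using flatness of $A_{\calR}\to A_{\calR}^{\etale}$, the vanishing $H^1(H_{\calR}^{\etale},A_{\calR}^{\etale})=0$, and Artin--Schreier.

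Your Step~4 claim, that base change of \'etale $\varphi$-modules from $A_{\calR}$ to $\tilde{A}_{\calR}$ is an equivalence, is true. But its natural proof, via Katz on both sides, goes through $\pi_1(E_{\calR})\simeq\pi_1(\tilde{E}_{\calR})$, which is exactly the finite \'etale decompletion above. Once you have that, Step~3 is a detour that adds work rather than saving it, for two reasons:
\begin{itemize}
\item Over the non-noetherian perfect ring $\tilde{E}_{\calR}$, a finitely generated \'etale $\varphi$-module need not be projective. For $d>a$, $\tilde{E}_{\calR}/\bigcup_n (x_d^{\flat})^{1/p^n}\tilde{E}_{\calR}$ is a counterexample. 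So your perfect-level category for torsion $T$ must be restricted; this is why the paper states Corollary~\ref{prop:modp_classify_perf} only for finite projective modules.
\item The map $\tilde{A}_{\calR}\to\tilde{A}_{\calR}^{\etale}$ involves a completion, so the faithful flatness you invoke for projectivity is not automatic. Over $A_{\calR}$, by contrast, projectivity is automatic (Lemmas~\ref{lem:modp_phimod_proj} and~\ref{lem:phimod_proj}), and $E_{\calR}\to E_{\calR}^{\etale}$ is a colimit of finite \'etale maps (Lemma~\ref{lem:ARet_flat}).
\end{itemize}

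If you keep your route, Step~4 needs two corrections. First, the hypothesis you need is weak decompletion: strict surjectivity of $A_{\calR}^{\dagger,r}\to\mathscr{E}_{\calR}$, and density of $\bigcup_n\varphi^{-n}(\mathscr{E}_{\calR})$ in $\tilde{E}_{\calR}$. Tate traces are not needed for \'etale objects. Second, Kedlaya--Liu's theorems concern their ring $\mathscr{A}_{\calR}$. You must therefore prove that it coincides with the $A_{\calR}$ you embed via $q\mapsto[\varepsilon]$, $x_i^{1/m}\mapsto[x_i^{\flat}]^{1/m}$ and \'etale lifting. In exchange, your route yields directly the perfect-level equivalence and its compatibility with $\tilde{A}_{\calR}\otimes_{A_{\calR}}D_{\calR}(T)$; the paper records these separately (Corollaries~\ref{prop:modp_classify_perf} and~\ref{cor:phiGammaR_descentR0m}).
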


\subsubsection{Wach modules}

In this paper, we adopt the definition of Wach modules introduced in \cite[Definition 3.8]{abhinandan-relative-wach-ii}:

\begin{defi}\label{intro_defi:wach_mods}
	A \textit{Wach module} over $\AR^+$ (resp.\ $\AS^+$) is a finitely generated $\AR^+\textrm{-module}$ (resp.\ $\AS^+\textrm{-module}$) $N$ satisfying the following assumptions:
	\begin{enumerate}
		\item[(1)] The sequences $\{p, \mu\}$ and $\{\mu, p\}$ are regular on $N$.
	
		\item[(2)] The module $N$ is equipped with a semilinear action of $\GammaR$ (resp.\ $\GammaS$) such that the induced action of $\GammaR$ (resp.\ $\GammaS$) on $N/\mu N$ is trivial.
	
		\item[(3)] The module $N$ is equipped with an $\AR^+\textrm{-linear}$ and $\GammaR\textrm{-equivariant}$ (resp.\ $\AS^+\textrm{-linear}$ and $\GammaS\textrm{-equivariant}$) Frobenius-structure, i.e.\ an isomorphism $\varphi_N \colon (\varphi^*N)[1/[p]_q] \isomorphic N[1/[p]_q]$.
	\end{enumerate}
	
	The module $N$ is said to be \textit{effective} if we have $\varphi_N(\varphi^*N) \subset N$.
	Denote by $(\varphi, \GammaR)\textup{-Mod}_{\AR^+}^{[p]_q}$, the category of Wach modules over $\AR^+$ with morphisms between objects being $\AR^+\textrm{-linear}$ and $(\varphi, \GammaR)\textrm{-equivariant}$.
\end{defi}

The following definition takes into account the additional structure provided by an action of $\Delta$:  

\begin{defi}\label{intro_defi:wach_mods_with_delta}
	A \textit{Wach module with $\Delta\textrm{-action}$} over $\AR^+$ (resp.\ $\AS^+$) is a Wach module $N$ over $\AR^+$ (resp.\ $\AS^+$) equipped with an additional semilinear action of $\Delta$ commuting with the $(\varphi, \GammaR)\textrm{-action}$ (resp.\ $(\varphi, \GammaS)\textrm{-action}$) on $N$.
	Denote by $(\varphi, \GammaR \times \Delta)\textup{-Mod}_{\AR^+}^{[p]_q}$, the category of Wach modules with $\Delta\textrm{-action}$ over $\AR^+$ with morphisms between objects being $\AR^+\textrm{-linear}$ and $(\varphi, \GammaR \times \Delta)\textrm{-equivariant}$.
\end{defi}

To any Wach module with $\Delta\textrm{-action}$ over $\AR^+$, one may functorially associate a $\mathbb{Z}_p\textrm{-representation}$ of $G_R$.
More precisely, we have that the following natural functors are fully faithful:
\begin{equation*}
	T_{\calR} \colon (\varphi, \GammaR \times \Delta)\textup{-Mod}_{\AR^+}^{[p]_q} \longrightarrow (\varphi, \GammaR \times \Delta)\textup{-Mod}_{\AR}^{\textup{\'et}} \isomorphic \textup{Rep}_{\mathbb{Z}_p}(G_{\calR/R}^{\etale}),
\end{equation*}
where the left arrow is given as $N \mapsto \AR \otimes_{\AR^+} N$ and the right arrow is a quasi-inverse to the functor $D_{\calR}$. 
Set $A_{\inf}(\overline{R})\defeq W(\overline{R}^{\flat})$ and $\tilde{A}(\overline{R})\defeq W(\overline{R}[1/p]^{\flat})$\footnote{Here $\overline{R}$ denotes the integral closer of $R$ in $\overline{\textup{Fr}(R)}$; note that $\widehat{\overline{R}}$ is perfectoid (see Lemma \ref{lem:Rbarhat_perfectoid}).}.
Then, similar to \cite[Proposition 3.14]{abhinandan-relative-wach-ii}, we have the following comparison result.

\begin{prop}[{Proposition \ref{prop:wachmod_comp_relative}}]\label{intro_prop:wachmod_comp_relative}
	Let $N$ be a Wach module with $\Delta\textrm{-action}$ over $\AR^+$, and let $T \coloneq \TR(N)$ be the associated finite free $\mathbb{Z}_p\textrm{-representation}$ of $G_R$.
	Then, we have a natural $G_R\textrm{-equivariant}$ comparison isomorphism:
	\begin{equation}\label{intro_eq:wachmod_comp_relative_ainf}
		A_{\inf}(\overline{R})[1/\mu] \otimes_{\AR^+} N \isomorphic A_{\inf}(\overline{R})[1/\mu] \otimes_{\mathbb{Z}_p} T.
	\end{equation}
	Additionally, \eqref{intro_eq:wachmod_comp_relative_ainf} is compatible with the action of $(\varphi, G_R)$ after base change along $A_{\inf}(\overline{R})[1/\mu] \rightarrow \tilde{A}(\overline{R})$.
\end{prop}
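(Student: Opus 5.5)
We follow the strategy of \cite[Proposition 3.14]{abhinandan-relative-wach-ii}, adapted to the tame covering $\calR/R$ and the extra $\Delta$-action. The plan has three steps: obtain a comparison over the big period ring $\tilde{A}(\overline{R})$ from Theorem \ref{intro_thm:padic_classify_decomp}, show that it descends to $A_{\inf}(\overline{R})[1/\mu]$ using the Frobenius structure of $N$, and finally check compatibility with $G_R$.

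\textbf{Step 1: comparison over $\tilde{A}(\overline{R})$.} Let $D \coloneq \AR \otimes_{\AR^+} N$, so that $T = \TR(N)$ corresponds to $D$ under Theorem \ref{intro_thm:padic_classify_decomp}. Embed $\AR^+ \subset A_{\inf}(\overline{R})$ and $\AR \subset \tilde{A}(\overline{R})$, using $q \mapsto [\varepsilon]$ and $x_i^{1/m} \mapsto [x_i^{\flat,1/m}]$. The proof of Theorem \ref{intro_thm:padic_classify_decomp} (tilting plus Katz's theorem) then gives a natural $(\varphi, G_R)$-equivariant isomorphism
\[
\tilde{A}(\overline{R}) \otimes_{\AR} D \isomorphic \tilde{A}(\overline{R}) \otimes_{\mathbb{Z}_p} T.
\]
This yields the last assertion once we know that the $A_{\inf}(\overline{R})[1/\mu]$-lattices on the two sides are matched.

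\textbf{Step 2: integral descent to $A_{\inf}(\overline{R})[1/\mu]$.} Consider $M \coloneq A_{\inf}(\overline{R}) \otimes_{\AR^+} N$. The module $N$ is finitely generated with $\{p,\mu\}$ regular, so by the argument of \cite{abhinandan-relative-wach-ii} it is free after inverting $p$ or $\mu$ locally, and in particular $N[1/\mu]$ is finite projective over $\AR^+[1/\mu]$.

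Twisting by a power of $\mu$, we may assume $N$ is effective. Then $T^{\vee}$ is recovered as $(\tilde{A}(\overline{R}) \otimes D)^{\varphi=1}$. The point is to show that $\varphi$-invariant elements already lie in $\mu^{-r} M$ for some fixed $r$.

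For this, use that $\varphi_N$ is an isomorphism after inverting $[p]_q$, so its "determinant" divides a power of $[p]_q$. Iterating $\varphi$ gives $\varphi^n(\mu) = \mu \prod_{k<n}\varphi^k([p]_q)$. A standard Dwork-style argument in $A_{\inf}$, as in \cite[Proposition 3.14]{abhinandan-relative-wach-ii}, shows that a solution $x$ of $\varphi(x) = \varphi_N(x)$ in $\tilde{A}(\overline{R}) \otimes D$ has bounded $\mu$-denominators. Here one uses that $A_{\inf}(\overline{R})[1/\mu] \cap \tilde{A}(\overline{R})^{\dots}$ is controlled modulo $p$, via valuation estimates on $\overline{R}^{\flat}$ fibrewise at each $\pins$. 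One also needs that $\overline{R}^{\flat}$ is a product-like integral perfectoid, using Lemma \ref{lem:Rbarhat_perfectoid}.

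This produces $T \subset \mu^{-r}M$, hence a natural $\varphi$-equivariant map $A_{\inf}(\overline{R})[1/\mu]\otimes T^{\vee\vee} \to M[1/\mu]$. Symmetrically, the dual Wach module $N^{\vee}$ yields a map in the other direction. The composite of the two is the identity after base change to $\tilde{A}(\overline{R})$. Since $A_{\inf}(\overline{R})[1/\mu] \to \tilde{A}(\overline{R})$ is injective, so that the modules are torsion free, both maps are mutually inverse isomorphisms. This gives \eqref{intro_eq:wachmod_comp_relative_ainf}.

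\textbf{Step 3: $G_R$-equivariance.} Here $G_R$ acts on $N$ through $\GammaR\times\Delta$, with $\Delta$ acting via its action on $\calR$, and the embedding $\AR^+ \subset A_{\inf}(\overline{R})$ is $G_R$-equivariant. So the maps above are $G_R$-equivariant because they are induced from the equivariant isomorphism of Step 1.

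\textbf{Main obstacle.} The hardest part is the denominator bound in Step 2 in the relative setting. There is no valuation on $\overline{R}^{\flat}$, so the estimate must be made uniformly over all points $\mathfrak{p}$. It must also survive the tame extension by $m$-th roots, which do not affect $\mu$-adic estimates since $m$ is prime to $p$. We would first try to reduce to the $d=0$ case of Wach and Berger by working fibrewise at the $\pins$ and then gluing, using the regularity of $\{p,\mu\}$.
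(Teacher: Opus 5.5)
Your Step 1, the reduction to effective $N$, and Step 3 agree with the paper. However, Step 2 carries the entire content of the statement, and it is not proved; neither of the mechanisms you offer works as stated. A Dwork-type bound on the $\mu$-denominators of $\varphi$-invariants needs valuation estimates in a field, and, as you observe yourself, $\tilde{E}(\overline{R})$ is neither a field nor valued. Citing \cite[Proposition 3.14]{abhinandan-relative-wach-ii} does not supply such an estimate: the paper's proof, modelled on that proposition, uses none. Your fallback, working fibrewise at each $\pins$, is the right idea, but it does not reduce to the $d=0$ theory of Wach and Berger. The local base is $O_{\calL}=\calR_{(p)}^{\wedge}$, a complete discrete valuation ring whose residue field $\mathrm{Fr}(\calR/p\calR)$ is \emph{imperfect}, so Wach--Berger does not apply. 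The input the paper actually uses is the imperfect-residue-field comparison \cite[Lemma 3.6]{abhinandan-imperfect-wach} for the Wach module $\NLp=\ALplusp\otimes_{\AR^+}N$:
\[
\mu^s A_{\inf}(\Cpplus)\otimes_{\mathbb{Z}_p}T\subset A_{\inf}(\Cpplus)\otimes_{\ALplusp}\NLp\subset A_{\inf}(\Cpplus)\otimes_{\mathbb{Z}_p}T,
\]
with a single $s$ working for all $\mathfrak{p}$.

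The gluing step is also missing. Regularity of $\{p,\mu\}$ on $N$ is not what passes from the fibres back to $\overline{R}$. The paper proceeds as follows.
\begin{enumerate}
\item It intersects the displayed inclusions with the base change of the Step 1 isomorphism to $\tilde{A}(\mathbb{C}(\mathfrak{p}))$, using $A_{\inf}(\Cplusp)=A_{\inf}(\Cpplus)\cap\tilde{A}(\mathbb{C}(\mathfrak{p}))$.
\item It takes the product over $\mathscr{P}(\overline{R})$, using $\prod_{\pins}(-\otimes_{\AR^+}N)=(\prod_{\pins}-)\otimes_{\AR^+}N$, which holds because $N$ is finitely presented over the noetherian ring $\AR^+$.
\item It inverts $\mu$.
\item It intersects with the $\tilde{A}(\overline{R})$-comparison, using $A_{\inf}(\overline{R})=\tilde{A}(\overline{R})\cap\prod_{\pins}A_{\inf}(\Cplusp)$ and the projectivity of $N[1/\mu]$ over $\AR^+[1/\mu]$.
\end{enumerate}
Since the local statement already gives both inclusions, no dual construction is needed. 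Your detour through $N^{\vee}$ would in any case require checking that $\mathrm{Hom}_{\AR^+}(N,\AR^+)$ is a Wach module with $\TR(N^{\vee})\cong T^{\vee}$, which you do not address. Also, with the paper's covariant conventions, $(\tilde{A}(\overline{R})\otimes_{\AR^+}N)^{\varphi=1}$ is $T$, not $T^{\vee}$.
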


Recall that in \cite[Section 4.1]{abhinandan-relative-wach-ii}, a Wach module over $\AS^+$ was constructed by ``glueing'' together an \'etale $(\varphi, \GammaS)\textrm{-module}$ over $\AS$ with a Wach module in the imperfect residue field case.
Analogously, we show that one may construct a Wach module with $\Delta\textrm{-action}$ over $\AR^+$ by ``glueing'' together an \'etale $(\varphi, \GammaR \times \Delta)\textrm{-module}$ over $\AR$ with a Wach module with $\Delta\textrm{-action}$ over $\AS^+$.
More precisely, we show the following.

\begin{thm}[{Theorem \ref{thm:wachmod_existence}}]\label{intro_thm:wachmod_existence}
	Let $\DR$ be an \'etale $(\varphi, \GammaR \times \Delta)\textrm{-module}$ over $\AR$ and set $\DS \coloneq \AS \otimes_{\AR} \DR$ as an \'etale $(\varphi, \GammaS \times \Delta)\textrm{-module}$ over $\AS$.
	Let $\NS \subset \DS$ be a Wach module with $\Delta\textrm{-action}$ over $\AS^+$.
	Then, $\NR \coloneq \DR \cap \NS \subset \DS$ is a Wach module with $\Delta\textrm{-action}$ over $\AR^+$, i.e.\ it satisfies all the axioms of Definition \ref{intro_defi:wach_mods_with_delta}.
\end{thm}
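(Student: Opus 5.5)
The proof will follow the strategy of \cite[Section 4.1]{abhinandan-relative-wach-ii}, where $\NS$ is glued with an étale module; here $\AS^+$ plays the role of the "open" ring and $\AR^+$ that of the "closed" one. The key commutative-algebra input is the intersection identity
\begin{equation*}
	\AR^+ = \AR \cap \AS^+ \quad \textrm{inside } \AS .
\end{equation*}
This holds because $R \to S = R[1/(x_{a+1}\cdots x_d)]^{\wedge}_p$ is injective and $R = R[1/p] \cap S$ fails only in a controlled way. More precisely, one works modulo $p$ and with $\mu$-adic expansions, using that $R/p \to S/p$ is a localisation of a normal domain, together with the fact that $\calR$ is finite étale over $R$ after inverting $p$ and $x_{a+1}\cdots x_d$.

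\textbf{Step 1: finiteness and regularity.} Finite generation of $\NR = \DR \cap \NS$ over $\AR^+$ is the first point to settle, since $\AR^+$ is noetherian. I would choose a finite projective $\AR^+$-lattice $M \subset \DR$ with $\AR \otimes M = \DR$; this exists after reducing to the case where $\DR$ is projective, or by taking generators. Then:
\begin{itemize}
	\item Because $\NS$ is finitely generated and $\mu$-adically controlled, there is $k$ with $\mu^k M \subset \NR$ and $\NR \subset \mu^{-k}(\AS^+ \otimes M) \cap \DR$.
	\item Applying the intersection identity coordinatewise (for free $M$; projective by direct summand) gives $\NR \subset \mu^{-k}M$, so $\NR$ is finitely generated.
	\item Regularity of $\{p,\mu\}$ follows since $\NR \subset \DR$ is $p$-torsion free. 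For $\mu$ modulo $p$: if $\mu x \in p\NR$, then in $\NS$ we get $x \in p\NS$ by regularity of $\{p,\mu\}$ on $\NS$, and $x/p \in \DR$, so $x \in p\NR$.
	\item The sequence $\{\mu,p\}$ is handled similarly: $\NR/\mu \NR \hookrightarrow \NS/\mu\NS$ is injective because $\NS \cap \mu \DR = \mu \NR$. This last equality uses that $\mu$ is invertible in $\DR$ and that $\NS \cap \mu\DS = \mu\NS$, both read through $\DS$.
\end{itemize}

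\textbf{Step 2: the actions.} Stability of $\NR$ under $\GammaR \times \Delta$ is immediate, since both $\DR$ and $\NS$ are stable and $\GammaR \cong \GammaS$. Triviality of $\GammaR$ on $\NR/\mu\NR$ then follows from the injection $\NR/\mu\NR \hookrightarrow \NS/\mu\NS$ of Step 1 and the corresponding property of $\NS$.

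For the Frobenius, $\varphi_{\DR}$ maps $\varphi^*\NR$ into $\DR \cap \NS[1/[p]_q]$. Twisting by a power of $[p]_q$ reduces to the effective case, where $[p]_q^h \NS \subset \varphi(\varphi^*\NS)$ and $\varphi(\varphi^*\NS) \subset \NS$ after multiplying by $[p]_q^{h}$. This gives $\varphi_N(\varphi^*\NR)[1/[p]_q] \subset \NR[1/[p]_q]$.

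\textbf{Step 3: the main obstacle.} The hardest point is surjectivity of $\varphi_N$ after inverting $[p]_q$, namely that $\NR[1/[p]_q]$ equals the span of $\varphi(\varphi^*\NR)[1/[p]_q]$. This requires knowing that $\varphi^*\NR = \varphi^*\DR \cap \varphi^*\NS$, i.e.\ that pullback along $\varphi$ commutes with the intersection. I would prove it using flatness of $\varphi \colon \AR^+ \to \AR^+$ together with the intersection identity, working on a finite projective lattice as in Step 1; faithful flatness of $\varphi$ on $\AR^+$ holds since $\calR$ is small and $\varphi$ is finite flat on the framing. With that identity in hand, the isomorphism follows from the isomorphisms $\varphi_{\DR}$ and $\varphi_{\NS}$ by intersecting.
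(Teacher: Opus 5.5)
There is a genuine gap in Step 1, and it sits exactly where the content of the theorem lies. The bounds $\mu^kM\subset\NR$ and $\NR\subset\mu^{-k}(\AS^+\otimes_{\AR^+}M)$ do not follow from $\NS$ being finitely generated. Because $\AS=\AS^+[1/\mu]^{\wedge}_p$ is a $p$-adic completion, two lattices in $\DS$ need not be commensurable up to a power of $\mu$.

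For example, take $\DR=\AR$, $\NS=\AS^+$ and $M=\AR^+u$ with $u=\sum_{j\geqslant 0}p^j\mu^{-j}=(1-p/\mu)^{-1}\in\AR^{\times}$. Then $\AR\otimes_{\AR^+}M=\DR$, but $\mu^ku\notin\AS^+$ for every $k$: specialise $\mu\mapsto p$ in $\mu^ku\cdot(\mu-p)=\mu^{k+1}$. Finding an $M$ for which your bounds do hold amounts to already knowing that $\NR$ is a lattice in $\DR$, i.e.\ $\AR\otimes_{\AR^+}\NR\isomorphic\DR$ (Proposition \ref{prop:wach_phigamm_comp}). So the step is circular.

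This lattice property is the real point of the theorem. The axioms alone do not prevent $\NR$ from being too small, even zero. It is also what Corollary \ref{cor:wachmod_base_change} and Section \ref{subsubsec:OA_comp_iso} later use, and your proposal never establishes it. Reducing modulo $p$ does not rescue it. There $\ER=\ER^+[1/\overline{\mu}]$ and commensurability does hold, so you do get finite generation: $\NR/p\hookrightarrow(\DR/p)\cap(\NS/p)\subset\overline{\mu}^{-k}(\ER^+)^n$, and $\NR$ is $p$-adically complete. But $\NR/p$ can be strictly smaller than $(\DR/p)\cap(\NS/p)$, because an element of the latter need not lift to $\DR\cap\NS$.

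The paper obtains the lattice property from the Frobenius structure, as follows.
\begin{itemize}
\item It works with $M_n=\DRn\cap\NLn$, using $\NR=\DR\cap\NL$. Modulo $p^n$ no completion intervenes, so $M_n[1/\mu]=\DRn$.
\item It considers the set $\mathscr{T}$ of $\varphi$-stable submodules of $M_1$ of $[p]_q$-height $s$ that span $\DR/p$ after inverting $\mu$.
\item Finite height forces $\mu^{p(s+1)}M_1\subset M'$ for every $M'\in\mathscr{T}$. Hence $M^{\circ}=\bigcap_{M'\in\mathscr{T}}M'$ lies in $\mathscr{T}$ and in every image $f_n(M_n)$.
\item This gives $M^{\circ}\subset\NR/p$, and therefore $(\NR/p)[1/\mu]=\DR/p$ (Lemmas \ref{lem:Mn_large} and \ref{lem:mcirc_props}).
\end{itemize}

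By contrast, your Step 3 is not where the difficulty lies. Flatness of $\varphi$ on $\AR^+$ gives $\varphi^*(\DR\cap\NS)=\varphi^*\DR\cap\varphi^*\NS$ inside $\varphi^*\DS$. Since $[p]_q\in\AR^{\times}$, the isomorphisms $\varphi^*\DR\isomorphic\DR$ and $(\varphi^*\NS)[1/[p]_q]\isomorphic\NS[1/[p]_q]$ then intersect formally to give $(\varphi^*\NR)[1/[p]_q]\isomorphic\NR[1/[p]_q]$. This is a cleaner route than the paper's finite-height lemma. Your regularity and $\GammaR$-triviality arguments are also fine, once the key identity is stated correctly as $\mu\NS\cap\DR=\mu\NR$.
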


\subsubsection{Log-crystalline representations}

The discussion below applies to the rings $R$ and $S$.
Let $\omega_R^1$ denote the $p\textrm{-adic}$ completion of the module of logarithmic differentials of $R$ relative to $\mathbb{Z}$ (see \cite[Section 1.7]{kato-fontaine-illusie-i}), namely $\omega_R^1 = \oplus_{i=1}^d R \dlog x_i$, where $\dlog x_i = \tfrac{dx_i}{x_i}$.
Using the logarithmic structure on $R$, we define $p\textrm{-adic}$ period rings
\begin{equation*}
	\OBlogcrys(\overline{R}) \subset \OBlogdR(\overline{R})
\end{equation*}
satisfying the following properties.
Both these rings are $R[1/p]\textrm{-algebras}$ equipped with a natural continuous action of $G_R$.
In addition, $\OBlogdR(\overline{R})$ is equipped with a decreasing filtration and a $G_R\textrm{-equivariant}$ integrable logarithmic connection $\partial \colon \OBlogdR(\overline{R}) \rightarrow \OBlogdR(\overline{R})\otimes_{R[1/p]} \omega_R^1$ satisfying Griffiths' transversality.
We equip the ring $\OBlogcrys(\overline{R})$ with an induced filtration, integrable logarithmic connection, and additionally, a Frobenius endomorphism $\varphi$. 
\begin{prop}[{Proposition \ref{prop:GR_regularity}}]\label{intro_prop:GR_regularity}
	The rings $\OBlogdR(\overline{R})$ and  $\OBlogcrys(\overline{R})$ are $G_R\textrm{-regular}$ in the sense of \cite[Section 8.4]{brinon-relatif}.
\end{prop}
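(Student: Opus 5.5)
Recall that $G_R$-regularity in the sense of \cite[Section 8.4]{brinon-relatif} asks three things of a ring $B$ carrying a $G_R$-action: (a) $B$ is a domain, (b) $(\mathrm{Fr}\,B)^{G_R} = B^{G_R}$, and (c) every nonzero $b \in B$ spanning a $G_R$-stable $\mathbb{Q}_p$-line is a unit in $B$. The plan is to reduce all three to the corresponding known facts for the non-logarithmic period rings of $S$, using the compatible embeddings $\overline{R} \subset \overline{S}$ and the continuous homomorphism $G_S \rightarrow G_R$.

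\textbf{Step 1 (comparison).} Construct a natural $G_S$-equivariant injection
\[
\OBlogdR(\overline{R}) \hookrightarrow \OBdR(\overline{S}) \defeq \OBlogdR(\overline{S}),
\]
compatible with filtrations and connections; on $S$ the log-structure is trivial, so the right-hand side is Brinon's ring. I would build this from the explicit description of $\OBlogdR(\overline{R})$: it is the $\ker\theta$-adic completion of $\BdR^+(\overline{R})[[\,\log([x_i^\flat]/x_i)\,]]$-type variables. More precisely, it is a formal power series ring over $\BdR^+(\overline{R})$ in the variables $u_i = \log([x_i^\flat]/x_i)$, or equivalently in $[x_i^\flat]/x_i - 1$. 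On $S$, where the $x_i$ are invertible, these are exactly Brinon's variables. Injectivity then reduces to injectivity of $\BdR^+(\overline{R}) \rightarrow \BdR^+(\overline{S})$. That map is faithfully compatible with $\theta$, since $\widehat{\overline{R}} \rightarrow \widehat{\overline{S}}$ is injective, so its graded pieces inject, and hence it injects after completion. The same argument applied through $\Alogcrys \subset \Acrys(\overline{S})$ handles $\OBlogcrys(\overline{R}) \hookrightarrow \OBcrys(\overline{S})$.

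\textbf{Step 2 (domain).} Part (a) follows immediately, since Brinon proves that $\OBdR(\overline{S})$ and $\OBcrys(\overline{S})$ are domains.

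\textbf{Step 3 (invariants and lines), the main obstacle.} The difficulty is that $G_S \rightarrow G_R$ is not surjective (Example \ref{eg:gabber_example}), so the $G_S$-invariants computed by Brinon need not coincide with the $G_R$-invariants. I would avoid this by working on the image $H$ of $G_S$ in $G_R$ and using a separate computation of invariants.

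For (c): if $b$ spans a $G_R$-stable line, it spans a $G_S$-stable line in $\OBdR(\overline{S})$, so by Brinon $b$ is invertible there. It remains to show $b^{-1}$ actually lies in $\OBlogdR(\overline{R})$. For this I would use the filtration and graded pieces. The graded ring $\mathrm{gr}\,\OBlogdR(\overline{R})$ is a polynomial ring $\widehat{\overline{R}}[1/p][t^{\pm1}, u_1, \ldots, u_d]$-like object. A $G_R$-eigenvector there must, by the Hodge--Tate computation of $G_R$-invariants (Sen/Tate-style, using the action of $\Gamma_R$ on the $u_i$), be of the form $\alpha t^k$ with $\alpha$ a unit. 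I would reduce to this eigenvector case by first twisting by a character and then invoking Theorem \ref{intro_thm:GR_rep_GR0m} to make the situation tame.

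Invertibility of the leading term then gives invertibility of $b$ in the complete filtered ring. The same argument applied to $b = y/z$ with $y, z \in B$ gives (b): one writes $z$ times a $G_R$-invariant and uses the line argument on $z$ after passing to the $\mathbb{Q}_p$-span of the orbit. Finally, for $\OBlogcrys$, I would deduce regularity from $\OBlogcrys \subset \OBlogdR$ exactly as Brinon does. Invariants of the fraction field sit inside those of $\mathrm{Fr}\,\OBlogdR$, which are $R[1/p]$, and $R[1/p] \subset \OBlogcrys$. Units in $\OBlogdR$ spanning lines in $\OBlogcrys$ are inverted in $\OBlogcrys$ via the Frobenius-eigen argument ($b = \lambda\, t^k$ up to unit).
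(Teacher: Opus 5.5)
\textbf{There is a genuine gap: your proposal verifies the wrong notion of regularity, and that notion is in fact false here.}

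Your conditions (a)--(c) are Fontaine's classical definition. The notion used in the paper (Definition~\ref{defi:GR_regular}, after Brinon's D\'efinition 8.0.1) is the relative variant:
\begin{enumerate}
\item[(1)] $B^{G_R}$ is noetherian;
\item[(2)] $B$ is faithfully flat over $B^{G_R}$;
\item[(3)] $\alpha_B(V)$ is injective for every $V$;
\item[(4)] the dual of a one-dimensional $B$-admissible representation is $B$-admissible.
\end{enumerate}
The classical conditions cannot hold here. Since $B^{G_R}=R[1/p]$, your condition (c) would make every nonzero $f\in R[1/p]$ a unit of $B$, because $f$ spans a trivial, hence $G_R$-stable, line. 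But $R[1/p]$ is not a field for $d\geqslant 1$. Moreover $R[1/p]\to B$ is faithfully flat, so a nonunit $f$ of $R[1/p]$ stays a nonunit in $B$: otherwise $B/fB=0$ would force $R[1/p]/f=0$.

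Integrality is also not available. $\widehat{\overline{R}}$ and $\widehat{\overline{S}}$ are only known to be reduced, which is exactly why the relative definition avoids it. So your Step~2 relies on something not proven, and Step~3 aims at a false statement. Meanwhile the conditions that actually need proof, namely faithful flatness and injectivity of $\alpha_B(V)$, are never addressed.

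\textbf{Step~1 also breaks down.} To get the injection into $\OBdR(\overline{S})$ you need $\widehat{\overline{R}}[1/p]\to\widehat{\overline{S}}[1/p]$ to be injective (through the graded pieces). Proposition~\ref{prop:Rinftym_in_Sinftym} only gives injectivity for $\widehat{R}_{\infty,m}$ and $\widehat{R}_{\infty,\infty}$. In Example~\ref{eg:gabber_example} the map $\overline{R}/p\to\overline{S}/p$ is not injective, and the paper explicitly warns that $A_{\inf}(\overline{R})\to A_{\inf}(\overline{S})$ may fail to be injective. Moreover, any comparison through $\overline{S}$ is only $G_S$-equivariant, and $G_S\to G_R$ is not surjective, so it cannot control $G_R$-invariants.

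\textbf{What the paper does instead} avoids $\overline{S}$ entirely:
\begin{itemize}
\item For injectivity of $\alpha_B(V)$ (Lemma~\ref{lem:alpha_injective}), it uses the $G_R$-equivariant embedding $L\otimes_{R[1/p]}B\hookrightarrow\prod_{\mathfrak{p}\in\mathscr{P}(\overline{R})}\OBdR(\mathbb{C}^+(\mathfrak{p}))$, which sees all height-one primes above $p$. Combined with transitivity of $G_R$ on $\mathscr{P}(\overline{R})$, this embeds $D_B(V)$ into $\ODdRL(V)$ and reduces the question to the imperfect-residue-field case.
\item Faithful flatness comes from Andreatta--Iovita and Proposition~\ref{prop:OBlogcrys_ff}.
\item The dual condition follows from finite projectivity of $D_B(V)$ via Hom--tensor adjunction.
\end{itemize}
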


The preceding result allows us to apply Fontaine's formalism of $B\textrm{-admissible}$ representations for $B = \OBlogcrys(\overline{R})$ and $\OBlogdR(\overline{R})$.
For any $p\textrm{-adic}$ Galois representation $V$ of $G_R$, we define  $R[1/p]\textrm{-modules}$ 
\begin{equation*}
	\begin{aligned}
		\ODlogdR (V) &\coloneq \big(\OBlogdR (\overline{R}) \otimes_{\mathbb{Q}_p} V\big)^{G_R},\\
		\ODlogcrys(V) &\coloneq \big(\OBlogcrys (\overline{R}) \otimes_{\mathbb{Q}_p} V\big)^{G_R}.
	\end{aligned}
\end{equation*}
Note that $\ODlogdR(V)$ is equipped with a filtration and an integrable logarithmic connection, and $\ODlogcrys(V)$ is equipped with a Frobenius operator $\varphi$, a filtration and an integrable logarithmic connection.

\begin{defi}\label{intro_defi:logcrys_rep}
	A $p\textrm{-adic}$ representation $V$ of $G_R$ is said to be \textit{log-de Rham} (resp.\ \textit{log-crystalline}) if the natural homomorphism
	\begin{equation*}
		\begin{aligned}
			\alpha_{\textrm{log-dR},R}(V) &\colon \OBlogdR (\overline{R}) \otimes_{R[1/p]} \ODlogdR (V) \longrightarrow \OBlogdR (\overline{R}) \otimes_{\mathbb{Q}_p} V\\
			(\textrm{resp. } \alpha_{\textrm{log-crys},R}(V) &\colon \OBlogcrys (\overline{R}) \otimes_{R[1/p]} \ODlogcrys (V) \longrightarrow \OBlogcrys(\overline{R}) \otimes_{\mathbb{Q}_p} V),
		\end{aligned}
	\end{equation*}
	is an isomorphism.
\end{defi}

We remark that for any log-de Rham representation $V$, the $R[1/p]$-module $\ODlogdR(V)$ is finite projective.
Additionally, if $V$ is log-crystalline then it is log-de Rham, we have $\ODlogcrys(V) \isomorphic \ODlogdR(V)$, the natural Frobenius map $\varphi^*\ODlogcrys(V) \rightarrow \ODlogcrys(V)$ is bijective, and the isomorphism $\alpha_{\textrm{log-cris},R}(V)$ is compatible with the respective Frobenii, filtrations and $G_R\textrm{-actions}$.

\begin{prop}[{Corollary \ref{cor:crys_base_change}}]\label{intro_prop:crys_base_change}
	Let $V$ be a log-crystalline (resp.\ de Rham) representation of $G_R$.
	Then, $V$ is a crystalline (resp.\ de Rham) representation of $G_S$, and we have the following natural isomorphism of $S[1/p]\textrm{-modules}$
	\begin{equation*}
		\begin{aligned}
			S[1/p] \otimes_{R[1/p]} \ODlogcrys(V) &\isomorphic \ODcrysS(V) \\
			(\textrm{resp. } S[1/p] \otimes_{R[1/p]} \ODlogdR(V) &\isomorphic \ODdRS(V)),
		\end{aligned}
	\end{equation*}
	compatible with the aforementioned supplementary structures.
\end{prop}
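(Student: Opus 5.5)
The plan is to compare the period rings for $R$ and $S$ directly and then pass to invariants. First I will build a natural $G_S$-equivariant map of period rings
\begin{equation*}
	\OBlogdR(\overline{R}) \longrightarrow \OBdR(\overline{S}), \qquad \OBlogcrys(\overline{R}) \longrightarrow \OBcrys(\overline{S}).
\end{equation*}
Here $G_S$ acts on the source through $G_S \rightarrow G_R$, and the inclusion $\overline{R} \subset \overline{S}$ is induced by the chosen compatible geometric points. The maps should be $R[1/p]$-linear and compatible with filtrations, Frobenius and connections. On the side of $S$ the connection uses $\omega_S^1 = S \otimes_R \omega_R^1$; this identification holds because $x_{a+1},\ldots,x_d$ are units in $S$, so $\dlog x_i$ and $dx_i$ generate the same module. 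At the level of generators, the log period rings are obtained by adjoining (PD-)completed variables of the form $\log([x_i^\flat]/x_i)$, or $u_i = x_i/[x_i^\flat] - 1$ with suitable divided powers, for all $i$. Since $x_i$ becomes invertible in $S$, these generators map to the usual generators of Brinon's rings $\OBdR(\overline{S})$ and $\OBcrys(\overline{S})$. This gives the comparison maps.

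Next I will take a log-crystalline (resp.\ log-de Rham) $V$ and base change the isomorphism $\alpha_{\textrm{log-cris},R}(V)$ along the map above. Since $\ODlogcrys(V)$ is finite projective over $R[1/p]$, this yields an isomorphism
\begin{equation*}
	\OBcrys(\overline{S}) \otimes_{R[1/p]} \ODlogcrys(V) \isomorphic \OBcrys(\overline{S}) \otimes_{\mathbb{Q}_p} V,
\end{equation*}
which is $G_S$-equivariant and compatible with all the structures. Writing $M = S[1/p] \otimes_{R[1/p]} \ODlogcrys(V)$, the left side equals $\OBcrys(\overline{S}) \otimes_{S[1/p]} M$. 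Taking $G_S$-invariants, and using $\OBcrys(\overline{S})^{G_S} = S[1/p]$ (Brinon) together with projectivity of $M$, gives $M \isomorphic \ODcrysS(V)$. In the same way $\alpha_{\textrm{cris},S}(V)$ is identified with the base-changed isomorphism, so $V$ is crystalline for $G_S$. The de Rham case is identical with $\OBdR$. Compatibility of the isomorphism with $\varphi$, the filtration and the connection follows from the compatibility of the ring map.

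I expect the main obstacle to be the first step: checking that the comparison map is well defined and filtration-compatible. The issue is the completions. $\OBlogdR(\overline{R})$ is a completion along a kernel $\ker\theta$ that includes the log-variables, and one must check that the image of this kernel lands in $\ker\theta_S$, so that the map extends to the completions. For the crystalline ring the analogous issue is the logarithmic PD-envelope and its $p$-adic completion. To handle this I will use the explicit presentations from the construction, namely power series or PD-polynomial rings in the $u_i$ over $\BdR^+$ or $\Acrys$. I will verify on generators that $\theta$ is preserved, and I will check the filtration degreewise using the explicit description of $\Fil^r$ in terms of $t$ and the $u_i$. I expect no genuine obstruction here, since $u_i$ for $i>a$ is sent to the corresponding Brinon variable.
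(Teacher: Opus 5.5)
Your strategy matches the paper's: base change $\alpha_{\textrm{log-cris},R}(V)$ (resp.\ $\alpha_{\textrm{log-dR},R}(V)$) along the natural $G_S$-equivariant map to $\OBcrys(\overline{S})$ (resp.\ $\OBdR(\overline{S})$), then take $G_S$-invariants using $\OBcrys(\overline{S})^{G_S}=S[1/p]$ and projectivity. This correctly gives three things:
\begin{itemize}
\item the isomorphism of $S[1/p]$-modules;
\item its compatibility with $\varphi$ and the connections;
\item the fact that $V$ is crystalline (resp.\ de Rham) for $G_S$.
\end{itemize}
The comparison maps, which you single out as the main obstacle, are essentially formal. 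They come from functoriality of the completed (log-)PD envelopes, since the log structure becomes trivial over $S$, and the paper simply takes them from the construction.

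The genuine gap is the filtration. You write that compatibility with the filtration ``follows from the compatibility of the ring map.'' That only shows that $S[1/p]\otimes_{R[1/p]}\Fil^k\ODlogdR(V)$ maps \emph{into} $\Fil^k\ODdRS(V)=(\Fil^k\OBdR(\overline{S})\otimes_{\mathbb{Q}_p}V)^{G_S}$. It does not show that it maps \emph{onto} it, i.e.\ that the inverse is also filtered and the map is an isomorphism of filtered modules. That surjectivity is not formal. Identifying $(\Fil^k(\OBdR(\overline{S})\otimes_{S[1/p]}M))^{G_S}$ with $S[1/p]\otimes\Fil^k\ODlogdR(V)$, where $M=S[1/p]\otimes_{R[1/p]}\ODlogdR(V)$, requires knowing the $G_S$-invariants of the graded pieces of $\OBdR(\overline{S})$.

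The paper closes this step by passing to associated gradeds:
\begin{itemize}
\item Since $\alpha_{\textrm{dR},S}(V)$ is a strict filtered isomorphism (inherited from $\alpha_{\textrm{log-dR},R}(V)$), it induces a graded isomorphism $\OBHT(\overline{S})\otimes_{R[1/p]}\ODlogHT(V)\isomorphic\OBHT(\overline{S})\otimes_{\mathbb{Q}_p}V$, where $\OBHT(\overline{S})=\mathrm{gr}^{\bullet}\OBdR(\overline{S})$.
\item Taking $G_S$-invariants, using the known invariants of $\OBHT(\overline{S})$, gives $S[1/p]\otimes_{R[1/p]}\ODlogHT(V)\isomorphic\ODHTS(V)$.
\item Both filtrations are exhaustive and separated, so a filtered map inducing an isomorphism on gradeds is a filtered isomorphism.
\end{itemize}
The filtration statement in the crystalline case is then inherited from the de Rham case. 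You need to add this step (or an equivalent argument).
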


In the main text, we construct and study an additional subring $\OBlog(\overline{R})$ of $\OBlogcrys (\overline{R})$, and define the notion of a strictly log-crystalline representation using this ring.
We expect to investigate these representations in more detail in another work. 

\subsubsection{Log-crystalline representations and Wach modules}

In \cite[Theorem 1.5]{abhinandan-relative-wach-ii}, Wach modules over $A_S^+$ were shown to be naturally equaivalent to $\mathbb{Z}_p\textrm{-lattices}$ inside crystalline representations of $G_S$.
Over $R$, we obtain the following result:

\begin{thm}[{Theorem \ref{thm:Delta_descent_wach_mods}}]\label{intro_thm:Delta_descent_wach_mods}
	Let $T$ be a finite free $\mathbb{Z}_p\textrm{-representation}$ of $G_R$ such that $T[1/p]$ is log-crystalline.
	Then, there exists a Wach module $N_R(T)$ over $A_R^+$, functorially associated to $T$.
\end{thm}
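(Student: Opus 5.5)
We build $N_R(T)$ at level $\calR = R_{0,m}$ and then descend along $\Delta$. By Theorem \ref{intro_thm:GR_rep_GR0m}, choose $m$ coprime to $p$ (depending on $T/pT$) such that the $G_R$-action on $T$ factors through $G_{\calR/R}^{\etale}$. Theorem \ref{intro_thm:padic_classify_decomp} then gives the finite projective étale $(\varphi, \GammaR\times\Delta)$-module $\DR(T)$ over $\AR$. Set $\DS(T) \coloneq \AS\otimes_{\AR}\DR(T)$.

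Since $T[1/p]$ is log-crystalline, Proposition \ref{intro_prop:crys_base_change} shows that $T[1/p]$ is crystalline as a representation of $G_S$. We then apply the $G_S$-version of the results of \cite[Theorem 1.5]{abhinandan-relative-wach-ii}, at level $\calS = S_{0,m}$ (the arguments there apply over the étale algebra $\calS$ over the torus), to obtain a Wach module $\NS \subset \DS(T)$ over $\AS^+$. Because $\Delta$ acts on $\DS(T)$ commuting with $(\varphi,\GammaS)$, and $\NS$ is uniquely characterised inside $\DS(T)$ (as in the proof of uniqueness in loc.\ cit., e.g.\ via the comparison with $\ODcrysS$), the module $\NS$ is $\Delta$-stable. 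It is therefore a Wach module with $\Delta$-action. Theorem \ref{intro_thm:wachmod_existence} now shows that
\[
\NR \coloneq \DR(T)\cap \NS
\]
is a Wach module with $\Delta$-action over $\AR^+$.

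The descent step sets
\[
N_R(T) \coloneq \NR^{\Delta}.
\]
The extension $\calR[1/p]/R[1/p]$ is finite étale Galois with group $\Delta$ of order prime to $p$, and $\Delta$ fixes $q$. Hence $\AR^+ = A_R^+\otimes_R \calR$ is finite étale Galois over $A_R^+ = R\llbracket q-1\rrbracket$ with group $\Delta$, after inverting $p$. The averaging idempotent $|\Delta|^{-1}\sum_{\delta\in\Delta}\delta$ makes sense integrally and commutes with $\varphi$, $\GammaR$ and multiplication by $p$ and $\mu$. From this we deduce the following:
\begin{enumerate}
\item $N_R(T)$ is a direct summand of $\NR$ as an $A_R^+$-module, so it is finitely generated and $\{p,\mu\}$, $\{\mu,p\}$ remain regular on it.
\item The triviality of $\Gamma$ on $N/\mu N$ descends, because $(N/\mu N)^\Delta = N^\Delta/\mu N^\Delta$ by exactness of invariants.
\item The Frobenius isomorphism descends, since $\varphi^*$ commutes with taking $\Delta$-invariants. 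This uses $(\varphi^*\NR)^\Delta = \varphi^* N_R(T)$, which follows from Galois descent once $\AR^+\otimes_{A_R^+} N_R(T)\cong \NR$ is known.
\end{enumerate}

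The main obstacle is precisely this descent isomorphism $\AR^+\otimes_{A_R^+}\NR^\Delta \isomorphic \NR$. Over $\AR^+$ the ring extension is ramified along $x_{a+1}\cdots x_d=0$: it is only tamely ramified Kummer, not étale, so Galois descent does not apply directly. If the isomorphism fails, the fallback is to take $N_R(T)\coloneq\NR^\Delta$ anyway and verify the Wach axioms directly, as in the list above. These axioms only require exactness of $\Delta$-invariants, which holds since $|\Delta|$ is invertible. For the Frobenius isomorphism, one uses that $\varphi$ is $\Delta$-equivariant and that $\varphi^*$ commutes with the projector. In that case one checks the isomorphism $\varphi_N$ after inverting $[p]_q$ by applying the projector to both sides of $\varphi_{\NR}$.

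Functoriality of the construction is then immediate. A morphism $T\to T'$ induces maps on the $\DR$, on the $\NS$ (by uniqueness of Wach modules inside $\DS$), hence on the intersections $\NR$ and on their $\Delta$-invariants. One also checks that the construction is independent of $m$: replacing $m$ by a multiple $m'$ gives the same invariants, using $\DR$ compatibility under $\calR\subset R_{0,m'}$.
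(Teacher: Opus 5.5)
Your construction of $\NR=\DR(T)\cap\NS$ is fine. The paper builds $\NS=\AS^+\otimes_{A_S^+}N_S(T)$ at level $S$, which gives the $\Delta$-action and its descent over $\AS^+$ for free, but your uniqueness argument at level $\calS$ also works. The gap is the descent step, exactly where you locate it, and your fallback does not close it. Taking $\Delta$-invariants of $\varphi_{\NR}$ only identifies $(\varphi^*\NR)^{\Delta}[1/[p]_q]$ with $\NR^{\Delta}[1/[p]_q]$. In general $(\varphi^*\NR)^{\Delta}$ is strictly larger than $\varphi^*(\NR^{\Delta})=A_R^+\otimes_{\varphi,A_R^+}\NR^{\Delta}$. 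The two would agree if the relative Frobenius $A_R^+\otimes_{\varphi,A_R^+}\AR^+\to\AR^+$ were an isomorphism, but along the Kummer directions $[x_i^{\flat}]^{1/m}$, $i>a$, it is not surjective.

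Your argument uses log-crystallinity only to get crystallinity over $G_S$, so it cannot be correct as it stands. Take the paper's second example: $R=\mathbb{Z}_p\langle x\rangle$, $p\geqslant 3$, $T=\mathbb{Z}_p(\eta)$ with $\eta$ the Kummer character of $x^{1/(p-1)}$, and $m=p-1$. This $T$ is crystalline for $G_S$ but not log-crystalline. Everything before your descent step goes through, with $\NR=\AR^+e$, $\varphi(e)=e$, and $\Delta$ acting on $e$ through $\eta$. Then $\NR^{\Delta}=A_R^+f$ with $f=[x^{\flat}]^{(p-2)/(p-1)}e$, so $\varphi(f)=[x^{\flat}]^{p-2}f$. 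This does not become invertible after inverting $[p]_q$, as one sees by reducing modulo $[x^{\flat}]$. So regularity and triviality of $\GammaR$ modulo $\mu$ descend, but the Frobenius axiom fails. Moreover, $\AR^+\otimes_{A_R^+}\NR^{\Delta}\to\NR$ has nonzero, $p$-torsion free cokernel $\AR^+/[x^{\flat}]^{(p-2)/(p-1)}\AR^+$. The descent isomorphism is therefore the real content, it must use log-crystallinity, and the Frobenius structure is then descended from it.

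The paper proves that $f_{\calR}^+\colon\AR^+\otimes_{A_R^+}\NR^{\Delta}\to\NR$ is bijective in two steps.
\begin{enumerate}
\item Since $[x_i^{\flat}]$ is regular on $\NR/p$ (via $\NR/p\hookrightarrow\NS/p$), Proposition \ref{prop:descent_Delta} shows that $f_{\calR}^+$ is injective with cokernel either zero or $p$-torsion free. So it suffices to show that $f_{\calR}^+[1/p]$ is bijective.
\item Proposition \ref{prop:OA_comp_iso} and Corollary \ref{cor:OA_comp_iso_Delta} identify both $\OARpi^{\logPD}\otimes_{\AR^+}\NR[1/p]$ and $\OARpi^{\logPD}\otimes_{A_R^+}\NR^{\Delta}[1/p]$ with $\OARpi^{\logPD}\otimes_R\ODlogcrys(V)$. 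These come from intersecting the $\OBlogcrys(R_{\infty,\infty})$-comparison with the crystalline comparison over $\OASpi^{\PD}$, via Lemma \ref{lem:OASpiPD_OAlogcrys_intersect}. This is exactly where one uses that the crystal is defined over $R[1/p]$, not merely over $\calR[1/p]$. Hence $f_{\calR}^+$ becomes bijective over $\OARpi^{\logPD}$. It is also bijective over $\AS^+$ by \'etale descent (Lemma \ref{lem:NS_descent}). Finally, $\OARpi^{\logPD}\cap\AS^+=\AR^+$ (Lemma \ref{lem:OARpilogPD_AS+_intersect}) gives bijectivity of $f_{\calR}^+[1/p]$.
\end{enumerate}

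Two of your auxiliary claims are also false as stated. First, $\calR[1/p]$ is not \'etale over $R[1/p]$, because the roots $x_i^{1/m}$ with $i>a$ ramify along the divisor. Second, $|\Delta|$ need not be prime to $p$, since $[F(\zeta_m):F]$ can be divisible by $p$ (e.g.\ $p=3$, $m=7$, $F=\mathbb{Q}_3$). So your averaging idempotent is not integral in general. The paper averages only over the Kummer part $J$ and handles $\mathrm{Gal}(F(\zeta_m)/F)$ by unramified Galois descent (Lemma \ref{lem:descent_G}).
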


To construct the Wach module $N_R(T)$ over $A_R^+$, we first use Theorem \ref{intro_thm:GR_rep_GR0m} to find some positive integer $m$ coprime to $p$ such that the action of $G_R$ on $T$ factors through $G_{\calR/R}^{\etale}$ for $\calR = R_{0,m}$.
Then, Theorem \ref{intro_thm:padic_classify_decomp} supplies us with an \'etale $(\varphi, \GammaR \times \Delta)\textrm{-module}$ $\DR(T)$ over $\AR$.
Additionally, from Proposition \ref{intro_prop:crys_base_change} we know that $T[1/p]$ is crystalline as a representation of $G_S$, so \cite[Theorem 1.5]{abhinandan-relative-wach-ii} implies the existence of the Wach module $N_S(T)$ over $A_S^+$ thus yielding $\NS(T) \defeq \AS^+ \otimes_{A_S^+} N_S(T)$ as the Wach module with $\Delta\textrm{-action}$ over $\AS^+$, and we show that it is compatible with $\DR(T)$ over $\AR$ (see Section \ref{subsubsec:OA_comp_iso}).
This allows us to ``glue'' these objects and obtain a Wach module with $\Delta\textrm{-action}$ $\NR(T)$ over $\AR^+$ by Theorem \ref{intro_thm:wachmod_existence}.
Finally, using properties of Wach modules we obtain comparison isomorphisms (see Proposition \ref{prop:OA_comp_iso} and Corollaries \ref{cor:NR_modmu} and \ref{cor:OA_comp_iso_Delta}), which allow us to show that $\NR(T)$ descends with respect to the ring extension $ A_R^+\rightarrow \AR^+$, i.e.\ for $N_R(T) \defeq \NR(T)^{\Delta}$ the natural map $\AR^+ \otimes_{A_R^+} N_R(T) \rightarrow \NR(T)$ is an isomorphism.
Hence, $N_R(T)$ is the desired Wach module over $A_R^+$ associated to $T$.

Conversely, by employing some techniques similar to \cite[Section 3.6]{abhinandan-relative-wach-ii}, we show that for a Wach module $N$ over $A_R^+$ and the associated representation $T_R(N)$ of $G_R$, we have that $T_R(N)[1/p]$ is log-crystalline (see Proposition \ref{prop:wach_is_logcrys}).
Hence, we obtain the following:
\begin{thm}[{Theorem \ref{thm:logcrys_wach_equiv}}]\label{intro_thm:logcrys_wach_equiv}
	The correspondence $T \mapsto N_R(T)$ induces a natural equivalence of categories:
	\begin{equation*}
		\textup{Rep}_{\mathbb{Z}_p}^{\textup{log-cris}}(G_R) \isomorphic (\varphi, \Gamma_R)\textup{-Mod}_{A_R^+}^{[p]_q}.
	\end{equation*}
\end{thm}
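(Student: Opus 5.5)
To prove Theorem \ref{intro_thm:logcrys_wach_equiv}, I will show that the functor $T \mapsto N_R(T)$ of Theorem \ref{intro_thm:Delta_descent_wach_mods} is well defined, fully faithful, and essentially surjective. The natural quasi-inverse candidate is $N \mapsto T_R(N)$. Here $T_R(N)$ is defined by applying the functor $T_{\calR}$ to $\AR^+ \otimes_{A_R^+} N$, for any $m$; its $\Delta$-action comes from the action on $\calR$. Equivalently, it is the $\varphi$-fixed part of $\tilde{A}(\overline{R}) \otimes_{A_R^+} N$ after inverting $\mu$.

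\textbf{Essential surjectivity.} Given a Wach module $N$ over $A_R^+$, Proposition \ref{prop:wach_is_logcrys} shows that $T_R(N)[1/p]$ is log-crystalline. I will then check that $N_R(T_R(N)) \cong N$, working inside $\DR(T_R(N)) = \AR \otimes_{A_R^+} N$.

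First, after base change to $\AS^+$, the module $\AS^+ \otimes_{A_R^+} N$ is a Wach module with $\Delta$-action over $\AS^+$ attached to the same $G_S$-representation. By the uniqueness of Wach modules over $A_S^+$ (part of \cite[Theorem 1.5]{abhinandan-relative-wach-ii}, applied after taking $\Delta$-invariants), it coincides with $\NS(T)$.

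Second, I need the intersection identity
$$\DR \cap \NS = \AR^+ \otimes_{A_R^+} N.$$
This reduces to the ring-level statement $\AR \cap \AS^+ = \AR^+$, applied to a basis or to a finite projective presentation. The ring statement holds because an element of $\calR\llbracket\mu\rrbracket[1/\mu]^\wedge$ lying in $\calS\llbracket \mu\rrbracket$ has no negative $\mu$-powers. To pass to modules I will use regularity of $\{p,\mu\}$ and the finite projectivity of $N$ over $A_R^+[1/p]$ and away from $\mu$.

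Third, taking $\Delta$-invariants recovers $N$, since $\Delta$ acts only on $\calR$ and $\calR^\Delta = R$.

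\textbf{Full faithfulness.} The functor $T_R$ is fully faithful: it is the composite of the fully faithful functor $T_{\calR}$ (stated before Proposition \ref{intro_prop:wachmod_comp_relative}) with base change, and morphisms descend along $\Delta$ by taking invariants. For the forward direction I will apply the comparison of Proposition \ref{intro_prop:wachmod_comp_relative} to $N_R(T)$. This identifies $T_R(N_R(T))$ with $T$ functorially, so $N_R$ and $T_R$ are mutually quasi-inverse. Functoriality of $N_R(T)$ in $T$ follows from its construction as an intersection followed by $\Delta$-invariants, both of which are functorial. Independence of the choice of $m$ follows by comparing $m$ with $mm'$: the intersection is compatible with the finite étale extension $\calR \to R_{0,mm'}$, and taking invariants under the larger $\Delta$ gives the same result.

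\textbf{Main obstacle.} I expect the hardest step to be the intersection identity in the essential surjectivity argument, that is, showing that the Wach module over $\AR^+$ is determined by its two "halves" over $\AR$ and $\AS^+$. Non-free finitely generated modules make this delicate. My first attempt will be to work modulo $p^n$ and use the $\mu$-adic filtration, exploiting that $\calR\llbracket\mu\rrbracket \to \calS\llbracket\mu\rrbracket$ is injective with torsion-free cokernel modulo $p$. This should be the same mechanism that already underlies Theorem \ref{intro_thm:wachmod_existence}.
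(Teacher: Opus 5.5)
Your proposal is correct and follows essentially the same route as the paper: $N_R$ is well defined by Theorem \ref{thm:Delta_descent_wach_mods}, $T_R(N)[1/p]$ is log-crystalline by Proposition \ref{prop:wach_is_logcrys}, and the two composites are identified with the identity using the comparison isomorphism of Proposition \ref{prop:wachmod_comp_relative} together with full faithfulness. For $T_R(N_R(T))\cong T$, the real input is $\AR\otimes_{A_R^+}N_R(T)\cong\DR(T)$, which comes from the bijectivity of $f_{\calR}^+$ in \eqref{eq:fR+} and Proposition \ref{prop:wach_phigamm_comp}; also, the intersection identity you flag as the main obstacle is already Lemma \ref{lem:wach_intersection_lemma} applied to the Wach module with $\Delta$-action $\AR^+\otimes_{A_R^+}N$, and it is proved by exactly the mechanism you sketch (regularity of $\{p,\mu\}$, projectivity of $N[1/p]$, and $\BR^+=\BS^+\cap\BR$).
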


\subsection{Relation to previous works}

As mentioned previously, the categorical equivalence in Theorem \ref{intro_thm:padic_classify_decomp} generalises the main result of \cite{andreatta-phigamma}, and the categorical equivalence in Theorem \ref{intro_thm:logcrys_wach_equiv} generalises the main result of \cite{abhinandan-relative-wach-ii}.
Our notion of log-de Rham representations is similar to \cite[Definition 3.20]{andreatta-iovita-semistable} (also see \cite[Section 3]{dllz}), however, the definition of log-crystalline representations is different from the notion of semistable representations in \cite[Section 3.6]{andreatta-iovita-semistable}.

In a somewhat related but different direction, \cite[Definition 3.26]{dlms2} defines semistable $\mathbb{Z}_p\textrm{-local}$ systems over the smooth generic fibre of a semistable $p\textrm{-adic}$ formal scheme using (log-) prismatic techniques developed in \cite{koshikawa-logprism, koshikawa-yao}.
The definition of \cite{dlms2} is different from ours because they work in a different setting, namely, they consider formal schemes equipped with a ``vertical'' log-structure, i.e.\ coming from the special fibre, and whose (adic) generic fibre is a smooth adic space.
Furthermore, following the definition of crystalline local systems from \cite{guo-reinecke, dlms1} and generalising \cite[Definition 3.26]{dlms2} (also see \cite[Corollary 3.46]{dlms2}), note that \cite[Definition 4.18]{inoue} provides a definition of semistable local systems.
By adapting the definition of loc.\ cit.\ to the current setting, one should be able to recover log-crystalline representations of Definition \ref{intro_defi:logcrys_rep}.
However, note that the relationship between semistable representations of \cite{inoue} and the natural objects in the prismatic world, i.e.\ analytic prismatic $F\textrm{-crystals}$, has not been explored in that paper (see \cite[Remark 1.6]{inoue}).

Our current paper \textit{exactly} addresses the question mentioned in the preceding paragraph, for the affinoid algebra $R[1/p]$ equipped with a ``horizontal'' log-structure, from the perspective of $(\varphi, \Gamma)\textrm{-modules}$.
We again emphasise that the categorical equivalence in Theorem \ref{intro_thm:logcrys_wach_equiv} is a \textit{completely novel} result in this setting, in particular, the Breuil--Kisin and prismatic variations are not yet known.
In view of the relationship between Wach modules and analytic prismatic $F\textrm{-crystals}$ in the trivial log-structure case (see \cite{abhinandan-prismatic-wach}), it is natural to ask if the results of loc.\ cit.\ extend to our current setting.
This line of thought will be explored in a subsequent work.

\vspace{2mm}
\noindent \textbf{Outline of the paper.} This paper is divided into six sections and an appendix section.
Section \ref{sec:prelims} collects all the fundamental definitions and results we need to establish, in order to carry out the constructions of this paper.
In Section \ref{sec:phiGamma_modules}, we construct \'etale $(\varphi, \Gamma)\textrm{-modules}$ associated to $p\textrm{-adic}$ representations of the logarithmic fundamental group.
Therein, we first define perfect period rings and then apply decompletion techniques of Kedlaya--Liu \cite{kedlaya-liu2} to construct imperfect period rings (see Section \ref{subsec:period_rings}).
Subsequently, we construct the promised categorical equivalence of Theorem \ref{intro_thm:padic_classify_decomp}, and obtain the consequence for $p\textrm{-adic}$ representations mentioned in Theorem \ref{intro_thm:GR_rep_GR0m} (see Section \ref{subsec:padic_reps_phiGamma}).

In Section \ref{sec:wachmods}, we define and study some properties of Wach modules in order to state and prove Proposition \ref{intro_prop:wachmod_comp_relative}, and carry out the glueing construction of Theorem \ref{intro_thm:wachmod_existence}.
Section \ref{subsec:crystalline_rings} defines several period rings of de Rham/crystalline nature, in order to define and study the properties of log-de Rham and log-crystalline reprentations.
This helps us in establishing Propositions \ref{intro_prop:GR_regularity} and \ref{intro_prop:crys_base_change}.
Finally, in Section \ref{sec:logcris_wachmod} we prove our  main result Theorem \ref{intro_thm:Delta_descent_wach_mods}, i.e.\ construct a Wach module associated to a log-crystalline representation, and establish the categorical equivalence of Theorem \ref{intro_thm:logcrys_wach_equiv}.

In the Appendix \ref{sec:appendix}, Section \ref{subsec:commutative_algebra} collects some facts on commutative algebra and Galois descent, and Section \ref{subsec:Kedlaya_Liu_ii} recalls the decompletion results of Kedlaya--Liu, applies it to our setting and thus obtains some consequences for Galois groups.

\vspace{2mm}
\noindent \textbf{Acknowledgements.} Work on this collaboration started when the two authors visited CIRM, Luminy in October 2024 for the annual meeting of the Agence National de Recherche (ANR) project GALF: ANR-18-CE40-0029.
We would like to thank CIRM for their wonderful hospitality and great working environment, and ANR for the support.
We would also like to thank Qing Liu and Lorenzo Ramero for relevant discussions, and many thanks to Ofer Gabber for communicating Example \ref{eg:gabber_example} which clarified some technical issues in a previous version of the paper.

The work of the first named author was supported by a Simons Collaboration grant on Perfection in Algebra, Geometry and Topology.
The work of the second named author (D.B.) was supported by the ANR grant ANR-25-CE40-4664 in the framework of the project ``Practical $p\textrm{-adic}$ Langlands''.

\section{Preliminaries}\label{sec:prelims}

\subsection{Affinoid algebras and relative cyclotomic towers}\label{subsec:setup_notation}

In this section, we will introduce our base rings and several extensions of those rings, relative to which we shall study various period rings in later sections.

\subsubsection{Fundamental groups}\label{subsubsec:fundamental_groups}

Let $\kappa$ be a perfect field of characteristic $p$.
Set $O_F \defeq W(\kappa)$ to be the ring of $p\textrm{-typical}$ Witt vectors with coefficients in $\kappa$.
Then, $O_F$ is a complete discrete valuation ring with uniformiser $p$ and we set $F \defeq O_F[1/p]$ to be the field of fractions of $O_F$.

\begin{nota}
	Let $\Lambda$ be an $I\textrm{-adically}$ complete algebra for a finitely generated ideal $I \subset \Lambda$, and let $x_1,\ldots,x_d$ denote some indeterminate.
	For any $\mathbf{k} = (k_1,\ldots,k_d)\in \mathbb{N}^d$, set $x^k \defeq x_1^{k_1}\ldots x_d^{k_d}.$
	Then, we let $\Lambda\langle x_1,\ldots, x_d\rangle$ denote the Tate algebra over $\Lambda$, namely
	\begin{equation*}
		\Lambda\langle x_1,\ldots,x_d\rangle \defeq \big\{\textstyle\sum_{\mathbf{k} \in \mathbb{N}} a_{\mathbf{k}}x^{\mathbf{k}}, \textrm{ where } a_{\mathbf{k}} \in \Lambda \textrm{ and } I\textrm{-adically } a_{k} \rightarrow 0 \textrm{ as } |\mathbf{k}| \rightarrow +\infty\big\}.
	\end{equation*}
\end{nota}

Let us fix integers $d \geqslant a \geqslant 0$, and we define the following $p\textrm{-adically}$ complete $O_F\textrm{-algebras}$:
\begin{equation*}
	\begin{aligned}
		R_{\square} &\defeq O_F\langle x_1^{\pm 1}, \ldots, x_a^{\pm 1}, x_{a+1}, \ldots, x_d\rangle,\\
		S_{\square} &\defeq O_F\langle x_1^{\pm 1}, \ldots, x_a^{\pm 1}, x_{a+1}^{\pm 1}, \ldots, x_d^{\pm 1}\rangle.
	\end{aligned}
\end{equation*}
In particular, $S_{\square}$ coincides with the $p\textrm{-adic}$ completion of $R_{\square}[1/(x_{a+1} \cdots x_d)]$.

Let us set $R$ to be a \textit{$p\textrm{-adically}$ completed \'etale} algebra over $R_{\square}$ with connected special fibre, and set $S \defeq S_{\square} \widehat{\otimes}_{R_{\square}} R$.
In particular, $S$ is the $p\textrm{-adic}$ completion of the $R\textrm{-algebra}$ $R[1/(x_{a+1} \cdots x_d)]$.
By Proposition \ref{prop:Rnm_Snm_normal} below, the rings $R$ and $S$ are noetherian regular domains, in particular, these are normal domains.
Furthermore, we shall say that $R$ is $p\textrm{-adically}$ completed \textit{standard \'etale} over $R_{\square}$ if it admits a presentation
\begin{equation*}
	R = R_{\square}\{z_1,\ldots, z_b\}/(Q_1,\ldots, Q_b),
\end{equation*}
for some $Q_1, \ldots, Q_b$ in $R_{\square}\langle z_1,\ldots, z_b\rangle$ such that $\det \big(\frac{\partial Q_i}{\partial z_j}\big)$ is a unit in $R$.

Let $\overline{\mathrm{Fr}(S)}$ denote a fixed algebraic closure of the field of fractions of $S$, and write $\overline{F}$ and $\overline{\mathrm{Fr}(R)}$ for the respective algebraic closures of $F$ and ${\mathrm{Fr}(R)}$ in $\overline{\mathrm{Fr}(S)}$. 

\begin{nota}\phantomsection\label{nota:etale_condition}
	For a morphism of $S\textrm{-algebras}$ $A \rightarrow B$, we shall say that condition {\bf{(\'et)}} is satisfied if the following holds:
	\begin{itemize}[leftmargin=2cm]
		\item[\textbf{(\'et)}] $A$ and $B$ are normal domains, $B$ is finite and integral over $A$, and $B[1/p]$ is étale over $A[1/p]$.
	\end{itemize}
\end{nota}

Let $\overline{S}$ denote the union of $S\textrm{-subalgebras}$ $S' \subset \overline{\mathrm{Fr}(S)}$ satisfying \hyperref[nota:etale_condition]{\textbf{(\'et)}}.
Denote by $G_S \defeq \mathrm{Gal}(\overline{S}[1/p]/S[1/p])$ the \'etale fundamental group of $S[1/p]$, and recall that the Galois theory of schemes says that (see \cite[Chapters I and V]{sga1}):

\begin{prop}\label{prop: galois theory for S}
	The following claims are true: 
	\begin{enumerate}
		\item[\textup{(1)}] The group $G_S$ is naturally isomorphic to the Galois group of the field extension $\mathrm{Fr}(\overline{S})/\mathrm{Fr}(S)$.
		
		\item[\textup{(2)}] The functor $S' \mapsto S'[1/p]$ establishes a natural equivalence between the category of $S\textrm{-subalgebras}$ of $\overline{S}$ satisfying condition \hyperref[nota:etale_condition]{\textup{\textbf{(\'et)}}} and the category of finite \'etale extensions of $S[1/p]$ contained in $\overline{S}[1/p]$.
			A quasi-inverse functor is given via normalisation of $S$.

		\item[\textup{(3)}] The functor sending a finite \'etale extension $S'[1/p]$ of $S[1/p]$ contained in $\overline{S}[1/p]$ to the open subgroup $H = \Gal(\overline{S}[1/p]/S'[1/p]) \subset G_S$ establishes an equivalence between finite \'etale extensions of $S[1/p]$ contained in $\overline{S}[1/p]$ and open subgroups of $G_S$.
			A quasi-inverse functor is given by sending an open subgroup $H \subset G_S$ to the finite \'etale extension $S'[1/p] = (\overline{S}[1/p])^H$.
	\end{enumerate}
\end{prop}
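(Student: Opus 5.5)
The proof is an application of Grothendieck's Galois theory for the connected normal noetherian scheme $X \defeq \Spec S[1/p]$, as in \cite[Exp.\ V, \S 8 and Exp.\ I, \S 10]{sga1}. First I check that the hypotheses hold. $S$ is a noetherian regular domain, so $S[1/p]$ is a normal noetherian domain, and $X$ is connected and normal with function field $\mathrm{Fr}(S)$. I will also need that $S$ is excellent, or at least Nagata, so that integral closures of $S$ in finite extensions of $\mathrm{Fr}(S)$ are finite $S\textrm{-modules}$. For this I would use that $S$ is $p\textrm{-adically}$ completed \'etale over the Tate algebra $S_{\square}$, and that Tate algebras over $O_F$, together with their $p\textrm{-adically}$ completed \'etale algebras, are excellent (Valabrega's theorem, or Gabber's result on $I\textrm{-adic}$ completions of excellent rings).

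For (2) I argue in both directions.
\begin{itemize}
	\item If $S' \subset \overline{S}$ satisfies \textbf{(\'et)}, then by definition $S'[1/p]$ is a finite \'etale $S[1/p]\textrm{-algebra}$ contained in $\overline{S}[1/p]$.
	\item Conversely, let $A \subset \overline{S}[1/p]$ be finite \'etale over $S[1/p]$. Then $A$ is a domain, being contained in a field, and it is normal, since it is \'etale over a normal ring. Let $S'$ be the integral closure of $S$ in $A$. By the Nagata property $S'$ is finite over $S$, and it is a normal domain. Since $A$ is normal and integral over $S[1/p]$, we get $S'[1/p] = A$. Hence $S'$ satisfies \textbf{(\'et)} and lies in $\overline{S}$.
\end{itemize}
The two constructions are mutually inverse. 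Starting from $S'$ satisfying \textbf{(\'et)}, $S'$ is normal and integral over $S$, hence equals the integral closure of $S$ in $S'[1/p]$. Both sides are posets of subobjects of $\overline{S}$, so this gives the equivalence.

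For (1) and (3), set $L \defeq \mathrm{Fr}(\overline{S})$. Let $\Omega$ be the compositum in $\overline{\mathrm{Fr}(S)}$ of all finite extensions $K'/\mathrm{Fr}(S)$ such that the normalisation of $X$ in $K'$ is \'etale over $X$. By (2), these $K'$ are exactly the fraction fields of the $S'$ satisfying \textbf{(\'et)}, so $L = \Omega$. Moreover $\overline{S}[1/p]$ is the normalisation of $X$ in $L$. This is the situation of \cite[Exp.\ V, Prop.\ 8.2]{sga1}. That result identifies $\pi_1(X, \bar{\eta})$ with $\Gal(\Omega/\mathrm{Fr}(S))$. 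It also shows that every connected finite \'etale cover of $X$ is the normalisation in a subextension of $\Omega$. Finally, $\mathrm{Aut}_{S[1/p]}(\overline{S}[1/p]) = \Gal(L/\mathrm{Fr}(S))$, since the normalisation is functorial and the ring is recovered from $L$. This gives (1).

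The main theorem of Galois categories \cite[Exp.\ V, Thm.\ 4.1]{sga1} then gives (3). It yields an anti-equivalence between finite \'etale $X\textrm{-schemes}$ and finite continuous $G_S\textrm{-sets}$. Restricted to connected covers with a chosen point over $\bar{\eta}$, which are the subextensions $S'[1/p] \subset \overline{S}[1/p]$, this is the correspondence $H \leftrightarrow (\overline{S}[1/p])^H$ with open subgroups. Here I check that $(\overline{S}[1/p])^H$ is the normalisation of $X$ in $L^H$, which follows from normality of $\overline{S}[1/p]$.

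The main obstacle is the finiteness of the normalisation in (2), i.e.\ establishing that $S$ is Nagata or excellent. Everything else is a formal consequence of SGA1. If excellence of $S$ is unavailable, I would instead use that $S$ is regular and reduce to the finite \'etale cover of $S[1/p]$ being defined over a finitely generated model. Alternatively, I would use the separability trick: the trace form of $A$ over $S[1/p]$ is nondegenerate, which embeds $S'$ into a finite $S\textrm{-module}$ $\sum_i S\, e_i^\vee$ after choosing a basis $(e_i)$ of $A$ contained in $S'$. This works because $S$ is noetherian and normal and $\mathrm{Fr}(A)/\mathrm{Fr}(S)$ is separable, so it avoids excellence altogether.
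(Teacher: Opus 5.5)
Your proposal is correct and follows the same route as the paper, which gives no separate proof and simply invokes Grothendieck's Galois theory from SGA1 (Exp.\ I and V); your write-up supplies the details that citation leaves implicit. The finiteness of the normalisation, which you flag as the main obstacle, is not one: your trace-form argument already settles it for any noetherian normal domain whose fraction field has characteristic zero, so you can drop the excellence detour.
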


We recall the logarithmic variant of the group $G_S$ for the ring $R$, i.e.\ the logarithmic fundamental group of $R[1/p]$ from \cite{kato-fontaine-illusie-ii, illusie-logetale, grothendieck-murre}.

\begin{nota}\phantomsection\label{nota:logetale_condition}
	For a morphism of $R\textrm{-algebras}$ $A \rightarrow B$, we shall say that condition \textbf{(log-\'et)} is satisfied if the following holds:
	\begin{itemize}[leftmargin=2cm]
		\item[\textbf{(log-\'et)}] $A$ and $B$ are normal domains, $B$ is finite and integral over $A$, and $B[1/p]$ is flat over $A[1/p]$ and unramified outside the divisor $x_{a+1} \cdots x_d = 0$.
	\end{itemize}
\end{nota}

Let $\overline{R}$ denote the union of $R\textrm{-subalgebras}$ $R' \subset \overline{\mathrm{Fr}(R)}$ satisfying \hyperref[nota:logetale_condition]{\textbf{(log-\'et)}}.
For an $R\textrm{-subalgebra}$ $R' \subset \overline{R}$ satisfying \hyperref[nota:logetale_condition]{\textbf{(log-\'et)}}, we shall say that the natural homomorphism $R[1/p] \rightarrow R'[1/p]$ is \textit{log-\'etale Galois} if the induced homomorphism $R[1/p, 1/(x_{a+1} \cdots x_d)] \rightarrow R'[1/p, 1/(x_{a+1} \cdots x_d)]$ is \'etale and Galois in the usual sense.
Let us set
\begin{equation*}
	G_R \defeq \mathrm{Gal}(\overline{R}[1/p])/R[1/p]),
\end{equation*}
to be the logarithmic fundamental group of $R[1/p]$ from \cite[Section 10.3]{kato-fontaine-illusie-ii}, which coincides with the tame fundamental group of $R[1/p, 1/(x_{a+1} \cdots x_d)]$ from \cite{grothendieck-murre}.
In this case, we have a logarithmic variation of Proposition \ref{prop: galois theory for S} (see \cite[Sections 1 and 2]{grothendieck-murre} for details).

\begin{prop}\label{prop: galois theory for R}
	The following claims are true:
	\begin{enumerate}
		\item[\textup{(1)}] The group $G_R$ is naturally isomorphic to the Galois group of the field extension $\mathrm{Fr}(\overline{R})/\mathrm{Fr}(R)$.
	   
		\item[\textup{(2)}] The functor $R' \mapsto R'[1/p]$ establishes a natural equivalence between the category of $R\textrm{-subalgebras}$ of $\overline{R}$ satisfying \hyperref[nota:logetale_condition]{\textup{\textbf{(log-\'et)}}} and the category of finite flat extensions of $R[1/p]$ unramified outside the divisor $x_{a+1} \cdots x_d = 0$.
			A quasi-inverse functor is given via normalisation of $R$.

		\item[\textup{(3)}] The functor sending a finite log-\'etale extension $R'[1/p]$ of $R[1/p]$ contained in $\overline{R}[1/p]$ to the open subgroup $H = \Gal(\overline{R}[1/p]/R'[1/p]) \subset G_R$ establishes an equivalence between finite log-\'etale extensions of $R[1/p]$ contained in $\overline{R}[1/p]$ and open subgroups of $G_R$.
			A quasi-inverse functor is given by sending an open subgroup $H \subset G_R$ to the finite log-\'etale extension $R'[1/p] = (\overline{R}[1/p])^H$.
	\end{enumerate}
\end{prop}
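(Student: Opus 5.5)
We plan to deduce all three statements from the non-logarithmic Galois theory (Proposition \ref{prop: galois theory for S}) applied to the open subscheme $U \defeq \Spec R[1/p, 1/(x_{a+1}\cdots x_d)]$, together with the characterisation of the tame fundamental group in \cite[Sections 1 and 2]{grothendieck-murre}. The first step is to reinterpret \textbf{(log-\'et)}. Let $R' \subset \overline{\mathrm{Fr}(R)}$ be normal and finite over $R$. Since $R'[1/p]$ is then a normal domain, finite over the regular ring $R[1/p]$, it is flat over $R[1/p]$ wherever it is unramified. Moreover, on $U$ it is finite \'etale exactly when it is unramified outside the divisor. We read ``unramified outside the divisor'' as in \cite{grothendieck-murre}, i.e.\ tamely ramified along $x_{a+1}\cdots x_d = 0$; in residue characteristic $0$, which is the case after inverting $p$, every ramification is tame. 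With this reading, the $R$-algebras satisfying \textbf{(log-\'et)} are precisely the normalisations of $R$ in finite \'etale covers of $U$. Equivalently, they are the normalisations of $R$ in the finite separable extensions $L/\mathrm{Fr}(R)$ inside $\overline{\mathrm{Fr}(R)}$ whose normalisation over $U$ is \'etale.

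For part (2), we check that normalisation and inversion of $p$ are quasi-inverse. Given a finite flat $R[1/p]$-algebra $B$ as in the statement, it is normal: it is \'etale over $U$, and along the divisor Abhyankar's lemma \cite[XIII, Proposition 5.2]{sga1} shows it is locally a Kummer-type extension of a regular ring, hence normal. So $B$ agrees with the normalisation of $R[1/p]$ in its fraction field. Let $R'$ be the normalisation of $R$ in $B$. Since $R$ is excellent (it is a $p$-adically complete noetherian ring, \'etale over a Tate algebra), $R'$ is finite over $R$, and normalisation commutes with localisation, giving $R'[1/p] = B$. Conversely, for $R'$ satisfying \textbf{(log-\'et)}, normality and integrality give that $R'$ is the normalisation of $R$ in $R'[1/p]$. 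Functoriality is clear, since morphisms are inclusions inside $\overline{R}$.

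For parts (1) and (3), we set $L \defeq \mathrm{Fr}(\overline{R})$, the union of all such finite extensions. It is Galois over $\mathrm{Fr}(R)$, because the class of tame covers is stable under conjugates and compositum. Restriction then gives a map $G_R \to \Gal(L/\mathrm{Fr}(R))$. It is injective because $\overline{R}[1/p] \subset L$ generates $L$. It is surjective because any field automorphism fixing $\mathrm{Fr}(R)$ preserves integral closures, hence preserves $\overline{R}$. This proves (1). Part (3) is then ordinary infinite Galois theory for $L/\mathrm{Fr}(R)$, transported through (1) and (2). Open subgroups correspond to finite subextensions $L^H$, and these correspond bijectively to the finite log-\'etale $R'[1/p] = (\overline{R}[1/p])^H$; the last identification uses normality of $\overline{R}[1/p]$.

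The main obstacle is the first step, namely showing that normality and finiteness are compatible with the tame condition along the divisor and that flatness holds at points of the divisor. Our plan is to work \'etale-locally at generic points of the components $x_i = 0$, where $R$ is regular. There we apply Abhyankar's lemma to exhibit the cover as a quotient of $R[x_i^{1/m}]$ with $p \nmid m$, and then invoke purity of the ramification locus \cite[Tag 0EA4]{stacks-project} to extend the conclusion from codimension one to all points. This is the same input that Theorem \ref{intro_thm:GR_rep_GR0m} relies on.
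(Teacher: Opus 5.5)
\textbf{Normality step.} One step fails as written. You assert that every finite flat $R[1/p]$-algebra $B$ unramified outside the divisor is normal, ``by Abhyankar's lemma''. This is false. Take $R=\mathbb{Z}_p\langle x\rangle$ and $B=R[1/p][y]/(y^2-x^3)$. It is free of rank two over $R[1/p]$, a domain, and \'etale over $R[1/p,1/x]$ (discriminant $4x^3$). Yet $y/x\notin B$ while $(y/x)^2=x$, so $B$ is not normal. It even embeds in $\overline{R}[1/p]$ via $y\mapsto x^{3/2}$. Abhyankar's lemma describes the \emph{normalisation} of $R[1/p]$ in a tame cover, not an arbitrary finite flat model of it. So (2) cannot be proved as literally phrased: the target category must consist of normal algebras. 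The paper's one-line proof (``follows from the definitions and basic properties of normalisation'') tacitly assumes exactly this. Once normality is imposed on $B$, your argument for (2) goes through.

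\textbf{Flatness along the divisor.} Your sketch of the ``main obstacle'' also needs repair. The restriction $p\nmid m$ is wrong. The generic points of $\{x_i=0\}\subset\Spec R[1/p]$ have residue characteristic $0$, so ramification indices divisible by $p$ occur (already for $R_1$). Such a cover cannot be a quotient of a Kummer cover of prime-to-$p$ degree. The condition $p\nmid m$ in Theorem~\ref{thm:GR_rep_GR0m} appears only because $p$-power roots are already in $R_\infty$. Moreover, purity cannot be applied on $\Spec R[1/p]$, where the cover is genuinely ramified in codimension one. The correct shape is as follows.
\begin{enumerate}
\item Pass to a Galois closure and pull back to a Kummer cover $R_{n,m}[1/p]$, with $mp^n$ divisible by all ramification indices along the divisor. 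This ring is regular by Proposition~\ref{prop:Rnm_Snm_normal}.
\item By Abhyankar at codimension-one points and Zariski--Nagata purity on $R_{n,m}[1/p]$, the normalisation $W$ of $R_{n,m}[1/p]$ in the compositum is \'etale over it, hence flat over $R[1/p]$.
\item The normalisation of $R[1/p]$ in the original cover is $W^H$ for a finite group $H$. By averaging in characteristic $0$ it is a direct summand of $W$, hence flat.
\end{enumerate}

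\textbf{Comparison with the paper.} With these corrections your proof is correct and is essentially the paper's. The paper gets (1) by citing Grothendieck--Murre (2.4.6), where you argue directly with the restriction map $G_R\to\Gal(\Fr(\overline{R})/\Fr(R))$. It gets (3) by inverting $p,x_{a+1},\ldots,x_d$ and applying SGA1 V, Cor.~3.2, where you use infinite Galois theory of $\Fr(\overline{R})/\Fr(R)$. What your route adds is an explicit check that the flatness required in \textbf{(log-\'et)} along the divisor is automatic. Equivalently, \textbf{(log-\'et)} algebras are exactly the normalisations of $R$ in finite \'etale covers of $R[1/p,1/(x_{a+1}\cdots x_d)]$. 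The paper uses this silently in two places. Identifying $\overline{R}$ with the union of normalisations in tame covers, which is needed for (1), requires it. In (3) the paper only verifies that $(A')^H$ is \'etale over $A$, not that $(R')^H[1/p]$ is flat over $R[1/p]$ at points of the divisor.
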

\begin{proof}
	From the general formalism of Grothendieck's Galois theory, we have a canonical surjective homomorphism from the Galois group of the maximal separable extension $\overline{\Fr(R)}/\Fr(R)$ onto $G_R$; the kernel of this homomorphism corresponds to the compositum of all finite extensions of $\Fr(R)$ which are unramified outside the divisor $x_{a+1} \cdots x_d = 0$ (see \cite[Paragraph 2.4.6]{grothendieck-murre}), which proves (1).
	
	The claim in (2) follows directly from the definitions and basic properties of normalisation.
	
	The final part (3) easily follows from the \'etale case.
	Indeed, let $\mathcal{S}$ denote the multiplicatively closed set generated by the set of elements $\{p, x_{a+1}, \ldots, x_d\}$ of $R$.
	Denote by $A = {\mathcal{S}}^{-1}R$ and $A' = {\mathcal{S}}^{-1}R'$ the respective localisation of $R$ and $R'$.
	Then, it is clear that $A'$ is the integral closure of $A$ in $\mathrm{Fr}(R')$.
	Since $A'$ is \'etale over $A$, from \cite[Expos\'e V, Corollaire 3.2]{sga1} we get that ${A'}^H$ is \'etale over $A$ and $A'$ is \'etale over ${A'}^H$.
	The claim now follows by observing that $\mathcal{S}^{-1}(R') = {A'}^H$.
\end{proof}

The injective homomorphism of fields $\overline{\mathrm{Fr}(R)} \hookrightarrow \overline{\mathrm{Fr}(S)}$ induces a natural injective homomorphism of rings $\overline{R} \hookrightarrow \overline{S}$, which further induces a continuous homomorphism of Galois groups $G_S \rightarrow G_R$.
Consequently, the homomorphism $\overline{R} \hookrightarrow \overline{S}$ is $G_S\textrm{-equivariant}$.
Let us note that the homomorphism of groups $G_S \rightarrow G_R$ is \textit{not} surjective.
To illustrate this, we provide the following example, and we would like to thank O.\ Gabber for communicating it to us.

\begin{exam}\label{eg:gabber_example}
	Let $R = \mathbb{Z}_p\langle x \rangle$ and $S = \mathbb{Z}_p\langle x^{\pm 1} \rangle$.
	Set $f(Y) = (Y^{p^2}-x^{p^2}-x)(Y-x)-p$ in $R[Y]$, and define $R' \defeq R[Y]/(f(Y))$.
	By reducing $f(Y)$ modulo $p$, it is easy to see that $f(Y)$ is an irreducible element of $R[Y]$.
	Since $R$ is a unique factorisation domain, therefore, so is $R[Y]$, and it follows that $R'$ is a noetherian domain of dimension two.

	We will first show that $R'$ is normal.
	By Serre's criterion for normality (see \cite[\href{https://stacks.math.columbia.edu/tag/031S}{Tag 031S}]{stacks-project}), recall that it is enough to show that $R'$ satisfies conditions $(R_1)$ and $(S_2)$.
	Let us first note that $R$ is Cohen--Macaulay, and therefore $R'$ is Cohen--Macaulay thanks to \cite[Proposition 18.13]{eisenbud}, thus implying that $R'$ satisfies property $(S_2)$.
	It remains to show that $R'$ satisfies $(R_1)$, i.e.\ for any height one prime ideal $\mathfrak{p} \subset R'$, we have that $R'_{\mathfrak{p}}$ is regular.

	Let $\mathfrak{p} \subset R'$ be a prime ideal such that $p$ is in $\mathfrak{p}$.
	As we have that $(\overline{Y}-\overline{x})(\overline{Y}^{p^2}-\overline{x}^{p^2}-\overline{x}) = p$ in $R'$ (we write $\overline{Y}$ and $\overline{x}$ for the respective images of $Y$ and $x$ inside $R'$), then we either have that $\overline{Y}-\overline{x}$ is in $\mathfrak{p}$ or $\overline{Y}^{p^2}-\overline{x}^{p^2}-\overline{x}$ is in $\mathfrak{p}$.
	In the first case, observe that $R'/(\overline{Y}-\overline{x}) = \mathbb{F}_p[x][Y]/(Y-x) \isomorphic \mathbb{F}_p[x]$, and so it follows that $\mathfrak{p} = (\overline{Y}-\overline{x})$.
	In particular, $\mathfrak{p}$ is principal, and we get that $R'_{\mathfrak{p}}$ is a local noetherian domain with a principal maximal ideal generated by a non-nilpotent element, hence a PID.
	This in turn implies that $R'_{\mathfrak{p}}$ is a discrete valuation ring, and therefore, regular.
	Similarly, in the second case, observe that $R'/(\overline{Y}^{p^2}-\overline{x}^{p^2}-\overline{x}) = \mathbb{F}_p[x,Y]/(Y^{p^2}-x^{p^2}-x)$, and $Y^{p^2}-x^{p^2}-x$ is an irreducible polynomial in $\mathbb{F}_p[x,Y]$, which implies that $R'/(\overline{Y}^{p^2}-\overline{x}^{p^2}-\overline{x})$ is a domain and $\mathfrak{p} = (\overline{Y}^{p^2}-\overline{x}^{p^2}-\overline{x})$.
	Then, similar to the first case, we see that $\mathfrak{p}$ is principal and $R'_{\mathfrak{p}}$ is a discrete valuation ring, in particular, it is regular.

	Now, let us show that $R'[1/p]$ is also regular.
	Indeed, observe that in $R[Y]$ we may write
	\begin{equation*}
		\begin{aligned}
			f(Y) - \tfrac{\partial f}{\partial Y}(Y-x) = -p - p^2Y^{p^2-1}(Y-x)^2 = -p(1+pY^{p^2-1}(Y-x)^2),
		\end{aligned}
	\end{equation*}
	which implies that $\tfrac{\partial f}{\partial Y}$ is a unit in $R'[1/p]$.
	So, $R'[1/p]$ is \'etale over $R[1/p]$, and therefore, it is regular.
	This implies that for any height one prime ideal $\mathfrak{p} \subset R'$ with $p \not \in \mathfrak{p}$, we have that $R'_{\mathfrak{p}}$ is regular.

	Consequently, we obtain that $R'$ satisfies $(R_1)$, and hence by Serre's criterion for normality, we conclude that $R'$ is normal.

	Next, let us note that in $S[Y]$ we have that $\tfrac{\partial f}{\partial Y}|_{Y=x} = -x$ and $f(x) = -p$.
	Therefore, by Hensel's Lemma it follows that $x \textrm{ mod } p$ may be lifted to a root of $f(Y)$ in $S$.
	Now, consider the natural map $R' \rightarrow S$.
	Let $a$ in $S$ denote the lift of $x \textrm{ mod } p$.
	Then, we see that $a-x$ is in $pS$, and it is easy to check that $\overline{Y}-\overline{x}$ cannot be in $pR'$.
	Hence, it follows that the natural map $R'/pR' \rightarrow S/pS$ is \textit{not} injective.
	The above also shows that the natural composition $G_S \rightarrow G_R \twoheadrightarrow \Gal(R'[1/p]/R[1/p])$ sends all $g$ in $G_S$ to the identity map.
	Thus, we conclude that the natural homomorphism $G_S \rightarrow G_R$ is \textit{not} surjective.
\end{exam}

We end this section by fixing some more notation.
Let $\Omega_R$ (resp.\ $\Omega_S$) denote the $p\textrm{-adic}$ completion of the module of K\"ahler differentials of $R$ (resp.\ $S$) relative to $\mathbb{Z}$.
Additionally, let $\omega_R$ denote the $p\textrm{-adic}$ completion of the module of logarithmic differentials of $R$ relative to $\mathbb{Z}$ (see \cite[Section 1.7]{kato-fontaine-illusie-i}).
Then, we have that $\Omega_R = \oplus_{i=1}^d R \hspace{1mm} dx_i$, $\omega_R = \oplus_{i=1}^d R \dlog x_i$ and $\Omega_S = \oplus_{i=1}^d S \dlog x_i$, where $\dlog x_i = \tfrac{dx_i}{x_i}$.

Let $\varphi$ denote an endomorphism of $R_{\square}$ which extends the natural Frobenius on $O_F$ by setting $\varphi(x_i) = x_i^p$, for $1 \leqslant i \leqslant d$, and is thus a lift of the absolute Frobenius modulo $p$.
By $p\textrm{-complete}$ \'etaleness of the morphism $R_{\square} \rightarrow R$, the lift of Frobenius on $R_{\square}$ naturally extends to a unique endomorphism $\varphi \colon R \rightarrow R$ lifting the absolute Frobenius modulo $p$.
By definition, it is clear that the endomorphism $\varphi$ of $R$ is faithfully flat and finite of degree $p^d$.
Furthermore, it is easy to see from the definition that the lift of Frobenius $\varphi$ on $R$ naturally extends to a lift of Frobenius $\varphi \colon S \rightarrow S$, which is again faithfully flat and finite of degree $p^d$.

\subsubsection{Relative cyclotomic towers and Galois groups}\label{subsubsec:galois_groups}

Let us fix a compatible system of primitive $p^n\textrm{-th}$ roots of unity $\{\zeta_{p^n}\}_{n\geqslant 0}$, i.e.\ $\zeta_{p^{n+1}}^p = \zeta_{p^n}$.
For any integer $n \geqslant 0$, set  
\begin{equation*}
   R_{n} \defeq R\big[\zeta_{p^n}, x_1^{1/p^n}, \ldots, x_d^{1/p^n}\big],\qquad 
   S_{n} \defeq S\big[\zeta_{p^n}, x_1^{1/p^n}, \ldots, x_d^{1/p^n}\big] = R_n \otimes_R S,
\end{equation*}
and for any $m \geqslant 1$ coprime to $p$, set  
\begin{equation*}
	R_{n,m} \defeq R_n\big[\zeta_{m}, x_1^{1/mp^n}, \ldots, x_d^{1/mp^n}\big],\qquad 
   S_{n,m} \defeq S_n\big[\zeta_{m}, x_1^{1/mp^n}, \ldots, x_d^{1/mp^n}\big] = R_{n,m} \otimes_R S.
\end{equation*}
Let $\pi_n = \zeta_{p^n}-1$ and set $\mathfrak{p}_{n,m} \defeq \pi_n R_{n,m}$ and $\mathfrak{P}_{n,m} \defeq \pi_n S_{n,m}$.
The following proposition is well known (compare with \cite[Proposition~2.3]{andreatta-phigamma}), but we include it for the convenience of the reader.

\begin{prop}\label{prop:Rnm_Snm_normal}
	The following properties hold:
	\begin{enumerate}
		\item[\textup{(1)}] The respective ideals $\mathfrak{p}_{n,m} \subset R_{n,m}$ and $\mathfrak{P}_{n,m} \subset S_{n,m}$ are the unique prime ideals lying over $pR$.
		
		\item[\textup{(2)}] The rings $R_{n,m}/\mathfrak{p}_{n,m}$ and $S_{n,m}/\mathfrak{P}_{n,m}$ are noetherian regular domains.
			In particular, they are normal.

		\item[\textup{(3)}] The rings $R_{n,m}$ and $S_{n,m}$ are noetherian regular domains.
			In particular, they are normal. 

		\item[\textup{(4)}] The natural homomorphism $R_{n,m}/p^rR_{n,m} \rightarrow S_{n,m}/p^rS_{n,m}$ is injective.  
	\end{enumerate}
\end{prop}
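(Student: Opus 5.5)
The approach is to reduce everything to the structure of $R_{n,m}$ modulo $\pi_n$, which is a purely inseparable, hence radicial, extension of $R/pR$ in characteristic $p$. Set $k = R/pR$, a regular domain of finite type over $\kappa$; it is a domain because $R$ is \'etale over $R_\square$ with connected special fibre, and $R_\square/p$ is a localised polynomial ring.

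First, observe that $R_{n,m}$ is obtained from $R_{0,m}$ by adjoining $\zeta_{p^n}$ and $p^n$-th roots of the $x_i^{1/m}$. The ring $R_{0,m} = R[\zeta_m, x_i^{1/m}]$ is tame: it is \'etale over $R$ away from the divisor, and it is a regular ring because it is \'etale over $R_\square[\zeta_m, x_i^{1/m}] = O_F[\zeta_m]\langle x_i^{\pm 1/m}, x_j^{1/m}\rangle$, which is again of the same shape. Hence I replace $R$ by $R_{0,m}$; here $O_F[\zeta_m]$ is unramified and so is absorbed into $O_F$. This reduces me to $m=1$.

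Next, I write $R_n$ as the successive extension $R \to R[\zeta_{p^n}] \to R[\zeta_{p^n}][x_1^{1/p^n},\dots,x_d^{1/p^n}]$. Concretely, $R_n = R\otimes_{R_\square} R_{\square,n}$, where $R_{\square,n} = O_F[\zeta_{p^n}]\langle x_i^{\pm 1/p^n}, x_j^{1/p^n}\rangle$. The latter is visibly a regular domain, being a Tate algebra over a DVR, and $R_n$ is $p$-completely \'etale over it. Its reduction modulo $\pi_n$ is $k[x_1^{1/p^n},\dots,x_d^{1/p^n}] = k\otimes_{k_\square} k_\square^{1/p^n}$. Since $k$ is \'etale over $k_\square$, this ring is isomorphic to $k^{1/p^n}$ via Frobenius, hence a regular domain. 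This proves (2), and it also shows that $\pi_n R_{n}$ is prime.

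For (1), any prime over $pR$ contains $\pi_n$, since $\pi_n^{\phi(p^n)} \in pR_n^\times$. As $R_n/\pi_n$ is a domain, $\mathfrak{p}_n$ is the unique minimal such prime. Uniqueness is meant among height-one primes over $pR$, i.e.\ those lying over the prime $pR$ itself: such a prime contains $\mathfrak{p}_n$, and equality follows because $R_n/\mathfrak{p}_n$ is integral over $R/p$ with the same generic point.

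For (3), regularity follows directly because $R_n$ is \'etale over the regular ring $R_{\square,n}$. Alternatively, a local argument works: at primes containing $\pi_n$, the element $\pi_n$ is a regular parameter with regular quotient, so the local ring is regular. At primes not containing $p$, $R_n[1/p]$ is \'etale over $R[1/p, 1/x_j]$ away from the divisor, and near the divisor it is a tame Kummer cover of the regular $R_{\square}[1/p]$-pattern. The same argument gives the statements for $S$.

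For (4), I use that $R_{n,m} \to S_{n,m}$ is the completed localisation at $x_{a+1}\cdots x_d$. Injectivity modulo $p^r$ follows once $x_{a+1}\cdots x_d$ is a nonzerodivisor on $R_{n,m}/p^r$. By dévissage along powers of $\pi_n$ (using flatness of $R_{n,m}$ over $O_F[\zeta_{p^n}]$), this reduces to $R_{n,m}/\pi_n$, which is the domain from (2), where $x_j \ne 0$.

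The main obstacle is the noetherianity and regularity of $R_{\square,n}$-type base rings together with the étaleness of base change; I expect this bookkeeping to be straightforward.
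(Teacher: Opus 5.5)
Your proposal is correct. Parts (1) and (3) follow the paper's argument. For (1): every prime containing $p$ contains $\pi_n$, since $p=\pi_n^{p^{n-1}(p-1)}u$, and incomparability finishes. For (3): regularity follows from Stacks Tag 00NU, because every maximal ideal of the $p$-adically complete noetherian ring $R_{n,m}$ contains $\pi_n$. Rely on this local argument rather than on ``\'etale over $R_{\square,n}$'', since that map is only $p$-completely \'etale. Your discussion of primes not containing $p$ is superfluous.

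The genuine differences are in (2) and (4).

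For (2), the paper keeps $m$ throughout. It observes that $R_{\square,n,m}/\pi_n$ is a Laurent polynomial ring over the field $\kappa[\zeta_m]$ in the variables $y_i=x_i^{1/mp^n}$. Hence $R_{n,m}/\mathfrak{p}_{n,m}$ is \'etale over a smooth $\kappa[\zeta_m]$-algebra, and regularity comes from EGA; the domain property is simply asserted from that of $R/pR$. You instead absorb the tame part into the base, replacing $R$ by $R_{0,m}$, $O_F$ by the unramified ring $O_F[\zeta_m]$, and $x_i$ by $x_i^{1/m}$. You then use that relative Frobenius is bijective for \'etale maps in characteristic $p$, giving $R_n/\pi_n\cong k\otimes_{k_\square}k_\square^{1/p^n}\cong k$. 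This is the mechanism of Lemma~\ref{lem:lim_ring_of_norms}~(1). It yields integrality and regularity in one stroke, and is more transparent than the paper's one-line domain claim.

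The cost is that you must verify $R_{0,m}$ meets the standing hypotheses, which you should state explicitly:
\begin{enumerate}
\item By incomparability, the subring $R_{0,m}\subset\overline{R}$ is a direct factor of the regular ring $R\otimes_{R_\square}R_{\square,0,m}$, hence $p$-completely \'etale over $R_{\square,0,m}$.
\item $R_{0,m}/p$ is a domain, being regular and connected. Connectedness holds because idempotents lift from $R_{0,m}/p$, as $R_{0,m}$ is a $p$-adically complete domain.
\item Likewise, your identification $R_n=R\otimes_{R_\square}R_{\square,n}$ needs the remark that the tensor product is regular (same Tag 00NU argument) and connected. Hence it is a domain, and so it injects into $\overline{R}$ by incomparability.
\end{enumerate}
The paper uses such identifications tacitly as well.

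For (4), the paper base-changes the evident injection $R_{\square,n,m}/p\hookrightarrow S_{\square,n,m}/p$ along the flat map $R_\square\to R$, then inducts on $r$. Your route writes $S_{n,m}/p^r=(R_{n,m}/p^r)[1/x]$ and shows $x$ is a nonzerodivisor there by d\'evissage to the domain $R_{n,m}/\pi_n$. It is equally short and feeds directly off (2).
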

\begin{proof}
	We shall only prove the claims for $R_{n,m}$ as the proof for $S_{n,m}$ follows by a similar argument. 

	For (1), note that we may write $p = \pi_n^{p^{n-1}(p-1)}u$, for some unit $u$ in $O_F[\pi_n]$ and $n \geqslant 1$.
	So, $p$ is contained in $\mathfrak{p}_{n,m}$ and it is easy to observe that,
	\begin{equation*}
		R_{n,m}/\mathfrak{p}_{n,m} \isomorphic (R/pR) [\zeta_{m}, x_1^{\pm 1/mp^n}, \ldots, x_a^{\pm 1/mp^n}, x_{a+1}^{1/mp^n}, \ldots, x_d^{1/mp^n}],
	\end{equation*}
	where the right hand term is a domain because $R/pR$ is a domain.
	In particular, $\mathfrak{p}_{n,m}$ is a prime ideal of $R_{n,m}$ lying over $pR$ and its uniqueness follows from the expression of $p$ in terms of $\pi_n$ from above. 

	For (2), note that we have,
	\begin{equation*}
		\begin{aligned}
			R_{\square,n,m}/\pi_n R_{\square, n,m} &\isomorphic \kappa[\zeta_{m}, x_1^{\pm 1/mp^n}, \ldots, x_a^{\pm 1/mp^n}, x_{a+1}^{1/mp^n}, \ldots, x_d^{1/mp^n}]\\
				&\xleftarrow{\hspace{1mm}\sim\hspace{1mm}} \kappa[\zeta_{m}, y_1^{\pm 1}, \ldots, y_a^{\pm 1}, y_{a+1}, \ldots, y_d],
		\end{aligned}
	\end{equation*}
	where the first isomorphism is clear, the second isomorphism is $\kappa[\zeta_m]\textrm{-linear}$ and induced by sending $y_i \mapsto x_i^{1/mp^n}$.
	As the $\kappa[\zeta_{m}]\textrm{-algebra}$ $\kappa[\zeta_{m}, y_1^{\pm 1}, \ldots, y_a^{\pm 1}, y_{a+1}, \ldots, y_d]$ is smooth, therefore, from the preceding diagram it follows that $R_{\square,n,m}/\pi_n R_{\square, n,m}$ is smooth over $\kappa[\zeta_{m}]$.
	Moreover, recall that $R$ is the $p\textrm{-adic}$ completion of an \'etale $R_{\square}\textrm{-algebra}$, in particular, $R/pR$ is \'etale over $R_{\square}/pR_{\square}$, and so we see that
	\begin{equation*}
		R_{n,m}/\mathfrak{p}_{n,m} = R/pR \otimes_{R_{\square}/pR_{\square}} (R_{\square,n,m}/\pi_n R_{\square,n,m}),
	\end{equation*}
	is \'etale over $R_{\square,n,m}/\pi_n R_{\square, n,m}$.
	Then, from \cite[Chapitre 0, Proposition (19.3.5)]{ega4_1}, we get that the localisation $(R_{n,m}/\mathfrak{p}_{n,m})_{\mathfrak{q}}$ at every maximal ideal $\mathfrak{q} \subset R_{n,m}/\mathfrak{p}_{n,m}$ is formally smooth over $\kappa[\zeta_{m}]$, therefore, geometrically regular by \cite[Chapitre 0, Proposition (22.5.8)]{ega4_1}, and hence $R_{n,m}/\mathfrak{p}_{n,m}$ is geometrically regular by \cite[Chapitre 0, Proposition (22.6.7)]{ega4_1}.
	In particular, we  conclude that $R_{n,m}/\mathfrak{p}_{n,m}$ is a noetherian regular domain, and hence normal.

	For (3) note that $R_{n,m} \subset \overline{R}$ is finite over $R$, therefore, it is a noetherian $p\textrm{-adically}$ complete domain.
	In particular, we see that the element $p$ and thus the principal ideal $\mathfrak{p}_{n,m}$, is contained in every maximal ideal of $R_{n,m}$.
	Then, from (2) and \cite[\href{https://stacks.math.columbia.edu/tag/00NU}{Tag 00NU}]{stacks-project}, it follows that the localisation of $R_{n,m}$ at every maximal ideal is regular, hence, $R_{n,m}$ is regular and thus normal.

	For (4), observe that the natural homomorphism $R_{\square,n,m}/pR_{\square,n,m} \rightarrow S_{\square,n,m}/pS_{\square,n,m}$ is clearly injective.
	As $R$ is flat over $R_{\square}$, extending the preceding injective map along $R_{\square} \rightarrow R$, we obtain 
	that the natural homomorphism $R_{n,m}/pR_{n,m} \rightarrow S_{n,m}/pS_{n,m}$ is injective.
	Moreover, as $R_{n,m}$ and $S_{n,m}$ are $p\textrm{-torsion}$ free, an easy induction on $r \geqslant 0$ shows that the homomorphism $R_{n,m}/p^rR_{n,m} \rightarrow S_{n,m}/p^rS_{n,m}$ is injective. 
\end{proof}

Let us set $R_{\infty} \defeq \cup_{n \geqslant 0} R_n$, $R_{\infty,m} \defeq \cup_{n \geqslant 0} R_{n,m}$, $R_{\infty,\infty} \defeq \cup_{m \geqslant 0} R_{\infty,m}$, and define the following Galois groups:
\begin{equation*}
	\begin{array}{ l l l }
		H_R \defeq \mathrm{Gal}(\overline{R}[1/p])/R_{\infty}[1/p]), & & \Gamma_R \defeq \mathrm{Gal}(R_{\infty}[1/p])/R[1/p]), \\ 
		H_{R,m} \defeq \mathrm{Gal}(\overline{R}[1/p])/R_{\infty,m}[1/p]), & & \Gamma_{R,m} \defeq \mathrm{Gal}(R_{\infty,m}[1/p])/R[1/p]), \\
		H_{R,\infty} \defeq \mathrm{Gal}(\overline{R}[1/p])/R_{\infty,\infty}[1/p]), & & \Gamma_{R,\infty} \defeq \mathrm{Gal}(R_{\infty,\infty}[1/p])/R[1/p]), \\
		\Delta_{R,\infty} \defeq \mathrm{Gal}(R_{\infty,\infty}[1/p]/R_{\infty}[1/p]), & & \Delta_{R,m} \defeq \mathrm{Gal}(R_{0,m}[1/p]/R[1/p]).
	\end{array}
\end{equation*}

Then, it is easy to see that $\Delta_{R,m} \isomorphic \mathrm{Gal}(R_{\infty,m}[1/p]/R_{\infty}[1/p])$,
and we have canonical group isomorphisms
\begin{equation*}
	\Gamma_{R,m} \isomorphic \Delta_{R,m} \times \Gamma_R, \qquad \Gamma_{R,\infty} \isomorphic \Delta_{R,\infty} \times \Gamma_R.
\end{equation*}

Let us now recall the structure of the group $\Gamma_R$.
Set $\Gamma'_R \defeq \textrm{Gal}(R_{\infty}[1/p]/R(\zeta_{p^{\infty}})[1/p])$, and observe that we have an exact sequence 
\begin{equation}\label{eqn: exact sequence of Gamma groups}
	1 \longrightarrow \Gamma'_R \longrightarrow \Gamma_R \longrightarrow \Gamma_F \longrightarrow 1,
\end{equation}
where $\Gamma_F \defeq \mathrm{Gal} (F(\zeta_{p^\infty})/F)$ is canonically isomorphic to $\mathbb{Z}_p^{\times}$ via the $p\textrm{-adic}$ cyclotomic character $\chi \colon \Gamma_F \rightarrow \mathbb{Z}_p^{\times}$.
For $1 \leqslant i \leqslant d$, we fix compatible systems $\{x_i^{1/p^n}\}_{n \geqslant 0}$ of $p^n\textrm{-th}$ roots of $x_i$ and fix elements $\{\gamma_1, \ldots, \gamma_d\}$ in $\Gamma'_R$ such that, for all $n \geqslant 1$, we have
\begin{equation*}
	\gamma_i(x_j^{1/p^n}) = 
	\left\{
		\begin{array}{ll}
			\zeta_{p^n}x_i^{1/p^n} & \textrm{ if } i = j,\\
			x_j^{1/p^n} & \textrm{ if } i \neq j.
		\end{array}
	\right.
\end{equation*}
Then $\{\gamma_1, \ldots, \gamma_d\}$ is  a system of topological generators of the free  abelian pro-$p$-group $\Gamma'_R.$ 
Fix a topological generator of $\mathrm{Gal}(F(\zeta_{p^\infty})/F)$ and denote by $\gamma_0$ in $\Gamma_R$ its unique lift such that 
\begin{equation*}
	\gamma_0(x_i^{1/p^n}) = x_i^{1/p^n} \textrm{ for all } n \geqslant 1 \textrm{ and } 1 \leqslant i \leqslant d.
\end{equation*}
Note that we have $\gamma_0 \gamma_i = \gamma_i^{\chi(\gamma_0)} \gamma_0$ for $1 \leqslant i \leqslant d$.
Therefore, the  exact sequence \eqref{eqn: exact sequence of Gamma groups} splits, and one has a natural isomorphism of groups
\begin{equation*}
	\Gamma_R \isomorphic \Gamma'_R \rtimes \Gamma_F,
\end{equation*}
where we have an isomorphism of groups $\Gamma'_R \isomorphic \mathbb{Z}_p(1)^d$ compatible with the action of $\Gamma_F \isomorphic \mathbb{Z}_p^{\times}$.
Note that the aforementioned isomorphisms depend on the choice of the roots $x_i^{1/p^n}$.

Let us set $S_{\infty} \defeq \cup_{n \geqslant 0} S_n$, $S_{\infty,m} \defeq \cup_{n \geqslant 0} S_{n,m}$, $S_{\infty,\infty} \defeq \cup_{m \geqslant 0} \cup_{n \geqslant 0} S_{n,m}$ and $G_{S,m} = \Gal(\overline{S}[1/p]/S_{0,m}[1/p])$, and analogous to above we define
\begin{equation*}
	\begin{array}{ l l l }
		H_S \defeq \mathrm{Gal}(\overline{S}[1/p]/S_{\infty}[1/p]), & & \Gamma_S \defeq \mathrm{Gal}(S_{\infty}[1/p]/S[1/p]),\\
		H_{S,m} \defeq \mathrm{Gal}(\overline{S}[1/p])/S_{\infty,m}[1/p]), & & \Gamma_{S,m} \defeq \mathrm{Gal}(S_{\infty,m}[1/p])/S[1/p]),\\
		H_{S,\infty} \defeq \mathrm{Gal}(\overline{S}[1/p])/S_{\infty,\infty}[1/p]), & & \Gamma_{S,\infty} \defeq \mathrm{Gal}(S_{\infty,\infty}[1/p])/S[1/p]).
	\end{array}
\end{equation*}
The continuous homomorphism of Galois groups $G_S \rightarrow G_R$ induces isomorphisms $\Gamma_S \isomorphic \Gamma_R$ and $\Gamma_{S,\infty} \isomorphic \Gamma_{R,\infty}$.
Using the preceding isomorphism, we shall identify  $\{\gamma_0, \gamma_1, \ldots, \gamma_d\}$ with a system of topological generators of $\Gamma_S$.

\begin{nota}
	For any $R\textrm{-subalgebra}$ $R' \subset \overline{R}$ satisfying \hyperref[nota:logetale_condition]{\textbf{(log-\'et)}}, denote by $R_n'$ (resp.\ $R_{n,m}'$) the normalisation of $R' \otimes_{R_{\square}} R_{\square,n}$ (resp.\ $R' \otimes_{R_{\square}} R_{\square,n,m}$) in its field of fractions, and set $R_{\infty}' \defeq \cup_{n \geqslant 0} R_n'$ (resp.\ $R_{\infty,m}' \defeq \cup_{n \geqslant 0} R_{n,m}'$).
	Similarly, for any $S\textrm{-subalgebra}$ $S' \subset \overline{S}$ satisfying \hyperref[nota:etale_condition]{\textbf{(\'et)}}, denote by $S_n'$ (resp.\ $S_{n,m}'$) the normalisation of $S' \otimes_{S_{\square}} S_{\square,n}$ (resp.\ $S' \otimes_{S_{\square}} S_{\square,n,m}$) in its field of fractions, and set $S_{\infty}' \defeq \cup_{n \geqslant 0} S_n'$ (resp.\ $S_{\infty,m}' \defeq \cup_{n \geqslant 0} S_{n,m}'$).
	If $R'=R$ (resp.\ $S'=S$) this agrees with our previous notation. 
\end{nota}

\begin{nota}
   For a commutative ring $A$, we shall let $\widehat{A}$ denote the $p\textrm{-adic}$ completion of $A$, namely $\widehat{A} = \lim_r A/p^rA$.
\end{nota}

\begin{lem}\label{lem:Rinftyhat_normal}
	The rings $\widehat{R}_{\infty,m}$, $\widehat{R}_{\infty,\infty}$, $\widehat{S}_{\infty,m}$ and $\widehat{S}_{\infty,\infty}$ are normal domains.
\end{lem}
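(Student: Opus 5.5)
The plan is to first construct a compatible system of "$\pi$-adic valuations" on the tower, use it to show that the completions are domains, and then deduce normality from an approximation argument. I give the argument for $\widehat{R}_{\infty,m}$. The cases $\widehat{R}_{\infty,\infty}$, $\widehat{S}_{\infty,m}$ and $\widehat{S}_{\infty,\infty}$ are identical, since only properties (1)--(3) of Proposition \ref{prop:Rnm_Snm_normal} are used.

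\emph{Valuations on the tower.} By Proposition \ref{prop:Rnm_Snm_normal}, $R_{n,m}$ is a noetherian normal domain, and $\mathfrak{p}_{n,m} = \pi_n R_{n,m}$ is the unique prime over $p$, with $R_{n,m}/\pi_n$ a domain. Hence $\mathfrak{p}_{n,m}$ has height one and $(R_{n,m})_{\mathfrak{p}_{n,m}}$ is a DVR with normalised valuation $v_n$.

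I claim that $\pi_n^k R_{n,m} = \{x \in R_{n,m} : v_n(x) \geqslant k\}$. Given such $x$, the element $x/\pi_n^k$ lies in every localisation of $R_{n,m}$ at a height one prime: at $\mathfrak{p}_{n,m}$ by the choice of $k$, and at the other height one primes because $\pi_n$ is a unit there. Normality of $R_{n,m}$ then gives $x/\pi_n^k \in R_{n,m}$.

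Set $w(x) = v_n(x)/(p^{n-1}(p-1))$, so that $w(p)=1$. Since $\pi_n = \pi_{n+1}^p \cdot(\textrm{unit})$, these functions are compatible along the tower. They therefore define a multiplicative function $w$ on $R_{\infty,m}$ with values in $\mathbb{Z}[1/p]_{\geqslant 0} \cup \{\infty\}$, satisfying $w(x+y) \geqslant \min(w(x),w(y))$. The key consequence of the claim is that, for $x \in R_{n,m}$, one has $x \in p^r R_{\infty,m}$ if and only if $w(x) \geqslant r$, which in turn holds if and only if $x \in p^r R_{n,m}$. In particular $p^r R_{\infty,m} \cap R_{n,m} = p^r R_{n,m}$.

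\emph{Domain property.} Because the condition $w \geqslant r$ describes $p^rR_{\infty,m}$ exactly, $w$ is continuous for the $p$-adic topology. It therefore extends to $\widehat{R}_{\infty,m}$ by setting $w(x) = w(x_r)$ for any $x_r \in R_{\infty,m}$ with $x \equiv x_r \bmod p^{r}$ and $r > w(x_r)$, and $w(x)=\infty$ if no such $r$ exists. The extension is still multiplicative. Moreover $w(x) = \infty$ forces $x \in \bigcap_r p^r\widehat{R}_{\infty,m} = 0$, so $\widehat{R}_{\infty,m}$ is a domain and $\widehat{R}_{\infty,m} = \widehat{R}_{\infty,m}[1/p] \cap \{w \geqslant 0\}$.

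\emph{Normality, and the main obstacle.} Let $z = a/b$ be integral over $A = \widehat{R}_{\infty,m}$, satisfying a monic equation $P(z)=0$ with $P \in A[T]$. I would split the argument into two reductions.

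The first reduction handles denominators which are powers of $p$. If $z \in A[1/p]$, say $z = y/p^s$ with $y \in A$, then multiplicativity of $w$ applied to $P(z)=0$ forces $w(z) \geqslant 0$, hence $z \in A$ by the last description of $A$.

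The second reduction, which I expect to be the main obstacle, is to show that $A[1/p]$ is integrally closed in $\mathrm{Fr}(A)$. My first attempt is an approximation argument. Choose $N$ much larger than $w(b)$ and $w(\mathrm{disc})$, and approximate $a$, $b$ and the coefficients of $P$ modulo $p^N$ by elements of some $R_{n,m}$. Since $R_{n,m}$ is a normal noetherian domain, the approximating integral element lies in $R_{n,m}$, up to an error controlled by $w$. Passing to the limit in $n$ and $N$ then yields $b \mid a$ up to a bounded power of $p$, which the first reduction removes.

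If controlling the error in this approximation proves too delicate, I would instead use Lemma-type facts from the appendix. The plan there is to identify $\widehat{R}_{\infty,m}[1/p]$ with a filtered colimit of completed localisations of the noetherian normal rings $R_{n,m}[1/p]$, each of which is normal by excellence, and then conclude via the first reduction.
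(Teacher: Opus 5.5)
\textbf{There is a genuine gap: you prove that the completions are domains, but not that they are normal.}

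Your valuation $w$ is correct. So is the deduction that $A=\widehat{R}_{\infty,m}$ is a domain with $A=A[1/p]\cap\{w\geqslant 0\}$, and so is your first reduction. Together these show that $A$ is a domain integrally closed in $A[1/p]$. What remains is that $A[1/p]$ is integrally closed in $\mathrm{Fr}(A)$. This is the real content of the lemma, and neither of your two plans establishes it.

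\emph{The approximation plan.} Replacing $a$, $b$ and the coefficients $c_i$ of $P$ by approximations $a',b',c'_i\in R_{n,m}$ modulo $p^N$ only gives $\sum_i c'_i a'^{\,i}b'^{\,d-i}\in p^N R_{n,m}$. Three things then go wrong.
\begin{enumerate}
\item The fraction $a'/b'$ is not integral over $R_{n,m}$, so normality of $R_{n,m}$ says nothing about it.
\item More fundamentally, $p$ is a unit at every height one prime of $R_{n,m}$ other than $\mathfrak{p}_{n,m}$. A congruence modulo $p^N$ therefore carries no information about whether $b'$ divides $a'$ in $R_{n,m}[1/p]$, which is exactly what must be shown. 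Think of $b$ equal to one of the non-invertible coordinates $x_i$.
\item Nothing in the sketch makes the errors uniform as $n\to\infty$.
\end{enumerate}

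\emph{The fallback plan.} It rests on a false identification: $\widehat{R}_{\infty,m}[1/p]$ is not a filtered colimit of (completed localisations of) the rings $R_{n,m}[1/p]$. An element $\sum_k p^k a_k$ with $a_k\in R_{n_k,m}$ and $n_k\to\infty$ lies in no single stage.

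\textbf{What is missing.} Your argument uses Proposition~\ref{prop:Rnm_Snm_normal}(2) only through the fact that $R_{n,m}/\mathfrak{p}_{n,m}$ is a domain. Its normality is never used, and that normality is what has to be exploited.

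The paper does not reprove any of this. It checks that the towers satisfy \cite[A.1, Conditions (i)--(iv)]{andreatta-phigamma} and quotes \cite[Lemma A.3 and Proposition A.6]{andreatta-phigamma}. The tools of that appendix are the multiplicative lift $\mathbf{w}\colon R_{\infty,m}/\mathfrak{p}_{\infty,m}\to\widehat{R}_{\infty,m}$ and the unique expansions $\sum_n\mathbf{w}(a_n)p^{j_n}$ with $j_n\to\infty$; these are recalled in the proof of Proposition~\ref{prop:Rinftym_in_Sinftym}.

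One self-contained repair along your lines would work with leading terms:
\begin{enumerate}
\item The graded ring of $A$ for the $w$-filtration is the monoid algebra on $\mathbb{Z}[1/p]_{\geqslant 0}$ over $R_{\infty,m}/\mathfrak{p}_{\infty,m}=\bigcup_n R_{n,m}/\mathfrak{p}_{n,m}$. This coefficient ring is a normal domain precisely by (2).
\item Show that the leading term of any element of $\mathrm{Fr}(A)$ integral over $A$ lies in this graded ring.
\item Peel off leading terms successively, while controlling convergence. This last point is not automatic, because the values of $w$ could accumulate.
\end{enumerate}
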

\begin{proof}
	It is easy to check that the towers of domains $\{R_{n,m}\}_{n \geqslant 0}$, $\{R_{n,p^n+1}\}_{n \geqslant 0}$, $\{S_{n,m}\}_{n \geqslant 0}$ and $\{S_{n,p^n+1}\}_{n \geqslant 0}$ satisfy \cite[A.1, Conditions (i)-(iv)]{andreatta-phigamma}.
	Therefore, from \cite[Lemma A.3 and Proposition A.6]{andreatta-phigamma}, it follows that the respective $p\textrm{-adic}$ completion of the colimit of each tower, i.e.\ $\widehat{R}_{\infty,m}$, $\widehat{R}_{\infty,\infty}$, $\widehat{S}_{\infty,m}$ and $\widehat{S}_{\infty,\infty}$ are normal domains.
\end{proof}

\begin{prop}\label{prop:Rinftym_in_Sinftym}\label{lem:Rinftym_in_Sinftym}
	For each $n \geqslant 0$ and $m \geqslant 1$ coprime to $p$, the following hold true:
	\begin{enumerate}
		\item[\textup{(1)}] For each $r \geqslant 0$, we have that $p^r\widehat{{R}}_{\infty,m} \cap R_{n,m} = p^r R_{n,m}$ (resp.\ $p^r\widehat{{S}}_{\infty,m} \cap S_{n,m} = p^r S_{n,m}$).
		
		\item[\textup{(2)}] The natural homomorphisms $\widehat{R}_{\infty,m} \rightarrow \widehat{S}_{\infty,m}$ and $\widehat{R}_{\infty,\infty} \rightarrow \widehat{S}_{\infty,\infty}$ are injective.

		\item[\textup{(3)}] We have that $R_{n,m} = \widehat{R}_{\infty,m} \cap S_{n,m} \subset \widehat{S}_{\infty,m}$ and $\widehat{R}_{\infty,m} = \widehat{R}_{\infty,\infty} \cap \widehat{S}_{\infty,m} \subset \widehat{S}_{\infty,\infty}$.
	\end{enumerate}
\end{prop}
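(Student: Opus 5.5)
For (1), I would argue along the tower. Since $\widehat{R}_{\infty,m}/p^r\widehat{R}_{\infty,m} = R_{\infty,m}/p^rR_{\infty,m} = \mathrm{colim}_{n'} R_{n',m}/p^rR_{n',m}$, it suffices to show that $p^rR_{n',m} \cap R_{n,m} = p^rR_{n,m}$ for every $n' \geqslant n$. The extension $R_{n,m} \to R_{n',m}$ is finite, and after inverting $p$ it is étale: one only adjoins $p$-power roots of units, of the $x_i$, and $p$-power roots of unity. The key point is to show that $R_{n',m}$ is a free $R_{n,m}$-module. First I would do this for $R_{\square}$ by writing down explicit monomial bases $\zeta_{p^{n'}}^{j_0}x^{\mathbf{j}/mp^{n'}}$ with bounded exponents, a base change of the cyclotomic case $O_F[\zeta_{p^{n'}}]$ over $O_F[\zeta_{p^n}]$, which is free on powers of $\zeta_{p^{n'}}$. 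Then I would base change along the flat map $R_{\square}\to R$, since by construction $R_{n,m} = R \otimes_{R_\square} R_{\square,n,m}$. A basis can be chosen to contain $1$, so $R_{n,m}$ is a direct summand of $R_{n',m}$, and hence $p^rR_{n',m}\cap R_{n,m} = p^rR_{n,m}$. Passing to the colimit and the completion then gives (1). The argument for $S$ is identical.

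For (2), I would note that Proposition \ref{prop:Rnm_Snm_normal}(4) gives injectivity of $R_{n,m}/p^r \to S_{n,m}/p^r$ for all $n$. Filtered colimits are exact, so $R_{\infty,m}/p^r \to S_{\infty,m}/p^r$ is injective. Taking the inverse limit over $r$ is left exact, which gives injectivity on completions. The same argument applied to the union over $m$ handles $\widehat{R}_{\infty,\infty}\to\widehat{S}_{\infty,\infty}$.

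For (3), the first equality is the main step. Take $y \in \widehat{R}_{\infty,m}\cap S_{n,m}$. For each $r$, reduce modulo $p^r$: there is $z_r \in R_{n',m}$, for some $n'$, with $y - z_r \in p^r\widehat{S}_{\infty,m}$. I would like to descend $z_r$ to $R_{n,m}$ using the free-summand structure from (1). Concretely, $S_{n',m} = S\otimes_R R_{n',m}$ is free over $S_{n,m}$ on the same basis, and the projection onto the basis vector $1$ is compatible for $R$ and $S$. Call this projection a normalized trace $t$; it satisfies $t(y)=y$. Applying $t$ replaces $z_r$ by an element of $R_{n,m}$ that is congruent to $y$ modulo $p^r S_{n,m}$, where I use (1) for $S$ to pass from $p^r\widehat{S}_{\infty,m}$ to $p^rS_{n,m}$. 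Since $R_{n,m}\to S_{n,m}$ is injective modulo every $p^r$, the limit of these approximations lies in $\widehat{R}_{n,m}=R_{n,m}$ and equals $y$. The obstacle is compatibility of the projections across varying $n'$; I would check it by using the monomial bases, which are nested as $n'$ grows.

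The second equality of (3) is analogous. Write $\widehat{R}_{\infty,\infty}$ modulo $p^r$ as a colimit over $m'$ that are multiples of $m$. I would use the $\Delta$-type decomposition of $R_{\infty,m'}$ as a free $R_{\infty,m}$-module on monomials $x^{\mathbf{j}/m'}$, together with $\zeta_{m'}$-powers. The projection onto the trivial component is the average over $\mathrm{Gal}(R_{\infty,m'}/R_{\infty,m})$, whose order is prime to $p$ and hence invertible. This averaging is compatible with $S$, so the same limiting argument applies.
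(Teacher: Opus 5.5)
Your arguments for (1), (2) and the first equality in (3) are correct. Part (2) is the paper's argument. For (1) and the first half of (3) you take a different and more elementary route.

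The paper deduces (1) from $p^r\widehat{\overline{R}}\cap\overline{R}=p^r\overline{R}$ together with the normality of $R_{n,m}$. It proves the first equality in (3) by establishing $R_{n,m}/p^r=(R_{\infty,m}/p^r)\cap(S_{n,m}/p^r)$, using the regular sequences $\{p,x\}$ and $\{x,p\}$ (with $x=x_{a+1}\cdots x_d$), a d\'evissage from $p$ to $\pi_n$, and the normality of $R_{n,m}/\pi_n$.

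You instead use that $R_{n',m}=R_{n,m}\otimes_{R_{\square,n,m}}R_{\square,n',m}$ is free over $R_{n,m}$ on monomials including $1$, compatibly with $S_{n',m}$ over $S_{n,m}$. The coordinate projection onto $1$ gives the direct-summand property, hence (1). Together with Proposition~\ref{prop:Rnm_Snm_normal}(4), your Cauchy argument then gives the first equality in (3). This avoids $\overline{R}$ and any normality input, at the cost of relying on the explicit presentation of $R_{n,m}$.

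Two small remarks on this part:
\begin{itemize}
\item $R_{n,m}[1/p]\to R_{n',m}[1/p]$ is in general not \'etale when $d>a$, since it ramifies along $x_i=0$ for $i>a$. You never use this, so it does no harm.
\item The ``obstacle'' you flag is not one. For each $r$ you apply a single projection $t_{n'}$, and $t_{n'}(y)=y$ for every $n'$, so no compatibility across $n'$ is needed.
\end{itemize}

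The second equality in (3) has a genuine gap. Two of your claims there are false.

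\emph{Freeness.} $R_{\infty,m'}$ is not free over $R_{\infty,m}$ on the $\zeta_{m'}^k x^{\mathbf{j}/m'}$ once some $x_i$ with $i>a$ is a non-unit (e.g.\ $R=R_\square$, $d>a$). At level $n$ the basis is $x^{\mathbf{j}/(m'p^n)}$, and these bases are not compatible in $n$. For instance, $x_i^{1/(m'p)}$ lies in the isotypic component of $x_i^{j/m'}$ with $j\equiv p^{-1}\bmod m'/m$. But $x_i^{1/(m'p)}\notin x_i^{j/m'}R_{\infty,m}[\zeta_{m'}]$, because the quotient would be $x_i^{e}$ with $e=\tfrac{1}{m'p}-\tfrac{j}{m'}<0$. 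This is harmless in itself, since you only use the projection onto the trivial component.

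\emph{Averaging.} You build that projection by averaging over $\mathrm{Gal}(R_{\infty,m'}[1/p]/R_{\infty,m}[1/p])$, and its order need not be prime to $p$. For $R=R_\square$ this group surjects onto $\mathrm{Gal}(F(\zeta_{m'})/F(\zeta_m))$. That group has order divisible by $p$ for instance when $F=\mathbb{Q}_3$, $m=1$, $m'=13$, since $3$ has order $3$ modulo $13$. This is exactly why the paper handles the unramified part by Galois descent (Lemma~\ref{lem:descent_G}) rather than by averaging.

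The fix has two steps:
\begin{itemize}
\item Average only over the geometric subgroup $J'\cong(\mathbb{Z}/(m'/m))^d$, acting on the $x_i^{1/(m'p^n)}$ through $(m'/m)$-th roots of unity. Its order is invertible. At each level $n$ the average is the coordinate projection for the level-$n$ basis, so no freeness over $R_{\infty,m}$ is needed. This gives a retraction onto $R_{\infty,m}[\zeta_{m'}]=R_{\infty,m}\otimes_{O_F[\zeta_m]}O_F[\zeta_{m'}]$.
\item Compose with the projection coming from a basis of $O_F[\zeta_{m'}]$ over $O_F[\zeta_m]$ that contains $1$.
\end{itemize}
Both steps are given by the same formulas for $S$, so they are compatible.

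Finally, ``the same limiting argument'' needs two further inputs, because $y$ now lies in $\widehat{S}_{\infty,m}$ rather than in a finite-level ring:
\begin{itemize}
\item The retraction must be extended $p$-adically to a map $\widehat{S}_{\infty,m'}\to\widehat{S}_{\infty,m}$.
\item You need the analogue of (1) in the $m$-direction, $p^r\widehat{S}_{\infty,\infty}\cap\widehat{S}_{\infty,m'}=p^r\widehat{S}_{\infty,m'}$. This is what places $y-z_r$ in $p^r\widehat{S}_{\infty,m'}$ before you project.
\end{itemize}
Both follow from the same retraction, applied with $m$ replaced by $m'$. With these repairs your approach is a valid alternative to the paper's proof. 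The paper instead uses Andreatta's expansions $\sum_n\mathbf{w}(a_n)p^{j_n}$ to reduce to an intersection of residue rings modulo $\mathfrak{p}_{\infty,m}$, $\mathfrak{p}_{\infty,\infty}$ and $\mathfrak{P}_{\infty,m}$.
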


We postpone the proof of Proposition \ref{prop:Rinftym_in_Sinftym} to the next section (see after Proposition \ref{prop:properties_Rbarhat}).

\subsection{Some properties of $\overline{R}$ and $\overline{S}$}

In this section, we record some technical statements for the rings $\overline{R}$ and $\overline{S}$, and their respective $p\textrm{-adic}$ completions $\CRp$ and $\CSp$.

\subsubsection{Almost \'etale descent}

We begin by showing a standard statement on almost \'etale descent in our setting.

\begin{nota}
	For any topological group $G$ and a topological $G$\textrm{-module} $M$, we shall let $H^i(G,M)$ denote the continuous cohomology of $G$ with coefficients in $M$.
	As usual, we shall consider $\overline{R}$ (resp.\ $\overline{S}$) as a discrete $G_R\textrm{-module}$ (resp.\ $G_S\textrm{module}$), and $\CRp$ (resp.\ $\CSp$) as a continuous (for the $p\textrm{-adic}$ topology) $G_R\textrm{-module}$ (resp.\ $G_S\textrm{-module}$).
\end{nota}

\begin{nota}
	For any ring $A$, the ring $A^{\flat} \coloneq \lim_{\varphi}A/pA$ shall be referred to as the \textit{tilt} of $A$.
\end{nota}

For each $j \geqslant 1$, set $\pi_j \defeq \zeta_{p^j}-1$.
For any $R_{\infty}\textrm{-algebra}$ or $S_{\infty}\textrm{-algebra}$ $B$, let $\mathfrak{m}_B$ denote the ideal of $B$ generated by $\{\pi_j\}_{j \geqslant 1}$, and note that we have $\mathfrak{m}_B = \mathfrak{m}_B^2$.

\begin{lem}\label{lem:Ax-Sen_R}
	Let $A$ denote either of the rings $R$ or $S$, and correspondingly let $A_{\infty,\infty}$ denote $R_{\infty,\infty}$ or $S_{\infty,\infty}$.
	Assume that $B \subset \overline{A}$ is a finite normal $A_{\infty,\infty}\textrm{-subalgebra}$.
	Let $G_B$ denote the Galois group $\Gal(\overline{A}[1/p]/B[1/p])$.
	Then, the following claims are true:
	\begin{enumerate}
		\item[\textup{(1)}] For each $i \geqslant 1$, the cohomology group $H^i(G_B, \widehat{\overline{A}})$ is an almost zero $B\textrm{-module}$, i.e.\ it is annihilated by the ideal $\mathfrak{m}_{B}$.
			In particular, $H^i(G_{B}, \widehat{\overline{A}}[1/p])=0$ for $i\geqslant 1$.
		
		\item[\textup{(2)}] The cokernel of the following natural injective homomorphisms are almost zero, i.e.\ killed by $\mathfrak{m}_{B}$:
			\begin{equation*}
				B/p^nB = \overline{A}^{G_B}/p^n\overline{A}^{G_B} \longhookrightarrow \left(\overline{A}/p^n\overline{A}\right)^{G_B} \quad  \textit{and} \qquad \widehat{B} \longhookrightarrow \bigl.\widehat{\overline{A}}\,\bigr.^{G_{B}}.
			\end{equation*}
			In particular, we get that $\widehat{\overline{A}}[1/p]^{G_{B}} = \widehat{B}[1/p]$.
		
		\item[\textup{(3)}] We have that $\bigl.\widehat{\overline{A}}\,\bigr.^{G_B} = \widehat{B}$.
			In particular, $\bigl.\widehat{\overline{A}}\,\bigr.^{H_{A,\infty}} = \widehat{A}_{\infty,\infty}$.
	\end{enumerate}
\end{lem}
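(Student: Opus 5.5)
The plan is to reduce everything to Faltings' almost purity theorem for the tower $A_{\infty,\infty}$, and then run the standard almost \'etale descent argument of Faltings / Andreatta (compare \cite[Appendix]{andreatta-phigamma} and Brinon \cite{brinon-relatif}).

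\textbf{Step 1: almost purity.} For every finite normal $B$-subalgebra $B' \subset \overline{A}$ with $B'[1/p]$ Galois over $B[1/p]$ (in the \'etale sense for $S$, and in the log-\'etale sense for $R$), the trace map $B' \to B$ is almost surjective. Equivalently, $B'$ is almost finite \'etale over $B$, and $B' \otimes_B B' \to \prod_{\Gal(B'/B)} B'$ is an almost isomorphism.
\begin{itemize}
	\item For $A = S$ this is Faltings' almost purity applied to the tower $S_{n,m}$, since $B$ is finite over some $S_{n,m}$ and extracting $p$-power roots of all $x_i$ and $\zeta$ kills ramification along $p$.
	\item For $A = R$ the extension $B'[1/p]/B[1/p]$ may ramify tamely along $x_{a+1}\cdots x_d = 0$. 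Here the tame prime-to-$p$ roots $x_i^{1/m}$ already contained in $R_{\infty,\infty}$ kill that ramification by Abhyankar's Lemma \cite[XIII, Proposition 5.2]{sga1}, together with purity of the branch locus. Hence $B'[1/p, 1/(x_{a+1}\cdots x_d)]$ is \'etale over the normal ring $B[1/p]$, and in fact $B'[1/p]$ is \'etale over $B[1/p]$. Almost purity then applies exactly as for $S$; one can also check almost surjectivity of the trace by the comparison $\widehat R_{\infty,m}\subset\widehat S_{\infty,m}$ from Proposition \ref{prop:Rinftym_in_Sinftym}.
\end{itemize}

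\textbf{Step 2: the cohomology of $\overline{A}/p^n$.} Write $\overline{A} = \colim B'$. Almost \'etaleness makes $B'/p^n$ an almost induced $\Gal(B'/B)$-module, so $H^i(\Gal(B'/B), B'/p^n)$ is almost zero for $i \geqslant 1$, and the cokernel of $B/p^n \to (B'/p^n)^{\Gal}$ is almost zero. The standard trick is to pick $x \in B'$ with almost-trace-one behaviour: given $\epsilon \in \mathfrak m_B$, choose $x$ with $\mathrm{Tr}(x) = \epsilon$, and use it to build contracting homotopies multiplied by $\epsilon$. Passing to the colimit gives $H^i(G_B, \overline{A}/p^n) \approx 0$ for $i\geqslant 1$ and $(\overline A/p^n)^{G_B} \approx B/p^n$. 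This proves the first half of (2); injectivity holds because $B$ is normal, so $p^n\overline A\cap B=p^nB$.

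\textbf{Step 3: passing to the limit.} Take the limit over $n$ using continuous cohomology, $\widehat{\overline A}=\lim \overline A/p^n$, and the Mittag-Leffler sequence. The $\lim^1$ term is controlled because the transition maps on almost-zero $H^{i-1}$ are almost zero; using $\mathfrak m_B=\mathfrak m_B^2$, the limit is killed by $\mathfrak m_B$. This gives (1) and the second map in (2). Inverting $p$ yields the rational statements, since $p \in \mathfrak m_B$ up to a unit power ($\pi_1^{p-1} \sim p$).

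\textbf{Step 4: the exact invariants (3).} From (2), $\widehat{\overline A}^{G_B}/\widehat B$ is killed by $\mathfrak m_B$. Let $f\in\widehat{\overline A}^{G_B}$. Then $\pi_j f\in\widehat B$ for all $j$, so $f\in\widehat B[1/p]$ and $f$ is integral over $\widehat B$, since $f$ is integral in $\widehat{\overline A}$. Now $\widehat B$ is normal by the analogue of Lemma \ref{lem:Rinftyhat_normal}, because $B$ is finite over $A_{\infty,\infty}$ and so is the colimit of a tower of normal domains satisfying \cite[A.1]{andreatta-phigamma}. Hence $f\in\widehat B$.

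The main obstacle is Step 1 for $A=R$: justifying that Galois subextensions of $\overline R$ over $R_{\infty,\infty}$ are unramified along the divisor after adjoining all prime-to-$p$ roots. This needs Abhyankar applied at generic points of the divisor, plus Zariski–Nagata purity to pass from codimension one to all of $B[1/p]$, so that almost purity can be invoked.
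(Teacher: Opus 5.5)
Your Steps 1--3 follow the paper's route: almost purity for $Q_{\infty,\infty}$ over $P_{\infty,\infty}$, with the tame ramification along the divisor removed by the prime-to-$p$ roots via Abhyankar, followed by Tate's trace computation. Your explicit $\lim^1$ argument in Step 3 is actually more careful than the paper, which only proves that $H^i(G_B,\overline{A}/p^n)$ and $H^i(G_B,\overline{A})$ are almost zero.

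Two small repairs are needed before Step 4:
\begin{enumerate}
\item[(a)] For $i=1$, the group $H^0(G_B,\overline A/p^n)$ is not almost zero, so ``transition maps on almost-zero $H^{i-1}$'' does not apply. The $\lim^1$ term is still killed by $\mathfrak m_B$: $(\overline A/p^n)^{G_B}$ differs from $B/p^n$ by a module killed by $\mathfrak m_B$, and $\lim^1 B/p^n=0$ because the transition maps are surjective.
\item[(b)] Drop the aside about checking the trace through $\widehat R_{\infty,m}\subset\widehat S_{\infty,m}$. For a general $B'$, base change to $S$ need not be injective modulo $p$ (Example~\ref{eg:gabber_example}), so this comparison does not control the trace.
\end{enumerate}

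The genuine gap is Step 4. You claim that $f\in\widehat{\overline{A}}^{\,G_B}$ is integral over $\widehat B$ ``since $f$ is integral in $\widehat{\overline A}$'', but this is unjustified. The ring $\widehat{\overline A}$ is not integral over $\widehat B$: its elements are only $p$-adic limits of elements of $\overline A$, and the degrees of their integral equations are unbounded. What (2) does give is that $\pi_j f^N\in\widehat B$ for all $N$, so $f$ is almost integral. Over a non-noetherian ring such as $\widehat B$, however, being integrally closed does not imply being completely integrally closed, so integrality does not follow. In addition, normality of $\widehat B$ for an arbitrary finite normal $B$ is not provided by Lemma~\ref{lem:Rinftyhat_normal}. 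That lemma only covers the specific towers $R_{n,m}$ and $S_{n,m}$, for which the conditions of \cite[A.1]{andreatta-phigamma} are verified, and nothing you say establishes those conditions for a general $P_{n,m}$.

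The fix uses something you already proved in Step 2: the injectivity $B/p^n\hookrightarrow\overline A/p^n$. Since $\widehat B/p^n=B/p^n$ and $\widehat{\overline A}/p^n=\overline A/p^n$, this gives $\widehat B\cap p^n\widehat{\overline A}=p^n\widehat B$ for every $n$, hence $\widehat B[1/p]\cap\widehat{\overline A}=\widehat B$. Combined with $\widehat{\overline A}^{\,G_B}\subset\widehat{\overline A}[1/p]^{G_B}=\widehat B[1/p]$, this yields (3), and it is exactly how the paper concludes.
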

\begin{proof}
	For $A = S$ the claims follow from \cite[Propositions 3.1.1 and 3.1.8]{brinon-relatif}.
	The proof in the case $A = R$ follows by analogous arguments, and we reproduce it for the reader's convenience.
	Also see \cite[Corollary~3.43]{andreatta-iovita-semistable} where the claims are formulated slightly differently.
	
	For (1), note that we may write $B = P_{\infty,\infty}$ for some finite normal $A\textrm{-algebra}$ $P \subset \overline{R}$ satisfying \hyperref[nota:logetale_condition]{\textbf{(log-\'et)}} and such that $G_B = H_P$.
	then, for a fixed $n \geqslant 1$, the topological $H_P\textrm{-module}$ $\overline{R}/p^n\overline{R}$ is discrete, and therefore we may write:
	\begin{equation*}
		H^i(H_P, \overline{R}/p^n\overline{R}) = \colim_{Q/P} H^i(H_P/H_Q, Q_{\infty,\infty}/p^n Q_{\infty,\infty}),
	\end{equation*}
	where $Q \subset \overline{R}$ runs over all finite extensions of $P$ such that $P[1/p] \rightarrow Q[1/p]$ is \'etale and Galois.
	Therefore, any cohomology class in $H^i(H_P, \overline{R}/p^n\overline{R})$ may be represented by an $i\textrm{-cocycle}$ $f$ in $C^i(H_P/H_Q, Q_{\infty,\infty}/p^n Q_{\infty,\infty})$, for some finite \'etale and Galois extension $P[1/p] \rightarrow Q[1/p]$.
	Now, the almost-purity theorem (see \cite[Section 2b]{faltings-almost}, \cite{scholze-perfectoid}, \cite[Section 9.6]{gabber-ramero-almostv5}) shows that there exists some $b_j$ in $Q_{\infty,\infty}$ such that 
	\begin{equation*}
		\mathrm{tr}_{Q_{\infty,\infty}[1/p]/P_{\infty,\infty}[1/p]}(b_j) = \pi_j \in P_{\infty,\infty},
	\end{equation*}
	where $\mathrm{tr}_{Q_{\infty,\infty}[1/p]/P_{\infty,\infty}[1/p]} \colon Q_{\infty,\infty}[1/p] \rightarrow P_{\infty,\infty}[1/p]$ is the usual trace map.
	For $i \geqslant 1$, a classical computation of Tate (see \cite[Section 3.2]{tate-p-divisible}) shows that we have the $(i-1)\textrm{-cochain}$ 
	\begin{equation*}
		a(s_1,\ldots, s_{i-1}) = (-1)^i \textstyle\sum_{\tau\in H_P/H_Q} s_1 \cdots s_{i-1} \tau (b) f(s_1,\ldots, s_{i-1},\tau),
	\end{equation*}
	such that $\partial(a) = \pi_j f$.
	Therefore, we see that $\pi_j f$ is a coboundary, and thus $\pi_j$ annihilates the class of $f$ in $H^i(H_P/H_Q, Q_{\infty,\infty}/p^n Q_{\infty,\infty})$.
	As the previous argument may be repeated for all $j \geqslant 1$ and the chosen cohomology class is arbitrary, therefore, we conclude that $\pi_j$ kills $H^i(H_P/H_Q, Q_{\infty,\infty}/p^n Q_{\infty,\infty})$ for all $j \geqslant 1$, and thus $\pi_j$ kills $H^i(H_P, \overline{R}/p^n \overline{R})$ for all $j \geqslant 1$.
	
	Analogously, since we consider $\overline{R}$ as a discrete $G_R\textrm{-module}$, therefore, we may write 
	\begin{equation*}
		H^i(H_P, \overline{R}) = \colim_{Q/P} H^i(H_P/H_Q, Q_{\infty,\infty}).
	\end{equation*}
	Then, an argument similar to the preceding argument shows that $\pi_j$ kills $H^i(H_P, \overline{R})$ for all $j \geqslant 1$.
	Therefore, from the discussion above we conclude that $\mathfrak{m}_B$ annihilates $H^i(G_{B}, \overline{R}/p^n\overline{R})$ for all $n\geqslant 1$, and $H^i(G_{B}, \overline{R})$, thus proving the claim.
	
	To show (2), note that for each $n \geqslant 1$, the exact sequence
	\begin{equation*}
		0 \longrightarrow \overline{R} \xrightarrow{p^n} \overline{R} \longrightarrow \overline{R}/p^n\overline{R} \longrightarrow 0,
	\end{equation*}
	yields the following exact sequence:
	\begin{equation*}
		0 \longrightarrow \overline{R}^{G_{B}}/p^n \overline{R}^{G_{B}} \longrightarrow (\overline{R}/p^n\overline{R})^{G_{B}} \longrightarrow H^1(G_{B},\overline{R})[p^n] \longrightarrow 0,
	\end{equation*}
	where ``$[p^n]$'' in the last term denotes $p^n\textrm{-torsion}$ elements.
	From part (1), recall that $H^1(G_{B}, \overline{R})$ is killed by $\mathfrak{m}_B$, therefore, we get that the cokernel of the first non-trivial homomorphism above is almost zero. 
	Passing to the limit over $n$, and taking into account that $\overline{R}^{G_{B}} = B$ and $\lim_n (\overline{R}/p^n\overline{R})^{G_{B}} = \widehat{\overline{R}}\,^{G_B}$, where the last equality follows because taking $G_B\textrm{-invariants}$ commutes with limit, we obtain the following exact sequence:
	\begin{equation*}
		0 \longrightarrow \widehat{B} \longrightarrow \widehat{\overline{R}}\,^{G_{B}} \longrightarrow \textstyle\lim_n H^1(G_{B},\overline{R})[p^n].
	\end{equation*}
	Hence, we conclude that the cokernel of the first non-trivial homomorphism above is almost zero, i.e.\ killed by $\mathfrak{m}_B$, thus implying part (2). 
	
	For part (3), note that using part (2) we have 
	\begin{equation*}
		\bigl.\widehat{\overline{A}}\,\bigr.^{G_B} = \widehat{\overline{A}}[1/p]^{G_B} \cap \widehat{\overline{A}} = \widehat{B}[1/p] \cap \widehat{\overline{A}} \subset \widehat{\overline{A}}[1/p].
	\end{equation*}
	Moreover, we have that $\widehat{B}/p^n\widehat{B} = B/p^n B \hookrightarrow \overline{A}/p^n\overline{A} = \widehat{\overline{A}}/p^n\widehat{\overline{A}}$, where the equalities follow from \cite[\href{https://stacks.math.columbia.edu/tag/05GG}{Tag 05GG}]{stacks-project}, and the injective homomorphism is from part (2).
	Therefore, we conclude that $\widehat{B}  \cap p^n\widehat{\overline{A}} = p^n \widehat{B}$, or equivalently, that $\widehat{B}[1/p] \cap \widehat{\overline{A}} = \widehat{B}$, in particular, $\bigl.\widehat{\overline{A}}\,\bigr.^{G_B} = \widehat{B}$.
\end{proof}

\subsubsection{Completions and Galois invariants}

We begin with the following observation:
\begin{prop}\label{prop:properties_Rbarhat}
	The following claims are true:
	\begin{enumerate}
		\item[\textup{(1)}] The ring $\widehat{\overline{R}}$ (resp.\ $\widehat{\overline{S}}$) is $p\textrm{-torsion free}$ and reduced, the homomorphism $\overline{R} \rightarrow \widehat{\overline{R}}$ (resp.\ $\overline{S} \rightarrow \widehat{\overline{S}}$) is injective, and $p^r\widehat{\overline{R}} \cap \overline{R} = p^r\overline{R}$ (resp.\ $p^r\widehat{\overline{S}} \cap \overline{S} = p^r\overline{S}$) for each $r \geqslant 1$.
		
		\item[\textup{(2)}] The ring homomorphism $R[1/p] \rightarrow \widehat{\overline{R}}[1/p]$ (resp.\ $S[1/p] \rightarrow \widehat{\overline{S}}[1/p]$) is faithfully flat. 
	\end{enumerate}
\end{prop}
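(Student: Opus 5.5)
For (1), I will work with $\overline{R}$; the argument for $\overline{S}$ is identical. First, $\overline{R}$ is a filtered union of normal noetherian domains $R'$ that are finite over $R$. In particular $\overline{R}$ is a $p$-torsion free domain. Since $\widehat{\overline{R}} = \lim_r \overline{R}/p^r\overline{R}$ with $\overline{R}$ $p$-torsion free, the completion is $p$-torsion free and satisfies $\widehat{\overline{R}}/p^r\widehat{\overline{R}} = \overline{R}/p^r\overline{R}$ (Stacks, Tag 05GG, as already used in Lemma \ref{lem:Ax-Sen_R}). This immediately gives $p^r\widehat{\overline{R}} \cap \overline{R} = p^r\overline{R}$.

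Injectivity of $\overline{R} \rightarrow \widehat{\overline{R}}$ amounts to $\bigcap_r p^r\overline{R} = 0$. Take $x \in \bigcap_r p^r\overline{R}$ and choose a finite normal $R' \ni x$ with $x = p^r y_r$, $y_r \in \overline{R}$. Then $y_r = x/p^r \in \mathrm{Fr}(R')$ is integral over $R'$, hence lies in $R'$ by normality. So $x \in \bigcap_r p^r R'$, and this intersection is $0$ by Krull's intersection theorem, since $R'$ is a noetherian domain and $p$ lies in its Jacobson radical ($R'$ is finite over the $p$-adically complete $R$).

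Reducedness is where I expect the real difficulty. Suppose $x \in \widehat{\overline{R}}$ with $x^k = 0$. Writing $x = x_r + p^r z$ with $x_r \in \overline{R}$, we get $x_r^k \in p^r\overline{R}$. The key claim is that normality forces $x_r \in p^{\lfloor r/k \rfloor}\overline{R}$: in $\mathrm{Fr}(R')$ the element $x_r/p^{\lfloor r/k\rfloor}$ has $k$-th power in $\overline{R}$, so it is integral over $\overline{R}$ and therefore lies in $\overline{R}$ by integral closedness. Hence $x \in p^{\lfloor r/k\rfloor}\widehat{\overline{R}}$ for every $r$, and so $x = 0$ by $p$-adic separatedness of $\widehat{\overline{R}}$.

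For (2), I will first show flatness. The ring $R[1/p]$ is noetherian, and $\widehat{\overline{R}}$ is the $p$-adic completion of a filtered colimit of finite $R$-algebras $R'$ with $R'[1/p]$ flat over $R[1/p]$ (by condition \textbf{(log-\'et)}). My route is to check that $\widehat{\overline{R}}$ is $p$-completely flat over $R$: each $\overline{R}/p^n\overline{R}$ should be flat over $R/p^n$. From this, flatness of $\widehat{\overline{R}}[1/p]$ over $R[1/p]$ should follow by the standard argument for $p$-complete flatness with a noetherian base (Tor computations modulo $p^n$ and passage to the limit).

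This step is delicate because $R'$ itself need not be flat over $R$ in general. The alternative I will try first is to use $\overline{R}_{\mathfrak{q}}$ over the regular local rings $R_{\mathfrak{q}}$ of height one, together with the fact that normal domains are torsion free, which gives flatness over the Dedekind-type localizations. I will then reduce the general case via the isomorphism $\widehat{\overline{R}}[1/p] = \widehat{\overline{R}}[1/p]^{H}$-invariants of Lemma \ref{lem:Ax-Sen_R}.

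Faithfulness is then easier. Given a maximal ideal $\mathfrak{n}$ of $R[1/p]$, it corresponds to a closed point of the rigid generic fibre, with residue field finite over $F$. A map $R \rightarrow O_K$ extends to $\overline{R} \rightarrow O_{\overline{K}}$, hence to $\widehat{\overline{R}} \rightarrow O_{\mathbb{C}_p}$, and this shows $\mathfrak{n}\widehat{\overline{R}}[1/p] \neq \widehat{\overline{R}}[1/p]$. I expect the flatness step to be the main obstacle.
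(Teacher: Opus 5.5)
Your part (1) is correct and self-contained; the paper itself only cites \cite[Proposition 3.5]{andreatta-iovita-semistable} and \cite[Proposition 2.0.3]{brinon-relatif}. In (2), the faithfulness step (lifting a closed point of $\mathrm{Spec}\,R[1/p]$ to a map $\widehat{\overline{R}} \rightarrow O_{\mathbb{C}_p}$) is also fine once flatness is known. The gap is flatness of $\widehat{\overline{R}}[1/p]$ over $R[1/p]$, and none of your suggested routes delivers it.

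\begin{itemize}
\item You give no justification for the claim that $\overline{R}/p^n\overline{R}$ is flat over $R/p^nR$. By Lemma \ref{lem:local_criterion_flatness} it would make $\widehat{\overline{R}}$ flat over $R$ \emph{integrally}, which is much stronger than (2). It does not follow from normality of the $R'$: normality only controls depth in low codimension, and a normal finite extension of a regular ring that is not Cohen--Macaulay is torsion free without being flat.
\item For the same reason, flatness over the height-one localisations $R_{\mathfrak{q}}$ says nothing at primes of $R[1/p]$ of larger height.
\item Lemma \ref{lem:Ax-Sen_R} only computes Galois invariants, $\widehat{\overline{R}}[1/p]^{G_B} = \widehat{B}[1/p]$. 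It says nothing about $\widehat{\overline{R}}[1/p]$ as a module over the base, so there is nothing to ``reduce'' to.
\end{itemize}

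The missing idea is a \emph{uniform} torsion bound. Let $N$ be a finitely generated $R$-module. Each $\mathrm{Tor}_i^R(N, R')$, $i \geqslant 1$, is killed by some $p^{c(R')}$, because $R'[1/p]$ is finite projective over $R[1/p]$. This gives flatness of $\overline{R}[1/p]$, but $c(R')$ is unbounded as $R'$ grows. When you complete $F_\bullet \otimes_R \overline{R}$ for a finite free resolution $F_\bullet$ of $N$, the higher $\mathrm{Tor}$'s and $N \otimes_R \overline{R}$ all enter. Completing modules with unbounded $p$-power torsion can then produce non-torsion classes (completed unbounded torsion, Tate-module terms) in $\mathrm{Tor}_i^R(N, \widehat{\overline{R}})$.

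The bound comes from almost purity, the same input used in the proof of Lemma \ref{lem:Ax-Sen_R}:
\begin{itemize}
\item $R \rightarrow R_{\infty,\infty}$ is flat, being a filtered colimit of finite free $R$-modules.
\item $\overline{R}$ is almost flat over $R_{\infty,\infty}$.
\item Hence $\mathrm{Tor}_i^R(N, \overline{R}) = \mathrm{Tor}_i^{R_{\infty,\infty}}(N \otimes_R R_{\infty,\infty}, \overline{R})$ is killed by $p$ for $i \geqslant 1$, and the $p$-power torsion of $N \otimes_R \overline{R}$ is bounded.
\end{itemize}
Completing $F_\bullet \otimes_R \overline{R}$ termwise then shows that $\mathrm{Tor}_i^R(N, \widehat{\overline{R}})$ is killed by a fixed power of $p$ for $i \geqslant 1$, which is flatness after inverting $p$.
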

\begin{proof}
	Both claims are shown in \cite[Proposition 3.5]{andreatta-iovita-semistable} (resp.\ \cite[Proposition 2.0.3]{brinon-relatif}).
\end{proof}

Using Proposition \ref{prop:properties_Rbarhat}, we may now show Proposition \ref{prop:Rinftym_in_Sinftym} as follows:
\begin{proof}[Proof of Proposition \ref{prop:Rinftym_in_Sinftym}]
	In part (1), we only show the claim for $R_{n,m}$, and the claim for $S_{n,m}$ follows by a similar argument.
	Let us note that from Proposition \ref{prop:properties_Rbarhat}, we have that $p^r\widehat{{R}}_{\infty,m} \cap R_{n,m}\subset p^r\overline{R}$.
	As we have that $\overline{R} \cap R_{n,m}[1/p] = R_{n,m}$ because $R_{n,m}$ is normal by Proposition \ref{prop:Rnm_Snm_normal} (3), therefore, we obtain that $p^r\overline{R} \cap R_{n,m} = p^r R_{n,m}$, thus implying that $p^r\widehat{{R}}_{\infty,m} \cap R_{n,m} \subset p^r R_{n,m}$, and hence $p^r\widehat{{R}}_{\infty,m} \cap R_{n,m} = p^r R_{n,m}$.
	
	To show part (2), let us first note that from Proposition \ref{prop:Rnm_Snm_normal} (4), the natural homomorphism $R_{n,m}/p^r R_{n,m} \rightarrow S_{n,m}/p^r S_{n,m}$ is injective.
	Then, by taking the (filtered) colimit over $n$, we get that the natural homomorphism $R_{\infty,m}/p^r R_{\infty,m} \rightarrow S_{\infty,m}/p^r S_{\infty,m}$ is injective.
	Similarly, taking the (filtered) colimit over both $m$ and $n$, we get that the natural homomorphism $R_{\infty,\infty}/p^r R_{\infty,\infty} \rightarrow S_{\infty,\infty}/p^r S_{\infty,\infty}$ is injective.
	Passing to the limit over $r$ in both the homomorphisms above, we obtain that the natural homomorphisms $\widehat{R}_{\infty,m}\rightarrow \widehat{S}_{\infty,m}$ and $\widehat{R}_{\infty,\infty}\rightarrow \widehat{S}_{\infty,\infty}$ are injective.

	Let us next show the first part of (3), i.e.\ $R_{n,m}  = \widehat{R}_{\infty,m} \cap S_{n,m} \subset \widehat{S}_{\infty,m}$.
	Observe that we have $R_{n,m} \cap p^r(\widehat{R}_{\infty,m} \cap S_{n,m}) = p^r R_{n,m}$ by part (1), which implies that the following natural homomorphism is injective:
	\begin{equation}\label{eq:Rnm_modpr}
		R_{n,m}/p^r \longhookrightarrow (\widehat{R}_{\infty,m} \cap S_{n,m})/p^r.
	\end{equation}
	Moreover, we have that the following equalities:
	\begin{equation*}
		\begin{aligned}
			p^r\widehat{R}_{\infty,m} \cap (\widehat{R}_{\infty,m} \cap S_{n,m}) = p^r\widehat{R}_{\infty,m} \cap S_{n,m} = p^r\widehat{R}_{\infty,m} \cap p^r\widehat{S}_{\infty,m} \cap S_{n,m} = p^r(\widehat{R}_{\infty,m} \cap S_{n,m}),\\
			p^r S_{n,m} \cap (\widehat{R}_{\infty,m} \cap S_{n,m}) = \widehat{R}_{\infty,m} \cap p^r S_{n,m} = \widehat{R}_{\infty,m} \cap p^r\widehat{S}_{\infty,m} \cap p^r S_{n,m} = p^r(\widehat{R}_{\infty,m} \cap S_{n,m}),
		\end{aligned}
	\end{equation*}
	where the third equality in the first line follows from part (1), and the third equality in the second line follows from the injectivity of the homomorphism $R_{\infty,m}/p^r R_{\infty,m} \rightarrow S_{\infty,m}/p^r S_{\infty,m}$ shown above.
	In particular, we get that the following natural homomorphism is injective:
	\begin{equation}\label{eq:intersect_modpr}
		(\widehat{R}_{\infty,m} \cap S_{n,m})/p^r \longhookrightarrow (\widehat{R}_{\infty,m}/p^r) \cap (S_{n,m}/p^r) = (R_{\infty,m}/p^r) \cap (S_{n,m}/p^r) \subset \widehat{S}_{\infty,m}/p^r.
	\end{equation}

	We claim that $R_{n,m}/p^r = (R_{\infty,m}/p^r) \cap (S_{n,m}/p^r)$.
	Since $S_{n,m} = R_{n,m} \otimes_R S$, the preceding claim is equivalent to showing that $R_{n,m}/p^r = (R_{\infty,m}/p^r) \cap (R_{n,m}/p^r)[1/x]$, where $x = x_{a+1} \cdots x_d$.
	Observe that the sequence $\{p, x\}$ is regular on $R_{n,m}$ (resp.\ $R_{\infty,m}$).
	Indeed, we have that $R_{n,m}$ (resp.\ $R_{\infty,m}$) is $p\textrm{-torsion}$ free, and from the injective homomorphism $R_{n,m}/p \hookrightarrow S_{n,m}/p$ (resp.\ $R_{\infty,m}/p \hookrightarrow S_{\infty,m}/p$), it follows that the source is $x\textrm{-torsion}$ free.
	This reduces the claim into showing that the natural homomorphism $R_{n,m}/(p^r, x) \rightarrow R_{\infty,m}/(p^r, x)$ is injective.
	Next, observe that the sequence $\{x, p\}$ is regular on $R_{n,m}$ (resp.\ $R_{\infty,m}$).
	Indeed, we have that $R_{n,m}$ (resp.\ $R_{\infty,m}$) is $x\textrm{-torsion}$ free, and since $(R_{n,m}/x)_{p\textrm{-tors}} = (R_{n,m}/p)_{x\textrm{-tors}} = 0$ (resp.\ $(R_{\infty,m}/x)_{p\textrm{-tors}} = (R_{\infty,m}/p)_{x\textrm{-tors}} = 0$), it follows that $\{x, p\}$ is regular on $R_{n,m}$ (resp.\ $R_{\infty,m}$).
	Then, an easy induction on $r \geqslant 1$ reduces us to showing that the natural homomorphism $R_{n,m}/(p, x) \rightarrow R_{\infty,m}/(p, x)$ is injective.
	Let $\pi_n = \zeta_{p^n}-1 = p\cdot u$ for some unit $u$ in $O_F[\zeta_{p^n}]$, and note that the reduced claim is equivalent to showing that the natural homomorphism $R_{n,m}/(\pi_n^{p^{n-1}(p-1)}, x) \rightarrow R_{\infty,m}/(\pi_n^{p^{n-1}(p-1)}, x)$ is injective.
	Again an easy induction on powers of $\pi_n$ reduces us to showing that the natural homomorphism $R_{n,m}/(\pi_n, x) \rightarrow R_{\infty,m}/(\pi_n, x)$ is injective, which is equivalent to showing that $R_{n,m}/\pi_n = (R_{\infty,m}/\pi_n) \cap (R_{n,m}/\pi_n)[1/x] \subset S_{\infty,m}/\pi_n$.
	As any element of $R_{\infty,m}/\pi_n$ is integral over $R_{n,m}/\pi_n$, and the latter is normal by Proposition \ref{prop:Rnm_Snm_normal} (2), therefore, it follows that we have $R_{n,m}/\pi_n = (R_{\infty,m}/\pi_n) \cap (R_{n,m}/\pi_n)[1/x]$, and hence $R_{n,m}/p^r = (R_{\infty,m}/p^r) \cap (S_{n,m}/p^r)$.

	Using the preceding claim and equations \eqref{eq:Rnm_modpr} and \eqref{eq:intersect_modpr}, we obtain the following injective homomorphisms:
	\begin{equation*}
		R_{n,m}/p^r \longhookrightarrow (\widehat{R}_{\infty,m} \cap S_{n,m})/p^r \longhookrightarrow (R_{\infty,m}/p^r) \cap (S_{n,m}/p^r) = R_{n,m}/p^r,
	\end{equation*}
	which implies that $R_{0,m}/p^r \isomorphic (\widehat{R}_{\infty,m} \cap S_{n,m})/p^r \isomorphic (R_{\infty,m}/p^r) \cap (S_{n,m}/p^r)$.
	Passing to the limit over $r$, we obtain the following isomorphism:
	\begin{equation*}
		\begin{aligned}
			R_{n,m} = \textstyle\lim_r R_{n,m}/p^r &\isomorphic \lim_r (\widehat{R}_{\infty,m} \cap S_{n,m})/p^r\\
				&\isomorphic (\lim_r R_{\infty,m}/p^r) \cap (\lim_r S_{n,m}/p^r) = \widehat{R}_{\infty,m} \cap S_{n,m} \longhookrightarrow \widehat{S}_{\infty,m},
		\end{aligned}
	\end{equation*}
	where the second equality follows because $\widehat{R}_{\infty,m}$ and $S_{n,m}$ are both $p\textrm{-adically}$ complete.

	Finally, let us show the second part of (3).
	It is clear that we have $\widehat{R}_{\infty,m} \subset \widehat{R}_{\infty,\infty} \cap \widehat{S}_{\infty,m}$, and to show the converse we will use the results of \cite[Appendix A]{andreatta-phigamma}.
	Let us first note that the towers $\{R_{n,m}\}_{n \geqslant 0}$, $\{R_{n,p^n+1}\}_{n \geqslant 0}$, $\{S_{n,m}\}_{n \geqslant 0}$ and $\{S_{n,p^n+1}\}_{n \geqslant 0}$ satisfy \cite[A.1, Conditions (i)-(iv)]{andreatta-phigamma}, in particular, the results of \cite[Appendix A]{andreatta-phigamma} apply to these towers.
	Next, using the notation of Proposition \ref{prop:Rnm_Snm_normal}, let us set $\mathfrak{p}_{\infty,m} \coloneq \colim_n \mathfrak{p}_{n,m}$, $\mathfrak{p}_{\infty,\infty} \coloneq \colim_m \mathfrak{p}_{m,m}$, $\mathfrak{P}_{\infty,m} \coloneq \colim_n \mathfrak{P}_{n,m}$ and $\mathfrak{P}_{\infty,\infty} \coloneq \colim_m \mathfrak{P}_{m,m}$ respectively, as an ideal inside $R_{\infty,m}$, $R_{\infty,\infty}$, $S_{\infty,m}$ and $S_{\infty,\infty}$.
	Note that as filtered colimits preserve exact sequences (of abelian groups, see for example \cite[Theorem 2.6.15]{weibel}), we get that the natural maps in the following commutative diagram are injective:
	\begin{equation}\label{eq:Rinftym_mod_pinftym}
		\begin{tikzcd}
			R_{\infty,m}/\mathfrak{p}_{\infty,m} = \colim_n R_{n,m}/\mathfrak{p}_{n,m} & R_{\infty,\infty}/\mathfrak{p}_{\infty,\infty} = \colim_m R_{m,m}/\mathfrak{p}_{m,m} \\
			S_{\infty,m}/\mathfrak{P}_{\infty,m} = \colim_n S_{n,m}/\mathfrak{P}_{n,m} & S_{\infty,\infty}/\mathfrak{P}_{\infty,\infty} = \colim_m S_{m,m}/\mathfrak{P}_{m,m}.
			\arrow[hook, from=1-1, to=1-2]
			\arrow[hook, from=1-1, to=2-1]
			\arrow[hook, from=1-2, to=2-2]
			\arrow[hook, from=2-1, to=2-2]
		\end{tikzcd}
	\end{equation}
	It is easy to see that the top left corner is the intersection of the top right corner and the bottom left corner inside the bottom right corner.

	Let $A$ denote either of the rings $R_{\infty,m}$, $R_{\infty,\infty}$, $S_{\infty,m}$ or $S_{\infty,\infty}$, and let $\mathfrak{q}$ denote the respective ideal $\mathfrak{p}_{\infty,m}$, $\mathfrak{p}_{\infty,\infty}$, $\mathfrak{P}_{\infty,m}$ or $\mathfrak{P}_{\infty,\infty}$.
	Then, from \cite[Lemma A.4]{andreatta-phigamma}, there exists a unique multiplicative map $\textbf{w} \colon A/\mathfrak{q} \rightarrow \widehat{A}$ such that $\textbf{w}(a) = a \textrm{ mod } \mathfrak{q}$ and $\textbf{w}(a^p) = \textbf{w}(a)^p$ for any $a$ in $A/\mathfrak{q}$.
	Additionally, from \cite[Lemma A.5]{andreatta-phigamma}, we have that any non-zero element $a$ of $A$ may uniquely be written as
	\begin{equation*}
		a = \textstyle \sum_{n \geqslant 0} \textbf{w}(a_n)p^{j_n},
	\end{equation*}
	with $a_n$ in $A/\mathfrak{q}$ so that the sequence of rational numbers $\{j_n \textrm{ such that } a_n \neq 0\}$ is strictly increasing and it is either finite or it converges to infinity.

	Now, let $a$ be an element of the intersection $\widehat{R}_{\infty,\infty} \cap \widehat{S}_{\infty,m} \subset \widehat{S}_{\infty,\infty}$, and we claim that $a$ is in $\widehat{R}_{\infty,m}$.
	Indeed, from the previous paragraph we have a unique presentation $a = \sum_{n \geqslant 0} \textbf{w}(a_n)p^{j_n}$ with $a_n$ in $R_{\infty,\infty}/\mathfrak{p}_{\infty,\infty} \cap S_{\infty,m}/\mathfrak{P}_{\infty,m} \subset S_{\infty,\infty}/\mathfrak{P}_{\infty,\infty}$.
	From the commutative diagram in \eqref{eq:Rinftym_mod_pinftym}, and the computation of $R_{\infty,m}/\mathfrak{p}_{\infty,m}$ and $S_{\infty,m}/\mathfrak{P}_{\infty,m}$ in the proof of Proposition \ref{prop:Rnm_Snm_normal} it follows that $R_{\infty,m}/\mathfrak{p}_{\infty,m} = R_{\infty,\infty}/\mathfrak{p}_{\infty,\infty} \cap S_{\infty,m}/\mathfrak{P}_{\infty,m}$, i.e.\ $a_n$ is in $R_{\infty,m}/\mathfrak{p}_{\infty,m}$.
	Hence, $a$ is in $\widehat{R}_{\infty,m}$, as claimed.
	This allows us to conclude.
\end{proof}

From Lemma \ref{lem:Ax-Sen_R} and Proposition \ref{prop:Rinftym_in_Sinftym}, we deduce the following:
\begin{lem}\label{lem:Galinv_Rinftym}
	Let $A$ denote either of the rings $R_{0,m}$ or $S_{0,m}$, and correspondingly let $A_{\infty,\infty}$ (resp.\ $A_{\infty}$) denote $R_{\infty,\infty}$ (resp.\ $R_{\infty,m}$) or $S_{\infty,\infty}$ (resp.\ $S_{\infty,m}$).
	Then, the following claims are true:
	\begin{enumerate}
		\item[\textup{(1)}] We have that $\bigl.\widehat{\overline{A}}\,\bigr.^{H_A} = \widehat{A}_{\infty}$ and $\bigl.\widehat{\overline{A}}\,\bigr.^{G_A} = A$. 

		\item[\textup{(2)}] Let $\overline{A}^{\flat} \coloneq \lim_{\varphi} \overline{A}/p\overline{A}$ and note that $G_A$ acts naturally and continuously on $\overline{A}^{\flat}$.
			Let $A_{\infty,\infty}^{\flat} \coloneq \lim_{\varphi} A_{\infty,\infty}/pA_{\infty,\infty}$.
			Then, we have that $(\overline{A}^{\flat})^{H_{A,\infty}} = A_{\infty,\infty}^{\flat}$ and $(\overline{A}^{\flat})^{H_A} = A_{\infty}^{\flat}$.
	\end{enumerate}
\end{lem}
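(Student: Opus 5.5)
For (1), we plan to reduce to Galois descent along the Galois extension $A_{\infty,\infty}/A_{\infty}$ (and then $A_\infty/A$), using Lemma \ref{lem:Ax-Sen_R} (3) and Proposition \ref{prop:Rinftym_in_Sinftym}.

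\textbf{Step 1: the invariants under $H_A$.} Here $H_A = \Gal(\overline{A}[1/p]/A_{\infty}[1/p])$, and $H_{A,\infty}\subset H_A$ is normal with quotient $\Gal(A_{\infty,\infty}[1/p]/A_{\infty}[1/p])$. By Lemma \ref{lem:Ax-Sen_R} (3) we have $\widehat{\overline{A}}^{H_{A,\infty}} = \widehat{A}_{\infty,\infty}$. Hence
\[
\widehat{\overline{A}}^{H_A} = \widehat{A}_{\infty,\infty}^{H_A/H_{A,\infty}}.
\]
For $A=S_{0,m}$, we intend to compute the right-hand side via the decomposition of $S_{\infty,\infty}$ into eigenspaces for the prime-to-$p$ group $\Delta_{S,\infty}$. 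This decomposition passes to the $p$-adic completion because $m'$ is invertible, so averaging over finite quotients is available and is compatible with the colimit over $m'$. The result should be $\widehat{S}_{\infty,m}$. For $A=R_{0,m}$ we intend to intersect inside $\widehat{S}_{\infty,\infty}$: the invariants lie in $\widehat{R}_{\infty,\infty}\cap \widehat{S}_{\infty,m}$. By Proposition \ref{prop:Rinftym_in_Sinftym} (3) this intersection is $\widehat{R}_{\infty,m}$, and the injectivity in part (2) of that proposition justifies working inside $\widehat{S}_{\infty,\infty}$. Alternatively, the same averaging argument with the finite groups $\Gal(R_{\infty,m'}/R_{\infty,m})$ works directly.

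\textbf{Step 2: the invariants under $G_A$.} We have $\widehat{\overline{A}}^{G_A} = \widehat{A}_\infty^{\Gamma_A}$, so it suffices to show $\widehat{A}_\infty^{\Gamma_A}=A$. We expect this to be the main obstacle, because $\Gamma_A$ is a $p$-adic Lie group and we cannot simply average over it. Our plan is to use the normalized trace maps / Tate–Sen decomposition: $\widehat{A}_\infty$ splits topologically as $A_n \oplus X_n$, where $X_n$ is the $p$-adically completed span of the nontrivial characters of the Kummer and cyclotomic parts. On $X_n$, the element $\gamma_i-1$ (for $i=0,\dots,d$) is invertible with bounded inverse. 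This is the standard computation, as in Andreatta and Brinon, done monomial by monomial in the $x_i^{k/p^n}$ and in powers of $\zeta_{p^n}$. Since $R_{n,m}$ is free over $R_{0,m}$ on such monomials by the explicit description in Proposition \ref{prop:Rnm_Snm_normal}, this yields $\widehat{A}_\infty^{\Gamma'_A}=A[\zeta_{p^\infty}]^\wedge$ and then, by classical Tate theory, $(\cdot)^{\Gamma_F}=A$. A shortcut for $R$: by Step 2 for $S$ (Brinon's result) the invariants lie in $S_{0,m}\cap \widehat{R}_{\infty,m}$, which equals $R_{0,m}$ by Proposition \ref{prop:Rinftym_in_Sinftym} (3) with $n=0$. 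So only the $S$ case is needed, and there it follows by citing Brinon.

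For (2), we plan to use the identification $\overline{A}^\flat = \lim_{\varphi}\overline{A}/p = \lim_{x\mapsto x^p}\widehat{\overline{A}}$. This identification is multiplicative and compatible with the Galois action, and invariants commute with limits. Hence
\[
(\overline{A}^\flat)^{H} = \lim_{x\mapsto x^p}\widehat{\overline{A}}^{H}
\]
for $H=H_{A,\infty}$ or $H_A$. By Lemma \ref{lem:Ax-Sen_R} (3) and part (1), this equals $\lim_{x\mapsto x^p}\widehat{A}_{\infty,\infty}$, respectively $\lim_{x\mapsto x^p}\widehat{A}_\infty$. Applying the same identification backwards identifies these with $A_{\infty,\infty}^\flat$ and $A_\infty^\flat$ respectively.
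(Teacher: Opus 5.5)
Your argument is, in substance, the paper's. You use Lemma~\ref{lem:Ax-Sen_R}~(3) for the $H_{A,\infty}$-invariants. You reduce the $R$-case to the $S$-case by working inside $\widehat{S}_{\infty,\infty}$ via Proposition~\ref{prop:Rinftym_in_Sinftym}~(2)--(3), with $n=0$ for the $\Gamma$-invariants. You invoke Brinon's result $\widehat{S}_{\infty,m}^{\,\Gamma_S}=S_{0,m}$, and for part (2) the identification $\overline{A}^{\flat}\cong\lim_{x\mapsto x^p}\widehat{\overline{A}}$. The Tate--Sen detour in Step 2 is unnecessary once you take the shortcut, which is exactly what the paper does.

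The one step that fails as written is your proof of the $S$-case of Step 1, which the paper simply quotes as well known. The group $\Gal(S_{\infty,m'}[1/p]/S_{\infty,m}[1/p])$ is not of order prime to $p$ in general. Besides the Kummer part, which is killed by $m'$, it maps onto the Galois group of the unramified extension generated by $\zeta_{m'}$. That group's order divides $|(\mathbb{Z}/m'\mathbb{Z})^{\times}|$ but may be divisible by $p$. For example, for $S=\mathbb{Z}_3\langle x^{\pm1}\rangle$, $m=1$, $m'=7$, the extension $S_\infty[\zeta_7]/S_\infty$ has degree $[\mathbb{Q}_3(\zeta_7):\mathbb{Q}_3]=6$. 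So $\frac{1}{|G|}\sum_{g\in G}g$ is not available there, and ``$m'$ is invertible'' does not justify it.

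The repair is routine. $S_{n,m'}$ is finite \'etale Galois over $S_{n,m}$ integrally, since $p\nmid m'$ and the $x_i$ are units in $S$. Hence the trace is surjective, and you can use $y\mapsto\sum_{g\in G}g(cy)$ with $\mathrm{tr}(c)=1$ in place of averaging. Equivalently, \'etale Galois descent gives $(S_{\infty,m'}/p^r)^{G}=S_{\infty,m}/p^r$ compatibly in $m'$ and $r$. Compare Lemma~\ref{lem:descent_G}: the paper treats the cyclotomic factor by Galois descent and averages only over the prime-to-$p$ Kummer group $J$. Averaging over the Kummer subgroup alone is fine, and the same correction applies to your alternative direct averaging for $R$.
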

\begin{proof}
	It is a well-known fact that $\bigl.\widehat{\overline{S}}\,\bigr.^{H_{S,m}} = \widehat{S}_{\infty,m}$, and for $\Gamma_S \isomorphic \Gal(S_{\infty,m}[1/p]/S_{0,m}[1/p])$ we have that $\widehat{S}_{\infty,m}^{\,\Gamma_S} = S_{0,m}$.
	So, let us now consider the case of $R_{0,m}$.
	From Lemma \ref{lem:Ax-Sen_R} (3), we have that $\bigl.\widehat{\overline{R}}\,\bigr.^{H_{R,\infty}} = \widehat{R}_{\infty,\infty}$.
	Then, it remains to show that for $\Delta \coloneq \Gal(R_{\infty,\infty}[1/p]/R_{\infty,m}[1/p]) \isomorphic \Gal(S_{\infty,\infty}[1/p]/S_{\infty,m}[1/p])$ we have $\widehat{R}_{\infty,\infty}^{\Delta} = \widehat{R}_{\infty,m}$ and for $\Gamma \coloneq \Gal(R_{\infty,m}[1/p]/R_{0,m}[1/p]) \isomorphic \Gamma_S$, we have $\widehat{A}_{\infty,m}^{\,\Gamma} = R_{0,m}$.
	But, this follows from Proposition \ref{lem:Rinftym_in_Sinftym} because we have
	\begin{equation*}
		\begin{aligned}
			\widehat{R}_{\infty,\infty}^{\,\Delta} &= \widehat{R}_{\infty,\infty} \cap \widehat{S}_{\infty,\infty}^{\,\Delta} = \widehat{R}_{\infty,\infty} \cap \widehat{S}_{\infty,m} = \widehat{R}_{\infty,m},\\
			\widehat{R}_{\infty,m}^{\,\Gamma} &= \widehat{R}_{\infty,m} \cap \widehat{S}_{\infty,m}^{\,\Gamma} = \widehat{R}_{\infty,m} \cap S_{0,m} = R_{0,m},
		\end{aligned}
	\end{equation*}
	where the intersections in the first equation are taken inside $\widehat{S}_{\infty,\infty}$, and in the second equation inside $\widehat{S}_{\infty,m}$.

	For part (2), recall that from \cite[Section 1.2.2]{fontaine-corps-periodes}, we have an isomorphism of monoids
	\begin{equation}\label{eq:Abar_monoid_iso}
		\overline{R}^{\flat} \isomorphic \lim_{x \mapsto x^p} \widehat{\overline{R}},
	\end{equation}
	which may be upgraded to an isomorphism of rings by equipping the right hand term with an appropriate ring structure from loc.\ cit.
	Additionally, by equipping the right hand term with a component-wise action of $G_R$, we see that the isomorphism in \eqref{eq:Abar_monoid_iso} is $G_R\textrm{-equivariant}$.
	As taking $H_{R,m}\textrm{-invariant}$ elements commutes with taking limits, it follows that 
	\begin{equation*}
		\begin{aligned}
			(\overline{R}^{\flat})^{H_{R,\infty}} \isomorphic \lim_{x \mapsto x^p} \bigl.\widehat{\overline{R}}\,\bigr.^{H_{R,\infty}} = \lim_{x \mapsto x^p} \widehat{R}_{\infty,\infty} \lisomorphic R_{\infty,\infty}^{\flat},\\
			(R_{\infty,\infty}^{\flat})^{\Gamma} \isomorphic \lim_{x \mapsto x^p} \widehat{R}_{\infty,\infty}^{\,\Gamma} = \lim_{x \mapsto x^p} \widehat{R}_{\infty,m} \lisomorphic R_{\infty,m}^{\flat},
		\end{aligned}
	\end{equation*}
	where we used part (1) for the two equalitues, and \cite[Section 1.2.2]{fontaine-corps-periodes} for the isomorphisms.
	This allows us to conclude.
\end{proof}

We end this section with a standard fact.
\begin{lem}\label{lem:Rbarhat_perfectoid}
	The rings $\widehat{R}_{\infty,m}$, $\widehat{R}_{\infty,\infty}$, $\widehat{\overline{R}}$, $\widehat{S}_{\infty,m}$, $\widehat{S}_{\infty,\infty}$ and $\widehat{\overline{S}}$ are perfectoid in the sense of \cite[Definition 3.5]{bhatt-morrow-scholze-1}.
\end{lem}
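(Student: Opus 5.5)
We check the definition of \cite[Definition 3.5]{bhatt-morrow-scholze-1} directly: a ring $A$ is perfectoid if there is $\pi \in A$ such that $A$ is $\pi\textrm{-adically}$ complete, $\pi^p$ divides $p$, the Frobenius $A/pA \rightarrow A/pA$ is surjective, and the kernel of $\theta \colon A_{\inf}(A) \rightarrow A$ is principal. For $p\textrm{-torsion free}$ rings this reduces to the more classical criterion: $A$ is $p\textrm{-adically}$ complete and $p\textrm{-torsion free}$, and with $\pi = \zeta_{p^2}-1$ (so that $\pi^p$ divides $p$ up to a unit in $O_F[\zeta_{p^2}]$) the Frobenius $A/\pi A \rightarrow A/\pi^p A$ is bijective (see \cite[Lemma 3.10]{bhatt-morrow-scholze-1}). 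All six rings contain $\zeta_{p^2}$, are $p\textrm{-adic}$ completions of $p\textrm{-torsion free}$ rings (hence $p\textrm{-torsion free}$ and complete), and reduction modulo $\pi^p$ commutes with completion. So it suffices to prove that Frobenius induces a bijection $B/\pi B \rightarrow B/\pi^p B$ for $B \in \{R_{\infty,m}, R_{\infty,\infty}, \overline{R}, S_{\infty,m}, S_{\infty,\infty}, \overline{S}\}$.

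\emph{Surjectivity.} For $R_{\infty,m}$, write $R_{\infty,m}/p$ as the colimit of $R_{n,m}/p$. Since $R/p$ is étale over $R_{\square}/p$, and Frobenius on an étale algebra is the base change of Frobenius on the base, every element of $R/p$ is a $\mathbb{Z}$-linear combination of products of elements of $R/p$ and monomials in $x_i$, and modulo $p$ we have $R/p \otimes_{\varphi} \ldots$; concretely, $R_{n+1,m}/p$ is generated over the $p$-th powers of $R_{n+1,m}/p$ by nothing new, because $R/p \cdot (R_{\square,n+1,m}/p)^p \supset R_{\square,n,m}/p$ and $R/p = (R/p)^p \cdot R_{\square}/p$ (étaleness). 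Thus every element of $R_{n,m}/p$ is a $p$-th power in $R_{n+1,m}/p$ modulo $p$; the same holds for $\zeta_{p^n}$. The identical argument works for $R_{\infty,\infty}$, $S_{\infty,m}$, $S_{\infty,\infty}$. For $\overline{R}$ (resp.\ $\overline{S}$), given $b \in \overline{R}$, adjoin a root of $Y^p - \pi^{p} Y - b$; this polynomial is separable after inverting $p$, and its discriminant involves only $p$ and $\pi$, so the extension is étale over $R'[1/p]$ for any $R' \ni b$ satisfying \textbf{(log-\'et)}. In particular it is still unramified outside the divisor, so its normalisation lies in $\overline{R}$, and the root $y$ satisfies $y^p \equiv b \bmod \pi^p$.

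\emph{Injectivity.} We must show that if $b \in B$ and $b^p \in \pi^p B$, then $b \in \pi B$. Here $b/\pi \in B[1/p]$ has $p$-th power in $B$, so it is integral over $B$. For $\overline{R}$ and $\overline{S}$ this suffices, since they are integrally closed in $\overline{R}[1/p]$ (resp.\ $\overline{S}[1/p]$): they are unions of normal domains. For $R_{\infty,m}$ and the others, the element $b/\pi$ lies in some $R_{n,m}[1/p]$ and is integral over $R_{n,m}$, which is normal by Proposition \ref{prop:Rnm_Snm_normal}~(3); hence $b/\pi \in R_{n,m}$.

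The main obstacle is surjectivity for the towers, namely making precise that Frobenius on $R/p$ becomes surjective after adjoining $x_i^{1/p}$. We will handle it through the relative Frobenius isomorphism $R/p \otimes_{R_{\square}/p, \varphi} R_{\square}/p \isomorphic R/p$ for étale maps. Alternatively, we may deduce the towers from $\overline{R}$ via Lemma \ref{lem:Ax-Sen_R}, which gives $\widehat{R}_{\infty,\infty} = \widehat{\overline{R}}^{H_{R,\infty}}$ almost, together with a standard almost-purity argument.
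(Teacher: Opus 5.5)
Your route differs from the paper's. The paper checks the conditions of \cite[Definition 3.5]{bhatt-morrow-scholze-1} one at a time. For Frobenius surjectivity it argues by hand for the tower $R_{n,m}$ and cites \cite[Corollary 3.6]{andreatta-iovita-semistable} and \cite[Proposition 2.0.1]{brinon-relatif} for $R_{\infty,\infty}$ and $\overline{R}$. For principality of $\ker\theta$ it cites \cite[Lemma 3.10]{andreatta-iovita-semistable}, and for $\widehat{R}_{\infty,m}$ it writes $x=\xi y$ and uses $A_{\inf}(\overline{R})^{H_{R,m}}=A_{\inf}(R_{\infty,m})$ from Lemma \ref{lem:Galinv_Rinftym}. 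You instead use the criterion of \cite[Lemma 3.10]{bhatt-morrow-scholze-1}. Your injectivity argument is correct: $b/\pi$ is integral over the normal rings $R_{n,m}$, resp.\ over a normal $R'\subset\overline{R}$. It replaces the paper's principality arguments with something elementary and uniform. Your surjectivity argument for the towers is the paper's ``easy to check'' step, made precise by the relative Frobenius of the \'etale map $R_{\square}/p\rightarrow R/p$.

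The gap is surjectivity for $\overline{R}$ (and $\overline{S}$). The discriminant of $Y^p-\pi^pY-b$ does not involve only $p$ and $\pi$: up to sign it is $p^p(-b)^{p-1}-(p-1)^{p-1}\pi^{p^2}$, which depends on $b$.

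For $p\geqslant 3$ your conclusion still holds, but for a reason you have to supply. With $v_p(p)=1$ one has $v_p(\pi^{p^2})=p/(p-1)<p$. So the discriminant is $\pi^{p^2}$ times a unit of $\mathbb{Z}_p$ times $1+c$ with $c\in\pi R'$. This is invertible in $R'[1/p]$ because $R'$, being finite over $R$, is $p$-adically complete.

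For $p=2$ the argument fails. Here $\pi^2=(\zeta_4-1)^2=-2\zeta_4$, so $Y^2-\pi^2Y-b=(Y+\zeta_4)^2-(b-1)$, and adjoining a root means adjoining $\sqrt{b-1}$. This is ramified along $\{b=1\}\subset\mathrm{Spec}\,R'[1/2]$. For instance, with $R=\mathbb{Z}_2\langle x_1^{\pm1}\rangle$ and $b=x_1$ it ramifies at $x_1=1$, so the root is not in $\overline{R}$.

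The obstruction is structural. The coefficient of $Y$ must have valuation at least $v_p(\pi^p)$, so that $y^p\equiv b$ modulo $\pi^p$. It must also have valuation strictly less than $1$, so that its $p$-th power dominates the discriminant. Both are impossible when $\pi^p$ is $p$ times a unit.

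For $p=2$, run your argument with $\pi=\zeta_8-1$ instead; you still have $\pi^2\mid 2$, and your injectivity proof is unchanged. Alternatively, cite \cite[Proposition 2.0.1]{brinon-relatif} as the paper does. Your closing alternative via Lemma \ref{lem:Ax-Sen_R} and almost purity is too vague to count as an argument, and it is not needed.
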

\begin{proof}
	We shall only show the claim for $\widehat{R}_{\infty,m}$, $\widehat{R}_{\infty,\infty}$ and $\widehat{\overline{R}}$ (the claim for $\widehat{S}_{\infty,m}$, $\widehat{S}_{\infty,\infty}$ and $\widehat{\overline{S}}$ follow by analogous arguments).
	To check that $\widehat{R}_{\infty,m}$, $\widehat{R}_{\infty,\infty}$ and $\widehat{\overline{R}}$ satisfy the assumptions of \cite[Definition 3.5]{bhatt-morrow-scholze-1}, let us set $\pi \coloneq \zeta_{p^2}-1$, then clearly $\pi^p$ divides $p$.
	It is easy to check that the natural power $p$ map $R_{n+1,m}/pR_{n+1,m} \rightarrow R_{n,m}/pR_{n+1,m}$ is surjective for $n \geqslant 0$, and taking the colimit over $n$ shows that the natural Frobenius endomorphism on $R_{\infty,m}/pR_{\infty,m}$ is surjective.
	Additionally, from \cite[Corollary 3.6]{andreatta-iovita-semistable} (resp.\ \cite[Proposition 2.0.1]{brinon-relatif}), the natural Frobenius endomorphism on $R_{\infty,\infty}/pR_{\infty,\infty}$ (resp.\ $\overline{R}/p\overline{R}$) is surjective.
	Moreover, from \cite[Lemma 3.10]{andreatta-iovita-semistable}, the kernel of Fontaine's map $\theta \colon A_{\inf}(R_{\infty,\infty}) = W(R_{\infty,\infty}^{\flat}) \twoheadrightarrow \widehat{R}_{\infty,\infty}$ (resp.\ $\theta \colon A_{\inf}(\overline{R}) = W(\overline{R}^{\flat}) \twoheadrightarrow \widehat{\overline{R}}$) is principal.
	Therefore, it follows that $\widehat{R}_{\infty,\infty}$ (resp.\ $\widehat{\overline{R}}$) is perfectoid in the sense of \cite[Definition 3.5]{bhatt-morrow-scholze-1}.

	It remains to check that the kernel of the map $\theta \colon A_{\inf}(R_{\infty,m}) = W(R_{\infty,m}^{\flat}) \twoheadrightarrow \widehat{R}_{\infty,m}$ is principal.
	So, let $x$ be an element of $A_{\inf}(R_{\infty,m})$ such that $\theta(x) = 0$.
	Then, we may write $x = \xi y$, for some $y$ in $A_{\inf}(\overline{R})$ and $\xi$ a generator of $\textup{ker } \theta \subset A_{\inf}(O_{F_{\infty}})$.
	By the functoriality of the $p\textrm{-typical}$ Witt vector construction, $A_{\inf}(\overline{R})$ admits a natural action of $G_R$, and since $\xi$ is a nonzerodivisor in $A_{\inf}(\overline{R})$ (see \cite[Lemma 3.10]{bhatt-morrow-scholze-1}), it follows that $y$ is an element of $A_{\inf}(\overline{R})^{H_{R,m}}$.
	As taking Galois invariant elements commutes with left exact functors, in particular, the Witt vector functor, therefore, from Lemma \ref{lem:Galinv_Rinftym} (2) we conclude that $y$ is an element of $A_{\inf}(\overline{R})^{H_{R,m}} = W((\overline{R}^{\flat})^{H_{R,m}}) = W(R_{\infty,m}^{\flat}) = A_{\inf}(R_{\infty,m})$.
	Hence, it follows that the ideal $\textup{ker } \theta \subset A_{\inf}(R_{\infty,m})$ is principal, and $\widehat{R}_{\infty,m}$ is perfectoid.
	This completes our proof.
\end{proof}

\subsection{Localisation and its applications}\label{subsec:localisation_Rbar}\label{subsubsec:localisation_Rbar}

Let $\mathscr{P}(\overline{R})$ denote the set of height one prime ideals of $\overline{R}$ containing $p$.
The set $\mathscr{P}(\overline{R})$ is equipped with a transitive action of $G_R$ (see \cite[Theorem 9.3]{matsumura}).
For each prime $\mathfrak{p}$ in $\mathscr{P}(\overline{R})$, set $\GRp \coloneq \{g \in G_R \textrm{ such that } g(\mathfrak{p}) = \mathfrak{p}\}$, i.e.\ $\GRp$ is the decomposition subgroup of $G$ at $\mathfrak{p}$.

Let us set $O_L \coloneq (R_{(p)})^{\wedge}$ and $L \coloneq O_L[1/p]$.
For each $\mathfrak{p}$ in $\mathscr{P}(\overline{R})$, let $\Lbarp$ denote an algebraic closure of $L$ with ring of integers $\OLbarp$ containing $\Rbarp$.
Set $\GRhatp \coloneq \Gal(\Lbarp/L)$, so that we have a natural homomorphism $\GRhatp \rightarrow G_R$ which factors as $\GRhatp \twoheadrightarrow \GRp \subset G_R$.
Note that for each $\mathfrak{p}$ in $\mathscr{P}(\overline{R})$, we have a natural inclusion $\overline{R} \subset \OLbarp$, and hence we have a (non-canonical) isomorphism of Galois groups $\GRhatp \isomorphic G_L$.

Now, for each $\mathfrak{p}$ in $\mathscr{P}(\overline{R})$, let $\Cpplus$ denote the $p\textrm{-adic}$ completion of $\OLbarp$ and set $\Cp \coloneq \Fr(\Cpplus)$.
Then, $\Cp$ is a complete algebraically closed valuation field equipped with a continuous action of $\GRhatp$ and $(\Cpplus)^{\GRhatp} = O_L$ (see \cite[Theorem 1]{hyodo}).
Moreover, let $\Cplusp$ denote the $p\textrm{-adic}$ completion of $\Rbarp$ equipped with a continuous action of $\GRp$, and set $\mathbb{C}(\mathfrak{p}) \coloneq \Cplusp[1/p]$.
Then, an argument similar to \cite[Lemma 2.1]{abhinandan-relative-wach-ii} shows that for each $\mathfrak{p}$ in $\mathscr{P}(\overline{R})$, the ring $\Cplusp$ is $p\textrm{-torsion}$ free, the natural map $\Rbarp \rightarrow \Cplusp$ is injective, and its reduction modulo $p^n$, i.e. $\Rbarp/p^n\Rbarp \rightarrow \Cplusp/p^n\Cplusp$ is also injective.
Additionally, we have that the natural map $\Rbarp/p^n\Rbarp \rightarrow \OLbarp/p^n\OLbarp$ is injective, and thus the natural homomorphism of rings $\Cplusp \rightarrow \Cpplus$ is injective, in particular, inside $\Cp$ we have that $\Cplusp = \mathbb{C}(\mathfrak{p}) \cap \Cpplus$.
Note that the natural injective homomorphism of rings $\Cplusp \hookrightarrow \Cpplus$ is further compatible with the respective actions of $\GRhatp$, where the action of $\GRhatp$ on the left-hand term factors through $\GRhatp \twoheadrightarrow \GRp$.
In particular, we see that $\Cplusp^{\GRp} = O_L$.

Next, we see that reduction modulo $p^n$ of the natural injective homomorphisms of rings $\overline{R} \hookrightarrow \Rbarp \hookrightarrow O_{\Lbar(\mathfrak{p})}$, need not be injective.
However, taking the product over all $\mathfrak{p}$ in $\mathscr{P}(\overline{R})$ and arguing as in \cite[Lemma 2.2]{abhinandan-relative-wach-ii} shows that for each $n \geqslant 1$, we have the following natural $R\textrm{-linear}$ injective homomorphism:
\begin{equation}\label{eqn: injectivity of localization R/pR}
	\overline{R}/p^n\overline{R} \longhookrightarrow \textstyle\prod_{\pins} (\overline{R})_{\mathfrak{p}}/p^n (\overline{R})_{\mathfrak{p}}.
\end{equation}
Taking the limit over $n$, yields the following natural $R\textrm{-linear}$ injective homomorphisms:
\begin{equation}\label{eq:cplus_gequiv}
	\widehat{\overline{R}} \longhookrightarrow \textstyle\prod_{\pins} \Cplusp \longhookrightarrow \textstyle\prod_{\pins} \Cpplus.
\end{equation}
In particular, we see that $\widehat{\overline{R}} = \widehat{\overline{R}}[1/p] \cap \textstyle\prod_{\pins} \Cplusp \subset \big(\textstyle\prod_{\pins} \Cplusp\big)[1/p]$.

Note that in \eqref{eq:cplus_gequiv} the leftmost term admits a natural action of $G_R$, the middle term admits a natural action of $\prod_{\pins}\GRp$ and the rightmost term admits a natural action of $\prod_{\pins} \GRhatp$.
The two homomorphisms in \eqref{eq:cplus_gequiv} are compatible with these respective actions.
Moreover, a reasoning similar to \cite[Remarque 3.3.2]{brinon-relatif} shows that the middle term of \eqref{eq:cplus_gequiv} may be equipped with an action of $G_R$ and the left homomorphism in \eqref{eq:cplus_gequiv} is equivariant with respect to this action of $G_R$.
Furthermore, an argument similar to \cite[Lemma 2.4]{abhinandan-relative-wach-ii} using \cite[Corollary 3.6]{andreatta-iovita-semistable}, shows that the $O_L\textrm{-algebras}$ $\Cplusp$ and $\Cpplus$ are perfectoid in the sense of \cite[Definition 3.5]{bhatt-morrow-scholze-1}.

Let us now turn to some applications of the discussion above which will be used in studying some explicit periods in Section \ref{subsubsec:explicit_periods}.
In the next lemma, we consider $\overline{R}/p\overline{R}$ as an $R\textrm{-module}$.

\begin{lem}\label{lem:inject_mult_x_i}\label{lem: injectivity of multiplication by x_i}
	For each $1 \leqslant i \leqslant d$ and $n \geqslant 1$, the natural $\textrm{multiplication-by-}x_i$ map $\overline{R}/p^n\overline{R} \xrightarrow{x_i} \overline{R}/p^n\overline{R}$ is injective.
	Moreover, the natural $\textrm{multiplication-by-}x_i$ map $\widehat{\overline{R}} \xrightarrow{x_i} \widehat{\overline{R}}$ is also injective.
\end{lem}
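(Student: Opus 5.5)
We reduce to $n=1$ and then pass to a limit for the completion. First, $\overline{R}$ is $p$-torsion free, being a domain of characteristic zero. So if multiplication by $x_i$ is injective on $\overline{R}/p\overline{R}$, an induction on $n$ shows it is injective on $\overline{R}/p^n\overline{R}$. The induction runs through the short exact sequences
\begin{equation*}
	0 \rightarrow \overline{R}/p^{n-1}\overline{R} \xrightarrow{p} \overline{R}/p^n\overline{R} \rightarrow \overline{R}/p\overline{R} \rightarrow 0,
\end{equation*}
which are compatible with multiplication by $x_i$, together with the snake lemma. For $\widehat{\overline{R}}$ we use $\widehat{\overline{R}}/p^n\widehat{\overline{R}} = \overline{R}/p^n\overline{R}$ (Stacks 05GG, as in Lemma \ref{lem:Ax-Sen_R}). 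If $x_i y = 0$ in $\widehat{\overline{R}}$, then the image of $y$ in every $\overline{R}/p^n\overline{R}$ vanishes, so $y \in \bigcap_n p^n\widehat{\overline{R}} = 0$.

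The key case is therefore $n=1$. Let $y \in \overline{R}$ with $x_i y \in p\overline{R}$. Choose a finite normal $R$-subalgebra $R' \subset \overline{R}$ satisfying \textbf{(log-\'et)} that contains $y$ and $z \coloneq x_i y/p$. We must show $y/p \in \overline{R}$, or after enlarging, that $y/p \in R'$. Since $R'$ is a noetherian normal domain, it is the intersection of its localisations at height one primes. Hence it suffices to check $v_{\mathfrak{q}}(y) \geqslant v_{\mathfrak{q}}(p)$ at every height one prime $\mathfrak{q} \subset R'$. If $p \notin \mathfrak{q}$ there is nothing to prove.

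If $p \in \mathfrak{q}$, then from $x_i y = pz$ we get $v_{\mathfrak{q}}(y) + v_{\mathfrak{q}}(x_i) \geqslant v_{\mathfrak{q}}(p)$. So it suffices to show $v_{\mathfrak{q}}(x_i) = 0$ for every height one prime $\mathfrak{q}$ of $R'$ containing $p$, i.e.\ $x_i \notin \mathfrak{q}$. Now $\mathfrak{q} \cap R$ is a height one prime of $R$ containing $p$, by going-down for the finite extension $R \subset R'$ of normal domains. By Proposition \ref{prop:Rnm_Snm_normal}(1) with $n=0$, $m=1$, the ideal $pR$ is prime, so $\mathfrak{q} \cap R = pR$. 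Finally $x_i \notin pR$: its image in $R/pR$, which is \'etale over $\kappa[x_1^{\pm1},\ldots,x_a^{\pm1},x_{a+1},\ldots,x_d]$, is nonzero, and in fact a nonzerodivisor since $R/pR$ is flat over that polynomial ring. Thus $x_i \notin \mathfrak{q}$.

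An equivalent route is the localisation \eqref{eqn: injectivity of localization R/pR}: in each $(\overline{R})_{\mathfrak{p}}$ the element $x_i$ is a unit, because $\mathfrak{p} \cap R = pR$ and $x_i \notin pR$. Multiplication by $x_i$ is then bijective on each factor $(\overline{R})_{\mathfrak{p}}/p^n$, and injectivity follows from the injection into the product. We can use whichever is cleaner.

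The main obstacle is justifying that every height one prime of $R'$ above $p$ contracts to $pR$. This relies on $R'$ being finite over $R$ with $R$ normal, so that going-down or the dimension formula applies. We also need $pR$ to be prime, which holds because the special fibre is connected and $R/pR$ is regular, hence a domain.
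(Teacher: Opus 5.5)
Your proof is correct, and your ``equivalent route'' is the paper's own argument. The paper shows that $x_i$ is a unit in $(\overline{R})_{\mathfrak{p}}$ for every height one prime $\mathfrak{p}$ of $\overline{R}$ above $p$. It then uses the injection \eqref{eqn: injectivity of localization R/pR} of $\overline{R}/p^n\overline{R}$ into $\prod_{\mathfrak{p}} (\overline{R})_{\mathfrak{p}}/p^n(\overline{R})_{\mathfrak{p}}$, and passes to the limit for $\widehat{\overline{R}}$.

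Your main route differs from this in two ways.

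\begin{enumerate}
\item The paper imports the injection \eqref{eqn: injectivity of localization R/pR} from earlier work. You instead descend to a finite normal, hence noetherian normal, subalgebra $R'$, and use $R' = \bigcap_{\mathrm{ht}\,\mathfrak{q}=1} R'_{\mathfrak{q}}$ together with a valuation count. This is self-contained, and it is essentially the finite-level mechanism behind that injection; the only extra cost is choosing $R'$.
\item To see that $x_i$ avoids every height one prime above $p$, the paper notes that $\mathfrak{p}\cap R$ has height one, so it cannot contain the $R$-regular sequence $\{p,x_i\}$. You instead identify $\mathfrak{q}\cap R = pR$, using that $pR$ is prime and $x_i \notin pR$. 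Both arguments rest on going-down for the integral extension of $R$.
\end{enumerate}

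Two small remarks.
\begin{itemize}
\item The reduction to $n=1$ is unnecessary. If $x_i y = p^n z$, the same count gives $v_{\mathfrak{q}}(y) \geqslant n\, v_{\mathfrak{q}}(p)$ directly.
\item Citing Proposition \ref{prop:Rnm_Snm_normal}(1) at $n=0$ is degenerate as written, since $\pi_0 = \zeta_1 - 1 = 0$. Rely instead on your own justification that $R/pR$ is regular and connected, hence a domain. Alternatively, use the paper's regular-sequence argument, which needs only that $x_i$ is a nonzerodivisor on $R/pR$, not that $pR$ is prime.
\end{itemize}
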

\begin{proof}
	The statement is clear for $1 \leqslant i \leqslant a$ because $x_i$ is invertible in $R$.
	So, assume that $a+1 \leqslant i \leqslant d$.
	Let $\mathfrak{p}$ be an element of $\mathscr{P}(\overline{R})$, and note that $\mathfrak{p} \cap R \subset R$ is a prime ideal.
	As $\overline{R}$ is a normal extension of $R$, by the Cohen--Seidenberg theorem we get that $\mathrm{ht}(\mathfrak{p} \cap R) = 1$, in particular,  $\mathfrak{p} \cap R$ cannot contain the $R\textrm{-regular}$ sequence $\{p,x_i\}$.
	Therefore, we see that $x_i$ is not in $\mathfrak{p}$, i.e.\ $x_i$ is invertible in $(\overline{R})_{\mathfrak{p}}$.
	Now, consider the following commutative diagram:
	\begin{equation*}
		\begin{tikzcd}
			\overline{R}/p^n\overline{R} \arrow[r, "\eqref{eqn: injectivity of localization R/pR}"] \arrow[d, "x_i"] & \textstyle\prod_{\pins} (\overline{R})_{\mathfrak{p}}/p^n (\overline{R})_{\mathfrak{p}} \arrow[d, "x_i"]\\
			\overline{R}/p^n\overline{R} \arrow[r, "\eqref{eqn: injectivity of localization R/pR}"] & \textstyle\prod_{\pins} (\overline{R})_{\mathfrak{p}}/p^n (\overline{R})_{\mathfrak{p}},
		\end{tikzcd}
	\end{equation*}
	where the vertical arrows denote the natural multiplication map.
	As the horizontal arrows and the right vertical arrow in the diagram are injective, therefore, we conclude that the left vertical arrow is also injective.
	Moreover, taking the limit over $n$ yields that the natural multiplication map $\widehat{\overline{R}} \xrightarrow{x_i} \widehat{\overline{R}}$ is also injective.
	This allows us to conclude.
\end{proof}

\begin{lem}\label{lem:B_mult_xi_injective}
	Let $B \subset \overline{R}$ be a finite normal $R_{\infty}\textrm{-subalgebra}$.
	Then, for each $1 \leqslant i \leqslant d$ and $n \geqslant 1$, the natural $\textrm{multiplication-by-}x_i$ map $B/p^n B \xrightarrow{x_i} B/p^n B$ is injective.
\end{lem}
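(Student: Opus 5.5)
The strategy is to reduce to Lemma \ref{lem:inject_mult_x_i}, using the fact that $B$ is normal and sits inside $\overline{R}$. The key claim is
\begin{equation*}
	p^n\overline{R} \cap B = p^n B \quad \textrm{for all } n \geqslant 1,
\end{equation*}
so that the natural map $B/p^nB \rightarrow \overline{R}/p^n\overline{R}$ is injective. Granting this, take $b \in B$ with $x_i b \in p^n B$. Then $x_i b \in p^n\overline{R}$, so Lemma \ref{lem:inject_mult_x_i} gives $b \in p^n\overline{R}$, hence $b \in p^n\overline{R} \cap B = p^n B$. This proves injectivity of multiplication by $x_i$ on $B/p^nB$. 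As in that lemma, the case $1 \leqslant i \leqslant a$ is trivial anyway, since $x_i$ is a unit.

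To prove the key claim, let $b \in B$ with $b = p^n c$ for some $c \in \overline{R}$. Then $c = b/p^n$ lies in $\mathrm{Fr}(B)$. Moreover $c$ is integral over $R$, since it is an element of $\overline{R}$, which is a union of finite $R$-algebras; hence $c$ is integral over $B$. Since $B$ is a normal domain (integrally closed in its fraction field), we get $c \in B$, so $b \in p^nB$.

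The one point requiring care is that $B$ is integrally closed in $\mathrm{Fr}(B)$ while $c$ a priori lives in $\overline{R}$. This is handled because $b/p^n$ visibly lies in $\mathrm{Fr}(B)$, and $\overline{R}$ is integral over $R \subset B$. If ``normal'' for a non-noetherian $B$ (a union over the $R_n$-tower) is meant as integrally closed in $\overline{R}[1/p]$ instead, the same argument applies verbatim, since $c \in \overline{R} \cap B[1/p]$. I expect no real obstacle; the substance lies entirely in Lemma \ref{lem:inject_mult_x_i}, that is, in the localisation injection \eqref{eqn: injectivity of localization R/pR}.
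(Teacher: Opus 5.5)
Your proposal is correct and is essentially the paper's own proof. The paper likewise uses the normality of $B$, written as $B = B[1/p] \cap \overline{R}$ inside $\overline{R}[1/p]$, to get that $B/p^nB \rightarrow \overline{R}/p^n\overline{R}$ is injective, and then concludes via Lemma \ref{lem:inject_mult_x_i}; your integrality argument for $p^n\overline{R} \cap B = p^nB$ just spells out that identity.
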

\begin{proof}
	The natural inclusion $B \subset \overline{R}$ induces the following commutative diagram:
	\begin{equation*}
		\begin{tikzcd}
			B/p^n B \arrow[r, hookrightarrow] \arrow[d, "x_i"] & \overline{R}/p^n\overline{R} \arrow[d, "x_i"]\\
			B/p^n B \arrow[r, hookrightarrow] & \overline{R}/p^n\overline{R},
		\end{tikzcd}
	\end{equation*}
	where the vertical arrows are multiplication by $x_i$ and the horizontal arrows are injective because we have $B = B[1/p] \cap \overline{R} \subset \overline{R}[1/p]$ since $B$ is a normal domain.
	As the right vertical arrow is injective by Lemma \ref{lem: injectivity of multiplication by x_i}, the claim follows.
\end{proof}

\begin{lem}\label{lem:rbarhat_noalmostzero}
	The rings $\widehat{\overline{R}}$ and $\overline{R}/p^n\overline{R}$ contain no almost zero elements.
\end{lem}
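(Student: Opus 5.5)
We reduce to the local rings at height one primes above $p$, using the injections \eqref{eqn: injectivity of localization R/pR} and \eqref{eq:cplus_gequiv}. Here "almost zero" means killed by $\mathfrak{m} = (\pi_j)_{j \geqslant 1}$, the ideal generated by the elements $\pi_j = \zeta_{p^j}-1$. First we treat $\overline{R}/p^n\overline{R}$. Let $x \in \overline{R}/p^n\overline{R}$ be killed by every $\pi_j$. By \eqref{eqn: injectivity of localization R/pR}, it suffices to show that for each $\mathfrak{p} \in \mathscr{P}(\overline{R})$ the image $x_{\mathfrak{p}} \in \Rbarp/p^n\Rbarp$ vanishes. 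Since $\Rbarp/p^n\Rbarp \hookrightarrow \OLbarp/p^n\OLbarp$ is injective (as recalled above), we may further replace the target by $\OLbarp/p^n\OLbarp$.

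The key point is that $\OLbarp$ is a valuation ring with value group $\mathbb{Q}$, normalised so that $v(p)=1$. Lift $x_{\mathfrak{p}}$ to $y \in \OLbarp$. The condition $\pi_j y \in p^n\OLbarp$ means $v(\pi_j) + v(y) \geqslant n$. Since $v(\pi_j) = 1/(p^{j-1}(p-1)) \to 0$, this holds for all $j$ only if $v(y) \geqslant n$, i.e.\ $y \in p^n\OLbarp$. Hence $x_{\mathfrak{p}} = 0$ for every $\mathfrak{p}$, and therefore $x = 0$.

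For $\widehat{\overline{R}}$ we argue the same way, using \eqref{eq:cplus_gequiv}: $\widehat{\overline{R}} \hookrightarrow \prod_{\pins} \Cpplus$, and each $\Cpplus$ is the valuation ring of a complete algebraically closed field. If $x \in \widehat{\overline{R}}$ is killed by $\mathfrak{m}$, then each component $x_{\mathfrak{p}} \in \Cpplus$ satisfies $\pi_1 x_{\mathfrak{p}} = 0$. Since $\Cpplus$ is a domain and $\pi_1 \neq 0$, we get $x_{\mathfrak{p}} = 0$, and injectivity then gives $x = 0$. Alternatively, $\widehat{\overline{R}}$ is $p$-torsion free by Proposition \ref{prop:properties_Rbarhat} and $\pi_1$ divides a power of $p$ up to a unit, which already kills any $\pi_1$-torsion directly.

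The only delicate point is in the first step: one needs injectivity of the composite $\overline{R}/p^n\overline{R} \to \prod_{\mathfrak{p}} \OLbarp/p^n\OLbarp$. This follows by combining \eqref{eqn: injectivity of localization R/pR} with the injectivity of $\Rbarp/p^n\Rbarp \to \OLbarp/p^n\OLbarp$ stated in Section \ref{subsec:localisation_Rbar}. Once this is in place, the valuation estimate above finishes the argument.
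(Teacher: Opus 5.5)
Your proposal is correct and follows the paper's proof: you reduce via the injection $\overline{R}/p^n\overline{R} \hookrightarrow \prod_{\mathfrak{p}} (\overline{R})_{\mathfrak{p}}/p^n(\overline{R})_{\mathfrak{p}}$ to a valuation estimate forcing $v(y) \geqslant n$, and you handle $\widehat{\overline{R}}$ by $p$-torsion-freeness (your second argument is exactly the paper's). The only cosmetic difference is that you run the valuation argument in $\OLbarp$, after the stated injection $(\overline{R})_{\mathfrak{p}}/p^n \hookrightarrow \OLbarp/p^n$, whereas the paper works directly in $(\overline{R})_{\mathfrak{p}}$, viewed as a valuation ring extending $R_{(p)}$.
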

\begin{proof}
	From Proposition \ref{prop:properties_Rbarhat}, recall that $\widehat{\overline{R}}$ is $p\textrm{-torsion}$ free, therefore, it has no almost zero elements.
	To show that $\overline{R}/p^n\overline{R}$ has no almost zero elements, using the injective map in \eqref{eqn: injectivity of localization R/pR}, it is enough to show that $(\overline{R})_{\mathfrak{p}}/p^n (\overline{R})_{\mathfrak{p}}$ has no almost zero elements for each $\mathfrak{p}$ in $\mathscr{P}(\overline{R})$, where $\mathscr{P}(\overline{R})$ denotes the set of height one prime ideals of $\overline{R}$ containing $p$.

	Fix some $\mathfrak{p}$ in $\mathscr{P}(\overline{R})$, and note that $R_{(p)} \rightarrow (\overline{R})_{\mathfrak{p}}$ is an extension of valuation rings.
	In particular, the valuation of $R_{(p)}$ uniquely extends to $(\overline{R})_{\mathfrak{p}}$, which we denote by $v_{\mathfrak{p}}$ and normalise so that $v_{\mathfrak{p}}(p)$ equals $1$.
	Assume that $a$ is an element of $(\overline{R})_{\mathfrak{p}}$ such that its reduction modulo $p^n$ is almost zero.
	Then, for each $\varepsilon > 0$, one has that $p^{\varepsilon}a \in p^n (\overline{R})_{\mathfrak{p}}$.
	Therefore, $v_{\mathfrak{p}}(a) + \varepsilon \geqslant n$ for all $\varepsilon > 0$, and we conclude that $v_{\mathfrak{p}}(a) \geqslant n$.
	Hence, $a$ must be in $p^n (\overline{R})_{\mathfrak{p}}$, as claimed.
\end{proof}

\begin{prop}\label{X-divisible}
	For $d > a$ and each $a+1 \leqslant i \leqslant d$, we have that
	\begin{equation*}
		\cap_{m \geqslant 0} \hspace{1mm} x_i^m (\overline{R}/p^n\overline{R}) = 0 \quad \textrm{and} \quad \cap_{m \geqslant 0} (x_i^m \CRp) = 0.
	\end{equation*}
\end{prop}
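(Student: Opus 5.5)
The proof has three steps. First I reduce the statement about $\CRp$ to the statement about $\overline{R}/p^n\overline{R}$. Then I reduce to $n=1$ by induction on $n$. Finally I treat $n=1$ by descending to a finite level, where noetherian arguments are available. Throughout, fix $a+1 \leqslant i \leqslant d$.

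\emph{From $\CRp$ to finite level.} Suppose $c \in \cap_{m} x_i^m \CRp$. Then the image of $c$ in $\CRp/p^n\CRp = \overline{R}/p^n\overline{R}$ lies in $\cap_m x_i^m(\overline{R}/p^n\overline{R})$ for every $n$. Once the first claim is proved, this image vanishes for all $n$, so $c \in \cap_n p^n\CRp = 0$.

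\emph{Reduction to $n=1$.} Let $\bar a \in \cap_m x_i^m(\overline{R}/p^n\overline{R})$ with $n \geqslant 2$. Its image modulo $p$ lies in $\cap_m x_i^m(\overline{R}/p\overline{R})$, so by the case $n=1$ we may write $a = pa'$. For each $m$ write $pa' \equiv x_i^m b_m \pmod{p^n}$. Reducing modulo $p$ gives $x_i^m b_m \equiv 0$, and since multiplication by $x_i$ on $\overline{R}/p\overline{R}$ is injective (Lemma \ref{lem:inject_mult_x_i}), we get $b_m = pb_m'$. Because $\overline{R}$ is $p$-torsion free, this yields $a' \equiv x_i^m b_m' \pmod{p^{n-1}}$ for all $m$. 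By induction $a' \in p^{n-1}\overline{R}$, hence $a \in p^n\overline{R}$.

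\emph{The case $n=1$ (main obstacle).} Take $a \in \overline{R}$ with $a \in x_i^m\overline{R} + p\overline{R}$ for every $m$. The difficulty is that the witnesses $b_m$ with $a \equiv x_i^m b_m$ may lie in larger and larger finite subalgebras of $\overline{R}$. Also, the localisation embedding \eqref{eqn: injectivity of localization R/pR} is useless here, since $x_i$ becomes a unit at every $\mathfrak{p} \in \mathscr{P}(\overline{R})$.

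My first attempt is to use only the normal finite subalgebra $B_m \subset \overline{R}$ generated over $R$ by $a$ and $b_m$. By Lemma \ref{lem:B_mult_xi_injective} and normality, $B_m/pB_m \hookrightarrow \overline{R}/p\overline{R}$ and $x_i$ is a nonzerodivisor on $B_m/pB_m$. I then want a statement \emph{uniform in the finite level}: for every finite normal $R$-subalgebra $B$ in which $\bar a \in x_i^{m}(B/pB)$, the exponent $m$ is bounded by a quantity depending only on $a$ and $R$.

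To obtain this, I would use a rank-two valuation argument. Let $Q \subset R$ be the height-two prime $(p, x_i)$; it is a regular prime because $\{p, x_i\}$ is a regular sequence on the regular ring $R$. On $\Fr(R)$ take the monomial valuation centred at $Q$ that gives $x_i$ a small weight $\varepsilon$ relative to $v(p)=1$, and extend it to $\Fr(\overline{R})$. For $a \notin p\overline{R}$ one would like $v(a) < 1$, i.e.\ the value of $a$ is bounded independently of the finite level. Then $a \equiv x_i^m b_m \pmod{p}$ forces $m\varepsilon \leqslant v(a) < 1$, which bounds $m$. The reason to expect this is that the extension is tame along $x_i = 0$, so valuations of $x_i$-powers in $\overline{R}$ stay controlled.

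Making this uniformity rigorous is the step I expect to be hardest. If the valuation approach fails, I would fall back on Krull's intersection theorem in the noetherian local rings $(B/pB)_{\mathfrak{n}}$ at maximal ideals $\mathfrak{n}$ containing $x_i$. To pass to $\overline{R}$, I would combine this with the almost-\'etale trace (almost purity, as in the proof of Lemma \ref{lem:Ax-Sen_R}) to move the $b_m$ down to a fixed level up to elements $\pi_j$, and then use Lemma \ref{lem:rbarhat_noalmostzero} to remove the resulting almost-zero error.
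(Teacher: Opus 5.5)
Your two reductions are fine and match the paper's ``easy induction'': passing from $\CRp$ to $\overline{R}/p^n\overline{R}$, and the induction from $n$ to $1$ via Lemma \ref{lem:inject_mult_x_i} and $p$-torsion freeness. The gap is the case $n=1$, which carries the whole content and which you leave unproved.

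As formulated, your valuation claim is false. Fix $\varepsilon$ and take an arbitrary extension $v$ of the monomial valuation. Then $a=x_i^k$ with $k\varepsilon\geqslant 1$ is not in $p\overline{R}$ (Lemma \ref{lem:inject_mult_x_i}), yet $v(a)\geqslant 1$. Moreover, when several height-one primes of a finite normal $R'\subset\overline{R}$ lie over $pR$, different extensions of the same $v_\varepsilon$ can take very different values on elements of $R'$. So both $\varepsilon$ and the extension must be chosen according to $a$.

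Tameness plays no role here. Nor is ``uniformity in the finite level'' the real difficulty. Once a rank-one valuation $v$ on $\Fr(\overline{R})$ with $v\geqslant 0$ on $\overline{R}$ is fixed, the relation $a=x_i^mb_m+pc_m$ gives $v(a)\geqslant\min(m\,v(x_i),v(p))$ wherever $b_m,c_m$ live. What is missing is the existence, for the given $a\notin p\overline{R}$, of such a $v$ with $v(x_i)>0$ and $v(a)<v(p)$.

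This can be supplied, and then your route becomes a complete and much more elementary alternative to the paper's. The steps are as follows.

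\begin{enumerate}
\item Let $R'$ be the integral closure of $R$ in $\Fr(R)(a)$. Since $a\notin pR'$ and $R'$ is normal, there is a height-one prime $\mathfrak{q}\ni p$ of $R'$ with $v_{\mathfrak{q}}(a)<v_{\mathfrak{q}}(p)$. Going down gives $\mathfrak{q}\cap R=pR$.
\item Let $T^n+c_{n-1}T^{n-1}+\cdots+c_0\in R[T]$ be the minimal polynomial of $a$. By the Newton polygon, for any rank-one valuation $v$ of $\Fr(R)$ the least value of $a$ over all extensions of $v$ to $\Fr(R)(a)$ is $\min_{1\leqslant k\leqslant n}v(c_{n-k})/k$.
\item Let $v_\varepsilon$ be the monomial valuation at a minimal prime $Q$ of $(p,x_i)R$ (so $R_Q$ is regular with parameters $p,x_i$), normalised by $v_\varepsilon(p)=1$ and $v_\varepsilon(x_i)=\varepsilon$. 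Then $v_\varepsilon(c)\to v_{(p)}(c)$ for every $c\in R$. Hence the minimum in step 2 tends to a limit $\leqslant v_{\mathfrak{q}}(a)/v_{\mathfrak{q}}(p)<1$.
\item For small $\varepsilon$ some extension $w$ of $v_\varepsilon$ therefore has $w(a)<1$. Extend $w$ to $\Fr(\overline{R})$; it stays $\geqslant 0$ on $\overline{R}$. Taking $m\geqslant 1/\varepsilon$ gives the contradiction.
\end{enumerate}

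The paper argues differently. Since $x_i$ is a nonzerodivisor on $\overline{R}/p\overline{R}$, $b_m$ is unique, hence invariant under the Galois group fixing $a$. Lemma \ref{lem:Ax-Sen_R} then puts $\omega_jb_m$ into $R'_\infty/pR'_\infty$. Andr\'e's perfectoid Abhyankar lemma maps $R'_\infty/pR'_\infty$, with $\omega_j$-torsion kernel, into a finite free $R_\infty/pR_\infty$-module where the intersection is visibly zero. Lemma \ref{lem:rbarhat_noalmostzero} finishes.

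Your fallback lacks exactly that middle step. Almost-purity traces only descend to a perfectoid level such as $R'_{\infty,\infty}$, which is not noetherian. There the witnesses again run through growing finite levels, so Krull's theorem at finite levels does not close the argument.
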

\begin{proof} 
	Note that the second equality follows from the first, and it is enough to show that for each $a+1 \leqslant i \leqslant d$ we have $\cap_{m \geqslant 0} \hspace{1mm} x_i^m (\overline{R}/p\overline{R}) = 0$.
	Indeed, using the claim for $n = 1$ and an easy induction yields the claim for $n \geqslant 2$.
	
	To show the reduced claim, note that we have 
	\begin{equation*}
		R_{\infty}/pR_{\infty} = (R/pR)[\mu_{p^{\infty}}, x_1^{1/p^{\infty}}, \ldots, x_d^{1/p^{\infty}}],
	\end{equation*}
	and we directly observe that for $a+1 \leqslant i \leqslant d$, we have
	\begin{equation}\label{eqn: intersection for R_{infty}}
		\cap_{m \geqslant 0} \hspace{1mm} x_i^m  (R_\infty/pR_\infty) = 0.
	\end{equation} 
	
	Next, let us consider a finite normal $R\textrm{-subalgebra}$ $R' \subset \overline{R}$.
	Then, by Andr\'e's perfectoid Abhyankar Lemma \cite[Th\'eor\`eme~0.3.1]{andre-abhyankar}, note that for each $j \geqslant 0$ there exists a homomorphism $f_j \colon R_{\infty}'/pR_{\infty}' \rightarrow N_j$, where $N_j$ is a finite \'etale $R_{\infty}/pR_{\infty}\textrm{-algebra}$ and $\ker (f_j)$ is $\omega_j$-torsion, for $\omega_j = (\zeta_{p^j}-1)x_{a+1} \cdots x_d$.
	By \cite[Lemma~2.4.15]{gabber-ramero-almost-ring}, we may further assume that $N_j$ is finite free over $R_{\infty}/pR_{\infty}$.
	Combining this with \eqref{eqn: intersection for R_{infty}}, we thus obtain that 
	\begin{equation*}
		\cap_{m \geqslant 0} \hspace{1mm} x_i^m (N_j/pN_j) = 0. 
	\end{equation*}
	In particular, we see that any element $a$ in $\cap_{m \geqslant 0} \hspace{1mm} x_i^m(R_{\infty}'/pR_{\infty}')$ is killed by $\omega_j$ for each $j \geqslant 0$.
	As multiplication by $x_k$ on $R_{\infty}'/pR_{\infty}'$ is injective for $1 \leqslant k \leqslant d$ (see Lemma \ref{lem:B_mult_xi_injective}), therefore, we further obtain that $a$ is killed by $\pi_j$ for each $j \geqslant 0$.
	But this is only possible if $a = 0$ because $R_{\infty}'/pR_{\infty}'$ has no almost zero elements (see Lemma \ref{lem:rbarhat_noalmostzero}).
	Hence, we conclude that $\cap_{m \geqslant 0} \hspace{1mm} x_i^m (R_{\infty}'/pR_{\infty}') = 0$.
	
	Finally, let us show that $\cap_{m \geqslant 0} \hspace{1mm} x_i^m(\overline{R}/p\overline{R}) = 0$.
	Let $b$ be any element of $\cap_{m \geqslant 0} \hspace{1mm} x_i^m(\overline{R}/p\overline{R})$.
	Then, we may fix a finite normal $R\textrm{-subalgebra}$ $R' \subset \overline{R}$ such that $b$ is an element of $R'/pR'$.
	Let $b_m$ in $\overline{R}/p\overline{R}$ be such that $x_i^m b_m = a$.
	Again, as multiplication by $x_i$ is injective on $\overline{R}/p\overline{R}$ (see Lemma \ref{lem: injectivity of multiplication by x_i}), therefore, we obtain that $b_m$ is an element of $(\overline{R}/p\overline{R})^{H_{R'}}$ for each $n \geqslant 0$.
	Then, by Lemma~\ref{lem:Ax-Sen_R} (2), note that for any $j \geqslant 1$, one has that $\omega_j b_m$ is in $R_{\infty}'/pR_{\infty}'$ for all $m \geqslant 1$.
	Therefore, we see that $\omega_j b$ is in $\cap_{m \geqslant 0} \hspace{1mm} x_i^m (R_{\infty}'/p R_{\infty}')$, and the previous paragraph yields that $\omega_j b = 0$.
	Again, as multiplication by $x_k$ on $R_{\infty}'/pR_{\infty}'$ is injective for $1 \leqslant k \leqslant d$ (see Lemma \ref{lem:B_mult_xi_injective}), therefore, we further obtain that $\pi_j b = 0$ for each $j \geqslant 0$.
	But this is only possible if $b = 0$ because $\overline{R}/p\overline{R}$ has no almost zero elements (see Lemma \ref{lem:rbarhat_noalmostzero}).
	Hence, we conclude that $\cap_{m \geqslant 0} \hspace{1mm} x_i^m (\overline{R}/p\overline{R}) = 0$.
\end{proof}

\section{\texorpdfstring{$p\textrm{-adic}$}{-} representations and \texorpdfstring{$(\varphi, \Gamma)\textrm{-modules}$}{-}}\label{sec:phiGamma_modules}

The goal of this section is to study \'etale $(\varphi, \Gamma)\textrm{-modules}$ associated to $\mathbb{Z}_p\textrm{-representations}$ of $G_R$.
We shall keep the notation and convention of Section \ref{sec:prelims}.

Let us fix $m$ to be a positive integer coprime to $p$, and set $\calR \coloneq R_{0,m}$.
Recall that in Section \ref{subsubsec:galois_groups}, we defined rings $R_{\infty}$, $\calR_{\infty} \coloneq R_{\infty,m}$ and $R_{\infty,\infty}$ (resp.\ $S_{\infty}$, $S_{\infty,m}$ and $S_{\infty,\infty}$), whose respective $p\textrm{-adic}$ completions are perfectoid.

Let $\calR_{\infty}^{\etale}$ denote the union of finite normal $\calR_{\infty}\textrm{-subalgebras}$ $\calR' \subset \overline{R}$ such that $\calR'[1/p]/\calR_{\infty}[1/p]$ is \'etale.
From Section \ref{subsubsec:galois_groups}, recall that we have $G_R = \Gal(\overline{R}[1/p]/R[1/p])$, and we introduced Galois groups $H_{\calR} = \Gal(\overline{R}[1/p]/\calR_{\infty}[1/p])$ and $\Gamma_{\calR} = \Gal(\calR_{\infty}[1/p]/\calR[1/p]) \isomorphic \Gamma_R$.
Let us set
\begin{equation*}
	\begin{aligned}
		G_{\calR}^{\etale} &\coloneq \Gal(\calR_{\infty}^{\etale}[1/p]/\calR[1/p]),\\
		\HR^{\etale} &\coloneq \Gal(\calR_{\infty}^{\etale}[1/p]/\calR_{\infty}[1/p]).
	\end{aligned}
\end{equation*}
Let $\Delta_{\calR/R} \coloneq \Gal(\calR_{\infty}[1/p]/R_{\infty}[1/p]) \isomorphic \Gal(\calR[1/p]/R[1/p]) \isomorphic (\mathbb{Z}/m\mathbb{Z})^d \rtimes \Gal(F(\zeta_m)/F)$, and note that we have the following exact sequences:
\begin{equation}\label{eq:GR0met_ses}
	\begin{aligned}
		1 &\longrightarrow \HR^{\etale} \longrightarrow G_{\calR}^{\etale} \longrightarrow \Gamma_{\calR} \longrightarrow 1,\\
		1 &\longrightarrow \HR^{\etale} \longrightarrow \mathrm{Gal}(\calR_{\infty}^{\etale}[1/p]/R_{\infty}[1/p]) \longrightarrow \Delta_{\calR/R} \longrightarrow 1.
	\end{aligned}
\end{equation}
Let us also set $G_{\calR/R}^{\etale} \coloneq \Gal(\calR_{\infty}^{\etale}[1/p]/R[1/p])$, and note that we have the following exact sequences:
\begin{equation}\label{eq:GRet_ses}
	\begin{aligned}
		1 &\longrightarrow G_{\calR}^{\etale} \longrightarrow G_{\calR/R}^{\etale} \longrightarrow \Delta_{\calR/R} \longrightarrow 1,\\
		1 &\longrightarrow \HR^{\etale} \longrightarrow G_{\calR/R}^{\etale} \longrightarrow \Gamma_{\calR} \times \Delta_{\calR/R} \longrightarrow 1.
	\end{aligned}
\end{equation}

We shall also consider $R$ and $\calR$ as log-schemes equipped with a log-structure defined by the divisor $\{x_{a+1} \cdots x_d = 0\}$.
We equip the $R\textrm{-algebras}$ $\widehat{\calR}_{\infty}$, $\widehat{R}_{\infty,\infty}$ and $\widehat{\overline{R}}$ with a log-structure induced from the log-structure on $R$.
Set
\begin{equation*}
	\begin{aligned}
		\GR &\coloneq \Gal(\overline{R}[1/p]/\calR[1/p]),\\
		\GammaR &\coloneq \Gal(\calR_{\infty}[1/p]/\calR[1/p]) \isomorphic \Gamma_R,\\
		\HR &\coloneq \textup{ker}(\GR \twoheadrightarrow \GammaR) = \Gal(\overline{R}[1/p]/\calR_{\infty}[1/p]).
	\end{aligned}
\end{equation*}
Analogously, for $\calS \coloneq S_{0,m} \isomorphic \calR[1/(x_{a+1} \cdots x_d)]^{\wedge}$ (see \cite[Section 2.5]{abhinandan-relative-wach-ii}), we have the following Galois groups:
\begin{equation*}
	\begin{aligned}
		\GS &\coloneq \Gal(\overline{S}[1/p]/\calS[1/p]),\\
		\GammaS &\coloneq \Gal(\calS_{\infty}[1/p]/\calS[1/p]) = \Gal(S_{\infty,m}[1/p]/S_{0,m}[1/p]) \isomorphic \Gamma_S,\\
		\HS &\coloneq \textup{ker}(\GS \twoheadrightarrow \GammaS) = \Gal(\overline{S}[1/p]/\calS_{\infty}[1/p]).
	\end{aligned}
\end{equation*}
Note that $\Delta_{\calR/R} \isomorphic \Delta_{\calS/S} \coloneq \Gal(\calS[1/p]/S[1/p])$ and $\GammaS \isomorphic \GammaR$.

\subsection{Period rings}\label{subsec:period_rings}

We begin by recalling some standard construction of perfect and imperfect period rings in the theory of $(\varphi, \Gamma)\textrm{-modules}$.

\subsubsection{Perfect period rings}\label{subsubsec:perfect_period_rings}

Recall that $\widehat{\calR}_{\infty}$, $\widehat{R}_{\infty,\infty}$ and $\widehat{\overline{R}}$ are perfectoid algebras (see Lemma \ref{lem:Rbarhat_perfectoid}).
Additionally, we have the following:
\begin{lem}
	The ring $\widehat{\calR}_{\infty}^{\etale}$ is perfectoid in the sense of \cite[Definition 3.5]{bhatt-morrow-scholze-1}.
\end{lem}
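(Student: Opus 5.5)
We follow the template of the proof of Lemma \ref{lem:Rbarhat_perfectoid}, checking the three conditions of \cite[Definition 3.5]{bhatt-morrow-scholze-1} for $\widehat{\calR}_{\infty}^{\etale}$ with $\pi \coloneq \zeta_{p^2}-1$, so that $\pi^p \mid p$. The ring $\widehat{\calR}_{\infty}^{\etale}$ is $p\textrm{-adically}$ complete by construction and $p\textrm{-torsion}$ free, being the completion of a subring of $\overline{R}$; this uses Proposition \ref{prop:properties_Rbarhat} (1) and the equality $B = B[1/p] \cap \overline{R}$ for normal $B$, exactly as in Lemma \ref{lem:B_mult_xi_injective}. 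It remains to prove surjectivity of Frobenius modulo $p$ and principality of $\ker\theta$.

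\textbf{Frobenius surjectivity.} Write $\calR_{\infty}^{\etale} = \colim \calR'$ over finite normal $\calR_{\infty}\textrm{-subalgebras}$ $\calR' \subset \overline{R}$ with $\calR'[1/p]$ finite \'etale over $\calR_{\infty}[1/p]$. Since $\widehat{\calR}_{\infty}$ is perfectoid (Lemma \ref{lem:Rbarhat_perfectoid}), almost purity (\cite{scholze-perfectoid}, \cite[Section 9.6]{gabber-ramero-almostv5}, as already used in Lemma \ref{lem:Ax-Sen_R}) shows that $\widehat{\calR'}$ is almost finite \'etale over $\widehat{\calR}_{\infty}$ and hence perfectoid. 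I expect to use the latter in the form that Frobenius $\calR'/p \rightarrow \calR'/p$ is almost surjective, with cokernel killed by $\mathfrak{m}$. Upgrading this to actual surjectivity for the colimit will be done as in \cite[Corollary 3.6]{andreatta-iovita-semistable}. Concretely, given $b \in \calR'$, solve $y^p \equiv b$ modulo $\pi_j\textrm{-multiples}$ inside a larger $\calR''$ in the tower, obtained by adjoining a root of $Y^p - b$ after a Tschirnhaus/Artin--Schreier type change of variables that stays \'etale after inverting $p$. Equivalently, since $\calR_{\infty}^{\etale}$ is integrally closed in $\calR_{\infty}^{\etale}[1/p]$, one can argue that a $p\textrm{-th}$ root modulo $p$ exists up to an error in $\pi\calR'$, then iterate, using $\pi^p \mid p$. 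This step is the main obstacle: the almost-to-honest upgrade for a colimit of normal rings. The cleanest route I would try first is to note that $\widehat{\calR}_{\infty}^{\etale}[1/p]$ is a perfectoid Tate algebra (filtered colimit of finite \'etale algebras over the perfectoid $\widehat{\calR}_{\infty}[1/p]$, completed). Its ring of power-bounded elements is $\widehat{\calR}_{\infty}^{\etale}$, because the latter is completed integrally closed. For such rings, integral perfectoidness follows from \cite[Lemma 3.20]{bhatt-morrow-scholze-1}, which requires the ring to be $p\textrm{-torsion}$ free and integrally closed in its generic fibre, together with Frobenius being surjective on $R^{\circ}/p$. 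That last surjectivity is the standard consequence of Scholze's result that $R^{\circ}/p^{1/p} \xrightarrow{\varphi} R^{\circ}/p$ is an isomorphism for perfectoid Tate rings.

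\textbf{Principality of $\ker\theta$.} Argue exactly as in the last paragraph of the proof of Lemma \ref{lem:Rbarhat_perfectoid}. Take $x \in W((\calR_{\infty}^{\etale})^{\flat})$ with $\theta(x)=0$, and write $x = \xi y$ with $y \in A_{\inf}(\overline{R})$ and $\xi \in A_{\inf}(O_{F_\infty})$; this is possible since $\widehat{\overline{R}}$ is perfectoid. Because $\xi$ is a nonzerodivisor, $y$ is invariant under $\Gal(\overline{R}[1/p]/\calR_{\infty}^{\etale}[1/p])$. It then suffices to show $(\overline{R}^{\flat})^{\Gal(\overline{R}[1/p]/\calR_{\infty}^{\etale}[1/p])} = (\calR_{\infty}^{\etale})^{\flat}$. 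This follows as in Lemma \ref{lem:Galinv_Rinftym} (2), via the monoid description $\lim_{x\mapsto x^p}\widehat{\overline{R}}$ and the equality $\widehat{\overline{R}}^{G_B} = \widehat{B}$ from Lemma \ref{lem:Ax-Sen_R} (3). That lemma must be extended from finite normal $R_{\infty,\infty}\textrm{-subalgebras}$ to our $B = \calR'$ and then passed to the colimit. The same Tate trace argument applies, using almost purity over $\widehat{\calR}_{\infty}$. Alternatively, once the first step is done via \cite[Lemma 3.20]{bhatt-morrow-scholze-1}, principality of $\ker\theta$ comes for free and this paragraph is only a consistency check.
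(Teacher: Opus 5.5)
Your proposal is correct, and its ``cleanest route'' is essentially the paper's proof. The paper also takes $\pi = \zeta_{p^2}-1$ and gets surjectivity of Frobenius from the fact that a finite \'etale algebra over the perfectoid Tate ring $\widehat{\calR}_{\infty}[1/p]$ is perfectoid (via \cite[Theorem 1.10]{scholze-perfectoid} and \cite[Lemma 3.20]{bhatt-morrow-scholze-1}, with $\widehat{\calR}'$ as a ring of integral elements); the differences are that it applies this to each $\calR'$ before passing to the colimit, and it handles $\ker\theta$ separately by the Galois-invariant argument of Lemma \ref{lem:Rbarhat_perfectoid}. Your remark that applying \cite[Lemma 3.20]{bhatt-morrow-scholze-1} to the completed colimit gives principality of $\ker\theta$ for free is a sound shortcut. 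It avoids extending Lemma \ref{lem:Ax-Sen_R} to $\Gal(\overline{R}[1/p]/\calR_{\infty}^{\etale}[1/p])$, which that lemma does not cover: $\calR_{\infty}^{\etale}$ does not contain $R_{\infty,\infty}$, and $\overline{R}[1/p]$ is only tamely ramified along the divisor over it, so almost purity alone does not supply the trace elements. The Tschirnhaus-type detour in your Frobenius paragraph can simply be dropped.
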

\begin{proof}
	To check that $\widehat{\calR}_{\infty}^{\etale}$ satisfies the assumptions of \cite[Definition 3.5]{bhatt-morrow-scholze-1}, let us set $\pi \coloneq \zeta_{p^2}-1$, then clearly $\pi^p$ divides $p$.
	To check the surjectivity of the natural Frobenius endomorphism on $\calR_{\infty}^{\mathrm{ét}}/p\calR_{\infty}^{\etale}$, note that we may write $\calR_{\infty}^{\etale} = \colim_{\calR'/\calR_{\infty}} \calR'$ as described before the lemma, and it is enough to show that the natural Frobenius endomorphism on $\calR'/p\calR'$ is surjective.
	But this is clear because $\widehat{\calR}'$ is perfectoid in the sense of \cite[Definition 3.5]{bhatt-morrow-scholze-1}: indeed, since $\widehat{\calR}'[1/p]$ is finite \'etale over $\widehat{\calR}_{\infty}[1/p]$, it is perfectoid in the sense of Fontaine by \cite[Lemma 3.20]{bhatt-morrow-scholze-1} and \cite[Theorem 1.10]{scholze-perfectoid}, and $\widehat{\calR}' \subset \widehat{\calR}'[1/p]$ is a subring of integral elements.
	Finally, for the map $\theta \colon A_{\inf}(\calR_{\infty}^{\etale}) = W((\calR_{\infty}^{\etale})^{\flat}) \twoheadrightarrow \widehat{\calR}_{\infty}^{\etale}$, one may employ an argument similar to the proof of Lemma \ref{lem:Rbarhat_perfectoid} to obtain that the ideal $\textup{ker } \theta \subset A_{\inf}(\calR_{\infty}^{\etale})$ is principal.
	This completes our proof.
\end{proof}

Let $\varepsilon \coloneq (1, \zeta_p, \zeta_{p^2}, \ldots)$ and $\overline{\mu} \coloneq \varepsilon-1$ in $O_{F_{\infty}}^{\flat}$, and set $\mu \coloneq [\varepsilon] - 1$ in $A_{\inf}(O_{F_{\infty}}) \coloneq W(O_{F_{\infty}}^{\flat})$.
Let $\chi$ denote the $p\textrm{-adic}$ cyclotomic character, and note that for any $g$ in $G_R$, we have that $g(1+\mu) = (1+\mu)^{\chi(g)}$.
For each $1 \leqslant i \leqslant d$, let $x_i^{\flat} \coloneq (x_i, x_i^{1/p}, \ldots)$ denote an element of $R_{\infty}^{\flat}$ for a compatible system of $p\textrm{-power}$ roots $\{x_i^{1/p^n}\}_{n \geqslant 0}$ in $R_{\infty}$.

Following the standard notation in the theory of $(\varphi, \Gamma)\textrm{-modules}$, we have several perfect period rings in charateristic $p$.
Let us set $\tilde{E}^+(\overline{R}) \coloneq \overline{R}^{\flat}$ and $\tilde{E}(\overline{R}) \coloneq \tilde{E}^+(\overline{R})[1/\overline{\mu}]$ equipped with a natural and continuous action of $G_R$.
Additionally, set $\tilde{E}_{\calR}^+ \coloneq \calR_{\infty}^{\flat} = \tilde{E}^+(\overline{R})^{H_{\calR}}$ and $\tilde{E}_{\calR} \coloneq \tilde{E}_{\calR}^+[1/\overline{\mu}] = \tilde{E}(\overline{R})^{H_{\calR}}$ (see Lemma \ref{lem:Ax-Sen_R}) equipped with a natural and continuous action of $\Gal(\calR_{\infty}[1/p]/R[1/p]) \isomorphic \Gamma_{\calR} \times \Delta_{\calR/R}$.
Finally, let $H \coloneq \Gal(\overline{R}[1/p]/\calR_{\infty}^{\etale}[1/p])$ and set $\tilde{E}_{\calR}^{\etale,+} \coloneq \calR_{\infty}^{\etale,\flat} = \tilde{E}^+(\overline{R})^H$ and $\tilde{E}_{\calR}^{\etale} \coloneq \tilde{E}_{\calR}^{\etale,+}[1/\overline{\mu}] = \tilde{E}(\overline{R})^H$ (see Lemma \ref{lem:Ax-Sen_R}) equipped with a natural and continuous action of $G_{\calR/R}^{\etale} = \Gal(\calR_{\infty}^{\etale}[1/p]/R[1/p])$.

Applying the $p\textrm{-typical}$ Witt vector construction to the period rings described above, we obtain the following perfect period rings in mixed characteristic.
Let us set $A_{\inf}(\overline{R}) \coloneq W(\overline{R}^{\flat}) = W(\tilde{E}^+(\overline{R}))$ and $\tilde{A}(\overline{R}) \coloneq W(\tilde{E}(\overline{R}))$ equipped with a natural and continuous action of $(\varphi, G_R)$.
Additionally, set $A_{\inf}(\calR_{\infty}) \coloneq W(\calR_{\infty}^{\flat}) = W(\tilde{E}_{\calR}^+) = A_{\inf}(\overline{R})^{H_{\calR}}$ and $\tilde{A}_{\calR} \coloneq W(\tilde{E}_{\calR}) = \tilde{A}(\overline{R})^{H_{\calR}}$ (using the discussion above) equipped with a natural and continuous action of $(\varphi, \Gamma_{\calR} \times \Delta_{\calR/R})$.
Furthermore, set $A_{\inf}(\calR_{\infty}^{\etale}) \coloneq W(\calR_{\infty}^{\etale,\flat}) = W(\tilde{E}_{\calR}^{\etale,+}) = A_{\inf}(\overline{R})^H$ and $\tilde{A}_{\calR}^{\etale} \coloneq W(\tilde{E}_{\calR}^{\etale}) = \tilde{A}(\overline{R})^H$ (using the discussion above) equipped with a natural and continuous action of $(\varphi, G_{\calR/R}^{\etale})$.
Finally, let us also set $A_{\inf}(R_{\infty,\infty}) \coloneq W(R_{\infty,\infty}^{\flat})$, admitting the Frobenius on Witt vectors and a continuous action of $\Gamma_{R,\infty}$, and using Lemma \ref{lem:Ax-Sen_R} note that we have $A_{\inf}(R_{\infty,\infty}) = A_{\inf}(\overline{R})^{H_{R,\infty}}$.

Recall that we fixed Teichm\"uller lifts $[x_i^{\flat}]$ in $A_{\inf}(\calR_{\infty})$ for $1 \leqslant i \leqslant d$.
We equip $A_{\inf}(\calR_{\infty})$, $A_{\inf}(R_{\infty,\infty})$ and $A_{\inf}(\overline{R})$ with a log-structure defined by the divisor $\{[x_{a+1}^{\flat}] \cdots [x_d^{\flat}] = 0\}$.
Then, note that the $G_R\textrm{-equivariant}$ surjection $\theta \colon A_{\inf}(\overline{R}) \twoheadrightarrow \CRp$ is compatible with the respective log-structures, and $\textrm{Ker } \theta = \xi A_{\inf}(\overline{R})$.
The map $\theta$ further induces a $(\GammaR \times \Delta_{\calR/R})\textrm{-equivariant}$ (resp.\ $\Gamma_{R,\infty}\textrm{-equivariant}$) surjection $\theta \colon A_{\inf}(\calR_{\infty}) \twoheadrightarrow \widehat{\calR}_{\infty}$ (resp.\ $\theta \colon A_{\inf}(R_{\infty,\infty}) \twoheadrightarrow \widehat{R}_{\infty,\infty}$) compatible with the respective log-structures.

The preceding definitions may also be given for $S$ by replacing $\calR$ with $\calS$, $\calR_{\infty}$ with $\calS_{\infty}$, $R_{\infty,\infty}$ with $S_{\infty,\infty}$, $\overline{R}$ with $\overline{S}$, $G_R$ with $G_S$, and equipping $S$, $\calS$ and the corresponding perfectoid rings with the trivial log-structure.
We shall use those variations in the following.
Additionally, note that the natural $G_S\textrm{-equivariant}$ homomorphism of rings $\overline{R} \rightarrow \overline{S}$ induces a natural $(\varphi, G_S)\textrm{-equivariant}$ homomorphism of rings $A_{\inf}(\overline{R}) \rightarrow A_{\inf}(\overline{S})$ and $\tilde{A}(\overline{R}) \rightarrow \tilde{A}(\overline{S})$.

\begin{lem}\label{lem:Ainf_Rinftym_in_Sinftym}
	The homomorphism $A_{\inf}(\overline{R}) \rightarrow A_{\inf}(\overline{S})$ naturally induces the $(\varphi, \Gamma_{S,\infty})\textrm{-equivariant}$ (resp.\ $(\varphi, \GammaS \times \Delta_{\calR/R})\textrm{-equivariant}$) injective bottom (resp.\ top) horizontal arrow in the following commutative diagram: 
	\begin{equation*}
		\begin{tikzcd}
			A_{\inf}(\calR_{\infty}) \arrow[r, hookrightarrow] \arrow[d, hookrightarrow] & A_{\inf}(\calS_{\infty}) \arrow[d, hookrightarrow]\\
			A_{\inf}(R_{\infty,\infty}) \arrow[r, hookrightarrow] & A_{\inf}(S_{\infty,\infty}).
		\end{tikzcd}
	\end{equation*}
\end{lem}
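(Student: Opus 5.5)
The plan is to reduce everything to the characteristic $p$ level and use that $W(-)$ preserves injections. By Proposition \ref{prop:Rinftym_in_Sinftym} (2), the natural maps $\widehat{\calR}_{\infty} = \widehat{R}_{\infty,m} \rightarrow \widehat{\calS}_{\infty}$ and $\widehat{R}_{\infty,\infty} \rightarrow \widehat{S}_{\infty,\infty}$ are injective. Moreover, by Proposition \ref{prop:Rinftym_in_Sinftym} (1) (or its proof), the reductions $R_{\infty,m}/p \rightarrow S_{\infty,m}/p$ and $R_{\infty,\infty}/p \rightarrow S_{\infty,\infty}/p$ are injective. We also have the obvious inclusions $R_{\infty,m}/p \hookrightarrow R_{\infty,\infty}/p$ and $S_{\infty,m}/p \hookrightarrow S_{\infty,\infty}/p$. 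These follow, for instance, from $p\widehat{R}_{\infty,\infty} \cap \widehat{R}_{\infty,m} = p\widehat{R}_{\infty,m}$, which is a consequence of Proposition \ref{prop:Rinftym_in_Sinftym} (3) together with $p$-torsion freeness, or alternatively from normality and $\overline{R}\cap R_{\infty,m}[1/p]=R_{\infty,m}$.

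The first step is to apply the limit functor $\lim_{\varphi}$ to the commutative square of mod $p$ rings. Since inverse limits are left exact, all four maps of tilts $\calR_{\infty}^{\flat} \rightarrow \calS_{\infty}^{\flat}$, $R_{\infty,\infty}^{\flat} \rightarrow S_{\infty,\infty}^{\flat}$, $\calR_{\infty}^{\flat} \rightarrow R_{\infty,\infty}^{\flat}$ and $\calS_{\infty}^{\flat}\rightarrow S_{\infty,\infty}^{\flat}$ are injective. The square commutes because every map is induced by the inclusions of rings inside $\overline{S}$. The second step is to apply $W(-)$. For perfect rings, $W(A)\rightarrow W(B)$ is injective when $A\rightarrow B$ is injective, since Witt vectors are sequences of coordinates in $A$ and the map acts coordinatewise. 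This gives the injective square of $A_{\inf}$'s. One could instead argue via invariants, using $A_{\inf}(\calR_{\infty}) = A_{\inf}(\overline{R})^{H_{\calR}}$ and its analogues, but the direct argument suffices.

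The third step is to check that the square is compatible with the homomorphism $A_{\inf}(\overline{R})\rightarrow A_{\inf}(\overline{S})$. Each $A_{\inf}(-)$ here sits inside $A_{\inf}(\overline{R})$ or $A_{\inf}(\overline{S})$ as a subring of invariants (by Lemma \ref{lem:Galinv_Rinftym} (2)), and the maps are restrictions of the natural map. Equivariance under $\varphi$ holds because Witt vector functoriality commutes with the Witt Frobenius. Equivariance under $\Gamma_{S,\infty}\isomorphic\Gamma_{R,\infty}$, and under $\GammaS\times\Delta_{\calR/R}$ on the top row, follows because $\overline{R}\rightarrow\overline{S}$ is $G_S$-equivariant, and $G_S$ acts on $R_{\infty,\infty}$ and $\calR_{\infty}$ through these quotients via the identifications $\Gamma_S\isomorphic\Gamma_R$ and $\Delta_{\calS/S}\isomorphic\Delta_{\calR/R}$.

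The main obstacle is establishing injectivity at the mod $p$ level for the horizontal maps. That is precisely the content of Proposition \ref{prop:Rnm_Snm_normal} (4), passed to the colimit as in the proof of Proposition \ref{prop:Rinftym_in_Sinftym} (2), so I expect this to be routine given those results. A minor subtlety is identifying $\lim_\varphi A/p$ with $\lim_{x\mapsto x^p}\widehat{A}$, as in \eqref{eq:Abar_monoid_iso}. This identification is not actually needed if injectivity is argued directly at mod $p$ level.
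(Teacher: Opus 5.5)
Your proposal is correct and is essentially the paper's argument. The paper's proof simply cites Proposition~\ref{prop:Rinftym_in_Sinftym}~(2), whose proof consists precisely of the mod $p$ injectivity (the colimit of Proposition~\ref{prop:Rnm_Snm_normal}~(4)) that you push through $\lim_{\varphi}$ and $W(-)$. Two small points: the mod $p$ injectivity $R_{\infty,m}/p \to S_{\infty,m}/p$ comes from the proof of part~(2), not part~(1), and for the vertical arrows your alternative normality argument ($\overline{R} \cap R_{\infty,m}[1/p] = R_{\infty,m}$) is the cleaner justification.
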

\begin{proof}
	Use Lemma \ref{lem:Rinftym_in_Sinftym} (2).
\end{proof}

\begin{rem}
	The reader should be warned that the $(\varphi, G_S)\textrm{-equivariant}$ homomorphism of rings $A_{\inf}(\overline{R}) \rightarrow A_{\inf}(\overline{S})$ and $\tilde{A}(\overline{R}) \rightarrow \tilde{A}(\overline{S})$ may fail to be injective (see Example \ref{eg:gabber_example}).
\end{rem}

\subsubsection*{Localisation}

Let $\mathscr{P}(\overline{R})$ denote the set of minimal primes of $\overline{R}$ above $pR \subset R$, and for each prime $\mathfrak{p}$ in $\mathscr{P}(\overline{R})$, let $\Cp$ denote the complete valuation field described in Section \ref{subsubsec:localisation_Rbar} with its ring of integers being the perfectoid algebra $\Cpplus$.
Moreover, recall that we also have the perfectoid algebra $\Cplusp$.
So, we set $A_{\inf}(\Cpplus) \coloneq W(\Cpplusflat)$ (resp.\ $A_{\inf}(\Cplusp) \coloneq W(\Cplusp^{\flat})$) admitting the Frobenius on Witt vectors and continuous $\GRhatp\textrm{-action}$ (resp.\ $\GRp\textrm{-action}$).

Similar to above, we have a $\GRhatp\textrm{-equivariant}$ surjection $\theta \colon A_{\inf}(\Cpplus) \rightarrow \Cpplus$ with $\ker \theta = \xi A_{\inf}(\Cpplus)$ (resp.\ a $\GRp\textrm{-equivariant}$ surjection $\theta \colon A_{\inf}(\Cplusp) \rightarrow \Cplusp$ with $\ker \theta = \xi A_{\inf}(\Cplusp)$).
An argument similar to \cite[Lemma 2.5]{abhinandan-relative-wach-ii} then shows that for each prime $\mathfrak{p}$ in $\mathscr{P}(\overline{R})$, we have natural $(\varphi, \GRhatp)\textrm{-equivariant}$ injective ring homomorphisms $A_{\inf}(\Cplusp) \hookrightarrow A_{\inf}(\Cpplus)$ and $\tilde{A}(\mathbb{C}(\mathfrak{p})) \coloneq W(\mathbb{C}(\mathfrak{p})^{\flat}) \hookrightarrow W(\Cp^{\flat}) \eqcolon \tilde{A}(\Cp)$, where the action of $\GRhatp$ on left-hand terms factor through $\GRhatp \twoheadrightarrow \GRp$.
Moreover, we have a natural $(\varphi, \GRhatp)\textrm{-equivariant}$ identification $A_{\inf}(\Cplusp) = A_{\inf}(\Cpplus) \cap \tilde{A}(\mathbb{C}(\mathfrak{p}))$ as subrings of $\tilde{A}(\Cp)$.

By the functoriality of the tilting construction and the Witt vector construction, note that the action of $G_R$ on $\prod_{\pins} \Cplusp$ described after \eqref{eq:cplus_gequiv}, extends to respective natural actions of $G_R$ on $\prod_{\pins} A_{\inf}(\Cplusp)$ and $\prod_{\pins} \tilde{A}(\mathbb{C}(\mathfrak{p}))$.
Then, an argument similar to \cite[Lemma 2.8]{abhinandan-relative-wach-ii} shows that we have natural $(\varphi, G_R)\textrm{-equivariant}$ injective homomorphisms 
\begin{equation*}
	\begin{aligned}
		A_{\inf}(\overline{R}) &\longhookrightarrow \textstyle\prod_{\pins} A_{\inf}(\Cplusp),\\
		\tilde{A}(\overline{R}) &\longhookrightarrow \textstyle\prod_{\pins} \tilde{A}(\mathbb{C}(\mathfrak{p})),
	\end{aligned}
\end{equation*}
where the right-hand terms are equipped with a $G_R\textrm{-action}$ as described above.
Moreover, we have a natural $(\varphi, G_R)\textrm{-equivariant}$ identification 
\begin{equation*}
	A_{\inf}(\overline{R}) = \tilde{A}(\overline{R}) \cap \textstyle\prod_{\pins} A_{\inf}(\Cplusp) \subset \prod_{\pins} \tilde{A}(\mathbb{C}(\mathfrak{p})).
\end{equation*}

\subsubsection{Imperfect period rings in characteristic \texorpdfstring{$p$}{-}}\label{subsubsec:imperfect_rings_charp}

In this section, our goal is to define imperfect analogues of the characteristic $p$ period rings from Section \ref{subsubsec:perfect_period_rings}.

Recall that in Section \ref{subsubsec:perfect_period_rings} we fixed $\varepsilon \coloneq (1, \zeta_p, \zeta_{p^2}, \ldots)$ in $O_{F_{\infty}}^{\flat}$, and for each $1 \leqslant i \leqslant d$, we fixed $x_i^{\flat} \coloneq (x_i, x_i^{1/p}, \ldots)$ in $R_{\infty}^{\flat}$.
Additionally, we fixed topological generators $\{\gamma_0, \gamma_1, \ldots, \gamma_d\}$ of $\Gamma_R$ in Section \ref{subsubsec:galois_groups} such that we have $\gamma_j(x_i^{\flat}) = \varepsilon x_i^{\flat}$, if $j = i$, or $x_i^{\flat}$, if $j \neq i$.

Let us set $\calR_{\square} \coloneq R_{\square,0,m} = R_{\square}[\zeta_m, x_1^{1/m}, \ldots, x_d^{1/m}]$, and note that $\calR = R_{0,m}$ is a $p\textrm{-completed}$ \'etale algebra over $\calR_{\square}$.
Set
\begin{equation*}
	E_{\calR,\square}^+ \coloneq \kappa[\zeta_m, (x_1^{\flat})^{\pm 1/m}, \ldots, (x_a^{\flat})^{\pm 1/m}, (x_{a+1}^{\flat})^{1/m}, \ldots, (x_{d}^{\flat})^{1/m}]]\llbracket \overline{\mu} \rrbracket.
\end{equation*}
By definition, there exists a natural injective homomorphism $E_{\calR,\square}^+ \hookrightarrow \tilde{E}_{\calR}^+$, and its image is stable under the Frobenius endomorphism $\varphi$ and the action of $\Gamma_{\calR} \times \Delta_{\calR/R}$ on $\tilde{E}_{\calR}^+$; we equip $E_{\calR,\square}^+$ with the induced $(\varphi, \Gamma_{\calR} \times \Delta_{\calR/R})\textrm{-action}$.
Furthermore, note that we have an injective homomorphism $\overline{\iota} \colon \calR_{\square}/p\calR_{\square} \rightarrow E_{\calR,\square}^+$ defined by the $\kappa[\zeta_m]\textrm{-linear}$ map sending $x_i^{1/m} \mapsto (x_i^{\flat})^{1/m}$, and it is easy to see that $\overline{\iota}$ extends to an isomorphism of rings $(\calR_{\square}/p\calR_{\square})\llbracket \overline{\mu} \rrbracket \isomorphic E_{\calR,\square}^+$ (enough to check modulo $\overline{\mu}$ since both source and target are $\overline{\mu}\textrm{-adically}$ complete and $\overline{\mu}\textrm{-torsion-free}$).
Additionally, note that the Frobenius endomorphism of $(\calR_{\square}/p\calR_{\square})\llbracket \overline{\mu} \rrbracket \isomorphic E_{\calR,\square}^+$ is finite and faithfully flat of degree $p^{d+1}$.

Let $\ER^+$ denote the $\overline{\mu}\textrm{-adic}$ completion of the unique extension of the injective homomorphism $E_{\calR,\square}^+ \rightarrow \tilde{E}_{\calR}^+$ along the $p\textrm{-adically}$ completed \'etale map $\calR_{\square} \rightarrow \calR$.
Then, there exists a natural injective homomorphism $\ER^+ \hookrightarrow \tilde{E}_{\calR}^+$, and its image is stable under the Frobenius and the action of $\Gamma_{\calR} \times \Delta_{\calR/R}$ on $\tilde{E}_{\calR}^+$; we equip $\ER^+$ with the induced $(\varphi, \Gamma_{\calR} \times \Delta_{\calR/R})\textrm{-action}$.
Furthermore, the injective homomorphism $\overline{\iota} \colon \calR_{\square}/p\calR_{\square} \hookrightarrow E_{\calR,\square}^+ \hookrightarrow \ER^+$, and the isomorphism $(\calR_{\square}/p\calR_{\square})\llbracket \overline{\mu} \rrbracket \isomorphic E_{\calR,\square}^+ \hookrightarrow \ER^+$ naturally extend to a unique injective homomorphism $\overline{\iota} \colon (\calR/p\calR) \hookrightarrow \ER^+$ and an isomorphism of rings $(\calR/p\calR)\llbracket \overline{\mu} \rrbracket \isomorphic \ER^+$.
Additionally, note that the Frobenius on $(\calR/p\calR)\llbracket \overline{\mu} \rrbracket \isomorphic \ER^+$ is finite and faithfully flat of degree $p^{d+1}$.

Set $\ER \coloneq \ER^+[1/\mu] \hookrightarrow \tilde{E}_{\calR}$ and note that the Frobenius endomorphism $\varphi$ and the continuous action of $\Gamma_{\calR} \times \Delta_{\calR/R}$ on $\ER^+$ naturally extend to $\ER$.
Similar to above, the induced Frobenius endomorphism $\varphi$ on $\ER$ is finite and faithfully flat of degree $p^{d+1}$.
From the discussion in Appendix \ref{subsec:Kedlaya_Liu_ii}, we know that the ring $\ER$ is the decompletion and deperfection of the ring $\tilde{E}_{\calR}$ (see Definition \ref{defi:decompleted_rings}, Lemma \ref{lem:decompleting_tower} and Proposition \ref{prop:ar+_decomplete}).

Let $\pazocal{C} \coloneq \textup{F\'Et}(\ER)$ denote the category of finite \'etale extensions of $\ER$ inside $\tilde{E}_{\calR}^{\etale}$ (see Theorem \ref{thm:fet_equivalence}), and set
\begin{equation}\label{eq:decompleted_ring_modp}
	\ER^{\etale} \coloneq \colim_{\ER' \in \pazocal{C}} \ER' = \cup_{\ER' \in \pazocal{C}} \ER'.
\end{equation}
Then, $\ER^{\etale}$ is a subring of $\tilde{E}_{\calR}^{\etale}$, stable under the action of $(\varphi, G_{\calR/R}^{\etale})$ on the latter.
Moreover, $\tilde{E}_{\calR}^{\etale}$ may be identified with the completed perfection of $\ER^{\etale}$ (similar to Lemma \ref{lem:decompleting_tower}), in other words, $\ER^{\etale}$ is the decompletion of $\tilde{E}_{\calR}^{\etale}$.
Additionally, from Corollary \ref{cor:Gal_isoms} we have that $\ER^{\etale}$ is Galois over $\ER$ and $\Gal(\ER^{\etale}/\ER) \isomorphic \HR^{\etale}$.
In particular, we obtain a $(\varphi, \Gamma_{\calR} \times \Delta_{\calR/R})\textrm{-equivariant}$ isomorphism $\ER \isomorphic (\ER^{\etale})^{\HR^{\etale}}$.
Finally, let us set $\ER^{\etale,+} \coloneq \ER^{\etale} \cap \tilde{E}_{\calR}^{\etale,+} \subset \tilde{E}_{\calR}^{\etale}$, and note that it is stable under the Frobenius and $G_{\calR/R}^{\etale}\textrm{-action}$ on $\tilde{E}_{\calR}^{\etale}$.
Then, from the discussion above we have a $(\varphi, \Gamma_{\calR} \times \Delta_{\calR/R})\textrm{-equivariant}$ isomorphism $\ER^+ \isomorphic (\ER^{\etale,+})^{\HR^{\etale}}$.

\subsubsection{Imperfect period rings in mixed characteristic}\label{subsubsec:imperfect_rings_mixedchar}

In this section, our goal is to define imperfect analogues of the mixed characteristic period rings from Section \ref{subsubsec:perfect_period_rings}.
In particular, the discussion below is a mixed charactersitic lift of the discussion in Section \ref{subsubsec:imperfect_rings_charp}.

Recall that we set $\mu \coloneq [\varepsilon] - 1$ in $A_{\inf}(O_{F_{\infty}})$.
Let $\chi$ denote the $p\textrm{-adic}$ cyclotomic character, and note that for any $g$ in $G_R$, we have that $g(1+\mu) = (1+\mu)^{\chi(g)}$.
For each $1 \leqslant i \leqslant d$, let us fix Teichm\"uller lifts $[x_i^{\flat}]$ in $A_{\inf}(R_{\infty})$.
Then, for the topological generators $\{\gamma_0, \gamma_1, \ldots, \gamma_d\}$ of $\Gamma_R$, we have that $\gamma_j([x_i^{\flat}]) = (1+\mu)[x_i^{\flat}]$, if $j = i$, or $[x_i^{\flat}]$, if $j \neq i$.

Recall that we set $\calR_{\square} = R_{\square}[\zeta_m, x_1^{1/m}, \ldots, x_d^{1/m}]$, and $\calR$ is a $p\textrm{-completed}$ \'etale algebra over $\calR_{\square}$.
Set 
\begin{equation*}
	A_{\calR,\square}^+ \coloneq (p, \mu)\textrm{-adic completion of } O_F[\zeta_m][\mu, [x_1^{\flat}]^{\pm 1/m}, \ldots, [x_a^{\flat}]^{\pm 1/m}, [x_{a+1}^{\flat}]^{1/m}, \ldots, [x_{d}^{\flat}]^{1/m}].
\end{equation*}
By definition, there exists a natural injective homomorphism $A_{\calR,\square}^+ \hookrightarrow A_{\inf}(\calR_{\infty})$, and its image is stable under the Witt vector Frobenius endomorphism $\varphi$ and the action of $\Gamma_{\calR} \times \Delta_{\calR/R}$ on $\tilde{A}_{\calR}^+$; we equip $A_{\calR,\square}^+$ with the induced $(\varphi, \Gamma_{\calR} \times \Delta_{\calR/R})\textrm{-action}$.
Furthermore, note that we have an injective homomorphism $\iota \colon \calR_{\square} \rightarrow A_{\calR,\square}^+$ defined by the $O_F[\zeta_m]\textrm{-linear}$ map sending $x_i^{1/m} \mapsto [x_i^{\flat}]^{1/m}$, and it is easy to see that $\iota$ extends to an isomorphism of rings $\calR_{\square}\llbracket \mu \rrbracket \isomorphic A_{\calR,\square}^+$ (enough to check modulo $\mu$ since both source and target are $\mu\textrm{-adically}$ complete and $\mu\textrm{-torsion-free}$).
We extend the Frobenius endomorphism on $\calR_{\square}$ to a Frobenius endomorphism $\varphi$ on $\calR_{\square}\llbracket \mu \rrbracket$ by setting $\varphi(\mu) = (1+\mu)^p-1$.
Then, the Frobenius on $\calR_{\square}\llbracket \mu \rrbracket$ is finite and faithfully flat of degree $p^{d+1}$ .
Moreover, by the preceding discussion, it also follows that the injective homomorphism $\iota$ and the isomorphism $\calR_{\square}\llbracket \mu \rrbracket \isomorphic A_{\calR,\square}^+$ are Frobenius-equivariant, and respective lifts of the homomorphism $\overline{\iota}$ and the isomorphism $(\calR_{\square}/p\calR_{\square})\llbracket \mu \rrbracket \isomorphic E_{\calR,\square}^+$ from Section \ref{subsubsec:imperfect_rings_charp}.

Let $\AR^+$ denote the $(p, \mu)\textrm{-adic}$ completion of the unique extension of the injective homomorphism $A_{\calR,\square}^+ \rightarrow A_{\inf}(\calR_{\infty})$ along the $p\textrm{-adically}$ completed \'etale map $\calR_{\square} \rightarrow \calR$.
Then, there exists a natural injective homomorphism $\AR^+ \hookrightarrow A_{\inf}(\calR_{\infty})$, and its image is stable under the Witt vector Frobenius and the action of $\Gamma_{\calR} \times \Delta_{\calR/R}$ on $A_{\inf}(\calR_{\infty})$; we equip $\AR^+$ with the induced $(\varphi, \Gamma_{\calR} \times \Delta_{\calR/R})\textrm{-action}$.
Furthermore, the injective homomorphism $\iota \colon \calR_{\square} \hookrightarrow A_{\calR,\square}^+ \hookrightarrow \AR^+$ and the isomorphism $\calR_{\square}\llbracket \mu \rrbracket \isomorphic A_{\calR,\square}^+ \hookrightarrow \AR^+$, naturally extend to a unique injective homomorphism $\iota \colon \calR \rightarrow \AR^+$ and an isomorphism of rings $\calR\llbracket \mu \rrbracket \isomorphic \AR^+$.
Similar to above, the Frobenius endomorphism on $\calR$ extends to a Frobenius endomorphism $\varphi$ on $\calR\llbracket \mu \rrbracket$ so that $\varphi(\mu) = (1+\mu)^p-1$.
Then, the Frobenius on $\calR\llbracket \mu \rrbracket$ is finite and faithfully flat of degree $p^{d+1}$ .
Moreover, by the preceding discussion, it is easy to see that the injective homomorphism $\iota$ and the isomorphism $\calR\llbracket \mu \rrbracket \isomorphic \AR^+$ are Frobenius-equivariant.
In particular, the induced Frobenius endomorphism $\varphi$ on $\AR^+$ is finite and faithfully flat of degree $p^{d+1}$, and we have that $\varphi^*(\AR^+) \coloneq \AR^+ \otimes_{\varphi, \AR^+} \AR^+ \isomorphic \oplus_{\alpha} \varphi(\AR^+) u_{\alpha}$, where $u_{\alpha} \coloneq (1+\mu)^{\alpha_0} [x_1^{\flat}]^{\alpha_1/m} \cdots [x_d^{\flat}]^{\alpha_d/m}$ for $\alpha = (\alpha_0, \alpha_1, \ldots, \alpha_d)$ a $(d+1)\textrm{-tuple}$ with $\alpha_i$ in $\{0, 1, \ldots, p-1\}$, for $0 \leqslant i \leqslant d$.

Set $\AR \coloneq \AR^+[1/\mu]^{\wedge} \hookrightarrow \tilde{A}_{\calR}$ as the $p\textrm{-adic}$ completion, and note that the Frobenius endomorphism $\varphi$ and the continuous action of $\GammaR \times \Delta_{\calR/R}$ on $\AR^+$ naturally extend to $\AR$.
Similar to above, the induced Frobenius endomorphism $\varphi$ on $\AR$ is finite and faithfully flat of degree $p^{d+1}$ and $\varphi^*(\AR) \coloneq \AR \otimes_{\varphi, \AR} \AR \isomorphic \oplus_{\alpha} \varphi(\AR) u_{\alpha} = (\oplus_{\alpha} \varphi(\AR^+) u_{\alpha}) \otimes_{\varphi(\AR^+)} \varphi(\AR) \lisomorphic \AR^+ \otimes_{\varphi, \AR^+} \AR$.
It is clear that we have a $(\varphi, \GammaR \times \Delta_{\calR/R})\textrm{-equivariant}$ identification $\AR/p\AR \isomorphic \ER$.
Additionally, from the discussion in Appendix \ref{subsec:Kedlaya_Liu_ii}, we know that the ring $\AR$ is the decompletion and deperfection of the ring $\tilde{A}_{\calR}$ (see Definition \ref{defi:decompleted_rings}, Lemma \ref{lem:decompleting_tower} and Proposition \ref{prop:ar+_decomplete}).

The imperfect period rings analogous to the ones discussed above also make sense if we replace the $O_F[\zeta_m]\textrm{-algebra}$ $\calR$ with the $p\textrm{-adically}$ complete ring $\calS = \calR[1/(x_{a+1} \cdots x_d)]^{\wedge}$ (see \cite[Section 2.5]{abhinandan-relative-wach-ii}).
In the following, we shall freely use period rings for $\calS$ analogous to the ones defined for $\calR$ (see \cite[Section 2.5]{abhinandan-relative-wach-ii}).
As $\Delta_{\calR/R} \isomorphic \Delta_{\calS/S}$ and $\GammaS \isomorphic \GammaR$, therefore, we have a natural $(\varphi, \GammaR \times \Delta_{\calR/R})\textrm{-equivariant}$ homomorphism $\AR^+ \rightarrow \AS^+$, which extends to a $(\varphi, \GammaR \times \Delta_{\calR/R})\textrm{-equivariant}$ isomorphism 
\begin{equation*}
	\AR^+\big[1/([x_{a+1}^{\flat}] \cdots [x_d^{\flat}])\big]^{\wedge} \isomorphic \AS^+,
\end{equation*}
where the completion is $(p, \mu)\textrm{-adic}$.

Let $\pazocal{C} \coloneq \textup{F\'Et}(\AR) \isomorphic \textup{F\'Et}(\ER)$ denote the category of finite \'etale extensions of $\AR$ inside $\tilde{A}_{\calR}^{\etale}$ (see Theorem \ref{thm:fet_equivalence}), and set
\begin{equation}\label{eq:decompleted_ring_mixedchar}
	\AR^{\etale} \coloneq p\textrm{-adic completion of } \big(\colim_{\AR' \in \pazocal{C}} \AR'\big).
\end{equation}
Then, $\AR^{\etale}$ is a subring of $\tilde{A}_{\calR}^{\etale}$, stable under the action of $(\varphi, G_{\calR/R}^{\etale})$ on the latter.
Moreover, $\tilde{A}_{\calR}^{\etale}$ may be identified with the completed perfection of $\AR^{\etale}$ (similar to Lemma \ref{lem:decompleting_tower}), in other words, $\AR^{\etale}$ is the decompletion of $\tilde{A}_{\calR}^{\etale}$, and we have a natural $(\varphi, G_{\calR/R}^{\etale})\textrm{-equivariant}$ identification $\AR^{\etale}/p\AR^{\etale} \isomorphic \ER^{\etale}$.
Additionally, from Corollary \ref{cor:Gal_isoms} we have that $\AR^{\etale}$ is the $p\textrm{-adic}$ completion of a Galois cover of $\AR$ and $\textup{Aut}(\AR^{\etale}/\AR) \isomorphic \HR^{\etale}$.
In particular, we obtain a $(\varphi, \Gamma_{\calR} \times \Delta_{\calR/R})\textrm{-equivariant}$ isomorphism $\AR \isomorphic (\AR^{\etale})^{\HR^{\etale}}$.
Finally, let us set $\AR^{\etale,+} \coloneq \AR^{\etale} \cap A_{\inf}(\calR_{\infty}^{\etale}) \subset \tilde{A}_{\calR}^{\etale}$, and note that it is stable under the Frobenius and $G_{\calR/R}^{\etale}\textrm{-action}$ on $\tilde{A}_{\calR}^{\etale}$.
Then, from the discussion above we have a $(\varphi, \Gamma_{\calR} \times \Delta_{\calR/R})\textrm{-equivariant}$ isomorphism $\AR^+ \isomorphic (\AR^{\etale,+})^{\HR^{\etale}}$.

\begin{rem}
	For a mixed characteristic period ring $A_{\calR}^{\textrm{deco}}$ discussed above, where ``$\textrm{deco}$'' denotes any of the decorations mentioned above, we set $B_{\calR}^{\textrm{deco}} \coloneq A_{\calR}^{\textrm{deco}}[1/p]$ equipped with an induced action of $\varphi$ and $G_{\calR/R}^{\etale}$.
\end{rem}

\begin{lem}\label{lem:ARet_flat}
	The $(\varphi, G_{\calR/R}^{\etale})\textrm{-equivariant}$ ring homomorphism $\AR \rightarrow \AR^{\etale}$ is flat.
\end{lem}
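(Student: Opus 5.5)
We will prove flatness of $\AR \rightarrow \AR^{\etale}$ by reducing to characteristic $p$ and then using a local flatness criterion for $p$-adically complete, $p$-torsion free rings. By construction \eqref{eq:decompleted_ring_mixedchar}, $\AR^{\etale}$ is the $p\textrm{-adic}$ completion of the filtered colimit $A' \coloneq \colim_{\AR' \in \pazocal{C}} \AR'$. Here each $\AR'$ is finite étale over $\AR$, hence finite flat (in fact finite projective). Since a filtered colimit of flat modules is flat, $A'$ is flat over $\AR$. It therefore remains to show that the $p$-adic completion step preserves flatness.

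\textbf{Step 1: reduce modulo $p^n$.} The ring $\AR$ is $p$-adically complete, $p$-torsion free and noetherian. It is noetherian because $\AR/p\AR \isomorphic \ER = \ER^+[1/\overline{\mu}]$ with $\ER^+ \isomorphic (\calR/p\calR)\llbracket \overline{\mu}\rrbracket$ noetherian; then $\AR$ is noetherian as a $p$-adically complete ring with noetherian reduction, using that $p$ is a nonzerodivisor. The ring $A'$ is $p$-torsion free, being flat over the $p$-torsion free ring $\AR$, so its completion $\AR^{\etale}$ is $p$-torsion free as well. Moreover, for each $n$ we have
\begin{equation*}
	\AR^{\etale}/p^n \AR^{\etale} \isomorphic A'/p^nA' \isomorphic \colim_{\AR'} \AR'/p^n\AR',
\end{equation*}
which is flat over $\AR/p^n\AR$.

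\textbf{Step 2: conclude flatness.} We apply the standard criterion (see \cite[\href{https://stacks.math.columbia.edu/tag/0912}{Tag 0912}]{stacks-project}, or \cite[Tag 0AGW]{stacks-project}). Let $A$ be noetherian and $I$-adically complete, and let $M$ be an $I$-adically complete module which is $I$-adically separated, has bounded $I$-torsion (here it is torsion free), and is such that $M/I^nM$ is flat over $A/I^n$ for all $n$. Then $M$ is flat over $A$. We use this with $A = \AR$, $I=(p)$ and $M = \AR^{\etale}$.

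Alternatively, there is a more hands-on route. For a finitely generated ideal $J \subset \AR$, the Tor vanishing $\textup{Tor}_1^{\AR}(\AR/J, \AR^{\etale}) = 0$ can be obtained by an Artin--Rees argument combined with the mod-$p^n$ flatness, passing to the limit via Mittag-Leffler.

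\textbf{Main obstacle.} The delicate point is the noetherianity of $\AR$ together with the correct form of the completed flatness criterion. Noetherianity is needed so that $\AR \otimes$ (finitely generated module) commutes with $p$-adic completion. If invoking noetherianity is problematic, one can instead argue directly with the presentation: each finitely presented $\AR$-module $N$ has $N\otimes_{\AR} \AR^{\etale} = (N \otimes_{\AR} A')^{\wedge}$, and exactness is checked via the mod-$p^n$ flatness and the torsion freeness above.
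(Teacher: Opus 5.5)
Your proposal is correct and essentially matches the paper's argument, which reduces modulo $p$, observes that $\ER^{\etale}$ is a filtered colimit of finite \'etale (hence flat) $\ER$-algebras, and lifts this via Lemma \ref{lem:local_criterion_flatness} (the local flatness criterion followed by Stacks Tag 0912, using that $\AR$ is noetherian). The only difference is that you get flatness of the uncompleted colimit over $\AR$ directly, so mod $p^n$ flatness follows by base change and the local-criterion step is skipped; you then conclude with the same completed-flatness criterion.
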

\begin{proof}
	Note that the reduction modulo $p$ of the natural ring homomorphism $\AR \rightarrow \AR^{\etale}$ coincides with the natural ring homomorphism $\ER \rightarrow \ER^{\etale}$.
	As $\AR^{\etale}$ is $p\textrm{-adically}$ complete and $p\textrm{-torsion}$ free, therefore, by Lemma \ref{lem:local_criterion_flatness} it is enough to show that $\ER^{\etale}$ is flat over $\ER$.
	From \eqref{eq:decompleted_ring_modp}, we have that $\ER^{\etale}$ is the direct colimit of finite \'etale, in particular, flat $\ER\textrm{-algebras}$ $\ER'$, therefore, $\ER^{\etale}$ is flat over $\ER$.
\end{proof}

\subsection{\texorpdfstring{$p\textrm{-adic}$}{-} representations and \'etale \texorpdfstring{$(\varphi, \Gamma)\textrm{-modules}$}{-}}\label{subsec:padic_reps_phiGamma}

In this section, our main goal is to classify $p\textrm{-adic}$ representations of the group $G_{\calR/R}^{\etale}$, in terms of \'etale $(\varphi, \Gamma_{\calR} \times \Delta_{\calR/R})\textrm{-modules}$.

\subsubsection{Mod \texorpdfstring{$p$}{-} representations of \texorpdfstring{$G_{\calR/R}^{\mathrm{\'et}}$}{-}}\label{subsubsec:modp_reps_phiGamma}

We will first look at mod $p$ representations of $G_{\calR/R}^{\etale}$ and their associated $(\varphi, \Gamma_{\calR} \times \Delta_{\calR/R})\textrm{-modules}$ in characteristic $p$.
We begin with a general definition.

Let $A$ be a topological $\mathbb{Z}_p\textrm{-algebra}$ (resp.\ a discrete $\mathbb{Z}/p^n\mathbb{Z}\textrm{-module}$) equipped with a continuous action of a topological group $\Gamma$ and a continuous lifting $\varphi \colon A \rightarrow A$ of the absolute Frobenius modulo $p$ which commutes with the action of $\Gamma$, and such that $A^{\varphi=1} = \mathbb{Z}_p$ (resp.\ $A^{\varphi=1} = \mathbb{Z}/p^n\mathbb{Z}$).
\begin{defi}\label{defi:phiGamma_module}
	A $\varphi\textit{-module}$ over $A$ is a finitely generated $A\textrm{-module}$ equipped with a semilinear action of $\varphi$.
	A $\varphi\textrm{-module}$ $D$ over $A$ is called \textit{\'etale} if the natural $A\textrm{-linear}$ map $A\otimes_{\varphi,A} D \rightarrow D$ is an isomorphism.

	A $(\varphi, \Gamma)\textit{-module}$ over $A$ is a finitely generated $A\textrm{-module}$ equipped with continuous and semilinear actions of $\Gamma$ and $\varphi$, which commute with each other.
	A $(\varphi,\Gamma)\textrm{-module}$ $D$ is called \textit{\'etale} if the natural $A\textrm{-linear}$ map $A\otimes_{\varphi,A} D \rightarrow D$ is an isomorphism.

	A morphism between two \'etale $(\varphi, \Gamma)\textrm{-modules}$ over $A$ is an $A\textrm{-linear}$ map compatible with the respective $(\varphi, \Gamma)\textrm{-actions}$.
	Denote the category of \'etale $(\varphi, \Gamma)\textrm{-modules}$ (resp.\ finite projective) over $A$ as $(\varphi, \Gamma)\textup{-Mod}_{A}^{\etale}$ (resp.\ $(\varphi, \Gamma)\textup{-Mod}_{A}^{\etale,\textup{fproj}}$).
\end{defi}

\begin{lem}\label{lem:modp_phimod_proj}
	An \'etale $\varphi\textrm{-module}$ over $\ER$ is finite projective.
\end{lem}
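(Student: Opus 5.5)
The plan is to show that an étale $\varphi$-module over $\ER$ is finite projective by exploiting that Frobenius on $\ER$ is finite faithfully flat. Two steps: first show $D$ is finitely presented, then show the locus where $D$ fails to be locally free is stable under Frobenius and hence empty.

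\emph{Finite presentation.} Recall that $\ER \isomorphic (\calR/p\calR)\llbracket \overline{\mu} \rrbracket[1/\overline{\mu}]$ and that $\calR/p\calR$ is a noetherian regular domain (Proposition \ref{prop:Rnm_Snm_normal} (2)). Hence $\ER$ is a noetherian regular domain, and $D$ is finitely presented.

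\emph{Reduction to the rank function.} Let $D$ be an étale $\varphi$-module over $\ER$, with isomorphism $\varphi^*D \isomorphic D$. For a noetherian module it suffices to prove the rank function $\mathfrak{q} \mapsto \dim_{k(\mathfrak{q})} D \otimes k(\mathfrak{q})$ is locally constant on $\Spec \ER$. Since $\Spec \ER$ is connected (it is a domain), this amounts to showing the rank is constant, equal to the generic rank $r = \dim_{\mathrm{Fr}(\ER)} D \otimes \mathrm{Fr}(\ER)$. Reducedness is then automatic.

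\emph{Key step.} Consider the Fitting ideals $I_j = \mathrm{Fitt}_j(D)$. Fitting ideals commute with base change, so the étale isomorphism gives
\[
I_j \ER = \mathrm{Fitt}_j(\varphi^*D) = \varphi(I_j)\ER .
\]
Let $J = I_r$. We have $J \neq 0$ because the generic rank is $r$, and $D$ is projective of rank $r$ exactly when $J = \ER$ and $I_{r-1} = 0$. First, $I_{r-1}\otimes \mathrm{Fr}(\ER) = 0$ and $\ER$ is a domain, so $I_{r-1} = 0$.

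For $J$, suppose $J \neq \ER$ and pick a prime $\mathfrak{q} \supseteq J$ that is minimal over $J$. The relation $J = \varphi(J)\ER$ says that $V(J)$ equals the preimage of $V(J)$ under the map $\Spec(\varphi)$. Because $\varphi$ is finite and faithfully flat of degree $p^{d+1}$, the map $\Spec(\varphi)$ is a finite surjective homeomorphism: it is the absolute Frobenius, a universal homeomorphism. I would rather run a length argument directly. Let $\mathfrak{q}' = \varphi^{-1}(\mathfrak{q})$, which here equals $\mathfrak{q}$ since Frobenius is a homeomorphism on spectra. Compare lengths of $\ER_{\mathfrak{q}}/J_{\mathfrak{q}}$ on the two sides of $J = \varphi(J)\ER$. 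By flatness of $\varphi$, the length of $\ER_{\mathfrak q}/\varphi(J)\ER_{\mathfrak q}$ equals $\ell(\ER_{\mathfrak q}/J_{\mathfrak q})$ times $\ell(\ER_{\mathfrak q}/\varphi(\mathfrak q)\ER_{\mathfrak q})$, and the second factor is at least $p$ whenever $\dim \ER_{\mathfrak q} \geqslant 1$. A finite positive length cannot equal $p$ times itself, so $\mathfrak{q}$ cannot exist and $J = \ER$.

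\emph{Main obstacle.} The delicate point is this length computation. It requires $\mathfrak q$ minimal over $J$, so that the length is finite, and it requires that the Frobenius pullback of the maximal ideal of $\ER_{\mathfrak q}$ generate an ideal of colength $\geqslant p$. The latter holds because $\varphi(x) = x^p$ modulo a regular system of parameters, using regularity of $\ER_{\mathfrak q}$. I would try Kunz's theorem here: since $\ER$ is regular, Frobenius is flat and $\ell(\ER_{\mathfrak q}/\mathfrak q^{[p]}) = p^{\dim}$. Alternatively, if this becomes messy, one may argue by faithfully flat descent of projectivity. Faithfully flat base change along $\varphi$ does not directly descend projectivity from $\varphi^*D$, but Fitting ideals do give the clean route above. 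Finitely generated plus all stalks free then yields finite projective.
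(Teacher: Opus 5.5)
Your proof is correct, but it takes a different route from the paper's.

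\textbf{The paper's route.} It follows Andreatta's Lemma 7.10. It reduces to flatness and works at a single maximal ideal $\mathfrak{m}$, passing to the $\mathfrak{m}$-adic completion. Nakayama's lemma gives a surjection $\widehat{E}_{\calR}^{\oplus r}\twoheadrightarrow\widehat{D}$ with $r=\dim D/\mathfrak{m}D$. The étale isomorphism, reduced modulo powers of $\mathfrak{m}$, is then used to compare lengths of the truncations $D/\mathfrak{m}^{pn}D$ and $(\ER/\mathfrak{m}^{pn})^{\oplus r}$, which shows the surjection is injective.

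\textbf{Your route.} You encode the étale condition in the Fitting ideals, $\mathrm{Fitt}_j(D)=\varphi(\mathrm{Fitt}_j(D))\ER$. You use that $\ER$ is a domain to get $\mathrm{Fitt}_{r-1}(D)=0$, where $r$ is the generic rank. This reduces everything to showing that a single nonzero ideal $J$ with $J=\varphi(J)\ER$ is the unit ideal.

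\textbf{Comparison.} The paper's argument is local and does not use that $\ER$ is a domain; a priori it only gives local freeness at each maximal ideal. Yours is global and avoids completions and faithful flatness of completion. It yields constant rank equal to the generic rank in one stroke, at the cost of using integrality of $\ER$, which holds here.

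\textbf{Your ``main obstacle'' is not really one.} The Frobenius $\varphi$ on $\ER$ is the absolute Frobenius, since $\ER\subset\tilde{E}_{\calR}$ carries the induced $p$-th power map. So $\varphi(J)\ER$ is generated by $p$-th powers of elements of $J$, and
\[
J=\varphi(J)\ER\subseteq J^p\subseteq J^2\subseteq J,
\]
that is, $J=J^2$. Since $J$ is finitely generated, Nakayama's lemma gives $x\equiv 1 \bmod J$ with $xJ=0$. As $\ER$ is a domain and $J\neq 0$, this forces $x=0$, hence $J=\ER$.

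Thus you need neither flatness of $\varphi$, nor Kunz's theorem, nor regularity of $\ER$; only that $\ER$ is a noetherian domain with $\varphi$ the absolute Frobenius. Your length computation at a prime $\mathfrak{q}$ minimal over $J$ is also valid: length is multiplicative along the flat local map $\ER_{\mathfrak q}\to\ER_{\mathfrak q}$ induced by $\varphi$, and $\ell(\ER_{\mathfrak q}/\mathfrak{q}^{[p]}\ER_{\mathfrak q})\geqslant 2$ by Nakayama once $\mathfrak{q}\neq 0$. It is simply heavier than necessary.
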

\begin{proof}
	The claim may be shown by following the argument in \cite[Lemma 7.10]{andreatta-phigamma}.
	We reproduce it for completeness.

	Let $D$ be an \'etale $\varphi\textrm{-module}$ over $\ER$.
	As $\ER$ is noetherian, it suffices to show that $D$ is a flat module over $\ER$.
	Let $\mathfrak{m} \subset \ER$ be a maximal ideal, and let $\widehat{E}_{\calR}$ denote the $\mathfrak{m}\textrm{-adic}$ completion of $\ER$.
	Then, it is enough to show that the $\mathfrak{m}\textrm{-adic}$ completion $\widehat{D}$ of $D$ is a free module over $\widehat{E}_{\calR}$.
	Let $r$ denote the dimension of $D/\mathfrak{m}D$ as an $\ER/\mathfrak{m}\textrm{-vector space}$.
	Choosing a lift in $D$ of a basis of $D/\mathfrak{m}D$ defines a surjective map $f \colon \widehat{E}_{\calR}^{\oplus r} \twoheadrightarrow \widehat{D}$ by Nakayama's Lemma.
	As $D$ is an \'etale $\varphi\textrm{-module}$, we have isomorphisms $1 \otimes \varphi \colon \ER/\mathfrak{m}^{pn}\ER \otimes_{\ER} D/\mathfrak{m}^nD \isomorphic D/\mathfrak{m}^{pn}D$ for each $n \geqslant 1$.
	Using this and arguing by induction, we deduce that for each $n \geqslant 1$, the $\ER/\mathfrak{m}\ER\textrm{-modules}$ $(\ER/\mathfrak{m}^{pn}\ER)^{\oplus r}$ and $D/\mathfrak{m}^{pn}D$ have the same length.
	Hence, we conclude that $f$ must be an isomorphism, thus proving the claim.
\end{proof}

\begin{prop}\label{prop:modp_classify_decomp}
	Fix $m$ to be a positive integer coprime to $p$, and set $\calR \coloneq R_{0,m}$.
	\begin{enumerate}
		\item[\textup{(1)}] Let $T$ be a finite-dimensional $\mathbb{F}_p\textrm{-vector}$ space equipped with a linear action of $G_{\calR/R}^{\etale}$.
			Then,
			\begin{equation}\label{eq:phiGamma_modp}   
				D_{\calR}(T) \coloneq (E_{\calR}^{\etale} \otimes_{\mathbb{F}_p} T)^{H^{\etale}_{\calR}},
			\end{equation}
			has the natural structure of an \'etale $(\varphi, \Gamma_{\calR} \times \Delta_{\calR/R})\textrm{-module}$ over $E_{\calR}$.
			Additionally, the natural multiplication map $E_{\calR}^{\etale} \otimes_{E_{\calR}} E_{\calR}^{\etale} \rightarrow E_{\calR}^{\etale}$ induces a natural isomorphism of $E_{\calR}^{\etale}\textrm{-modules}$
			\begin{equation}\label{eq:phiGamma_comp_iso_modp_imperf}
				E_{\calR}^{\etale} \otimes_{E_{\calR}} D_{\calR}(T) \isomorphic E_{\calR}^{\etale} \otimes_{\mathbb{F}_p} T,
			\end{equation}
			compatible with the respective actions of $\varphi$ and $G_{\calR/R}^{\etale}$.
		
		\item[\textup{(2)}] The following natural functor induces an equivalence of categories:
			\begin{equation}\label{eq:modp_classify_decomp}
				D_{\calR} \colon \mathrm{Rep}_{\mathbb{F}_p}(G_{\calR/R}^{\etale}) \isomorphic (\varphi, \Gamma_{\calR} \times \Delta_{\calR/R})\textup{-Mod}_{E_{\calR}}^{\etale},
			\end{equation}
			with a quasi-inverse functor given as $T_{\calR}(D) \coloneq (E_{\calR}^{\etale} \otimes_{E_{\calR}} D)^{\varphi=1}$. 
	\end{enumerate}
\end{prop}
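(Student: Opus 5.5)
The plan is Galois descent along the extension $E_{\calR} \rightarrow E_{\calR}^{\etale}$, whose Galois group is $\HR^{\etale}$ (Corollary \ref{cor:Gal_isoms}), combined with the Artin--Schreier exact sequence $0 \rightarrow \mathbb{F}_p \rightarrow E_{\calR}^{\etale} \xrightarrow{\varphi-1} E_{\calR}^{\etale} \rightarrow 0$.

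\emph{Step 1: the comparison isomorphism \eqref{eq:phiGamma_comp_iso_modp_imperf}.} The action of $G_{\calR/R}^{\etale}$ on $T$ is continuous and $T$ is finite, so $\HR^{\etale}$ acts on $T$ through a finite quotient $\Gal(E'/E_{\calR})$, where $E' \in \textup{F\'Et}(E_{\calR})$ is a finite Galois \'etale extension inside $E_{\calR}^{\etale}$. This $E'$ corresponds, via Theorem \ref{thm:fet_equivalence} and Corollary \ref{cor:Gal_isoms}, to the open subgroup of $\HR^{\etale}$ acting trivially on $T$. Since $E' \otimes_{\mathbb{F}_p} T$ carries a semilinear action of the finite group $\Gal(E'/E_{\calR})$ and $E'/E_{\calR}$ is finite Galois \'etale, finite Galois descent gives
\[
E' \otimes_{E_{\calR}} (E' \otimes_{\mathbb{F}_p} T)^{\Gal(E'/E_{\calR})} \isomorphic E' \otimes_{\mathbb{F}_p} T,
\]
and the invariant module is finite projective over $E_{\calR}$. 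Since $T$ is fixed by $\Gal(E_{\calR}^{\etale}/E')$ and $(E_{\calR}^{\etale})^{\Gal(E_{\calR}^{\etale}/E')} = E'$, we get $D_{\calR}(T) = (E' \otimes_{\mathbb{F}_p} T)^{\Gal(E'/E_{\calR})}$. Base changing along $E' \rightarrow E_{\calR}^{\etale}$ then yields \eqref{eq:phiGamma_comp_iso_modp_imperf}. The remaining structure is inherited:
\begin{itemize}
\item $\varphi$ commutes with the Galois action, so it preserves $D_{\calR}(T)$;
\item $G_{\calR/R}^{\etale}/\HR^{\etale} \isomorphic \GammaR \times \Delta_{\calR/R}$ acts on the invariants;
\item continuity holds because the action factors through a finite-level object.
\end{itemize}
For \'etaleness, $\varphi^*$ of $E_{\calR}^{\etale} \otimes_{\mathbb{F}_p} T$ is an isomorphism, because $\varphi$ on $E_{\calR}^{\etale}$ is the Frobenius of a colimit of finite \'etale extensions of $E_{\calR}$, so $E_{\calR}^{\etale} \otimes_{\varphi, E_{\calR}} E_{\calR} \isomorphic E_{\calR}^{\etale}$. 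Faithfully flat descent along $E_{\calR} \rightarrow E_{\calR}^{\etale}$, which is flat as in Lemma \ref{lem:ARet_flat} and faithfully flat on each finite \'etale cover, then shows that $\varphi^*D_{\calR}(T) \rightarrow D_{\calR}(T)$ is an isomorphism.

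\emph{Step 2: the functors are quasi-inverse.} Taking $\varphi=1$ on both sides of \eqref{eq:phiGamma_comp_iso_modp_imperf} and using $(E_{\calR}^{\etale})^{\varphi=1} = \mathbb{F}_p$ gives $T_{\calR}(D_{\calR}(T)) \isomorphic T$. The identity $(E_{\calR}^{\etale})^{\varphi=1} = \mathbb{F}_p$ holds because $E_{\calR}^{\etale}$ is a domain-like colimit with connected spectrum. Connectedness follows from connectedness of $\Spec(\calR/p)$ and $E_{\calR}^+ \isomorphic (\calR/p\calR)\llbracket\overline{\mu}\rrbracket$, carried through the tower.

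For the converse, let $D$ be an \'etale $(\varphi, \GammaR \times \Delta_{\calR/R})$-module. By Lemma \ref{lem:modp_phimod_proj}, $D$ is finite projective. I would show that $M \coloneq E_{\calR}^{\etale} \otimes_{E_{\calR}} D$ is trivialised by $\varphi$-invariants, i.e.\ $E_{\calR}^{\etale} \otimes_{\mathbb{F}_p} M^{\varphi=1} \isomorphic M$.
\begin{itemize}
\item Locally on $\Spec E_{\calR}$, take a basis of $D$ with matrix $A$ of $\varphi$, where $A$ is invertible by \'etaleness.
\item The equations $\varphi(X) = A^{-1}X$ define a finite \'etale $E_{\calR}$-algebra of degree $p^{r^2}$. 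It is \'etale because the differential of $X^{(p)} - A^{-1}X$ is $-A^{-1}$, which is invertible. This is the standard Katz argument \cite[Proposition~4.1.1]{katz73}.
\item Glueing these algebras gives a finite \'etale cover of $E_{\calR}$. It embeds in $E_{\calR}^{\etale}$ by Theorem \ref{thm:fet_equivalence}, since every finite \'etale $E_{\calR}$-algebra splits in $E_{\calR}^{\etale}$.
\end{itemize}
This shows that $T_{\calR}(D)$ is an $\mathbb{F}_p$-space of dimension equal to the rank, that $M \isomorphic E_{\calR}^{\etale} \otimes T_{\calR}(D)$, and, taking $\HR^{\etale}$-invariants, that $D_{\calR}(T_{\calR}(D)) \isomorphic D$. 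The $G_{\calR/R}^{\etale}$-action on $T_{\calR}(D)$ comes from the diagonal action on $E_{\calR}^{\etale} \otimes D$, and its continuity again follows from finiteness. Both constructions are natural, so full faithfulness follows from these two isomorphisms.

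\emph{Main obstacle.} I expect the hardest step to be the Katz-type trivialisation in Step 2. The difficulty is to ensure that the solution algebra is a genuine finite \'etale $E_{\calR}$-algebra (rather than one over $\tilde{E}_{\calR}$) and that it lands in $E_{\calR}^{\etale}$ with the full set of $p^{r^2}$ solutions. This depends on identifying $\textup{F\'Et}(E_{\calR})$ with finite \'etale algebras over the perfectoid side, via Theorem \ref{thm:fet_equivalence} and the decompletion results of the appendix, and on the connectedness that gives $(E_{\calR}^{\etale})^{\varphi=1} = \mathbb{F}_p$.
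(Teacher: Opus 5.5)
Your proposal is correct and follows essentially the paper's route. For part (1) you use finite Galois descent along the finite Galois subextension $E'/E_{\calR}$ of $E_{\calR}^{\etale}$ cut out by the open subgroup of $\HR^{\etale}$ acting trivially on $T$. For the quasi-inverse you use Katz's argument that the $\varphi$-fixed-point equations define a finite \'etale $E_{\calR}$-algebra which splits in $E_{\calR}^{\etale}$.

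One small precision concerns $(E_{\calR}^{\etale})^{\varphi=1} = \mathbb{F}_p$. This needs every finite \'etale $E' \subset E_{\calR}^{\etale}$ in the colimit to be connected, not merely $\Spec(\calR/p\calR)$ or $E_{\calR}$; it holds because under Theorem \ref{thm:fet_equivalence} each such $E'$ corresponds to a domain inside $\overline{R}[1/p]$.
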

\begin{proof}
	The proof is analogous to the proof of \cite[Propositions 1.2.4 \& 1.2.6]{fontaine-phigamma} and \cite[Theorem 7.11]{andreatta-phigamma} and is based on \cite[Proposition 4.1.1]{katz73}.
	More precisely, given a finite \'etale morphism $A \rightarrow B$ which is Galois, with Galois group $G$, and $M$ a finitely generated $B\textrm{-module}$ equipped with a semilinear action of $G$, then the $G\textrm{-action}$ on $M$ determines a descent data $\mathrm{p}_{1}^*(M) \isomorphic \mathrm{p}_{2}^*(M)$ over $B \otimes_A B$ satisfying the cocycle condition over $B \otimes_A B \otimes_A B$.
	Here $\mathrm{p}_{1}^*(M)$ denotes the base change of $M$ along the map $B \rightarrow B \otimes_A B$ sending $b \mapsto b \otimes 1$, and similarly $\mathrm{p}_{2}^*(M)$ denotes the base change of $M$ along the map $B \rightarrow B \otimes_A B$ sending $b \mapsto 1 \otimes b$.
	Since an \'etale descent data is always effective (see, for example, \cite[Example B, p.\ 139]{bosch-lutkebohmert-raynaud}), therefore, there exists an $A\textrm{-module}$ $N$ such that $M = B \otimes_A N$, and $N$ is finitely generated over $A$.
	Moreover, it is clear that if $M$ is finite projective over $B$, then $N$ is finite projective over $A$.
	
	Let $T$ be an object of $\mathrm{Rep}_{\mathbb{F}_p}(G_{\calR/R}^{\etale})$.
	From \cite[Theorem 7.12]{scholze-perfectoid} and Corollary \ref{cor:Gal_isoms}, we may identify $\HR^{\etale}$ with the Galois group $\Gal(E_{\calR}^{\etale}/E_{\calR})$.
	As $T$ is finite and $H^{\etale}_{\calR}$ is profinite, there exists an open normal subgroup $H'$ of $H^{\etale}_{\calR}$, i.e.\ a subgroup of finite index, which acts trivially on $T$.
	Set $E_{\calR}' \coloneq (E_{\calR}^{\etale})^{H'}$ as a finite \'etale and galois extension of $E_{\calR}$ with Galois group $G \coloneq H^{\etale}_{\calR}/H'$.
	Then, we see that we have $D_{\calR}(T) = (E_{\calR}^{\etale} \otimes_{\mathbb{F}_p} T)^{H^{\etale}_{\calR}} = (E_{\calR}' \otimes_{\mathbb{F}_p} T)^G$. 
	Now, by applying the discussion of the previous paragraph to the finite \'etale extension $E_{\calR}'$ of $E_{\calR}$, we see that the finite free $E_{\calR}^{\etale}\textrm{-module}$ $E_{\calR}^{\etale} \otimes_{\mathbb{F}_p} T$ equipped with a semilinear (diagonal) action of the Galois group $H_{\calR,}^{\etale}$, descends to a finite projective $E_{\calR}\textrm{-module}$ $D_{\calR}(T) \coloneq (E_{\calR}^{\etale} \otimes_{\mathbb{F}_p} T)^{H^{\etale}_{\calR}}$.
	
	As $E_{\calR}^{\etale} \otimes_{\mathbb{F}_p} T$ is equipped with an action of $G_{\calR/R}^{\etale}$, and a Frobenius structure given by $\varphi \otimes 1$ which is $G_{\calR/R}^{\etale}\textrm{-equivariant}$, therefore, we see that $D_{\calR}(T)$ is equipped with a residual action of $(\varphi, \Gamma_{\calR} \times \Delta_{\calR/R})$ by \eqref{eq:GRet_ses}, and it is easy to check that the natural map $E_{\calR} \otimes_{\varphi, E_{\calR}} D_{\calR}(T) \rightarrow D_{\calR}(T)$ is an isomorphism.
	Additionally, we also have a natural isomorphism of $(\varphi, G_{\calR/R}^{\etale})\textrm{-modules}$ induced by the multiplication map:
	\begin{equation*}
		E_{\calR}^{\etale} \otimes_{E_{\calR}} D_{\calR}(T) \isomorphic E_{\calR}^{\etale} \otimes_{\mathbb{F}_p} T.
	\end{equation*}
	
	Conversely, the arguments of \cite{katz73} show that, for any \'etale $(\varphi, \Gamma_{\calR} \times \Delta_{\calR/R})\textrm{-module}$ $D$ over $\ER$ (finite projective by Lemma \ref{lem:modp_phimod_proj}), the $\mathbb{F}_p\textrm{-vector}$ space $T_{\calR}(D)$ is finite dimensional, and the following natural map of $(\varphi, G_{\calR/R}^{\etale})\textrm{-modules}$ over $E_{\calR}^{\etale}$ is an isomorphism:
	\begin{equation*}
		E_{\calR}^{\etale} \otimes_{\mathbb{F}_p} T_{\calR}(D) \isomorphic E_{\calR}^{\etale} \otimes_{E_{\calR}} D.
	\end{equation*}
	Then, from the preceding discussion, it is easy to check that the functors $D_{\calR}$ and $T_{\calR}$ are quasi inverse to each other.
	This allows us to conclude.
\end{proof}

The following is an easy consequence of Proposition \ref{prop:modp_classify_decomp}:
\begin{cor}\label{prop:modp_classify_perf}
	The following natural functor induces an equivalence of categories:
	\begin{equation}\label{eq:modp_classify_perf}
		\begin{aligned}
			\tilde{D}_{\calR} \colon \mathrm{Rep}_{\mathbb{F}_p}(G_{\calR/R}^{\etale}) &\isomorphic (\varphi, \Gamma_{\calR} \times \Delta_{\calR/R})\textup{-Mod}_{\tilde{E}_{\calR}}^{\etale,\textup{fproj}}\\
				T &\longmapsto (\tilde{E}_{\calR}^{\etale} \otimes_{\mathbb{F}_p} T)^{H^{\etale}_{\calR}},
		\end{aligned}
	\end{equation}
	with a quasi-inverse functor given as $\tilde{T}_{\calR}(\tilde{D}) \coloneq (\tilde{E}_{\calR}^{\etale} \otimes_{\tilde{E}_{\calR}} \tilde{D})^{\varphi=1}$.
	Additionally, for an $\mathbb{F}_p\textrm{-representation}$ $T$ of $G_{\calR/R}^{\etale}$, the natural multiplication map $\tilde{E}_{\calR}^{\etale} \otimes_{\tilde{E}_{\calR}} \tilde{E}_{\calR}^{\etale} \rightarrow \tilde{E}_{\calR}^{\etale}$ induces a natural isomorphism of $\tilde{E}_{\calR}^{\etale}\textrm{-modules}$
	\begin{equation}\label{eq:phiGamma_comp_iso_modp_perf}
		\tilde{E}_{\calR}^{\etale} \otimes_{\tilde{E}_{\calR}} \tilde{D}_{\calR}(T) \isomorphic \tilde{E}_{\calR}^{\etale} \otimes_{\mathbb{F}_p} T,
	\end{equation}
	compatible with the respective actions of $\varphi$ and $G_{\calR/R}^{\etale}$.
\end{cor}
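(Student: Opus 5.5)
The plan is to deduce the corollary from Proposition \ref{prop:modp_classify_decomp} by base change along the inclusion $\ER \hookrightarrow \tilde{E}_{\calR}$, using that $\tilde{E}_{\calR}$ is the completed perfection of $\ER$ and $\tilde{E}_{\calR}^{\etale}$ that of $\ER^{\etale}$. First, for an $\mathbb{F}_p$-representation $T$, extend the isomorphism \eqref{eq:phiGamma_comp_iso_modp_imperf} to $\tilde{E}_{\calR}^{\etale}$. This gives a $(\varphi, G_{\calR/R}^{\etale})$-equivariant isomorphism
\begin{equation*}
	\tilde{E}_{\calR}^{\etale} \otimes_{\ER} \DR(T) \isomorphic \tilde{E}_{\calR}^{\etale} \otimes_{\mathbb{F}_p} T.
\end{equation*}
Since $\DR(T)$ is finite projective over $\ER$ (Lemma \ref{lem:modp_phimod_proj}), taking $\HR^{\etale}$-invariants commutes with the tensor product. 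Using $(\tilde{E}_{\calR}^{\etale})^{\HR^{\etale}} = \tilde{E}_{\calR}$ (from Lemma \ref{lem:Galinv_Rinftym} (2) and the definitions via $H$-invariants of $\tilde{E}(\overline{R})$), we get $\tilde{D}_{\calR}(T) \cong \tilde{E}_{\calR} \otimes_{\ER} \DR(T)$. In particular $\tilde{D}_{\calR}(T)$ is finite projective and étale (the Frobenius linearisation is the base change of an isomorphism), it carries the residual $(\varphi, \GammaR \times \Delta_{\calR/R})$-action by \eqref{eq:GRet_ses}, and \eqref{eq:phiGamma_comp_iso_modp_perf} follows by further base change.

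Full faithfulness and the formula for the quasi-inverse then come from computing $\varphi$-invariants. For $\tilde{D} = \tilde{D}_{\calR}(T)$ we have $(\tilde{E}_{\calR}^{\etale} \otimes_{\tilde{E}_{\calR}} \tilde{D})^{\varphi=1} = (\tilde{E}_{\calR}^{\etale} \otimes_{\mathbb{F}_p} T)^{\varphi=1} = T$, since $(\tilde{E}_{\calR}^{\etale})^{\varphi=1} = \mathbb{F}_p$. This last identity holds because $\tilde{E}_{\calR}^{\etale}$ is the localisation of a perfect domain-like tilt with connected spectrum, whose Frobenius fixed points are $\mathbb{F}_p$. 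Hence $\tilde{T}_{\calR} \circ \tilde{D}_{\calR} \cong \mathrm{id}$, and $\tilde{D}_{\calR}$ is fully faithful.

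The main obstacle is essential surjectivity: every finite projective étale $(\varphi, \GammaR \times \Delta_{\calR/R})$-module $\tilde{D}$ over $\tilde{E}_{\calR}$ must be of the form $\tilde{E}_{\calR} \otimes_{\ER} D$. I would avoid decompleting directly and instead run Katz's argument over the perfect ring. By \cite[Proposition 4.1.1]{katz73}, applied to $\tilde{E}_{\calR}$ and its étale cover $\tilde{E}_{\calR}^{\etale}$, which is the completion of the union of all finite étale covers by Scholze's tilting equivalence \cite[Theorem 7.12]{scholze-perfectoid} together with Theorem \ref{thm:fet_equivalence}, the étale $\varphi$-module $\tilde{D}$ becomes trivial after base change to $\tilde{E}_{\calR}^{\etale}$. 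Concretely, $\tilde{E}_{\calR}^{\etale} \otimes_{\mathbb{F}_p} \tilde{T}_{\calR}(\tilde{D}) \isomorphic \tilde{E}_{\calR}^{\etale} \otimes_{\tilde{E}_{\calR}} \tilde{D}$ with $\tilde{T}_{\calR}(\tilde{D})$ finite dimensional. The key input is that solutions of $\varphi(x) = Ax$ are defined over a finite étale extension, and these exist inside $\tilde{E}_{\calR}^{\etale}$. Checking that the completion step does not create extra covers is the delicate point; Corollary \ref{cor:Gal_isoms} handles it. Then $\tilde{T}_{\calR}(\tilde{D})$ is a representation of $G_{\calR/R}^{\etale}$, and taking $\HR^{\etale}$-invariants yields $\tilde{D}_{\calR}(\tilde{T}_{\calR}(\tilde{D})) \cong \tilde{D}$, compatibly with all the structures.
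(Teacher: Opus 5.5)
Your proposal is correct and follows essentially the paper's route. The paper gives no separate proof; it only records the statement as an easy consequence of Proposition~\ref{prop:modp_classify_decomp}. Your argument is a faithful expansion of that justification: base change along $\ER^{\etale}\subset\tilde{E}_{\calR}^{\etale}$ gives $\tilde{D}_{\calR}(T)\cong\tilde{E}_{\calR}\otimes_{\ER}\DR(T)$ and \eqref{eq:phiGamma_comp_iso_modp_perf}, and for essential surjectivity you rerun the Katz/Galois-descent argument of that proposition over $\tilde{E}_{\calR}$, using Theorem~\ref{thm:fet_equivalence} and Corollary~\ref{cor:Gal_isoms} to place the trivialising cover inside $\tilde{E}_{\calR}^{\etale}$.

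The only slip is that $\tilde{T}_{\calR}\circ\tilde{D}_{\calR}\cong\mathrm{id}$ alone gives faithfulness of $\tilde{D}_{\calR}$, not full faithfulness. Your third step supplies $\tilde{D}_{\calR}\circ\tilde{T}_{\calR}\cong\mathrm{id}$, so the equivalence follows regardless.
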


In Proposition \ref{prop:modp_classify_decomp}, by restricting to $G_{\calR}^{\etale}\textrm{-action}$, instead of  $G_{\calR/R}^{\etale}\textrm{-action}$, we obtain the following variant:
\begin{cor}\label{cor:modp_classify_GR0m}
	The following natural functor induces an equivalence of categories:
	\begin{equation}\label{eq:modp_classify_GR0m}
		\begin{aligned}
			\DR \colon \mathrm{Rep}_{\mathbb{F}_p}(G_{\calR}^{\etale}) &\isomorphic (\varphi, \Gamma_{\calR})\textup{-Mod}_{\ER}^{\etale}\\
			T &\longmapsto (\ER^{\etale} \otimes_{\mathbb{F}_p} T)^{H^{\etale}_{\calR}},
		\end{aligned}
	\end{equation}
	with a quasi-inverse functor given as $\TR(D) \coloneq (\ER^{\etale} \otimes_{\ER} D)^{\varphi=1}$.  
	Additionally, for an $\mathbb{F}_p\textrm{-representation}$ $T$ of $G_{\calR}^{\etale}$, the natural multiplication map $\ER^{\etale} \otimes_{\ER} \ER^{\etale} \rightarrow \ER^{\etale}$ induces a natural isomorphism of $\ER^{\etale}\textrm{-modules}$
	\begin{equation}\label{eq:phiGamma_comp_iso_modp_GR0m}
		\ER^{\etale} \otimes_{\ER} \DR(T) \isomorphic \ER^{\etale} \otimes_{\mathbb{F}_p} T,
	\end{equation}
	compatible with the respective actions of $\varphi$ and $G_{\calR}^{\etale}$.
\end{cor}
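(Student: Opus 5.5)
The plan is to repeat the proof of Proposition \ref{prop:modp_classify_decomp} with the group $G_{\calR/R}^{\etale}$ replaced by its open normal subgroup $G_{\calR}^{\etale}$, using the first exact sequence in \eqref{eq:GR0met_ses}, namely $1 \rightarrow \HR^{\etale} \rightarrow G_{\calR}^{\etale} \rightarrow \GammaR \rightarrow 1$. The point is that the descent along $\ER \rightarrow \ER^{\etale}$ only involves $\HR^{\etale}$, which is common to both groups. The additional group $\Delta_{\calR/R}$ only enters through the residual action on invariants, so dropping it costs nothing.

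\textbf{Construction of $\DR$.} Let $T$ be an $\mathbb{F}_p$-representation of $G_{\calR}^{\etale}$. Choose an open normal subgroup $H' \subset \HR^{\etale}$ acting trivially on $T$, and put $E' \coloneq (\ER^{\etale})^{H'}$. By Corollary \ref{cor:Gal_isoms} (together with \cite[Theorem 7.12]{scholze-perfectoid}), $E'$ is finite \'etale Galois over $\ER$ with group $\HR^{\etale}/H'$.
\begin{itemize}
	\item Galois descent (Katz \cite[Proposition 4.1.1]{katz73}, effectivity of \'etale descent) then gives that $\DR(T) = (E' \otimes_{\mathbb{F}_p} T)^{\HR^{\etale}/H'}$ is finite projective over $\ER$.
	\item Base change gives $E' \otimes_{\ER} \DR(T) \isomorphic E' \otimes_{\mathbb{F}_p} T$, and hence \eqref{eq:phiGamma_comp_iso_modp_GR0m} after further base change to $\ER^{\etale}$.
	\item The diagonal $(\varphi, G_{\calR}^{\etale})$-action on $\ER^{\etale} \otimes_{\mathbb{F}_p} T$ induces, via the exact sequence, a residual $(\varphi, \GammaR)$-action on $\DR(T)$.
	\item Étaleness follows because $\varphi \otimes 1$ is bijective after linearisation on $\ER^{\etale} \otimes T$, and this descends by the comparison isomorphism.
\end{itemize}

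\textbf{Quasi-inverse.} Let $D$ be an étale $(\varphi, \GammaR)$-module over $\ER$. By Lemma \ref{lem:modp_phimod_proj}, $D$ is finite projective. Katz's argument then shows that $\TR(D) = (\ER^{\etale} \otimes_{\ER} D)^{\varphi=1}$ is finite-dimensional over $\mathbb{F}_p$, and that
\[
\ER^{\etale} \otimes_{\mathbb{F}_p} \TR(D) \isomorphic \ER^{\etale} \otimes_{\ER} D .
\]
This step needs $(\ER^{\etale})^{\varphi=1} = \mathbb{F}_p$ and the fact that $\ER^{\etale}$ contains enough étale $\varphi$-solutions, which is the same input as in the proof of Proposition \ref{prop:modp_classify_decomp}. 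The group $G_{\calR}^{\etale}$ acts on $\TR(D)$ by letting $\HR^{\etale}$ act on the first factor and $\GammaR$ act diagonally through $\GammaR$ on $D$.

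\textbf{Checking the two composites.}
\begin{itemize}
	\item $\TR\DR(T) = (\ER^{\etale} \otimes T)^{\varphi=1} = T$, using the comparison isomorphism.
	\item $\DR\TR(D) = (\ER^{\etale} \otimes_{\ER} D)^{\HR^{\etale}} = D$, using $(\ER^{\etale})^{\HR^{\etale}} = \ER$ together with flatness and projectivity of $D$.
\end{itemize}
Both identifications are natural and compatible with all the structures.

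\textbf{Main obstacle.} The only delicate point is making sure the actions are well defined. One must check that the $\GammaR$-action on $\DR(T)$ and the $G_{\calR}^{\etale}$-action on $\TR(D)$ are continuous and compatible. This needs $\GammaR$ to be identified with $G_{\calR}^{\etale}/\HR^{\etale}$, which is exactly the first sequence of \eqref{eq:GR0met_ses}, and the continuity argument is verbatim the one in Proposition \ref{prop:modp_classify_decomp}.
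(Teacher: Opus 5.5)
Your proposal is correct and is essentially the paper's argument: the paper obtains this corollary by rerunning the proof of Proposition \ref{prop:modp_classify_decomp} with $G_{\calR/R}^{\etale}$ replaced by $G_{\calR}^{\etale}$, that is, Katz-style Galois descent along the finite \'etale Galois subextensions of $\ER \to \ER^{\etale}$ using $\HR^{\etale} \cong \Gal(\ER^{\etale}/\ER)$, with the residual $(\varphi,\GammaR)$-action read off from the first sequence in \eqref{eq:GR0met_ses}. One point of phrasing should be tightened: an element $g \in G_{\calR}^{\etale}$ acts on $\ER^{\etale} \otimes_{\ER} D$ diagonally, by $g$ on $\ER^{\etale}$ and through its image in $\GammaR$ on $D$, rather than through separate actions of $\HR^{\etale}$ and $\GammaR$, since the extension need not split.
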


\subsubsection{\texorpdfstring{$p\textrm{-adic}$}{-} representations of \texorpdfstring{$G_{\calR/R}^{\mathrm{\'et}}$}{-}}\label{subsubsec:padicreps_phiGamma}

In this section, we will lift the classification of mod $p$ representations from Proposition \ref{prop:modp_classify_decomp} to $p\textrm{-adic}$ representations of $G_{\calR/R}^{\etale}$.
We begin by noting some simple observations which will be useful in the main result Theorem \ref{thm:padic_classify_decomp} below.

\begin{lem}\label{lem:phimod_proj}
	A $p\textrm{-torsion free}$ \'etale $\varphi\textrm{-module}$ over $\AR$ is finite projective.
\end{lem}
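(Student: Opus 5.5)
We reduce to the mod $p$ statement, Lemma \ref{lem:modp_phimod_proj}, and lift using $p$-adic completeness. Let $D$ be a $p$-torsion free étale $\varphi$-module over $\AR$. The ring $\AR$ is noetherian and $p$-adically complete (it is the $p$-adic completion of the noetherian ring $\AR^+[1/\mu]$, with $\AR^+ \cong \calR\llbracket\mu\rrbracket$). Also $\AR/p\AR \cong \ER$. Hence $D$ is finitely generated and $p$-adically complete and separated.

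\emph{Step 1.} The quotient $D/pD$ is an étale $\varphi$-module over $\ER$. Since $\varphi$ on $\AR$ is faithfully flat, hence flat, base change gives
\[
\AR\otimes_{\varphi,\AR}(D/pD) \cong (\AR\otimes_{\varphi,\AR}D)/p.
\]
Reducing the isomorphism $\AR\otimes_{\varphi,\AR}D\isomorphic D$ modulo $p$, and using $\AR/p = \ER$, we obtain the étale condition for $D/pD$. Lemma \ref{lem:modp_phimod_proj} then shows that $D/pD$ is finite projective over $\ER$.

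\emph{Step 2.} We show that $D$ is flat over $\AR$. The element $p$ is a nonzerodivisor on $\AR$, $D$ is $p$-torsion free, and $D/pD$ is flat over $\AR/p$. Since $\AR$ is noetherian, $D$ is finitely generated, and $p$ lies in the Jacobson radical of $\AR$ by $p$-adic completeness, the local criterion for flatness applies. Here we use Lemma \ref{lem:local_criterion_flatness}, or the standard form for finite modules over a noetherian ring that is $I$-adically complete. It gives that $D$ is flat.

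\emph{Step 3.} A finitely generated flat module over a noetherian ring is finitely presented and flat, hence finite projective, which proves the claim. As an alternative to Step 2, one can lift a projective presentation directly. Write $D/pD$ as a direct summand of $\ER^{\oplus r}$ via an idempotent $\bar e$. Lift $\bar e$ to an idempotent $e$ of $M_r(\AR)$, which is possible because $\AR$ is $p$-adically complete. Then compare $e\AR^{\oplus r}$ with $D$ using a lift of the isomorphism modulo $p$, Nakayama's lemma, and $p$-torsion freeness to see the kernel vanishes.

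The only point needing care is Step 2: the local criterion requires exactly the hypotheses of $p$-torsion freeness and $p$-adic completeness of a finite module over a noetherian ring. Both are available here, so we expect no real obstacle beyond verifying that $\AR$ is noetherian.
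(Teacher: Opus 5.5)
Your proposal is correct and follows the paper's proof essentially verbatim: reduce modulo $p$, apply Lemma \ref{lem:modp_phimod_proj} to the \'etale $\varphi$-module $D/pD$ over $\ER$, lift flatness to $D$ via Lemma \ref{lem:local_criterion_flatness} using $p$-torsion freeness and $p$-adic completeness, and conclude because a finitely generated flat module over the noetherian ring $\AR$ is projective. Your verification that $D/pD$ is \'etale only needs right exactness of $\AR\otimes_{\varphi,\AR}(-)$ together with $\varphi(p)=p$, not flatness of $\varphi$; the idempotent-lifting alternative is valid but not needed.
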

\begin{proof}
	Let $D$ be a $p\textrm{-torsion free}$ \'etale $\varphi\textrm{-module}$ over $\AR$.
	As $D$ is finitely generated over the noetherian ring $\AR$, it suffices to show that $D$ is a flat $\AR\textrm{-module}$.
	Observe that $D$ is $p\textrm{-adically}$ complete, so by Lemma \ref{lem:local_criterion_flatness} it is enough to show that $D/pD$ is flat over $\AR/p\AR = \ER$.
	As $D/pD$ is an \'etale $\varphi\textrm{-module}$ over $\ER$, therefore, by Lemma \ref{lem:modp_phimod_proj} we have that $D/pD$ is finite projective over $\ER$, in particular, flat.
\end{proof}

\begin{lem}\label{lem:H_coh_Eet}
	We have that $H^1(\HR^{\etale}, \ER^{\etale}) = 0$ and $H^1(\HR^{\etale}, \AR^{\etale}) = 0$.
\end{lem}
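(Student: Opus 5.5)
We first treat $\ER^{\etale}$ and then lift to $\AR^{\etale}$ by dévissage along $p$.

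\textbf{Step 1: the characteristic $p$ statement.} By \eqref{eq:decompleted_ring_modp} and Corollary \ref{cor:Gal_isoms}, $\ER^{\etale} = \cup_{\ER' \in \pazocal{C}} \ER'$ is a filtered union of finite \'etale Galois extensions $\ER'$ of $\ER$, with $\HR^{\etale} \isomorphic \Gal(\ER^{\etale}/\ER)$. We regard $\ER^{\etale}$ as a discrete $\HR^{\etale}$-module, in the same way $\overline{R}$ was treated in the proof of Lemma \ref{lem:Ax-Sen_R}. Then
\begin{equation*}
	H^1(\HR^{\etale}, \ER^{\etale}) = \colim_{\ER'} H^1(\Gal(\ER'/\ER), \ER').
\end{equation*}
For a finite \'etale Galois extension $\ER \rightarrow \ER'$ with group $G$, the trace map $\mathrm{tr} \colon \ER' \rightarrow \ER$ is surjective, since $\ER'$ is finite projective and \'etale over $\ER$. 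So there is $b \in \ER'$ with $\mathrm{tr}(b) = 1$. Tate's cochain construction used in Lemma \ref{lem:Ax-Sen_R} then applies: for a $1$-cocycle $f$, the element $a = -\sum_{\tau \in G}\tau(b) f(\tau)$ satisfies $\partial a = f$, up to sign conventions. Equivalently, Galois descent gives $\ER' \otimes_{\ER} \ER' \isomorphic \prod_G \ER'$, and one can use the vanishing of higher Čech cohomology of a faithfully flat cover. Hence every term in the colimit vanishes, and $H^1(\HR^{\etale}, \ER^{\etale}) = 0$. The same argument shows that $H^1(\HR^{\etale}, \ER^{\etale}/p^n)$-type statements hold, which is what Step 2 uses.

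\textbf{Step 2: the mixed characteristic statement.} $\AR^{\etale}$ is $p$-adically complete and $p$-torsion free, and $\AR^{\etale}/p^n$ is a discrete $\HR^{\etale}$-module. We have $\AR^{\etale}/p \isomorphic \ER^{\etale}$, and the multiplication-by-$p^{n}$ maps give exact sequences
\begin{equation*}
	0 \longrightarrow \AR^{\etale}/p \xrightarrow{p^{n}} \AR^{\etale}/p^{n+1} \longrightarrow \AR^{\etale}/p^n \longrightarrow 0.
\end{equation*}
Induction on $n$ and the long exact sequence give $H^1(\HR^{\etale}, \AR^{\etale}/p^n) = 0$ for all $n$. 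Step 1 also gives surjectivity on $H^0$: taking invariants at each level, $(\AR^{\etale}/p^n)^{\HR^{\etale}} = \AR/p^n$, and the transition maps $\AR/p^{n+1} \rightarrow \AR/p^n$ are surjective. So the inverse system of $H^0$'s is Mittag-Leffler. For continuous cochains with $\AR^{\etale} = \lim_n \AR^{\etale}/p^n$, the standard exact sequence
\begin{equation*}
	0 \rightarrow {\lim_n}^1 H^0(\HR^{\etale}, \AR^{\etale}/p^n) \rightarrow H^1(\HR^{\etale}, \AR^{\etale}) \rightarrow \lim_n H^1(\HR^{\etale}, \AR^{\etale}/p^n) \rightarrow 0
\end{equation*}
then yields $H^1(\HR^{\etale}, \AR^{\etale}) = 0$.

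\textbf{Main obstacle.} The delicate point is the topology. One must check that continuous cohomology of $\AR^{\etale}/p^n$ for its natural topology coincides with the discrete-module colimit description. This amounts to showing that $\AR^{\etale}/p^n$ is the union of the $H'$-fixed parts $\AR'/p^n$, where $\AR'$ is finite \'etale over $\AR$. That holds because $\AR^{\etale}/p^n$ is by definition the colimit of the $\AR'/p^n$. One must also check that the invariants $(\AR^{\etale}/p^n)^{\HR^{\etale}} = \AR/p^n$, which follows by the same dévissage from $(\ER^{\etale})^{\HR^{\etale}} = \ER$ and the vanishing of $H^1$ mod $p$.
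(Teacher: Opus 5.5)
Your proof is correct and follows the paper's. Step 1 is the same reduction to finite Galois subextensions $\ER'/\ER$ and their acyclicity; the paper uses the Amitsur complex of the faithfully flat map $\ER\to\ER'$, which is your \v{C}ech alternative, and your trace argument is an equivalent variant. For $\AR^{\etale}$, the paper works directly with $0\to\AR^{\etale}\xrightarrow{p}\AR^{\etale}\to\ER^{\etale}\to 0$ to show that $p$ is surjective on $H^1(\HR^{\etale},\AR^{\etale})$, then sums the resulting coboundaries $p$-adically, whereas you pass through the finite levels $\AR^{\etale}/p^n$ and the Milnor ${\lim}^1$ sequence, which is the same d\'evissage plus $p$-adic completeness, only packaged differently.
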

\begin{proof}
	Let us note that we may write $H^1(\HR^{\etale}, \ER^{\etale}) = \colim_{\ER'/\ER}H^1(\Gal(\ER'/\ER), \ER')$, where the colimit is taken over all finite Galois $\ER\textrm{-subalgebras}$ $\ER' \subset \ER^{\etale}$.
	Note that we have $H^i(\Gal(\ER'/\ER), \ER') = 0$ for all $i \geqslant 1$ because the preceding Galois cohomology may be computed using the Amitsur complex of the finite \'etale, hence faithfully flat, map $\ER \rightarrow \ER'$ (for example, see \cite[Example 2.6]{milne-etale-cohomology}), which is well-known to have vanishing cohomology for $i \geqslant 1$.
	This shows the first claim.

	For the second claim, consider the following exact sequence:
	\begin{equation*}
		0 \longrightarrow \AR^{\etale} \xrightarrow{\hspace{1mm} p \hspace{1mm}} \AR^{\etale} \longrightarrow \ER^{\etale} \longrightarrow 0.
	\end{equation*}
	Taking the $\HR^{\etale}\textrm{-invariants}$ in the exact sequence above yields the following exact sequence
	\begin{equation*}
		0 \longrightarrow \AR \xrightarrow{\hspace{1mm} p \hspace{1mm}} \AR \longrightarrow \ER \xrightarrow{\hspace{1mm} 0 \hspace{1mm}} H^1(\HR^{\etale}, \AR^{\etale}) \xrightarrow{\hspace{1mm} p \hspace{1mm}} H^1(\HR^{\etale}, \AR^{\etale}) \longrightarrow 0,
	\end{equation*}
	where the third map is easily seen to be zero.
	We see that multiplication by $p$ is bijective on $H^1(\HR^{\etale}, \AR^{\etale})$, and using this we may write any cocyle $c \colon \HR^{\etale} \rightarrow \AR^{\etale}$ as a sum of coboundaries $\sum_{i \geqslant 0}p^ib_i$ which converges because $\AR^{\etale}$ is $p\textrm{-adically}$ complete.
	Hence, we conclude that $H^1(\HR^{\etale}, \AR^{\etale}) = 0$.
\end{proof}

\begin{thm}\label{thm:padic_classify_decomp}
	Fix $m$ to be a positive integer coprime to $p$, and set $\calR \coloneq R_{0,m}$.
	\begin{enumerate}
		\item[\textup{(1)}] Let $T$ be a $\mathbb{Z}_p\textrm{-representation}$ of $G_{\calR/R}^{\etale}$.
			Then, 
			\begin{equation}\label{eq:phiGamma}
				\DR(T) \coloneq (\AR^{\etale} \otimes_{\mathbb{Z}_p} T)^{H^{\etale}_{\calR}},
			\end{equation}
			has the natural structure of an \'etale $(\varphi, \GammaR \times \Delta_{\calR/R})\textrm{-module}$ over $\AR$.
			Additionally, the natural multiplication map $\AR^{\etale} \otimes_{\AR} \AR^{\etale} \rightarrow \AR^{\etale}$ induces an isomorphism
			\begin{equation}\label{eq:phiGamma_comp_iso}
				\AR^{\etale} \otimes_{\AR} \DR(T) \isomorphic \AR^{\etale} \otimes_{\mathbb{Z}_p} T,
			\end{equation}
			compatible with the respective actions of $\varphi$ and $G_{\calR/R}^{\etale}$.
		
		\item[\textup{(2)}] The following natural functor induces an equivalence of categories:
			\begin{equation}\label{eq:padic_classify_decomp}
				\DR \colon \textup{Rep}_{\mathbb{Z}_p}(G_{\calR/R}^{\etale}) \isomorphic (\varphi,\GammaR \times \Delta_{\calR/R})\textup{-Mod}_{\AR}^{\mathrm{ét}},
			\end{equation}
			with a quasi-inverse functor given as $\TR(D) \coloneq (\AR^{\etale} \otimes_{\AR} D)^{\varphi=1}$.
			The equivalence in \eqref{eq:padic_classify_decomp} restricts to a natural equivalence of categories:
			\begin{equation}\label{eq:free_padic_classify_decomp}
				\DR \colon \textup{Rep}_{\mathbb{Z}_p}^{\textup{free}}(G_{\calR/R}^{\etale}) \isomorphic (\varphi,\GammaR \times \Delta_{\calR/R})\textup{-Mod}_{\AR}^{\etale,\textup{fproj}},
			\end{equation}
			where note that any $p\textrm{-torsion}$ free \'etale $\varphi\textrm{-module}$ over $\AR$ is necessarily finite projective by Lemma \ref{lem:phimod_proj}.
	\end{enumerate}
\end{thm}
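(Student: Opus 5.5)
We will follow Fontaine's dévissage argument, lifting the mod $p$ classification of Proposition \ref{prop:modp_classify_decomp} along the $p$-adic filtration. The plan is to treat the $p^n$-torsion case first and then pass to the limit.

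\textbf{Step 1 (torsion representations).} Let $T$ be a $\mathbb{Z}/p^n$-representation of $G_{\calR/R}^{\etale}$. We show by induction on $n$ that the natural map
\begin{equation*}
	\AR^{\etale}/p^n \otimes_{\AR/p^n} \DR(T) \longrightarrow \AR^{\etale}/p^n \otimes_{\mathbb{Z}_p} T
\end{equation*}
is an isomorphism, and that $\DR$ is exact on torsion objects. For the induction, take a short exact sequence $0 \to T' \to T \to T'' \to 0$, with $T', T''$ killed by $p^{n-1}$ or by $p$. Tensoring with the flat $\mathbb{Z}_p$-module $\AR^{\etale}$ (it is $p$-torsion free) gives an exact sequence of $\HR^{\etale}$-modules. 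The vanishing of $H^1(\HR^{\etale}, \AR^{\etale}\otimes T')$ is obtained by dévissage from Lemma \ref{lem:H_coh_Eet}, since $\AR^{\etale}\otimes T'$ has a filtration with graded pieces $\ER^{\etale}\otimes(\text{mod } p \text{ rep})$. Note that $H^1$ of such a piece is a finite sum of copies of $H^1(\HR^{\etale},\ER^{\etale})$ after trivialising on a finite quotient, using Proposition \ref{prop:modp_classify_decomp}(1). This gives exactness of $0\to\DR(T')\to\DR(T)\to\DR(T'')\to0$. The five lemma together with flatness of $\AR\to\AR^{\etale}$ (Lemma \ref{lem:ARet_flat}) then yields the comparison isomorphism. Étaleness of $\DR(T)$ follows from the comparison, because $\varphi$ is bijective on $\AR^{\etale}\otimes T$ after linearisation and $\AR\to\AR^{\etale}$ is faithfully flat. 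Faithful flatness holds since it is flat and, modulo $p$, $\ER\to\ER^{\etale}$ is a colimit of faithfully flat finite étale maps. Finite generation also descends by faithful flatness.

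\textbf{Step 2 (general $T$).} For finitely generated $T$, write $T=\lim T/p^n$. Then $\DR(T)=\lim_n \DR(T/p^n)$, since invariants commute with limits and $\AR^{\etale}\otimes T$ is $p$-adically complete. The transition maps are surjective and $\DR(T)/p^n\cong \DR(T/p^n)$ by the exactness from Step 1 applied to $0\to T\xrightarrow{p^n}T\to T/p^n\to 0$ (for torsion-free $T$; in general split off the torsion part via a presentation). Since $\AR$ is noetherian and $p$-adically complete, $\DR(T)$ is finitely generated. The comparison isomorphism \eqref{eq:phiGamma_comp_iso} then follows by passing to the limit. The $\GammaR\times\Delta_{\calR/R}$-action comes from \eqref{eq:GRet_ses}, and continuity is inherited from $\AR^{\etale}\otimes T$.

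\textbf{Step 3 (quasi-inverse).} For an étale $D$ over $\AR$, first treat $D$ killed by $p^n$ by induction, using Proposition \ref{prop:modp_classify_decomp}(2) for $n=1$. The key input is surjectivity of $\varphi-1$ on $\AR^{\etale}\otimes_{\AR}D$ for $p$-torsion $D$, which follows from the Katz argument: locally, adjoining roots of Artin–Schreier type equations gives finite étale extensions of $\ER$ contained in $\ER^{\etale}$. This gives exactness of $\TR$ and the isomorphism $\AR^{\etale}\otimes_{\mathbb{Z}_p}\TR(D)\cong \AR^{\etale}\otimes_{\AR}D$. Passing to limits as in Step 2 gives the result for general $D$. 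Fullness of the comparison on both sides then shows that $\DR$ and $\TR$ are quasi-inverse: take $H^{\etale}_{\calR}$-invariants, resp. $\varphi$-invariants, using $(\AR^{\etale})^{\varphi=1}=\mathbb{Z}_p$ and $(\AR^{\etale})^{\HR^{\etale}}=\AR$.

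\textbf{Step 4 (free version).} If $T$ is free, then $\DR(T)$ is $p$-torsion free, hence finite projective by Lemma \ref{lem:phimod_proj}. Conversely, if $D$ is finite projective, then $\TR(D)$ is $p$-torsion free by the comparison, hence free.

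The main obstacle I expect is Step 3: the surjectivity of $\varphi-1$ and the finiteness of $\TR(D)$ over the non-local, imperfect ring $\ER^{\etale}$. This requires checking that the étale algebras produced by Katz's construction genuinely lie in $\mathrm{F\acute{E}t}(\ER)$, i.e.\ inside $\ER^{\etale}$. For that I would invoke Theorem \ref{thm:fet_equivalence} and Corollary \ref{cor:Gal_isoms}.
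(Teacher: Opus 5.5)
Your proposal is correct and follows essentially the same route as the paper. Both arguments run by d\'evissage from the mod $p$ equivalence of Proposition~\ref{prop:modp_classify_decomp}, using the vanishing of $H^1(\HR^{\etale},-)$ from Lemma~\ref{lem:H_coh_Eet}, flat base change along $\AR\to\AR^{\etale}$ with the five lemma, passage to the limit for free objects, and Artin--Schreier surjectivity of $\varphi-1$ for the quasi-inverse. The differences are minor. You descend \'etaleness and finite generation through faithful flatness of $\AR\to\AR^{\etale}$, where the paper uses noetherianity of $\AR$. For surjectivity of $\varphi-1$, the paper first uses the mod $p$ comparison isomorphism to reduce to scalars, i.e.\ to $H^1_{\etale}(\ER^{\etale},\mathbb{F}_p)=0$, instead of a Katz-style local construction, which is slightly cheaper.
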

\begin{proof}
	Let $T$ be a $\mathbb{Z}_p\textrm{-representation}$ of $G_{\calR/R}^{\etale}$. 
	Note that the functor $\DR$ commutes with tensor products, so we may first assume that $p^nT = 0$ for some integer $n \geqslant 1$.
	Additionally, without loss of generality, we may further assume that $T$ is a finite free $\mathbb{Z}/p^n\mathbb{Z}\textrm{-module}$.

	For each $1 \leqslant i \leqslant n$, we have
	\begin{equation*}
		\DR(T/p^iT) = (\AR^{\etale} \otimes_{\mathbb{Z}_p} T/p^iT)^{\HR^{\etale}},
	\end{equation*}
	and the natural multiplication map $\AR^{\etale} \otimes_{\AR} \AR^{\etale} \rightarrow \AR^{\etale}$ induces the following $(\varphi, G_{\calR/R}^{\etale})\textrm{-equivariant}$ homomorphism for each $1 \leqslant i \leqslant n$:
	\begin{equation}\label{eq:phiGamma_comp}
		\AR^{\etale} \otimes_{\AR} \DR(T/p^iT) \longrightarrow \AR^{\etale} \otimes_{\mathbb{Z}_p} T/p^iT.
	\end{equation}
	For $i = 1$, from Proposition \ref{prop:modp_classify_decomp} it is clear that $\DR(T/pT)$ is an \'etale $(\varphi, \GammaR \times \Delta_{\calR/R})\textrm{-module}$ over $\AR$, and \eqref{eq:phiGamma_comp} is an isomorphism by \eqref{eq:phiGamma_comp_iso_modp_imperf}.

	For any $1 \leqslant i \leqslant n$, to show that $\DR(T/p^iT)$ is an \'etale $(\varphi, \GammaR \times \Delta_{\calR/R})\textrm{-module}$ over $\AR$ and \eqref{eq:phiGamma_comp} is an isomorphism, we shall proceed by induction.
	So, assume that this holds for some $1 \leqslant i \leqslant n$, and consider the following $G_{\calR/R}^{\etale}\textrm{-equivariant}$ exact sequence:
	\begin{equation}\label{eq:T_modpi_modpj}
		0 \longrightarrow p^iT/p^{i+1}T \longrightarrow T/p^{i+1}T \longrightarrow T/p^iT \longrightarrow 0.
	\end{equation}
	By extending scalars of the exact sequence \eqref{eq:T_modpi_modpj} along the $(\varphi, G_{\calR/R}^{\etale})\textrm{-equivariant}$ flat homomorphism $\mathbb{Z}_p \rightarrow \AR^{\etale}$, and taking the $\HR^{\etale}\textrm{-invariants}$, we obtain the following exact sequence:
	\begin{equation*}
		0 \longrightarrow \DR(p^iT/p^{i+1}T) \longrightarrow \DR(T/p^{i+1}T) \longrightarrow \DR(T/p^iT) \longrightarrow H^1(\HR^{\etale}, \AR^{\etale} \otimes_{\mathbb{Z}_p} p^iT/p^{i+1}T).
	\end{equation*}
	Using Lemma \ref{lem:H_coh_Eet} and the induction assumption for the $\mathbb{F}_p\textrm{-representation}$ $p^iT/p^{i+1}T$ of $G_{\calR/R}^{\etale}$, we deduce that $H^1(\HR^{\etale}, \AR^{\etale} \otimes_{\mathbb{Z}_p} p^iT/p^{i+1}T) = 0$, in particular, the following is a $(\varphi, \GammaR \times \Delta_{\calR/R})\textrm{-equivariant}$ exact sequence of $\AR\textrm{-modules}$:
	\begin{equation}\label{eq:DR_exact}
		0 \longrightarrow \DR(p^iT/p^{i+1}T) \longrightarrow \DR(T/p^{i+1}T) \longrightarrow \DR(T/p^iT) \longrightarrow 0.
	\end{equation}
	As $\AR$ is a noetherian ring, and by the induction assumption the first and the third term in \eqref{eq:DR_exact} are finitely presented over $\AR$, therefore, it follows that the middle term, i.e.\ $\DR(T/p^iT)$ is also a finitely presented $\AR\textrm{-module}$.

	Now, consider the following commutative diagram with exact rows:
	\begin{equation*}
		\begin{tikzcd}
			0 & \AR^{\etale} \otimes_{\AR} \DR(p^iT/p^{i+1}T) & \AR^{\etale} \otimes_{\AR} \DR(T/p^{i+1}T) & \AR^{\etale} \otimes_{\AR} \DR(T/p^iT) & 0 \\
			0 & \AR^{\etale} \otimes_{\mathbb{Z}_p} (p^iT/p^{i+1}T) & \AR^{\etale} \otimes_{\mathbb{Z}_p} T/p^{i+1}T & \AR^{\etale} \otimes_{\mathbb{Z}_p} T/p^iT & 0,
			\arrow[from=1-1, to=1-2]
			\arrow[from=1-2, to=1-3]
			\arrow[from=1-2, to=2-2, "\eqref{eq:phiGamma_comp}", "\wr"']
			\arrow[from=1-3, to=1-4]
			\arrow[from=1-3, to=2-3, "\eqref{eq:phiGamma_comp}"]
			\arrow[from=1-4, to=1-5]
			\arrow[from=1-4, to=2-4, "\eqref{eq:phiGamma_comp}", "\wr"']
			\arrow[from=2-1, to=2-2]
			\arrow[from=2-2, to=2-3]
			\arrow[from=2-3, to=2-4]
			\arrow[from=2-4, to=2-5]
		\end{tikzcd}
	\end{equation*}
	where the top row is the scalar extension of \eqref{eq:DR_exact} along the $(\varphi, G_{\calR/R}^{\etale})\textrm{-equivariant}$ flat homomorphism $\AR \rightarrow \AR^{\etale}$ (see Lemma \ref{lem:ARet_flat}), and the exactness of the bottom row was discussed above.
	The left and the right vertical arrow are isomorphisms by the induction assumption.
	Hence, it follows that the middle vertical arrow is also an isomorphism.
	Finally, it is easy to check that the Frobenius structure on $\DR(T)$ is \'etale, in particular, we conclude that $\DR(T)$ is an \'etale $(\varphi, \GammaR \times \Delta_{\calR/R})\textrm{-module}$ over $\AR$.
	
	Next, let us assume that $T$ is a finite free $\mathbb{Z}_p\textrm{-representation}$ of $G_{\calR/R}^{\etale}$.
	Then, $T = \lim_i T/p^iT$, and we have $(\varphi, \GammaR \times \Delta_{\calR/R})\textrm{-equivariant}$ identifications
	\begin{equation*}
		\begin{aligned}
			\DR(T) = (\AR^{\etale} \otimes_{\mathbb{Z}_p} T)^{\HR^{\etale}} &= \big(\AR^{\etale} \otimes_{\mathbb{Z}_p} \lim_i T/p^iT\big)^{\HR^{\etale}}\\
				&= \big(\lim_i(\AR^{\etale} \otimes_{\mathbb{Z}_p} T/p^iT)\big)^{\HR^{\etale}}\\
				&= \lim_i\big((\AR^{\etale} \otimes_{\mathbb{Z}_p} T/p^iT)^{\HR^{\etale}}\big) = \lim_i \DR(T/p^iT),
		\end{aligned}
	\end{equation*}
	where the third equality follows because $\AR^{\etale}$ is a flat $\mathbb{Z}_p\textrm{-algebra}$ and the fourth equality follows because taking $\HR^{\etale}\textrm{-invariants}$ preserves limits.
	Additionally, for each $i \geqslant 1$, we see that $T/p^iT$ is a finite free $\mathbb{Z}/p^i\mathbb{Z}\textrm{-representation}$ of $G_{\calR/R}^{\etale}$, and thus we have the $(\varphi, G_{\calR/R}^{\etale})\textrm{-equivariant}$ isomorphism from \eqref{eq:phiGamma_comp} for each $i$.
	Passing to the limit over $i \geqslant 1$ and using the discussion above yields the isomorphism in \eqref{eq:phiGamma_comp_iso} for a finite free $\mathbb{Z}_p\textrm{-representation}$ $T$ of $G_{\calR/R}^{\etale}$.

	To show that $\DR(T)$ is finitely generated over $\AR$ consider the following $G_{\calR/R}^{\etale}\textrm{-equivariant}$ exact sequence:
	\begin{equation*}
		\begin{tikzcd}
			0 \arrow[r] & T \arrow[r, "p^i"] & T \arrow[r] & T/p^iT \arrow[r] & 0.
		\end{tikzcd}
	\end{equation*}
	Upon tensoring the preceding exact sequence along the $(\varphi, G_{\calR/R}^{\etale})\textrm{-equivariant}$ flat homomorphism $\mathbb{Z}_p \rightarrow \AR^{\etale}$, taking the $\HR^{\etale}\textrm{-invariants}$, and using that $H^1(\HR^{\etale}, \AR^{\etale} \otimes_{\mathbb{Z}_p} T) = 0$ (follows from Lemma \ref{lem:H_coh_Eet} and the isomorphism \eqref{eq:phiGamma_comp_iso}), we obtain a natural $(\varphi, \GammaR \times \Delta_{\calR/R})\textrm{-equivariant}$ isomorphism $\DR(T)/p^i\DR(T) \isomorphic \DR(T/p^iT)$.
	For $i = 1$, note that $\DR(T/pT)$ is finitely generated over $\AR/p\AR \isomorphic \ER$, and the $\AR\textrm{-module}$ $\DR(T)$ is $p\textrm{-adically}$ separated because $\cap_{n \geqslant 0} p^n\DR(T) \subset \cap_{n \geqslant 0} p^n(\AR \otimes_{\mathbb{Z}_p} T) = 0$.
	Therefore, from \cite[\href{https://stacks.math.columbia.edu/tag/031D}{Tag 031D}]{stacks-project} it follows that $\DR(T)$ is finitely generated over $\AR$.
	Finally, it is easy to check that the Frobenius structure on $\DR(T)$ is \'etale, in particular, we conclude that $\DR(T)$ is an \'etale $(\varphi, \GammaR \times \Delta_{\calR/R})\textrm{-module}$ over $\AR$.
	As $\DR(T)$ is $p\textrm{-torsion}$ free, from Lemma \ref{lem:phimod_proj} it follows that $\DR(T)$ is finite projective over $\AR$, and of rank equal to $\textrm{rk}_{\mathbb{Z}_p} T$ using the isomorphism in \eqref{eq:phiGamma_comp_iso}.

	Conversely, let $D$ denote an \'etale $(\varphi, \GammaR \times \Delta_{\calR/R})\textrm{-module}$ over $\AR$.
	Let us first assume that $p^nD = 0$ for some $n \geqslant 1$.
	For each $1 \leqslant i \leqslant n$, we have
	\begin{equation*}
		\TR(D/p^iD) = (\AR^{\etale} \otimes_{\AR} D/p^iD)^{\varphi=1},
	\end{equation*}
	and the natural multiplication map $\AR^{\etale} \otimes_{\AR} \AR^{\etale} \rightarrow \AR^{\etale}$ induces the following $(\varphi, G_{\calR/R}^{\etale})\textrm{-equivariant}$ homomorphism for each $1 \leqslant i \leqslant n$:
	\begin{equation}\label{eq:phiGamma_comp_inv}
		\AR^{\etale} \otimes_{\mathbb{Z}_p} \TR(D/p^iD) \longrightarrow \AR^{\etale} \otimes_{\AR} D/p^iD.
	\end{equation}
	For $i = 1$, from Proposition \ref{prop:modp_classify_decomp} it is clear that $\TR(D/pD)$ is a finite $\mathbb{F}_p\textrm{-representation}$ of $G_{\calR/R}^{\etale}$, and \eqref{eq:phiGamma_comp_inv} is an isomorphism by \eqref{eq:phiGamma_comp_iso_modp_imperf}.

	For any $1 \leqslant i \leqslant n$, to show that $\TR(D/pD)$ is a finitely generated $\mathbb{Z}_p\textrm{-representation}$ of $G_{\calR/R}^{\etale}$ and \eqref{eq:phiGamma_comp_inv} is an isomorphism, we shall proceed by induction.
	So, assume that this holds for some $1 \leqslant i \leqslant n-1$, and consider the following exact sequence of \'etale $(\varphi, \GammaR \times \Delta_{\calR/R})\textrm{-modules}$ over $\AR$:
	\begin{equation}\label{eq:D_modpi}
		0 \longrightarrow p^iD/p^{i+1}D \longrightarrow D/p^{i+1}D \longrightarrow D/p^iD \longrightarrow 0.
	\end{equation}
	By extending scalars of the exact sequence \eqref{eq:D_modpi} along the $(\varphi, G_{\calR/R}^{\etale})\textrm{-equivariant}$ flat homomorphism $\AR \rightarrow \AR^{\etale}$ (see Lemma \ref{lem:ARet_flat}), and taking the $\varphi=1$ part, we obtain the following exact sequence:
	\begin{equation}\label{eq:TR_exact}
		0 \longrightarrow \TR(p^iD/p^{i+1}D) \longrightarrow \TR(D/p^{i+1}D) \longrightarrow \TR(D/p^iD) \longrightarrow 0,
	\end{equation}
	where the only nontrivial claim is surjectivity on the right, and it may be shown similar to the proof of \cite[Theorem 7.11]{andreatta-phigamma}.
	Indeed, let $x$ be an element of $\TR(D/p^iD)$, and let $y$ in $\AR' \otimes_{\AR} (D/p^{i+1}D)$ denote a lift of $x$, where $\AR' \subset \AR^{\etale}$ is a finite \'etale extension of $\AR$.
	We wish to find some $z$ in $\AR' \otimes_{\AR} (p^iD/p^{i+1}D)$ such that $\varphi(y+z) = y+z$.
	Note that $w = y-\varphi(y)$ belongs to $\AR' \otimes_{\AR} (p^iD/p^{i+1}D)$, so it remains to show that $\varphi(z)-z = w$ admits a solution in $\AR^{\etale} \otimes_{\AR} (p^iD/p^{i+1}D)$.
	As $p^iD/p^{i+1}D$ is killed by $p$, from the isomorphism \eqref{eq:phiGamma_comp_iso_modp_imperf} in Proposition \ref{prop:modp_classify_decomp}, we get that $(\varphi, G_{\calR/R}^{\etale})\textrm{-equivariant}$ isomorphism $\AR^{\etale} \otimes_{\AR} (p^iD/p^{i+1}D) = \ER^{\etale} \otimes_{\ER} (p^iD/p^{i+1}D) \isomorphic \ER^{\etale} \otimes_{\mathbb{F}_p} \TR(p^iD/p^{i+1}D)$, where the Frobenius on the last term is given as $\varphi \otimes 1$.
	Therefore, from the Artin--Schreier exact sequence for $\ER^{\etale}$ and the fact that $H^1_{\etale}(\ER^{\etale}, \mathbb{F}_p) = 0$, it follows that the equation $\varphi(z)-z = w$ admits a solution in $\AR^{\etale} \otimes_{\AR} (p^iD/p^{i+1}D)$, and therefore, \eqref{eq:TR_exact} is exact.
	Additionally, note that in the exact sequence \eqref{eq:TR_exact}, by the induction assumption its first and third term are finitely presented over $\mathbb{Z}_p$, therefore, it follows that the middle term, i.e.\ $\TR(D/p^iD)$ is also a finitely presented $\mathbb{Z}_p\textrm{-module}$.

	Now, consider the following commutative diagram with exact rows:
	\begin{equation*}
		\begin{tikzcd}
			0 & \AR^{\etale} \otimes_{\mathbb{Z}_p} \TR(p^iD/p^{i+1}D) & \AR^{\etale} \otimes_{\mathbb{Z}_p} \TR(D/p^{i+1}D) & \AR^{\etale} \otimes_{\mathbb{Z}_p} \TR(D/p^iD) & 0 \\
			0 & \AR^{\etale} \otimes_{\AR} (p^iD/p^{i+1}D) & \AR^{\etale} \otimes_{\AR} D/p^{i+1}D & \AR^{\etale} \otimes_{\AR} D/p^iD & 0,
			\arrow[from=1-1, to=1-2]
			\arrow[from=1-2, to=1-3]
			\arrow[from=1-2, to=2-2, "\eqref{eq:phiGamma_comp_inv}", "\wr"']
			\arrow[from=1-3, to=1-4]
			\arrow[from=1-3, to=2-3, "\eqref{eq:phiGamma_comp_inv}"]
			\arrow[from=1-4, to=1-5]
			\arrow[from=1-4, to=2-4, "\eqref{eq:phiGamma_comp_inv}", "\wr"']
			\arrow[from=2-1, to=2-2]
			\arrow[from=2-2, to=2-3]
			\arrow[from=2-3, to=2-4]
			\arrow[from=2-4, to=2-5]
		\end{tikzcd}
	\end{equation*}
	where the top row is the scalar extension of \eqref{eq:TR_exact} along the $(\varphi, G_{\calR/R}^{\etale})\textrm{-equivariant}$ flat homomorphism $\mathbb{Z}_p \rightarrow \AR^{\etale}$, and the exactness of the bottom row was discussed above.
	The left and the right vertical arrow are isomorphisms by the induction assumption.
	Hence, it follows that the middle vertical arrow is also an isomorphism.

	Next, let us assume that $D$ is a $p\textrm{-torsion}$ free \'etale $(\varphi, \GammaR \times \Delta_{\calR/R})\textrm{-module}$ over $\AR$.
	Then, $D$ is finite projective over $\AR$ by Lemma \ref{lem:phimod_proj}, in particular, $D = \lim_i D/p^iD$, and we have $G_{\calR/R}^{\etale}\textrm{-equivariant}$ identifications
	\begin{equation*}
		\begin{aligned}
			\TR(D) = (\AR^{\etale} \otimes_{\AR} D)^{\varphi=1} &= \big(\AR^{\etale} \otimes_{\AR} \lim_i D/p^iD\big)^{\varphi=1}\\
				&= \big(\lim_i(\AR^{\etale} \otimes_{\AR} D/p^iD)\big)^{\varphi=1}\\
				&= \lim_i\big((\AR^{\etale} \otimes_{\AR} D/p^iD)^{\varphi=1}\big) = \lim_i \TR(D/p^iD),
		\end{aligned}
	\end{equation*}
	where the third equality follows because $\AR^{\etale}$ is a flat $\AR\textrm{-algebra}$ (see Lemma \ref{lem:ARet_flat}), and the fourth equality follows because taking $\lim_i$ preserves kernels.
	Additionally, for each $i \geqslant 1$, we see that $D/p^iD$ is an \'etale $(\varphi, \GammaR \times \Delta_{\calR/R})\textrm{-module}$ over $\AR$, and thus we have the $(\varphi, G_{\calR/R}^{\etale})\textrm{-equivariant}$ isomorphism from \eqref{eq:phiGamma_comp_inv} for each $i$.
	Passing to the limit over $i \geqslant 1$ and using the discussion above yields the following $(\varphi, G_{\calR/R}^{\etale})\textrm{-equivariant}$ isomorphism:
	\begin{equation}\label{eq:phiGamma_comp_iso_inv}
		\AR^{\etale} \otimes_{\mathbb{Z}_p} \TR(D) \isomorphic \AR^{\etale} \otimes_{\AR} D.
	\end{equation}
	Finally, as $D$ is finite projective over $\AR$ and the homomorphism $\mathbb{Z}_p \rightarrow \AR^{\etale}$ is readily faithfully flat, it follows that $\TR(D)$ is finite projective, hence finite free, over $\mathbb{Z}_p$ of rank equal to $\textup{rk}_{\AR} D$ using \eqref{eq:phiGamma_comp_iso_inv}.

	From the discussion above, the functors $\DR$ and $\TR$ are well defined, and using the isomorphisms \eqref{eq:phiGamma_comp_iso} and \eqref{eq:phiGamma_comp_iso_inv}, it is easy to check that we have natural isomorphisms of functors $\TR \circ \DR \isomorphic \textrm{id}$ and $\DR \circ \TR \isomorphic \textrm{id}$.
	Hence, we conclude that \eqref{eq:padic_classify_decomp} is an equivalence, and from the discussion above it also follows that the preceding equivalence induces the equivalence in \eqref{eq:free_padic_classify_decomp}.
	This completes the proof.
\end{proof}

In Theorem \ref{thm:padic_classify_decomp}, by restricting to $G_{\calR}^{\etale}\textrm{-action}$, instead of  $G_{\calR/R}^{\etale}\textrm{-action}$, we obtain the following variant:
\begin{cor}\label{cor:padic_classify_GR0m}
	The following natural functor induces an equivalence of categories:
	\begin{equation}\label{eq:padic_classify_GR0m}
		\begin{aligned}
			\DR \colon \mathrm{Rep}_{\mathbb{Z}_p}(G_{\calR}^{\etale}) &\isomorphic (\varphi, \Gamma_{\calR})\textup{-Mod}_{\AR}^{\etale}\\
			T &\longmapsto (\AR^{\etale} \otimes_{\mathbb{Z}_p} T)^{H^{\etale}_{\calR}},
		\end{aligned}
	\end{equation}
	with a quasi-inverse functor given as $\TR(D) \coloneq (\AR^{\etale} \otimes_{\AR} D)^{\varphi=1}$.  
	The equivalence in \eqref{eq:padic_classify_GR0m} restricts to a natural equivalence of categories:
	\begin{equation}\label{eq:free_padic_classify_GR0m}
		\DR \colon \textup{Rep}_{\mathbb{Z}_p}^{\textup{free}}(G_{\calR}^{\etale}) \isomorphic (\varphi,\GammaR)\textup{-Mod}_{\AR}^{\etale,\textup{fproj}}.
	\end{equation}
	Additionally, for a $\mathbb{Z}_p\textrm{-representation}$ $T$ of $G_{\calR}^{\etale}$, the natural multiplication map $\AR^{\etale} \otimes_{\AR} \AR^{\etale} \rightarrow \AR^{\etale}$ induces a natural isomorphism of $\AR^{\etale}\textrm{-modules}$
	\begin{equation}\label{eq:phiGamma_comp_GR0m}
		\AR^{\etale} \otimes_{\AR} \DR(T) \isomorphic \AR^{\etale} \otimes_{\mathbb{Z}_p} T,
	\end{equation}
	compatible with the respective actions of $\varphi$ and $G_{\calR}^{\etale}$.
\end{cor}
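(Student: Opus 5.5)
The plan is to deduce the corollary directly from Theorem \ref{thm:padic_classify_decomp} by replacing $G_{\calR/R}^{\etale}$ with its closed normal subgroup $G_{\calR}^{\etale}$. By the first sequence in \eqref{eq:GRet_ses}, this subgroup is the kernel of $G_{\calR/R}^{\etale} \twoheadrightarrow \Delta_{\calR/R}$. By \eqref{eq:GR0met_ses}, it still contains $\HR^{\etale}$, with quotient $G_{\calR}^{\etale}/\HR^{\etale} \isomorphic \GammaR$.

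The key observation is that the proof of Theorem \ref{thm:padic_classify_decomp} never used the group $\Delta_{\calR/R}$ in an essential way. The only inputs were the following:
\begin{enumerate}
\item the mod $p$ equivalence, Proposition \ref{prop:modp_classify_decomp}, whose $G_{\calR}^{\etale}$-analogue is Corollary \ref{cor:modp_classify_GR0m};
\item the flatness of $\AR \rightarrow \AR^{\etale}$ from Lemma \ref{lem:ARet_flat};
\item the vanishing $H^1(\HR^{\etale}, \AR^{\etale}) = 0$ from Lemma \ref{lem:H_coh_Eet};
\item the Artin--Schreier argument for surjectivity in \eqref{eq:TR_exact}.
\end{enumerate}
All of these concern only $\HR^{\etale}$ and $\varphi$, so they hold verbatim in the present setting.

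Concretely, I will argue as follows. For a $\mathbb{Z}_p$-representation $T$ of $G_{\calR}^{\etale}$, I set $\DR(T) = (\AR^{\etale} \otimes_{\mathbb{Z}_p} T)^{\HR^{\etale}}$. This carries a residual action of $G_{\calR}^{\etale}/\HR^{\etale} \isomorphic \GammaR$ together with a commuting Frobenius.

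First I treat the torsion case by induction on $n$ with $p^nT = 0$. The base case comes from Corollary \ref{cor:modp_classify_GR0m}. The inductive step uses the sequences \eqref{eq:T_modpi_modpj} and \eqref{eq:DR_exact}, Lemma \ref{lem:H_coh_Eet}, and the five lemma applied to the comparison maps \eqref{eq:phiGamma_comp}.

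Next I pass to the limit to handle finite free $T$ and obtain \eqref{eq:phiGamma_comp_GR0m}. Finite generation then follows from $p$-adic separatedness and \cite[Tag 031D]{stacks-project}, and finite projectivity from Lemma \ref{lem:phimod_proj}.

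For the quasi-inverse, I take $\TR(D) = (\AR^{\etale} \otimes_{\AR} D)^{\varphi=1}$. Since $\AR^{\etale}$ carries an action of $G_{\calR/R}^{\etale}$, it in particular carries an action of $G_{\calR}^{\etale}$ compatible with $\GammaR$ on $\AR$, so $\TR(D)$ inherits a $G_{\calR}^{\etale}$-action. The same dévissage goes through, using the exactness of \eqref{eq:TR_exact} via Artin--Schreier and the isomorphism \eqref{eq:phiGamma_comp_iso_modp_GR0m}. The two comparison isomorphisms then show that $\DR$ and $\TR$ are mutually quasi-inverse, and that they restrict to the finite free and finite projective subcategories.

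The only point requiring care is that restricting to $G_{\calR}^{\etale}$ does not change the invariants ring. This holds because $(\AR^{\etale})^{\HR^{\etale}} = \AR$ depends only on $\HR^{\etale}$, and $\HR^{\etale} \subset G_{\calR}^{\etale}$. Beyond that, I expect no real obstacle: the argument is a word-for-word repetition of the proof of Theorem \ref{thm:padic_classify_decomp} with $\Delta_{\calR/R}$ deleted throughout.
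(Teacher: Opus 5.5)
Your proposal is correct and follows the paper's own route. The paper obtains this corollary by rerunning the proof of Theorem \ref{thm:padic_classify_decomp} with $G_{\calR}^{\etale}$ in place of $G_{\calR/R}^{\etale}$, taking Corollary \ref{cor:modp_classify_GR0m} as the mod $p$ input, exactly as you describe.
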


We end this section with an interesting observation.
Let $T$ be a finite free $\mathbb{Z}_p\textrm{-representation}$ of $G_{\calR/R}^{\etale}$.
Let us set $\DR^+(T) \coloneq (A_{\calR}^{\etale,+} \otimes_{\mathbb{Z}_p} T)^{\HR^{\etale}} \subset \DR(T)$ to be the $(\varphi, \GammaR \times \Delta_{\calR/R})\textup{-module}$ over $\AR^+$ associated to $T$, and for $V \coloneq T[1/p]$ let $\DR^+(V) \coloneq \DR^+(T)[1/p]$ be the $(\varphi, \GammaR \times \Delta_{\calR/R})\textup{-module}$ over $\BR^+$ associated to $V$.

\begin{lem}\label{lem:regsec_D+}
	For each $1 \leqslant i \leqslant d$ the sequences $\{p, \mu, [x_i^{\flat}]\}$ and $\{p, \mu, [x_i^{\flat}]\}$ are regular on $\DR^+(T)$.
\end{lem}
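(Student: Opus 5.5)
The plan is to deduce regularity on $\DR^+(T)$ from regularity on the ambient module $M \coloneq A_{\calR}^{\etale,+} \otimes_{\mathbb{Z}_p} T$. Since $T$ is finite free, $M$ is finite free over $A_{\calR}^{\etale,+}$, and
\begin{equation*}
	\DR^+(T) = M^{\HR^{\etale}}.
\end{equation*}
I read the second sequence in the statement as $\{\mu, p, [x_i^{\flat}]\}$, presumably the intended one, and treat both orderings in parallel. The basic tool is a saturation principle. Let $f \in \{p, \mu, [x_i^{\flat}]\}$; each such $f$ lies in $\AR^+$ and is $\HR^{\etale}$-invariant. If $f$ is a nonzerodivisor on $M$, then
\begin{equation*}
	\DR^+(T) \cap fM = f\,\DR^+(T).
\end{equation*}
Indeed, if $a = fb$ with $a$ invariant, then $f(h(b)-b) = 0$ for every $h \in \HR^{\etale}$, which forces $h(b) = b$. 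Consequently $f$ is a nonzerodivisor on $\DR^+(T)$, and the natural map $\DR^+(T)/f \hookrightarrow (M/fM)^{\HR^{\etale}}$ is injective.

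\textbf{First step: the ambient ring.} I will show that $\{p,\mu,[x_i^{\flat}]\}$ and $\{\mu,p,[x_i^{\flat}]\}$ are regular on $A_{\calR}^{\etale,+}$. The ring $A_{\calR}^{\etale,+}$ is $p$-torsion free, being a subring of a ring of Witt vectors, and $A_{\calR}^{\etale,+}/p \subset \tilde{E}_{\calR}^{\etale,+}$. The element $\overline{\mu}$ is a nonzerodivisor on $\tilde{E}_{\calR}^{\etale,+}$, because $\tilde{E}_{\calR}^{\etale,+} \subset \overline{R}^{\flat}$, and $\overline{R}^{\flat}$ embeds into $\prod_{\mathfrak{p}} \Cplusp^{\flat}$, a product of valuation rings. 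For $[x_i^{\flat}]$ modulo $(p,\mu)$, I pass to $\tilde{E}^{\etale,+}_{\calR}/\overline{\mu}$. Using $\theta$ and the tilting identification of $\overline{R}^{\flat}/\overline{\mu}$ with $\overline{R}/\pi_1$ (up to the usual untilt), Lemma \ref{lem:inject_mult_x_i} gives injectivity of multiplication by $x_i$, and Lemma \ref{lem:B_mult_xi_injective} gives the analogous statement on the $H$-invariant level. For the swapped order I use that $\mu$-torsion and $p$-torsion both vanish, and that $M/\mu M$ is $p$-torsion free. The latter follows by comparing with $\tilde A$-level statements, where $(p,\mu)$ is a regular sequence in both orders on $A_{\inf}$.

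\textbf{Second step: transferring to $\DR^+(T)$.} The saturation principle immediately gives regularity of the first element, $p$ (respectively $\mu$), on $\DR^+(T)$. For the second element, the injection $\DR^+(T)/p \hookrightarrow M/pM$ shows that $\mu$ is a nonzerodivisor on $\DR^+(T)/p$. For the third element, $[x_i^{\flat}]$, I need
\begin{equation*}
	\DR^+(T) \cap (p,\mu)M = (p,\mu)\,\DR^+(T).
\end{equation*}

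\textbf{Main obstacle.} This last equality is the crux, because invariants do not commute with the quotient by a two-element ideal. My first attempt will use the $\varphi$-étale structure: by \eqref{eq:phiGamma_comp_iso},
\begin{equation*}
	\AR^{\etale} \otimes_{\AR} \DR(T) \isomorphic M[1/\mu]^{\wedge}.
\end{equation*}
I will try to show $\DR^+(T) = \DR(T) \cap M$ and that $\DR^+(T)/p$ coincides with $(M/p)^{\HR^{\etale}}$, by proving $H^1(\HR^{\etale}, M)[p] = 0$. For this I would argue via the finite étale covers $\AR'$ as in Lemma \ref{lem:H_coh_Eet}, restricted to integral subrings; if that only yields an almost statement, I would kill the ambiguity using that $\overline{R}/p^n$ has no almost zero elements (Lemma \ref{lem:rbarhat_noalmostzero}). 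Granting $\DR^+(T)/p = (M/p)^{\HR^{\etale}}$, the saturation principle applied to $\mu$ on $M/p$ gives $\DR^+(T)/(p,\mu) \hookrightarrow M/(p,\mu)$. Regularity of $[x_i^{\flat}]$ on $M/(p,\mu)$ from the first step then finishes the first ordering. The second ordering is symmetric, using $\DR^+(T)/\mu \hookrightarrow M/\mu$ first and then $p$.
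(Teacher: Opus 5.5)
\textbf{The gap.} Your skeleton is the same as the paper's. Single-element saturation handles $\{p,\mu\}$ and $\{\mu,p\}$. Regularity of $[x_i^{\flat}]$ on $M/(p,\mu)M$ comes from $\tilde{E}^+(\overline{R})/\overline{\mu}\cong\overline{R}/(\zeta_p-1)^p$ together with Lemma~\ref{lem:inject_mult_x_i}. You also correctly isolate the only nontrivial point, $\DR^+(T)\cap(p,\mu)M=(p,\mu)\DR^+(T)$. The paper's proof does not settle it either: taking $\HR^{\etale}$-invariants of the reduction modulo $(p,\mu)$ only makes $\DR^+(T)/((p,\mu)M)^{\HR^{\etale}}$ inject into $(M/(p,\mu)M)^{\HR^{\etale}}$.

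\textbf{Why your route to it fails.} $H^1(\HR^{\etale},M)[p]$ is not zero in general, and in fact the identity itself is false. Take for $T$ the pull-back to $G_R$ of a crystalline $G_F$-extension of $\mathbb{Z}_p$ by $\mathbb{Z}_p(r)$, with $r\geqslant 2$, $\chi^r\equiv 1 \bmod p$, and extension class $p\,c$ for some non-torsion $c$. Then $h(e_1)=e_1$ and $h(e_2)=e_2+p\,c(h)e_1$ for $h\in\HR^{\etale}$, and $T/p$ is trivial.
\begin{enumerate}
\item Suppose $e_2 \bmod p\in(M/p)^{\HR^{\etale}}$ lifted to $\DR^+(T)$. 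Invariance would produce $a\in\AR^{\etale,+}$ with $c(h)=h(a)-a$, hence $\varphi(a)-a\in(\AR^{\etale,+})^{\HR^{\etale}}=\AR^+$.
\item Then $c|_{\HR^{\etale}}$ would come from an \'etale $\varphi$-module over $\AR^+$ (Katz). That is, it would factor through $\pi_1$ of $\AR^+/(p,\mu)=\calR/p$, so $c(ghg^{-1})=c(h)$.
\item This contradicts $c(ghg^{-1})=\chi(g)^r c(h)$, since $c|_{\HR^{\etale}}\neq0$ and $\chi^r\neq 1$.
\item Hence $Q\defeq\mathrm{coker}\big(\DR^+(T)/p\hookrightarrow(M/p)^{\HR^{\etale}}\big)\neq 0$.
\item Since $T$ comes from a crystalline representation, it has finite height: $\DR(T)=\AR\otimes_{\AR^+}\DR^+(T)$. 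Also $(M/p)^{\HR^{\etale}}\subset\DR(T)/p$. So $Q$ is $\mu$-power torsion.
\item Let $g$ represent a nonzero element of $Q$ killed by $\mu$. Then $\mu g$ is a nonzero element of $\DR^+(T)/(p,\mu)\DR^+(T)$ that dies in $M/(p,\mu)M$.
\end{enumerate}
This refutes the method, not the lemma. Your almost-purity fallback does not help either: the obstruction is a ramified class, not an almost-zero one.

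\textbf{What is actually needed.} The $[x_i^{\flat}]$-torsion of $\DR^+(T)/(p,\mu)$ cannot be detected inside $M/(p,\mu)M$ in general, so a new input is required. Your set-up already shows what that input must be. By saturation, $(M/p)^{\HR^{\etale}}/\mu$ embeds in $M/(p,\mu)M$, so $[x_i^{\flat}]$ is regular on it. It follows that $[x_i^{\flat}]$ is regular on $\DR^+(T)/(p,\mu)$ if and only if no nonzero element of $Q$ is killed by both $\mu$ and $[x_i^{\flat}]$. Indeed, if $[x_i^{\flat}]e=\mu f$ in $\DR^+(T)/p$, then $e=\mu g$ with $g\in(M/p)^{\HR^{\etale}}$, and both $\mu g$ and $[x_i^{\flat}]g=f$ lie in $\DR^+(T)/p$. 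This criterion would follow, for instance, from either of the following:
\begin{itemize}
\item $[x_i^{\flat}]$-torsion-freeness of $Q$;
\item reflexivity of $\DR^+(T)/p$ over $\ER^+=(\calR/p)\llbracket\overline{\mu}\rrbracket$, whose only height-one prime containing $\overline{\mu}$ is $(\overline{\mu})$.
\end{itemize}
Some such control of $Q$ is the missing idea. Neither your proposal nor the written proof in the paper supplies it.
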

\begin{proof} 
	Note that the sequences $\{p, \mu\}$ and $\{\mu, p\}$ are clearly regular on $\AR^+$.
	Additionally, we have that $\AR^+/(p,\mu)\AR^+ \isomorphic \calR/p\calR$ as rings, and it is also clear that $x_i$ is regular on $\calR/p\calR$ for each $1 \leqslant i \leqslant d$.

	Now, consider the following $(\varphi, G_{\calR/R}^{\etale})\textrm{-equivariant}$ exact sequence
	\begin{equation*}
		0 \longrightarrow p\AR^{\etale} \longrightarrow \AR^{\etale} \longrightarrow \ER^{\etale} \longrightarrow 0.
	\end{equation*}
	Tensoring the preceding exact sequence with $T$ (over $\mathbb{Z}_p$) and taking $\HR\textrm{-invariants}$, we obtain a natural $(\varphi, \GammaR \times \Delta_{\calR/R})\textrm{-equivariant}$ injective homomorphism $\DR^+(T)/p\DR^+(T) \hookrightarrow (\ER^{\etale} \otimes_{\mathbb{Z}_p} T)^{\HR^{\etale}}$.
	As we have a natural inclusion $\ER^{\etale,+} \hookrightarrow \tilde{E}^+(\overline{R})$, the ring $\ER^{\etale,+}$ is $\overline{\mu}\textrm{-torsion}$ free.
	So, it follows that $\overline{\mu}$ is regular on $\DR^+(T)/p\DR^+(T)$, i.e.\ the sequence $\{p, \mu\}$ is regular on $\DR^+(T)$.
	Additionally, $\DR^+(T) \subset \DR(T)$ is $\mu\textrm{-torsion}$ free, so from \cite[Lemma A.2]{abhinandan-relative-wach-ii} we conclude that $\{\mu, p\}$ is also regular on $\DR^+(T)$.

	It remains to show that $[x_i^{\flat}]$ is regular on $\DR^+(T)/(p,\mu)\DR^+(T)$.
	Consider the following $(\varphi, G_{\calR/R}^{\etale})\textrm{-equivariant}$ exact sequence
	\begin{equation*}
		0 \longrightarrow (p,\mu)\AR^{\etale} \longrightarrow \AR^{\etale} \longrightarrow \ER^{\etale}/\overline{\mu}\ER^{\etale} \longrightarrow 0.
	\end{equation*}
	Tensoring the preceding exact sequence with $T$ (over $\mathbb{Z}_p$) and taking $\HR\textrm{-invariants}$, we obtain a natural $(\varphi, \GammaR \times \Delta_{\calR/R})\textrm{-equivariant}$ injective homomorphism $\DR^+(T)/(p,\mu)\DR^+(T) \hookrightarrow (\ER^{\etale}\overline{\mu}\ER^{\etale} \otimes_{\mathbb{Z}_p} T)^{\HR^{\etale}}$.

	By definition, note that $\ER^{\etale,+} = \ER^{\etale} \cap \tilde{E}^+(\overline{R}) \subset \tilde{E}(\overline{R})$, in particular, $\ER^{\etale,+} = \ER^{\etale,+}[1/\overline{\mu}] \cap \tilde{E}^+(\overline{R})$ which implies that the natural map $\ER^{\etale,+}/\overline{\mu}\ER^{\etale,+} \rightarrow \tilde{E}^+(\overline{R})/\overline{\mu}\tilde{E}^+(\overline{R})$ is injective.
	So, for each $1 \leqslant i \leqslant d$, to get that $\textrm{multiplication-by-}[x_i^{\flat}]$ is injective on $\DR^+(T)/(p,\mu)\DR^+(T)$, it is enough to show that $\textrm{multiplication-by-}x_i^{\flat}$ is injective on $\tilde{E}^+(\overline{R})/\overline{\mu}\tilde{E}^+(\overline{R})$.
	From \cite[Proposition 5.17]{scholze-perfectoid}, we know that $\tilde{E}^+(\overline{R})/\overline{\mu}\tilde{E}^+(\overline{R}) = \overline{R}^{\flat}/\overline{\mu}\overline{R}^{\flat} \isomorphic \overline{R}/(\zeta_p-1)^p\overline{R}$.
	Therefore, we are reduced to showing that $\textrm{multiplication-by-}x_i$ is injective on $\overline{R}/(\zeta_p-1)^p\overline{R}$, or equivalently, the sequence $\{(\zeta_p-1)^p, x_i\}$ is regular on $\overline{R}$.
	From \cite[\href{https://stacks.math.columbia.edu/tag/07DV}{Tag 07DV}]{stacks-project}, the latter is equivalent to showing that the sequence $\{(\zeta_p-1)^{p(p-1)}, x_i\}$ is regular on $\overline{R}$, and since we have that $(\zeta_p-1)^{p(p-1)}\overline{R} = p^p\overline{R}$, we are reduced to showing that the sequence $\{p^p, x_i\}$ is regular on $\overline{R}$.
	But this follows from Lemma \ref{lem:inject_mult_x_i}, thus allowing us to conclude.
\end{proof}

\subsubsection{\texorpdfstring{$p\textrm{-adic}$}{-} representations of $G_R$}

In this section, our goal is to classify $p\textrm{-adic}$ representations of $G_R$ using the $(\varphi, \Gamma)\textrm{-modules}$ studied in the previous sections.
We being by noting a well-known fact: the $R[1/p]\textrm{-algebra}$ $\overline{R}[1/p]$ coincides with the maximal \'etale extension of $R_{\infty,\infty}[1/p]$ inside $\overline{\Fr(R)}$ by an application of Abhyankar's Lemma (for example, see \cite[Lemma 5.8]{colmez-niziol}).
Then, by an application of Elkik's approximation theorem (see the proof of loc.\ cit.), we obtain a natural isomorphism of groups 
\begin{equation*}
	H_{R,\infty} = \Gal(\overline{R}[1/p]/R_{\infty,\infty}[1/p]) \isomorphic \textup{Aut}(\widehat{\overline{R}}[1/p]/\widehat{R}_{\infty,\infty}[1/p]).
\end{equation*}
Combining this with the tilting equivalence of \cite[Theorems 5.25 and 7.9]{scholze-perfectoid} and \cite{faltings-almost} we obtain natural isomorphisms of Galois groups 
\begin{equation*}
	H_{R,\infty} \isomorphic \Gal\big(\widehat{\overline{R}}[1/p]/\widehat{R}_{\infty,\infty}[1/p]\big) \isomorphic \Gal\big(\overline{R}[1/p]^{\flat}/R_{\infty,\infty}[1/p]^{\flat}\big).
\end{equation*}

Let us set $\tilde{A}_{R,\infty} \coloneq W(R_{\infty,\infty}[1/p]^{\flat})$ equipped with the Witt vector Frobenius and a natural action of $\Gamma_{R,\infty}$.
Using Lemma \ref{lem:Ax-Sen_R} and the discussion above, it is easy to see that we have a natural $(\varphi, \Gamma_{R,\infty})\textrm{-equivariant}$ identification $\tilde{A}_{R,\infty} = \tilde{A}(\overline{R})^{H_{R,\infty}}$.
\begin{prop}\label{prop:padic_classify_Rinftyinfty}
	The following natural functor induces an equivalence of categories:
	\begin{equation}\label{eq:padic_classify_Rinftyinfty}
		\begin{aligned}
			\tilde{D}_{R,\infty} \colon \mathrm{Rep}_{\mathbb{Z}_p}^{\textup{free}}(G_R) &\isomorphic (\varphi, \Gamma_{R,\infty})\textup{-Mod}_{\tilde{A}_{R,\infty}}^{\etale,\textup{fproj}}\\
				T &\longmapsto (\tilde{A}(\overline{R}) \otimes_{\mathbb{Z}_p} T)^{H_{R,\infty}},
		\end{aligned}
	\end{equation}
	with a quasi-inverse functor given as $\tilde{T}_{R,\infty}(\tilde{D}) \coloneq (\tilde{A}(\overline{R}) \otimes_{\tilde{A}_{R,\infty}} \tilde{D})^{\varphi=1}$.
\end{prop}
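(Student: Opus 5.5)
The plan is to copy the standard perfect-ring argument (as in \cite[Section 7]{andreatta-phigamma} or \cite{kedlaya-liu2}). The one new input is the identification, recalled just before the statement, $H_{R,\infty} \isomorphic \Gal\big(\overline{R}[1/p]^{\flat}/R_{\infty,\infty}[1/p]^{\flat}\big)$. Set $\tilde{E}_{R,\infty} \coloneq R_{\infty,\infty}[1/p]^{\flat}$, so that $\tilde{A}_{R,\infty}/p = \tilde{E}_{R,\infty}$ and $\tilde{A}(\overline{R})/p = \tilde{E}(\overline{R})$. We argue first modulo $p$ and then lift by dévissage, exactly as in Proposition \ref{prop:modp_classify_decomp} and Theorem \ref{thm:padic_classify_decomp}.

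\emph{Step 1 (mod $p$).} By the tilting equivalence, $\tilde{E}(\overline{R})$ is the completion of the union $\tilde{E}^{\mathrm{sep}}$ of the finite étale Galois extensions $\tilde{E}'$ of $\tilde{E}_{R,\infty}$ inside it. Moreover $\Gal(\tilde{E}'/\tilde{E}_{R,\infty})$ is a finite quotient of $H_{R,\infty}$, and $(\tilde{E}')^{H_{R,\infty}} = \tilde{E}_{R,\infty}$ by Lemma \ref{lem:Galinv_Rinftym}.

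Let $T$ be an $\mathbb{F}_p$-representation, and choose an open normal $H' \subset H_{R,\infty}$ acting trivially on $T$. Galois descent along the finite étale Galois cover $\tilde{E}_{R,\infty} \rightarrow \tilde{E}' = (\tilde{E}^{\mathrm{sep}})^{H'}$, using \cite{katz73} and effectivity of étale descent, gives the following. The module $\tilde{D} = (\tilde{E}' \otimes T)^{H_{R,\infty}/H'}$ is finite projective, and $\tilde{E}' \otimes_{\tilde{E}_{R,\infty}} \tilde{D} \isomorphic \tilde{E}' \otimes_{\mathbb{F}_p} T$.

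To pass from $\tilde{E}^{\mathrm{sep}}$ to its completion $\tilde{E}(\overline{R})$, use that $\tilde{E}(\overline{R}) \otimes T$ is also an induced module. Cohomology of $H_{R,\infty}$ with coefficients in it vanishes in degree $\geqslant 1$, by a completed version of the Amitsur argument in Lemma \ref{lem:H_coh_Eet}; alternatively, take invariants of $\tilde{E}(\overline{R}) \otimes_{\tilde{E}'} (\tilde{E}' \otimes T)$ directly, with $H'$ acting trivially on $T$.

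For the converse, take an étale $\varphi$-module $\tilde{D}$. Katz's argument shows that $(\tilde{E}(\overline{R}) \otimes \tilde{D})^{\varphi=1}$ has full $\mathbb{F}_p$-rank and spans. The required input is that $\tilde{E}(\overline{R})$ has no nontrivial finite étale covers, so the Artin--Schreier equations $\varphi(X) = AX$ are solvable. This holds because $\overline{R}[1/p]$ is maximal among étale extensions of $R_{\infty,\infty}[1/p]$ (Abhyankar, as recalled before the statement), and the tilting equivalence transfers this maximality to the tilt.

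\emph{Step 2 (dévissage to $\mathbb{Z}_p$).} Following the proof of Theorem \ref{thm:padic_classify_decomp} verbatim, with $\AR^{\etale}$ replaced by $\tilde{A}(\overline{R})$ and $\AR$ by $\tilde{A}_{R,\infty}$, we obtain the following.
\begin{enumerate}
	\item[(1)] Exact sequences for $T/p^{i}$, using $H^1(H_{R,\infty}, \tilde{A}(\overline{R}) \otimes T/p) = 0$ from Step 1, and its lift to $p$-torsion-free coefficients by successive approximation as in Lemma \ref{lem:H_coh_Eet}.
	\item[(2)] Comparison isomorphisms $\tilde{A}(\overline{R}) \otimes_{\tilde{A}_{R,\infty}} \tilde{D}(T) \isomorphic \tilde{A}(\overline{R}) \otimes_{\mathbb{Z}_p} T$, obtained by passing to the limit.
	\item[(3)] Finite projectivity of $\tilde{D}(T)$. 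Since $\tilde{A}_{R,\infty}$ is not noetherian, we cannot quote Lemma \ref{lem:phimod_proj}. Instead we use that $\tilde{D}(T)$ is $p$-adically complete, each $\tilde{D}(T)/p^i$ is finite projective over $\tilde{A}_{R,\infty}/p^i$ by induction, and the transition maps are surjective. A lift of a finite projective module along the nilpotent thickenings, followed by completion, then shows $\tilde{D}(T)$ is finite projective.
\end{enumerate}
The converse direction uses the surjectivity of $\varphi - 1$ on $\tilde{A}(\overline{R}) \otimes (\text{mod } p \text{ pieces})$, which follows from Step 1 and Artin--Schreier theory. The identities $\tilde{T} \circ \tilde{D} \simeq \mathrm{id}$ and $\tilde{D} \circ \tilde{T} \simeq \mathrm{id}$ then follow from the two comparison isomorphisms together with $\tilde{A}(\overline{R})^{\varphi=1} = \mathbb{Z}_p$. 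That last equality follows by dévissage from $\tilde{E}(\overline{R})^{\varphi=1} = \mathbb{F}_p$, which uses connectedness of $\operatorname{Spec}\overline{R}[1/p]$.

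The main obstacle is Step 1: making precise that $\tilde{E}(\overline{R})$ is the completion of the separable closure of $\tilde{E}_{R,\infty}$ with Galois group $H_{R,\infty}$, in the non-field, non-noetherian setting. Here I would rely on \cite[Theorem 7.12]{scholze-perfectoid} together with the displayed isomorphism of Galois groups, and I would check the cohomology vanishing and the solvability of $\varphi(X) = AX$ on each finite étale level before completing.
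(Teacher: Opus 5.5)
Your proposal is correct in outline, but it takes a genuinely different route from the paper. The paper runs no d\'evissage here. It first uses the discussion preceding the statement: Abhyankar's lemma makes $\overline{R}[1/p]$ the maximal \'etale extension of $R_{\infty,\infty}[1/p]$, Elkik's approximation passes to $\widehat{\overline{R}}[1/p]$, and tilting then gives $H_{R,\infty} \isomorphic \Gal(\overline{R}[1/p]^{\flat}/R_{\infty,\infty}[1/p]^{\flat})$ and $\tilde{A}_{R,\infty} = \tilde{A}(\overline{R})^{H_{R,\infty}}$. It then simply quotes \cite[Theorem 9.3.7]{kedlaya-liu1}, the general Kedlaya--Liu comparison between \'etale $\mathbb{Z}_p$-local systems and \'etale $\varphi$-modules over Witt vectors of perfect(oid) rings, with the $\Gamma_{R,\infty}$-action carried along by functoriality.

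You instead reprove this special case by hand, repeating over the perfect rings the Katz argument of Proposition \ref{prop:modp_classify_decomp} and the d\'evissage of Theorem \ref{thm:padic_classify_decomp}. Your ``main obstacle'' is exactly the paper's preliminary discussion, so both proofs rest on the same geometric input. The citation buys brevity and settles once and for all the issues specific to the completed, non-noetherian setting. Your route is self-contained and shows precisely where maximality and connectedness of $\overline{R}[1/p]$ enter, but you then have to handle those issues yourself, and three of your justifications need adjusting.

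(i) Continuous cohomology of $H_{R,\infty}$ with coefficients in the $\overline{\mu}$-adically complete ring $\tilde{E}(\overline{R})$ is not controlled by an Amitsur complex, which only sees the uncompleted union of finite \'etale extensions. Either use Tate's trace argument as in Lemma \ref{lem:Ax-Sen_R}, which is elementary for perfect rings since $\mathrm{tr}(\varphi^{-n}(b)) = \varphi^{-n}(\mathrm{tr}(b))$. Or bypass completed cohomology: descend along the finite \'etale Galois maps $W_n(\tilde{E}_{R,\infty}) \rightarrow W_n(\tilde{E}')$ and only then base change to $W_n(\tilde{E}(\overline{R}))$, as in your ``alternatively''.

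(ii) Projectivity of $\tilde{D}(T)/p^{i}$ over $\tilde{A}_{R,\infty}/p^{i}$ does not follow from the short exact sequences by induction alone. An extension of a projective $\tilde{A}_{R,\infty}/p^{i}$-module by a projective $\tilde{A}_{R,\infty}/p$-module need not be projective over $\tilde{A}_{R,\infty}/p^{i+1}$. It does follow from the same finite-level descent, or from $\mathrm{Tor}$-vanishing plus the flatness criterion for nilpotent ideals.

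(iii) To see that $\operatorname{Spec} \tilde{E}(\overline{R})$ is connected and has only split finite \'etale covers, you must pass from $\overline{R}[1/p]$ to $\widehat{\overline{R}}[1/p]$ (via Elkik, or henselianity of $(\overline{R}, p\overline{R})$) before tilting.
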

\begin{proof}
	The equivalence in \eqref{eq:padic_classify_Rinftyinfty} follows by using the discussion above and \cite[Theorem 9.3.7]{kedlaya-liu1}.
\end{proof}

The following is a crucial observation for $p\textrm{-adic}$ representations of $G_R$.
\begin{thm}\label{thm:GR_rep_GR0m}
	Let $T$ be a finitely generated $\mathbb{Z}_p\textrm{-representation}$ of $G_R$.
	Then, there exists a positive integer $m$ coprime to $p$, depending on $T/pT$, and such that the action of $G_R$ on $T$ factors through $G_{\calR/R}^{\etale}$ for $\calR \coloneq R_{0,m}$.
\end{thm}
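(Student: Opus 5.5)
The plan is to reduce everything to the finite Galois extension cut out by $T/pT$, and then use Abhyankar's Lemma together with purity of the branch locus to kill the tame ramification along $\{x_{a+1}\cdots x_d=0\}$ by adjoining $m$-th roots of the $x_i$.

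\textbf{Step 1: reduction to a finite log-\'etale extension.}
Let $K\subset G_R$ be the kernel of the action on $T$, and $K_1\supset K$ the kernel of the action on $T/pT$. Since $T/pT$ is finite, $K_1$ is open and normal. By Proposition \ref{prop: galois theory for R}, $K_1$ corresponds to a finite normal $R$-subalgebra $R'\subset\overline{R}$ satisfying \hyperref[nota:logetale_condition]{\textbf{(log-\'et)}}, with $R[1/p]\to R'[1/p]$ log-\'etale Galois. The group $K_1/K$ embeds into $\ker(\mathrm{Aut}_{\mathbb{Z}_p}(T)\to\mathrm{Aut}(T/pT))$, which is a pro-$p$ group. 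Hence the fixed ring $\overline{R}^K$ is a union of finite extensions of $R'$ whose Galois groups are $p$-groups, and each of them is tamely ramified along the divisor.

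\textbf{Step 2: the pro-$p$ part is already \'etale over $R'[1/p]$.}
Consider a finite normal $R'$-subalgebra $R''\subset\overline{R}^K$ that is Galois of $p$-power degree over $R'$. Localise at a height-one prime $\mathfrak{q}$ of $R'$ lying over some $(x_i)$, with $a+1\leqslant i\leqslant d$ (such a prime does not contain $p$). Because the extension is log-\'etale, the inertia at $\mathfrak{q}$ is tame, so its order is prime to $p$. It is also a subquotient of a $p$-group, so it must be trivial. Thus $R''[1/p]$ is unramified over $R'[1/p]$ at every codimension-one point of the divisor. The ring $R'[1/p]$ is normal noetherian but not necessarily regular, so I would not apply purity over $R'[1/p]$ directly. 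Instead I defer the purity argument to Step 3, where the base is regular.

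\textbf{Step 3: choice of $m$ via Abhyankar.}
The ramification indices of $R'$ over $R$ at the generic points of the $\{x_i=0\}$, for $a+1\leqslant i\leqslant d$, are prime to $p$. Let $m$ be their least common multiple; it depends only on $R'$, hence only on $T/pT$. The divisor $\{x_{a+1}\cdots x_d=0\}$ is a normal crossing divisor on the regular scheme $\mathrm{Spec}\,R[1/p]$; this uses Proposition \ref{prop:Rnm_Snm_normal}. Abhyankar's Lemma \cite[XIII, Proposition 5.2]{sga1} then shows that the normalisation of $R'\otimes_R R_{0,m}$ becomes \'etale over $R_{0,m}[1/p]$, and likewise at each level $R_{n,m}$. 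The same holds for each $R''$ as in Step 2: the composite of $R''$ with $R_{n,m}$ is unramified at all height-one primes over the divisor, by Steps 2 and 3 together. Since $R_{n,m}[1/p]$ is regular, purity of the branch locus (Zariski--Nagata, \cite[Tag 0EA4]{stacks-project}) upgrades this to \'etaleness over $R_{n,m}[1/p]$. Passing to the union over $n$ and over all such $R''$, every finite normal subalgebra of the normalisation of $\overline{R}^K\cdot R_{\infty,m}$ is \'etale over $R_{\infty,m}[1/p]$ after inverting $p$. Therefore $\overline{R}^K\subset\calR_\infty^{\etale}$, and $G_R\to\mathrm{Aut}(T)$ factors through $G^{\etale}_{\calR/R}$.

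\textbf{Main obstacle.}
I expect the hardest step to be the purity and Abhyankar argument: ensuring that the finite extensions live over a regular noetherian base. The way around this is to work at the finite levels $R_{n,m}$ and only then take the union, because $R_{\infty,m}$ is not noetherian. A second, more routine, point is controlling the inertia in the pro-$p$ part, which is handled by the tameness argument in Step 2.
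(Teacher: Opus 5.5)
\textbf{Gap in Step 2.} You argue that since $R''/R'$ is log-\'etale, the inertia at a height-one prime $\mathfrak{q}$ over $\{x_i=0\}$ is tame and therefore has order prime to $p$. Tameness only forces the inertia order to be prime to the \emph{residue characteristic} of $\mathfrak{q}$. As you note yourself, $p\notin\mathfrak{q}$, so $k(\mathfrak{q})$ has characteristic $0$. Hence being log-\'etale places no restriction on the order of inertia along the divisor. In fact $p$-power inertia genuinely occurs inside $\overline{R}$: the extension $R[1/p]\to R_n[1/p]$ is log-\'etale with inertia of order $p^n$ along $\{x_i=0\}$ for $a<i\leqslant d$.

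\textbf{A concrete counterexample.} Fix $i>a$ and let $G_R$ act on $T=\mathbb{Z}_pe_1\oplus\mathbb{Z}_pe_2$ through $\Gamma_R$ by $g(e_1)=\chi(g)e_1$ and $g(e_2)=e_2+\psi_i(g)e_1$. The extension cut out by $T/p^kT$ is the normalisation of $R[\zeta_{p^k},x_i^{1/p^k}]$. So the $p$-part $R''/R'$ is ramified along $\{x_i=0\}$, contradicting Step~2. Already $R'$ has ramification index $p$ there, which contradicts your claim in Step~3 that the indices of $R'/R$ are prime to $p$; the lcm you take is then not prime to $p$. Of course the theorem holds in this example with $m=1$.

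\textbf{The missing idea.} $p$-power ramification along the divisor is not trivial; it is absorbed by the $p$-power roots of the $x_i$ that $\calR_\infty=R_{\infty,m}$ already contains. Abhyankar's Lemma applies with arbitrary exponents, because every integer is invertible on $\Spec R[1/p]$. The correct procedure is as follows.
\begin{enumerate}
\item Let $m$ be the prime-to-$p$ part of the ramification indices of $R'/R$, and choose $n$ with $p^n$ covering their $p$-part. Then $R'_{n,m}[1/p]$ is finite \'etale over $R_{n,m}[1/p]$.
\item Take a finite Galois layer $R''$ of the $p$-part, for example the one cut out by $T/p^kT$. Its inertia over $R'$ along the divisor is a $p$-group, so the prime-to-$p$ parts of its ramification indices over $R$ still divide $m$.
\item Enlarge $n$ to some $n'$, which may depend on $R''$ while $m$ stays fixed. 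Abhyankar together with purity over the regular ring $R_{n',m}[1/p]$ then shows that $R''_{n',m}[1/p]$ is \'etale over $R_{n',m}[1/p]$.
\item Since every $R_{n',m}$ lies in $\calR_\infty$, passing to the union gives $\overline{R}^K\subset\calR_\infty^{\etale}$.
\end{enumerate}
This is exactly the paper's argument: it takes $B=\overline{R}^H$ for $T/pT$ and $C$ for $T/p^iT$, and uses that $C_{n,m}/B_{n,m}$ has $p$-power ramification indices, which disappear after replacing $n$ by $n'$. With Step~2 replaced by this, the rest of your outline goes through. That is, the reduction to $T/pT$, working at the regular finite levels $R_{n,m}[1/p]$, and then passing to the union are all fine.
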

\begin{proof}
	Let us set $H \coloneq \ker(G_R \rightarrow \textrm{GL}(T/pT))$, and note that $B \coloneq \overline{R}^{H} \subset \overline{R}$ is a finite normal $R\textrm{-subalgebra}$.
	By Abhyankar's Lemma (see \cite[XIII, Proposition 5.2]{sga1}), there exists some $m \geqslant 1$ coprime to $p$ and $n \geqslant 0$ such that $B_{n,m}[1/p]$ is finite \'etale over $R_{n,m}[1/p]$.
	Now, take any $i \geqslant 2$ and set $H' \coloneq \ker(G_R \rightarrow \textrm{GL}(T/p^iT))$ and $C \coloneq \overline{R}^{H'}$.
	Then, we have natural injective homomorphisms $R[1/p] \hookrightarrow B[1/p] \hookrightarrow C[1/p]$, and the induced homomorphism $B_{n,m}[1/p] \rightarrow C_{n,m}[1/p]$ is unramified outside the divisor $\{x_{a+1} \cdots x_d = 0\}$.
	Without loss of generality we may further assume that the preceding homomorphism is log-\'etale and Galois (see Section \ref{subsubsec:fundamental_groups}).
	Note that the order of the Galois group $\Gal(C_{n,m}[1/p]/B_{n,m}[1/p])$ equals $p^k$ for some $k$ depending on $T$ and $i$ because $|G/H|$ (resp.\ $|G/H'|$) is a power of $p$ depending on $T$ (resp.\ $T$ and $i$).
	Therefore, it follows that the ramification index of $C_{n,m}[1/p]$, over each height one prime ideal $\mathfrak{p}$ of $\Spec(B_{n,m}[1/p])$, is a power of $p$.
	Thus, after replacing $n$ by some $n' \geqslant n$ if required, by the purity of ramification locus (see \cite[\href{https://stacks.math.columbia.edu/tag/0EA4}{Tag 0EA4}]{stacks-project}), it follows that $C_{n',m}[1/p]$ is finite \'etale over $B_{n',m}[1/p]$, and therefore $C_{\infty,m}[1/p]$ is finite \'etale over $B_{\infty,m}[1/p]$.
	Hence, we conclude that the action of $G_R$ on $T$ factors through $G_{\calR/R}^{\etale}$ for $\calR = R_{0,m}$, where $m$ depends on the mod $p$ representation $T/pT$ of $G_R$.
\end{proof}

\begin{cor}\label{cor:phiGammaR_descentR0m}
	Let $T$ be a $\mathbb{Z}_p\textrm{-representation}$ of $G_R$ and $\calR = R_{0,m}$ for some positive integer $m$ coprime to $p$ as in Theorem \ref{thm:GR_rep_GR0m}.
	Then, we have a natural $(\varphi, \Gamma_{R,\infty})\textrm{-equivariant}$ isomorphism of $\tilde{A}_{R,\infty}\textrm{-modules}$
	\begin{equation}\label{eq:phiGammaR_descentR0m}
		\tilde{D}_{R,\infty}(T) \isomorphic \tilde{A}_{R,\infty} \otimes_{\AR} \DR(T).
	\end{equation}
	Additionally, for a $\mathbb{Z}_p\textrm{-representation}$ $T$ of $G_R$, the natural multiplication map $\tilde{A}(\overline{R}) \otimes_{\tilde{A}_{R,\infty}} \tilde{A}(\overline{R}) \rightarrow \tilde{A}(\overline{R})$ induces a natural isomorphism of $\tilde{A}_{R,\infty}\textrm{-modules}$
	\begin{equation}\label{eq:phiGamma_comp_iso_padic_Rinftyinfty}
		\tilde{A}(\overline{R}) \otimes_{\tilde{A}_{R,\infty}} \tilde{D}_{R,\infty}(T) \isomorphic \tilde{A}(\overline{R}) \otimes_{\mathbb{Z}_p} T,
	\end{equation}
	compatible with the respective actions of $\varphi$ and $G_R$.
\end{cor}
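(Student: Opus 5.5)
We need to prove Corollary \ref{cor:phiGammaR_descentR0m}. Since $T$ is a representation of $G_R$ factoring through $G_{\calR/R}^{\etale}$ by Theorem \ref{thm:GR_rep_GR0m}, we may regard $T$ as a representation of $G_{\calR/R}^{\etale}$ and use the comparison isomorphism \eqref{eq:phiGamma_comp_iso} of Theorem \ref{thm:padic_classify_decomp}:
\begin{equation*}
	\AR^{\etale} \otimes_{\AR} \DR(T) \isomorphic \AR^{\etale} \otimes_{\mathbb{Z}_p} T.
\end{equation*}
The plan is to extend scalars along the $(\varphi, G_R)$-equivariant chain of inclusions $\AR^{\etale} \subset \tilde{A}_{\calR}^{\etale} = \tilde{A}(\overline{R})^{H} \subset \tilde{A}(\overline{R})$, where $H = \Gal(\overline{R}[1/p]/\calR_{\infty}^{\etale}[1/p])$ acts trivially on $T$. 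This gives a $(\varphi, G_R)$-equivariant isomorphism
\begin{equation*}
	\tilde{A}(\overline{R}) \otimes_{\AR} \DR(T) \isomorphic \tilde{A}(\overline{R}) \otimes_{\mathbb{Z}_p} T. \tag{$\ast$}
\end{equation*}
Here $G_R$ acts on the left via its action on $\tilde{A}(\overline{R})$ and via $G_R \twoheadrightarrow G_{\calR/R}^{\etale} \twoheadrightarrow \GammaR \times \Delta_{\calR/R}$ on $\DR(T)$. Equivariance comes from the equivariance of \eqref{eq:phiGamma_comp_iso} and of the inclusions.

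Next I take $H_{R,\infty}$-invariants of $(\ast)$. The subgroup $H_{R,\infty} \subset H_{\calR}$ fixes $R_{\infty,\infty} \supset \calR_{\infty}$, so its image in $\GammaR \times \Delta_{\calR/R}$ is trivial, and hence it acts trivially on $\DR(T)$. The right side then becomes $\tilde{D}_{R,\infty}(T)$ by definition.

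For the left side, the key point is that $\DR(T)$ is finite projective over $\AR$ when $T$ is free (Theorem \ref{thm:padic_classify_decomp}). Writing $\DR(T)$ as a direct summand of a finite free module, invariants commute with the tensor product:
\begin{equation*}
	\big(\tilde{A}(\overline{R}) \otimes_{\AR} \DR(T)\big)^{H_{R,\infty}} = \tilde{A}(\overline{R})^{H_{R,\infty}} \otimes_{\AR} \DR(T) = \tilde{A}_{R,\infty} \otimes_{\AR} \DR(T).
\end{equation*}
This uses the identification $\tilde{A}_{R,\infty} = \tilde{A}(\overline{R})^{H_{R,\infty}}$ stated before Proposition \ref{prop:padic_classify_Rinftyinfty}, and yields \eqref{eq:phiGammaR_descentR0m}. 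It is $(\varphi, \Gamma_{R,\infty})$-equivariant since $\Gamma_{R,\infty} = G_R/H_{R,\infty}$. If $T$ is not free, I would instead use that $\DR(T)$ is finitely presented and $\tilde{A}(\overline{R})$-invariants are left exact. I expect a careful check to be needed there: the torsion case requires the vanishing of $H^1(H_{R,\infty}, \tilde{A}(\overline{R})/p^n)$, which follows from the Artin--Schreier argument for $\tilde{E}(\overline{R})$ as in Lemma \ref{lem:H_coh_Eet}. This non-free case is the main obstacle, and I would restrict to the free case where it matters.

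Finally, \eqref{eq:phiGamma_comp_iso_padic_Rinftyinfty} follows by substituting \eqref{eq:phiGammaR_descentR0m} into $(\ast)$:
\begin{equation*}
	\tilde{A}(\overline{R}) \otimes_{\tilde{A}_{R,\infty}} \tilde{D}_{R,\infty}(T) \cong \tilde{A}(\overline{R}) \otimes_{\AR} \DR(T) \isomorphic \tilde{A}(\overline{R}) \otimes_{\mathbb{Z}_p} T.
\end{equation*}
It remains to check that this composite is the map induced by multiplication. That is a diagram chase, because all the maps are induced by multiplication in $\tilde{A}(\overline{R})$.
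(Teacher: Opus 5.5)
Your proposal is correct and follows the paper's proof. Both extend scalars of the comparison isomorphism \eqref{eq:phiGamma_comp_iso} along $\AR^{\etale} \to \tilde{A}(\overline{R})$, take $H_{R,\infty}$-invariants, and substitute back. You additionally supply details the paper leaves implicit: that $H_{R,\infty}$ acts trivially on $\DR(T)$, and that invariants commute with $\otimes_{\AR}\DR(T)$ because $\DR(T)$ is finite projective. This is also why restricting to free $T$, the setting in which $\tilde{D}_{R,\infty}$ is defined in Proposition \ref{prop:padic_classify_Rinftyinfty}, is appropriate. You also rightly use the $p$-adic isomorphism \eqref{eq:phiGamma_comp_iso}, whereas the paper's proof cites the mod $p$ one \eqref{eq:phiGamma_comp_iso_modp_imperf}.
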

\begin{proof}
	The isomorphism in \eqref{eq:phiGammaR_descentR0m} follows by using Theorem \ref{thm:GR_rep_GR0m} and taking $H_{R,\infty}\textrm{-invariants}$ of the extension of scalars of the isomorphism in \eqref{eq:phiGamma_comp_iso_modp_imperf} along the $(\varphi, G_R)\textrm{-equivariant}$ homomorphism $\AR^{\etale} \rightarrow \tilde{A}(\overline{R})$.
	The isomorphism in \eqref{eq:phiGamma_comp_iso_padic_Rinftyinfty} follows by combining \eqref{eq:phiGammaR_descentR0m} and extension along $\AR^{\etale} \rightarrow \tilde{A}(\overline{R})$ of \eqref{eq:phiGamma_comp_iso_modp_imperf}.
\end{proof}

\begin{rem}\label{rem:TRD_GRrep}
	For any \'etale $(\varphi, \GammaR \times \Delta)\textrm{-module}$ $D$ over $\AR$, note that by extending scalars of the natural $(\varphi, G_{\calR/R}^{\etale})\textrm{-equivariant}$ isomorphism \eqref{eq:phiGamma_comp_iso_inv} along the $(\varphi, G_R)\textrm{-equivariant}$ homomorphism $\AR^{\etale} \rightarrow \tilde{A}(\overline{R})$ and taking Frobenius-fixed part, we get that $\TR(D) \isomorphic (\tilde{A}(\overline{R}) \otimes_{A_R} D)^{\varphi=1}$ as $\mathbb{Z}_p\textrm{-representations}$ of $G_R$, and it is clear that the $G_R\textrm{-action}$ on $\TR(D)$ factors through $G_{\calR/R}^{\etale}$ (also see Theorem \ref{thm:GR_rep_GR0m}).
	Analogously, if $D$ is an \'etale $(\varphi, \GammaR)\textrm{-module}$ over $\AR$, then using the $(\varphi, G_{\calR}^{\etale})\textrm{-equivariant}$ isomorphisms \eqref{eq:phiGamma_comp_iso_inv} and \eqref{eq:phiGamma_comp_GR0m}, and applying an argument similar to above, we get that $\TR(D) \isomorphic (\tilde{A}(\overline{R}) \otimes_{A_R} D)^{\varphi=1}$ as $\mathbb{Z}_p\textrm{-representations}$ of $\GR$, and it is clear that the $\GR\textrm{-action}$ on $\TR(D)$ factors through $G_{\calR/R}^{\etale}$.
\end{rem}

The discussion above also applies if we replace the $O_F\textrm{-algebra}$ $R$ above with the $p\textrm{-adically}$ complete ring $S = R[1/(X_{a+1} \cdots X_d)]^{\wedge}$ (see \cite[Section 2.6]{abhinandan-relative-wach-ii}).
We will use the results of op.\ cit.\ in our constructions.

\begin{rem}\label{rem:phiGamma_comp_R0mS0m}
	Let $T$ be a finitely generated $\mathbb{Z}_p\textrm{-representation}$ of $G_R$, and $m$ a positive integer coprime to $p$ as in Theorem \ref{thm:GR_rep_GR0m}.
	We may also view $T$ as a representation of $G_S$ via the group homomorphism $G_S \rightarrow G_R$.
	Then, as described in \cite[Section 2.6]{abhinandan-relative-wach-ii}, there exists an \'etale $(\varphi, \Gamma_S)\textrm{-module}$ $D_S(T)$ associated to $T$.
	Additonally, by setting $\calR = R_{0,m}$ and using Theorems \ref{thm:padic_classify_decomp} and \ref{thm:GR_rep_GR0m}, we see that there exists an \'etale $(\varphi, \GammaR \times \Delta_{\calR/R})\textrm{-module}$ $D_{\calR}(T)$ over $\AR$.
	Then, extending scalars of the isomorphism \eqref{eq:phiGamma_comp_iso} from Theorem \ref{thm:padic_classify_decomp} along the $(\varphi, G_S)\textrm{-equivariant}$ composition $A_{\calR}^{\etale} \rightarrow \tilde{A}(\overline{R}) \rightarrow \tilde{A}(\overline{S})$, and using the classification of $p\textrm{-adic}$ representations of $G_{\calS}$ in terms of \'etale $(\varphi, \GammaS \times \Delta_{\calS/S})\textrm{-modules}$ over $A_{\calS}$ (see \cite{andreatta-phigamma}), yields a natural isomorphism of $(\varphi, \GammaS \times \Delta_{\calS/S})\textrm{-modules}$ over $\AS$ as $\AS \otimes_{\AR} D_{\calR}(T) \isomorphic \AS \otimes_{A_S} D_S(T)$.
\end{rem}

\section{Wach modules}\label{sec:wachmods}

In this section, we shall define and study the properties of Wach modules.
We will keep the setup and notation introduced in Sections \ref{sec:prelims} and \ref{sec:phiGamma_modules}.

\subsection{Wach modules}

In \cite[Section 3]{abhinandan-relative-wach-ii}, we defined and studied properties of Wach modules over the ring $\AS^+$.
In this section, our goal is to define and study analogous properties of Wach modules over $\AR^+$.
Let us set $[p]_q \coloneq \tfrac{q^p-1}{q-1} = \tfrac{\varphi(\mu)}{\mu}$ as an element of $\AR^+$.
\begin{defi}\label{defi:wach_mods}
	A \textit{Wach module} over $\AR^+$ is a finitely generated $\AR^+\textrm{-module}$ $N$ satisfying the following assumptions:
	\begin{enumerate}
		\item[(1)] The sequences $\{p, \mu\}$ and $\{\mu, p\}$ are regular on $N$.
	
		\item[(2)] The module $N$ is equipped with a semilinear action of $\GammaR$ such that the induced action of $\GammaR$ on $N/\mu N$ is trivial.
	
		\item[(3)] The module $N$ is equipped with an $\AR^+\textrm{-linear}$ and $\GammaR\textrm{-equivariant}$ Frobenius-structure, i.e.\ an isomorphism $\varphi_N \colon (\varphi^*N)[1/[p]_q] \isomorphic N[1/[p]_q]$.
	\end{enumerate}
	The module $N$ is said to be \textit{effective} if we have $\varphi_N(\varphi^*N) \subset N$.
	Denote by $(\varphi, \GammaR)\textup{-Mod}_{\AR^+}^{[p]_q}$, the category of Wach modules over $\AR^+$ with morphisms between objects being $\AR^+\textrm{-linear}$ and $(\varphi, \GammaR)\textrm{-equivariant}$.
\end{defi}

\begin{rem}\label{rem:automatic_continuity}
	In Definition \ref{defi:wach_mods}, note that from the triviality of the action of $\GammaR$ on $N/\mu N$ and by an argument similar to \cite[Lemma 3.4]{abhinandan-relative-wach-ii}, it follows that the action of $\GammaR$ on $N$ is continuous.
\end{rem}

\begin{defi}\label{defi:wach_mods_with_delta}
	A \textit{Wach module with $\Delta\textrm{-action}$} over $\AR^+$ (resp.\ $\AS^+$) is a Wach module $N$ over $\AR^+$ (resp.\ $\AS^+$ in the sense of \cite[Definition 3.8]{abhinandan-relative-wach-ii}) equipped with an additional semilinear action of $\Delta \coloneq \Delta_{\calR/R}$ commuting with the $(\varphi, \GammaR)\textrm{-action}$ (resp.\ $(\varphi, \GammaS)\textrm{-action}$) on $N$.
	Denote by $(\varphi, \GammaR \times \Delta)\textup{-Mod}_{\AR^+}^{[p]_q}$, the category of Wach modules with $\Delta\textrm{-action}$ over $\AR^+$ with morphisms between objects being $\AR^+\textrm{-linear}$ and $(\varphi, \GammaR \times \Delta)\textrm{-equivariant}$.
\end{defi}

\begin{lem}\label{lem:wachmod_RtoS}
	Let $\NR$ be a Wach module with $\Delta\textrm{-action}$ over $\AR^+$, then $\NS \coloneq \AS^+ \otimes_{\AR^+} \NR$ is a Wach module with $\Delta\textrm{-action}$ over $\AS^+$.
\end{lem}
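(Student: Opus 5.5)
The plan is to check the axioms of a Wach module with $\Delta$-action for $\NS$ one by one, using that $\AR^+ \rightarrow \AS^+$ is flat and that $\AS^+$ is the $(p,\mu)$-adic completion of $\AR^+[1/x]$ with $x = [x_{a+1}^{\flat}]\cdots[x_d^{\flat}]$. Flatness comes from two facts: localisation is flat, and completion of a noetherian ring along an ideal is flat. Here $\AR^+ \isomorphic \calR\llbracket \mu \rrbracket$ is noetherian, so $\AR^+[1/x]$ is noetherian as well. Consequently $\NS$ is finitely generated over $\AS^+$. It also follows that $\NS \isomorphic \AS^+ \otimes_{\AR^+[1/x]} \NR[1/x]$ is the $(p,\mu)$-adic completion of $\NR[1/x]$.

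\textbf{Regularity.} For axiom (1), flat base change preserves injectivity of multiplication by $p$ on $\NR$. It also preserves injectivity of $\mu$ on $\NR/p\NR$, because $\NS/p\NS = (\AS^+/p) \otimes_{\AR^+/p} \NR/p\NR$ and $\AR^+/p \rightarrow \AS^+/p$ is again flat, being a base change of a flat map. The same argument handles the sequence $\{\mu, p\}$. The only remaining condition is $\NS \neq (p,\mu)\NS$ (or we allow $N=0$). I read the regularity convention as not requiring this, as in \cite{abhinandan-relative-wach-ii}. If it is required, it follows because $\NR/(p,\mu)$ is a finitely generated $\calR/p$-module on which $x$ acts injectively. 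That injectivity is the content of Lemma \ref{lem:regsec_D+}-type arguments, or it can be obtained from an embedding of $\NR$ into an étale $(\varphi,\Gamma)$-module.

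\textbf{Group actions.} For axiom (2), the actions of $\GammaR \isomorphic \GammaS$ and of $\Delta$ on $\AR^+$ extend to $\AS^+$. The map $\AR^+ \rightarrow \AS^+$ is $(\varphi,\GammaR\times\Delta)$-equivariant, as recorded in Section \ref{subsubsec:imperfect_rings_mixedchar}. So we define the actions on $\NS$ diagonally, $g(a\otimes n) = g(a)\otimes g(n)$. These are semilinear and commute with each other. For the triviality modulo $\mu$, note that $\NS/\mu\NS = (\AS^+/\mu)\otimes_{\AR^+/\mu} \NR/\mu\NR$. Also $\GammaS$ acts trivially on $\AS^+/\mu \isomorphic \calS$, since $g(x_i^{1/m}) = q^{\psi_i(g)/m}x_i^{1/m} \equiv x_i^{1/m}$ and $g(q)\equiv q$ modulo $\mu$. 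Hence the induced action on $\NS/\mu\NS$ is trivial. Continuity is automatic by Remark \ref{rem:automatic_continuity}.

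\textbf{Frobenius.} For axiom (3), base change the isomorphism $\varphi_{\NR}$ along $\AR^+[1/[p]_q] \rightarrow \AS^+[1/[p]_q]$. This uses the identification $\varphi^*\NS = \AS^+\otimes_{\varphi,\AS^+}\AS^+\otimes_{\AR^+}\NR \isomorphic \AS^+ \otimes_{\AR^+} \varphi^*\NR$, which holds because $\varphi$ commutes with $\AR^+\rightarrow\AS^+$. The resulting map is an $\AS^+$-linear isomorphism $(\varphi^*\NS)[1/[p]_q] \isomorphic \NS[1/[p]_q]$. It is $\GammaS$- and $\Delta$-equivariant because each ingredient is, and it preserves effectivity.

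\textbf{Main obstacle.} The only nontrivial point is the flatness of $\AR^+\rightarrow\AS^+$, which involves a completion. The approach I would take is the one above: noetherianity of $\AR^+[1/x]$ together with flatness of $I$-adic completion of noetherian rings. As a fallback, Lemma \ref{lem:local_criterion_flatness} reduces the question to checking flatness modulo $p$, which is the analogous statement for $\calR/p\llbracket\overline{\mu}\rrbracket \rightarrow \calS/p\llbracket\overline{\mu}\rrbracket$.
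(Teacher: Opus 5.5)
Your argument is correct and matches the paper's proof. Both establish flatness of $\AR^+\to\AS^+$ from $\AS^+\cong\AR^+[1/([x_{a+1}^{\flat}]\cdots[x_d^{\flat}])]^{\wedge}$ and then transfer each axiom of Definition \ref{defi:wach_mods}, together with the $\Delta$-action, by base change. You write out the checks that the paper calls easy.

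One caveat concerns your contingency remark on the nondegeneracy condition $\NS\neq(p,\mu)\NS$. The paper never imposes this condition: in the proof of Theorem \ref{thm:wachmod_existence}, regularity is checked purely through torsion-freeness. So your main argument stands under the paper's convention.

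The justification you sketch for nondegeneracy is not available, however. Injectivity of $[x_i^{\flat}]$ on $\NR/(p,\mu)\NR$ for an arbitrary Wach module is exactly what the paper leaves open, in the Question following Proposition \ref{prop:wachmod_almost_comp}. Lemma \ref{lem:regsec_D+} only treats $\DR^+(T)$. The embedding $\NR\subset\DR$ gives regularity of $[x_i^{\flat}]$ on $\NR/p\NR$, not on $\NR/(p,\mu)\NR$.

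If you do want nondegeneracy, here is an argument that avoids the open question. The module $\NR[1/p]$ is finite projective over $\BR^+$ by Proposition \ref{prop:wachmod_proj_pmu}, and $\BR^+\hookrightarrow\BS^+$ is injective. Hence the $p$-torsion-free module $\NR$ injects into $\BS^+\otimes_{\BR^+}\NR[1/p]=\NS[1/p]$. So $\NS\neq 0$ whenever $\NR\neq 0$, and Nakayama over the $(p,\mu)$-adically complete ring $\AS^+$ then gives $\NS\neq(p,\mu)\NS$.
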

\begin{proof}
	Note that we have $\GammaS \times \Delta \isomorphic \GammaR \times \Delta$ and the natural $(\varphi, \GammaR \times \Delta)\textrm{-equivariant}$ homomorphism $\AR^+ \rightarrow \AS^+$ is flat because $\AR^+\big[1/([X_{a+1}^{\flat}] \cdots [X_d^{\flat}])\big]^{\wedge} \isomorphic \AS^+$ from Section \ref{subsubsec:imperfect_rings_mixedchar}.
	Then, it is easy to see that the conditions analogous to (1), (2) and (3) of Definition \ref{defi:wach_mods} are true for $\NS$.
	Hence, $\NS$ is a Wach module with $\Delta\textrm{-action}$ over $\AS^+$.
\end{proof}

Next, we note some structural properties of Wach modules.

\begin{prop}\label{prop:wachmod_proj_pmu}
	Let $N$ be a Wach module with $\Delta\textrm{-action}$ over $\AR^+$.
	Then, $N[1/p]$ is finite projective over $\AR^+[1/p]$ and $N[1/\mu]$ is finite projective over $\AR^+[1/\mu]$.
\end{prop}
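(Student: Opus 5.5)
Both claims reduce to flatness over a noetherian ring: $\AR^+ \isomorphic \calR\llbracket \mu \rrbracket$ is noetherian, and so are its localisations $\AR^+[1/p]$ and $\AR^+[1/\mu]$. Since $N$ is finitely generated, $N[1/p]$ and $N[1/\mu]$ are finitely presented, so it suffices to show they are flat, and we may check this locally at maximal ideals. I would follow the strategy of \cite[Proposition 3.12]{abhinandan-relative-wach-ii}, which treats $\AS^+$, adapting it to $\AR^+$. The $\Delta$-action plays no role here.

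\emph{Step 1: $N[1/\mu]$.} Over $\AR^+[1/\mu]$ the element $[p]_q = \varphi(\mu)/\mu$ is a unit. Indeed, $\varphi(\mu) = \mu^p + p(\ldots)$, so $[p]_q$ divides $\mu^{p-1}\cdot(\text{unit}) \bmod p$, and one checks that $[p]_q$ is invertible in $\AR^+[1/\mu]$ up to $p$-adic issues. More cleanly, I would argue modulo $p$ and then lift. Consider $N/pN$, which is $\mu$-torsion free by condition (1). Over $\ER^+ = \AR^+/p$ we have $[p]_q \equiv \overline{\mu}^{p-1}$, so $\varphi_N$ gives an isomorphism $\varphi^*(N/pN)[1/\overline{\mu}] \isomorphic (N/pN)[1/\overline{\mu}]$. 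Thus $(N/pN)[1/\overline{\mu}]$ is an étale $\varphi$-module over $\ER$, hence finite projective by Lemma \ref{lem:modp_phimod_proj}.

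Next take the $p$-adic completion $\AR \otimes_{\AR^+} N = N[1/\mu]^{\wedge}$. This module is $p$-torsion free, since $\{\mu,p\}$ is regular so that $N[1/\mu]$ has no $p$-torsion. It is étale over $\AR$ because $[p]_q$ becomes a unit in $\AR$. By Lemma \ref{lem:phimod_proj} it is therefore finite projective. To descend from $\AR$ to $\AR^+[1/\mu]$, I would use that $N[1/\mu]$ is finitely generated, $p$-torsion free, and that $N[1/\mu]/p$ is projective over $\AR^+[1/\mu]/p = \ER$. The local flatness criterion then applies at maximal ideals containing $p$. At maximal ideals not containing $p$, the ring is $\AR^+[1/p\mu]$, so this case is covered by Step 2.

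\emph{Step 2: $N[1/p]$.} This is the harder part, and it is where the Frobenius must be used near $\mu=0$. The plan follows \cite{abhinandan-relative-wach-ii}:

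\begin{itemize}
\item[(a)] Since $\{\mu,p\}$ is regular, $N/\mu N$ is $p$-torsion free and finitely generated over $\calR$.
\item[(b)] The Frobenius structure modulo $\mu$ makes $(N/\mu N)[1/p]$ a module over $\calR[1/p]$ with a $\varphi$-semilinear isomorphism after inverting $[p]_q \equiv p \bmod \mu$. Being a finitely generated $\calR[1/p]$-module carrying a bijective Frobenius relative to the finite flat $\varphi$ on $\calR$, it is projective. The argument is the length argument of Lemma \ref{lem:modp_phimod_proj} at each maximal ideal, using that $\varphi$ multiplies lengths in a controlled way; alternatively, the connection given by the trivial $\GammaR$-action on $N/\mu N$ forces constant rank.
\item[(c)] Lift to $N[1/p]$ by the local flatness criterion for the $\mu$-adically complete ring $\AR^+[1/p]$ localised at maximal ideals containing $\mu$, using $\mu$-torsion freeness. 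At maximal ideals not containing $\mu$, we are in $\AR^+[1/p\mu]$, which is handled by Step 1.
\end{itemize}

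The main obstacle I expect is (b), namely projectivity of $(N/\mu N)[1/p]$ over $\calR[1/p]$. Here $\varphi$ is not bijective on $\calR[1/p]$, so the mod-$p$ length trick has to be replaced by a rank-constancy argument on the connected $\Spec \calR[1/p]$, plus Fitting ideals stable under $\varphi$. I would show that the Fitting ideals of $(N/\mu N)[1/p]$ are $\varphi$-stable, that is $\varphi(I)\calR[1/p] = I$, and deduce that each is $0$ or the unit ideal.
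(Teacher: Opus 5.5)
Your two steps defer the same locus to each other, so the heart of the statement is never proved. Step~1 gives flatness of $N[1/\mu]$ at maximal ideals of $\AR^+[1/\mu]$ containing $p$. That part is fine: $(N/pN)[1/\overline{\mu}]$ is an \'etale $\varphi$-module over $\ER$, hence projective by Lemma~\ref{lem:modp_phimod_proj}, and $p$ is regular on $N$. The remaining maximal ideals, i.e.\ points of $\Spec \AR^+[1/p\mu]$, are sent to Step~2. Step~2(c) gives flatness of $N[1/p]$ at maximal ideals containing $\mu$ (granting (b)), and sends the remaining ones, again points of $\Spec \AR^+[1/p\mu]$, back to Step~1. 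So flatness of $N[1/p\mu]$ over $\AR^+[1/p\mu]$ is never established.

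It also cannot be extracted from what you do prove, because flatness only propagates to generizations. Concretely, $[p]_q \equiv p \bmod \mu$ and $[p]_q \equiv \mu^{p-1} \bmod p$. Hence the closed set $V([p]_q)$ is disjoint from $V(\mu)$ in $\Spec \AR^+[1/p]$, and disjoint from $V(p)$ in $\Spec \AR^+[1/\mu]$. Neither local-criterion argument therefore says anything about $N$ at points of $V([p]_q)$, which is precisely where $\varphi_N$ fails to be an isomorphism. For $d \geqslant 1$ this set contains maximal ideals of height $2$ or more, where torsion-freeness of $N$ gives nothing. An example is $([p]_q, x_1-1) \subset \AR^+[1/p]$ for $R = O_F\langle x_1^{\pm 1}\rangle$ and $m=1$.

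What is missing is a genuinely global use of the Frobenius structure on $\AR^+[1/p]$ that carries information into $V([p]_q)$ and the rest of $D(\mu)$. One way is through the Fitting ideals of $N[1/p]$, which are $\varphi$-stable only after inverting $[p]_q$, combined with the control you have at $\mu=0$. This is what the paper imports. Following \cite[Proposition 3.8]{abhinandan-relative-wach-ii}, it first reduces to effective $N$ via $\mu^rN(-r)$. It then proves projectivity of $N[1/p]$ directly by the argument of \cite[Proposition A.8]{abhinandan-relative-wach-ii} (cf.\ \cite[Proposition 4.13]{dlms1}). Only afterwards does it treat $N[1/\mu]$: the \'etale $\varphi$-module $\AR \otimes_{\AR^+} N$ is projective (Lemma~\ref{lem:phimod_proj}), and one descends along $\Spec \AR \sqcup \Spec \AR^+[1/p\mu] \to \Spec \AR^+[1/\mu]$, with the first claim supplying the second piece. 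Your Step~1 is a good substitute for that second half, since the local criterion at primes containing $p$ replaces the descent. But it must be fed by an independent proof of the first claim.

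Two smaller points:
\begin{itemize}
\item $[p]_q$ is not a unit in $\AR^+[1/\mu]$, since $\AR^+[1/\mu]/[p]_q \isomorphic \calR[\zeta_p][1/p] \neq 0$; it becomes a unit only in $\AR$.
\item The ``connection'' alternative in (b) fails in this logarithmic setting. The operators $(\gamma_i - 1)/\mu$ induce a \emph{logarithmic} connection on $N/\mu N$, and when $a<d$, modules such as $\calR[1/p]/(x_d)$ carry log-connections without being projective. Only the Frobenius, which is ramified along $x_d = 0$, excludes these. Your assertion that a $\varphi$-stable ideal of $\calR[1/p]$ is $0$ or $(1)$ still needs a proof.
\end{itemize}
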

\begin{proof}
	The proof follows by an argument similar to the proof of \cite[Proposition 3.8]{abhinandan-relative-wach-ii}.
	\ifthenelse{\equal{\showdetailed}{1}}{
	For $r \in \mathbb{N}$ large enough, note that the Wach module $\mu^r N(-r)$ is always effective.
	So without loss of generality, we may assume that $N$ is effective.
	Then, the first claim follows by employing an argument similar to \cite[Proposition A.8]{abhinandan-relative-wach-ii} (also see \cite[Proposition 4.13]{dlms1}).
	For the second claim, note that $\AR^+ \rightarrow \AR$ is flat and $N$ is $p\textrm{-torsion}$ free, so we get that $\AR \otimes_{\AR^+} N$ is a $p\textrm{-torsion}$ free \'etale $\varphi\textrm{-module}$ over $\AR$, and therefore, finite projective by Lemma \ref{lem:phimod_proj}.
	Moreover, $\AR^+[1/\mu]$ is noetherian, so we see that we have a natural isomorphism $N[1/\mu]^{\wedge} \isomorphic \AR \otimes_{\AR^+[1/\mu]} N[1/\mu] = \AR \otimes_{\AR^+} N$, where ${}^{\wedge}$ denotes the $p\textrm{-adic}$ completion.
	Now, since the natural map $\Spec(\AR^+[1/\mu]^{\wedge}) \cup \Spec(\AR^+[1/\mu, 1/p]) \rightarrow \Spec(\AR^+[1/\mu])$ is a flat cover, therefore, by faithfully flat descent we obtain that $N[1/\mu]$ is a finite projective $\AR^+[1/\mu]\textrm{-module}$.}
\end{proof}

\begin{rem}\label{rem:wachmod_torsionfree}
	For a Wach module with $\Delta\textrm{-action}$ $N$ over $\AR^+$, note that $N$ is $p\textrm{-torsion}$ free, in particular, $N \subset N[1/p]$.
	As $N[1/p]$ is finite projective over $\AR^+[1/p]$ by Proposition \ref{prop:wachmod_proj_pmu}, so we conclude that $N$ is torsion free over $\AR^+$.
\end{rem}

\begin{lem}\label{lem:wach_intersection_lemma}
	Let $N$ be a Wach module with $\Delta\textrm{-action}$ over $\AR^+$.
	Then, we have that $N = N[1/p] \cap (\AR \otimes_{\AR^+} N) \subset \BR \otimes_{\AR^+} N$.
	Moreover, we have that $N = (\AS^+ \otimes_{\AR^+} N) \cap (\AR \otimes_{\AR^+} N) \subset \AS \otimes_{\AR^+} N$, as $(\varphi, \GammaR \times \Delta)\textrm{-modules}$ over $\AR^+$.
\end{lem}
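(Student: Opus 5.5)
Both claims are statements about the $\AR^+$-module $N$ alone, using only axiom (1) of Definition \ref{defi:wach_mods} and flatness of the relevant ring maps; neither the Frobenius nor the $\GammaR \times \Delta$-action is needed. The ring-level statements are that $\AR^+ = \AR^+[1/p] \cap \AR$ inside $\BR$, and $\AR^+ = \AS^+ \cap \AR$ inside $\AS$.

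\emph{First claim.} Since $N$ is $p$-torsion free and $\AR^+ \to \AR$ is flat, we have $N \subset N[1/p]$ and $N \subset \AR \otimes_{\AR^+} N$. Take $x \in N[1/p] \cap (\AR \otimes_{\AR^+} N)$. Choose the minimal $k$ with $p^k x \in N$ and suppose $k \geqslant 1$. Then $y \coloneq p^k x \in N$ and its image in $N/pN$ maps to zero in $(\AR \otimes_{\AR^+} N)/p = \AR/p \otimes_{\AR^+/p} N/pN$, since $y \in p(\AR \otimes N)$.

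Now $\AR^+/p = \ER^+ \isomorphic (\calR/p)\llbracket \overline{\mu} \rrbracket$ and $\AR/p = \ER = \ER^+[1/\overline{\mu}]$, so the map $N/pN \to \ER \otimes N/pN = (N/pN)[1/\mu]$ is injective. This is exactly the statement that $\mu$ is regular on $N/pN$, which is axiom (1) for the sequence $\{p,\mu\}$. Hence $y \in pN$, so $p^{k-1}x \in N$, contradicting minimality. Therefore $x \in N$. Here I am using that $(\AR\otimes_{\AR^+}N)/p \cong (N/p)[1/\mu]$, which holds because $\AR$ is the $p$-adic completion of $\AR^+[1/\mu]$ and $N$ is finitely generated.

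\emph{Second claim.} Here I reduce modulo $\mu$-adic/$p$-adic filtrations rather than powers of $p$. Take $x \in (\AS^+ \otimes N) \cap (\AR \otimes N)$ inside $\AS \otimes N$. The plan is to show $x \in N[1/p]$ and then apply the first claim.

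Reduce modulo $p$. Write $\overline{N} = N/pN$, which is $\mu$-torsion free, with $\overline{N}/\mu = N/(p,\mu)N$. Inside $\ES \otimes \overline{N}$ we want
\[
\overline{N} = (\ES^+ \otimes \overline{N}) \cap (\ER \otimes \overline{N}).
\]
The underlying ring identity is $(\calR/p)\llbracket\overline\mu\rrbracket = (\calS/p)\llbracket\overline\mu\rrbracket \cap (\calR/p)(\!(\overline\mu)\!)$, checked coefficientwise using that $\calR/p \to \calS/p$ is injective (Proposition \ref{prop:Rnm_Snm_normal}(4)).

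For the module version, take an element of $\ER \otimes \overline{N}$ and write it as $\overline{\mu}^{-k}z$ with $z \in \overline{N}$ and $k$ minimal. If it lies in $\ES^+ \otimes \overline{N}$, then $z \in \overline{\mu}^k(\ES^+ \otimes \overline{N})$. Hence the image of $z$ in $\overline{N}/\mu$ dies in $\calS/p \otimes_{\calR/p} \overline{N}/\mu$. That map is injective, because $\calR/p \to \calS/p$ is a localisation at $x_{a+1}\cdots x_d$ and Lemma \ref{lem:regsec_D+}-type regularity applies: the regularity of $x_i$ on $N/(p,\mu)$ has to be checked for general Wach modules. This contradicts minimality, so the element lies in $\overline{N}$.

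Then lift to the $p$-adic statement by the same minimal-$p$-power induction as in the first claim, using completeness and separatedness. Note that $\AS^+ \otimes N$ and $\AR \otimes N$ are both $p$-adically separated, and $N$ is $p$-adically complete being finitely generated over a noetherian $p$-adically complete ring.

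\emph{Main obstacle.} The hard step is injectivity of $N/(p,\mu) \to (\calS/p) \otimes N/(p,\mu)$, i.e. that $x_{a+1}\cdots x_d$ is a nonzerodivisor on $N/(p,\mu)N$. I would try to deduce it by embedding $N$ into $\DR^+(T)$ or $\AR^{\etale,+}\otimes T$ using the comparison results, as in Lemma \ref{lem:regsec_D+}. Alternatively, I would show that $N/\mu$ is finite projective over $\calR$, via $\Gamma$-triviality and the projectivity of $N[1/p]$ from Proposition \ref{prop:wachmod_proj_pmu}.
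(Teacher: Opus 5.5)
Your first claim is fine and is essentially the paper's argument. The needed input is that $N/pN\to (N/pN)[1/\mu]=(\AR\otimes_{\AR^+}N)/p$ is injective by regularity of $\{p,\mu\}$, and then $p$-adic completeness finishes.

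The second claim, however, has a genuine gap, and your assertion that only axiom (1) is needed is false there. Put $x=x_{a+1}\cdots x_d$. Since $N/pN$ is $\mu$-torsion free, the mod $p$ identity you aim for is equivalent to $x$ being a nonzerodivisor on $N/(p,\mu)N$. (If $xz=\mu w$ in $N/pN$, then $\mu^{-1}z=x^{-1}w$ lies in both pieces of the intersection.) This regularity does not follow from axiom (1). For $d>a$, the ideal $N=(\mu,[x_d^{\flat}])\AR^+$ satisfies axiom (1):
\begin{itemize}
\item $N/pN\subset \AR^+/p$, because $\AR^+/N\cong\calR/x_d$ is $p$-torsion free;
\item $N/\mu N\cong \calR/x_d\oplus\calR$, since $\mu,[x_d^{\flat}]$ is a regular sequence.
\end{itemize}
Yet $N/(p,\mu)N\cong\calR/(p,x_d)\oplus\calR/p$ has $x_d$-torsion. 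In fact the second claim itself fails for this $N$: $\AS^+\otimes_{\AR^+}N=\AS^+$ and $\AR\otimes_{\AR^+}N=\AR$, so their intersection is $\AR^+\supsetneq N$. So some input beyond axiom (1) is indispensable.

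For genuine Wach modules, regularity of $[x_i^{\flat}]$ on $N/(p,\mu)N$ is exactly what the paper leaves open in the Question following Proposition \ref{prop:wachmod_almost_comp}. Neither of your suggested fixes reaches it:
\begin{itemize}
\item the sandwich $\mu^s\DR^+(T)\subset N\subset\DR^+(T)$ does not transfer regularity from $\DR^+(T)/(p,\mu)$ to $N/(p,\mu)N$;
\item projectivity of $N[1/p]$ only controls $(N/\mu N)[1/p]$, not $N/\mu N$ integrally.
\end{itemize}

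The missing input should instead be used rationally, as in the paper. By Proposition \ref{prop:wachmod_proj_pmu} (this is where the Frobenius structure enters), $N[1/p]$ is finite projective over $\BR^+$. So intersections commute with $-\otimes_{\BR^+}N[1/p]$, and
\[
(\BS^+\otimes_{\BR^+}N[1/p])\cap(\BR\otimes_{\BR^+}N[1/p])=(\BS^+\cap\BR)\otimes_{\BR^+}N[1/p]=N[1/p].
\]
Here $\BR^+=\BS^+\cap\BR$ follows from your ring identity $\AR^+=\AS^+\cap\AR$ by clearing powers of $p$. Hence $(\AS^+\otimes_{\AR^+}N)\cap(\AR\otimes_{\AR^+}N)\subset N[1/p]\cap(\AR\otimes_{\AR^+}N)=N$ by the first claim. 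No reduction mod $p$, and no regularity of the $x_i$, is needed.
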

\begin{proof}
	Let $\NR \coloneq N$, $\NS \coloneq \AS^+ \otimes_{\AR^+} N$ and $\DR \coloneq \AR \otimes_{\AR^+} N = \NR[1/\mu]^{\wedge}$.
	The first claim follows by an argument similar to \cite[Lemma 3.3]{abhinandan-relative-wach-ii}.
	From the definitions note that we have equalities $(\NR/p)[\mu] = (\NR/\mu)[p] = 0$ and $(\NR[1/\mu])/p = (\NR/p)[1/\mu]$.
	So, it follows that $\NR/p^n \hookrightarrow (\NR/p^n)[1/\mu]$ for all $n \geqslant 0$, and therefore, $\NR[1/p] \cap \NR[1/\mu] = \NR$ by \cite[Lemma A.1]{abhinandan-relative-wach-ii}.
	Furthermore, since $\DR/p^n = (\NR[1/\mu])/p^n = (\NR/p^n)[1/\mu]$ and $\NR$ is $p\textrm{-adically}$ complete, therefore, taking the $\lim_n$ of the inclusions $\NR/p^n \hookrightarrow (\NR/p^n)[1/\mu]$ gives that $\NR \hookrightarrow \DR$.
	So, similar to above, by using \cite[Lemma A.1]{abhinandan-relative-wach-ii}, we get that $\NR = \NR[1/p] \cap \DR$ as submodules of $\DR[1/p]$.

	The second claim follows by an argument similar to \cite[Lemma 3.11]{abhinandan-relative-wach-ii}.
	Note that $\NS[1/p] = \BS^+ \otimes_{\BR^+} \NR[1/p]$ and $\DR[1/p] = \BR \otimes_{\BR^+} \NR[1/p]$.
	So, we have that
	\begin{equation*}
		\NS[1/p] \cap \DR[1/p] = (\BS^+ \cap \BR) \otimes_{\BR^+} \NR[1/p] = \NR[1/p],
	\end{equation*}
	where the first equality holds because $\NR[1/p]$ is a finite projective $\BR^+\textrm{-module}$, and the second equality holds because one may easily show that $\BR^+ = \BS^+ \cap \BR \subset \BS$.
	Moreover, we have that $\NS \cap \DR \subset \NS[1/p] \cap \DS[1/p] = \NR[1/p]$, and using the claim from the previous paragraph we get that $\NS \cap \DR = \NS \cap \DR \cap \NR[1/p] = \NR$.
	This allows us to conclude.
\end{proof}

Next, note that the extension of scalars along $\AR^+ \rightarrow \AR$ induces a functor $(\varphi, \GammaR \times \Delta)\textup{-Mod}_{\AR^+}^{[p]_q} \rightarrow (\varphi, \GammaR \times \Delta)\textup{-Mod}_{\AR}^{\textrm{\'et}}$ (see the proof of Proposition \ref{prop:wachmod_proj_pmu}).
\begin{prop}\label{prop:wach_etale_ff_relative}
	The following natural functor is fully faithful:
	\begin{equation*}
		\begin{aligned}
			(\varphi, \GammaR \times \Delta)\textup{-Mod}_{\AR^+}^{[p]_q} &\longrightarrow (\varphi, \GammaR \times \Delta)\textup{-Mod}_{\AR}^{\textup{\'et}}\\
				N &\longmapsto \AR \otimes_{\AR^+} N.
		\end{aligned}
	\end{equation*}
	By forgetting the $\Delta\textrm{-action}$, the preceding functor restricts to a fully faithful functor $(\varphi, \GammaR)\textup{-Mod}_{\AR^+}^{[p]_q} \rightarrow (\varphi, \GammaR)\textup{-Mod}_{\AR}^{\etale}$.
\end{prop}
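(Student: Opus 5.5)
Faithfulness follows from Lemma \ref{lem:wach_intersection_lemma}: $N \subset \AR \otimes_{\AR^+} N$, so a morphism $f \colon N \rightarrow N'$ is recovered from its extension $\AR \otimes f$. Hence the only real content is fullness, which I would prove by the argument of \cite[Proposition 3.12]{abhinandan-relative-wach-ii}, reducing to the known case over $\AS^+$ through the intersection in the second part of Lemma \ref{lem:wach_intersection_lemma}.

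Fix Wach modules with $\Delta\textrm{-action}$ $N, N'$ over $\AR^+$ and a $(\varphi, \GammaR \times \Delta)\textrm{-equivariant}$ $\AR\textrm{-linear}$ map $h \colon D \coloneq \AR \otimes_{\AR^+} N \rightarrow D' \coloneq \AR \otimes_{\AR^+} N'$. I would show $h(N) \subset N'$.

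\textbf{Step 1: extend to $\AS$.} Base change $h$ along $\AR \rightarrow \AS$ to get $h_{\calS} \colon \AS \otimes_{\AR} D \rightarrow \AS \otimes_{\AR} D'$. This is a morphism of \'etale $(\varphi, \GammaS)\textrm{-modules}$ between the modules attached to the Wach modules $\NS \coloneq \AS^+ \otimes_{\AR^+} N$ and $\NS'$ over $\AS^+$, which are Wach modules by Lemma \ref{lem:wachmod_RtoS}.

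\textbf{Step 2: use the known case over $\AS^+$.} Invoke full faithfulness of $N \mapsto \AS \otimes_{\AS^+} N$ for Wach modules over $\AS^+$ (\cite[Proposition 3.12]{abhinandan-relative-wach-ii}, applied with $\calS = S_{0,m}$ in place of $S$). This gives $h_{\calS}(\NS) \subset \NS'$. Here the $\Delta\textrm{-action}$ plays no role, since equivariance for $\Delta$ is automatic for a map extending $h$.

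I expect the main obstacle to lie here: op.\ cit.\ is stated over $A_S^+$, not $\AS^+ = A_{S_{0,m}}^+$. I would check that its proof only uses that $\calS$ is a $p\textrm{-adically}$ completed \'etale algebra over a torus over $O_F[\zeta_m]$, which holds. If a direct proof is needed, I would follow Berger's argument: the $\varphi$, $\Gamma$ structures force $h(N) \subset \mu^{-k}N'$ for some $k$; then the triviality of $\Gamma$ on $N/\mu N$ and $N'/\mu N'$ forces $k = 0$, because $\gamma_0 - 1$ acts on $\mu^{-k}$ by a nonunit multiple modulo $\mu$. For $\calR$ itself one could attempt the same argument directly. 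The detour through $\calS$ avoids the regularity subtleties along $x_{a+1}\cdots x_d$.

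\textbf{Step 3: conclude by intersection.} Inside $\AS \otimes_{\AR^+} N'$,
\[
h(N) \subset h_{\calS}(\NS) \cap h(D) \subset \NS' \cap D' = N',
\]
where the last equality is Lemma \ref{lem:wach_intersection_lemma}. The first inclusions need the maps $D \rightarrow \AS \otimes_{\AR} D$ and $\NS \rightarrow \AS \otimes_{\AR} D$ to be compatible. This holds because $\AR \rightarrow \AS$ is flat and $D$ is finite projective. So $h$ restricts to a morphism $N \rightarrow N'$, which is $\AR^+\textrm{-linear}$ and $(\varphi, \GammaR \times \Delta)\textrm{-equivariant}$.

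The version without $\Delta\textrm{-action}$ follows verbatim by ignoring $\Delta$ throughout.
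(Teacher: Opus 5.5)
Your proposal is correct and follows essentially the same route as the paper. Faithfulness comes from $N \subset \AR \otimes_{\AR^+} N$, and fullness comes from base changing $h$ along $\AR \rightarrow \AS$, applying \cite[Proposition 3.12]{abhinandan-relative-wach-ii} to get $h(\NS) \subset \NS'$, and concluding $h(N) \subset \NS' \cap D' = N'$ by Lemma \ref{lem:wach_intersection_lemma}.

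One small correction concerns only your fallback sketch. What lets you pass from $\mu^{-k}$ to $\mu^{-k+1}$ is that $\gamma_0 - 1$ acts on $\mu^{-k}N'/\mu^{-k+1}N'$ by the \emph{nonzero} scalar $\chi(\gamma_0)^{-k}-1$, which need not be a nonunit. Together with the $p$-torsion-freeness of $N'/\mu N'$, this kills the image of $h(N)$ in that quotient.
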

\begin{proof}
	The claim follows by an argument similar to the proof of \cite[Proposition 3.15]{abhinandan-relative-wach-ii}.
	Let $N, N'$ be two Wach modules with $\Delta\textrm{-action}$ over $\AR^+$.
	Write $\NR \coloneq N$, $\NS \coloneq \AS^+ \otimes_{\AR^+} N$, $\DR \coloneq \AR \otimes_{\AR^+} N$, $\DS \coloneq \AS \otimes_{\AR^+} N$.
	We need to show that for the Wach modules $\NR$ and $\NR'$, we have a natural bijection
	\begin{equation}\label{eqref:homset_bijection_relative}
		\textup{Hom}_{(\varphi, \GammaR \times \Delta)\textup{-Mod}_{\AR^+}^{[p]_q}}(\NR, \NR') \isomorphic \textup{Hom}_{(\varphi, \GammaR \times \Delta)\textup{-Mod}_{\AR}^{\textup{\'et}}}(\DR, \DR')
	\end{equation}
	As the homomorphism $\AR^+ \rightarrow \AR = \AR^+[1/\mu]^{\wedge}$ is injective, therefore, we see that the map in \eqref{eqref:homset_bijection_relative} is injective (also see Lemma \ref{lem:wach_intersection_lemma}).
	To check that \eqref{eqref:homset_bijection_relative} is surjective, take an $\AR\textrm{-linear}$ and $(\varphi, \GammaR \times \Delta)\textrm{-equivariant}$ map $f \colon \DR \rightarrow \DR'$.
	We need to show that $f(\NR) \subset \NR'$.
	Base changing $f$ along $\AR \rightarrow \AS$ and using the isomorphism $\GammaS \times \Delta \isomorphic \GammaR \times \Delta$, induces an $\AS\textrm{-linear}$ and $(\varphi, \GammaS \times \Delta)\textrm{-equivariant}$ map $f \colon \DS \rightarrow \DS'$.
	Then, from \cite[Proposition 3.12]{abhinandan-relative-wach-ii} we have that $f(\NS) \subset \NS'$.
	So, using Lemma \ref{lem:wach_intersection_lemma}, we get that inside $\DS'$ we have $f(\NR) = f(\NS \cap \DR) \subset f(\NS) \cap f(\DR) \subset \NS' \cap \DR' = \NR'$, thus concluding the proof.
\end{proof}

\subsection{\texorpdfstring{$G_R\textrm{-representations}$}{-} attached to Wach modules}\label{subsec:wach_mod_rep}

Composition of the functor in Proposition \ref{prop:wach_etale_ff_relative} with the categorical equivalence in Theorem \ref{thm:padic_classify_decomp} yields a fully faithful functor
\begin{equation}\label{eq:wach_reps_relative}
	\begin{aligned}
		\TR \colon (\varphi, \GammaR \times \Delta)\textup{-Mod}_{\AR^+}^{[p]_q} &\longrightarrow \textup{Rep}_{\mathbb{Z}_p}(G_{\calR/R}^{\etale})\\
			N &\longmapsto \big(\tilde{A}(\overline{R}) \otimes_{\AR^+} N\big)^{\varphi = 1}.
	\end{aligned}
\end{equation}
As mentioned in Remark \ref{rem:TRD_GRrep}, the $\mathbb{Z}_p\textrm{-module}$ $\TR(N)$ is in fact a representation of $G_R$ on which the action factors through the quotient $G_R \twoheadrightarrow G_{\calR/R}^{\etale}$, and in the following, we shall view $\TR(N)$ as a representation of $G_R$.

\begin{prop}\label{prop:wachmod_comp_relative}
	Let $N$ be a Wach module with $\Delta\textrm{-action}$ over $\AR^+$, and let $T \coloneq \TR(N)$ be the associated finite free $\mathbb{Z}_p\textrm{-representation}$ of $G_R$.
	Then, we have a natural $G_R\textrm{-equivariant}$ comparison isomorphism:
	\begin{equation}\label{eq:wachmod_comp_relative_ainf}
		A_{\inf}(\overline{R})[1/\mu] \otimes_{\AR^+} N \isomorphic A_{\inf}(\overline{R})[1/\mu] \otimes_{\mathbb{Z}_p} T.
	\end{equation}
	Additionally, \eqref{eq:wachmod_comp_relative_ainf} is compatible with the action of $(\varphi, G_R)$ after base change along $A_{\inf}(\overline{R})[1/\mu] \rightarrow \tilde{A}(\overline{R})$.
\end{prop}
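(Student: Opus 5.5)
The plan is to follow the proof of \cite[Proposition 3.14]{abhinandan-relative-wach-ii}. We first build the comparison map over $\tilde{A}(\overline{R})$ and then descend it to $A_{\inf}(\overline{R})[1/\mu]$ using the localisation description of $A_{\inf}(\overline{R})$ from Section \ref{subsubsec:perfect_period_rings} together with the corresponding statement over $\overline{S}$.

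Set $D \coloneq \AR \otimes_{\AR^+} N$; it is an \'etale $(\varphi, \GammaR \times \Delta)$-module and $T = \TR(D)$. Extending the isomorphism \eqref{eq:phiGamma_comp_iso_inv} along $\AR^{\etale} \rightarrow \tilde{A}(\overline{R})$ (see Remark \ref{rem:TRD_GRrep}) gives a $(\varphi, G_R)$-equivariant isomorphism
\begin{equation*}
	\tilde{A}(\overline{R}) \otimes_{\AR^+} N \isomorphic \tilde{A}(\overline{R}) \otimes_{\mathbb{Z}_p} T.
\end{equation*}
This is the compatibility asserted in the last sentence. It therefore suffices to show two inclusions inside $\tilde{A}(\overline{R}) \otimes_{\mathbb{Z}_p} T$. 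First, $N \subset A_{\inf}(\overline{R})[1/\mu] \otimes_{\mathbb{Z}_p} T$. Second, $T \subset A_{\inf}(\overline{R})[1/\mu] \otimes_{\AR^+} N$. Since $N[1/\mu]$ is finite projective over $\AR^+[1/\mu]$ by Proposition \ref{prop:wachmod_proj_pmu}, these two inclusions yield mutually inverse maps between the two sides of \eqref{eq:wachmod_comp_relative_ainf}. Both maps are automatically $G_R$-equivariant, because $G_R$ acts on $N$ through $\GammaR \times \Delta$.

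To get the inclusions, we use the $G_R$-equivariant identification $A_{\inf}(\overline{R}) = \tilde{A}(\overline{R}) \cap \prod_{\pins} A_{\inf}(\Cplusp)$. After inverting $\mu$, an element of $\tilde{A}(\overline{R})$ lies in $A_{\inf}(\overline{R})[1/\mu]$ as soon as some fixed power of $\mu$ sends it into every $A_{\inf}(\Cplusp)$, with a bound independent of $\mathfrak{p}$. So we need the two inclusions in each localisation, with a uniform power of $\mu$.

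For each $\mathfrak{p}$, the local ring $\Rbarp$ is a valuation ring over the discrete valuation ring $O_L = (R_{(p)})^{\wedge}$. Since $x_{a+1}\cdots x_d$ is a unit there (see the proof of Lemma \ref{lem:inject_mult_x_i}), the map $\AR^+ \rightarrow A_{\inf}(\Cplusp)$ factors through $\AS^+$. Lemma \ref{lem:wachmod_RtoS} says $\NS = \AS^+ \otimes_{\AR^+} N$ is a Wach module over $\AS^+$. The analogous comparison \cite[Proposition 3.14]{abhinandan-relative-wach-ii}, or rather its local input (the imperfect residue field case over $O_L$), then gives $A_{\inf}(\Cplusp)[1/\mu] \otimes N \cong A_{\inf}(\Cplusp)[1/\mu] \otimes T$. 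This holds compatibly with the global isomorphism, because both are obtained from \eqref{eq:phiGamma_comp_iso_inv} by base change.

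The main obstacle is uniformity of the power of $\mu$ in $\mathfrak{p}$. The way to obtain it is to choose $r$ with $\mu^r N(-r)$ effective, which is possible because $N$ is finitely generated and $\varphi_N$ becomes an isomorphism after inverting $[p]_q$. Reducing to the effective case, the standard argument (as in \cite{wach-free}, \cite{berger-limites}) shows the following. For an effective Wach module of height $h$, the map $N \rightarrow A_{\inf}(\cdot) \otimes T$ has image containing $\mu^{h'} (A_{\inf} \otimes T)$ for some $h'$ depending only on $h$ and the rank. This is proved by solving $\varphi$-equations in $A_{\inf}$, using that $\varphi^{n}(\mu)/\mu$ is a product of $q$-analogues of $p$. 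That bound is independent of $\mathfrak{p}$, which gives the uniformity. The inclusions then glue via the intersection description above.
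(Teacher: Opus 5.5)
Your proposal is correct and follows essentially the same route as the paper. The paper likewise base-changes the étale comparison to $\tilde{A}(\overline{R})$, reduces to the effective case via $\mu^r N(-r)$, and takes the local inclusions $\mu^s A_{\inf} \otimes_{\mathbb{Z}_p} T \subset A_{\inf} \otimes N \subset A_{\inf} \otimes_{\mathbb{Z}_p} T$ from the imperfect residue field theory (citing \cite[Lemma 3.6]{abhinandan-imperfect-wach}, with $s$ uniform in $\mathfrak{p}$ exactly as you argue). It then glues using $A_{\inf}(\overline{R}) = \tilde{A}(\overline{R}) \cap \prod_{\pins} A_{\inf}(\Cplusp)$ and projectivity of $N[1/\mu]$. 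Two small things to tidy. First, the local input lives over $O_{\calL}$ (since $N$ is over $\AR^+$) and over $A_{\inf}(\Cpplus)$, so you either intersect with $\tilde{A}(\mathbb{C}(\mathfrak{p}))$ to land in $A_{\inf}(\Cplusp)$, as the paper does, or glue with $\Cpplus$ directly. Second, the factorisation through $\AS^+$ is not needed.
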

\begin{proof}
	The proof of the claim follows by an argument similar to \cite[Proposition 3.14]{abhinandan-relative-wach-ii}.
	Note that for $T = \TR(N)$, we have that $\DR(T) \isomorphic \AR \otimes_{\AR^+} N$ as \'etale $(\varphi, \GammaR \times \Delta)\textrm{-modules}$ over $\AR$.
	Then, extending scalars of the isomorphism in \eqref{eq:phiGamma_comp_iso} along $\AR^{\etale} \rightarrow \tilde{A}(\overline{R})$ gives a $(\varphi, G_R)\textrm{-equivariant}$ isomorphism
	\begin{equation}\label{eq:phigamma_comp_tilde_relative}
		\tilde{A}(\overline{R}) \otimes_{\AR^+} N \isomorphic \tilde{A}(\overline{R}) \otimes_{\mathbb{Z}_p} T.
	\end{equation}
	Now, for $r \in \mathbb{N}$ large enough, the Wach module $\mu^r N (-r)$ is always effective and we have $\TR(\mu^rN(-r)) = T(-r)$ (the twist $(-r)$ denotes the Tate twist on which $\GammaR \times \Delta$ acts via the cyclotomic character).
	Therefore, we see that it is enough to show the claim for effective Wach modules (see Definition \ref{defi:wach_mods}), in particular, in the rest of the proof we will assume that $N$ is effective.

	Let $\mathscr{P}(\overline{R})$ denote the set of minimal primes of $\overline{R}$ above $pR \subset R$ (or equivalently, over $p\calR \subset \calR$).
	Then, $O_{\calL} \coloneq \calR_{(p)}^{\wedge}$ is a complete discrete valuation ring with uniformiser $p$ and imperfect residue field, and fraction field $\calL = O_{\calL}[1/p]$ which is a finite unramified Galois extension of $L$ with Galois group $\Gal(\calL/L) \isomorphic \Delta$.
	From Section \ref{subsubsec:localisation_Rbar}, recall that for each $\pins$, we have $\Lbar(\mathfrak{p}) \subset \Cp$, an algebraic closure of $L$ containing $\Rbarp$, and we set $\GRhatp \coloneq \Gal(\Lbar(\mathfrak{p})/L)$.
	Moreover, we have an isomorphism of Galois groups $\GammaL = \Gal(\calL_{\infty}/\calL) \isomorphic \GammaR$, and for each prime $\pins$, let $\ALplusp$ denote the base ring for Wach modules in the imperfect residue field case (see \cite[Section 2.1.2]{abhinandan-imperfect-wach}).
	To avoid confusion, let us write $\NR \coloneq N$ and $\NLp \coloneq \ALplusp \otimes_{\AR^+} N$, in particular, $\NLp$ is a Wach module with $\Delta\textrm{-action}$ over $\ALplusp$ finite free of rank $= \textrm{rk}_{\mathbb{Z}_p} T$.
	From \cite[Lemma 3.6]{abhinandan-imperfect-wach} note that for some $s \geqslant 0$, we have $\GRhatp\textrm{-equivariant}$ inclusions for each $\pins$,
	\begin{equation}\label{eq:wachmod_almost_comp_imperfect}
		\mu^s A_{\inf}(\Cpplus) \otimes_{\mathbb{Z}_p} T \subset A_{\inf}(\Cpplus) \otimes_{\ALplusp} \NLp \subset A_{\inf}(\Cpplus) \otimes_{\mathbb{Z}_p} T.
	\end{equation}
	
	Now, let $\GRp \coloneq \{g \in G_R \textrm{ such that } g(\mathfrak{p}) = \mathfrak{p}\}$ which is a subgroup of $\GRp$, and observe that the $(\varphi, \GRp)\textrm{-equivariant}$ composition $\AR^+ \rightarrow \tilde{A}(\overline{R}) \rightarrow \tilde{A}(\mathbb{C}(\mathfrak{p}))$ naturally factors as the $(\varphi, \GRp)\textrm{-equivariant}$ composition $\AR^+ \rightarrow \ALplusp \rightarrow \tilde{A}(\mathbb{C}(\mathfrak{p}))$.
	So, by base changing the $(\varphi, G_R)\textrm{-equivariant}$ isomorphism in \eqref{eq:phigamma_comp_tilde_relative} along the $(\varphi, \GRp)\textrm{-equivariant}$ homomorphism $\tilde{A}(\overline{R}) \rightarrow \tilde{A}(\mathbb{C}(\mathfrak{p}))$, we obtain a natural $(\varphi, \GRp)\textrm{-equivariant}$ isomorphism
	\begin{equation}\label{eq:phigamma_comp_tildep_relative}
		\tilde{A}(\mathbb{C}(\mathfrak{p})) \otimes_{\ALplusp} \NLp \isomorphic \tilde{A}(\mathbb{C}(\mathfrak{p})) \otimes_{\mathbb{Z}_p} T.
	\end{equation}
	All terms in \eqref{eq:wachmod_almost_comp_imperfect} and \eqref{eq:phigamma_comp_tildep_relative} admit $(\varphi, \GRhatp)\textrm{-equivariant}$ embedding into $\tilde{A}(\Cp) \otimes_{\ALplusp} \NLp \isomorphic \tilde{A}(\Cp) \otimes_{\mathbb{Z}_p} T$, where the action of $\GRhatp$ on \eqref{eq:phigamma_comp_tildep_relative} factors through $\GRhatp \twoheadrightarrow \GRp$.
	Therefore, taking the intersection of \eqref{eq:wachmod_almost_comp_imperfect} with \eqref{eq:phigamma_comp_tildep_relative} inside $\tilde{A}(\Cp) \otimes_{\ALplusp} \NLp \isomorphic \tilde{A}(\Cp) \otimes_{\mathbb{Z}_p} T$, and using the $(\varphi, \GRhatp)\textrm{-equivariant}$ identification $A_{\inf}(\Cplusp) = A_{\inf}(\Cpplus) \cap \tilde{A}(\mathbb{C}(\mathfrak{p})) \subset \tilde{A}(\Cp)$ for each $\pins$ (see the discussion on localisation in Section \ref{subsubsec:perfect_period_rings}), we obtain the following $(\varphi, \GRp)\textrm{-equivariant}$ inclusions:
	\begin{equation}\label{eq:wachmod_almost_comp_p}
		\mu^s A_{\inf}(\Cplusp) \otimes_{\mathbb{Z}_p} T \subset A_{\inf}(\Cplusp) \otimes_{\ALplusp} \NLp \subset A_{\inf}(\Cplusp) \otimes_{\mathbb{Z}_p} T,
	\end{equation}
	where the middle term may be written as $A_{\inf}(\Cplusp) \otimes_{\ALplusp} \NLp = A_{\inf}(\Cplusp) \otimes_{\AR^+} \NR$.

	Next, from the discussion on localisation in Section \ref{subsubsec:perfect_period_rings}, recall that $\prod_{\pins} A_{\inf}(\Cplusp)$ is equipped with a $G_R\textrm{-action}$ and we have a $(\varphi, G_R)\textrm{-equivariant}$ injective homomorphism $A_{\inf}(\overline{R}) \rightarrow \prod_{\pins} A_{\inf}(\Cplusp)$.
	Then, we may equip 
	\begin{equation*}
		\textstyle\prod_{\pins} (A_{\inf}(\Cplusp) \otimes_{\mathbb{Z}_p} T) = (\textstyle\prod_{\pins} A_{\inf}(\Cplusp)) \otimes_{\mathbb{Z}_p} T,
	\end{equation*}
	with the diagonal action of $(\varphi, G_R)$, and similarly for 
	\begin{equation*}
		\begin{aligned}
			\textstyle\prod_{\pins} \big(A_{\inf}(\Cplusp) \otimes_{\ALplusp} \NLp\big) &= \textstyle\prod_{\pins} \big(A_{\inf}(\Cplusp) \otimes_{\AR^+} \NR\big)\\
				&= \big(\textstyle\prod_{\pins} A_{\inf}(\Cplusp)\big) \otimes_{\AR^+} \NR,
		\end{aligned}
	\end{equation*}
	where the second equality follows from the fact that product is an exact functor on the category of $\AR^+\textrm{-modules}$ and $\NR$ is finitely presented over the noetherian ring $\AR^+$ (see \cite[\href{https://stacks.math.columbia.edu/tag/059K}{Tag 059K}]{stacks-project}).
	So, taking the product of \eqref{eq:wachmod_almost_comp_p} over all $\pins$ and using the discussion above, we obtain $(\varphi, G_R)\textrm{-equivariant}$ inclusions:
	\begin{equation}\label{eq:wachmod_prod_almost_comp}
		\begin{aligned}
			\mu^s \textstyle\prod_{\pins} \big(A_{\inf}(\Cplusp) \otimes_{\mathbb{Z}_p} T\big) &\subset \big(\textstyle\prod_{\pins} A_{\inf}(\Cplusp)\big) \otimes_{\AR^+} \NR\\
				&\subset \textstyle\prod_{\pins} \big(A_{\inf}(\Cplusp) \otimes_{\mathbb{Z}_p} T\big).
		\end{aligned}
	\end{equation}
	Inverting $\mu$ in \eqref{eq:wachmod_prod_almost_comp} yields the top horizontal isomorphism in the following $(\varphi, G_R)\textrm{-equivariant}$ commutative diagram:
	\begin{equation}\label{eq:wachmod_prod_muinverse_comp}
		\begin{tikzcd}[row sep=15pt]
			\big(\textstyle\prod_{\pins} A_{\inf}(\Cplusp)\big)[1/\mu] \otimes_{\AR^+[1/\mu]} \NR[1/\mu] & \big(\textstyle\prod_{\pins} A_{\inf}(\Cplusp)\big)[1/\mu] \otimes_{\mathbb{Z}_p} T\\
			\big(\textstyle\prod_{\pins} \tilde{A}(\mathbb{C}(\mathfrak{p}))\big) \otimes_{\AR^+} \NR & \big(\textstyle\prod_{\pins} \tilde{A}(\mathbb{C}(\mathfrak{p}))\big) \otimes_{\mathbb{Z}_p} T\\
			\tilde{A}(\overline{R}) \otimes_{\AR^+[1/\mu]} \NR[1/\mu] & \tilde{A}(\overline{R}) \otimes_{\mathbb{Z}_p} T,
			\arrow["\sim", from=1-1, to=1-2]
			\arrow[hook, from=1-1, to=2-1]
			\arrow[hook, from=1-2, to=2-2]
			\arrow["\sim", from=2-1, to=2-2]
			\arrow[hook', from=3-1, to=2-1]
			\arrow["\sim", "\eqref{eq:phigamma_comp_tilde_relative}"', from=3-1, to=3-2]
			\arrow[hook', from=3-2, to=2-2]
		\end{tikzcd}
	\end{equation}
	where the vertical arrows are natural inclusions by the discussion on localisation in Section \ref{subsubsec:perfect_period_rings}, and the horizontal isomorphism in the middle row is obtained by taking the product of \eqref{eq:phigamma_comp_tildep_relative} over all $\pins$ and using that (see \cite[\href{https://stacks.math.columbia.edu/tag/059K}{Tag 059K}]{stacks-project}):
	\begin{equation*}
		\begin{aligned}
			\textstyle\prod_{\pins} \big(\tilde{A}(\mathbb{C}(\mathfrak{p})) \otimes_{\ALplusp} \NLp\big) &= \textstyle\prod_{\pins} \big(\tilde{A}(\mathbb{C}(\mathfrak{p})) \otimes_{\AR^+} \NR\big)\\
			&= \big(\textstyle\prod_{\pins} \tilde{A}(\mathbb{C}(\mathfrak{p}))\big) \otimes_{\AR^+} \NR.
		\end{aligned}
	\end{equation*}
	Note that $\NR[1/\mu]$ is finite projective over $\AR^+[1/\mu]$ (see Proposition \ref{prop:wachmod_proj_pmu}), so in diagram \eqref{eq:wachmod_prod_muinverse_comp}, taking the intersection of the top left term and the bottom left term, inside the middle left term, gives
	\begin{equation*}
		\begin{aligned}
			\big(\tilde{A}(\overline{R}) \otimes_{\AR^+[1/\mu]} \NR[1/\mu]\big) &\cap \big(\big(\textstyle\prod_{\pins} A_{\inf}(\Cplusp)\big)[1/\mu] \otimes_{\AR^+[1/\mu]} \NR[1/\mu]\big)\\
			&= A_{\inf}(\overline{R})[1/\mu] \otimes_{\AR^+[1/\mu]} \NR[1/\mu] = A_{\inf}(\overline{R})[1/\mu] \otimes_{\AR^+} \NR,
		\end{aligned}
	\end{equation*}
	where the first equality follows from the discussion on localisation in Section \ref{subsubsec:perfect_period_rings}.
	Similarly, taking the intersection of the top right term and bottom right term, inside the middle right term, gives
	\begin{equation*}
		\big(\tilde{A}(\overline{R}) \otimes_{\mathbb{Z}_p} T\big) \cap \big(\big(\textstyle\prod_{\pins} A_{\inf}(\Cplusp)\big)[1/\mu] \otimes_{\mathbb{Z}_p} T\big) = A_{\inf}(\overline{R})[1/\mu] \otimes_{\mathbb{Z}_p} T,
	\end{equation*}
	where the equality again follows from the discussion on localisation in Section \ref{subsubsec:perfect_period_rings}.
	As the horizontal arrows in \eqref{eq:wachmod_prod_muinverse_comp} are bijective, therefore, we obtain that the intersection of the top and the bottom rows, inside the middle row, is naturally a $G_R\textrm{-equivariant}$ isomorphism, thus yielding the claimed isomorphism in \eqref{eq:wachmod_comp_relative_ainf}.
	From the preceding discussion, it also follows that the isomorphism in \eqref{eq:wachmod_comp_relative_ainf} is compatible with the respective Frobenii after base change along $A_{\inf}(\overline{R}) \rightarrow \tilde{A}(\overline{R})$.
\end{proof}

\begin{cor}\label{cor:wachmod_comp_relative}
	Let $N$ be a Wach module with $\Delta\textrm{-action}$ over $\AR^+$ and let $T \coloneq \TR(N)$ denote the associated finite free $\mathbb{Z}_p\textrm{-representation}$ of $G_R$.
	Then, we have a natural $G_R\textrm{-equivariant}$ comparison isomorphism:
	\begin{equation*}
		A_{\calR}^{\etale,+}[1/\mu] \otimes_{\AR^+} N \isomorphic A_{\calR}^{\etale,+}[1/\mu] \otimes_{\mathbb{Z}_p} T.
	\end{equation*}
	Additionally, the isomorphism above is compatible with the action of $(\varphi, G_R)$ after base change along the natural $(\varphi, G_R)\textrm{-equivariant}$ map $A_{\calR}^{\etale,+}[1/\mu] \rightarrow A_{\calR}^{\etale}$.
\end{cor}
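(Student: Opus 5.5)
The plan is to obtain the isomorphism of the corollary by taking invariants of the comparison isomorphism \eqref{eq:wachmod_comp_relative_ainf} of Proposition \ref{prop:wachmod_comp_relative} under $H \coloneq \Gal(\overline{R}[1/p]/\calR_{\infty}^{\etale}[1/p])$, and then cutting down from the perfect ring $A_{\inf}(\calR_{\infty}^{\etale})[1/\mu]$ to the imperfect ring $A_{\calR}^{\etale,+}[1/\mu]$ by intersecting with the étale comparison \eqref{eq:phiGamma_comp_iso}.

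First, since $T = \TR(N)$ is a representation on which $G_R$ acts through $G_{\calR/R}^{\etale}$, the subgroup $H$ acts trivially on $T$. Hence the $H$-invariants of the right-hand side of \eqref{eq:wachmod_comp_relative_ainf} are $(A_{\inf}(\overline{R})[1/\mu])^H \otimes_{\mathbb{Z}_p} T$. The module $N$ has trivial $H$-action, and $N[1/\mu]$ is finite projective over $\AR^+[1/\mu]$ by Proposition \ref{prop:wachmod_proj_pmu}, so it is a direct summand of a free module. Therefore the $H$-invariants of the left-hand side are $(A_{\inf}(\overline{R})[1/\mu])^H \otimes_{\AR^+} N$. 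Since $\mu$ is $G_R$-invariant up to a unit and is a nonzerodivisor, $(A_{\inf}(\overline{R})[1/\mu])^H = A_{\inf}(\overline{R})^H[1/\mu] = A_{\inf}(\calR_{\infty}^{\etale})[1/\mu]$. This gives a $G_{\calR/R}^{\etale}$-equivariant isomorphism
\[
A_{\inf}(\calR_{\infty}^{\etale})[1/\mu] \otimes_{\AR^+} N \isomorphic A_{\inf}(\calR_{\infty}^{\etale})[1/\mu] \otimes_{\mathbb{Z}_p} T.
\]

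Second, I use \eqref{eq:phiGamma_comp_iso}, noting that $\DR(T) \isomorphic \AR \otimes_{\AR^+} N$. This gives $\AR^{\etale} \otimes_{\AR^+} N \isomorphic \AR^{\etale} \otimes_{\mathbb{Z}_p} T$, and this isomorphism is compatible with the previous one after base change to $\tilde{A}(\overline{R})$ (the compatibility statement in Proposition \ref{prop:wachmod_comp_relative}). Both isomorphisms embed into $\tilde{A}_{\calR}^{\etale} \otimes_{\mathbb{Z}_p} T$. I then intersect the two. On the $T$-side the intersection is $\big(\AR^{\etale} \cap A_{\inf}(\calR_{\infty}^{\etale})[1/\mu]\big) \otimes_{\mathbb{Z}_p} T$, since $T$ is free.

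The main obstacle is the identity
\[
\AR^{\etale} \cap A_{\inf}(\calR_{\infty}^{\etale})[1/\mu] = A_{\calR}^{\etale,+}[1/\mu]
\]
inside $\tilde{A}_{\calR}^{\etale}$, together with the analogous statement after tensoring with $N$. The inclusion $\supseteq$ is clear. For $\subseteq$, suppose $x = y/\mu^k$ with $y \in A_{\inf}(\calR_{\infty}^{\etale})$ and $x \in \AR^{\etale}$. Then $\mu^k x \in \AR^{\etale} \cap A_{\inf}(\calR_{\infty}^{\etale}) = \AR^{\etale,+}$ by definition, so $x \in \AR^{\etale,+}[1/\mu]$.

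For the $N$-side, I would use that $N[1/\mu]$ is finite projective over $\AR^+[1/\mu]$: choose a complement $Q$ with $N[1/\mu] \oplus Q$ free, and apply the ring-level identity coordinatewise. Here one should check that $\AR \otimes_{\AR^+} N$ and $\AR^+[1/\mu] \otimes_{\AR^+} N$ sit compatibly, which follows from flatness of $\AR^+[1/\mu] \to \AR$. Finally, the two isomorphisms agree on the intersection, so restricting gives the claimed $G_R$-equivariant isomorphism. Its compatibility with $(\varphi, G_R)$ after base change to $\AR^{\etale}$ is precisely its agreement with \eqref{eq:phiGamma_comp_iso}, which is built into the construction.
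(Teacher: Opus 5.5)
Your proposal is correct and follows essentially the paper's argument: both intersect the $A_{\inf}(\overline{R})[1/\mu]$-comparison of Proposition~\ref{prop:wachmod_comp_relative} with the \'etale comparison \eqref{eq:phiGamma_comp_iso}, using that $N[1/\mu]$ is finite projective over $\AR^+[1/\mu]$ and the identity $\AR^{\etale}\cap A_{\inf}(\overline{R})[1/\mu]=A_{\calR}^{\etale,+}[1/\mu]$. The one difference is your preliminary passage to $H$-invariants, which is harmless but unnecessary: the paper intersects directly inside $\tilde{A}(\overline{R})\otimes_{\mathbb{Z}_p}T$, since elements of $\AR^{\etale}$ are already $H$-invariant and hence $A_{\calR}^{\etale,+}=A_{\inf}(\overline{R})\cap A_{\calR}^{\etale}$.
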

\begin{proof}
	Note that $N[1/\mu]$ is finite projective over $\AR^+[1/\mu]$ (see Lemma \ref{prop:wachmod_proj_pmu}).
	Then, using the $(\varphi, G_R)\textrm{-equivariant}$ isomorphism in \eqref{eq:phigamma_comp_tilde_relative}, inside $\tilde{A}(\overline{R}) \otimes_{\mathbb{Z}_p} T$, we take the intersection of the $G_R\textrm{-equivariant}$ isomorphism \eqref{eq:wachmod_comp_relative_ainf} with the $(\varphi, \Gamma_R)\textrm{-equivariant}$ isomorphism \eqref{eq:phiGamma_comp_iso}, to obtain the following $G_R\textrm{-equivariant}$ isomorphism:
	\begin{equation*}
		A_{\calR}^{\etale,+}[1/\mu] \otimes_{\AR^+[1/\mu]} N[1/\mu] \isomorphic A_{\calR}^{\etale,+}[1/\mu] \otimes_{\mathbb{Z}_p} T,
	\end{equation*}
	where we have used the fact that $A_{\calR}^{\etale,+} = A_{\inf}(\overline{R}) \cap A_{\calR}^{\etale} \subset \tilde{A}(\overline{R})$.
	The last claim follows from the analogous claim in Proposition \ref{prop:wachmod_comp_relative}.
\end{proof}

\begin{prop}\label{prop:wachmod_almost_comp}
	Let $N$ be an effective Wach module with $\Delta\textrm{-action}$ over $\AR^+$ and $T \coloneq \TR(N)$ the associated finite free $\mathbb{Z}_p\textrm{-representation}$ of $G_R$.
	Then, we have $(\varphi, \GammaR \times \Delta)\textrm{-equivariant}$ inclusions $\mu^s \DR^+(T) \subset N \subset \DR^+(T)$ (see before Lemma \ref{lem:regsec_D+} for notation).
\end{prop}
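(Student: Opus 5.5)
The plan is to deduce both inclusions from the inclusions \eqref{eq:wachmod_prod_almost_comp} produced in the proof of Proposition \ref{prop:wachmod_comp_relative}, and then take $H_{\calR}^{\etale}$-invariants.
Take $s \geqslant 0$ as in \eqref{eq:wachmod_almost_comp_imperfect}. Since $N$ is effective, intersecting \eqref{eq:wachmod_prod_almost_comp} with $\tilde{A}(\overline{R}) \otimes_{\AR^+} N \isomorphic \tilde{A}(\overline{R}) \otimes_{\mathbb{Z}_p} T$ (via \eqref{eq:phigamma_comp_tilde_relative}) inside the middle row of \eqref{eq:wachmod_prod_muinverse_comp} should give $(\varphi, G_R)$-equivariant inclusions
\begin{equation*}
	\mu^s A_{\inf}(\overline{R}) \otimes_{\mathbb{Z}_p} T \subset A_{\inf}(\overline{R}) \otimes_{\AR^+} N \subset A_{\inf}(\overline{R}) \otimes_{\mathbb{Z}_p} T.
\end{equation*}
Here I use $A_{\inf}(\overline{R}) = \tilde{A}(\overline{R}) \cap \prod_{\pins} A_{\inf}(\Cplusp)$. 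I also use that $N$ is finitely presented, so tensoring commutes with the product and with this intersection. The intersection step is where $N$ needs to be flat-like. In fact $N$ need not be projective over $\AR^+$, so the cleanest route is to compare only the images of $N$ and $T$ inside $\tilde{A}(\overline{R}) \otimes_{\mathbb{Z}_p} T$.

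\textbf{Step 1: $N \subset \DR^+(T)$.}
The image of $N$ lies in $\DR(T) = (\AR^{\etale} \otimes_{\mathbb{Z}_p} T)^{H_{\calR}^{\etale}}$, since $\AR \otimes_{\AR^+} N \isomorphic \DR(T)$. By the second inclusion above, the image of $N$ also lies in $A_{\inf}(\overline{R}) \otimes_{\mathbb{Z}_p} T$. Since $T$ is free and $\AR^{\etale,+} = \AR^{\etale} \cap A_{\inf}(\overline{R})$, it follows that $N \subset (\AR^{\etale,+} \otimes_{\mathbb{Z}_p} T)^{H_{\calR}^{\etale}} = \DR^+(T)$. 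The $(\varphi, \GammaR \times \Delta)$-equivariance is automatic, because all the maps involved are the canonical comparison maps.

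\textbf{Step 2: $\mu^s \DR^+(T) \subset N$.}
Take $x \in \DR^+(T)$. Then $\mu^s x \in \DR(T) = \AR \otimes_{\AR^+} N$. By the first inclusion, $\mu^s x$ also lies in $A_{\inf}(\overline{R}) \otimes_{\AR^+} N$, more precisely in its image inside $\tilde{A}(\overline{R}) \otimes N$. So it suffices to show
\begin{equation*}
	(\AR \otimes_{\AR^+} N) \cap (A_{\inf}(\overline{R}) \otimes_{\AR^+} N) = N
\end{equation*}
inside $\tilde{A}(\overline{R}) \otimes_{\AR^+} N$. This is the main obstacle.

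I would reduce this intersection to the relative case over $\AS^+$ using Lemma \ref{lem:wach_intersection_lemma}, which gives $N = (\AS^+ \otimes N) \cap (\AR \otimes N)$. It then suffices to know that the element lies in $\AS^+ \otimes_{\AR^+} N$. For $\NS = \AS^+ \otimes N$, the corresponding statement $\mu^s D_S^+ \subset \NS$ is known from \cite{abhinandan-relative-wach-ii}, for the same $s$ if it is chosen large enough for both. The point is that $\DR^+(T)$ maps into $\DS^+(T)$, because $\AR^{\etale,+} \to \AS^{\etale,+}$ is compatible with the comparison isomorphisms of Remark \ref{rem:phiGamma_comp_R0mS0m}. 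Hence $\mu^s x \in \NS \cap \DR(T) = N$.

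If that compatibility is troublesome, the fallback is an alternative argument that avoids $S$. One writes $N$ as a direct summand after inverting $p$ (Proposition \ref{prop:wachmod_proj_pmu}) and uses $\AR^+ = \AR \cap A_{\inf}(\overline{R})[1/p]$-type intersections together with the regularity of $\{p,\mu\}$ on $N$, as in Lemma \ref{lem:wach_intersection_lemma}.
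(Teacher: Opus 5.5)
Your Step~1 is correct, and slightly more direct than the paper. Since $T$ is free, intersections inside $\tilde{A}(\overline{R})\otimes_{\mathbb{Z}_p}T$ commute with $-\otimes_{\mathbb{Z}_p}T$. So for each $n\in N$, its image lies in $(A_{\inf}(\overline{R})\cap\AR^{\etale})\otimes_{\mathbb{Z}_p}T=\AR^{\etale,+}\otimes_{\mathbb{Z}_p}T$ and is $\HR^{\etale}$-invariant, with no flatness of $N$ needed. The integral inclusions you display at the start, however, are not justified, for exactly the non-flatness reason you mention. The first of them is what Step~2 would need.

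The gap is in Step~2. Your main route assumes $\mu^s\DS^+(T)\subset\NS$ over $\AS^+$ ``from \cite{abhinandan-relative-wach-ii}''. The paper neither provides nor cites this; over the base $\calS$ it is just the present proposition with $a=d$. Proving it there hits the same obstacle, since Wach modules over $\AS^+$ are likewise only known to be projective after inverting $p$ or $\mu$. The rest of the reduction is sound: $\DR^+(T)$ does map into $\DS^+(T)$, and $\NS\cap\DR(T)=N$ by Lemma~\ref{lem:wach_intersection_lemma}. But it only relocates the difficulty.

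The missing idea is the one in your fallback, and it must actually be carried out: work after inverting $p$. There $N[1/p]$ is finite projective over $\BR^+$ (Proposition~\ref{prop:wachmod_proj_pmu}), so intersections commute with $-\otimes_{\BR^+}N[1/p]$. Take $x\in\DR^+(T)$ and set $V=T[1/p]$.
\begin{enumerate}
\item Invert $p$ in \eqref{eq:wachmod_prod_almost_comp} and intersect with \eqref{eq:phigamma_comp_tilde_relative} inside the middle row of \eqref{eq:wachmod_prod_muinverse_comp} (with $p$ inverted). This puts $\mu^s x$ in $A_{\inf}(\overline{R})[1/p]\otimes_{\BR^+}N[1/p]$.
\item Also $\mu^s x\in\DR(T)\subset\BR\otimes_{\BR^+}N[1/p]$. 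Hence $\mu^s x$ lies in $(\BR\cap A_{\inf}(\overline{R})[1/p])\otimes_{\BR^+}N[1/p]=N[1/p]$, using $\AR\cap A_{\inf}(\calR_\infty)=\AR^+$ (Remark~\ref{rem:ar+_alt_desc} and Proposition~\ref{prop:ar+_decomplete}).
\item Finally $\mu^s x\in N[1/p]\cap\DR(T)=N$ by the \emph{first} half of Lemma~\ref{lem:wach_intersection_lemma}, not the $\NS\cap\DR$ half.
\end{enumerate}

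The paper follows this rational route with one difference. It intersects with Corollary~\ref{cor:wachmod_comp_relative} to get $\mu^s(B_{\calR}^{\etale,+}\otimes_{\mathbb{Q}_p}V)\subset B_{\calR}^{\etale,+}\otimes_{\BR^+}N[1/p]\subset B_{\calR}^{\etale,+}\otimes_{\mathbb{Q}_p}V$. It then takes $\HR^{\etale}$-invariants to get $\mu^s\DR^+(V)\subset N[1/p]\subset\DR^+(V)$. Intersecting with $\DR(T)$ then yields both integral inclusions at once.
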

\begin{proof}
	The proof follows in a manner similar to the proof of Proposition \ref{prop:wachmod_comp_relative}, so we will freely use the notations therein.
	By inverting $p$ in \eqref{eq:wachmod_prod_almost_comp} we have $(\varphi, G_R)\textrm{-equivariant}$ inclusions
	\begin{equation}\label{eq:wachmod_prod_pinverse_almost_comp}
		\begin{aligned}
			\mu^s \big(\textstyle\prod_{\pins} \big(A_{\inf}(\Cplusp) \otimes_{\mathbb{Z}_p} T\big)\big)[1/p] &\subset \big(\textstyle\prod_{\pins} A_{\inf}(\Cplusp)\big)[1/p] \otimes_{\BR^+} \NR[1/p]\\
			&\subset \big(\textstyle\prod_{\pins} \big(A_{\inf}(\Cplusp) \otimes_{\mathbb{Z}_p} T\big)\big)[1/p],
		\end{aligned}
	\end{equation}
	where the last term may also be written as $\big(\prod_{\pins} A_{\inf}(\Cplusp)\big)[1/p] \otimes_{\mathbb{Q}_p} V$, and similarly for the first term.
	Moreover, by inverting $p$ in \eqref{eq:phigamma_comp_tilde_relative}, we have the following $(\varphi, G_R)\textrm{-equivariant}$ comparison isomorphism:
	\begin{equation}\label{eq:phigamma_comp_pinverse}
		\tilde{A}(\overline{R})[1/p] \otimes_{\BR^+} \NR[1/p] \isomorphic \tilde{A}(\overline{R})[1/p] \otimes_{\mathbb{Q}_p} V.
	\end{equation}
	Using the discussion on localisation in Section \ref{subsubsec:perfect_period_rings} and diagram \eqref{eq:wachmod_prod_muinverse_comp} (after inverting $p$), we embed all terms of \eqref{eq:wachmod_prod_pinverse_almost_comp} and \eqref{eq:phigamma_comp_pinverse}, in a $(\varphi, G_R)\textrm{-equivariant}$ manner, inside
	\begin{equation}\label{eq:phigamma_comp_prod_pinverse}
		\big(\textstyle\prod_{\pins} \tilde{A}(\mathbb{C}(\mathfrak{p}))\big)[1/p] \otimes_{\BR^+} \NR[1/p] \isomorphic \big(\textstyle\prod_{\pins} \tilde{A}(\mathbb{C}(\mathfrak{p}))\big)[1/p] \otimes_{\mathbb{Q}_p} V,
	\end{equation}
	where the isomorphism above is obtained by inverting $p$ in the middle row of \eqref{eq:wachmod_prod_muinverse_comp}.
	Since $\NR[1/p]$ is a finite projective module over $\BR^+$, therefore, the intersection of the middle term of \eqref{eq:wachmod_prod_pinverse_almost_comp} and the left-hand term of \eqref{eq:phigamma_comp_pinverse}, inside the left-hand term of \eqref{eq:phigamma_comp_prod_pinverse}, gives
	\begin{equation*}
		\begin{aligned}
			\big(\tilde{A}(\overline{R})[1/p] \otimes_{\BR^+} \NR[1/p]\big) \cap \big(\big(\textstyle\prod_{\pins} A_{\inf}(\Cplusp)\big)[1/p] &\otimes_{\BR^+} \NR[1/p]\big)\\
			&= A_{\inf}(\overline{R})[1/p] \otimes_{\BR^+} \NR[1/p],
		\end{aligned}
	\end{equation*}
	where the equality follows from the discussion on localisation in Section \ref{subsubsec:perfect_period_rings}.
	Similarly, taking the intersection of the right-most terms of \eqref{eq:wachmod_prod_pinverse_almost_comp} and \eqref{eq:phigamma_comp_pinverse}, inside the right-hand term of \eqref{eq:phigamma_comp_prod_pinverse}, gives
	\begin{equation*}
		\big(\tilde{A}(\overline{R})[1/p] \otimes_{\mathbb{Q}_p} V\big) \cap \big(\big(\textstyle\prod_{\pins} A_{\inf}(\Cplusp)\big)[1/p] \otimes_{\mathbb{Q}_p} V\big) = A_{\inf}(\overline{R})[1/p] \otimes_{\mathbb{Q}_p} V,
	\end{equation*}
	where the equality again follows from the discussion on localisation in Section \ref{subsubsec:perfect_period_rings}.
	Therefore, from \eqref{eq:wachmod_prod_pinverse_almost_comp} and using the $(\varphi, G_R)\textrm{-equivariance}$ of \eqref{eq:phigamma_comp_pinverse}, we obtain the following $(\varphi, G_R)\textrm{-equivariant}$ inclusions:
	\begin{equation}\label{eq:wachmodrat_almost_comp}
		\mu^s \big(A_{\inf}(\overline{R})[1/p] \otimes_{\mathbb{Q}_p} V\big) \subset A_{\inf}(\overline{R})[1/p] \otimes_{\BR^+} \NR[1/p] \subset A_{\inf}(\overline{R})[1/p] \otimes_{\mathbb{Q}_p} V.
	\end{equation}
	Inverting $p$ in the isomorphism of Corollary \ref{cor:wachmod_comp_relative} and taking its intersection with \eqref{eq:wachmodrat_almost_comp}, inside \eqref{eq:phigamma_comp_pinverse}, we obtain the following $(\varphi, G_R)\textrm{-equivariant}$ inclusions:
	\begin{equation*}
		\mu^s \big(B_{\calR}^{\etale,+} \otimes_{\mathbb{Q}_p} V\big) \subset B_{\calR}^{\etale,+} \otimes_{\BR^+} \NR[1/p] \subset B_{\calR}^{\etale,+} \otimes_{\mathbb{Q}_p} V.
	\end{equation*}
	In the preceding equation, first we take the $\HR^{\etale}\textrm{-invariants}$ and then its intersection with $\DR(T) = \NR[1/\mu]^{\wedge}$, inside $\DR(V)$, to obtain $(\varphi, \GammaR \times \Delta)\textrm{-equivariant}$ inclusions $\mu^s \DR^+(T) \subset \NR \subset \DR^+(T)$, because we have $\NR = \NR[1/p] \cap \DR$ from Lemma \ref{lem:wach_intersection_lemma} and $\DR^+(T) = \DR(T) \cap \DR^+(V) \subset \DR(V)$, by definition.
	Hence, the proposition is proved.
\end{proof}

In light of Lemma \ref{lem:regsec_D+} and Proposition \ref{prop:wachmod_almost_comp}, we ask the following:
\begin{ques}
	For each $1 \leqslant i \leqslant d$, are the sequences $\{p, \mu, [x_i^{\flat}]\}$ and $\{\mu, p, [x_i^{\flat}]\}$ regular on a Wach module $N$ over $\AR^+$?
\end{ques}

Restricting the discussion above to $\GR\textrm{-equivariant}$ objects, instead of $G_R\textrm{-equivariant}$ objects, yields parallel statements for $p\textrm{-adic}$ representations of $\GR$.
Indeed, note that the composition of the functor in Proposition \ref{prop:wach_etale_ff_relative} with the categorical equivalence in Corollary \ref{cor:padic_classify_GR0m} yields a fully faithful functor
\begin{equation}\label{eq:wach_reps_GR0m}
	\begin{aligned}
		\TR \colon (\varphi, \GammaR)\textup{-Mod}_{\AR^+}^{[p]_q} &\longrightarrow \textup{Rep}_{\mathbb{Z}_p}(\GR^{\etale})\\
			N &\longmapsto \big(\tilde{A}(\overline{R}) \otimes_{\AR^+} N\big)^{\varphi = 1},
	\end{aligned}
\end{equation}
where we observe that the $\mathbb{Z}_p\textrm{-module}$ $\TR(N)$ is in fact a representation of $\GR$ on which the action factors through the quotient $\GR \twoheadrightarrow \GR^{\etale}$, and in the following, we shall view $\TR(N)$ as a representation of $\GR$.

\begin{cor}\label{cor:wachmod_comp_GR0m}
	Let $N$ be a Wach module over $\AR^+$, and let $T \coloneq \TR(N)$ be the associated finite free $\mathbb{Z}_p\textrm{-representation}$ of $\GR$.
	Then,
	\begin{enumerate}
		\item We have a natural $\GR\textrm{-equivariant}$ comparison isomorphism:
			\begin{equation}\label{eq:wachmod_comp_ainf_GR0m}
				A_{\inf}(\overline{R})[1/\mu] \otimes_{\AR^+} N \isomorphic A_{\inf}(\overline{R})[1/\mu] \otimes_{\mathbb{Z}_p} T,
			\end{equation}
			which is further compatible with the action of $(\varphi, \GR)$ after base change along $A_{\inf}(\overline{R})[1/\mu] \rightarrow \tilde{A}(\overline{R})$.
	
		\item The isomorphism in \eqref{eq:wachmod_comp_ainf_GR0m} restricts to a natural $\GR\textrm{-equivariant}$ comparison isomorphism:
			\begin{equation*}
				A_{\calR}^{\etale,+}[1/\mu] \otimes_{\AR^+} N \isomorphic A_{\calR}^{\etale,+}[1/\mu] \otimes_{\mathbb{Z}_p} T.
			\end{equation*}
			which is further compatible with the action of $(\varphi, \GR)$ after base change along the natural $(\varphi, \GR)\textrm{-equivariant}$ map $A_{\calR}^{\etale,+}[1/\mu] \rightarrow A_{\calR}^{\etale}$.

		\item We have $(\varphi, \GammaR)\textrm{-equivariant}$ inclusions $\mu^s \DR^+(T) \subset N \subset \DR^+(T)$.
	\end{enumerate}
\end{cor}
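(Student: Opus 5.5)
The plan is to rerun the arguments of Proposition \ref{prop:wachmod_comp_relative}, Corollary \ref{cor:wachmod_comp_relative} and Proposition \ref{prop:wachmod_almost_comp}, keeping track only of the $\GR$-action instead of the $G_R$-action. The starting point is the fully faithful functor \eqref{eq:wach_reps_GR0m}. Together with Corollary \ref{cor:padic_classify_GR0m}, it gives $\DR(T) \isomorphic \AR \otimes_{\AR^+} N$ as \'etale $(\varphi, \GammaR)\textrm{-modules}$. Base changing \eqref{eq:phiGamma_comp_GR0m} along $\AR^{\etale} \rightarrow \tilde{A}(\overline{R})$ then produces the $(\varphi, \GR)\textrm{-equivariant}$ isomorphism $\tilde{A}(\overline{R}) \otimes_{\AR^+} N \isomorphic \tilde{A}(\overline{R}) \otimes_{\mathbb{Z}_p} T$, which is the analogue of \eqref{eq:phigamma_comp_tilde_relative}. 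Replacing $N$ by $\mu^r N(-r)$ reduces us to the effective case, since this only Tate-twists $T$.

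For (1), I localise at each $\pins$ exactly as before. Base changing along $\AR^+ \rightarrow \ALplusp$ gives a Wach module over $\ALplusp$; the $\Delta$-action played no role in that step. The imperfect residue field result \cite[Lemma 3.6]{abhinandan-imperfect-wach} then yields the inclusions \eqref{eq:wachmod_almost_comp_imperfect}, which are equivariant for the open subgroup $\Gal(\Lbarp/\calL) \subset \GRhatp$; this subgroup is all we need because we only track $\GR$. I intersect these with the localised $\tilde{A}$-comparison, using $A_{\inf}(\Cplusp) = A_{\inf}(\Cpplus) \cap \tilde{A}(\mathbb{C}(\mathfrak{p}))$. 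Next I take the product over $\mathscr{P}(\overline{R})$, using \cite[Tag 059K]{stacks-project} to commute products with $\otimes_{\AR^+} N$. Inverting $\mu$ and intersecting as in diagram \eqref{eq:wachmod_prod_muinverse_comp}, with $A_{\inf}(\overline{R}) = \tilde{A}(\overline{R}) \cap \prod_{\pins} A_{\inf}(\Cplusp)$ and the projectivity of $N[1/\mu]$ from Proposition \ref{prop:wachmod_proj_pmu}, gives \eqref{eq:wachmod_comp_ainf_GR0m}. Equivariance is only checked for $\GR$, and this subgroup acts compatibly on the product $\prod_{\pins} A_{\inf}(\Cplusp)$ because it is a subgroup of $G_R$.

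For (2), I intersect the isomorphism from (1) with the isomorphism \eqref{eq:phiGamma_comp_GR0m}. Both sit inside $\tilde{A}(\overline{R}) \otimes_{\mathbb{Z}_p} T$, and the identity $A_{\calR}^{\etale,+} = A_{\inf}(\overline{R}) \cap A_{\calR}^{\etale}$ shows the intersection is the claimed isomorphism. For (3), I copy Proposition \ref{prop:wachmod_almost_comp}. First I invert $p$ in the product inclusions and intersect down to $\mu^s(A_{\inf}(\overline{R})[1/p] \otimes V) \subset A_{\inf}(\overline{R})[1/p] \otimes N[1/p] \subset A_{\inf}(\overline{R})[1/p]\otimes V$. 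Intersecting with (2) brings this down to $B_{\calR}^{\etale,+}$. Taking $\HR^{\etale}\textrm{-invariants}$ and intersecting with $\DR(T)$, via Lemma \ref{lem:wach_intersection_lemma}, gives $\mu^s \DR^+(T) \subset N \subset \DR^+(T)$, equivariant for $(\varphi, \GammaR)$.

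The main obstacle is the localisation step in (1): the input \eqref{eq:wachmod_almost_comp_imperfect} and the intersection identities must be available without the $\Delta$-structure. I expect this to be harmless, since those statements concern rings and modules rather than the $\Delta$-action. Concretely, I would verify that the $\GR$-action on $\prod_{\pins} A_{\inf}(\Cplusp)$, obtained by restricting the $G_R$-action, makes all the embeddings $\GR$-equivariant. I would also check that the integer $s$ from the imperfect case can be chosen uniformly in $\mathfrak{p}$. Uniformity should hold because every $\NLp$ is the base change of the single module $N$, whose height bound is fixed, and because $\GR$ permutes the primes.
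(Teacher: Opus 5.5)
Your proposal is correct and is essentially the paper's proof: it reruns Proposition \ref{prop:wachmod_comp_relative}, Corollary \ref{cor:wachmod_comp_relative} and Proposition \ref{prop:wachmod_almost_comp} with $\GR$ in place of $G_R$, and your remarks that locally only $\Gal(\Lbarp/\calL)$-equivariance is needed and that $s$ can be taken uniform in $\mathfrak{p}$ spell out points the paper leaves implicit. One caveat: (3), like Proposition \ref{prop:wachmod_almost_comp}, needs $N$ to be effective (it fails for the Wach module $\mu^{-1}\AR^+\otimes\mathbb{Z}_p(1)$), so the reduction $N\mapsto\mu^rN(-r)$ is legitimate for (1) and (2) but not for (3), where effectiveness must be an implicit hypothesis.
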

\begin{proof}
	By restricting to $\GR\textrm{-action}$, instead of $G_R\textrm{-action}$, the claim in (1) follows from Proposition \ref{prop:wachmod_comp_relative}, the claim in (2) follows from Corollary \ref{cor:wachmod_comp_relative}, and the claim in (3) follows from Proposition \ref{prop:wachmod_almost_comp}.
\end{proof}

\subsection{Constructing Wach modules}\label{subsec:constructing_wach}

The goal of this section is to prove the following claim:
\begin{thm}\label{thm:wachmod_existence}
	Let $\DR$ be an \'etale $(\varphi, \GammaR \times \Delta)\textrm{-module}$ over $\AR$ and set $\DS \coloneq \AS \otimes_{\AR} \DR$ as an \'etale $(\varphi, \GammaS \times \Delta)\textrm{-module}$ over $\AS$.
	Let $\NS \subset \DS$ be a Wach module with $\Delta\textrm{-action}$ over $\AS^+$.
	Then, $\NR \coloneq \DR \cap \NS \subset \DS$ is a Wach module with $\Delta\textrm{-action}$ over $\AR^+$, i.e.\ it satisfies all the axioms of Definition \ref{defi:wach_mods_with_delta}.
\end{thm}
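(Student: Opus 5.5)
We verify the axioms of Definition \ref{defi:wach_mods_with_delta} for $\NR \coloneq \DR \cap \NS$, modelled on the glueing argument of \cite[Section 4.1]{abhinandan-relative-wach-ii}. The intersection is taken inside $\DS$. This requires $\DR \hookrightarrow \DS$, which holds because $\AR \rightarrow \AS$ is flat and injective; indeed it is the $p$-completion of a localisation, and $\DR$ is finite projective by Lemma \ref{lem:phimod_proj} when $p$-torsion free. The $p$-torsion freeness of $\DR$ is automatic here, since $\NS$ is $p$-torsion free and $\NS[1/\mu]^{\wedge} = \DS$ contains $\DR$.

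\textbf{Easy structure.} $\NR$ is an $\AR^+$-submodule stable under $\GammaR \times \Delta$, because both $\DR$ and $\NS$ are stable and $\GammaS \times \Delta \isomorphic \GammaR \times \Delta$. The Frobenius $\varphi_{\NS}$ restricts to $\varphi_{\DR}$ on the \'etale side, so $\varphi$ maps $\varphi^*\NR$ into $(\DR \cap \NS[1/[p]_q])$. To obtain the isomorphism $(\varphi^*\NR)[1/[p]_q] \isomorphic \NR[1/[p]_q]$, I would use that $[p]_q$ is a unit in $\AR$. This gives $\NR[1/[p]_q] = \DR \cap \NS[1/[p]_q]$, since any element of $\DR \cap \NS[1/[p]_q]$ is multiplied into $\NR$ by a power of $[p]_q$. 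The analogous identity holds for $\varphi^*$, by flatness and finiteness of $\varphi$ on $\AR^+$, $\AR$ and $\AS^+$ and the basis $\{u_\alpha\}$. The isomorphism then follows from those for $\DR$ and $\NS$.

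\textbf{Regularity of $\{p,\mu\}$.} $p$- and $\mu$-torsion freeness are inherited from $\DS$. For $\mu$ on $\NR/p$: if $\mu x = p y$ with $x,y \in \NR$, then $x \in p\NS$ because $\{p,\mu\}$ is regular on $\NS$, and $x \in p\DR$ since $\mu$ is a unit in $\AR$. Hence $x/p \in \DR \cap \NS = \NR$. Symmetrically, for $p$ on $\NR/\mu$: if $px = \mu y$, then $x/\mu \in \NS$ and $x/\mu \in \DR$. Regularity of $\{\mu,p\}$ also follows from \cite[Lemma A.2]{abhinandan-relative-wach-ii}.

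\textbf{Triviality of $\GammaR$ on $\NR/\mu\NR$.} For $g \in \GammaR$ and $x \in \NR$, we have $(g-1)x \in \mu\NS \cap \DR = \mu(\NS \cap \DR) = \mu\NR$.

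\textbf{Finite generation (main obstacle).} This is the hard step, since $\AR^+$ is noetherian but $\NR$ is a priori only a submodule of the non-finite module $\DR$. I would first bound $\NR$ inside a finite module. Let $\NR'$ be an $\AR^+$-lattice in $\DR$, for instance one generated by generators of $\DR$. Choose $k$ with $\NS \subset \mu^{-k}(\AS^+ \otimes \NR')$, which is possible because $\NS$ is finitely generated inside $\DS = \AS \otimes \NR'$. Then
\[
\NR \subset \DR \cap \mu^{-k}(\AS^+ \otimes_{\AR^+} \NR').
\]
The right side should be finite over $\AR^+$ by the identity $\AR^+ = \AR \cap \AS^+$ inside $\AS$, which is the integral analogue of $\BR^+ = \BS^+ \cap \BR$ from Lemma \ref{lem:wach_intersection_lemma}, proved by reduction modulo $(p,\mu)$ using $\calR/p = (\calR/p)[1/x] \cap$ ($\mu$-adic expansions). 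There are two subtleties. First, $\NR'$ may only be projective rather than free; I would handle this by passing to a free module of which it is a direct summand. Second, $\NR'$ may not be of the form $\AR \otimes$ (lattice); I would instead use a finite free $\AR^+$-module $P$ with $\AR \otimes P \twoheadrightarrow \DR$ split, and compute intersections coordinatewise. Then $\NR$ is a submodule of a finitely generated module over the noetherian ring $\AR^+$, hence finitely generated. I expect the identity $\AR^+ = \AR \cap \AS^+$ and its compatibility with the projective summand to carry the real work. The remaining axioms of Definition \ref{defi:wach_mods_with_delta} are then formal.
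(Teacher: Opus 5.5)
Most of your verification is sound. Stability under $\GammaR\times\Delta$, regularity of $\{p,\mu\}$ and $\{\mu,p\}$, and triviality of $\GammaR$ on $\NR/\mu\NR$ are argued correctly. Your Frobenius argument is also valid: you use $[p]_q\in\AR^{\times}$ to get $\NR[1/[p]_q]=\DR\cap\NS[1/[p]_q]$, and the basis $\{u_\alpha\}$ to commute $\varphi^*$ with intersections. This is a more direct alternative to the finite $[p]_q$-height lemma the paper takes from \cite[Lemma 4.5]{abhinandan-relative-wach-ii}.

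The gap is in finite generation. You claim one can choose $k$ with $\NS\subset\mu^{-k}(\AS^+\otimes_{\AR^+}\NR')$ ``because $\NS$ is finitely generated inside $\DS$''. This is false. $\AS$ is the $p$-adic completion of $\AS^+[1/\mu]$, so its elements are series $\sum_{n\in\mathbb{Z}}a_n\mu^n$ with $a_n\to0$ $p$-adically as $n\to-\infty$. The coefficients expressing generators of $\NS$ in terms of those of $\NR'$ can therefore have unbounded $\mu$-denominators. Concretely, take $\DR=\AR$, $\NS=\AS^+$ and $\NR'=\AR^+\cdot(1-p/\mu)$; this is generated by a generator of $\DR$, since $1-p/\mu\equiv1 \bmod p$ is a unit of $\AR$. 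The inclusion would force $\mu^k(1-p/\mu)^{-1}=\sum_{n\geqslant0}p^n\mu^{k-n}\in\AS^+$, which fails for every $k$. A lattice for which the bound does hold (for example $\NR$ itself, a posteriori) cannot be singled out without already knowing the result. So your bound on $\NR$ is not established; the identity $\AR^+=\AR\cap\AS^+$ is true but is not where the difficulty lies.

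The missing idea is to do the bounding modulo $p$, where it works.
\begin{itemize}
\item We have $\AS/p=E_{\calS}^+[1/\overline{\mu}]$ with $E_{\calS}^+=(\calS/p)\llbracket\overline{\mu}\rrbracket$, so $\overline{\mu}$-denominators are bounded.
\item Your commensurability argument, together with $\ER^+=\ER\cap E_{\calS}^+$ and the projective-summand bookkeeping you describe, then shows that $M_1\coloneq\DR/p\cap\NS/p$ is finitely generated over the noetherian ring $\ER^+$.
\item Computations like those in your regularity step give $\NR\cap p\DS=p\NR$, using that $\DR/p\to\DS/p$ and $\NS/p\to\DS/p$ are injective. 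Hence $\NR/p\NR\hookrightarrow M_1$ is finitely generated.
\item Since $\AR^+$ is $p$-adically complete and $\NR\subset\NS$ is $p$-adically separated, \cite[\href{https://stacks.math.columbia.edu/tag/031D}{Tag 031D}]{stacks-project} gives that $\NR$ is finitely generated.
\end{itemize}
This is the route the paper takes, following \cite[Lemma 4.3]{abhinandan-relative-wach-ii}, with $\NR/p\subset M_1=\DR/p\cap\NL/p$ for $\NL=\AL^+\otimes_{\AS^+}\NS$.

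The paper's proof also establishes $\AR\otimes_{\AR^+}\NR\isomorphic\DR$ (Proposition \ref{prop:wach_phigamm_comp}). This is not an axiom of Definition \ref{defi:wach_mods}, but later results use it, and your proposal does not address it.
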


The proof of Theorem \ref{thm:wachmod_existence} requires some preparations which we carry out next.
Let $O_{\calL} \coloneq \calR_{(p)}^{\wedge} = \calS_{(p)}^{\wedge}$ as in the proof of Proposition \ref{prop:wachmod_comp_relative}.
Recall that we have natural isomorphism of groups $\GammaR \isomorphic \GammaS \isomorphic \GammaL$, we shall identify these below.
Moreover, $\NL \coloneq \AL^+ \otimes_{\AS^+} \NS$ is a Wach module with $\Delta\textrm{-action}$ over $\AL^+$, and we have that $\NS = \DS \cap \NL \subset \DL = \AL \otimes_{\AR} \DR$ from \cite[Proposition 4.2]{abhinandan-relative-wach-ii}.
Consequently, we see that inside $\DL$, we have $(\varphi, \GammaR \times \Delta)\textrm{-equivariant}$ identifications $\NR = \DR \cap \NS = \DR \cap \NL \cap \DS = \DR \cap \NL$.

\begin{lem}\label{lem:fingen_finiteheight}
	The $\AR^+\textrm{-module}$ $\NR$ is finitely generated.
	Moreover, $\NR$ is of finite $[p]_q\textrm{-height}$, i.e.\ the cokernel of the injective map $1 \otimes \varphi \colon \varphi^*(\NR) \rightarrow \NR$ is killed by $[p]_q^s$, for some $s \in \mathbb{N}$.
\end{lem}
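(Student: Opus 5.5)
The plan is to control $\NR$ by a finite free lattice, using that $\AR^+$ is noetherian, and then to deduce finite height from the corresponding property of $\NS$, exploiting that $\varphi$ on $\AR^+$ is finite free with the explicit basis $\{u_{\alpha}\}$.

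\emph{Finite generation.} Since $\DR$ is \'etale it is finite projective over $\AR$ (Lemma \ref{lem:phimod_proj}; $\DR$ is $p$-torsion free because $\NS \subset \DS$ is). So I fix an embedding $\DR \hookrightarrow \AR^{\oplus n}$ as a direct summand, and base change it to $\DS \hookrightarrow \AS^{\oplus n}$. The first step is to show that $\NS \subset \mu^{-k}(\AS^+)^{\oplus n}$ for some $k$.
\begin{enumerate}
	\item[(a)] Reduce modulo $p$. The module $\NS/p\NS$ is finitely generated over $\AS^+/p = \ES^+$ and maps into $\ES^{\oplus n}$, where $\ES = \ES^+[1/\overline{\mu}]$. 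Hence its image lies in $\overline{\mu}^{-k}(\ES^+)^{\oplus n}$ for some $k$. For the map $\NS/p \rightarrow \DS/p$ I will use $(\NS/p)[\mu]=0$, which is part of the Wach axioms.
	\item[(b)] Lift. Step (a) gives $\NS \subset \mu^{-k}(\AS^+)^{\oplus n} + p\,\DS$. Using $p$-adic completeness of $\NS$ and $p$-adic closedness of $\mu^{-k}(\AS^+)^{\oplus n}$ inside $\AS^{\oplus n}$, I iterate, correcting each error term by the same argument applied to $p$-multiples, to get $\NS \subset \mu^{-k}(\AS^+)^{\oplus n}$.
	\item[(c)] Intersect. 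I then have $\NR = \DR \cap \NS \subset \AR^{\oplus n} \cap \mu^{-k}(\AS^+)^{\oplus n}$. This equals $\mu^{-k}(\AR \cap \AS^+)^{\oplus n}$, and I need $\AR \cap \AS^+ = \AR^+$ inside $\AS$. This is the integral version of the identity $\BR^+ = \BS^+ \cap \BR$ used in Lemma \ref{lem:wach_intersection_lemma}. I would prove it modulo $p$ via $\calR/p = (\calS/p) \cap \calR/p[\ldots]$-type arguments; concretely, $\ER \cap \ES^+ = \ER^+$ by comparing $\overline{\mu}$-expansions with coefficients in $\calR/p \subset \calS/p$. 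I would then lift to $p$-adically complete, $p$-torsion free rings by induction on powers of $p$.
\end{enumerate}
Thus $\NR$ is a submodule of the finitely generated module $\mu^{-k}(\AR^+)^{\oplus n}$ over the noetherian ring $\AR^+$, hence finitely generated. I expect step (c), together with the lifting in (b), to be the main technical obstacle.

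\emph{Finite height.} First, the map $1\otimes\varphi \colon \varphi^*\NR \rightarrow \NR$ is injective. Indeed, $\varphi^*\NR \subset \varphi^*\DR$ because $\varphi$ is flat on $\AR^+$, and $\varphi^*\DR \isomorphic \DR$ because $\DR$ is \'etale. Next, since $\NS$ is a Wach module, there is some $s$ with $[p]_q^s\NS \subset \varphi_{\NS}(\varphi^*\NS)$. This holds because $\varphi^*\NS[1/[p]_q] \isomorphic \NS[1/[p]_q]$ and $\NS$ is finitely generated.

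The key observation is that $\varphi^*(-) = \oplus_{\alpha} u_{\alpha}\varphi(-)$ on $\AR^+$, and similarly on $\AS^+$ and $\AR$, with the same basis $\{u_{\alpha}\}$. Hence, inside $\varphi^*\DS$,
\begin{equation*}
	\varphi^*\DR \cap \varphi^*\NS = \textstyle\oplus_{\alpha} u_{\alpha}\,\varphi(\DR \cap \NS) = \varphi^*\NR.
\end{equation*}
Now let $x \in \NR$. Then $[p]_q^s x$ lies in $\varphi(\varphi^*\NS) \cap \DR$. Since $1\otimes\varphi$ is bijective on $\varphi^*\DS$ and carries $\varphi^*\DR$ onto $\DR$, the preimage of $[p]_q^s x$ lies in $\varphi^*\NS \cap \varphi^*\DR = \varphi^*\NR$. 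Therefore $[p]_q^s$ kills the cokernel of $1\otimes\varphi \colon \varphi^*\NR \rightarrow \NR$.

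If effectivity of $\NS$ is needed in this argument, I would first replace $\NS$ by $\mu^r\NS(-r)$ and $\DR$ by $\DR(-r)$, and undo the twist at the end.
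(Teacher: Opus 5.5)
Your finite-height argument is sound. The decomposition $\varphi^*(-)=\bigoplus_\alpha u_\alpha\otimes(-)$ is functorial and compatible with the base changes $\AR^+\to\AR$ and $\AS^+\to\AS$, so $\varphi^*\DR\cap\varphi^*\NS=\varphi^*\NR$ inside $\varphi^*\DS$. Injectivity of $1\otimes\varphi$ on $\varphi^*\DS$ then transfers the bound $[p]_q^s$ from $\NS$ to $\NR$. Your twisting remark also handles effectivity.

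The gap is in the finite-generation part, at step (b), and the intermediate claim there is false. The ring $\AS$ is not $\AS^+[1/\mu]$: its elements are Laurent series $\sum_{n\in\mathbb{Z}}a_n\mu^n$ with $a_n\to 0$ only $p$-adically as $n\to-\infty$. So a bound on $\overline{\mu}$-denominators modulo $p$ does not lift. For instance, take the trivial representation, so $\DR=\AR$ and $\NS=\AS^+$. Embed $\DR\to\AR$ by multiplication with the unit $u=\sum_{j\geqslant 0}(p/\mu)^j$ of $\AR$. Step (a) holds with $k=0$, since $u\equiv 1$ modulo $p$. But $\NS$ maps onto $u\AS^+$, which lies in no $\mu^{-k}\AS^+$.

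Your iteration breaks for the following reason. In $x=y+pz$ with $y\in\mu^{-k}(\AS^+)^{\oplus n}$, the error $z$ need not come from $\NS$. In the example, $x=1$ gives $z=\mu^{-1}u$, the image of $\mu^{-1}\notin\NS$, so (a) cannot be re-applied to it.

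The fix is to stay modulo $p$, where your steps (a) and (c) are correct because $\ER=\ER^+[1/\overline{\mu}]$ and $E_{\calS}=E_{\calS}^+[1/\overline{\mu}]$.
\begin{itemize}
\item First check that $\NR\cap p\DR=p\NR$. If $x=pz$ with $x\in\NR$ and $z\in\DR$, then $x\in\NS\cap p\DS=p\NS$; this uses $(\NS/p)[\mu]=0$ and $\DS/p=(\NS/p)[1/\mu]$. By $p$-torsion freeness, $z\in\DR\cap\NS=\NR$.
\item Hence $\NR/p\hookrightarrow(\DR/p)\cap(\NS/p)\subset\DS/p$.
\item Your argument mod $p$, together with $\ER\cap E_{\calS}^+=\ER^+$ (which follows from $\calR/p\hookrightarrow\calS/p$), puts $\NR/p$ inside $\overline{\mu}^{-k}(\ER^+)^{\oplus n}$. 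So $\NR/p$ is finitely generated over the noetherian ring $\ER^+$.
\item Since $\AR^+$ is $p$-adically complete and $\bigcap_n p^n\NR\subset\bigcap_n p^n\DR=0$, \cite[Tag 031D]{stacks-project} gives finite generation of $\NR$.
\end{itemize}

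This is the route of \cite[Lemma 4.3]{abhinandan-relative-wach-ii}, which the paper invokes with $\NL$ in place of $\NS$. The proof of Proposition \ref{prop:wach_phigamm_comp} recalls exactly the inclusion $\NR/p\subset\DR/p\cap\NL/p$.
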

\begin{proof}
	The first claim of the lemma follows by the same argument as in \cite[Lemma 4.3]{abhinandan-relative-wach-ii} and the second claim follows by the same argument as in \cite[Lemma 4.5]{abhinandan-relative-wach-ii}.
\end{proof}

Next, we will show that $\AL^+ \otimes_{\AR^+} \NR \isomorphic \NL$ and $\AR \otimes_{\AR^+} \NR \isomorphic \DR$, using an approach parallel to \cite[Lemma 4.7 \& Proposition 4.8]{abhinandan-relative-wach-ii}.
For $n \in \mathbb{N}_{\geqslant 1}$, let $\NRn \coloneq \NR/p^n$, $\DRn \coloneq \DR/p^n$, $\NLn \coloneq \NL/p^n$, $\DLn \coloneq \DL/p^n$ and $M_n \coloneq \NLn \cap \DRn \subset \DLn$.
Then, have the following commutative diagram:
\begin{center}
	\begin{tikzcd}[row sep=15pt]
		M_n \arrow[r, "f_n"] \arrow[d] & M_1 \arrow[d]\\
		\DRn \arrow[r, twoheadrightarrow, "f_n"] & D_{\calR,1},
	\end{tikzcd}
\end{center}
where the vertical arrows are natural inclusions, the bottom horizontal arrow $f_n$ is the natural projection map and the top arrow is the induced map.
We have a similar diagram with the bottom row replaced by $\NLn \twoheadrightarrow N_{\calL, 1}$.

\begin{lem}\label{lem:Mn_large}
	The following statements are true:
	\begin{enumerate}
		\item[\textup{(1)}] $M_n$ is a finitely generated $\AR^+/p^n\textrm{-module}$ and $\NR \isomorphic \lim_n M_n$.
	
		\item[\textup{(2)}] $M_n$ is of finite $[p]_q\textrm{-height}$ $s$, for $s \in \mathbb{N}$ from Lemma \ref{lem:fingen_finiteheight}.
	
		\item[\textup{(3)}] $M_n[1/\mu] = \AR \otimes_{\AR^+} M_n \isomorphic \DRn$ and $\AL^+ \otimes_{\AR^+} M_n \isomorphic \NLn$.
	\end{enumerate}
\end{lem}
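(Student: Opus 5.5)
The plan is to follow the argument of \cite[Lemma 4.7]{abhinandan-relative-wach-ii} closely. The new input over $\AR^+$ is the regularity of $\{p,\mu\}$, which we get from Lemma \ref{lem:regsec_D+}, together with the almost comparison results of Section \ref{subsec:wach_mod_rep}.

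For (1), we first realise $M_n$ inside a finitely generated module. Since $\DR$ is finite projective over $\AR$ by Theorem \ref{thm:padic_classify_decomp}, we have $\DRn = \DR/p^n$. Similarly $\NLn = \NL/p^n$, and it is finite free over $\AL^+/p^n$. Consider the $p$-adic filtration on $M_n$ given by $p^kM_n$. Each graded piece embeds into $N_{\calL,1}\cap D_{\calR,1}=M_1$, because multiplication by $p^k$ induces injections $\NL/p^{n-k}\hookrightarrow \NLn$ and $\DR/p^{n-k}\hookrightarrow\DRn$. So it suffices to show that $M_1$ is finitely generated over $\AR^+/p=\ER^+$.

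Inside $D_{\calL,1}$ we have $\mu^{r}N_{\calL,1}\cap D_{\calR,1}\subset \mu^{r}(N_{\calL,1}\cap D_{\calR,1}[1/\mu])$. We then use the same lattice argument as in \cite[Lemma 4.3]{abhinandan-relative-wach-ii}. Choose a finite $\ER^+$-lattice $L\subset D_{\calR,1}$ spanning it, for instance the image of $\DR^+(T)$ modulo $p$. Show $\mu^{-c}L\supset M_1\supset \mu^{c}L$ for some $c$ by comparing with the $\AL^+$-lattice $\NL$ and the $\AL$-structure; this uses $\ER^+=\ER\cap E_{\calL}^+$, which holds because $\calR/p$ is normal (Proposition \ref{prop:Rnm_Snm_normal}). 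Then $M_1$ is finitely generated, since $\ER^+$ is noetherian.

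Next, $\lim_n M_n=\lim_n(\NLn\cap\DRn)=(\lim\NLn)\cap(\lim \DRn)=\NL\cap\DR=\NR$. Here we use that $\NL$ and $\DR$ are $p$-adically complete and that limits commute with intersections (fibre products).

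For (2), we reduce the finite-height statement of Lemma \ref{lem:fingen_finiteheight} modulo $p^n$. We have $\varphi(M_n)\subset M_n$, since the Frobenius preserves both $\NLn$ and $\DRn$ (the effectiveness of $\NL$ is arranged after twisting by $\mu^r(-r)$). Also $[p]_q^s\NLn\subset\varphi(\varphi^*\NLn)$, and $\varphi^*\DRn\isomorphic\DRn$. Given $x\in M_n$, write $[p]_q^sx=\sum a_i\varphi(y_i)$ with $y_i\in\NLn$. Since $\varphi$ is faithfully flat (basis $u_\alpha$), the $y_i$ are determined uniquely, so they lie in $\DRn$ too, hence in $M_n$.

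For (3), the heart of the matter is $M_n[1/\mu]=\DRn$, i.e.\ $M_n$ contains a lattice of $\DRn$. Given $x\in\DRn$, some $\mu^kx$ lies in $\NLn$: indeed $\DLn=\NLn[1/\mu]$ and $\mu$ is central. Hence $\mu^kx\in M_n$, and so $M_n[1/\mu]=\DRn$. Then $\AR\otimes M_n\to\DRn$ is an isomorphism, because $\AR/p^n=(\AR^+/p^n)[1/\mu]$.

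For $\AL^+\otimes_{\AR^+}M_n\isomorphic\NLn$, the plan is to imitate \cite[Proposition 4.8]{abhinandan-relative-wach-ii} via the finite-height trick. The map is injective after inverting $\mu$, and the source is $\mu$-torsion free by flatness of $\AR^+\to\AL^+$, so it is injective. For surjectivity, the cokernel $Q$ is a finitely generated $\mu$-power torsion module. Iterating Frobenius using finite height, $\varphi$-semilinearly, bounds the $\mu$-exponent, since $[p]_q$ is coprime to $\mu$ modulo $p$ up to $\mu^{p-1}$, and shows $Q=\varphi^*Q$-type stability. Then a Nakayama argument over $\AL^+/(p,\mu)$, using that $M_1/\mu\to N_{\calL,1}/\mu$ has dense image (equivalently, the residue module is spanned), forces $Q=0$.

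I expect this surjectivity step to be the main obstacle. It is where the passage from $\calR/p$ to its localisation $O_{\calL}/p$ must be controlled, and it should again use the normality of $\calR/p\calR$ together with the $\Gamma$-triviality modulo $\mu$.
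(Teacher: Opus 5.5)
Your arguments for (1), (2) and for $M_n[1/\mu] \isomorphic \DRn$ match the paper; the last is exactly its argument. There are two slips. First, the graded pieces of $\{p^kM_n\}$ need not embed in $M_1$; the correct d\'evissage is the exact sequence $0 \to M_{n-1} \xrightarrow{p} M_n \to M_1$. Second, $\DR^+(T)/p$ need not span $D_{\calR,1}$, since that is a finite-height condition you do not yet have. Instead, embed $\DR/p$ as a direct summand of some $\ER^{\oplus k}$ and use $\ER \cap E_{\calL}^+ = \ER^+$ coordinatewise. This only needs $\calR/p \hookrightarrow O_{\calL}/p$, not normality.

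The genuine gap is the surjectivity of $\AL^+ \otimes_{\AR^+} M_n \to \NLn$. Your Frobenius/Nakayama sketch is circular. Its key input, that $M_1/\mu \to N_{\calL,1}/\mu$ has spanning image, is precisely the claim modulo $(p,\mu)$. Finite $[p]_q$-height cannot supply it: it only bounds $\mu$-exponents of cokernels, as in Lemma \ref{lem:mcirc_props}. (Also $[p]_q \equiv \mu^{p-1}$ modulo $p$, so it is not coprime to $\mu$.)

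The missing idea is $\mu$-saturation, which needs no Frobenius, no $\Gamma$-action and no normality.
\begin{itemize}
\item Since $\mu$ is invertible on $\DRn$, if $\mu y \in M_n$ with $y \in \NLn$ then $y \in \DRn$. Hence $M_n \cap \mu\NLn = \mu M_n$, i.e.\ $M_n/\mu \hookrightarrow \NLn/\mu$.
\item Write $P$ for $\AL^+ \otimes_{\AR^+} M_n$. Then $P/\mu = O_{\calL}/p^n \otimes_{\calR/p^n} M_n/\mu$. Here $O_{\calL}/p^n = (\calR/p^n)_{(p)}$ is a localisation of $\calR/p^n$, hence flat, and the target $\NLn/\mu$ is already an $O_{\calL}/p^n$-module. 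So the injection survives: $P/\mu \hookrightarrow \NLn/\mu$.
\item On the other hand, $P[1/\mu] = \AL^+ \otimes_{\AR^+} \DRn = \DLn \supset \NLn$.
\item So for $y \in \NLn$ some $\mu^k y$ lies in $P$. If $k \geqslant 1$, then $\mu^k y \in P \cap \mu\NLn = \mu P$, so $\mu^{k-1}y \in P$ by $\mu$-torsion-freeness of $\NLn$. Descending on $k$ gives $y \in P$.
\end{itemize}
This is exactly the paper's argument via \cite[Lemma A.1]{abhinandan-relative-wach-ii}.
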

\begin{proof}
	The claim in (1) and (2) follow by an argument similar to Lemma \ref{lem:fingen_finiteheight} and the claim in (3) follows by an argument similar to \cite[Lemma 4.6]{abhinandan-relative-wach-ii}.
	To obtain the first isomorphism in (3), note that we have natural $\AR\textrm{-linear}$ inclusions $\AR \otimes_{\AR^+} M_n = M_n[1/\mu] \hookrightarrow \DRn \hookrightarrow \DLn$, because $M_n$ is $\mu\textrm{-torsion}$ free.
	Moreover, we have a natural $\AL\textrm{-linear}$ isomorphism $\NLn[1/\mu] \isomorphic \DLn$.
	Then, inside $\DLn$ we have that
	\begin{equation}\label{eq:mn_muinverse}
		M_n[1/\mu] \isomorphic \DRn \cap \NLn[1/\mu] \isomorphic \DRn \cap \DLn = \DRn,
	\end{equation}
	where the first isomorphism follows because we have a natural $\AR\textrm{-linear}$ inclusion $M_n[1/\mu] \hookrightarrow \DRn \cap \NLn[1/\mu]$, and to obtain its surjectivity let $x$ be an element of $\DRn \cap \NLn[1/\mu]$, then there exists some $k \in \mathbb{N}$ such that $\mu^kx$ is in $\NLn$, i.e.\ $\mu^k x$ is in $\DRn \cap \NLn = M_n$, as claimed.
	This gives us the first claim in (3).
	
	To obtain the second isomorphism in (3), note that from \eqref{eq:mn_muinverse} and \cite[Lemma A.1]{abhinandan-relative-wach-ii}, it follows that we have a natural $\AR^+/\mu\textrm{-linear}$ injective homomorphism $M_n/\mu \hookrightarrow \NLn/\mu$.
	As $\NLn/\mu$ is a free $O_{\calL}/p^n\textrm{-module}$, $\AR^+/\mu \isomorphic R$ and $\AL^+/\mu \isomorphic O_{\calL}$, and $R/p^n \hookrightarrow (R_{(p)})/p^n \isomorphic O_{\calL}/p^n$, therefore, by localising we obtain that $(R/p^n)_{(p)} \otimes_{R/p^n} M_n/\mu \hookrightarrow \NLn/\mu$.
	But, recall that localisation commutes with passing to quotient by ideals (see \cite[Theorem 4.2]{matsumura}), therefore, we have that $(R/p^n)_{(p)} \isomorphic (R_{(p)})/p^n \isomorphic O_{\calL}/p^n$, and hence $(\AL^+ \otimes_{\AR^+} M_n)/\mu \isomorphic O_{\calL}/p^n \otimes_{R/p^n} M_n/\mu \hookrightarrow \NLn/\mu$.
	In particular, we see that inside $\DLn$ we have,
	\begin{equation*}
		\AL^+ \otimes_{\AR^+} M_n = \NLn \cap \big(\AL^+ \otimes_{\AR^+} M_n[1/\mu]\big) \isomorphic \NLn \cap \big(\AL^+ \otimes_{\AR^+} \DRn\big) \isomorphic \NLn \cap \DLn = \NLn,
	\end{equation*}
	where we used \cite[Lemma A.1]{abhinandan-relative-wach-ii} for the equality and \eqref{eq:mn_muinverse} for the middle isomorphism.
	This completes the proof.
\end{proof}

Let $\mathscr{T}$ denote the set of $\AR^+\textrm{-submodules}$ $M' \subset M_1$ such that $M'$ is stable under the action of $\varphi$, it is of finite $[p]_q\textrm{-height}$ $s \in \mathbb{N}$ and $M'[1/\mu] = M_1[1/\mu] = D_{\calR,1} = \DR/p$.
Set $M^{\circ} \coloneq \cap_{M' \in \mathscr{T}} M' \subset M_1$.

\begin{lem}\label{lem:mcirc_props}
	The $\AR^+\textrm{-module}$ $M^{\circ}$ belongs to $\mathscr{T}$ and $f_n(M_n)$ is also in $\mathscr{T}$, for all $n \in \mathbb{N}_{\geqslant 1}$.
\end{lem}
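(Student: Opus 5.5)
We treat the two claims separately, following \cite[Lemma 4.7]{abhinandan-relative-wach-ii}. That $f_n(M_n)$ lies in $\mathscr{T}$ is the easier claim, so we do it first. Since $f_n \colon \DRn \twoheadrightarrow D_{\calR,1}$ is $(\varphi, \GammaR \times \Delta)\textrm{-equivariant}$ and $\AR^+\textrm{-linear}$, and $M_n$ is $\varphi\textrm{-stable}$ (it is an intersection of $\varphi\textrm{-stable}$ submodules of $\DLn$), the image $f_n(M_n) \subset M_1$ is a $\varphi\textrm{-stable}$ $\AR^+\textrm{-submodule}$.

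Finite $[p]_q\textrm{-height}$ $s$ passes to quotients. Indeed, by Lemma \ref{lem:Mn_large} (2) we have $[p]_q^s M_n \subset \AR^+\cdot\varphi(M_n)$, and applying $f_n$ gives $[p]_q^s f_n(M_n) \subset \AR^+\cdot\varphi(f_n(M_n))$. For the localisation condition, inverting $\mu$ is exact and $M_n[1/\mu] = \DRn$ by Lemma \ref{lem:Mn_large} (3). Hence $f_n(M_n)[1/\mu] = f_n(\DRn) = D_{\calR,1}$, which equals $M_1[1/\mu]$, again by Lemma \ref{lem:Mn_large} (3) for $n=1$. So $f_n(M_n) \in \mathscr{T}$.

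For $M^{\circ}$, stability under $\varphi$ and the $\AR^+\textrm{-module}$ structure are clear for an intersection. Finite height requires care. Since $\varphi$ does not commute with intersections, the plan is to work with the linearized image $\varphi^*(M') \to M_1$ and show $[p]_q^s M^{\circ} \subset \AR^+\varphi(M^{\circ})$. The argument goes through minimality. Write $M'' \coloneq \AR^+\varphi(M^{\circ}) + [p]_q^s M^{\circ}$, or better, use $\varphi\textrm{-stability}$ to form a candidate member of $\mathscr{T}$ contained in $M^{\circ}$.

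The cleanest route is to show first that $\mathscr{T}$ is stable under finite intersections and that the intersection stabilises. Given this, $M^{\circ}$ equals a finite intersection, and it suffices to treat $M' \cap M''$ for $M', M'' \in \mathscr{T}$. For this we use that $M_1$ is finitely generated over the noetherian ring $\AR^+/p$. Moreover, each $M' \in \mathscr{T}$ satisfies $\mu^k M_1 \subset M'$ for some $k$, since $M'[1/\mu] = M_1[1/\mu]$, $M_1$ is finitely generated and $\mu\textrm{-torsion}$ free. Thus $\mathscr{T}$ corresponds to submodules of $M_1$ containing some $\mu^kM_1$.

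The key step, and the main obstacle, is a uniform bound: there is a $k_0$ with $\mu^{k_0}M_1 \subset M'$ for all $M' \in \mathscr{T}$. To get it, I would use $\varphi\textrm{-stability}$ together with the height condition. If $\mu^k M_1 \subset M'$, then $[p]_q^s\varphi(\mu^k)M_1 \subset [p]_q^s \AR^+\varphi(\mu^k M_1)$, up to the height-$s$ defect of $M_1$. Since $\varphi(\mu)=[p]_q\mu$ and $[p]_q \equiv p \equiv 0$ modulo $(p,\mu)$, so that $[p]_q \equiv \mu^{p-1}$ modulo $p$, this shows $\mu^{k'}M_1 \subset M'$ with $k' \approx k/p + $ a constant depending only on $s$. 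Iterating yields a bound independent of $M'$, as in the Wach/Berger argument. Once this uniform bound holds, $M^{\circ} \supset \mu^{k_0}M_1$, so $M^{\circ}[1/\mu] = D_{\calR,1}$.

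With the bound in hand, $M_1/\mu^{k_0}M_1$ is noetherian, so the intersection stabilises after finitely many terms. Finite height of a finite intersection can then be checked on the linearized map: $[p]_q^s$ kills the cokernel of $\varphi^*(M'\cap M'') \to M'\cap M''$. This uses that $\varphi$ on $\AR^+$ is finite faithfully flat, so $\varphi^*$ commutes with finite intersections, together with the regularity of $[p]_q$ on $M_1$. This proves $M^{\circ} \in \mathscr{T}$.
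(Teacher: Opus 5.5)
Your argument that $f_n(M_n)\in\mathscr{T}$ is the same as the paper's. You should add that $1\otimes\varphi$ is injective on $\varphi^*(f_n(M_n))$; this holds because $f_n(M_n)\subset D_{\calR,1}$, which is \'etale, and $\varphi$ is flat.

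For $M^{\circ}$, however, there is a genuine gap in the step you call the main obstacle: the computation you sketch runs in the wrong direction. From $\mu^kM_1\subset M'$, the $\varphi$-stability of $M'$ and the height of $M_1$, you only get $[p]_q^s\varphi(\mu^k)M_1\subset \AR^+\varphi(\mu^kM_1)\subset M'$, i.e.\ $\mu^{pk+(p-1)s}M_1\subset M'$. That is weaker than your hypothesis, and pushing forward along $\varphi$ can never lower the exponent. The missing idea is to pull back through $1\otimes\varphi$ using the finite height of $M'$ itself, not that of $M_1$:
\begin{itemize}
\item Suppose $\mu^EM_1\subset M'$ and $m\in M_1$. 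Then $\mu^E\varphi(m)\in M'$, so $[p]_q^s\mu^E\varphi(m)=(1\otimes\varphi)(y')$ for some $y'\in\varphi^*(M')$.
\item Injectivity of $1\otimes\varphi$ on $\varphi^*(M_1)\subset\varphi^*(D_{\calR,1})$ forces $y'=\mu^{E+(p-1)s}(1\otimes m)$.
\item Since $M_1$ is killed by $p$, one has $\mu^{pj}(1\otimes m)=1\otimes\mu^jm$. Also $1\otimes w\in\varphi^*(M')$ iff $w\in M'$, because $\AR^+=\oplus_\alpha\varphi(\AR^+)u_\alpha$ with $u_0=1$.
\item Hence $\mu^{\lceil (E+(p-1)s)/p\rceil}M_1\subset M'$. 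Iterating gives $\mu^{s+1}M_1\subset M'$ for every $M'\in\mathscr{T}$.
\end{itemize}
The paper runs this mechanism in one shot. It uses maps $\psi_{M'},\psi_{M''}$ with $\psi\circ(1\otimes\varphi)=\mu^k$ on $M''=M_1/M'$, applied to an element of maximal $\mu$-exponent, and obtains $\mu^{p(s+1)}M_1\subset M'$ uniformly.

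Your route to the finite height of $M^{\circ}$ also has a false step. That $M_1/\mu^{k_0}M_1$ is noetherian gives the ascending chain condition, but $M^{\circ}$ is a descending intersection. Moreover, $M_1/\mu^{k_0}M_1$ is a module over $(\calR/p)[\mu]/(\mu^{k_0})$, which is not artinian once $d\geqslant 1$. So there is no reason for $M^{\circ}$ to be a finite intersection. No such reduction is needed. For $x\in M^{\circ}$ and each $M'\in\mathscr{T}$, the element $[p]_q^sx$ has a preimage in $\varphi^*(M')$. By injectivity of $1\otimes\varphi$ on $\varphi^*(M_1)$, all these preimages equal a single $y$. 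Since $\AR^+$ is finite free over itself via $\varphi$, $\varphi^*$ commutes with arbitrary intersections, not just finite ones, so $y\in\varphi^*(M^{\circ})$. This is the paper's argument. The ingredient is injectivity of $1\otimes\varphi$, coming from \'etaleness of $D_{\calR,1}$, not regularity of $[p]_q$.
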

\begin{proof}
	Follows by an argument similar to \cite[Lemma 4.7]{abhinandan-relative-wach-ii}.
	Let $M'$ be an element of $\mathscr{T}$.
	First, let us show that there exists some $r \in \mathbb{N}$ such that $\mu^r M_1 \subset M' \subset M_1$.
	Let $M'' \coloneq M_1/M'$ such that $M'' \neq 0$ and let $k = p(p-1)s \in \mathbb{N}$.
	Also, let $\varphi^*(M'') \coloneq \varphi^*(M_1)/\varphi^*(M')$ and let $1 \otimes \varphi_{M''} \colon \varphi^*(M'') \rightarrow M''$ denote the map induced from $1 \otimes \varphi_M$.
	Since $M_1$ (resp.\ $M'$) is of finite $[p]_q\textrm{-height}$ $k$ (since $s < k$), we define $\psi_M \colon M_1 \xrightarrow{\mu^k} \mu^k M_1 \rightarrow \varphi^*(M_1)$ (resp.\ $\psi_{M'} \colon M' \xrightarrow{\mu^k} \mu^k M' \rightarrow \varphi^*(M')$) to be the unique $\AR^+/p\textrm{-linear}$ map such that $\psi_M \circ (1 \otimes \varphi_M) = \mu^k \textup{Id}_{\varphi^*_M}$ (resp.\ $\psi_{M'} \circ (1 \otimes \varphi_{M'})  = \mu^k \textup{Id}_{\varphi^*_{M'}}$).
	Let $\psi_{M''} \colon M'' \rightarrow \varphi^*(M'')$ denote the map induced from $\psi_M$.
	Now, consider the following commutative diagram:
	\begin{center}
		\begin{tikzcd}
			0 \arrow[r] & \varphi^*(M') \arrow[r] \arrow[d, "1 \otimes \varphi_{M'}"] & \varphi^*(M_1) \arrow[r] \arrow[d, "1 \otimes \varphi_{M}"] & \varphi^*(M'') \arrow[r] \arrow[d, "1 \otimes \varphi_{M''}"] & 0\\
			0 \arrow[r] & M' \arrow[r] \arrow[d, "\psi_{M'}"] & M_1 \arrow[r] \arrow[d, "\psi_{M}"] & M'' \arrow[r] \arrow[d, "\psi_{M''}"] & 0\\
			0 \arrow[r] & \varphi^*(M') \arrow[r] & \varphi^*(M_1) \arrow[r] & \varphi^*(M'') \arrow[r] & 0.
		\end{tikzcd}
	\end{center}
	Note that $[p]_q = \mu^{p-1} \mod p$, $\varphi(\mu) = \mu^p \mod p$ and $\varphi([p]_q) = \mu^{p(p-1)} \mod p$.
	Since $M_1[1/\mu] = M'[1/\mu]$, let $i \in \mathbb{N}_{\geqslant 1}$ such that $\mu^{pi} M'' = 0$ and $\mu^{p(i-1)} M'' \neq 0$.
	Let $x$ be in $M''$ such that $\mu^{p(i-1)} x \neq 0$ and set $y = 1 \otimes x$ to be in $\varphi^*(M'')$.
	Then, $\varphi(\mu^{pi}) y = 1 \otimes \mu^{pi} x = 0$, but 
	\begin{equation*}
		\mu^{p^2(i-1)}y = \varphi(\mu^{p(i-1)}) y = 1 \otimes \mu^{p(i-1)} x \neq 0.
	\end{equation*}
	Let $z = (1 \otimes \varphi_{M''}) y$ be in $M''$, then $\mu^{pi} z = 0$.
	So, we have that
	\begin{equation*}
		0 = \psi_{M''}(\mu^{pi} z) = \mu^{pi} (\psi_{M''} \circ (1 \otimes \varphi_{M''}) y) = \mu^{pi+k} y.
	\end{equation*}
	Therefore, we get that $pi+k = pi+p(p-1)s > p^2 (i-1)$, i.e.\ $i < s + \frac{p}{p-1}$.
	Hence, it follows that $\mu^{p(s+1)} M'' = 0$.
	Since the constant $i$ obtained above is independent of $M'$, we also get that $\mu^{p(s+1)} M_1 \subset M^{\circ} \subset M_1$ and $M^{\circ}[1/\mu] = M_1[1/\mu]$.

	Next, we will show that $M^{\circ}$ is of finite height $s$.
	Let $x$ be in $M^{\circ}$, so that $x$ is in $M'$, for each $M'$ in $\mathscr{T}$, and there exists some $y$ in $\varphi^*(M') \subset \varphi^*(M_1)$ such that $(1 \otimes \varphi)y = [p]_q^s x$.
	Note that $y$ is unique in $\varphi^*(M_1)$ and since $\varphi \colon \AR^+ \rightarrow \AR^+$ is flat, we get that $y$ is in $\cap_{M' \in \mathscr{T}} (\AR^+ \otimes_{\varphi, \AR^+} M') = \AR^+ \otimes_{\varphi, \AR^+} (\cap_{M' \in \mathscr{T}} M') = \varphi^*(M^{\circ})$.
	Therefore, we conclude that $M^{\circ}$ is an element of $\mathscr{T}$.

	For the second part of the claim, note that $M_n[1/\mu] = \DRn$ and $f_n(\DRn) = \DRn/p = \DR/p$ (see Lemma \ref{lem:Mn_large}).
	So, we get that $f_n\big(M_n[1/\mu]\big) = \DR/p$ and we are left to show that $f_n(M_n)$ is of finite height $s$.
	Note that we have the following commutative diagram with exact rows:
	\begin{center}
		\begin{tikzcd}
			0 \arrow[r] & \varphi^*(\textrm{kernel}) \arrow[r] \arrow[d, "1 \otimes \varphi"] & \varphi^*(M_n) \arrow[r] \arrow[d, "1 \otimes \varphi"] & \varphi^*(f_n(M_n)) \arrow[r] \arrow[d, "1 \otimes \varphi"] & 0\\
			0 \arrow[r] & \textrm{kernel} \arrow[r] & M_n \arrow[r, "f_n"] & f_n(M_n) \arrow[r] & 0.
		\end{tikzcd}
	\end{center}
	The rightmost vertical arrow is injective because $f_n(M_n) \subset \DRn$ and the cokernel of the middle vertical arrow is killed by $[p]_q^s$ (see Lemma \ref{lem:Mn_large}).
	Hence, the cokernel of the rightmost vertical arrow is also killed by $[p]_q^s$.
	This concludes our proof.
\end{proof}

\begin{prop}\label{prop:wach_phigamm_comp}
	The natural inclusion $\NR \subset \DR$ extends to a natural $(\varphi, \GammaR \times \Delta)\textrm{-equivariant}$ isomorphism $\AR \otimes_{\AR^+} \NR \isomorphic \DR$.
\end{prop}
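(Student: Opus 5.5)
The plan follows \cite[Proposition 4.8]{abhinandan-relative-wach-ii}: first prove the statement modulo $p$, then lift by $p$-adic completeness. Since $\NR \subset \DR$ and $\DR$ is $p$-adically complete, the inclusion extends to an $\AR$-linear, $(\varphi, \GammaR \times \Delta)$-equivariant map $\AR \otimes_{\AR^+} \NR \rightarrow \DR$. Here we use that $\AR^+$ is noetherian, $\NR$ is finitely generated by Lemma \ref{lem:fingen_finiteheight}, and $\AR \otimes_{\AR^+} \NR$ is therefore already $p$-adically complete. Injectivity is formal: $\AR^+ \rightarrow \AR$ is flat and $\NR \subset \DR$. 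It thus suffices to prove surjectivity, and by Nakayama for the $p$-adically complete finitely generated module $\DR$ it is enough to show that the image of $\NR$ in $\DR/p = D_{\calR,1}$ generates $D_{\calR,1}$ after inverting $\mu$.

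The key step is to show that the image $\overline{\NR}$ of $\NR$ in $D_{\calR,1}$ contains $M^{\circ}$. Let $f_n \colon M_n \rightarrow M_1$ be as above. By Lemma \ref{lem:mcirc_props}, each $f_n(M_n)$ lies in $\mathscr{T}$, so $M^{\circ} \subset f_n(M_n)$ for all $n$. The sets $f_n(M_n)$ decrease in $n$; more precisely, the transition maps $M_{n+1} \rightarrow M_n$ are compatible with the $f_n$. I will then run a Mittag-Leffler-type argument: for $x \in M^{\circ}$, the preimages $f_n^{-1}(x) \cap M_n$ form a projective system of nonempty sets. These are cosets of $\ker(f_n|_{M_n})$, which are finitely generated modules over the noetherian ring $\AR^+/p^n$. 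Using the uniform bound $\mu^{p(s+1)}M_1 \subset M^{\circ}$ from the proof of Lemma \ref{lem:mcirc_props}, I will show that the images of $M_{n+k} \rightarrow M_n$ stabilise. This should follow from the same height argument applied to $M_n$, giving $\mu$-bounded cokernels. Granting this, the system satisfies Mittag-Leffler, so $x$ lifts to $\lim_n M_n \cong \NR$ by Lemma \ref{lem:Mn_large}(1). Hence $M^{\circ} \subset \overline{\NR}$.

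Given that inclusion, $M^{\circ}[1/\mu] = D_{\calR,1}$ (as $M^{\circ} \in \mathscr{T}$) yields $\overline{\NR}[1/\mu] = D_{\calR,1}$. So the cokernel $C$ of $\AR \otimes_{\AR^+} \NR \rightarrow \DR$ satisfies $C/pC = 0$. Since $C$ is a finitely generated module over the $p$-adically complete noetherian ring $\AR$, we get $C = 0$. Equivariance for $\varphi$, $\GammaR$ and $\Delta$ is inherited from the inclusion $\NR \subset \DR$, since all of these structures are defined inside $\DL$.

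The main obstacle is the lifting step: making Mittag-Leffler precise, i.e.\ showing that the compatible elements in $\varphi$-stable finite-height submodules of $M_n$ assemble into elements of $\NR$. If a direct argument is awkward, I would instead imitate \cite[Proposition 4.8]{abhinandan-relative-wach-ii}. There one shows by induction on $n$ that $M_n \rightarrow M_{n-1}$ has image containing a fixed $\mu^{t}$-multiple of $M_{n-1}$, with $t$ independent of $n$, using the height-$s$ bound of Lemma \ref{lem:Mn_large}(2). This gives $\mu^{t}M_1 \subset \overline{\NR}$, which again suffices after inverting $\mu$.
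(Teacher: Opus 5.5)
Your architecture matches the paper's. You reduce modulo $p$, show $M^{\circ}\subset \NR/p$ inside $M_1$, and conclude from $\DR/p=M^{\circ}[1/\mu]\subset(\NR/p)[1/\mu]\subset M_1[1/\mu]=\DR/p$ together with $p$-adic completeness.

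The paper obtains $M^{\circ}\subset\NR/p$ by simply citing Lemma~\ref{lem:mcirc_props}. That lemma only gives $M^{\circ}\subset\bigcap_n f_n(M_n)$. Passing to the image of $\NR=\lim_n M_n$ is exactly the step you flag, and neither of your two justifications closes it.
\begin{enumerate}
\item A $\mu$-bounded cokernel does not force the images of $M_{n+k}\to M_n$ to stabilise. Here $M_n/\mu^{c}M_n$ is a finitely generated module over $(\calR/p^n)[\mu]/(\mu^{c})$, which has Krull dimension $d$ and is not artinian once $d\geqslant 1$. So a decreasing chain of submodules between $\mu^{c}M_n$ and $M_n$ need not stabilise. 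The finite-length argument works only when the residue ring is a field, as over $O_{\calL}$.
\item A uniform bound $\mathrm{im}(M_n\to M_{n-1})\supset\mu^{t}M_{n-1}$ does not compose. After lifting $x\in\mu^{t}M_1$ to $x_2\in M_2$, nothing forces $x_2\in\mu^{t}M_2$. Iterating only yields $\mu^{(n-1)t}M_1\subset f_n(M_n)$, and produces no element of $\lim_n M_n$.
\end{enumerate}

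What works is to run the minimality argument at every level. Let $\mathscr{T}_n$ be defined like $\mathscr{T}$ but inside $M_n$, with $\DR/p^n$ in place of $\DR/p$.
\begin{enumerate}
\item Since $\NL$ and $\DR$ are $p$-saturated in $\DL$, there is an exact sequence $0\to M_{n-1}\xrightarrow{p}M_n\xrightarrow{f_n}f_n(M_n)\to 0$.
\item For $M'\in\mathscr{T}_n$, one checks that $f_n(M')\in\mathscr{T}$. One also checks that the submodule $M'_{(1)}\subset M_{n-1}$ with $pM'_{(1)}=M'\cap p(\DR/p^n)$ lies in $\mathscr{T}_{n-1}$, using flatness of $\varphi$ and bijectivity of $1\otimes\varphi$ on $\varphi^*(\DR/p^n)$. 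By induction, $\mu^{np(s+1)}M_n\subset M'$.
\item Since $\varphi$ is finite free on $\AR^+$, $\varphi^*$ commutes with arbitrary intersections. Hence the intersection $M_n^{\circ}$ of all members of $\mathscr{T}_n$ is the least element of $\mathscr{T}_n$.
\item Let $\rho_n\colon M_{n+1}\to M_n$ be reduction. The image $\rho_n(M_{n+1}^{\circ})$ lies in $\mathscr{T}_n$, so it contains $M_n^{\circ}$. Conversely, $M_{n+1}^{\circ}\cap\rho_n^{-1}(M_n^{\circ})$ lies in $\mathscr{T}_{n+1}$ by the same two inputs, so it equals $M_{n+1}^{\circ}$.
\item Therefore $M_{n+1}^{\circ}\to M_n^{\circ}$ is surjective. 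Every element of $M^{\circ}=M_1^{\circ}$ then lifts to $\lim_n M_n^{\circ}\subset\lim_n M_n=\NR$.
\end{enumerate}

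A smaller point: injectivity is not formal from flatness, because $\AR\otimes_{\AR^+}\DR\to\DR$ need not be injective. Instead use $\AR\otimes_{\AR^+}\NR=\NR[1/\mu]^{\wedge}$ and $(\NR/p^n)[1/\mu]\hookrightarrow\DR/p^n$, which follows from $\NR/p^n\hookrightarrow\DR/p^n$. Alternatively, prove the mod $p$ isomorphism directly, as the paper does.
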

\begin{proof}
	Note that $\DR$ and $\NR$ are $p\textrm{-torsion}$ free and $p\textrm{-adically}$ complete, so it is enough to show the claim modulo $p$.
	From the proof of Lemma \ref{lem:fingen_finiteheight} (see \cite[Lemma 4.3]{abhinandan-relative-wach-ii}), recall that we have $\NR/p \subset M_1 = \DR/p \cap \NL/p \subset \DL/p$, and from Lemma \ref{lem:mcirc_props} we have that $M^{\circ} \subset \NR/p$.
	Therefore,
	\begin{equation*}
		\DR/p = M^{\circ}[1/\mu] \subset \AR/p \otimes_{\AR^+/p} \NR/p \subset M_1[1/\mu] = \DR/p.
	\end{equation*}
	Hence, $(\NR/p)[1/\mu] \isomorphic \DR/p$, thus proving the claim.
 \end{proof}

\begin{proof}[Proof of Theorem \ref{thm:wachmod_existence}]
	It is immediate that $\NR$ is $p\textrm{-torsion}$ free and $\mu\textrm{-torsion}$ free.
	From Lemma \ref{lem:fingen_finiteheight}, note that $\NR(T)$ is finitely generated over $\AR^+$, and we have that $\NR/p \subset (\NL/p) \cap (\DR/p) \subset \DL/p$, in particular, $\NR/p$ is $\mu\textrm{-torsion}$ free.
	Then, by using \cite[Lemma A.2]{abhinandan-relative-wach-ii}, we also get that $(\NR/\mu)[p] = (\NR/p)[\mu] = 0$, i.e.\ $\NR/\mu$ is $p\textrm{-torsion}$ free.
	Moreover, from Lemma \ref{lem:fingen_finiteheight}, we know that $\NR$ is of finite $[p]_q\textrm{-height}$, i.e.\ the cokernel of the injective map $1 \otimes \varphi \colon \varphi^*(\NR) \rightarrow \NR$ is killed by $[p]_q^s$, where $s$ is the height of $\NL$.
	Furthermore, from Proposition \ref{prop:wach_phigamm_comp} we have that $\AR \otimes_{\AR^+} \NR \isomorphic \DR$.
	Finally, recall that the action of $\GammaL$ is trivial on $\NL/\mu\NL$ and $\GammaL \isomorphic \GammaR$, so for any $g \in \GammaR$, we have that $(g-1)\NL \subset \mu \NL$.
	Therefore, we get that $(g-1)\NR \subset (\mu \NL) \cap \DR = \mu \NR$, so it follows that $\GammaR$ acts trivially on $\NR/\mu\NR$.
	This concludes our proof.
\end{proof}

\begin{cor}\label{cor:wachmod_base_change}
	The natural inclusion $\NR \subset \NS$ extends to a natural $(\varphi, \GammaS \times \Delta)\textrm{-equivariant}$ isomorphism $\AS^+ \otimes_{\AR^+} \NR \isomorphic \NS$.
	Similarly, the natural inclusion $\NR \subset \NL$ extends to a natural $(\varphi, \GammaL \times \Delta)\textrm{-equivariant}$ isomorphism $\AL^+ \otimes_{\AR^+} \NR \isomorphic \NL$.
\end{cor}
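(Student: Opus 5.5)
The plan is to mimic the argument for Lemma \ref{lem:Mn_large} (3), but integrally after completion, and to combine it with Proposition \ref{prop:wach_phigamm_comp}. We treat the $\calL$-statement first, since the $\calS$-statement can then be deduced by intersecting.

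\textbf{Step 1 (the $\calL$ case).} Since both $\AL^+ \otimes_{\AR^+} \NR$ and $\NL$ are $p$-torsion free and $p$-adically complete, it suffices to show that the natural map is an isomorphism modulo $p^n$ for every $n$. For the $p$-torsion freeness of the source, note that $\AR^+ \rightarrow \AL^+$ is flat, being a completed localisation of a noetherian ring. Modulo $p^n$ we compare $\NR/p^n$ with $M_n$.

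The main obstacle is that $\NR/p^n$ may be strictly smaller than $M_n$. To get around this, first compare modulo $p$ using $M^{\circ} \subset \NR/p \subset M_1$ (Lemma \ref{lem:mcirc_props}). Since $M^{\circ} \in \mathscr{T}$, we have $\mu^{p(s+1)} M_1 \subset M^{\circ}$. Hence the cokernel of $\NR/p \hookrightarrow M_1$ is killed by a power of $\mu$.

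Base changing to $\AL^+/p$, and using Lemma \ref{lem:Mn_large} (3) for $n=1$, we obtain an injection $\AL^+\otimes_{\AR^+} \NR/p \hookrightarrow \NL/p$ whose cokernel is $\mu$-power torsion. Injectivity here uses flatness of $\AR^+ \rightarrow \AL^+$. Now $\AL^+/p = (O_{\calL}/p)\llbracket\mu\rrbracket$ with $O_{\calL}/p$ a field, so $\NL/p$ is free over a DVR. A $\mu$-power-torsion cokernel is then finite length, which is not automatically zero.

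To kill it, I would run the argument of Lemma \ref{lem:mcirc_props} over $\AL^+/p$. The submodule $\AL^+\otimes \NR/p$ is $\varphi$-stable of height $s$ and agrees with $\NL/p$ after inverting $\mu$. Moreover, $\NL/p$ is the minimal such lattice, because $\NL$ is a Wach module, so its reduction is the unique minimal lattice, by the analogue of \cite[Proposition 4.8]{abhinandan-relative-wach-ii} in the imperfect residue field case. This forces equality modulo $p$. Nakayama and $p$-adic completeness then upgrade this to $\AL^+ \otimes_{\AR^+} \NR \isomorphic \NL$. Equivariance for $(\varphi, \GammaL \times \Delta)$ is clear, since the map is induced by the inclusion $\NR \subset \NL$.

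\textbf{Step 2 (the $\calS$ case).} Set $N' \coloneq \AS^+ \otimes_{\AR^+} \NR$. By Lemma \ref{lem:wachmod_RtoS} and the proof of Theorem \ref{thm:wachmod_existence}, $N'$ is a Wach module with $\Delta$-action over $\AS^+$. By Proposition \ref{prop:wach_phigamm_comp} its étale module is $\AS \otimes_{\AR} \DR = \DS$. Step 1 gives $\AL^+ \otimes_{\AS^+} N' = \AL^+ \otimes_{\AR^+} \NR \isomorphic \NL$. Then \cite[Proposition 4.2]{abhinandan-relative-wach-ii} applied to $N'$ yields $N' = \DS \cap (\AL^+ \otimes_{\AS^+} N') = \DS \cap \NL = \NS$. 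Alternatively, one can invoke full faithfulness in \cite[Proposition 3.12]{abhinandan-relative-wach-ii}: the identity of $\DS$ maps $N'$ into $\NS$ and $\NS$ into $N'$.
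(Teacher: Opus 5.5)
Step 1 has a genuine gap. You claim that $\NL/p$ is the \emph{minimal} $\varphi$-stable $\AL^+/p$-submodule of $\NL/p$ of $[p]_q$-height $\leqslant s$ with the same $\mu$-localisation. This is false in general.

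For a counterexample, suppose $\NL = N_1 \oplus N_2$ as $\varphi$-modules, with $N_2 \neq 0$ of height $0$ and $s \geqslant 1$. (For instance take $T = \mathbb{Z}_p(-1) \oplus \mathbb{Z}_p$, with $N_2$ coming from the trivial summand and $s = 1$.) Put $M' \coloneq N_1/p \oplus \mu N_2/p$. Then:
\begin{itemize}
\item $M'$ is $\varphi$-stable;
\item the cokernel of $\varphi^*M' \rightarrow M'$ is killed by $[p]_q^{s}$, since on the second summand it is $\mu N_2/\mu^{p} N_2$, which is killed by $\mu^{p-1} \equiv [p]_q \bmod p$;
\item $M'[1/\mu] = \DL/p$;
\item yet $M' \subsetneq \NL/p$.
\end{itemize}
Modulo $p$, a finite-height lattice is simply not determined by its $\varphi$-structure. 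Uniqueness of Wach modules is an integral statement that also uses the $\Gamma$-action. This is why the paper extracts only $M^{\circ}[1/\mu] = \DR/p$ from Lemma \ref{lem:mcirc_props}, and never $M^{\circ} = \NR/p$. So the finite-length cokernel of $\AL^+ \otimes_{\AR^+} \NR/p \hookrightarrow \NL/p$ is never shown to vanish. Your main route for the $\calS$-case goes through Step 1 and \cite[Proposition 4.2]{abhinandan-relative-wach-ii}, so it inherits the same gap.

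The repair is already in your proposal: the ``alternative'' at the end of Step 2 is exactly the paper's proof.
\begin{itemize}
\item $\AS^+ \otimes_{\AR^+} \NR$ is a Wach module with $\Delta$-action over $\AS^+$, by Theorem \ref{thm:wachmod_existence} and Lemma \ref{lem:wachmod_RtoS}.
\item Its \'etale module is $\DS$, by Proposition \ref{prop:wach_phigamm_comp}.
\item Uniqueness of Wach modules \cite[Theorem 1.5]{abhinandan-relative-wach-ii} then forces $\AS^+ \otimes_{\AR^+} \NR \isomorphic \NS$. Equivalently, apply full faithfulness \cite[Proposition 3.12]{abhinandan-relative-wach-ii} to the identity of $\DS$.
\end{itemize}
Make this the proof of the first claim. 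Then deduce the second claim by base change along $\AS^+ \rightarrow \AL^+$, since $\NL$ is \emph{defined} as $\AL^+ \otimes_{\AS^+} \NS$. Step 1 can then be dropped entirely.
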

\begin{proof}
	Proof of both the claims follow by a similar argument, so we only show the first claim.
	Using Theorem \ref{thm:wachmod_existence}, observe that $\AS^+ \otimes_{\AR^+} \NR$ and $\NS$ are both Wach modules with $\Delta\textrm{-action}$ over $\AS^+$, and we have a natural $(\varphi, \GammaS \times \Delta)\textrm{-equivariant}$ homomorphism $\AS^+ \otimes_{\AR^+} \NR \rightarrow \NS$.
	From Proposition \ref{prop:wach_phigamm_comp} we see that the preceding homomorphism induces a natural isomorphism of \'etale $(\varphi, \GammaS \times \Delta)\textrm{-modules}$ $\AS \otimes_{\AR^+} \NR \rightarrow \AS \otimes_{\AS^+} \NS = \DS$ over $\AS$.
	Let $T \coloneq \TS(\AS \otimes_{\AR^+} \NR) \isomorphic \TS(\DS)$ denote the associated $\mathbb{Z}_p\textrm{-representation}$ of $G_S$, and note that $T[1/p]$ is a crystalline representation of $G_S$ from \cite[Theorem 3.35]{abhinandan-relative-wach-ii}.
	By the uniqueness of Wach modules associated to $T$ (see \cite[Theorem 1.5]{abhinandan-relative-wach-ii}), it follows that the natural $(\varphi, \GammaS \times \Delta)\textrm{-equivariant}$ homomorphism $\AS^+ \otimes_{\AR^+} \NR \rightarrow \NS$ is an isomorphism.
\end{proof}

By restricting to $\GR\textrm{-action}$, instead of $G_R\textrm{-action}$, in the discussion above we obtain the following:
\begin{rem}\label{rem:wachmod_existence}
	Let $\DR$ be an \'etale $(\varphi, \GammaR)\textrm{-module}$ over $\AR$ and set $\DS \coloneq \AS \otimes_{\AR} \DR$ as an \'etale $(\varphi, \GammaS)\textrm{-module}$ over $\AS$.
	Let $\NS \subset \DS$ be a Wach module over $\AS^+$.
	Then, $\NR \coloneq \DR \cap \NS \subset \DS$ is a Wach module over $\AR^+$, i.e.\ it satisfies all the axioms of Definition \ref{defi:wach_mods}.

	Moreover, the natural inclusion $\NR \subset \DR$ extends to a natural $(\varphi, \GammaR)\textrm{-equivariant}$ isomorphism $\AR \otimes_{\AR^+} \NR \isomorphic \DR$, the natural inclusion $\NR \subset \NS$ extends to a natural $(\varphi, \GammaS)\textrm{-equivariant}$ isomorphism $\AS^+ \otimes_{\AR^+} \NR \isomorphic \NS$, and the natural inclusion $\NR \subset \NL$ extends to a natural $(\varphi, \GammaL)\textrm{-equivariant}$ isomorphism $\AL^+ \otimes_{\AR^+} \NR \isomorphic \NL$.
\end{rem}

\section{Log-Crystalline Galois representations}\label{subsec:crystalline_rings}

The goal of this section is to define log-crystalline period rings, and study the properties of admissible $p\textrm{-adic}$ Galois representations with respect to these period rings.
We will keep the notation of previous sections.

\subsection{de Rham period rings}\label{subsec:deRham_period_rings}

In this section, we will define several logarithmic period rings and study their properties.
For a fixed positive integer $m$ coprime to $p$, let $\calR \coloneq R_{0,m}$.
We consider $R$ and $\calR$ as log-schemes equipped with a log-structure defined by the divisor $\{x_{a+1} \cdots x_d = 0\}$.
Moreover, we equip the $R\textrm{-algebras}$ $\widehat{\calR}_{\infty}$, $\widehat{R}_{\infty,\infty}$ and $\widehat{\overline{R}}$ with a log-structure induced from the log-structure on $R$.

Recall that in Section \ref{subsubsec:perfect_period_rings} we defined the infinitesimal period rings and studied their properties.
Our next goal is to define logarithmic de Rham period rings and study their properties.

\subsubsection{Logarithmic de Rham period rings}\label{subsubsec:BdR}

Definitions of various de Rham period rings below have been adapted to the current setting from \cite[Section 3.2]{andreatta-iovita-semistable}, \cite[Chapitre 5]{brinon-relatif}, \cite[Section 2]{brinon-imparfait} and \cite[Section 2.3]{abhinandan-relative-wach-ii}.

We start by noting that the $G_R\textrm{-equivariant}$ surjective homomorphism $\theta \colon A_{\inf}(\overline{R}) \twoheadrightarrow \widehat{\overline{R}}$ is compatible with log-structures described in Section \ref{subsubsec:perfect_period_rings} (see before Lemma \ref{lem:Ainf_Rinftym_in_Sinftym}), it naturally extends to a $G_R\textrm{-equivariant}$ surjective homomorphism $\theta \colon A_{\inf}(\overline{R})[1/p] \twoheadrightarrow \widehat{\overline{R}}[1/p]$ compatible with the induced log-structures, and its kernel is principal and generated by $\xi$.
The closed immersion of log-schemes induced by the map $\theta$ is exact, and the $n\textrm{-th}$ log-infinitesimal neighbourhood of $\theta$ (in the sense of \cite[Remark 5.8]{kato-fontaine-illusie-i}) coincides with the $n\textrm{-th}$ infinitesimal neighbourhood of $\theta$.
So we set
\begin{equation*}
	\BlogdR^+(\overline{R}) \coloneq \lim_n (A_{\inf}(\overline{R})[1/p])/(\ker \theta)^n,
\end{equation*}
equipped with a log-structure induced by the natural map $A_{\inf}(\overline{R}) \rightarrow \BlogdR^+(\overline{R})$.
Note that $t \coloneq \log(1+\mu)$ converges in $\BlogdR^+(\overline{R})$ and this ring is $t\textrm{-torsion free}$; we set $\BlogdR(\overline{R}) \coloneq \BlogdR^+(\overline{R})[1/t]$.

Analogously, we may $R[1/p]\textrm{-linearly}$ extend the $F\textrm{-linear}$ homomorphism $\theta \colon A_{\inf}(\overline{R})[1/p] \twoheadrightarrow \widehat{\overline{R}}[1/p]$ to obtain a $G_R\textrm{-equivariant}$ surjective homomorphism $\theta_R \colon (A_{\inf}(\overline{R}) \otimes_{O_F} R)[1/p] \twoheadrightarrow \widehat{\overline{R}}[1/p]$ compatible with the induced log-structures, where the log-structure on the source of $\theta_R$ is given as the product of the respective log-structure on the two components.
We define $\pazocal{O}B^+_{\textup{dR},\log,n}(\overline{R})$ to be the $n\textrm{-th}$ log-infinitesimal neighbourhood of the closed immersion of log-schemes induced by the map $\theta_R$, in the sense of \cite[Remark 5.8]{kato-fontaine-illusie-i}.
The rings $\pazocal{O}B^+_{\textup{dR},\log,n}(\overline{R})$ naturally form an inverse system for $n \geqslant 1$, and we set
\begin{equation*}
	\OBlogdR^+(\overline{R}) \coloneq \lim_n \pazocal{O}B^+_{\textup{dR},\log,n}(\overline{R}).
\end{equation*}
Note that $\OBlogdR^+(\overline{R})$ is $t\textrm{-torsion free}$, and we set $\OBlogdR(\overline{R}) \coloneq \OBlogdR^+(\overline{R})[1/t]$.

The rings defined above are naturally equipped with a $G_R\textrm{-action}$, a natural extension of the map $\theta$ (before inverting $t$) and a $G_R\textrm{-stable}$ decreasing, separated and exhaustive filtration.
Additionally, we have identifications $\OBlogdR^+(\overline{R})^{G_R} = \OBlogdR(\overline{R})^{G_R} = R[1/p]$.
Moreover, the ring $\OBlogdR(\overline{R})$ is further equipped with a $\BlogdR(\overline{R})\textrm{-linear}$ and $G_R\textrm{-equivariant}$ integrable logarithmic connection 
\begin{equation*}
	\partial \colon \OBlogdR(\overline{R}) \rightarrow \OBlogdR(\overline{R}) \otimes_{R[1/p]} \omega^1_{R/O_F},
\end{equation*}
satisfying Griffiths transversality with respect to the filtration; we equip $\OBlogdR^+(\overline{R})$ with the induced logarithmic connection.

Furthermore, similar to \cite[Proposition 3.15]{andreatta-iovita-semistable}, we see that the natural $G_R\textrm{-equivariant}$ injective homomorphism of rings $\BlogdR^+(\overline{R}) \hookrightarrow \OBlogdR^+(\overline{R})$ extends to a natural isomorphism of $\BlogdR^+(\overline{R})\textrm{-algebras}$:
\begin{equation}\label{eq:OBdRlog+_explicit}
	\begin{aligned}
		\BlogdR^+(\overline{R})\llbracket y_1, \ldots, y_d \rrbracket &\isomorphic \OBlogdR^+(\overline{R})\\
		y_i &\longmapsto 1-\tfrac{[x_i^{\flat}]}{{x_i}}.
	\end{aligned}
\end{equation}
Using the isomorphism in \eqref{eq:OBdRlog+_explicit}, we may describe the logarithmic connection on $\OBlogdR(\overline{R})$ more explicitly.
Let $N_i \colon \OBlogdR(\overline{R}) \rightarrow \OBlogdR(\overline{R})$ denote the unique $\BlogdR(\overline{R})\textrm{-linear}$ continuous logarithmic differential operator satisfying $N_i(y_j) = \delta_{ij}(1-y_j)$, where $\delta_{ij}$ denotes the Kronecker's $\delta\textrm{-symbol}$, and we have that
\begin{equation}\label{eq:OBlogdR_connection_explicit}
	\begin{aligned}
		\partial \colon \OBlogdR(\overline{R}) &\longrightarrow \OBlogdR(\overline{R}) \otimes_R \omega^1_{R/O_F}\\
			f &\longmapsto \textstyle\sum_{i=1}^d N_i(f) \otimes \dlog x_i.
	\end{aligned}
\end{equation}
Finally, let us note that the natural homomorphism of rings $R[1/p] \rightarrow \OBlogdR(\overline{R})$ is faithfully flat (see \cite[Proposition 3.18]{andreatta-iovita-semistable}).

\begin{rem}[\textbf{Classical de Rham period rings}]
	By equipping $R$, and thus $\widehat{\overline{R}}$ and $A_{\inf}(\overline{R})$, with the trivial log-structure in the discussion above yields the usual de Rham period rings, analogous to \cite[Chapitre 5]{brinon-relatif}.
	Indeed, using the map $\theta$ with trivial log-structures, we have that
	\begin{equation*}
		\BdR^+(\overline{R}) \coloneq \lim_n (A_{\inf}(\overline{R})[1/p])/(\ker \theta)^n.
	\end{equation*}
	Note that the ring $\BdR^+(\overline{R})$ coincides with the underlying ring of $\BlogdR^+(\overline{R})$.
	
	We have that $t \coloneq \log(1+\mu)$ converges in $\BdR^+(\overline{R})$ and this ring is $t\textrm{-torsion free}$, so we set $\BdR(\overline{R}) \coloneq \BdR^+(\overline{R})[1/t]$.
	Furthermore, by using the map $\theta_R$ with trivial log-structures, we obtain big period rings $\OBdR^+(\overline{R})$ and $\OBdR(\overline{R})$.
	These rings are equipped with a $G_R\textrm{-action}$, a natural extension of the map $\theta$ (before inverting $t$) and a $G_R\textrm{-stable}$ decreasing, separated and exhaustive filtration.
	Additionally, we have identifications $\OBdR^+(\overline{R})^{G_R} = \OBdR(\overline{R})^{G_R} = R[1/p]$, and by employing an argument similar to \cite[Th\'eor\`eme 5.4.1]{brinon-relatif} it follows that the natural homomorphism of rings $R[1/p] \rightarrow \OBdR(\overline{R})$ is faithfully flat.
	Moreover, the ring $\OBdR(\overline{R})$ is further equipped with a $\BdR(\overline{R})\textrm{-linear}$ and $G_R\textrm{-equivariant}$ integrable connection $\partial$ satisfying Griffiths transversality with respect to the filtration; we equip $\OBdR^+(\overline{R})$ with the induced connection.
	
	Furthermore, similar to \cite[Proposition 5.2.2]{brinon-relatif}, we see that the natural $G_R\textrm{-equivariant}$ injective homomorphism of rings $\BdR^+(\overline{R}) \hookrightarrow \OBdR^+(\overline{R})$ extends to a natural isomorphism of $\BdR^+(\overline{R})\textrm{-algebras}$:
	\begin{equation}\label{eq:OBdR+_explicit}
		\begin{aligned}
			\BdR^+(\overline{R})\llbracket z_1, \ldots, z_d \rrbracket &\isomorphic \OBdR^+(\overline{R})\\
			z_i &\longmapsto x_i-[x_i^{\flat}].
		\end{aligned}
	\end{equation}
	Using the isomorphism in \eqref{eq:OBdR+_explicit}, we may describe the connection on $\OBdR(\overline{R})$ more explicitly.
	Let $\partial_i \colon \OBdR(\overline{R}) \rightarrow \OBdR(\overline{R})$ denote the unique $\BdR(\overline{R})\textrm{-linear}$ continuous differential operator satisfying $\partial_i(z_j) = \delta_{ij}$, where $\delta_{ij}$ denotes the Kronecker's $\delta\textrm{-symbol}$, and we have that
	\begin{equation*}
		\begin{aligned}
			\partial \colon \OBdR(\overline{R}) &\longrightarrow \OBdR(\overline{R}) \otimes_R \Omega^1_{R/O_F}\\
				f &\longmapsto \textstyle\sum_{i=1}^d \partial_i(f) \otimes dx_i.
		\end{aligned}
	\end{equation*}

	By this discussion it is clear that the de Rham period rings described above are naturally a subring of their logarithmic counterparts.
	In particular, we have the following natural $G_R\textrm{-equivariant}$ inclusion of rings:
	\begin{equation*}
		\BdR^+(\overline{R}) \longhookrightarrow \OBdR^+(\overline{R}) \longhookrightarrow \OBlogdR^+(\overline{R}).
	\end{equation*}
	compatible with the respective filtrations and connections.
\end{rem}

Note that we have natural variations of these constructions over $\calR_{\infty}$, $R_{\infty,\infty}$, $\calS_{\infty}$, $S_{\infty,\infty}$ and $\overline{S}$, where $\calR_{\infty}$ and $R_{\infty, \infty}$ are equipped with the log-structure induced from the log-structure on $\calR$, and $\calS_{\infty}$, $S_{\infty,\infty}$ and $\overline{S}$ are equipped with the trivial log-structure.
Then, the natural $(\varphi, G_S)\textrm{-equivariant}$ homomorphism $A_{\inf}(\overline{R}) \rightarrow A_{\inf}(\overline{S})$ from Section \ref{subsubsec:perfect_period_rings} is compatible with log-structures, and it induces natural $G_S\textrm{-equivariant}$ homomorphisms of rings $\BlogdR^+(\overline{R}) \rightarrow \BdR^+(\overline{S})$ and $\OBdR^+(\overline{R}) \hookrightarrow \OBlogdR^+(\overline{R}) \rightarrow \OBdR^+(\overline{S})$, compatible with the respective filtrations and connections.
\begin{lem}\label{lem:BdR_Rinftym_in_Sinftym}
	The homomorphism $\BlogdR^+(\overline{R}) \rightarrow \BdR^+(\overline{S})$ naturally restricts to $\Gamma_{S,\infty}\textrm{-equivariant}$ (resp.\ $(\GammaS \times \Delta)\textrm{-equivariant}$) injective bottom (resp.\ top) horizontal homomorphism in the following commutative diagram of rings:
	\begin{equation*}
		\begin{tikzcd}
			\BlogdR^+(\calR_{\infty}) & \BdR^+(\calS_{\infty}) \\
			\BlogdR^+(R_{\infty,\infty}) & \BdR^+(S_{\infty,\infty}).
			\arrow[hook, from=1-1, to=1-2]
			\arrow[hook, from=1-1, to=2-1]
			\arrow[hook, from=1-2, to=2-2]
			\arrow[hook, from=2-1, to=2-2]
		\end{tikzcd}
	\end{equation*}   
	Similarly, the homomorphisms $\OBdR^+(\overline{R}) \hookrightarrow \OBlogdR^+(\overline{R}) \rightarrow \OBdR^+(\overline{S})$ naturally restrict to the $\Gamma_{S,\infty}\textrm{-equivariant}$ (resp.\ $(\GammaS \times \Delta)\textrm{-equivariant}$) injective bottom (resp.\ top) horizontal homomorphisms in the following commutative diagram of rings:
	\begin{equation*}
		\begin{tikzcd}
			\OBdR^+(\calR_{\infty}) & \OBlogdR^+(\calR_{\infty}) & \OBdR^+(\calS_{\infty}) \\
			\OBdR^+(R_{\infty,\infty}) & \OBlogdR^+(R_{\infty,\infty}) & \OBdR^+(S_{\infty,\infty}).
			\arrow[hook, from=1-1, to=1-2]
			\arrow[hook, from=1-1, to=2-1]
			\arrow[hook, from=1-2, to=1-3]
			\arrow[hook, from=1-2, to=2-2]
			\arrow[hook, from=1-3, to=2-3]
			\arrow[hook, from=2-1, to=2-2]
			\arrow[hook, from=2-2, to=2-3]
		\end{tikzcd}
	\end{equation*}  
\end{lem}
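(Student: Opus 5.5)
The plan is to reduce everything to the injectivity statements at the level of $A_{\inf}$ in Lemma \ref{lem:Ainf_Rinftym_in_Sinftym}, and then pass to completions along $\ker\theta$. The vertical maps come from functoriality, and commutativity is clear because all maps are induced by the same homomorphism $A_{\inf}(\overline{R}) \rightarrow A_{\inf}(\overline{S})$. Equivariance is inherited from Lemma \ref{lem:Ainf_Rinftym_in_Sinftym}. So the real content is injectivity of the horizontal arrows; the vertical arrows are handled the same way.

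\textbf{Step 1: the $\BdR^+$ diagram.} For $A$ one of $\calR_{\infty}, R_{\infty,\infty}, \calS_{\infty}, S_{\infty,\infty}$, we have
\begin{equation*}
\BdR^+(A) = \lim_n A_{\inf}(A)[1/p]/\xi^n,
\end{equation*}
with $\xi$ a common generator coming from $A_{\inf}(O_{F_{\infty}})$. Each graded piece $\xi^nA_{\inf}(A)[1/p]/\xi^{n+1}$ is isomorphic to $\widehat{A}[1/p]$, because $\xi$ is a nonzerodivisor. The injection $A_{\inf}(\calR_{\infty})\hookrightarrow A_{\inf}(\calS_{\infty})$ induces on graded pieces the map $\widehat{\calR}_{\infty}[1/p]\rightarrow \widehat{\calS}_{\infty}[1/p]$, and this map is injective by Proposition \ref{prop:Rinftym_in_Sinftym} (2). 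An induction on $n$ with the five lemma then shows that every map $A_{\inf}(\calR_{\infty})[1/p]/\xi^n \rightarrow A_{\inf}(\calS_{\infty})[1/p]/\xi^n$ is injective. Since $\lim$ is left exact, injectivity passes to $\BdR^+$. The vertical and bottom arrows are treated identically, using Proposition \ref{prop:Rinftym_in_Sinftym} and the injectivity $\widehat{\calR}_{\infty}\hookrightarrow \widehat{R}_{\infty,\infty}$. The underlying ring of $\BlogdR^+$ equals that of $\BdR^+$, so the log version follows.

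\textbf{Step 2: the $\OBdR^+$ diagrams.} Here I would use the explicit descriptions \eqref{eq:OBdR+_explicit} and \eqref{eq:OBlogdR+_explicit} in their versions over these bases:
\begin{equation*}
\OBdR^+(A)\simeq \BdR^+(A)\llbracket z_1,\ldots,z_d\rrbracket, \qquad \OBlogdR^+(A)\simeq \BdR^+(A)\llbracket y_1,\ldots,y_d\rrbracket .
\end{equation*}
The inclusion $\OBdR^+\hookrightarrow \OBlogdR^+$ sends $z_i = x_i - [x_i^{\flat}] = x_iy_i$ to $x_iy_i$. For $i>a$ the element $x_i$ is not a unit in $\OBlogdR^+(\calR_\infty)$, so this map is not an isomorphism. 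It is still injective: $x_i = [x_i^\flat]/(1-y_i)$ with $[x_i^\flat]$ a nonzerodivisor in $\BdR^+$, as one checks on graded pieces using Lemma \ref{lem:inject_mult_x_i}. Hence $z\mapsto x y$ is injective on power series.

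For the map $\OBlogdR^+(\calR_\infty) \rightarrow \OBdR^+(\calS_\infty)$, the element $x_i$ is invertible on the target. Writing $y_i = z_i/x_i$ identifies this map with the coefficientwise extension of the injective map $\BdR^+(\calR_\infty)\hookrightarrow \BdR^+(\calS_\infty)$ from Step 1, together with the variable change $y_i\mapsto x_i^{-1}z_i$. The variable change is an automorphism of power series rings over the target, so the composite is injective. Vertical maps are coefficientwise and follow from Step 1.

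\textbf{Main obstacle.} The delicate point is justifying the explicit power series descriptions over the non-$\overline{R}$ bases such as $\calR_\infty$. I would deduce these by repeating the argument of \cite[Proposition 3.15]{andreatta-iovita-semistable}. A second delicate point is making sure that, in Step 1, the graded-piece identification with $\widehat{A}[1/p]$ is compatible across the maps.
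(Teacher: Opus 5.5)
Your proposal is correct and follows the same route as the paper, whose proof reduces to the $A_{\inf}$-level injectivity of Lemma \ref{lem:Ainf_Rinftym_in_Sinftym}, which in turn rests on Proposition \ref{prop:Rinftym_in_Sinftym} (2). Your graded-piece argument modulo $\xi^n$ and your power-series bookkeeping for the $\pazocal{O}$-rings make explicit the details the paper leaves to the reader. The latter uses that $[x_i^{\flat}]$ is a nonzerodivisor on $\BdR^+(\calR_\infty)$ and a unit in $\BdR^+(\calS_\infty)$.
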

\begin{proof}
	Use Lemma \ref{lem:Ainf_Rinftym_in_Sinftym}.
\end{proof}

\subsubsection{Localisation of de Rham period rings}\label{subsubsec:localisation_dR}

Next, we look at the de Rham period rings for the localisation of $\overline{R}$ at primes $\pins$ (also see \cite[Section 3.4.5]{andreatta-iovita}).
Similar to above, for each $\pins$, we set 
\begin{equation*}
	\begin{aligned}
		\BdR^+(\Cpplus) &\coloneq \lim_n (A_{\inf}(\Cpplus)[1/p])/(\ker \theta)^n\\
		(\textrm{resp. } \BdR^+(\Cplusp) &\coloneq \lim_n (A_{\inf}(\Cplusp)[1/p])/(\ker \theta)^n),
	\end{aligned}
\end{equation*}
and set $\BdR(\Cpplus) \coloneq \BdR^+(\Cpplus)[1/t]$ (resp.\ $\BdR(\Cplusp) \coloneq \BdR^+(\Cplusp)[1/t]$) equipped with a $\GRhatp\textrm{-action}$ (resp.\ $\GRp\textrm{-action}$), a natural extension of the map $\theta$ (before inverting $t$) and a $\GRhatp\textrm{-stable}$ (resp.\ $\GRp\textrm{-stable}$) decreasing, exhaustive and separated filtration given by the formula $\Fil^k \BdR(\Cpplus) \coloneq t^k \BdR(\Cpplus)$ (resp.\ $\Fil^k \BdR(\Cplusp) \coloneq t^k \BdR(\Cplusp)$), for each $k \in \mathbb{Z}$.

Moreover, for each $\pins$, note that we have the big de Rham period rings $\OBdR^+(\Cpplus)$ and $\OBdR(\Cpplus)$ equipped with an $L\textrm{-linear}$ $\GRhatp\textrm{-action}$, a natural extension of the map $\theta$ (before inverting $t$), a $\GRhatp\textrm{-stable}$ decreasing, separated and exhaustive filtration and a $\GRhatp\textrm{-equivariant}$ integrable connection satisfying Griffiths transversality with respect to the filtration (see \cite[Section 2]{brinon-imparfait}).
Analogously, following the construction of big de Rham period rings in \cite[Chapitre 5]{brinon-relatif}, for each $\pins$, we can construct period rings for the perfectoid $O_L\textrm{-algebra}$ $\Cplusp$.
In particular, we have big de Rham period rings $\OBdR^+(\Cplusp)$ and $\OBdR(\Cplusp)$ equipped with an $L\textrm{-linear}$ $\GRp\textrm{-action}$, a natural extension of the map $\theta$ (before inverting $t$ and denoted $\theta_L$), a $\GRp\textrm{-stable}$ decreasing, separated and exhaustive filtration and a $\GRp\textrm{-equivariant}$ integrable connection satisfying Griffiths transversality with respect to the filtration.
Additionally, note that we have the natural $\GRp\textrm{-equivariant}$ injective homomorphism of rings $\BdR^+(\Cplusp) \hookrightarrow \OBdR^+(\Cplusp)$.

Now, an argument similar to \cite[Lemma 2.9]{abhinandan-relative-wach-ii} shows that the natural $\GRhatp\textrm{-equivariant}$ injective homomorphism of rings $A_{\inf}(\Cplusp) \hookrightarrow A_{\inf}(\Cpplus)$ (see the discussion on localisation in Section \ref{subsubsec:perfect_period_rings}) extends to a natural $\GRhatp\textrm{-equivariant}$ injective homomorphism of rings $\BdR(\Cplusp) \hookrightarrow \BdR(\Cpplus)$ compatible with the respective filtrations.
Moreover, the preceding natural $\GRhatp\textrm{-equivariant}$ homomorphism further extends to a natural $L\textrm{-linear}$ and $\GRhatp\textrm{-equivariant}$ injective homomorphism of rings $\OBdR(\Cplusp) \hookrightarrow \OBdR(\Cpplus)$ compatible with the respective filtrations and connections.

Let us note that we have natural $\GRhatp\textrm{-equivariant}$ injective homomorphisms of rings $A_{\inf}(\Cplusp) \hookrightarrow \BdR(\Cplusp) \hookrightarrow \OBdR(\Cplusp)$, for each $\pins$.
As product is an exact functor on the category of abelian groups, therefore, the preceding natural homomorphisms, extend to natural injective homomorphisms
\begin{equation}\label{eq:gr_act_prodbdr}
	\textstyle\prod_{\pins} A_{\inf}(\Cplusp) \longhookrightarrow \textstyle\prod_{\pins} \BdR(\Cplusp) \longhookrightarrow \textstyle\prod_{\pins} \OBdR(\Cplusp).
\end{equation}
Moreover, an argument similar to \cite[Remarque 3.3.2]{brinon-relatif} shows that the products $\prod_{\pins} \BdR(\Cplusp)$ and $\prod_{\pins} \OBdR(\Cplusp)$ may, respectively, be equipped with an action of $G_R$, extending the $G_R\textrm{-action}$ on $\prod_{\pins} A_{\inf}(\Cplusp)$.
In particular, the natural injective homomorphisms in \eqref{eq:gr_act_prodbdr} are $G_R\textrm{-equivariant}$.
Using the observations above and employing an argument similar to \cite[Lemma 2.11]{abhinandan-relative-wach-ii} shows that we have a natural $R[1/p]\textrm{-linear}$ $G_R\textrm{-equivariant}$ injective homomorphisms 
\begin{equation*}
	\OBdR(\overline{R}) \longhookrightarrow \OBlogdR(\overline{R}) \longhookrightarrow \textstyle\prod_{\pins} \OBdR(\Cplusp).
\end{equation*}

Furthermore, using the description of the grading of the filtration on $\OBlogdR(\overline{R})$ from \cite[Proposition 3.15]{andreatta-iovita-semistable}, and employing arguments similar to \cite[Remarks 2.3 and 2.12]{abhinandan-relative-wach-ii} (also see \cite[Proposition 6.2.6]{brinon-relatif}), we observe that the preceding injective homomorphisms naturally extend to $L\textrm{-linear}$ and $G_R\textrm{-equivariant}$ injective homomorphisms
\begin{equation*}
	L \otimes_{R[1/p]} \OBdR(\overline{R}) \longhookrightarrow L \otimes_{R[1/p]}\OBlogdR(\overline{R}) \longhookrightarrow \textstyle\prod_{\pins} \OBdR(\Cplusp),
\end{equation*}
where $L = (R_{(p)})^{\wedge}[1/p]$ (see Section \ref{subsubsec:localisation_Rbar}).

\subsection{Log-Crystalline period rings}

In this section we recall period rings in the log-crystalline setting and study their properties.

\subsubsection{Crystalline period rings}\label{subsubsec:Bcris}

Definitions of various crystalline period rings below have been adapted to the current setting from \cite[Chapitre 6]{brinon-relatif}.

Recall that the $G_R\textrm{-equivariant}$ surjective homomorphism $\theta \colon A_{\inf}(\overline{R}) \twoheadrightarrow \widehat{\overline{R}}$ is compatible with log-structures and its kernel is principal and generated by $\xi$.
The closed immersion of log-schemes induced by the map $\theta$ is exact, and the $n\textrm{-th}$ log-PD envelope of $\theta$ (in the sense of \cite[Remark 5.5.1]{kato-fontaine-illusie-i}) coincides with the $n\textrm{-th}$ PD envelope of $\theta$.
So we set
\begin{equation*}
	\Alogcrys(\overline{R}) \coloneq A_{\inf}(\overline{R})[\{\xi^{[k]}\}_{k \in \mathbb{N}}]_{p}^{\wedge},
\end{equation*}
equipped with a log-structure induced by the natural map $A_{\inf}(\overline{R}) \rightarrow \Alogcrys(\overline{R})$.
Note that $t \coloneq \log(1+\mu)$ converges in $\Alogcrys(\overline{R})$, and this ring is $p\textrm{-torsion free}$ and $t\textrm{-torsion free}$; we set $\Blogcrys^+(\overline{R}) \coloneq \Alogcrys(\overline{R})[1/p]$ and $\Blogcrys(\overline{R}) \coloneq \Alogcrys(\overline{R})[1/t]$.
These rings are equipped with a natural action of $G_R$ and a Frobenius endomorphism $\varphi$.

Analogously, we may $R\textrm{-linearly}$ extend the $O_F\textrm{-linear}$ homomorphism $\theta \colon A_{\inf}(\overline{R}) \twoheadrightarrow \widehat{\overline{R}}$ to obtain a $G_R\textrm{-equivariant}$ surjective homomorphism $\theta_R \colon A_{\inf}(\overline{R}) \otimes_{O_F} R \twoheadrightarrow \widehat{\overline{R}}$ compatible with the induced log-structures.
We define $\pazocal{O}A_{\textup{cris},\log,n}(\overline{R})$ to be the $n\textrm{-th}$ log-PD envelope of the closed immersion of log-schemes induced by the map $\theta_R$, in the sense of \cite[Definition 5.4]{kato-fontaine-illusie-i}.
The rings $\pazocal{O}A_{\textup{cris},\log,n}(\overline{R})$ naturally form an inverse system for $n \geqslant 1$, and we set
\begin{equation*}
	\OAlogcrys(\overline{R}) \coloneq \lim_n \pazocal{O}A_{\textup{cris},\log,n}(\overline{R}).
\end{equation*}
Note that the $\Alogcrys(\overline{R})\textrm{-algebra}$ $\OAlogcrys(\overline{R})$ is $p\textrm{-torsion free}$ and $t\textrm{-torsion free}$, and we set $\OBlogcrys^+(\overline{R}) \coloneq \OAlogcrys(\overline{R})[1/p]$ and $\OBlogcrys(\overline{R}) = \OAlogcrys(\overline{R})[1/t]$.

The rings defined above are naturally equipped with a $G_R\textrm{-action}$ and a Frobenius endomorphism $\varphi$, and the map $\theta$ naturally extends to $\OAlogcrys(\overline{R})$ and $\OBlogcrys^+(\overline{R})$.
Moreover, we have $G_R\textrm{-equivariant}$ natural injective homomorphisms
\begin{equation*}
	\begin{aligned}
		\Alogcrys(\overline{R}) &\longhookrightarrow \Blogcrys^+(\overline{R}) \longhookrightarrow \Blogcrys(\overline{R}) \longhookrightarrow \BlogdR(\overline{R}),\\
		\OAlogcrys(\overline{R}) &\longhookrightarrow \OBlogcrys^+(\overline{R}) \longhookrightarrow \OBlogcrys(\overline{R}) \longhookrightarrow \OBlogdR(\overline{R}),
	\end{aligned}
\end{equation*}
and we equip all the log-crystalline period rings above with a $G_R\textrm{-stable}$ decreasing, separated and exhaustive filtration induced from the filtration on the log-de Rham period rings.
Additionally, we equip $\OBlogcrys(\overline{R})$ with a $\Blogcrys(\overline{R})\textrm{-linear}$ and $G_R\textrm{-equivariant}$ integrable logarithmic connection induced from the logarithmic connection $\partial$ on $\OBlogdR(\overline{R})$, which satisfies Griffiths transversality with respect to the filtration because the same is true for the logarithmic connection on $\OBlogdR(\overline{R})$; we also equip $\OBlogcrys^+(\overline{R})$ and $\OAlogcrys(\overline{R})$ with induced logarithmic connections.
Furthermore, we have that $\OBlogcrys^+(\overline{R})^{G_R} = \OBlogcrys(\overline{R})^{G_R} = R[1/p]$ and $\OAlogcrys(\overline{R})^{G_R} = R$, and it is easy to see that $(\Fil^0 \OBlogcrys(\overline{R}))^{\varphi=1, \partial=0} = R[1/p]$.
In addition, note that we have the following natural isomorphism of $\Alogcrys(\overline{R})\textrm{-algebras}$ (also see \cite[Section 3.4.1]{andreatta-iovita}):
\begin{equation}\label{eq:OAcryslog_explicit}
	\begin{aligned}
		\Alogcrys(\overline{R})[y_1, \ldots, y_d]_{\textup{PD}}^{\wedge} &\isomorphic \OAlogcrys(\overline{R})\\
		y_i^{[k]} &\longmapsto (1-\tfrac{[x_i^{\flat}]}{x_i})^{[k]},
	\end{aligned}
\end{equation}
where the left-hand-term denotes the $p\textrm{-adic}$ completion of the PD-polynomial algebra over $\Acrys(\overline{R})$ in variables $\{y_1, \ldots, y_d\}$.

\begin{prop}\label{prop:OBlogcrys_ff}
	The natural homomorphism of rings $R[1/p] \rightarrow \OBlogcrys(\overline{R})$ is faithfully flat.
\end{prop}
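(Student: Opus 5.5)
The plan is to adapt the argument of \cite[Th\'eor\`eme 6.3.8]{brinon-relatif} (and its logarithmic analogue in \cite{andreatta-iovita-semistable}), using the explicit description \eqref{eq:OAcryslog_explicit}. First we establish flatness of $R[1/p] \rightarrow \OBlogcrys^+(\overline{R})$ and then pass to $\OBlogcrys(\overline{R})$. Since $\OBlogcrys(\overline{R}) = \OBlogcrys^+(\overline{R})[1/t]$ is a localisation, it is flat over $\OBlogcrys^+(\overline{R})$, so flatness of the composite follows from flatness of the first map. For flatness of $\OAlogcrys(\overline{R})$ over $R$ (before inverting $p$), we reduce modulo $p$ using Lemma \ref{lem:local_criterion_flatness}, which requires $p$-adic completeness and $p$-torsion freeness; both hold by construction. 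It then suffices to show that $\OAlogcrys(\overline{R})/p$ is flat over $R/p$. By \eqref{eq:OAcryslog_explicit}, $\OAlogcrys(\overline{R})/p$ is a free module over $\Alogcrys(\overline{R})/p$ on the divided power monomials $y^{[\mathbf{k}]}$. The $R$-algebra structure is given by $x_i \mapsto [x_i^{\flat}](1-y_i)^{-1}$, read in the PD-completed sense. Rewriting $R_\square/p \rightarrow \OAlogcrys(\overline{R})/p$ through the variables $u_i = x_i - [x_i^{\flat}]$, we would like to exhibit the target as a filtered colimit or completion of free modules over $(R_\square/p)$. The \'etaleness of $R_\square \to R$ then transports flatness from $R_\square$ to $R$.

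A cleaner route, and the one I would actually pursue, is to follow Brinon's strategy directly. We filter $\OBlogcrys^+(\overline{R})$ by the PD degree in the $y_i$ and identify the graded pieces. We also compare with $\OBlogdR^+(\overline{R})$: the latter is faithfully flat over $R[1/p]$ by \cite[Proposition 3.18]{andreatta-iovita-semistable}. From \eqref{eq:OBdRlog+_explicit} and \eqref{eq:OAcryslog_explicit}, $\OBlogcrys^+(\overline{R})$ sits inside $\OBlogdR^+(\overline{R})$ as a subring whose $\theta$-adic completion is $\OBlogdR^+(\overline{R})$. One would show, as in Brinon, that for any finitely generated ideal $I\subset R[1/p]$ the map $I\otimes \OBlogcrys \to \OBlogcrys$ is injective. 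To do this, we use the noetherianity of $R[1/p]$ together with the fact that the completion map is faithfully flat on the relevant finitely generated modules (an Artin--Rees type argument along the $\xi$-adic, equivalently $t$-adic, filtration).

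For faithfulness, it suffices to show that $\mathfrak{m}\OBlogcrys(\overline{R}) \neq \OBlogcrys(\overline{R})$ for every maximal ideal $\mathfrak{m}\subset R[1/p]$. We would use the localisations from Section \ref{subsubsec:localisation_dR}: a point of $R[1/p]$ gives, via $\theta_R$ composed with a specialisation to $\widehat{\overline{R}}[1/p]$ modulo $\mathfrak{m}$, a nonzero quotient. Alternatively, we use the $G_R$-invariants $\OBlogcrys(\overline{R})^{G_R}=R[1/p]$. If $\mathfrak{m}$ generated the unit ideal, then $1=\sum a_j b_j$ with $a_j \in \mathfrak{m}$. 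Applying $\theta_R$ (after scaling by a power of $t$ to land in $\OBlogcrys^+$) and composing with the faithfully flat $R[1/p]\to\widehat{\overline{R}}[1/p]$ of Proposition \ref{prop:properties_Rbarhat} then gives a contradiction.

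The main obstacle is the flatness step. Brinon's proof relies on the Frobenius-divided-power structure and on an explicit basis of $\OAcrys$ over $\Acrys\otimes R$. In the logarithmic setting, the non-invertibility of $x_{a+1},\dots,x_d$ on $R$ makes the identification of $\OAlogcrys(\overline{R})$ as a completed PD-polynomial ring over $\Alogcrys(\overline{R})\otimes R$ less direct. I would first try to handle this via the substitution $y_i = 1-[x_i^\flat]/x_i$ and Lemma \ref{lem:inject_mult_x_i}, which guarantees that $x_i$ is a nonzerodivisor modulo $p^n$.
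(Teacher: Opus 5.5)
Your top-level plan, adapting \cite[Th\'eor\`eme 6.3.8]{brinon-relatif}, is exactly what the paper does; its proof is nothing more than that pointer. However, the arguments you sketch for the two substantive steps do not work.

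For flatness, the route you say you would pursue transfers flatness from $\OBlogdR^+(\overline{R})$ to its dense subring $\OBlogcrys^+(\overline{R})$ by an ``Artin--Rees type argument''. Artin--Rees, and flatness of $B\to\widehat{B}$, are facts about noetherian rings with ideal-adic filtrations. But $\OBlogcrys^+(\overline{R})$ is not noetherian, and it is not complete for its filtration (its completion is $\OBlogdR^+(\overline{R})$).

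Nor do free graded pieces plus a flat completion force flatness. Take a field $k$, $A=k[x,y]$, and the ring $B=A[T]+(x,y)A\llbracket T\rrbracket$. Then $B/(B\cap T^rA\llbracket T\rrbracket)\cong A[T]/(T^r)$ and the completion is $A\llbracket T\rrbracket$. Yet $c=\sum_n T^n\notin B$ (its image in $k\llbracket T\rrbracket$ is not in $k[T]$), while $xc,yc\in B$. So $(yc,-xc)$ gives a nonzero Koszul class in $\mathrm{Tor}^A_1(k,B)$.

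The argument has to be $p$-adic, as in your first route. You do not need integral flatness of $\OAlogcrys(\overline{R})$ over $R$, which is what Lemma \ref{lem:local_criterion_flatness} would give. It suffices that $\mathrm{Tor}^R_1(N,\OAlogcrys(\overline{R}))$ be $p$-power torsion for finitely generated $N$. By $p$-completeness and $p$-torsion-freeness, this follows from bounds on $\mathrm{Tor}^{R/p^n}_i(-,\OAlogcrys(\overline{R})/p^n)$, $i\geqslant 1$, that are uniform in $n$ (for instance, almost vanishing).

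Via \eqref{eq:OAcryslog_explicit}, and since $\Alogcrys(\overline{R})/p$ is free over $A_{\inf}(\overline{R})/(p,\xi^p)\cong\overline{R}/p$, the real input needed is an (almost) flatness statement for $\overline{R}$ modulo powers of $p$ over $R$. One source would be almost purity: $\overline{R}$ is almost \'etale over $R_{\infty,\infty}$, which is flat over $R$. That is precisely the step your proposal leaves as a wish (``we would like to exhibit the target as a filtered colimit \ldots'').

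The tools you name cannot supply it. Lemma \ref{lem:inject_mult_x_i} gives depth-one information about each $x_i$ separately. Flatness over the $d$-dimensional regular ring $R/p$ is a Cohen--Macaulay-type condition on all regular sequences. Proposition \ref{prop:properties_Rbarhat}(2) only concerns $\widehat{\overline{R}}[1/p]$, with no uniform control of $p$-torsion.

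Your faithfulness argument also fails as written. From $t^N=\sum_j a_jb_j$ with $a_j\in\mathfrak{m}$ and $b_j\in\OBlogcrys^+(\overline{R})$, applying $\theta_R$ only gives $0=\sum_j a_j\theta_R(b_j)$, because $\theta(t)=0$. Moreover $\theta_R$ does not extend to $\OBlogcrys(\overline{R})$, so the ``specialisation'' variant has the same problem.

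Once flatness is known, the correct argument is one line. $\OBlogcrys(\overline{R})\hookrightarrow\OBlogdR(\overline{R})$ is a homomorphism of $R[1/p]$-algebras, and $\OBlogdR(\overline{R})$ is faithfully flat over $R[1/p]$ \cite[Proposition 3.18]{andreatta-iovita-semistable}. So $\mathfrak{m}\OBlogcrys(\overline{R})=\OBlogcrys(\overline{R})$ for a maximal ideal $\mathfrak{m}\subset R[1/p]$ would force $\mathfrak{m}\OBlogdR(\overline{R})=\OBlogdR(\overline{R})$, which is impossible.
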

\begin{proof}
	Follows by employing an argument similar to \cite[Th\'eor\`eme 6.3.8]{brinon-relatif} in our setting.
\end{proof}

\begin{rem}[\textbf{Classical crystalline period rings}]
	We set $\Acrys(\overline{R}) \coloneq A_{\inf}(\overline{R})[\{\xi^{[k]}\}_{k \in \mathbb{N}}]_{p}^{\wedge}$ and note that it coincides with the underlying ring of $\Alogcrys(\overline{R})$.
	We set $\Bcrys^+(\overline{R}) \coloneq \Acrys(\overline{R})[1/p]$ and $\Bcrys(\overline{R}) \coloneq \Bcrys^+(\overline{R})[1/t]$.
	Furthermore, one may define big crystalline period rings over $\overline{R}$ such as $\OAcrys(\overline{R})$, $\OBcrys^+(\overline{R})$ and $\OBcrys(\overline{R})$.
	The rings defined above are naturally equipped with a continuous action of $G_R$ and a Frobenius endomorphism $\varphi$, and the map $\theta$ naturally extends to $\OAcrys(\overline{R})$ and $\OBcrys^+(\overline{R})$.
	Moreover, we have $G_R\textrm{-equivariant}$ natural injective homomorphisms
	\begin{equation*}
		\begin{aligned}
			\Acrys(\overline{R}) &\longhookrightarrow \Bcrys^+(\overline{R}) \longhookrightarrow \Bcrys(\overline{R}) \longhookrightarrow \BdR(\overline{R}),\\
			\OAcrys(\overline{R}) &\longhookrightarrow \OBcrys^+(\overline{R}) \longhookrightarrow \OBcrys(\overline{R}) \longhookrightarrow \OBdR(\overline{R}),
		\end{aligned}
	\end{equation*}
	and we equip all the crystalline period rings above with a $G_R\textrm{-stable}$ decreasing, separated and exhaustive filtration induced from the filtration on the de Rham period rings.
	Additionally, we equip $\OBcrys(\overline{R})$ with a $\Bcrys(\overline{R})\textrm{-linear}$ and $G_R\textrm{-equivariant}$ integrable connection induced from the connection $\partial$ on $\OBdR(\overline{R})$, which satisfies Griffiths transversality with respect to the filtration because the same is true for the connection on $\OBdR(\overline{R})$; we also equip $\OBcrys^+(\overline{R})$ and $\OAcrys(\overline{R})$ with induced connections.
	Furthermore, we have that $\OBcrys^+(\overline{R})^{G_R} = \OBcrys(\overline{R})^{G_R} = R[1/p]$ and $\OAcrys(\overline{R})^{G_R} = R$, and by employing an argument similar to \cite[Th\'eor\`eme 6.3.8]{brinon-relatif} it follows that the natural homomorphism of rings $R[1/p] \rightarrow \OBcrys(\overline{R})$ is faithfully flat.
	In addition, note that we have the following natural isomorphism of $\Acrys(\overline{R})\textrm{-algebras}$:
	\begin{equation}\label{eq:OAcrys_explicit}
		\begin{aligned}
			\Acrys(\overline{R})[z_1, \ldots, z_d]_{\textup{PD}}^{\wedge} &\isomorphic \OAcrys(\overline{R})\\
			z_i^{[k]} &\longmapsto (x_i-[x_i^{\flat}])^{[k]},
		\end{aligned}
	\end{equation}
	where the left-hand-term denotes the $p\textrm{-adic}$ completion of the PD-polynomial algebra over $\Acrys(\overline{R})$ in variables $\{z_1, \ldots, z_d\}$.
	Note that we have natural variations of these constructions over $R_{\infty}$ (resp.\ $R_{\infty,\infty}$) as well, and an argument similar to \cite[Corollary 4.34]{morrow-tsuji} shows that we have natural $(\varphi, \Gamma_R)\textrm{-equivariant}$ (resp.\ $(\varphi, \Gamma_{R,\infty})\textrm{-equivariant}$) isomorphism $\OAcrys(R_{\infty}) \isomorphic \OAcrys(\overline{R})^{H_R}$ (resp.\ $\OAcrys(R_{\infty,\infty}) \isomorphic \OAcrys(\overline{R})^{H_{R,\infty}}$).
		
	By this discussion it is clear that the crystalline period rings described above are naturally a subring of their logarithmic counterparts.
	In particular, we have the following natural $G_R\textrm{-equivariant}$ inclusion of rings:
	\begin{equation*}
		\Bcrys^+(\overline{R}) \longhookrightarrow \OBcrys^+(\overline{R}) \longhookrightarrow \OBlogcrys^+(\overline{R}).
	\end{equation*}
	compatible with the respective filtrations and connections.
\end{rem}

Note that we have natural variations of these constructions over $\calR_{\infty}$, $R_{\infty,\infty}$, $\calS_{\infty}$, $S_{\infty,\infty}$ and $\overline{S}$, where $\calR_{\infty}$ and $R_{\infty, \infty}$ are equipped with the log-structure induced from the log-structure on $R$, and $\calS_{\infty}$, $S_{\infty,\infty}$ and $\overline{S}$ are equipped with the trivial log-structure.
Then, an argument similar to \cite[Corollary 4.34]{morrow-tsuji} shows that we have a natural $(\varphi, \GammaR \times \Delta)\textrm{-equivariant}$ (resp.\ $(\varphi, \Gamma_{R,\infty})\textrm{-equivariant}$) isomorphism $\OAlogcrys(\calR_{\infty}) \isomorphic \OAlogcrys(\overline{R})^{H_{\calR}}$ (resp.\ $\OAlogcrys(R_{\infty,\infty}) \isomorphic \OAlogcrys(\overline{R})^{H_{R,\infty}}$).
Analogous statements hold for the corresponding crystalline period rings for $S$, i.e.\ we have a natural $(\varphi, \GammaS \times \Delta)\textrm{-equivariant}$ (resp.\ $(\varphi, \Gamma_{S,\infty})\textrm{-equivariant}$) isomorphism $\OAcrys(\calS_{\infty}) \isomorphic \OAcrys(\overline{S})^{H_{\calS}}$ (resp.\ $\OAcrys(S_{\infty,\infty}) \isomorphic \OAcrys(\overline{S})^{H_{S,\infty}}$).
Additionally, we note that the natural $(\varphi, G_S)\textrm{-equivariant}$ homomorphism $A_{\inf}(\overline{R}) \rightarrow A_{\inf}(\overline{S})$ from Section \ref{subsubsec:perfect_period_rings} is compatible with log-structures, and it induces natural $(\varphi, G_S)\textrm{-equivariant}$ homomorphisms of rings $\Blogcrys^+(\overline{R}) \rightarrow \Bcrys^+(\overline{S})$ and $\OBcrys^+(\overline{R}) \hookrightarrow \OBlogcrys^+(\overline{R}) \rightarrow \OBcrys^+(\overline{S})$, compatible with the respective filtrations and connections.

\begin{lem}\label{lem:Acrys_Rinftym_in_Sinftym}
	The homomorphism $\Acrys(\overline{R}) \rightarrow \Acrys(\overline{S})$ naturally restricts to a $\Gamma_{S,\infty}\textrm{-equivariant}$ bottom (resp.\ $(\GammaS \times \Delta)\textrm{-equivariant}$ top) injective homomorphism of rings 
	\begin{equation*}
		\begin{tikzcd}
			\Alogcrys(\calR_{\infty}) & \Acrys(\calS_{\infty}) \\
			\Alogcrys(R_{\infty,\infty}) & \Acrys(S_{\infty,\infty}).
			\arrow[hook, from=1-1, to=1-2]
			\arrow[hook, from=1-1, to=2-1]
			\arrow[hook, from=1-2, to=2-2]
			\arrow[hook, from=2-1, to=2-2]
		\end{tikzcd}
	\end{equation*}
	Similarly, the homomorphism $\OAcrys(\overline{R}) \rightarrow \OAcrys(\overline{S})$ naturally restricts to $\Gamma_{S,\infty}\textrm{-equivariant}$ bottom (resp.\ $(\GammaS \times \Delta)\textrm{-equivariant}$ top) injective homomorphisms in the following commutative diagram of rings:
	\begin{equation*}
		\begin{tikzcd}
			\OAcrys(\calR_{\infty}) & \OAlogcrys(\calR_{\infty}) & \OAcrys(\calS_{\infty}) \\
			\OAcrys(R_{\infty,\infty}) & \OAlogcrys(R_{\infty,\infty}) & \OAcrys(S_{\infty,\infty}).
			\arrow[hook, from=1-1, to=1-2]
			\arrow[hook, from=1-1, to=2-1]
			\arrow[hook, from=1-2, to=1-3]
			\arrow[hook, from=1-2, to=2-2]
			\arrow[hook, from=1-3, to=2-3]
			\arrow[hook, from=2-1, to=2-2]
			\arrow[hook, from=2-2, to=2-3]
		\end{tikzcd}
	\end{equation*} 
\end{lem}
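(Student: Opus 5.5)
The plan is to reduce everything to the infinitesimal statement, Lemma \ref{lem:Ainf_Rinftym_in_Sinftym}. I use the explicit descriptions of the crystalline rings as PD-completions over $A_{\inf}$, and check that the maps and their injectivity pass through these constructions.

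First, the rows of the first diagram. By construction, $\Alogcrys(\calR_{\infty})$ is the $p$-adic completion of the PD envelope $A_{\inf}(\calR_{\infty})[\{\xi^{[k]}\}]$, and similarly for the other three corners. Since $\xi$ comes from $A_{\inf}(O_{F_\infty})$ and is a nonzerodivisor, each ring has the standard description as a $p$-adically complete module of sequences $\sum_k a_k \xi^{[k]}$. More precisely, it is the $p$-adic completion of $\bigoplus_k A_{\inf}(\cdot)\,\xi^{k}/k!$ inside $A_{\inf}(\cdot)[1/p]$.

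The horizontal maps are induced by the injective maps of Lemma \ref{lem:Ainf_Rinftym_in_Sinftym}. To show they stay injective after adjoining divided powers and completing, I will show that $A_{\inf}(\calR_\infty)\to A_{\inf}(\calS_\infty)$ is injective modulo $p^r$ and modulo $(p^r,\xi^k)$. This reduces, via $A_{\inf}/(p,\xi)\cong \widehat{\calR}_\infty/p$, to Proposition \ref{prop:Rinftym_in_Sinftym}(2) together with the injectivity of $R_{\infty,m}/p^r\to S_{\infty,m}/p^r$ shown in its proof. Injectivity modulo $(p,\xi)$ plus $\xi$-regularity and $p$-torsion freeness on both sides then gives injectivity on the graded pieces of the PD filtration modulo $p^r$, and passing to the limit over $r$ gives injectivity of the completion.

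The vertical maps are handled the same way, using the left vertical arrows of Lemma \ref{lem:Ainf_Rinftym_in_Sinftym} and injectivity of $R_{\infty,m}/p\to R_{\infty,\infty}/p$. Equivariance is inherited from the $A_{\inf}$-level maps, since $\xi$ is fixed up to units by $\Gamma$ and the PD structure is functorial. Commutativity is clear from functoriality.

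Next, the second diagram. By \eqref{eq:OAcryslog_explicit} and \eqref{eq:OAcrys_explicit}, and their analogues over $\calR_\infty$, $R_{\infty,\infty}$, $\calS_\infty$, $S_{\infty,\infty}$, we have
\[
\OAlogcrys(\cdot)\cong \Alogcrys(\cdot)[y_1,\ldots,y_d]^{\wedge}_{\PD},\qquad \OAcrys(\cdot)\cong \Acrys(\cdot)[z_1,\ldots,z_d]^{\wedge}_{\PD}.
\]
On the $S$-side the $x_i$ are units, so $y_i = -x_i^{-1} z_i$. Hence $\OAcrys(\calS_\infty)$ is also the PD-polynomial completion in the $y_i$ over $\Acrys(\calS_\infty)$, and the map $\OAlogcrys(\calR_\infty)\to\OAcrys(\calS_\infty)$ is $y_i\mapsto y_i$ extended from the first diagram. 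A PD-polynomial completion is a completed free module on the basis $y^{[\mathbf{k}]}$, so injectivity follows coefficientwise from the first diagram.

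For $\OAcrys(\calR_\infty)\to\OAlogcrys(\calR_\infty)$, the relation $z_i = -x_i y_i$ sends $z^{[\mathbf{k}]}$ to $(-x)^{\mathbf{k}} y^{[\mathbf{k}]}$. Injectivity then reduces, modulo $p^r$, to $x_i$ being a nonzerodivisor on $\Acrys(\calR_\infty)/p^r$. This should follow from Lemma \ref{lem:B_mult_xi_injective} and Lemma \ref{lem:inject_mult_x_i} via the same graded reduction to $\widehat{\calR}_\infty/p$. Alternatively, one can compose to the $S$-side, where both become injective since $x_i$ is a unit there.

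The main obstacle is injectivity after $p$-adic completion of the divided power envelopes. PD envelopes of non-flat maps can misbehave, so I need flatness-type control. Concretely, I need that $A_{\inf}(\calS_\infty)/A_{\inf}(\calR_\infty)$ is $p$-torsion free and $\xi$-torsion free modulo $p$. I will derive this from Proposition \ref{prop:Rinftym_in_Sinftym}, in particular the intersection statement $R_{n,m}/p^r=(R_{\infty,m}/p^r)\cap(S_{n,m}/p^r)$ and the regular-sequence arguments in its proof.
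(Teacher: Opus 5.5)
Your proposal is correct in outline but takes a different route from the paper. The paper's proof just invokes Lemma~\ref{lem:BdR_Rinftym_in_Sinftym}. The crystalline rings embed into their de Rham counterparts ($\Alogcrys(\cdot)\hookrightarrow\BlogdR^+(\cdot)$, $\OAlogcrys(\cdot)\hookrightarrow\OBlogdR^+(\cdot)$, and the non-log and $\calS$-side analogues) compatibly with all arrows. So each crystalline map is injective because its composite with an injection is injective. At the de Rham level the filtration is $\xi$-adic, complete and separated, with graded pieces $\widehat{\calR}_\infty[1/p]\hookrightarrow\widehat{\calS}_\infty[1/p]$. Also $\OBlogdR^+\cong\BlogdR^+\llbracket y_1,\ldots,y_d\rrbracket$ handles the $\pazocal{O}$-rings coefficientwise, so no divided-power analysis modulo $p^r$ is needed.

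Your integral route avoids relying on $\Acrys\hookrightarrow\BdR^+$ over $\calR_\infty$, $R_{\infty,\infty}$, etc., but in exchange you must control $\Acrys(\cdot)/p^r$. Passing from injectivity on graded pieces to injectivity on $\Acrys(\calR_\infty)/p^r$ needs two things you do not address: the induced PD filtration there must be separated, and its graded pieces must be $\mathrm{gr}^k/p^r$ (i.e.\ $p$-saturation of $\Fil^k$).

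A cleaner way is the description used in the proof of Lemma~\ref{lem:OASpiPD_OAlogcrys_intersect}. There, $\Acrys(\cdot)/p$ is free over $A_{\inf}(\cdot)/(p,\xi^p)$ on the monomials in $\xi^{[p]},\xi^{[p^2]},\ldots$ with exponents $<p$. So injectivity mod $p$ reduces to injectivity of $A_{\inf}(\calR_\infty)/(p,\xi^p)\to A_{\inf}(\calS_\infty)/(p,\xi^p)$, and $p$-torsion freeness plus completeness finishes.

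The ``flatness-type control'' you single out is immediate. The cokernel of $A_{\inf}(\calR_\infty)\to A_{\inf}(\calS_\infty)$ is $p$-torsion free, and $\xi$-torsion free modulo $p$, exactly because $\widehat{\calR}_\infty/p\to\widehat{\calS}_\infty/p$ is injective. The intersection statements in Proposition~\ref{prop:Rinftym_in_Sinftym} are not needed.

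Two corrections in the second diagram:
\begin{enumerate}
\item The relation is $z_i=x_iy_i$, with no sign.
\item $x_i$ does not lie in $\Acrys(\calR_\infty)$, so $z^{[\mathbf{k}]}\mapsto x^{\mathbf{k}}y^{[\mathbf{k}]}$ is not diagonal. It is only triangular in the $y^{[\mathbf{j}]}$-basis, with leading coefficient $[x^{\flat}]^{\mathbf{k}}$.
\end{enumerate}
Your alternative of composing to $\OAcrys(\calS_\infty)$ is the clean fix. It works because that composite is coefficientwise injective in the $z$-basis (by the first diagram), not because $x_i$ becomes a unit.
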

\begin{proof}
	Use Lemma \ref{lem:BdR_Rinftym_in_Sinftym}.
\end{proof}

\subsubsection{Max period rings}

We begin by setting
\begin{equation*}
	\Alogmax(\overline{R}) \coloneq A_{\inf}(\overline{R})[\xi/p]_p^{\wedge},
\end{equation*}
equipped with a log-structure induced by the natural map $A_{\inf}(\overline{R}) \rightarrow \Alogmax(\overline{R})$.
Note that $t \coloneq \log(1+\mu)$ converges in $\Alogmax(\overline{R})$, and this ring is $p\textrm{-torsion free}$ and $t\textrm{-torsion free}$; we set $\Blogmax^+(\overline{R}) \coloneq \Alogmax(\overline{R})[1/p]$ and $\Blogmax(\overline{R}) \coloneq \Alogmax(\overline{R})[1/t]$.
These rings are equipped with a natural action of $G_R$ and a Frobenius endomorphism $\varphi$, and the map $\theta$ naturally extends to $\Alogmax(\overline{R})$ and $\Blogmax^+(\overline{R})$.
Moreover, we have $G_R\textrm{-equivariant}$ natural injective homomorphisms
\begin{equation*}
	\Alogmax(\overline{R}) \longhookrightarrow \Blogmax^+(\overline{R}) \longhookrightarrow \Blogmax(\overline{R}) \longhookrightarrow \BlogdR(\overline{R}),
\end{equation*}
and we equip the log-crystalline period rings above with a $G_R\textrm{-stable}$ decreasing, separated and exhaustive filtration induced from the filtration on the log-de Rham period ring.

Let $\Alogmax(\overline{R})[\tfrac{y_1}{p}, \ldots, \tfrac{y_d}{p}]_p^{\wedge}$ denote a subring of $\BlogdR^+(\overline{R})\llbracket y_1, \ldots, y_d\rrbracket$.
Under the isomorphism \eqref{eq:OBdRlog+_explicit}, let $\OAlogmax(\overline{R}) \subset \OBlogdR^+(\overline{R})$ denote the inverse image of $\Alogmax(\overline{R})[\tfrac{y_1}{p}, \ldots, \tfrac{y_d}{p}]_p^{\wedge}$.
In particular, we have the following natural isomorphism of $\Alogmax(\overline{R})\textrm{-algebras}$:
\begin{equation}\label{eq:OAmaxlog_explicit}
	\begin{aligned}
		\Alogmax(\overline{R})[\tfrac{y_1}{p}, \ldots, \tfrac{y_d}{p}]_p^{\wedge} &\isomorphic \OAlogmax(\overline{R})\\
		\tfrac{y_i}{p} &\longmapsto \tfrac{1}{p}(1-\tfrac{[x_i^{\flat}]}{x_i}).
	\end{aligned}
\end{equation}
Note that the $\Alogmax(\overline{R})\textrm{-algebra}$ $\OAlogmax(\overline{R})$ is $p\textrm{-torsion free}$ and $t\textrm{-torsion free}$, and we set $\OBlogmax^+(\overline{R}) \coloneq \OAlogmax(\overline{R})[1/p]$ and $\OBlogmax(\overline{R}) = \OAlogmax(\overline{R})[1/t]$.
The rings defined above may naturally be equipped with induced structures, in particular, a log-structure, a natural action of $G_R$, and a $G_R\textrm{-stable}$ decreasing, separated and exhaustive filtration.
We further equip $\OBlogmax(\overline{R})$ with a $\Blogmax(\overline{R})\textrm{-linear}$ and $G_R\textrm{-equivariant}$ integrable logarithmic connection induced from the logarithmic connection $\partial$ on $\OBlogdR(\overline{R})$, which satisfies Griffiths transversality with respect to the filtration; we also equip $\OBlogmax^+(\overline{R})$ and $\OAlogmax(\overline{R})$ with induced logarithmic connections.
In addition, we have $G_R\textrm{-equivariant}$ and filtration compatible natural injective homomorphisms of rings $\Alogcrys(\overline{R}) \hookrightarrow \Alogmax(\overline{R}) \hookrightarrow \BlogdR(\overline{R})$ and $\OAlogcrys(\overline{R}) \hookrightarrow \OAlogmax(\overline{R}) \hookrightarrow \OBlogdR(\overline{R})$.
So, we easily get that $\OBlogmax^+(\overline{R})^{G_R} = \OBlogmax(\overline{R})^{G_R} = R[1/p]$ and $\OAlogmax(\overline{R})^{G_R} = R$.

\begin{rem}[\textbf{Classical max period rings}]
	Let us set $\Amax(\overline{R}) \coloneq A_{\inf}(\overline{R})[\xi/p]_p^{\wedge}$, and we have that $t \coloneq \log(1+\mu)$ converges in $\Amax(O_{F_{\infty}})$.
	Moreover, $\Amax(\overline{R})$ is $p\textrm{-torsion free}$ and $t\textrm{-torsion free}$, and so we set $\Bmax^+(\overline{R}) \coloneq \Amax(\overline{R})[1/p]$ and $\Bmax(\overline{R}) \coloneq \Bmax^+(\overline{R})[1/t]$.
	Furthermore, one may define ``big'' period rings over $\overline{R}$ such as $\OAmax(\overline{R})$, $\OBmax^+(\overline{R})$ and $\OBmax(\overline{R})$.
	These rings are equipped with a continuous action of $G_R$ and a Frobenius endomorphism $\varphi$, and the map $\theta$ naturally extends to $\OAmax(\overline{R})$ and $\OBmax^+(\overline{R})$.
	Then, similar to above, we have $G_R\textrm{-equivariant}$ natural injective homomorphisms
	\begin{equation*}
		\begin{aligned}
			\Amax(\overline{R}) &\longhookrightarrow \Bmax^+(\overline{R}) \longhookrightarrow \Bmax(\overline{R}) \longhookrightarrow \BdR(\overline{R}),\\
			\OAmax(\overline{R}) &\longhookrightarrow \OBmax^+(\overline{R}) \longhookrightarrow \OBmax(\overline{R}) \longhookrightarrow \OBdR(\overline{R}),
		\end{aligned}
	\end{equation*}
	and we equip all the ``max'' period rings above with a $G_R\textrm{-stable}$ decreasing, separated and exhaustive filtration induced from the filtration on the de Rham period rings.
	Additionally, we equip $\OBmax(\overline{R})$ with a $\Bmax(\overline{R})\textrm{-linear}$ and $G_R\textrm{-equivariant}$ integrable connection induced from the connection $\partial$ on $\OBdR(\overline{R})$, which satisfies Griffiths transversality with respect to the filtration because the same is true for the connection on $\OBdR(\overline{R})$; we also equip $\OBmax^+(\overline{R})$ and $\OAmax(\overline{R})$ with induced connections.
	Furthermore, we have that $\OBmax^+(\overline{R})^{G_R} = \OBmax(\overline{R})^{G_R} = R[1/p]$ and $\OAmax(\overline{R})^{G_R} = R$.
	Note that we have natural variations of these constructions over $R_{\infty}$ (resp.\ $R_{\infty,\infty}$) as well, and an argument similar to \cite[Corollary 4.34]{morrow-tsuji} shows that we have a natural $(\varphi, \Gamma_R)\textrm{-equivariant}$ (resp.\ $(\varphi, \Gamma_{R,\infty})\textrm{-equivariant}$) isomorphism $\OAmax(R_{\infty}) \isomorphic \OAmax(\overline{R})^{H_R}$ (resp.\ $\OAmax(R_{\infty,\infty}) \isomorphic \OAmax(\overline{R})^{H_{R,\infty}}$).
	Recall that we have the following natural isomorphism of $\Amax(\overline{R})\textrm{-algebras}$:
	\begin{equation}\label{eq:OAmax_explicit}
		\begin{aligned}
			\Amax(\overline{R})\big[\tfrac{z_1}{p}, \ldots \tfrac{z_d}{p}\big]_p^{\wedge} &\isomorphic \OAmax(\overline{R})\\
			\tfrac{z_i}{p} &\longmapsto \tfrac{x_i-[x_i^{\flat}]}{p},
		\end{aligned}
	\end{equation}
	where the left-hand-term denotes the $p\textrm{-adic}$ completion of the polynomial algebra over $\Amax(\overline{R})$ in variables $\{\tfrac{z_1}{p}, \ldots, \tfrac{z_d}{p}\}$.
	In addition, note that we have $G_R\textrm{-equivariant}$ and filtration compatible natural injective homomorphisms of rings $\Acrys(\overline{R}) \hookrightarrow \Amax(\overline{R}) \hookrightarrow \BdR(\overline{R})$ and $\OAcrys(\overline{R}) \hookrightarrow \OAmax(\overline{R}) \hookrightarrow \OBdR(\overline{R})$.
	So, we see that we have $\OBmax^+(\overline{R})^{G_R} = \OBmax(\overline{R})^{G_R} = R[1/p]$ and $\OAmax(\overline{R})^{G_R} = R$.
\end{rem}

Note that we have natural variations of these constructions over $\calR_{\infty}$, $R_{\infty,\infty}$, $\calS_{\infty}$, $S_{\infty,\infty}$ and $\overline{S}$, where $\calR_{\infty}$ and $R_{\infty, \infty}$ are equipped with the log-structure induced from the log-structure on $R$, and $\calS_{\infty}$, $S_{\infty,\infty}$ and $\overline{S}$ are equipped with the trivial log-structure.
Then, an argument similar to \cite[Corollary 4.34]{morrow-tsuji} shows that we have a natural $(\varphi, \GammaR \times \Delta)\textrm{-equivariant}$ (resp.\ $(\varphi, \Gamma_{R,\infty})\textrm{-equivariant}$) isomorphism $\OAlogmax(R_{\infty}) \isomorphic \OAlogmax(\overline{R})^{H_{\calR}}$ (resp.\ $\OAlogmax(R_{\infty,\infty}) \isomorphic \OAlogmax(\overline{R})^{H_{R,\infty}}$).
Additionally, note that the natural $(\varphi, G_S)\textrm{-equivariant}$ homomorphism $A_{\inf}(\overline{R}) \rightarrow A_{\inf}(\overline{S})$ from Section \ref{subsubsec:perfect_period_rings} is compatible with log-structures, and it induces natural $(\varphi, G_S)\textrm{-equivariant}$ homomorphisms of rings $\Blogmax^+(\overline{R}) \rightarrow \Bmax^+(\overline{S})$ and $\OBmax^+(\overline{R}) \hookrightarrow \OBlogmax^+(\overline{R}) \rightarrow \OBmax^+(\overline{S})$, compatible with the respective filtrations and connections.

\begin{lem}\label{lem:Amax_Rinftym_in_Sinftym}
	Statements analogous to Lemma \ref{lem:Acrys_Rinftym_in_Sinftym} also hold after replacing $\Acrys$ and $\OAcrys$ with $\Amax$ and $\OAmax$, respectively.
\end{lem}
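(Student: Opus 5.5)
The plan is to reduce Lemma~\ref{lem:Amax_Rinftym_in_Sinftym} to Lemma~\ref{lem:BdR_Rinftym_in_Sinftym}, in the same way Lemma~\ref{lem:Acrys_Rinftym_in_Sinftym} was proved. Every ring in the diagram embeds into the corresponding de Rham ring, and these embeddings are compatible with the horizontal maps. Concretely, we have natural inclusions $\Alogmax(\calR_{\infty}) \hookrightarrow \BlogdR^+(\calR_{\infty})$, $\Alogmax(R_{\infty,\infty}) \hookrightarrow \BlogdR^+(R_{\infty,\infty})$, $\Amax(\calS_{\infty}) \hookrightarrow \BdR^+(\calS_{\infty})$ and $\Amax(S_{\infty,\infty}) \hookrightarrow \BdR^+(S_{\infty,\infty})$. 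These are the variants over the relevant perfectoid rings of the inclusions $\Alogmax(\overline{R}) \hookrightarrow \BlogdR(\overline{R})$ and $\Amax(\overline{S}) \hookrightarrow \BdR(\overline{S})$ recorded above. The images land in the $+$-parts, since $\xi/p$ lies in $\BdR^+$.

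First, I would check that the homomorphism $\Amax(\overline{R}) \rightarrow \Amax(\overline{S})$, induced by $A_{\inf}(\overline{R}) \rightarrow A_{\inf}(\overline{S})$, restricts to maps between the invariant subrings. This uses the identifications $\Alogmax(\calR_{\infty}) = \Alogmax(\overline{R})^{H_{\calR}}$ and their analogues, obtained as in \cite[Corollary 4.34]{morrow-tsuji}. Alternatively, the restriction is defined directly by functoriality of $A \mapsto A[\xi/p]^{\wedge}_p$ applied to the maps of Lemma~\ref{lem:Ainf_Rinftym_in_Sinftym}. The maps are $(\GammaS \times \Delta)$- resp.\ $\Gamma_{S,\infty}$-equivariant and make the square commute, because the corresponding statements hold for $A_{\inf}$. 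Injectivity then follows from the commutative square
\begin{equation*}
	\begin{tikzcd}
		\Alogmax(\calR_{\infty}) \arrow[r] \arrow[d, hook] & \Amax(\calS_{\infty}) \arrow[d, hook]\\
		\BlogdR^+(\calR_{\infty}) \arrow[r, hook] & \BdR^+(\calS_{\infty}),
	\end{tikzcd}
\end{equation*}
whose bottom arrow is injective by Lemma~\ref{lem:BdR_Rinftym_in_Sinftym}. The same argument applies to the $R_{\infty,\infty}$ row, and to the vertical arrows via the vertical maps of Lemma~\ref{lem:BdR_Rinftym_in_Sinftym}.

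For the big rings I would argue in the same way, using the inclusions $\OAmax \hookrightarrow \OAlogmax \hookrightarrow \OBlogdR^+$ and $\OAmax(\calS_{\infty}) \hookrightarrow \OBdR^+(\calS_{\infty})$. Injectivity of each arrow in the three-column diagram is then inherited from the second diagram of Lemma~\ref{lem:BdR_Rinftym_in_Sinftym}.

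The main obstacle is the middle-to-right arrow $\OAlogmax \rightarrow \OAmax$: one has to make sure it is compatible with the embeddings into the de Rham rings. Here $\OAlogmax$ is generated by $\tfrac{1}{p}(1-[x_i^{\flat}]/x_i)$, while $\OAmax(\calS_{\infty})$ is generated by $(x_i-[x_i^{\flat}])/p$. In $\calS$ the element $x_i$ is a unit, so
\begin{equation*}
	\tfrac{1}{p}\big(1-\tfrac{[x_i^{\flat}]}{x_i}\big) = x_i^{-1}\cdot\tfrac{x_i-[x_i^{\flat}]}{p}.
\end{equation*}
Hence the map on generators is well-defined, and via \eqref{eq:OAmaxlog_explicit} and \eqref{eq:OAmax_explicit} it agrees with the restriction of $\OBlogdR^+ \rightarrow \OBdR^+$. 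Given this identification, the descriptions \eqref{eq:OBdRlog+_explicit} and \eqref{eq:OBdR+_explicit} show that the diagram embeds into the de Rham one, and that step completes the proof.
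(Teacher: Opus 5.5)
Your proposal is correct and is essentially the paper's argument: the paper's proof is just ``Use Lemma~\ref{lem:BdR_Rinftym_in_Sinftym}'', i.e.\ embed the (log-)max rings into the corresponding (log-)de Rham rings and inherit injectivity and equivariance from there, exactly as you do. Your extra check that $\tfrac{1}{p}\big(1-\tfrac{[x_i^{\flat}]}{x_i}\big) = x_i^{-1}\cdot\tfrac{x_i-[x_i^{\flat}]}{p}$ with $x_i \in \calS^{\times}$ makes explicit why $\OAlogmax(\calR_{\infty})$ lands in $\OAmax(\calS_{\infty})$, a point the paper leaves implicit.
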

\begin{proof}
	Use Lemma \ref{lem:BdR_Rinftym_in_Sinftym}.
\end{proof}

\subsubsection{Localisation for crystalline period rings}\label{subsubsec:localisation_crys}

From \cite[Sections 2.3 \& 2.4]{brinon-imparfait}, for each $\pins$ we have rings $\Acrys(\Cpplus)$, $\Bcrys(\Cpplus)$, $\OAcrys(\Cpplus)$, $\OBcrys(\Cpplus)$, and $\Amax(\Cpplus)$, $\Bmax(\Cpplus)$, $\OAmax(\Cpplus)$, $\OBmax(\Cpplus)$ equipped with a $\GRhatp\textrm{-action}$, a natural extension of the map $\theta$ (before inverting $t$) and a Frobenius endomorphism $\varphi$.
Moreover, we have a natural $L\textrm{-linear}$ and $\GRhatp\textrm{-equivariant}$ injective homomorphism of rings $\OAcrys(\Cpplus) \hookrightarrow \OAmax(\Cpplus) \hookrightarrow \OBdR^+(\Cpplus)$.
Using the preceding inclusion, we equip all crystalline period rings for $\Cpplus$ described above with an induced $\GRhatp\textrm{-stable}$ decreasing, separated and exhaustive filtration and an induced $\GRhatp\textrm{-equivariant}$ integrable connection (on rings with a prefix ``$\pazocal{O}$'') satisfying Griffiths transversality with respect to the filtration.

Analogously, following the construction of crystalline period rings in \cite[Chapitre 6]{brinon-relatif} for each $\pins$, we can construct period rings for the perfectoid $O_L\textrm{-algebra}$ $\Cplusp$ (similar to the de Rham period rings for $\Cplusp$ described in Section \ref{subsubsec:BdR} using \cite[Chapitre 5]{brinon-relatif}).
Consequently, we have period rings $\Acrys(\Cplusp)$, $\Bcrys(\Cplusp)$, $\OAcrys(\Cplusp)$, $\OBcrys(\Cplusp)$ and $\Amax(\Cplusp)$, $\Bmax(\Cplusp)$, $\OAmax(\Cplusp)$ and $\OBmax(\Cplusp)$ equipped with a $\GRp\textrm{-action}$, a natural extension of the map $\theta$ (before inverting $t$) and a Frobenius endomorphism $\varphi$.
Then, one can adapt the proof of \cite[Proposition 6.2.1]{brinon-relatif} to obtain a natural $O_L\textrm{-linear}$ and $\GRp\textrm{-equivariant}$ injective homomorphism of rings $\OAcrys(\Cplusp) \hookrightarrow \OAmax(\Cplusp) \hookrightarrow \OBdR^+(\Cplusp)$.
Using the preceding inclusion, we equip all the crystalline period rings for $\Cplusp$ described above with an induced $\GRp\textrm{-stable}$ decreasing, separated and exhaustive filtration and an induced $\GRp\textrm{-equivariant}$ integrable connection (on rings with a prefix ``$\pazocal{O}$'') satisfying Griffiths transversality with respect to the filtration.

Now, by employing an argument similar to \cite[Lemma 2.13]{abhinandan-relative-wach-ii} we see that for each $\pins$, the natural $(\varphi, \GRhatp)\textrm{-equivariant}$ injective homomorphism $A_{\inf}(\Cplusp) \hookrightarrow A_{\inf}(\Cpplus)$ (see the discussion on localisation in Section \ref{subsubsec:perfect_period_rings}) extends to natural $(\varphi, \GRhatp)\textrm{-equivariant}$ and filtration compatible injective homomorphisms of rings $\Acrys(\Cplusp) \hookrightarrow \Acrys(\Cpplus)$, $\Amax(\Cplusp) \hookrightarrow \Amax(\Cpplus)$, $\OAcrys(\Cplusp) \hookrightarrow \OAcrys(\Cpplus)$ and $\OAmax(\Cplusp) \hookrightarrow \OAmax(\Cpplus)$, where the latter two are $O_L\textrm{-linear}$ and also compatible with the respective connections.

From the discussion above, it is easy to see that we have the following diagram of injective homomorphisms:
\begin{equation}\label{eq:gr_act_prodbcrys}
	\begin{tikzcd}
		\textstyle\prod_{\pins} A_{\inf}(\Cplusp) \arrow[r, hookrightarrow] & \textstyle\prod_{\pins} \Acrys(\Cplusp) \arrow[r, hookrightarrow] \arrow[d, hookrightarrow] & \textstyle\prod_{\pins} \OAcrys(\Cplusp) \arrow[d, hookrightarrow]\\
		& \textstyle\prod_{\pins} \Amax(\Cplusp) \arrow[r, hookrightarrow] & \textstyle\prod_{\pins} \OAmax(\Cplusp) \arrow[d, hookrightarrow]\\
		& & \textstyle\prod_{\pins} \OBdR(\Cplusp),
	\end{tikzcd}
\end{equation}
where the all homomorphisms (except the vertical arrow from the second row to the third row) are compatible with the respective Frobenii.
Moreover, by employing an argument similar to \cite[Remarque 3.3.2]{brinon-relatif}, we see that the products $\prod_{\pins} \Acrys(\Cplusp)$, $\prod_{\pins} \Amax(\Cplusp)$, $\prod_{\pins} \OAcrys(\Cplusp)$ and $\prod_{\pins} \OAmax(\Cplusp)$ are stable under the natural $G_R\textrm{-action}$ on $\prod_{\pins} \OBdR(\Cplusp)$ (see Section \ref{subsubsec:BdR}), and we equip them with the induced action.
Then, it follows that the injective homomorphisms in \eqref{eq:gr_act_prodbcrys} are $G_R\textrm{-equivariant}$ as well.
Using the preceding discussion, we get a natural $R\textrm{-linear}$ and $(\varphi, G_R)\textrm{-equivariant}$ injective homomorphisms 
\begin{equation*}
	\begin{aligned}
		\OAlogcrys(\overline{R}) &\longhookrightarrow \textstyle\prod_{\pins} \OAcrys(\Cplusp)\\
		\OAlogmax(\overline{R}) &\longhookrightarrow \textstyle\prod_{\pins} \OAmax(\Cplusp).
	\end{aligned}
\end{equation*}

\begin{lem}\label{lem:OAcrys_modp_inject}
	For each $n \geqslant 1$, the following natural homomorphisms are injective:
	\begin{equation}\label{eq:OAcrys_modp_inject}
		\begin{aligned}
			\OAlogcrys(\overline{R})/p^n\OAlogcrys(\overline{R}) &\longrightarrow \textstyle\prod_{\pins} \OAcrys(\Cplusp)/p^n\OAcrys(\Cplusp)\\
			\OAlogmax(\overline{R})/p^n\OAlogmax(\overline{R}) &\longrightarrow \textstyle\prod_{\pins} \OAmax(\Cplusp)/p^n\OAmax(\Cplusp).
		\end{aligned}
	\end{equation}
\end{lem}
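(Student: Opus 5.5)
Both maps in \eqref{eq:OAcrys_modp_inject} will be reduced to the injectivity of a single map on the level of base rings, namely \eqref{eqn: injectivity of localization R/pR} and its $A_{\inf}$-analogue. The tool is the explicit descriptions \eqref{eq:OAcryslog_explicit} and \eqref{eq:OAmaxlog_explicit} on the source side, together with their counterparts for $\Cplusp$: the analogue of \eqref{eq:OAcrys_explicit} for $\OAcrys(\Cplusp)$ in the variables $z_i = x_i-[x_i^{\flat}]$. Modulo $p^n$, the source $\OAlogcrys(\overline{R})/p^n$ is the free $\Alogcrys(\overline{R})/p^n$-module on the divided power monomials $y^{[\mathbf{k}]}$. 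Likewise, $\OAlogmax(\overline{R})/p^n$ is the polynomial ring over $\Alogmax(\overline{R})/p^n$ in the $y_i/p$. The first step is therefore to prove the ``small'' statements
\begin{equation*}
	\Alogcrys(\overline{R})/p^n \hookrightarrow \textstyle\prod_{\pins} \Acrys(\Cplusp)/p^n, \qquad \Alogmax(\overline{R})/p^n \hookrightarrow \textstyle\prod_{\pins} \Amax(\Cplusp)/p^n.
\end{equation*}
For $\Alogcrys$, I use that $\Acrys$ modulo $p^n$ is a divided power envelope of $A_{\inf}/p^n$ along $\xi$, and that $A_{\inf}(\overline{R})/p^n = W_n(\overline{R}^{\flat})$. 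Using $\xi^p \equiv [\overline{\xi}]^p$ modulo $p$, one writes $\Acrys(\overline{R})/p$ as $(\overline{R}^{\flat}/\overline{\xi}^p)[\delta_1,\delta_2,\ldots]/(\delta_i^p)$ with $\delta_i$ the images of $\xi^{[p^i]}$, and similarly for $\Cplusp$. This reduces the case $n=1$ to the injectivity of $\overline{R}^{\flat}/\overline{\xi}^p \cong \overline{R}/p \to \prod \Cplusp/p$, which is \eqref{eqn: injectivity of localization R/pR}. For $\Amax$, an analogous presentation $A_{\inf}[X]/(pX-\xi)$ modulo $p$ gives $(\overline{R}^{\flat}/\overline{\xi})[X] \cong (\overline{R}/p)[X]$, handled the same way. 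The passage from $n=1$ to general $n$ is by induction on $n$, using that all rings are $p$-torsion free. This gives a snake-lemma diagram with rows $0\to \cdot/p^{n-1}\xrightarrow{p}\cdot/p^n\to\cdot/p\to0$.

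The second step is to compare the bases on the two sides. On the $\Cplusp$ side, $x_i$ is a unit in $(\overline{R})_{\mathfrak{p}}$ (this was shown in the proof of Lemma \ref{lem:inject_mult_x_i}). Hence $x_i$ is a unit in $O_L$, so $y_i = 1-[x_i^{\flat}]/x_i = z_i/x_i$ is a unit multiple of $z_i$. Consequently $y^{[\mathbf{k}]} = x^{-\mathbf{k}} z^{[\mathbf{k}]}$, and the images of the $y^{[\mathbf{k}]}$ (resp.\ $(y/p)^{\mathbf{k}}$) also form a basis of $\OAcrys(\Cplusp)/p^n$ over $\Acrys(\Cplusp)/p^n$ (resp.\ of the $\Amax$-version). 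The map in \eqref{eq:OAcrys_modp_inject} is then the base change of the small map along these compatible bases. An element $\sum_{\mathbf{k}} a_{\mathbf{k}} y^{[\mathbf{k}]}$, with $a_{\mathbf{k}}\to 0$ (finitely many nonzero modulo $p^n$), maps to $\big(\sum a_{\mathbf{k}}(\mathfrak{p}) y^{[\mathbf{k}]}\big)_{\mathfrak{p}}$. If the image vanishes, each coefficient vanishes in $\prod_{\pins}\Acrys(\Cplusp)/p^n$, hence $a_{\mathbf{k}}=0$ by the first step.

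The main obstacle is the first step, specifically making the divided power presentation of $\Acrys(\overline{R})/p$ precise and compatible with products. I would follow the argument of \cite[Lemma 2.13]{abhinandan-relative-wach-ii} (and \cite[Proposition 6.1.6]{brinon-relatif}), which identifies $\Acrys(\overline{R})/p$ with an explicit free module over $\overline{R}^{\flat}/\overline{\xi}^p$. In the $\Amax$ case one must also check that $A_{\inf}(\overline{R})[X]/(pX-\xi)$ is $p$-torsion free, so that reduction modulo $p$ commutes with $p$-completion. This holds because $\{p,\xi\}$ is a regular sequence on $A_{\inf}(\overline{R})$. A final subtlety is that the product is infinite, so I will take care that finiteness of the sums modulo $p^n$ allows interchanging the product with the direct sum over $\mathbf{k}$ coefficientwise.
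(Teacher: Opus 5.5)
Your proposal is correct and follows the same route as the paper: induct on $n$ using $p$-torsion freeness, use the explicit coordinate descriptions to reduce to the base rings $\Alogcrys$ and $\Alogmax$, compute these modulo $p$ as (PD-)polynomial algebras over $\overline{R}/p$, and conclude by \eqref{eqn: injectivity of localization R/pR}. You are more careful than the paper's sketch, which treats only the max case and passes straight to the base rings without comparing the logarithmic coordinates $y_i$ with the coordinates $z_i$ on the $\Cplusp$ side; but your own comparison needs one correction: $x_i^{-1}$ lies in $O_L\subset\OAcrys(\Cplusp)$, not in the coefficient ring $\Acrys(\Cplusp)$, so $y^{[\mathbf{k}]}=x^{-\mathbf{k}}z^{[\mathbf{k}]}$ is not a rescaling of the $z$-basis by coefficients, and linear independence of the $y^{[\mathbf{k}]}$ over $\Acrys(\Cplusp)/p^n$ should instead come from the expansion $x_i^{-1}=\sum_{k\geqslant 0}(-1)^k k!\,[x_i^{\flat}]^{-k-1}z_i^{[k]}$ (resp.\ $x_i^{-1}=\sum_{k\geqslant 0}(-1)^k p^k[x_i^{\flat}]^{-k-1}(z_i/p)^k$ in the max case), which makes the change of basis triangular with diagonal entries that are units of $\Acrys(\Cplusp)$ (resp.\ $\Amax(\Cplusp)$), or from the $\Cplusp$-analogue of \eqref{eq:OAcryslog_explicit}.
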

\begin{proof}
	As $\OAlogcrys(\overline{R})$ and $\OAlogmax(\overline{R})$ are $p\textrm{-adically}$ complete and $p\textrm{-torsion}$ free, it is enough to show the injectivity of the map in \eqref{eq:OAcrys_modp_inject} for $n=1$, and the injectivity for $n \geqslant 2$ would follow from an easy induction.
	In the following, we only present an argument for $\OAlogmax(\overline{R})$, the case of $\OAlogmax(\overline{R})$ may be argued similarly using \eqref{eq:OAcrys_explicit}.

	Using the description of $\OAmax(\overline{R})$ from \eqref{eq:OAmax_explicit}, we may further reduce ourselves to showing that the following $(\varphi, G_R)\textrm{-equivariant}$ homomorphism is injective:
	\begin{equation*}
		\Alogmax(\overline{R})/p \longrightarrow \textstyle\prod_{\pins} \Amax(\Cplusp)/p.
	\end{equation*}
	Note that we have 
	\begin{equation*}
		\Alogmax(\overline{R})/p = (A_{\inf}(\overline{R})[Y]/(pY-\xi))/p = (A_{\inf}(\overline{R})[Y]/(p, \xi) = (\overline{R}/p)[Y],
	\end{equation*}
	where $Y$ is an indeterminate.
	Similarly, we have that $\Alogmax(\Cplusp)/p = ((\overline{R})_{\mathfrak{p}}/p)[Y]$.
	So, we are reduced to showing that the following homomorphism is injective:
	\begin{equation*}
		(\overline{R}/p)[Y] \longrightarrow \textstyle\prod_{\pins} ((\overline{R})_{\mathfrak{p}}/p)[Y].
	\end{equation*}
	But the injectivity of the preceding map follows from \eqref{eqn: injectivity of localization R/pR}.
	This allows us to conclude.
\end{proof}

\subsubsection{Some explicit periods}\label{subsubsec:explicit_periods}

In this section, we will record some important technical observations for some special $p\textrm{-adic}$ periods.
We will work under the assumption that $d > a$, which implies that the elements $x_{a+1}, \ldots, x_d$ are not invertible in $R$.
For each $1 \leqslant i \leqslant d$, let us set 
\begin{equation*}
	\beta_i \coloneq \log\big(\tfrac{x_i}{[x_i^{\flat}]}\big) = \textstyle\sum_{k=0}^{\infty} \tfrac{(-1)^k}{k+1} \big(\tfrac{x_i-[x_i^{\flat}]}{[x_i^{\flat}]}\big)^{k+1} \in \OBlogdR(\overline{R}).
\end{equation*}
For $1 \leqslant i \leqslant a$, from its definition it is clear that $\beta_i$ is an element of $\OAcrys(\overline{R})$, and for $a+1 \leqslant i \leqslant d$, we have that $\beta_i$ is an element of $\OAlogcrys(\overline{R})$.
Our first goal in this section is to show the following claim:
\begin{lem}\label{lem:betanotrational}
	For any non-zero element $f$ of $\OAmax(\overline{R})$ and $a+1 \leqslant i \leqslant d$, the product $\beta_if$ does not belong to $\OAmax(\overline{R})$.
	Analogous claim also holds for $\OAcrys(\overline{R})$.
\end{lem}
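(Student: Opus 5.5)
We work inside $\OAmax(\overline{R})\cong \Amax(\overline{R})[\tfrac{z_1}{p},\ldots,\tfrac{z_d}{p}]_p^{\wedge}$ from \eqref{eq:OAmax_explicit}, with $z_i = x_i-[x_i^{\flat}]$. Argue by contradiction: suppose $f\neq 0$ in $\OAmax(\overline{R})$ and $g\coloneq\beta_i f\in\OAmax(\overline{R})$. Since $\OAmax(\overline{R})$ is $p$-adically separated and $p$-torsion free, we may divide $f$ and $g$ by a common power of $p$ and assume $f\notin p\,\OAmax(\overline{R})$. Now apply the connection. The operator $\partial_i$ (with $\partial_i(z_j)=\delta_{ij}$) preserves $\OBmax(\overline{R})$, and $\partial_i(\beta_i) = \partial_i\log(x_i/[x_i^{\flat}]) = 1/x_i$. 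From $g=\beta_i f$ we get
\begin{equation*}
	\partial_i(g) = \tfrac{1}{x_i} f + \beta_i \partial_i(f).
\end{equation*}
Iterating, one gets that $f/x_i^k$ times explicit constants lies in $\beta_i\cdot(\textrm{stuff}) + \OBmax(\overline{R})$. Rather than iterating, the cleaner route is to use the \emph{integrality} structure: $\partial_i$ maps $\OAmax(\overline{R})$ into $p^{-1}\OAmax(\overline{R})$, since $\partial_i(z_i/p)=1/p$. Hence $x_i\partial_i(g)-x_i\beta_i\partial_i(f)=f$ gives information modulo $p$ after rescaling.

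I would instead follow the approach of Brinon for $t$-type periods and use the specialisation at the localisation: the key input is Proposition \ref{X-divisible} (and Lemma \ref{lem:inject_mult_x_i}). Concretely, expand $\beta_i$ as an element of $\OBlogdR^+(\overline{R})\cong\BlogdR^+(\overline{R})\llbracket y_1,\ldots,y_d\rrbracket$ where $\beta_i=-\log(1-y_i)=\sum_{k\geqslant1} y_i^k/k$, while elements of $\OAmax(\overline{R})$ are power series in $z_j = x_j y_j$ (up to units $[x_j^\flat]/x_j$) with coefficients in $\Amax(\overline{R})$ of $p$-adically controlled growth, namely the coefficient of $(z_i/p)^k$ tends to $0$. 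Writing $f=\sum_k f_k (z_i/p)^k$ and comparing coefficients of $y_i^k$ in $\beta_i f$, one finds that $g$'s expansion in $z_i/p$ has coefficients of the form $\sum_{j} f_{k-j}\,p^{j}/(j\,x_i^{j})$ (after re-expressing $y_i^j = z_i^j/x_i^j$). For $g$ to lie in $\OAmax(\overline{R})$ these must lie in $\Amax(\overline{R})$, forcing, after reduction modulo $p$ via the description $\Amax(\overline{R})/p=(\overline{R}/p)[Y]$ in the proof of Lemma \ref{lem:OAcrys_modp_inject}, that the leading nonzero coefficient of $f$ mod $p$ is divisible by arbitrarily high powers of $x_i$ in $\overline{R}/p$. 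By Proposition \ref{X-divisible} (intersection of $x_i^m(\overline{R}/p)$ is zero) and injectivity of $x_i$ (Lemma \ref{lem:inject_mult_x_i}), this leading coefficient vanishes, contradicting $f\notin p\OAmax(\overline{R})$.

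The main obstacle is the bookkeeping in the middle step: $1/x_i$ and the factors $1/j$ (with $j$ divisible by $p$) interact, so one must choose the right "leading term" (I would take the minimal $k$ with $f_k\not\equiv 0$ mod $p$ and look at coefficients of index $k+p^r$ for large $r$, where $p^{p^r}/p^r$ is highly divisible but the $x_i^{-p^r}$ denominator persists). The crystalline case $\OAcrys(\overline{R})$ is analogous using \eqref{eq:OAcrys_explicit} and divided powers, or follows from the max case since $\OAcrys\subset\OAmax$ up to the usual $\varphi$-sandwich.
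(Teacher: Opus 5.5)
Your overall frame (expand $f$ and $g=\beta_if$ in powers of $z_i/p$, compare coefficients, and finish with $x_i$-adic separatedness coming from Proposition \ref{X-divisible}) is the same as the paper's. However, the decisive middle step is missing, and the route you sketch for it does not work.

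First, the coefficients must be taken in $\OAmax^{(i)}(\overline{R})=\ker\partial_i$. So you have to expand $\beta_i=\log(1+z_i/[x_i^{\flat}])=\sum_{m\geqslant1}(-1)^{m-1}z_i^m/(m[x_i^{\flat}]^m)$. Your expansion $\sum_j z_i^j/(jx_i^j)$ is not a coefficient expansion: $x_i=[x_i^{\flat}]+z_i$ satisfies $\partial_i(x_i)=1$, so $x_i^{-j}$ is itself non-constant in $z_i$. Writing $f=\sum_kF_k(z_i/p)^k$ and $g=\sum_nG_n(z_i/p)^n$ with $F_k,G_n\in\OAmax^{(i)}(\overline{R})$, one gets
\begin{equation*}
	\textstyle\sum_{k=0}^{n-1}(-1)^{n-k+1}\tfrac{p^{n-k}}{n-k}[x_i^{\flat}]^kF_k=[x_i^{\flat}]^nG_n,\qquad n\geqslant1.
\end{equation*}

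Second, and this is the real gap, these relations do not let you read off high $[x_i^{\flat}]$-divisibility of a ``leading coefficient modulo $p$''. Every coefficient $p^j/j$ with $j\geqslant1$ lies in $p\mathbb{Z}_{(p)}$, so modulo $p$ the relations carry no information about the $F_k$. At index $n=k_0+p^r$ the coefficient of $F_{k_0}$ is $\pm p^{p^r-r}[x_i^{\flat}]^{k_0}$, so that term disappears modulo any fixed power of $p$. Only the terms $F_{n-j}$ with small $j$ survive, and about those you know nothing. Dividing once by $p$ yields only $F_k\in[x_i^{\flat}]\OAmax^{(i)}(\overline{R})+p\OAmax^{(i)}(\overline{R})$, a single power of $[x_i^{\flat}]$.

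What is needed is an argument controlling $F_0,\ldots,F_N$ simultaneously. The paper proves by induction on $n$ that $F_k\in[x_i^{\flat}]^{n-k}\OAmax^{(i)}(\overline{R})$ for $0\leqslant k\leqslant n-1$.
\begin{itemize}
\item Write $F_k=[x_i^{\flat}]^{N-k}H_k$ and take an auxiliary prime $q\neq p$ with $q\geqslant N+1$. The $N+1$ relations with $q\leqslant n\leqslant q+N$ give $\sum_{k=0}^N(-1)^{n-k+1}\tfrac{p^{n-k}}{n-k}H_k\in[x_i^{\flat}]\OAmax^{(i)}(\overline{R})$.
\item This rational matrix is invertible, because exactly its diagonal entries (those with $n-k=q$) have $q$-adic valuation $-1$.
\item Cramer's rule then gives $p^{s_k}H_k\in[x_i^{\flat}]\OAmax^{(i)}(\overline{R})$, and injectivity of $[x_i^{\flat}]$ modulo $p^n$ (Lemma \ref{lem: sublemma betanotrational}) removes the factor $p^{s_k}$.
\end{itemize}
Only then does $\bigcap_k[x_i^{\flat}]^k\OAmax(\overline{R})=0$ (Lemma \ref{lem: X-divisible for Amax}, which is where Proposition \ref{X-divisible} enters) force every $F_k=0$.

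Two minor points. Normalising to $f\notin p\OAmax(\overline{R})$ by dividing $f$ and $g$ by a common power of $p$ is not justified, since $g$ need not be divisible by that power; the argument above does not need this normalisation anyway. The crystalline case needs no $\varphi$-sandwich: it follows at once from the inclusion $\OAcrys(\overline{R})\subset\OAmax(\overline{R})$.
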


To prove Lemma \ref{lem:betanotrational}, we need some preparations.

\begin{lem}\label{lem: X-divisible for Amax}
	For each $a+1 \leqslant i \leqslant d$, we have that 
	\begin{equation*}
		\underset{k=1}{\overset{\infty}\cap} [x_i^{\flat}]^k \OAmax(\overline{R}) = 0.
	\end{equation*}
\end{lem}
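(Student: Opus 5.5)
We reduce the claim modulo powers of $p$ and then to the statement $\cap_{m} x_i^m(\overline{R}/p\overline{R}) = 0$ from Proposition \ref{X-divisible}. Since $\OAmax(\overline{R})$ is $p$-adically complete and $p$-torsion free, suppose $f \neq 0$ lies in $\cap_k [x_i^{\flat}]^k \OAmax(\overline{R})$. Dividing by the largest power of $p$ dividing $f$ (possible by $p$-adic separatedness), we may assume $f \notin p\OAmax(\overline{R})$. The intersection is stable under division by $p$: if $p g = [x_i^{\flat}]^k h$, we want $h \in p\OAmax(\overline{R})$. This holds once multiplication by $[x_i^{\flat}]$ is injective on $\OAmax(\overline{R})/p$. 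Granting that injectivity, the image $\bar f$ of $f$ in $\OAmax(\overline{R})/p$ is nonzero and lies in $\cap_k [x_i^{\flat}]^k(\OAmax(\overline{R})/p)$. So everything reduces to two facts about the ring $\OAmax(\overline{R})/p$: injectivity of $[x_i^{\flat}]$, and the vanishing of $\cap_k [x_i^{\flat}]^k(\OAmax(\overline{R})/p)$.

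Next we compute $\OAmax(\overline{R})/p$ explicitly. By \eqref{eq:OAmax_explicit}, $\OAmax(\overline{R})/p \cong (\Amax(\overline{R})/p)[\tfrac{z_1}{p},\ldots,\tfrac{z_d}{p}]$. As in the proof of Lemma \ref{lem:OAcrys_modp_inject}, $\Amax(\overline{R})/p \cong (\overline{R}/p)[Y]$ with $Y = \xi/p$. Here the structure map $A_{\inf}(\overline{R})\to \Amax(\overline{R})/p$ factors through $\theta$ modulo $p$, so $[x_i^{\flat}]$ maps to $x_i \in \overline{R}/p$. Hence $\OAmax(\overline{R})/p$ is a polynomial ring $(\overline{R}/p)[Y, w_1,\ldots,w_d]$ over $\overline{R}/p$, and $[x_i^{\flat}]$ acts through the coefficient $x_i$. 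Injectivity of $[x_i^{\flat}]$ is then Lemma \ref{lem:inject_mult_x_i} with $n=1$, applied coefficientwise.

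For the intersection, suppose a polynomial $P$ is divisible by $x_i^k$ for every $k$. Using injectivity of $x_i$ on $\overline{R}/p$, each coefficient of $P$ lies in $\cap_k x_i^k(\overline{R}/p)$, and this vanishes by Proposition \ref{X-divisible} for $a+1\leqslant i\leqslant d$. So $\bar f = 0$, a contradiction, and the intersection in the lemma is zero.

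The step I expect to be the main obstacle is justifying the identification $\OAmax(\overline{R})/p \cong (\overline{R}/p)[Y,w_1,\ldots,w_d]$ compatibly with $[x_i^{\flat}]$. In particular one must check that $p$-adic completion does not alter the reduction modulo $p$, and that $\xi$, together with the relation $pY=\xi$, behaves as in Lemma \ref{lem:OAcrys_modp_inject}. If that identification proves delicate, a fallback is to use the injection of Lemma \ref{lem:OAcrys_modp_inject} into $\prod_{\mathfrak p}\OAmax(\Cplusp)/p$ and argue locally, where $x_i$ becomes a unit. That route fails, however, so the polynomial description is the route to pursue.
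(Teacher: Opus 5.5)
Your argument is correct, but it takes a different route from the paper's. The paper first uses the expansion \eqref{eq:OAmax_explicit} to reduce to the coefficient ring, i.e.\ to $\cap_k [x_i^{\flat}]^k \Amax(\overline{R}) = 0$. It then reduces modulo the kernel of $\theta$ rather than modulo $p$: after dividing a putative nonzero element by a power of $\xi$ so that it survives $\theta$, its image lies in $\cap_k x_i^k \CRp$, which vanishes by the second half of Proposition \ref{X-divisible}.

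You instead reduce modulo $p$ directly on $\OAmax(\overline{R})$. There the reduction is already the polynomial ring $(\overline{R}/p)[Y, w_1, \ldots, w_d]$, so no separate passage to $\Amax(\overline{R})$ is needed. You conclude with the first half of Proposition \ref{X-divisible} (for $n=1$) together with Lemma \ref{lem:inject_mult_x_i}.

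The paper's version is shorter and feeds straight into the completed statement $\cap_m x_i^m \CRp = 0$. Its step of dividing by powers of $\xi$, however, tacitly needs $[x_i^{\flat}]$ to be regular modulo the relevant ideal, and that step is not written out. Moreover, the kernel of $\theta$ on $\Amax(\overline{R})$ contains $\xi/p$, so it is strictly larger than $\xi\Amax(\overline{R})$; one really has to divide by powers of $\xi/p$, which also requires $(\xi/p)$-adic separatedness. Your version makes the corresponding step honest. Dividing by $p$ preserves membership in the intersection exactly because $[x_i^{\flat}]$ is injective on $\OAmax(\overline{R})/p$, and separatedness is automatic from $p$-adic completeness. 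As a by-product you obtain the case $n=1$ of Lemma \ref{lem: sublemma betanotrational} without the localisation map.

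The identification you flag as the main obstacle is routine. For the polynomial ring $B$ one has $\widehat{B}/p\widehat{B} = B/pB$. Since $\xi$ is a nonzerodivisor on $A_{\inf}(\overline{R})/p = \overline{R}^{\flat}$, the ring $A_{\inf}(\overline{R})[Y]/(pY-\xi)$ is $p$-torsion free, hence equal to $A_{\inf}(\overline{R})[\xi/p]$. Its reduction modulo $p$ is $(A_{\inf}(\overline{R})/(p,\xi))[Y] = (\overline{R}/p)[Y]$, with $[x_i^{\flat}] \mapsto \theta([x_i^{\flat}]) = x_i$.

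A minor point: extracting the coefficientwise statement from $P = x_i^k Q_k$ does not use injectivity of $x_i$; it is just comparison of coefficients.
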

\begin{proof}
	From \eqref{eq:OAmax_explicit}, recall that each element of $\OAmax(\overline{R})$ may be written uniquely as a power series in variables $\{z_1/p, \ldots, z_d/p\}$ with coefficients in $\Amax(\overline{R})$.
	Therefore, to get the claim it suffices to show that
	\begin{equation*}
		\underset{k=1}{\overset{\infty}\cap} [x_i^{\flat}]^k \Amax(\overline{R}) = 0.
	\end{equation*}
	
	Let $a$ be a non-zero element in the left hand side of the equation above.
	Dividing $a$ by a sufficiently large power of $\xi$ (if necessary), we may further assume that $a$ is not in $\xi \Amax(\overline{R})$, i.e.\ $a$ is not in the kernel of the map $\theta$.
	But, then we observe that
	\begin{equation*}
		\theta(a) \in \underset{k=1}{\overset{\infty}\cap} x_i^k \CRp = 0,
	\end{equation*}
	where the equality follows from Proposition~\ref{X-divisible}, contradicting the assumption above.
	Hence, it follows that $a$ must be zero, and the lemma is proved.
\end{proof}

\begin{lem}\label{lem: sublemma betanotrational}
	For each $1 \leqslant i \leqslant d$ and $n \geqslant 1$, the following natural $\textrm{multiplication-by-}[x_i^{\flat}]$ map is injective:
	\begin{equation*}
		\OAmax(\overline{R})/p^n\OAmax(\overline{R}) \xrightarrow{[x_i^{\flat}]} \OAmax(\overline{R})/p^n\OAmax(\overline{R}).
	\end{equation*}
\end{lem}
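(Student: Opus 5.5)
The plan is to reduce, as in the proof of Lemma \ref{lem:OAcrys_modp_inject}, to a statement about $\overline{R}/p$ and then invoke Lemma \ref{lem:inject_mult_x_i}.

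First, since $\OAmax(\overline{R})$ is $p$-adically complete and $p$-torsion free, a standard induction on $n$ (snake lemma applied to $0 \to \OAmax(\overline{R})/p \xrightarrow{p^{n-1}} \OAmax(\overline{R})/p^n \to \OAmax(\overline{R})/p^{n-1} \to 0$, which is compatible with multiplication by $[x_i^{\flat}]$) reduces the claim to $n = 1$.

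Next, I describe $\OAmax(\overline{R})/p$ explicitly using \eqref{eq:OAmax_explicit}. Modulo $p$, the completed polynomial algebra becomes an honest polynomial ring:
\begin{equation*}
	\OAmax(\overline{R})/p \cong \big(\Amax(\overline{R})/p\big)[Z_1, \ldots, Z_d], \qquad Z_j = z_j/p.
\end{equation*}
As computed in Lemma \ref{lem:OAcrys_modp_inject}, we also have
\begin{equation*}
	\Amax(\overline{R})/p = A_{\inf}(\overline{R})[Y]/(pY-\xi, p) = (A_{\inf}(\overline{R})/(p,\xi))[Y] \cong (\overline{R}/p)[Y].
\end{equation*}
Under the last identification, which is induced by $\theta$, the element $[x_i^{\flat}]$ maps to $\theta([x_i^{\flat}]) = x_i$. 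Hence $\OAmax(\overline{R})/p \cong (\overline{R}/p)[Y, Z_1, \ldots, Z_d]$, and multiplication by $[x_i^{\flat}]$ becomes multiplication by $x_i$, acting coefficientwise on the polynomials. It therefore suffices to show that multiplication by $x_i$ is injective on $\overline{R}/p\overline{R}$.

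This is exactly Lemma \ref{lem:inject_mult_x_i} for $n=1$. For $1 \leqslant i \leqslant a$ it is automatic, since $x_i$ is a unit.

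The only delicate point is checking that the identification $A_{\inf}(\overline{R})/(p,\xi) \cong \overline{R}/p$ sends $[x_i^{\flat}]$ to $x_i$, and that reduction mod $p$ of the $p$-adically completed polynomial algebra really is the polynomial algebra. The second holds because the completion is of a $p$-torsion free module, and the first is immediate from the definition of $\theta$. I expect no real obstacle.
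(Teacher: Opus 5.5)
Your proof is correct, but it is organised differently from the paper's.

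The paper does not compute $\OAmax(\overline{R})/p$ in this lemma. It places multiplication by $[x_i^{\flat}]$ in a commutative square whose horizontal arrows are the injections $\OAmax(\overline{R})/p^n \hookrightarrow \prod_{\pins}\OAmax(\Cplusp)/p^n$ of Lemma \ref{lem:OAcrys_modp_inject}. It then notes that $[x_i^{\flat}]$ acts injectively on each factor, because $x_i$ is a unit in $(\overline{R})_{\mathfrak{p}}$.

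You instead identify $\OAmax(\overline{R})/p \cong (\overline{R}/p)[Y, Z_1, \ldots, Z_d]$ directly, with $[x_i^{\flat}]$ acting as the constant $x_i$, and then quote Lemma \ref{lem:inject_mult_x_i}. Both arguments rest on the same input: the embedding $\overline{R}/p \hookrightarrow \prod_{\pins}(\overline{R})_{\mathfrak{p}}/p$ together with $x_i \notin \mathfrak{p}$. You use it at the level of $\overline{R}/p$, whereas the paper uses it at the level of $\OAmax$.

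What your route buys is self-containedness. You never need the rings $\OAmax(\Cplusp)$, nor the structure of $\OAmax(\Cplusp)/p^n$ required to justify that $[x_i^{\flat}]$ acts injectively there. You also avoid relying on Lemma \ref{lem:OAcrys_modp_inject}, which is stated for $\OAlogmax$ but applied to $\OAmax$. What the paper's route buys is uniformity with the localisation framework, which is needed anyway in Proposition \ref{prop: beta_transcendental}.

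One small correction concerns your justification for reducing the completed polynomial algebra mod $p$. The equality $\widehat{M}/p\widehat{M} = M/pM$ for a $p$-adic completion holds for every module $M$, because $(p)$ is finitely generated (see \cite[Tag 05GG]{stacks-project}); torsion-freeness is not the reason. Where $p$-torsion-freeness is genuinely used is elsewhere:
\begin{itemize}
\item identifying $A_{\inf}(\overline{R})[\xi/p]$ with $A_{\inf}(\overline{R})[Y]/(pY-\xi)$, which needs $\{p,\xi\}$ to be regular on $A_{\inf}(\overline{R})$;
\item the induction on $n$.
\end{itemize}
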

\begin{proof}
	Consider the following commutative diagram:
	\begin{equation*}
		\begin{tikzcd}
			\OAmax(\overline{R})/p^n\OAmax(\overline{R}) \arrow[r] \arrow[d, "{[x_i^{\flat}]}"] & \textstyle\prod_{\pins} \OAmax(\Cplusp)/p^n \OAmax(\Cplusp) \arrow[d, "{[x_i^{\flat}]}"]\\
			\OAmax(\overline{R})/p^n\OAmax(\overline{R}) \arrow[r] & \textstyle\prod_{\pins} \OAmax(\Cplusp)/p^n \OAmax(\Cplusp),
		\end{tikzcd}
	\end{equation*}
	where the horizontal arrows are injective by Lemma \ref{lem:OAcrys_modp_inject}.
	The right vertical arrow is clearly injective, therefore, it follows that the left vertical arrow is also injective.
\end{proof}

Now we are ready to prove Lemma \ref{lem:betanotrational}.

\begin{proof}[Proof of Lemma \ref{lem:betanotrational}]
	It is enough to show the claim for $\OAmax(\overline{R})$.
	So, let $f$ be any element of $\OAmax(\overline{R})$, fix some $a+1 \leqslant i \leqslant d$, and set $g \coloneq \beta_if$ in $\OBlogdR(\overline{R})$.
	We shall show that if $g$ belongs $\OAmax(\overline{R})$, then $f$ must be zero.

	Let $\partial_i$ denote the differential operator on $\OAmax(\overline{R})$ induced from $\OBdR(\overline{R})$ (see Sections \ref{subsubsec:BdR} and \ref{subsubsec:Bcris}).
	Let $\OAmax^{(i)} (\overline{R})$ denote the kernel of the operator $\partial_i$ on $\OAmax(\overline{R})$.
	Then, we may write $f$ and $g$ uniquely as
	\begin{equation*}
		f = \textstyle\sum_{n=0}^{\infty} F_n \tfrac{z_i^n}{p^n}, \qquad g = \textstyle\sum_{n=0}^{\infty} G_n \tfrac{z_i^n}{p^n},
	\end{equation*}
	where $F_n$ and $G_n$ belong to $\OAmax^{(i)}(\overline{R})$, for each $n \geqslant 0$.
	Additionally, we may write $\beta_i = \sum_{m=1}^{\infty} (-1)^{m-1}\tfrac{z_i^m}{m [x_i^{\flat}]^m}$ in $\OBlogdR(\overline{R})$.
	Then, an easy computation inside $\OBlogdR(\overline{R})$ shows that
	\begin{equation*}
		G_n = \textstyle\sum_{k+m=n} (-1)^{m-1}\tfrac{p^{m}}{m [x_i^{\flat}]^m} \cdot F_k.
	\end{equation*}
	Consequently, we obtain an infinite system of linear equations in $F_k$, for $n\geqslant 1$:
	\begin{equation}\label{eqn: main relation in lem:betanotrational}
	\textstyle\sum_{k=0}^{n-1}(-1)^{n-k+1}\frac{p^{n-k}[x_i^{\flat}]^k }{n-k} \cdot F_k = G_n [x_i^{\flat}]^n.
	\end{equation}
	By induction on $n$, we shall show that for each $0 \leqslant k \leqslant n-1$, we have
	\begin{equation}\label{eqn: induction in betanotrational}
		F_k \in [x_i^{\flat}]^{n-k} \OAmax^{(i)}(\overline{R}).
	\end{equation}
	This will imply that $F_k$ belongs to $\cap_{n = k+1}^{\infty} [x_i^{\flat}]^{n-k} \OAmax^{(i)} (\overline{R})$, and therefore by Lemma \ref{lem: X-divisible for Amax} it will follow that $F_k = 0$, for all $k \geqslant 0$.
	
	To prove the induction claim, let $n=1$.
	Then equation \eqref{eqn: main relation in lem:betanotrational} for $n=1$ gives us that $pF_0 = [x_i^{\flat}]G_1 $ belongs to $[x_i^{\flat}] \OAmax^{(i)}(\overline{R})$.
	An application of Lemma \ref{lem: sublemma betanotrational} then shows that $F_0$ is in $[x_i^{\flat}]\OAmax^{(i)}(\overline{R})$.
	
	Next, assume that \eqref{eqn: induction in betanotrational} holds for some $n = N \geqslant 1$.
	Then, for each $0 \leqslant k \leqslant N$, we may write $F_k = [x_i^{\flat}]^{N-k} H_k$, for some $H_k$ in $\OAmax^{(i)}(\overline{R})$.
	We claim that $H_k$ belongs to $[x_i^{\flat}] \OAmax^{(i)}(\overline{R})$.
	Indeed, fix a prime number $q \geqslant N+1$ such that $q \neq p$, and consider the equations \eqref{eqn: main relation in lem:betanotrational} for $q \leqslant n \leqslant q+N$.
	Since multiplication by $[x_i^{\flat}]$ is injective on $\OAmax^{(i)}(\overline{R})$ (see Lemma \ref{lem: sublemma betanotrational}), therefore, we obtain a system of linear equations in $\{H_0, \ldots, H_N\}$, for $q \leqslant n \leqslant q+N$:
	\begin{equation*}
		\textstyle\sum_{k=0}^N (-1)^{n-k+1}\frac{p^{n-k}}{n-k} \cdot H_k = L_n [x_i^{\flat}],
	\end{equation*}
	where $L_n$ belongs to $\OAmax^{(i)}(\overline{R})$.
	It is easy to see that the determinant of the preceding system of linear equations is equal to
	\begin{equation*}
		\pm p^{q(N+1)} \left \vert 
		\begin{matrix} 
			\frac{p^q}{q} & -\frac{p^{q-1}}{q-1} & \ldots & (-1)^N \frac{p^{q-N}}{q-N}\\
			\frac{p^{q+1}}{q+1} & -\frac{p^{q}}{q} & \ldots & (-1)^N \frac{p^{q-N+1}}{q-N+1}\\
			\vdots & \vdots & \ddots & \vdots\\
			\frac{p^{q+N}}{q+N} & -\frac{p^{q+N-1}}{q+N-1} & \ldots & (-1)^N \frac{p^{q}}{q}
		\end{matrix}
		\right \vert.
	\end{equation*}
	An elementary computation shows that the product of diagonal elements gives the term of the lowest $q\textrm{-adic}$ valuation in the standard expression of this determinant as a sum of signed products.
	Consequently, this determinant is a non-zero rational number, and from Cramer's formulas we get that $p^{s_k}H_k$ belongs to $[x_i^{\flat}] \OAmax^{(i)}(\overline{R})$, for some $s_k \geqslant 0$ and each $0 \leqslant k \leqslant N$. 
	Then, by applying Lemma \ref{lem: sublemma betanotrational} repeatedly, we conclude that $H_k$ belongs to $[x_i^{\flat}] \OAmax^{(i)}(\overline{R})$, for each $0 \leqslant k \leqslant N$.
	In particular, it follows that $F_k$ belongs to $[x_i^{\flat}]^{N+1-k} \OAmax^{(i)}(\overline{R})$, thus proving the induction claim.
	Hence, it follows that $F_k = 0$, for all $k \geqslant 0$.
\end{proof}

\begin{prop}\label{prop: beta_transcendental}
	For $a+1 \leqslant i \leqslant d$, the element $\beta_i$ is not algebraic over $\OAcrys(\overline{R})$.
\end{prop}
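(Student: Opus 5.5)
Suppose, for a contradiction, that $\beta_i$ is algebraic over $\OAcrys(\overline{R})$. Then there are $f_0, \ldots, f_n$ in $\OAcrys(\overline{R})$, with $n \geqslant 1$ and $f_n \neq 0$, such that $\sum_{j=0}^n f_j \beta_i^j = 0$ in $\OBlogdR(\overline{R})$; we take $n$ minimal among all such relations. The plan is to lower the degree by differentiating. We apply the operator $\partial_i$ from Lemma \ref{lem:betanotrational}, that is, the derivation of $\OBdR(\overline{R})$ with respect to $z_i = x_i - [x_i^{\flat}]$, extended to the log ring. It preserves $\OAcrys(\overline{R})$, since $\partial_i(z_i^{[k]}) = z_i^{[k-1]}$. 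Because $\beta_i = \log(x_i/[x_i^{\flat}])$ and $[x_i^{\flat}]$ is killed by $\partial_i$, we get $\partial_i(\beta_i) = 1/x_i$.

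To stay inside integral-type rings we first multiply the relation by a suitable power of $x_i$, which is fine since $x_i \in R$. Differentiating $\sum_j f_j\beta_i^j = 0$ and multiplying by $x_i$ gives
\begin{equation*}
	\textstyle\sum_{j=0}^{n} \big(x_i\partial_i(f_j) + (j+1) f_{j+1}\big)\beta_i^j = 0, \qquad f_{n+1} \coloneq 0.
\end{equation*}
This relation has coefficients in $\OAcrys(\overline{R})$ and degree $\leqslant n$. Take $n f_n$ times the original relation minus $f_n \cdot$(this new one) (or a similar combination) to kill the top term; this yields a relation of degree $\leqslant n-1$. By minimality all of its coefficients must vanish. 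The top coefficient then gives $n f_n f_{n-1} - f_n\big(x_i\partial_i(f_{n-1}) + n f_n\big) = 0$.

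The main obstacle is extracting the contradiction from this vanishing, since $\OAcrys(\overline{R})$ need not be a domain. It is cleaner to pass to fraction-like objects. Since $\partial_i$ acts on $K = \mathrm{Frac}$ of a suitable domain quotient, the standard argument would go as follows. If $\beta_i$ were algebraic over a differential field $K$ whose constants-compatible structure gives $\partial_i\beta_i = 1/x_i \in K$, then the coefficient of $\beta_i^{n-1}$ in the monic minimal polynomial would give $c$ with $\partial_i(c) = -n/x_i$. That would make $\beta_i + c/n$ a $\partial_i$-constant in $K(\beta_i)$, and hence $\beta_i \in K \cdot$(constants). Concretely, I would aim to show: there is a nonzero $f \in \OAcrys(\overline{R})$ with $\beta_i f \in \OAcrys(\overline{R})[1/p]$. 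Then, after multiplying by a power of $p$ and using that $\OAcrys(\overline{R}) \subset \OAmax(\overline{R})$, this contradicts Lemma \ref{lem:betanotrational}.

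To avoid the non-domain issue I would localize at one $\mathfrak{p} \in \mathscr{P}(\overline{R})$, using the injection $\OAlogcrys(\overline{R}) \hookrightarrow \prod_{\pins}\OAcrys(\Cplusp)$. I would project onto a factor where $f_n$ is nonzero. Alternatively, I would run the descending induction directly: the relations $n f_n f_{n-1} = f_n(x_i\partial_i f_{n-1} + n f_n)$ iterated should produce, from the $\beta_i^{n-1}$ coefficient, an element $h$ with $f_n(\beta_i + h) $ annihilated by $\partial_i$ and satisfying a lower-degree relation. I expect the bookkeeping of zero divisors here to be the genuinely delicate step. My first attempt would be to reduce to $n=1$, where the conclusion $f_1\beta_i = -f_0 \in \OAcrys(\overline{R})$ with $f_1 \neq 0$ immediately contradicts Lemma \ref{lem:betanotrational}.
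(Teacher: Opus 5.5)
Your opening is the same as the paper's. You take a minimal relation $\sum_j f_j\beta_i^j=0$, differentiate, and use minimality to get an identity saying that $f_{n-1}/f_n$ has logarithmic derivative $-n$. There are two slips in this part.
\begin{itemize}
\item $\partial_i$ does not preserve $\OBlogdR(\overline{R})$, because $\partial_i\beta_i=1/x_i$ and $x_i$ is not a unit there for $i>a$. You should work throughout with $N_i=x_i\partial_i$, for which $N_i(\beta_i)=1$; your displayed relation is exactly the $N_i$-relation.
\item The combination $nf_n\cdot(\text{original})-f_n\cdot(\text{new})$ does not cancel the degree-$n$ term, since the leading coefficient of the new relation is $N_i(f_n)$. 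The correct elimination is $f_n\cdot(\text{new})-N_i(f_n)\cdot(\text{original})$, which gives $f_nN_i(f_{n-1})-N_i(f_n)f_{n-1}=-nf_n^2$.
\end{itemize}

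The genuine gap is the step you flag as delicate, and localizing at one $\mathfrak{p}$ where $f_n$ survives cannot close it.
\begin{itemize}
\item \emph{No contradiction is possible in a single factor.} Since $x_i\notin\mathfrak{p}$, $x_i$ is a unit in $\Cplusp$. Hence $[x_i^{\flat}]$ is a unit in $A_{\inf}(\Cplusp)$, and $r_{\mathfrak{p}}(\beta_i)=\log(1+z_i/[x_i^{\flat}])$ already lies in $\OAcrys(\Cplusp)$. So locally $\beta_i$ satisfies a degree-one relation.
\item \emph{The obstruction is global.} Lemma \ref{lem:betanotrational} rests on $\bigcap_k[x_i^{\flat}]^k\OAmax(\overline{R})=0$, so you need an identity in $\OAlogcrys(\overline{R})$ itself.
\item \emph{The local output does not glue.} In a factor, the fraction-field argument gives $r_{\mathfrak{p}}(f_{n-1}+nf_n\beta_i)=c_{\mathfrak{p}}\,r_{\mathfrak{p}}(f_n)$, with $c_{\mathfrak{p}}$ in the fraction field of $\ker\partial_i$ on $\OAcrys(\Cplusp)$. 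This constant depends on $\mathfrak{p}$ and need not come from $\OAcrys(\overline{R})$.
\end{itemize}

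The device you are missing is to pin $c_{\mathfrak{p}}$ down by global data.
\begin{itemize}
\item Write $f_n=\sum_m F_m z_i^{[m]}$ and $f_{n-1}=\sum_m G_m z_i^{[m]}$, with $F_m,G_m$ in the kernel of $\partial_i$ on $\OAcrys(\overline{R})$. Let $m_0$ be minimal with $F_{m_0}\neq 0$.
\item Clear the $\partial_i$-constant denominator of $c_{\mathfrak{p}}$ and compare $z_i^{[m_0]}$-coefficients, using that $\beta_i$ has no constant term in $z_i$. This gives $c_{\mathfrak{p}}\,r_{\mathfrak{p}}(F_{m_0})=r_{\mathfrak{p}}(G_{m_0})$.
\item Hence $r_{\mathfrak{p}}(F_{m_0}f_{n-1}+nF_{m_0}f_n\beta_i-G_{m_0}f_n)=0$ for every $\pins$. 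This is trivial when $r_{\mathfrak{p}}(f_n)=0$, since then $r_{\mathfrak{p}}(F_{m_0})=0$.
\item Injectivity of $\OAlogcrys(\overline{R})\to\prod_{\pins}\OAcrys(\Cplusp)$ then gives $nF_{m_0}f_n\beta_i\in\OAcrys(\overline{R})$.
\item $F_{m_0}f_n\neq 0$: test at a $\mathfrak{p}$ with $r_{\mathfrak{p}}(F_{m_0})\neq 0$, where $r_{\mathfrak{p}}(f_n)\neq0$ too and the ring is a domain. This contradicts Lemma \ref{lem:betanotrational}.
\end{itemize}
Your ``reduce to $n=1$'' is the right endgame, but this is the mechanism that achieves it.
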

\begin{proof}
	We shall prove the claim by contradiction, i.e.\ for a fixed $a+1 \leqslant i \leqslant d$ assume that $\beta_i$ is algebraic over $\OAcrys(\overline{R})$, and under this assumption we shall produce a contradiction.
	So, let the following be an algebraic relation of minimal degree over $\OAcrys(\overline{R})$:
	\begin{equation*}
		a_n\beta_i^{n}+a_{n-1}\beta_i^{n-1}+\cdots +a_0=0, \qquad a_n\neq 0.
	\end{equation*}
	Let $N_i \colon \OAlogcrys(\overline{R}) \rightarrow \OAlogcrys(\overline{R})$ denote the differential operator induced from \eqref{eq:OBlogdR_connection_explicit}.
	Then, we see that $N_i(\beta_i^k) = k\beta_i^{k-1}$, and applying $N_i$ to the algebraic relation mentioned above, we see that
	\begin{equation*}
		N_i(a_n) \beta_i^{n} + \big(N_i(a_{n-1}) + na_n\big) \beta_i^{n-1} + \big(N_i(a_{n-2}) + (n-1)a_{n-1}\big) \beta_i^{n-2}+ \cdots + \big(N_i(a_0) + a_1\big) = 0.
	\end{equation*}
	From the minimality of the chosen relation we obtain that
	\begin{equation*}
		N_i(a_n)a_k = a_n \big(N_i(a_k)+(k+1)a_{k+1}\big), \qquad 0 \leqslant k \leqslant n-1.
	\end{equation*}
	In particular, 
	\begin{equation*}
		a_n N_i(a_{n-1}) - N_i(a_n)a_{n-1}  = -n a_n^2. 
	\end{equation*}
	Let $\mathfrak{p}$ be a fixed minimal prime ideal of $\overline{R}$ containing $p$, and such that the image of $a_n$ under the natural map $r_{\mathfrak{p}} \colon \OAlogcrys(\overline{R}) \rightarrow \OAcrys(\Cplusp)$ is non-zero (see the equation before Lemma \ref{lem:OAcrys_modp_inject}).
	From \cite[Proposition 2.9]{brinon-imparfait} we know that the ring $\OAcrys(\Cplusp)$ is integral, and in its field of fractions we have that
	\begin{equation*}
		N_i\left(\tfrac{r_{\mathfrak{p}}(a_{n-1})}{r_{\mathfrak{p}}(a_n)}\right ) = -n,
	\end{equation*}
	where we identify $x_i$ with $r_{\mathfrak{p}}(x_i)$ and, by abusing notations, write $N_i$ for the analogous derivation over $\OAcrys(\Cplusp)$.
	Since we have that $N_i(r_{\mathfrak{p}}(\beta_i)) = 1$, therefore, we obtain that 
	\begin{equation*}
		nr_{\mathfrak{p}}(\beta_i) = -\tfrac{r_{\mathfrak{p}}(a_{n-1})}{r_{\mathfrak{p}}(a_n)}+c_{\mathfrak{p}}, 
	\end{equation*}
	where the element $c_\mathfrak{p}$ belongs to the field of fractions of the integral domain 
	\begin{equation*}
		\OAcrys^{(i)}(\Cplusp)) = \ker(\partial_i \colon \OAcrys(\Cplusp) \longrightarrow \OAcrys(\Cplusp)).
	\end{equation*}
	Hence, we get that
	\begin{equation}\label{eq:an_betai_relation}
		r_{\mathfrak{p}}(a_{n-1} + na_n\beta_i)=c_{\mathfrak{p}} r_{\mathfrak{p}}(a_n).
	\end{equation}
	Using the explicit description of $\OAcrys(\overline{R})$ (see \eqref{eq:OAcrys_explicit} and \cite[Section 2.4]{abhinandan-relative-wach-ii}), let us write $a_n$ and $a_{n-1}$ in the following form:
	\begin{equation*}
		a_n = \textstyle\sum_{m=0}^{\infty} f_m \tfrac{z_i^m}{m!}, \qquad a_{n-1} = \sum_{m=0}^{\infty} g_m \tfrac{z_i^m}{m!},
	\end{equation*}
	where $f_m$ and $g_m$ belong to $\OAcrys^{(i)}(\overline{R})$.
	Let $m_0$ be the smallest integer such that $f_{m_0} \neq 0$.
	Comparing the terms of degree $m_0$ with respect to $z_i$ in equation \eqref{eq:an_betai_relation}, we find that $r_{\mathfrak{p}}(g_{m_0}) = c_{\mathfrak{p}} r_{\mathfrak{p}}(f_{m_0})$.
	Therefore, we get that
	\begin{equation}\label{eq:fm0_an_betai_relation}
		r_{\mathfrak{p}}(f_{m_0} a_{n-1} + nf_{m_0}a_n\beta_i) = r_{\mathfrak{p}}(g_{m_0}a_n),
	\end{equation}
	for any $\mathfrak{p}$ such that $r_{\mathfrak{p}}(a_n) \neq 0$.
	But, if $r_{\mathfrak{p}}(a_n) = 0$, then we see that both sides of the equation \eqref{eq:fm0_an_betai_relation} is zero, and therefore, we conclude that the equality in \eqref{eq:fm0_an_betai_relation} holds for all minimal primes $\mathfrak{p}$ of $\overline{R}$ lying above $pR$.
	Now, from the injectivity of the map $\underset{\mathfrak{p}}\prod r_{\mathfrak{p}}$ (see Section \ref{subsubsec:localisation_crys}) we get that 
	\begin{equation*}
		f_{m_0}a_{n-1} + nf_{m_0}a_n\beta_i = g_{m_0}a_n,
	\end{equation*}
	in $\OAlogcrys(\overline{R})$.
	But then it is clear that $nf_{m_0}a_n\beta_i$, in fact, belongs to $\OAcrys(\overline{R})$, which contradicts Lemma \ref{lem:betanotrational} above.
	Hence, $\beta_i$ is not algebraic over $\OAcrys(\overline{R})$, thus concluding our proof.
\end{proof}

\subsubsection{A logarithmic period ring}\label{subsubsec:Blog}

In the previous sections we looked at several log-crystalline period rings.
In particular, we defined the period ring $\OBlogcrys(\overline{R})$ and studied some of its properties.
In this section, we will define yet another logarithmic period ring using the explicit periods studied in Section \ref{subsubsec:explicit_periods}.

\begin{defi}\label{defi:ring_OBst}
	Define the logarithmic period ring
	\begin{equation*}
		\OAlog(\overline{R}) \coloneq \OAcrys(\overline{R})[\beta_{a+1}, \ldots, \beta_d],
	\end{equation*} 
	to be the $\OAcrys(\overline{R})\textrm{-subalgebra}$ of $\OAlogcrys(\overline{R})$ generated by $\beta_{a+1}, \ldots, \beta_d$.
	We also define $\OBlog^+(\overline{R}) \coloneq \OAlog(\overline{R})[1/p]$ and $\OBlog(\overline{R}) \defeq \OBlog^+(\overline{R})[1/t]$.
\end{defi}

We summarise the basic properties of the ring $\OBlog(\overline{R})$ in the following, which follow from easy computations and the corresponding property for $\OBcrys(\overline{R})$ and $\OBlogcrys(\overline{R})$.

\begin{prop}\label{prop:OBlog_properties}
	The following statements hold true:
	\begin{enumerate}
		\item[\textup{(1)}] The natural homomorphism of $\OBcrys(\overline{R})\textrm{-algebras}$
			\begin{equation*}
				\OBcrys(\overline{R})[T_{a+1}, \ldots T_d] \longrightarrow \OBlog(\overline{R}),
			\end{equation*}
			sending $T_i$ to $\beta_i$, is an isomorphism.
		
		\item[\textup{(2)}] The subring $\OBlog(\overline{R}) \subset \OBlogcrys(\overline{R})$ is stable under the action of the Galois group $G_R$ and the Frobenius $\varphi$ on the latter.
			Additionally, we have that $\varphi(\beta_i) = p\beta_i$, the $G_R\textrm{-invariants}$ are computed as
			\begin{equation*}
				\OBlog(\overline{R})^{G_R} = R[1/p],
			\end{equation*}
			and the homomorphism of rings $R[1/p] \rightarrow \OBlog(\overline{R})$ is faithfully flat.
		
		\item[\textup{(3)}] Equip $\OBlog(\overline{R}) \subset \OBlogcrys(\overline{R})$ with the induced filtration.
			Additionally, the logarithmic connection $\partial$ on $\OBlogcrys(\overline{R})$ induces a logarithmic connection
			\begin{equation*}
				\OBlog(\overline{R}) \longrightarrow \OBlog(\overline{R}) \otimes_{R} \omega^1_{R/O_F},
			\end{equation*}
			satisfying Griffiths transversality with respect to the filtration.
			Moreover, we have that 
			\begin{equation*}
				(\Fil^0\OBlog(\overline{R}))^{\varphi=1,\partial=0} = \mathbb{Q}_p.
			\end{equation*}
	\end{enumerate}
\end{prop}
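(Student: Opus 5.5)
The plan is to treat the three parts in order, deducing each from Proposition \ref{prop: beta_transcendental}, the explicit structure of $\OBlogcrys(\overline{R})$ and the corresponding facts for $\OBcrys(\overline{R})$.

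For (1), surjectivity is immediate from Definition \ref{defi:ring_OBst} after inverting $p$ and $t$, since $\OAlog(\overline{R})$ is generated over $\OAcrys(\overline{R})$ by the $\beta_i$. Injectivity amounts to the algebraic independence of $\beta_{a+1},\ldots,\beta_d$ over $\OBcrys(\overline{R})$. I will argue by induction on the number of variables, reusing the derivation trick from the proof of Proposition \ref{prop: beta_transcendental}. The operators $N_j$ of \eqref{eq:OBlogdR_connection_explicit} preserve $\OBcrys(\overline{R})$ (they are $x_j\partial_j$ there) and satisfy $N_j(\beta_k)=\delta_{jk}$. Take a nonzero relation $P(\beta_{a+1},\ldots,\beta_d)=0$ that is minimal in the $\beta_d$-degree, with leading coefficient a polynomial in the remaining $\beta$'s. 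Applying $N_d$ and using minimality lets me reduce, exactly as in that proof, to an identity of the form $f\cdot n\,a\,\beta_d\in \OAcrys(\overline{R})[\beta_{a+1},\ldots,\beta_{d-1}]$ after localising at the primes $\mathfrak{p}$ and using the injectivity of $\prod_{\mathfrak{p}} r_{\mathfrak{p}}$. Lemma \ref{lem:betanotrational} has to be extended to show that $\beta_d$ times a nonzero element of $\OAmax(\overline{R})[\beta_{a+1},\ldots,\beta_{d-1}]$ is not in that ring. Its proof should carry over once the coefficients are expanded as polynomials in the other $\beta$'s, because $\beta_k$ for $k\neq d$ lies in $\ker\partial_d$ after the change of variables $z_d$. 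Denominators in $p$ and $t$ can be cleared first, since $\OAcrys(\overline{R})$ is $p$- and $t$-torsion free. I expect this extension, making the $[x_d^{\flat}]$-divisibility argument work uniformly over polynomial coefficients, to be the main obstacle.

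For (2), stability under $G_R$ follows because $g(\beta_i)=\beta_i-\psi_i(g)t$, since $g[x_i^{\flat}]=[\varepsilon]^{\psi_i(g)}[x_i^{\flat}]$. Stability under Frobenius follows from $\varphi(\beta_i)=\log(x_i^p/[x_i^{\flat}]^p)=p\beta_i$. The invariants satisfy $R[1/p]\subset\OBlog(\overline{R})^{G_R}\subset\OBlogcrys(\overline{R})^{G_R}=R[1/p]$. For faithful flatness, $\OBlog(\overline{R})$ is a polynomial ring over $\OBcrys(\overline{R})$ by (1), so it is free, hence faithfully flat, over $\OBcrys(\overline{R})$. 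Composing with the faithful flatness of $R[1/p]\to\OBcrys(\overline{R})$ (Remark on classical crystalline rings) gives the claim.

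For (3), the connection preserves the subring because $N_j(\beta_i)=\delta_{ij}$ and $\OBcrys(\overline{R})$ is stable. Griffiths transversality is then inherited from $\OBlogcrys(\overline{R})$. For the last identity, write $f=\sum_\alpha c_\alpha\beta^\alpha$ with $c_\alpha\in\OBcrys(\overline{R})$, using (1). The condition $\partial f=0$ gives $N_j(c_\alpha)+(\alpha_j+1)c_{\alpha+e_j}=0$ for $j>a$ and $N_j(c_\alpha)=0$ for $j\le a$. The condition $\varphi(f)=f$ gives $\varphi(c_\alpha)p^{|\alpha|}=c_\alpha$. Passing to the top-degree terms, with $\varphi$ compatibility and $\partial$ acting triangularly, I expect to show that all $c_\alpha$ with $\alpha\neq0$ vanish. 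This should use the fact that a nonzero $\varphi$-eigenvector in $\Fil^0$ with slope $p^{-k}$, $k>0$, cannot be horizontal; alternatively, the top coefficient is horizontal with $\varphi=p^{-|\alpha|}$, which is impossible by the known $(\Fil^0\OBcrys)^{\varphi=p^{-k},\partial=0}=0$ for $k>0$. Then $f=c_0\in(\Fil^0\OBcrys(\overline{R}))^{\varphi=1,\partial=0}=\mathbb{Q}_p$.
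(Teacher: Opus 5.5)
You follow the route the paper has in mind for parts (1) and (2). The paper's own justification is only ``easy computations'' from the properties of $\OBcrys(\overline{R})$ and $\OBlogcrys(\overline{R})$. You are right that Proposition~\ref{prop: beta_transcendental} gives transcendence of one $\beta_i$ at a time, so joint independence needs your extra induction. One caution there: comparing coefficients of monomials in $\beta_{a+1},\ldots,\beta_{d-1}$ inside $\OAmax^{(d)}(\overline{R})[\beta_{a+1},\ldots,\beta_{d-1}]$ requires this to be a polynomial ring. So your induction hypothesis must be algebraic independence over $\OAmax(\overline{R})$, not only over $\OAcrys(\overline{R})$.

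The genuine gap is the last identity in (3).
\begin{itemize}
\item You kill the top coefficient $c_\alpha$ because it is horizontal with $\varphi(c_\alpha)=p^{-|\alpha|}c_\alpha$, and then invoke $(\Fil^0\OBcrys(\overline{R}))^{\varphi=p^{-k},\partial=0}=0$. But nothing puts $c_\alpha$ in $\Fil^0$.
\item Since $\beta_i=-\log(1-y_i)\in\Fil^1$, the condition $f\in\Fil^0$ gives no such control on the coefficients. For example, $f=t^{-1}\beta_d$ lies in $\Fil^0\OBlog(\overline{R})$ and satisfies $\varphi(f)=f$, yet its top coefficient $t^{-1}$ is a nonzero horizontal element with $\varphi=p^{-1}$.
\item So filtration and Frobenius can never exclude the top term; only $\partial$ can, one degree lower. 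Take $\alpha\neq 0$ maximal with $c_\alpha\neq 0$, and $j>a$ with $\alpha_j>0$. Then $c_\alpha\in\OBcrys(\overline{R})^{\partial=0}=\Bcrys(\overline{R})$ and $N_j(c_{\alpha-e_j})=-\alpha_j c_\alpha$.
\end{itemize}

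The missing input is that, for $j>a$, the equation $N_j(c)=b$ has no solution $c\in\OBcrys(\overline{R})$ when $b\neq 0$ is horizontal. This is a non-rationality statement for $\beta_j$ of the same nature as Lemma~\ref{lem:betanotrational}, and it can be proved directly:
\begin{itemize}
\item After scaling by $p^s t^r$, write $c=\sum_n C_n z_j^n/p^n$ with $C_n\in\OAmax^{(j)}(\overline{R})$.
\item Since $N_j=x_j\partial_j$ and $x_j=[x_j^\flat]+z_j$, comparing coefficients gives $[x_j^\flat]C_1=pb$ and $[x_j^\flat](n+1)C_{n+1}=-pnC_n$.
\item Hence $[x_j^\flat]^k\,kC_k=(-1)^{k-1}p^k b$ for all $k\geqslant 1$.
\item Lemmas~\ref{lem: sublemma betanotrational} and~\ref{lem: X-divisible for Amax} then force $b\in\bigcap_k [x_j^\flat]^k\OAmax(\overline{R})=0$.
\end{itemize}
Iterating over the finite support shows $\OBlog(\overline{R})^{\partial=0}=\Bcrys(\overline{R})$. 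The identity then follows from $(\Fil^0\Bcrys(\overline{R}))^{\varphi=1}=\mathbb{Q}_p$, which is the ``corresponding property'' for $\OBcrys(\overline{R})$.
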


\subsection{Log-crystallline Galois representations}\label{subsec:logcrys_reps}

In this section we shall define and study properties of the main objects of this paper, namely, log-crystalline representations of $G_R$.

\subsubsection{\texorpdfstring{$G_R\textrm{-regularity}$}{-}}

Let $G$ be a topological group and let $\textrm{Rep}_{\mathbb{Q}_p}(G)$ denote the category of $\mathbb{Q}_p\textrm{-vector}$ spaces admitting a linear and continuous action of $G$, and morphisms being $\mathbb{Q}_p\textrm{-linear}$ and $G\textrm{-equivariant}$ homomorphisms of vector spaces.

\begin{defi}
	A \textit{Tannakian subcategory} of $\textrm{Rep}_{\mathbb{Q}_p}(G)$ is a full subcategory stable under sub-objects, quotients, direct sums, tensor products, dual and contains the unit object, i.e.\ $\mathbb{Q}_p$ equipped with the trivial action of $G$.
	If $\mathcal{C}$ is such a category and $\Lambda$ a commutative ring, then any $\otimes\textrm{-functor}$ $F$ from $\mathcal{C}$ to the category of $\Lambda\textrm{-modules}$ is said to be a $\Lambda\textit{-fibre}$ functor if it is faithful, exact and takes values in finite projective $\Lambda\textrm{-modules}$.
\end{defi}

Let $B$ be a reduced $\mathbb{Q}_p\textrm{-algebra}$ equipped with a continuous and $\mathbb{Q}_p\textrm{-linear}$ action of $G$.
For a $\mathbb{Q}_p\textrm{-representation}$ $V$ of $G$, set
\begin{equation*}
	D_B(V) \coloneq (B \otimes_{\mathbb{Q}_p} V)^G,
\end{equation*}
as a module over $B^G$.
Then, we have a natural homomorphism of $B\textrm{-modules}$:
\begin{equation*}
	\begin{aligned}
		\alpha_B(V) \colon B \otimes_{B^G} D_B(V) &\longrightarrow B \otimes_{\mathbb{Q}_p} V\\
					b \otimes d &\longmapsto bd.
	\end{aligned}
\end{equation*}

\begin{defi}
	A $\mathbb{Q}_p\textrm{-representation}$ $V$ of $G$ is said to be $B\textit{-admissible}$ if $\alpha_B(V)$ is an isomorphism.
\end{defi}

\begin{defi}[{\cite[Définition 8.0.1]{brinon-relatif}}]\label{defi:GR_regular}
	The ring $B$ is said to be \textit{$G\textrm{-regular}$} if it satisfies the following properties:
	\begin{enumerate}
		\item[\textup{(1)}] The ring $B^G$ is noetherian.

		\item[\textup{(2)}] The ring $B$ is faithully flat over $B^G$.
		
		\item[\textup{(3)}] For all $\mathbb{Q}_p\textrm{-representations}$ $V$ of $G$, the homomorphism $\alpha_B(V)$ is injective.

		\item[\textup{(4)}] If $V$ is a one-dimensional $B\textrm{-admissible}$ $F\textrm{-representation}$ of $G$, then its $\mathbb{Q}_p\textrm{-linear}$ dual is also $B\textrm{-admissible}$.
	\end{enumerate}
	The category of $B\textrm{-admissible}$ $\mathbb{Q}_p\textrm{-representations}$ of $G$ shall be denoted as $\textrm{Rep}_{B\textrm{-adm}}(G)$ and it is a Tannakian subcategory of $\textrm{Rep}_{\mathbb{Q}_p}(G)$.
	Moreover, the functor $D_B$ is a $B^G\textrm{-fibre}$ functor.
\end{defi}

\subsubsection{Admissible representations for \texorpdfstring{$\OBlogdR(\overline{R})$}{-} and \texorpdfstring{$\OBlogcrys(\overline{R})$}{-}}\label{subsubsec:log_crys_dR_admis_rep}

For any $p\textrm{-adic}$ Galois representation $V$ of $G_R$, set
\begin{equation*}
	\ODlogdR(V) \coloneq \big(\OBlogdR(\overline{R}) \otimes_{\mathbb{Q}_p} V\big)^{G_R}.
\end{equation*}

\begin{defi}
	A $p\textrm{-adic}$ representation $V$ of $G_R$ is said to be \textit{log-de Rham} if the following natural homomorphism
	\begin{equation}\label{eq:deRham_admis}
		\alpha_{\textrm{log-dR},R}(V) \colon \OBlogdR(\overline{R}) \otimes_{R[1/p]} \ODlogdR(V) \longrightarrow \OBlogdR(\overline{R}) \otimes_{\mathbb{Q}_p} V,
	\end{equation}
	is an isomorphism.
\end{defi}

For a log-de Rham representation $V$ of $G_R$, from \cite[Section 3.3]{andreatta-iovita-semistable}, we have that $\ODlogdR(V)$ is a finite projective $R[1/p]\textrm{-module}$ equipped with a decreasing, separated and exhaustive filtration by finite projective $R[1/p]\textrm{-modules}$ and a $p\textrm{-adically}$ quasi-nilpotent logarithmic connection
\begin{equation*}
	\partial \colon \ODlogdR(V) \longrightarrow \ODlogdR(V) \otimes_R \omega^1_{R/O_F},
\end{equation*}
satisfying Griffiths transversality with respect to the filtration.
Additionally, the isomorphism $\alpha_{\textrm{log-dR},R}(V)$ is compatible with the respective filtrations and $G_R\textrm{-actions}$.

For any $p\textrm{-adic}$ Galois representation $V$ of $G_R$, set
\begin{equation*}
	\ODlogcrys(V) \coloneq \big(\OBlogcrys (\overline{R}) \otimes_{\mathbb{Q}_p} V\big)^{G_R}.
\end{equation*}

\begin{defi}\label{defi:logcrys_rep}
	A $p\textrm{-adic}$ representation $V$ of $G_R$ is said to be \textit{log-crystalline} if the following natural homomorphism
	\begin{equation}\label{eq:crys_admis}
		\alpha_{\textrm{log-cris},R}(V) \colon \OBlogcrys(\overline{R}) \otimes_{R[1/p]} \ODlogcrys(V) \longrightarrow \OBlogcrys(\overline{R}) \otimes_{\mathbb{Q}_p} V,
	\end{equation}
	is an isomorphism.
\end{defi}

For a log-crystalline representation $V$ of $G_R$, it is clear that we have a natural identification $\ODlogcrys(V) \isomorphic \ODlogdR(V)$, in particular, $\ODlogcrys(V)$ is a finite projective $R[1/p]\textrm{-module}$, and by transport of structure, it is equipped with a decreasing, separated and exhaustive filtration by finite projective $R[1/p]\textrm{-modules}$ and a $p\textrm{-adically}$ quasi-nilpotent logarithmic connection
\begin{equation*}
	\partial \colon \ODlogcrys(V) \longrightarrow \ODlogcrys(V) \otimes_R \omega^1_{R/O_F},
\end{equation*}
satisfying Griffiths transversality with respect to the filtration.
Additionally, the natural Frobenius map $\varphi^*\ODlogcrys(V) \rightarrow \ODlogcrys(V)$ is bijective, and the isomorphism $\alpha_{\textrm{log-cris},R}(V)$ is compatible with the respective Frobenii, filtrations and $G_R\textrm{-actions}$.

\begin{defi}
	A $p\textrm{-adic}$ representation $V$ of $G_R$ is said to be \textit{strongly log-crystalline} if the following natural homomorphism
	\begin{equation}\label{eq:log_admis}
		\alpha_{\textrm{log,R}}(V) \colon \OBlog(\overline{R}) \otimes_{R[1/p]} \ODlog(V) \longrightarrow \OBlog(\overline{R}) \otimes_{\mathbb{Q}_p} V,
	\end{equation}
	is an isomorphism.
\end{defi}

For a strongly log-crystalline representation $V$ of $G_R$, it is clear that we have a natural identification $\ODlog(V) \isomorphic \ODlogcrys(V)$, in particular, $\ODlog(V)$ is a finite projective $R[1/p]\textrm{-module}$, and by transport of structure, it is equipped with a decreasing, separated and exhaustive filtration by finite projective $R[1/p]\textrm{-modules}$ and a $p\textrm{-adically}$ quasi-nilpotent logarithmic connection
\begin{equation*}
	\partial \colon \ODlog(V) \longrightarrow \ODlog(V) \otimes_R \omega^1_{R/O_F},
\end{equation*}
satisfying Griffiths transversality with respect to the filtration.
Additionally, the natural Frobenius map $\varphi^*\ODlog(V) \rightarrow \ODlog(V)$ is bijective, and the isomorphism $\alpha_{\textrm{log},R}(V)$ is compatible with the respective Frobenii, filtrations and $G_R\textrm{-actions}$.

\begin{prop}\label{prop:GR_regularity}
	The rings $\OBlogdR(\overline{R})$, $\OBlogcrys(\overline{R})$ and $\OBlog(\overline{R})$ are $G_R\textrm{-regular}$.
\end{prop}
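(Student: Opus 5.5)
I will verify the four conditions of Definition \ref{defi:GR_regular} for $B$ equal to each of $\OBlogdR(\overline{R})$, $\OBlogcrys(\overline{R})$ and $\OBlog(\overline{R})$, following the strategy of \cite[Section 8]{brinon-relatif}.

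\textbf{Conditions (1) and (2).} In each case $B^{G_R} = R[1/p]$, which was recorded in Section \ref{subsubsec:BdR}, Section \ref{subsubsec:Bcris} and Proposition \ref{prop:OBlog_properties}. This ring is noetherian because $R$ is noetherian by Proposition \ref{prop:Rnm_Snm_normal}. Faithful flatness of $R[1/p] \rightarrow B$ was recorded after \eqref{eq:OBlogdR_connection_explicit}, in Proposition \ref{prop:OBlogcrys_ff}, and in Proposition \ref{prop:OBlog_properties}(2).

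\textbf{Condition (3).} I will first reduce to the field-like situation using the standard criterion \cite[Proposition 8.1.4]{brinon-relatif}: injectivity of $\alpha_B(V)$ for all $V$ holds as soon as every nonzero $b \in B$ such that $\mathbb{Q}_p b$ is $G_R$-stable is invertible in $B$ (equivalently, one can work in $\mathrm{Frac}$-type localisations). Granting this criterion, suppose $b \neq 0$ and $g(b) = \eta(g) b$ for a character $\eta$. I will embed $B$ $G_R$-equivariantly into $\prod_{\pins} \OBdR(\Cplusp)$, using the injections constructed in Section \ref{subsubsec:localisation_dR} (and, for the crystalline rings, via $\OBlogcrys \subset \OBlogdR$).

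The main step is to show that the images $b_{\mathfrak{p}}$ are all nonzero and are units. First, since $G_R$ acts transitively on $\mathscr{P}(\overline{R})$ and the line $\mathbb{Q}_p b$ is stable, the set of $\mathfrak{p}$ with $b_{\mathfrak{p}} \neq 0$ is $G_R$-stable and nonempty, hence it is everything. Second, each $b_{\mathfrak{p}}$ spans a $\GRhatp$-stable line in $\OBdR(\Cpplus)$. By Brinon's imperfect-residue-field result \cite{brinon-imparfait}, this ring is $G_L$-regular, so $b_{\mathfrak{p}}$ is a unit there. Its inverse again spans a stable line, so it lies in the stable subring $\OBdR(\Cplusp)$ by a Galois-invariants argument. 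This produces an element $b' = (b_{\mathfrak{p}}^{-1})_{\mathfrak{p}}$ in the product, and $b'$ spans a $G_R$-stable line.

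I then need to show that $b'$ actually lies in $B$. For $\OBlogdR$ I will use the filtration together with the explicit description \eqref{eq:OBdRlog+_explicit}: multiply $b$ by $t^{k}$ so that it becomes a stable line in $\Fil^0$ with nonzero image in $\mathrm{gr}^0$, and invert there using Hodge–Tate theory, as in \cite[Proposition 8.2.x]{brinon-relatif}. For $\OBlogcrys$, a $\varphi$-eigen-argument shows that a $G_R$-stable line is of the form $t^{k}\cdot(\text{unit of }R[1/p])$ times a period, exactly as in Brinon's crystalline case. I expect this step, checking that the inverse lands back in the log rings rather than only in the product, to be the main obstacle. I would handle it through the decomposition $\OBlog = \OBcrys[\beta_{a+1},\dots,\beta_d]$ (Proposition \ref{prop:OBlog_properties}(1)) together with the transcendence of $\beta_i$ from Proposition \ref{prop: beta_transcendental}. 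The point is that a stable line in a polynomial ring over $\OBcrys$ in the $\beta_i$ must have degree zero, since $g(\beta_i) = \beta_i + \text{(element of } \OBcrys)$ and comparing top-degree coefficients forces this. This reduces the problem to the known regularity of $\OBcrys(\overline{R})$ (Brinon), and by the same method for $\OBlogcrys$ via $\OBlogcrys \subset \OBlogdR$.

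\textbf{Condition (4).} If $V$ is one-dimensional and admissible, then $D_B(V)$ is invertible, so some $b$ with $g(b) = \eta(g)^{-1} b$ generates $B \otimes V$ up to units; in particular $b$ is a unit of $B$. Then $b^{-1}$ gives admissibility of $V^\vee$. Since the criterion in condition (3) already shows that such $b$ are units, condition (4) follows formally.
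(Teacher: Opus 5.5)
Your treatment of Conditions (1) and (2) matches the paper. The gap is in Condition (3). The criterion you plan to use is that every nonzero $b\in B$ spanning a $G_R$-stable $\mathbb{Q}_p$-line is a unit of $B$. For all three rings this is false as soon as $d\geq 1$, so the ``main step'' you set out to prove cannot be proved.

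Take any nonzero non-unit $r\in R[1/p]$, for instance $x_{a+1}$ whenever its zero locus in $\Spec R[1/p]$ is nonempty. It is $G_R$-invariant, so $\mathbb{Q}_p r$ is stable. It is nonzero in $B$ because $R[1/p]\to B$ is faithfully flat. Suppose $rb'=1$ in $B$. Then $r\,g(b')=1$ for every $g\in G_R$, so $g(b')=b'$ by uniqueness of inverses. Hence $b'\in B^{G_R}=R[1/p]$, a contradiction.

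This also shows that the obstacle you flagged is not technical. For $b=x_{a+1}$, every component $b_{\mathfrak{p}}$ is a unit, since $x_{a+1}\in L^{\times}$. Yet the tuple $(b_{\mathfrak{p}}^{-1})_{\mathfrak{p}}$ does not come from $B$. Fontaine's unit criterion relies on $B^G$ being a field, through minimal-length relations among elements of $D_B(V)$ that are independent over $B^G$. That is exactly what fails here, where $B^{G_R}=R[1/p]$. The same objection applies to your reduction of $\OBlog(\overline{R})$ to $\OBcrys(\overline{R})$ by degree in the $\beta_i$: it would only transfer this false property.

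What works, and what the paper does in Lemma \ref{lem:alpha_injective}, is to prove injectivity of $\alpha_B(V)$ directly. It uses your localisation ingredients, but applies them to $D_B(V)$ rather than to stable lines.
\begin{itemize}
\item Since $D_B(V)$ is torsion-free over $R[1/p]$, it injects into $L\otimes_{R[1/p]}D_B(V)$.
\item Use the $G_R$-equivariant embedding $L\otimes_{R[1/p]}B\hookrightarrow\prod_{\mathfrak{p}}\OBdR(\Cplusp)$ and the transitivity of $G_R$ on $\mathscr{P}(\overline{R})$. Together they show that $L\otimes_{R[1/p]}D_B(V)$ injects into a single factor $(\OBdR(\Cplusp)\otimes_{\mathbb{Q}_p}V)^{\GRp}$, and hence into $\ODdRL(V)$.
\item Flatness of $B$ over $R[1/p]$, combined with injectivity of $\alpha_{\mathrm{dR},L}(V)$ from the imperfect residue field theory, then gives injectivity of $\alpha_B(V)$ via a commutative square.
\end{itemize}

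Your argument for Condition (4) leans on the same false unit criterion. It also has a second gap: it assumes $D_B(V)$ is generated by a single element, but an invertible $R[1/p]$-module need not be free. The paper avoids both problems. It uses that $D_B(V)$ is finite projective, which follows from admissibility and faithfully flat descent. It then uses the adjunction isomorphisms $\mathrm{Hom}_{R[1/p]}(D_B(V),R[1/p])\otimes_{R[1/p]}B\cong V^{*}\otimes_{\mathbb{Q}_p}B$. Taking $G_R$-invariants identifies $D_B(V^{*})$ with the dual of $D_B(V)$ and shows that $V^{*}$ is admissible. This works for every admissible $V$, not only one-dimensional ones.
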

\begin{proof}
	Let $B$ denote one of the rings $\OBlogdR(\overline{R})$, $\OBlogcrys(\overline{R})$ and $\OBlog(\overline{R})$.
	We first note that $B^{G_R} = R[1/p]$ (see the discussions in Sections \ref{subsubsec:BdR}, \ref{subsubsec:Bcris} and \ref{subsubsec:Blog}), and the natural homomorphism $R[1/p] \rightarrow B$ is faithfully flat (see \cite[Proposition 3.18]{andreatta-iovita-semistable}, Proposition \ref{prop:OBlogcrys_ff} and Proposition \ref{prop:OBlog_properties}).

	Next, let $V$ be a $\mathbb{Q}_p\textrm{-representation}$ of $G_R$ and from Lemma \ref{lem:alpha_injective} below we have that $\alpha_B(V)$ is injective.
	So it remains to show that if $V$ is $B\textrm{-admissible}$, then the finite dimensional $\mathbb{Q}_p\textrm{-vector}$ space $V^* \coloneq \textup{Hom}_{\mathbb{Q}_p}(V, \mathbb{Q}_p)$ is also $B\textrm{-admissible}$.
	Let us note that by the faithful flatness of the map $R[1/p] \rightarrow B$ and the $B\textrm{-admissibility}$ of $V$, we know that $D_B(V)$ is a finite projective $R[1/p]\textrm{-module}$.
	Then, we have the following $G_R\textrm{-equivariant}$ isomorphism of $B\textrm{-modules}$:
	\begin{equation*}
		\begin{aligned}
			\textup{Hom}_{R[1/p]}(D_B(V), R[1/p]) \otimes_{R[1/p]} B &\isomorphic \textup{Hom}_{R[1/p]}(D_B(V), B)\\
			&\isomorphic \textup{Hom}_B(D_B(V) \otimes_{R[1/p]} B, B)\\
			&\isomorphic \textup{Hom}_B(V \otimes_{\mathbb{Q}_p} B, B)\\
			&\isomorphic \textup{Hom}_{\mathbb{Q}_p}(V, B) \\
			&\isomorphic \textup{Hom}_{\mathbb{Q}_p}(V, \mathbb{Q}_p) \otimes_{\mathbb{Q}_p} B = V^* \otimes_{\mathbb{Q}_p} B,
		\end{aligned}
	\end{equation*}
	where the first isomorphism follows because $D_B(V)$ is a finite projective $R[1/p]\textrm{-module}$, the second and the fourth isomorphisms follow from the $\otimes\textrm{-Hom}$ adjunction, the third isomorphism follows from the $B\textrm{-admissibility}$ of $V$, and the fifth isomorphism follows because $V$ is a finite dimensional $\mathbb{Q}_p\textrm{-vector}$ space.
	Taking $G_R\textrm{-invariants}$ in the diagram above yields a natural $R[1/p]\textrm{-linear}$ isomorphism $\textup{Hom}_{R[1/p]}(D_B(V), R[1/p]) \isomorphic D_B(V^*)$, and then the diagram further implies that $\alpha_{B}(V^*)$ is bijective, i.e.\ $V^*$ is $B\textrm{-admissible}$.
	This concludes our proof.
\end{proof}

The following claim was used in the proof of Proposition \ref{prop:GR_regularity}:

\begin{lem}\label{lem:alpha_injective}
	Let $V$ be a $p\textrm{-adic}$ representation of $G_R$.
	Then, the homomorphisms $\alpha_{\textup{log-dR},R}(V)$, $\alpha_{\textup{log-cris},R}(V)$ and $\alpha_{\textup{log},R}(V)$ are injective.
\end{lem}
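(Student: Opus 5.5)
The plan is to follow the classical argument of Fontaine, in the form given in \cite[Proposition 8.2.?]{brinon-relatif}: injectivity of $\alpha_B(V)$ follows from $\mathrm{Fr}(B)^{G_R}=\mathrm{Fr}(B^{G_R})$ together with the fact that a nonzero $b\in B$ spanning a $G_R$-stable line is a unit. Since $B$ is not a domain here, I would replace the field of fractions by a localisation argument. Concretely, for $B\in\{\OBlogdR(\overline{R}),\OBlogcrys(\overline{R}),\OBlog(\overline{R})\}$ we have $G_R$-equivariant injections
\begin{equation*}
B\subset \OBlogdR(\overline{R}) \hookrightarrow \textstyle\prod_{\pins}\OBdR(\Cplusp),
\end{equation*}
from Section \ref{subsubsec:localisation_dR}. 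It therefore suffices to treat $\OBlogdR(\overline{R})$, because for the smaller rings $B\otimes_{R[1/p]}D_B(V)\to \OBlogdR(\overline{R})\otimes_{R[1/p]}D_B(V)$ is injective. This holds because $\OBlogdR(\overline{R})$ is faithfully flat over $R[1/p]$ and $B\to\OBlogdR(\overline{R})$ is injective. The point to watch is whether $D_B(V)$ is flat; since $R[1/p]$ is regular of finite dimension, I would instead argue directly with elements.

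For the direct argument, suppose the kernel is nonzero. I pick a relation $\sum_{j=1}^{r} b_j\otimes d_j\mapsto 0$ with $d_j\in D_B(V)$ and $r$ minimal, where the $d_j$ are linearly independent over the field $\Fr(R)$, which is possible as $R[1/p]$ is a domain. Writing $d_j=\sum_k c_{jk}\otimes v_k$ in $B\otimes V$, the relation says $\sum_j b_j c_{jk}=0$ for all $k$. Localising at a prime $\pins$ and passing to $\OBdR(\Cplusp)$, I would use that $\OBdR(\Cplusp)$ is a domain and embed it in $\OBdR(\Cpplus)$. By Brinon's imperfect residue field theory \cite{brinon-imparfait}, $\OBdR(\Cpplus)$ is $\GRhatp$-regular with invariants $L$, so the $\GRhatp$-version of the injectivity statement holds. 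Since $R[1/p]\subset L$ and the images of the $d_j$ remain in $D_{\OBdR(\Cpplus)}(V)$, injectivity of $\alpha$ for $L$ shows that the $\sum_j r_{\mathfrak{p}}(b_j)\otimes d_j$ are zero in $\OBdR(\Cpplus)\otimes_{L}(L\otimes_{R[1/p]}D)$.

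The main obstacle is the passage from $R[1/p]$-independence to $L$-independence of the images of the $d_j$. Here I would use the injectivity $L\otimes_{R[1/p]}\OBlogdR(\overline{R})\hookrightarrow\prod_{\pins}\OBdR(\Cplusp)$ stated at the end of Section \ref{subsubsec:localisation_dR}, and the flatness of $R[1/p]\to L$, to reduce to showing that the natural map $L\otimes_{R[1/p]} D_B(V)\to D_{\OBdR(\Cplusp)}(V)$ is injective. I would argue this by the same $G_R$-equivariant product embedding. Granting it, $r_{\mathfrak{p}}(b_j)=0$ for all $\mathfrak{p}$ when $D$ is replaced by a free submodule generated by the $d_j$, and the injectivity of the product map gives $b_j=0$. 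This yields injectivity of $\alpha_{\textup{log-dR},R}(V)$, and hence of the other two maps by the reduction in the first paragraph.
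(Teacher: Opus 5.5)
Your overall route is the paper's. You embed $L\otimes_{R[1/p]}B$ into $\prod_{\pins}\OBdR(\Cplusp)\subset\prod_{\pins}\OBdR(\Cpplus)$, show that $L\otimes_{R[1/p]}D_B(V)$ injects into a single factor $(\OBdR(\Cplusp)\otimes_{\mathbb{Q}_p}V)^{\GRp}\subset\ODdRL(V)$, and conclude from Brinon's injectivity of $\alpha_{\textup{dR},L}(V)$.

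\textbf{The gap.} The step you call the main obstacle and then grant is the heart of the proof, and ``the same $G_R$-equivariant product embedding'' does not give it. The product embedding only yields $L\otimes_{R[1/p]}D_B(V)\hookrightarrow\prod_{\pins}(\OBdR(\Cplusp)\otimes_{\mathbb{Q}_p}V)^{\GRp}$. That says a nonzero $x$ has \emph{some} nonzero component. Your minimal-relation argument needs more: for each $\mathfrak{p}$, the images of the $d_j$ must be $L$-linearly independent in that single factor. Only then does $\alpha_{\textup{dR},L}(V)$ force $r_{\mathfrak{p}}(b_j)=0$.

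\textbf{The missing idea.} The image of $x$ in the product is $G_R$-invariant, since $G_R$ acts trivially on $L\otimes_{R[1/p]}D_B(V)$. Moreover, $G_R$ acts on the product by permuting factors compatibly with its action on $\mathscr{P}(\overline{R})$, so $x_{g\mathfrak{p}}=g(x_{\mathfrak{p}})$. Because $G_R$ acts transitively on $\mathscr{P}(\overline{R})$, either all components of $x$ vanish or none does. This gives injectivity into every single factor, which is exactly what the paper proves in \eqref{eq:ldbv_embed}--\eqref{eq:ldbv_embed_p}. With it, your element-wise argument goes through.

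\textbf{The reduction in your first paragraph.} It is not valid as stated. Injectivity of $B\otimes_{R[1/p]}D_B(V)\to\OBlogdR(\overline{R})\otimes_{R[1/p]}D_B(V)$ needs flatness of $D_B(V)$ (or a Tor-vanishing against the cokernel of $B\hookrightarrow\OBlogdR(\overline{R})$). Flatness of $\OBlogdR(\overline{R})$ over $R[1/p]$ does not supply it. You flag this, but then rely on the reduction again in your last sentence.

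The reduction is also unnecessary: run the argument for each $B$ directly, as the paper does for all three rings at once. It only uses:
\begin{itemize}
\item $B^{G_R}=R[1/p]$;
\item flatness of $B$ over $R[1/p]$, which makes $B\otimes_{R[1/p]}D_B(V)$ torsion-free so that a minimal relation has $\mathrm{Fr}(R)$-independent $d_j$ (``$R[1/p]$ is a domain'' alone is not the reason);
\item the embedding $L\otimes_{R[1/p]}B\hookrightarrow L\otimes_{R[1/p]}\OBlogdR(\overline{R})\hookrightarrow\prod_{\pins}\OBdR(\Cplusp)$, whose first arrow is injective because $R[1/p]\to L$ is flat.
\end{itemize}
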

\begin{proof}
	The idea of the proof comes from the proof of \cite[Theorem 4.11]{abhinandan-relative-wach-ii} and \cite[Lemme 8.2.2]{brinon-relatif}.
	Let $B$ denote one of the rings $\OBlogdR(\overline{R})$, $\OBlogcrys(\overline{R})$ and $\OBlog(\overline{R})$, and note that $B^{G_R} = R[1/p]$ (see the discussions in Sections \ref{subsubsec:BdR}, \ref{subsubsec:Bcris} and \ref{subsubsec:Blog}).
	Then, by definition we have that $D_B(V)$ is a torsion-free $R[1/p]\textrm{-module}$.
	From Section \ref{subsubsec:localisation_Rbar}, recall that $O_L = (R_{(p)})^{\wedge}$ is a complete discrete valuation ring with uniformiser $p$ and an imperfect residue field, and we set $L = O_L[1/p]$ as its fraction field.
	Then, we have natural injective $R[1/p]\textrm{-linear}$ homomorphisms 
	\begin{equation}\label{eq:dbv_in_ldbv}
		D_B(V) \longhookrightarrow R_{(p)} \otimes_R D_B(V) \longhookrightarrow O_L \otimes_R D_B(V) = L \otimes_{R[1/p]} D_B(V),
	\end{equation}
	compatible with the respective filtrations and connections, and where the first arrow is injective because $D_B(V)$ is a torsion-free $R[1/p]\textrm{-module}$, and the second arrow is injective because the homomorphism $R_{(p)} \rightarrow O_L$ is faithfully flat as $R_{(p)}$ is noetherian and $p$ is in the Jacobson radical of $R_{(p)}$ (see \cite[Theorem 8.14]{matsumura}).

	Next, from the discussion in Sections \ref{subsubsec:localisation_dR}, \ref{subsubsec:localisation_crys} and \ref{subsubsec:Blog}, we have a natural $L\textrm{-linear}$ and $G_R\textrm{-equivariant}$ injective homomorphism
	\begin{equation}\label{eq:lb_in_BdRprod}
		L \otimes_{R[1/p]} B \longhookrightarrow \textstyle\prod_{\pins} \OBdR(\Cplusp) \longhookrightarrow \textstyle\prod_{\pins} \OBdR(\Cpplus).
	\end{equation}
	Tensoring the preceding injective homomorphism with $V$ (over $\mathbb{Q}_p$) and considering the diagonal action of $G_R$, we obtain a $(\varphi, G_R)\textrm{-equivariant}$ injective map
	\begin{equation}\label{eq:lbv_embed}
		L \otimes_{R[1/p]} B \otimes_{\mathbb{Q}_p} V \longrightarrow \big(\textstyle\prod_{\pins} \OBdR(\Cplusp)\big) \otimes_{\mathbb{Q}_p} V = \textstyle\prod_{\pins} (\OBdR(\Cplusp) \otimes_{\mathbb{Q}_p} V).
	\end{equation}
	The composition in \eqref{eq:lbv_embed} further induces a natural $G_R\textrm{-equivariant}$ homomorphism
	\begin{equation*}
		L \otimes_{R[1/p]} B \otimes_{\mathbb{Q}_p} V \longrightarrow \OBdR(\Cplusp) \otimes_{\mathbb{Q}_p} V,
	\end{equation*}
	compatible with the respective filtrations and connections.
	Now, we take the $G_R\textrm{-invariant}$ part of \eqref{eq:lbv_embed} and note that taking $G_R\textrm{-invariants}$ commutes with product.
	So, we obtain the following $\varphi\textrm{-equivariant}$ $L\textrm{-linear}$ injective homomorphisms:
	\begin{equation}\label{eq:ldbv_embed}
		\begin{aligned}
			L \otimes_{R[1/p]} D_B(V) &\longhookrightarrow \big(\textstyle\prod_{\pins} \OBdR(\Cplusp) \otimes_{\mathbb{Q}_p} V\big)^{G_R}\\
			&\xrightarrow{\hspace{1mm} = \hspace{1mm}} \textstyle\prod_{\pins} (\OBdR(\Cplusp) \otimes_{\mathbb{Q}_p} V)^{G_R}\\
			&\longhookrightarrow \textstyle\prod_{\pins} (\OBdR(\Cplusp) \otimes_{\mathbb{Q}_p} V)^{\GRp},
		\end{aligned}
	\end{equation}
	where we note that the last arrow is injective because $\GRp \subset G_R$ is a subgroup.
	Moreover, since $G_R$ acts transitively on $\mathscr{P}(\overline{R})$, therefore it transitively permutes the components of the product $\prod_{\pins} (\OBdR(\Cplusp) \otimes_{\mathbb{Q}_p} V)^{\GRp}$.
	So, if $x \neq 0$ is an element of $L \otimes_{R[1/p]} D_B(V)$, then its image $(x_{\mathfrak{p}})_{\pins}$ under the composition \eqref{eq:ldbv_embed} satisfies that $x_{\mathfrak{p}} \neq 0$, for all $\pins$.
	Therefore, for each $\pins$, composing \eqref{eq:ldbv_embed} with the natural $\varphi\textrm{-equivariant}$ $L\textrm{-linear}$ projection
	\begin{equation*}
		\textstyle\prod_{\pins} (\OBdR(\Cplusp) \otimes_{\mathbb{Q}_p} V)^{\GRp} \longrightarrow (\OBdR(\Cplusp) \otimes_{\mathbb{Q}_p} V)^{\GRp},
	\end{equation*}
	gives a natural $\varphi\textrm{-equivariant}$ $L\textrm{-linear}$ injective homomorphism
	\begin{equation}\label{eq:ldbv_embed_p}
		L \otimes_{R[1/p]} D_B(V) \longhookrightarrow (\OBdR(\Cplusp) \otimes_{\mathbb{Q}_p} V)^{\GRp},
	\end{equation}
	compatible with the respective filtrations and connections, where the left-hand term is equipped with the $L\textrm{-linear}$ extension of the filtration on $D_B(V)$ and an integrable tensor product connection ($\partial \otimes 1 + 1 \otimes \partial$).

	Next, from Section \ref{subsubsec:localisation_dR}, recall that we have a natural $\GRhatp\textrm{-equivariant}$ injective homomorphism of $L\textrm{-algebras}$ $\OBdR(\Cplusp) \rightarrow \OBdR(\Cpplus)$ compatible with the respective filtrations and connections, and where the $\GRhatp\textrm{-action}$ on the left-hand term factors through $\GRhatp \twoheadrightarrow \GRp$.
	Tensoring the preceding injective homomorphism with $V$ (over $\mathbb{Q}_p$), equipping each term with the diagonal action of $\GRhatp$ and taking the $\GRhatp\textrm{-invariants}$ yields a natural $L\textrm{-linear}$ injective homomorphism
	\begin{equation*}
		(\OBdR(\Cplusp) \otimes_{\mathbb{Q}_p} V)^{\GRp} \longhookrightarrow \ODdRL(V),
	\end{equation*}
	compatible with the respective filtrations and connections.
	Composing \eqref{eq:dbv_in_ldbv} and \eqref{eq:ldbv_embed_p} with the preceding $L\textrm{-linear}$ homomorphism gives a natural $L\textrm{-linear}$ injective homomorphism
	\begin{equation}\label{eq:dbv_functoriality}
		D_B(V) \longhookrightarrow L \otimes_{R[1/p]} D_B(V) \longhookrightarrow \ODdRL(V),
	\end{equation}
	compatible with the respective filtrations and connections.
	
	Now, using the discussion above, we consider the following $G_R\textrm{-equivariant}$ commutative diagram:
	\begin{equation*}
		\begin{tikzcd}[column sep=large]
			B \otimes_{R[1/p]} D_B(V) &  B \otimes_{\mathbb{Q}_p} V\\
			B \otimes_{R[1/p]} \ODdRL(V) \\
			B \otimes_{R[1/p]} L \otimes_L \ODdRL(V) \\
			\textstyle\prod_{\pins} \big(\OBdR(\Cpplus) \otimes_L \ODdRL(V)\big) & \textstyle\prod_{\pins} \big(\OBdR(\Cpplus) \otimes_{\mathbb{Q}_p} V\big),
			\arrow[from=1-1, to=1-2, "\alpha_B(V)"]
			\arrow[from=1-1, to=2-1, hookrightarrow, "\eqref{eq:dbv_functoriality}"]
			\arrow[from=1-2, to=4-2, hookrightarrow, "\eqref{eq:lb_in_BdRprod}"]
			\arrow[from=2-1, to=3-1, equal]
			\arrow[from=3-1, to=4-1, hookrightarrow, "\eqref{eq:lb_in_BdRprod}"]
			\arrow[from=4-1, to=4-2, hookrightarrow, "\alpha_{\textup{dR},L}(V)"]
		\end{tikzcd}
	\end{equation*}
	where the top left vertical arrow is scalar extension of the injective map \eqref{eq:dbv_functoriality} along the flat homomorphism $R[1/p] \rightarrow B$, the bottom left arrow is the tensor product (over $L$) of the injective map \eqref{eq:lb_in_BdRprod} with the finite dimensional $L\textrm{-vector}$ space $\ODdRL(V)$ (see \cite{brinon-imparfait}), the bottom horizontal map is the product of natural injective $\GRhatp\textrm{-equivariant}$ homomorphisms analogous to \eqref{eq:deRham_admis} (see \cite[Proposition 3.22]{brinon-imparfait}), and the right vertical arrow is the tensor product with $V$ (over $\mathbb{Q}_p$) of the injective homomorphism \eqref{eq:lb_in_BdRprod} precomposed with the $G_R\textrm{-equivariant}$ injective homomorphism $B \hookrightarrow L \otimes_{R[1/p]} B$.
	From the diagram, it clearly follows that $\alpha_B(V)$ is injective, thus proving the claim.
\end{proof}

Finally, let us examine the relationship between log-de Rham (resp.\ log-crystalline) representations of $G_R$ and de Rham (resp.\ crystalline) representations of $G_S$ and $G_L$.
Recall that we have a homomorphism of groups $G_L \rightarrow G_S \rightarrow G_R$ which defines a natural action of $G_S$ and $G_L$ on $\OBlogcrys(\overline{R})$. 
Let $V$ be a $p\textrm{-adic}$ representation of $G_R$.
Then, the homomorphism $G_L \rightarrow G_S \rightarrow G_R$ equips $V$ with the structure of a $G_S\textrm{-representation}$ and $G_L\textrm{-representation}$.

\begin{prop}\label{prop:deRham_base_change}
	Let $V$ be a log-de Rham representation of $G_R$.
	Then $V$ is a de Rham representation of $G_S$, and we have the following natural isomorphism of $S[1/p]\textrm{-modules}$
	\begin{equation}\label{eq:deRham_base_change}
		S[1/p] \otimes_{R[1/p]} \ODlogdR(V) \isomorphic \ODdRS(V),
	\end{equation}
	compatible with the respective filtrations and connections, where the left-hand-term is equipped with the tensor-product connection, and the filtration on $S[1/p]$ is trivial.
	Analogous claims hold after replacing $S[1/p]$ above with $L$.
\end{prop}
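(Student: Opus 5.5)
The plan is to base-change the comparison isomorphism $\alpha_{\textrm{log-dR},R}(V)$ along the natural $G_S$-equivariant map $\OBlogdR(\overline{R}) \rightarrow \OBdR(\overline{S})$ induced by $A_{\inf}(\overline{R}) \rightarrow A_{\inf}(\overline{S})$ (Section \ref{subsubsec:BdR}). First I check that this map is $R[1/p]$-linear, where $S[1/p]$ sits inside $\OBdR(\overline{S})$ via $\theta_S$. I also check that it is compatible with filtrations, and that it carries the logarithmic connection to the usual one via $\omega^1_R \rightarrow \Omega^1_S$, $\dlog x_i \mapsto dx_i/x_i$. The last point holds because $x_i$ is invertible in $S$, so the image of $y_i = 1-[x_i^{\flat}]/x_i$ is $z_i/x_i$, and $N_i$ corresponds to $x_i\partial_i$. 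Tensoring the isomorphism \eqref{eq:deRham_admis} gives a $G_S$-equivariant isomorphism
\begin{equation*}
	\OBdR(\overline{S}) \otimes_{R[1/p]} \ODlogdR(V) \isomorphic \OBdR(\overline{S}) \otimes_{\mathbb{Q}_p} V.
\end{equation*}
Since $\ODlogdR(V)$ is finite projective over $R[1/p]$, write $D \coloneq S[1/p] \otimes_{R[1/p]} \ODlogdR(V)$, a finite projective $S[1/p]$-module. The left side is then $\OBdR(\overline{S}) \otimes_{S[1/p]} D$, with $G_S$ acting only on the first factor.

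Taking $G_S$-invariants and using $\OBdR(\overline{S})^{G_S} = S[1/p]$ (Brinon), together with the fact that invariants commute with tensoring by a finite projective module (direct summand of a free module), gives $D \isomorphic \ODdRS(V)$. Substituting this back shows that $\alpha_{\textrm{dR},S}(V)$ is identified with the base-changed isomorphism above, so $V$ is de Rham for $G_S$. This is exactly \eqref{eq:deRham_base_change}, and by construction it is compatible with filtrations and tensor-product connections.

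The main obstacle is the filtration compatibility. The filtration on $D$ is the base change of $\Fil^\bullet\ODlogdR(V)$, and it must agree with the one induced from $\Fil^\bullet \OBdR(\overline{S})$. I would handle this on graded pieces. By \eqref{eq:OBdRlog+_explicit} and \eqref{eq:OBdR+_explicit}, the graded rings of the two period rings are polynomial algebras over $\widehat{\overline{R}}[1/p](k)$ and $\widehat{\overline{S}}[1/p](k)$ in $y_i/t$ and $z_i/t$, which correspond because $x_i$ is a unit in $S$. Then $\mathrm{gr}\,\alpha$ is an isomorphism for $R$, and it base-changes to an isomorphism for $S$, arguing as in \cite[Proposition 8.3.4]{brinon-relatif}. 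Strictness then follows because the filtrations are by finite projective modules and are separated and exhaustive.

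For $L$ the same argument applies verbatim, using the map $\OBlogdR(\overline{R}) \rightarrow \prod_{\mathfrak{p}} \OBdR(\Cplusp) \rightarrow \OBdR(\Cpplus)$ of Section \ref{subsubsec:localisation_dR} for a fixed $\mathfrak{p}$. This map is $G_L$-equivariant and $L$-linear after tensoring with $L$, so tensoring $\alpha$ with it and taking $G_L$-invariants via $\OBdR(\Cpplus)^{G_L} = L$ (Brinon, imperfect residue field case) gives $L \otimes_{R[1/p]} \ODlogdR(V) \isomorphic D_{\textup{dR},L}(V)$.
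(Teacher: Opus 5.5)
Your proposal is correct and follows essentially the same route as the paper. Both arguments base-change $\alpha_{\textrm{log-dR},R}(V)$ along the $G_S$-equivariant map $\OBlogdR(\overline{R}) \rightarrow \OBdR(\overline{S})$, take $G_S$-invariants to obtain the isomorphism and the de Rham property, and handle filtrations by passing to associated graded pieces and then using separatedness and exhaustiveness; the case of $L$ is handled analogously. Your explicit checks that $N_i$ corresponds to $x_i\partial_i$ and that invariants commute with tensoring by a finite projective module are details the paper leaves implicit.
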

\begin{proof}
	We only prove the claim for $S[1/p]$, and the claim for $L$ follows by employing a similar argument.
	From Section \ref{subsubsec:BdR} recall that we have a natural $G_S\textrm{-equivariant}$ homomorphism $\OBlogdR(\overline{R}) \rightarrow \OBdR(\overline{S})$ compatible with the respective filtrations and connections.
	Base changing the isomorphism in \eqref{eq:deRham_admis} along the preceding homomorphism yields the following $G_S\textrm{-equivariant}$ isomorphism of $\OBdR(\overline{S})\textrm{-modules}$:
	\begin{equation*}
		\alpha_{\textrm{dR,S}}(V) \colon \OBdR(\overline{S}) \otimes_{R[1/p]} \ODlogdR(V) \isomorphic \OBdR(\overline{S}) \otimes_{\mathbb{Q}_p} V,
	\end{equation*}
	compatible with the respective connections.
	Taking the $G_S\textrm{-fixed}$ points of the isomorphism $\alpha_{\textrm{dR,S}}(V)$ yields the isomorphism of $S[1/p]\textrm{-modules}$ in \eqref{eq:deRham_base_change} compatible with the respective connections.
	In particular, from the isomorphism $\alpha_{\textrm{dR,S}}(V)$, we conclude that $V$ is a de Rham representation of $G_S$.

	To check the compatibility of \eqref{eq:deRham_base_change} with filtrations, we need to introduce some notations.
	Let us set $\OBlogHT(\overline{R}) \coloneq \textrm{gr}^{\bullet} \OBlogdR(\overline{R})$ and $\OBHT(\overline{S}) \coloneq \textrm{gr}^{\bullet} \OBdR(\overline{S})$ as graded rings.
	Similarly, set $\ODlogHT(V) \coloneq \textrm{gr}^{\bullet} \ODlogdR(V) = (\OBlogHT(\overline{R}) \otimes_{\mathbb{Q}_p} V)^{G_R}$ as a graded $R[1/p]\textrm{-module}$ and $\ODHTS(V) \coloneq \textrm{gr}^{\bullet} \ODHTS(V) = (\OBHT(\overline{S}) \otimes_{\mathbb{Q}_p} V)^{G_S}$ as a graded $S[1/p]\textrm{-module}$.
	Now, passing to the associated graded in the isomorphism $\alpha_{\textrm{dR,S}}(V)$ yields the following $G_S\textrm{-equivariant}$ isomorphism of graded $\OBHT(\overline{S})\textrm{-modules}$:
	\begin{equation*}
		\alpha_{\textrm{HT,S}}(V) \colon \OBHT(\overline{S}) \otimes_{R[1/p]} \ODlogHT(V) \longrightarrow \OBHT(\overline{S}) \otimes_{\mathbb{Q}_p} V.
	\end{equation*}
	Taking the $G_S\textrm{-fixed}$ points of the isomorphism $\alpha_{\textrm{HT,S}}(V)$ yields the following isomorphism of graded $S[1/p]\textrm{-modules}$ (see \cite[Sections 3.2 and 3.3]{andreatta-iovita}):
	\begin{equation*}
		S[1/p] \otimes_{R[1/p]} \ODlogHT(V) \isomorphic \ODHTS(V).
	\end{equation*}
	As the respective filtrations on $\ODlogdR(V)$ and $\ODdRS(V)$ are exhaustive and separated, an easy induction shows that the isomorphism in \eqref{eq:deRham_base_change} is compatible with filtrations.
\end{proof}

\begin{cor}\label{cor:crys_base_change}
	Let $V$ be a log-crystalline representation of $G_R$.
	Then, $V$ is a crystalline representation of $G_S$ and we have the following natural isomorphism of $S[1/p]\textrm{-modules}$
	\begin{equation}\label{eq:crys_base_change}
		S[1/p] \otimes_{R[1/p]} \ODlogcrys(V) \isomorphic \ODcrysS(V),
	\end{equation}
	compatible with the respective Frobenii, filtrations and connections, where the left-hand-term is equipped with the tensor-product Frobenius and connection, and the filtration on $S[1/p]$ is trivial.
	Analogous claims hold after replacing $S[1/p]$ above with $L$.
\end{cor}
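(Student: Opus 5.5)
We argue exactly as in the proof of Proposition \ref{prop:deRham_base_change}, replacing the de Rham period rings by the crystalline ones. First, since $V$ is log-crystalline it is log-de Rham, and $\ODlogcrys(V) \isomorphic \ODlogdR(V)$. The natural $(\varphi, G_S)\textrm{-equivariant}$ homomorphism $\OBlogcrys(\overline{R}) \rightarrow \OBcrys(\overline{S})$ (induced from $A_{\inf}(\overline{R}) \rightarrow A_{\inf}(\overline{S})$, see Section \ref{subsubsec:Bcris}) is compatible with filtrations and connections. Base changing the isomorphism $\alpha_{\textrm{log-cris},R}(V)$ in \eqref{eq:crys_admis} along it gives a $(\varphi, G_S)\textrm{-equivariant}$ isomorphism
\begin{equation*}
	\OBcrys(\overline{S}) \otimes_{R[1/p]} \ODlogcrys(V) \isomorphic \OBcrys(\overline{S}) \otimes_{\mathbb{Q}_p} V,
\end{equation*}
compatible with connections. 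Here the left side carries the tensor-product Frobenius and connection.

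Next we take $G_S\textrm{-invariants}$. The group $G_S$ acts trivially on $\ODlogcrys(V)$, which is finite projective over $R[1/p]$, and $\OBcrys(\overline{S})^{G_S} = S[1/p]$. Invariants commute with tensoring by a finite projective module: reduce to a direct summand of a finite free module. So the left side has invariants $S[1/p] \otimes_{R[1/p]} \ODlogcrys(V)$, and we obtain the isomorphism \eqref{eq:crys_base_change}, compatible with Frobenii and connections. Rewriting $\OBcrys(\overline{S}) \otimes_{\mathbb{Q}_p} V$ as $\OBcrys(\overline{S}) \otimes_{S[1/p]} (S[1/p] \otimes_{R[1/p]} \ODlogcrys(V))$ shows that $\alpha_{\textrm{cris},S}(V)$ is an isomorphism, so $V$ is crystalline as a $G_S\textrm{-representation}$.

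For filtrations, we do not argue directly with the crystalline rings. A crystalline representation is de Rham, so $\ODcrysS(V) \isomorphic \ODdRS(V)$ compatibly with filtrations, and likewise $\ODlogcrys(V) \isomorphic \ODlogdR(V)$. These identifications are compatible with the maps $\OBlogcrys(\overline{R}) \rightarrow \OBlogdR(\overline{R}) \rightarrow \OBdR(\overline{S})$ and $\OBcrys(\overline{S}) \rightarrow \OBdR(\overline{S})$. So \eqref{eq:crys_base_change} is identified with the isomorphism \eqref{eq:deRham_base_change} of Proposition \ref{prop:deRham_base_change}, which is already filtration-compatible. The main point needing care is this commutativity, which is clear from functoriality of the constructions from $A_{\inf}$.

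For $L$ the argument is the same, using the $G_L\textrm{-equivariant}$ map $\OBlogcrys(\overline{R}) \rightarrow \OBcrys(\Cpplus)$ of Section \ref{subsubsec:localisation_crys} together with $\OBcrys(\Cpplus)^{G_L} = L$ (see \cite{brinon-imparfait}) and the $L$-version of Proposition \ref{prop:deRham_base_change}.
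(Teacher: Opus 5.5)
Your proposal is correct and follows the paper's proof. You base change $\alpha_{\textrm{log-cris},R}(V)$ along $\OBlogcrys(\overline{R}) \rightarrow \OBcrys(\overline{S})$, take $G_S$-invariants to obtain \eqref{eq:crys_base_change} and the crystallinity of $V$, and deduce compatibility with filtrations from Proposition \ref{prop:deRham_base_change}. Your extra remarks make explicit steps the paper leaves implicit: that invariants commute with tensoring by the finite projective module $\ODlogcrys(V)$, and that for the $L$-case one uses the $G_L$-equivariant map to $\OBcrys(\Cpplus)$.
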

\begin{proof}
	We only prove the claim for $S[1/p]$, and the claim for $L$ follows by employing a similar argument.
	From Section \ref{subsubsec:Bcris} recall that we have a natural $G_S\textrm{-equivariant}$ homomorphism $\OBlogcrys(\overline{R}) \rightarrow \OBcrys(\overline{S})$ compatible with the respective Frobenii, filtrations and connections.
	Base changing the isomorphism in \eqref{eq:crys_admis} along the preceding homomorphism yields the following $G_S\textrm{-equivariant}$ isomorphism of $\OBcrys(\overline{S})\textrm{-modules}$:
	\begin{equation*}
		\alpha_{\textrm{cris,S}}(V) \colon \OBcrys(\overline{S}) \otimes_{R[1/p]} \ODlogcrys(V) \longrightarrow \OBcrys(\overline{S}) \otimes_{\mathbb{Q}_p} V,
	\end{equation*}
	compatible with the respective Frobenii and connections.
	Taking the $G_S\textrm{-fixed}$ points of the isomorphism $\alpha_{\textrm{cris,S}}(V)$ yields the isomorphism of $S[1/p]\textrm{-modules}$ in \eqref{eq:crys_base_change} compatible with the respective connections.
	In particular, from the isomorphism $\alpha_{\textrm{cris,S}}(V)$, we conclude that $V$ is a crystalline representation of $G_S$.
	Finally, from Proposition \ref{prop:deRham_base_change} it follows that the isomorphism in \eqref{eq:crys_base_change} is compatible with the respective filtrations.
	This concludes our proof.
\end{proof}

\subsection{Some examples}

In this section our goal is to provide two examples of Galois representations to emphasise the novelty of log-crystalline representations described in the previous sections.

\begin{exam}
	We first provide a prototypical example of a log-crystalline representation of $G_R$.
	For simplicity, let us assume that $R = \mathbb{Z}_p\langle x \rangle$.
	Let $c \colon G_R \rightarrow \mathbb{Z}_p$ denote a $1\textrm{-cocycle}$, and for any $g$ in $G_R$ set $g([x^{\flat}]) = [\varepsilon]^{c(g)}[x^{\flat}]$.
	Then, for $\beta = \log(\tfrac{[x^{\flat}]}{x})$ in $\OBlogcrys(\overline{R})$, we have that $g(\beta) = \beta + c(g)t$.

	Let $V = \mathbb{Q}_p e_1 + \mathbb{Q}_p e_2$ denote a two-dimensional $\mathbb{Q}_p\textrm{-representation}$ of $G_R$, with the action of any $g$ in $G_R$ given as $g(e_1) = \chi(g) e_1$ and $g(e_2) = c(g)e_1 + e_2$.
	Then, it is easy to see that we have $\ODlogcrys(V) = R[1/p] f_1 + R[1/p] f_2$ with $f_1 = t^{-1} \otimes e_1$ and $f_2 = 1 \otimes e_2 - t^{-1}\beta \otimes e_1$.
	From this description it is straightforward to verify that $V$ is a log-crystalline representation of $G_R$, i.e.\ it is $\OBlogcrys(\overline{R})\textrm{-admissible}$, but it is \textit{not} crystalline, i.e.\ $V$ is not $\OBcrys(\overline{R})\textrm{-admissible}$.
\end{exam}

\begin{exam}
	We will now provide an example of a $p\textrm{-adic}$ representation of $G_R$ which is \textit{not} log-crystalline but it is a crystalline representation of $G_S$.
	For simplicity again, let us set $R = \mathbb{Z}_p\langle x \rangle$.
	Let us first note that for each $(m, p)=1$, we have that $x^{1/m}$ belongs to $\OBlogcrys(\overline{R})$.
	Indeed, by formally writing $x^{1/m} = [x^{\flat}]^{1/m} (1-y)^{1/m}$ with $y = 1-\tfrac{x}{[x]^{\flat}}$, we see that the series $(1-y)^{1/m} = \sum_{k \geqslant 0} (-1)^k \binom{1/m}{k} y^k$ converges in $\OBlogcrys(\overline{R})$.
	An analogous argument shows that for each $m \geqslant 0$, we have that $x^{1/m}$ belongs to $\OBlogdR(\overline{R})$.

	Let $V = \mathbb{Q}_p(\eta)$ denote the one dimensional representation of $G_R$ given by the character $\eta$ described as $g(x^{1/(p-1)}) = \eta(g)x^{1/(p-1)}$ for any $g$ in $G_R$, i.e.\ $\eta(g)$ is in $\mu_{p-1} \subset \mathbb{Z}_p^{\times}$.
	From the previous paragraph, we have that $x^{1/(p-1)}$ is in $\OBlogcrys(\overline{R})$, and therefore $x^{-1/(p-1)}$ is in $\OBcrys(\overline{S})$.
	Let $\{e\}$ denote a $\mathbb{Q}_p\textrm{-basis}$ of $V$.
	Then, we see that $f_S = ex^{-1/(p-1)}$ is in $\OBcrys(\overline{S})$, satisfying $g(f_S) = g(e)g(x^{-1/(p-1)}) = f_S$.
	Therefore, we get that $\ODcrysS(V) = S[1/p] f_S$.

	Let $\calR = R[x^{1/(p-1)}]$ and $\calS = S[x^{1/(p-1)}]$, and note that we have $\pazocal{O}D_{\textup{log-crys},\calR}(V) = \calR[1/p]e$ and $\pazocal{O}D_{\textup{crys},\calS}(V) = \calS[1/p]e = \calS[1/p]d_S$.
	Then, it is easy to compute that $\ODlogcrys(V) = \pazocal{O}D_{\textup{log-crys},\calR}(V) \cap \ODcrysS(V) = R[1/p] f_R$, where $f_R = x f_S$ and the intersection takes place inside $\pazocal{O}D_{\textup{crys},\calS}(V)$.
	In particular, we see that the natural injective map $\OBlogcrys(\overline{R}) \otimes_{R[1/p]} \ODlogcrys(V) \rightarrow \OBlogcrys(\overline{R}) \otimes_{\mathbb{Q}_p} V$ is not surjective.
	Hence, we conclude that $V = \mathbb{Q}_p(\eta)$ is a crystalline representation of $G_S$, but it is \textit{not} a log-crystalline representation of $G_R$.
\end{exam}

\section{Wach modules associated to log-crystalline representations}\label{sec:logcris_wachmod}

In this section, we shall study the relationship between $(\varphi, \Gamma)\textrm{-modules}$ associated to a log-crystalline representations.
We will keep the notation of previous sections.
In particular, $R$ denotes a $p\textrm{-adically}$ smooth affine algebra, and $G_R$ its log-\'etale fundamental group.

\subsection{Some logarithmic divided power rings}\label{subsec:logPD_rings}

In this section, we will define some log-crystalline period rings similar to the crystalline period rings studied in \cite[Section 4.3.1]{abhinandan-relative-wach-i} and \cite[Section 3.6]{abhinandan-relative-wach-ii}.
These rings will be used to carry out some computations in later sections.

Let $m \geqslant 1$ be an integer, let $\calR \coloneq R_{0,m}$ and set $\Delta \coloneq \Delta_{\calR/R}$.
Let $\varpi \coloneq \zeta_{p^r}-1$, where $r = 1$, for $p \geqslant 3$, and $r = 2$, for $p = 2$.
Let $q = 1+\mu$ and set
\begin{equation*}
	\ARpi^+ \coloneq \AR^+[Y]/(Y^{p^r}-q) \isomorphic \AR^+[q^{1/p^r}] = \AR^+[\varphi^{-r}(q)] \subset A_{\inf}(\calR_{\infty}).
\end{equation*}
By definition, it is clear that $\ARpi^+$ is finite free as a module over $\AR^+$, with a basis given as $\{1, q^{1/p^r}, \ldots, q^{(p^r-1)/p^r}\}$.
Moreover, it is also clear that $\ARpi^+$ is stable under the Frobenius on $A_{\inf}(\calR_{\infty})$, and as the action of $\GammaR \times \Delta$ on $A_{\inf}(\calR_{\infty})$ is Frobenius-equivariant, therefore, it follows that $\ARpi^+$ is stable under the action of $\GammaR \times \Delta$; we equip it with the induced structures.
We further equip $\ARpi^+$ with a log structure defined by the divisor $\{[x_{a+1}^{\flat}] \cdots [x_d^{\flat}] = 0\}$.

Next, we note that the restriction to $\ARpi^+$ of the map $\theta$ on $A_{\inf}(\calR_{\infty})$ (see Section \ref{subsubsec:perfect_period_rings}), gives a surjective ring homomorphism $\theta \colon \ARpi^+ \twoheadrightarrow \calR[\varpi]$.
We define $\ARpi^{\PD}$ to be the $p\textrm{-adic}$ completion of the divided power envelope of $\ARpi^+$, with respect to $\textrm{ker } \theta$.
Furthermore, the map $\theta$ extends $R\textrm{-linearly}$ to a surjective ring homomorphism $\theta_R \colon R \otimes_{\mathbb{Z}} \ARpi^+ \twoheadrightarrow \calR[\varpi]$, given as $x \otimes y \mapsto x\theta(y)$.
Similar to the definition of $\OAlogcrys(\overline{R})$ in Section \ref{subsubsec:Bcris}, we define $\OARpi^{\logPD}$ to be the $p\textrm{-adic}$ completion of the log-PD envelope of $(R \otimes_{\mathbb{Z}} \ARpi^+)$, with respect to $\textrm{ker } \theta_R$.
The morphisms $\theta$ and $\theta_R$ naturally extend to respective surjections $\theta \colon \ARpi^{\PD} \twoheadrightarrow \calR[\varpi]$ and $\theta_R \colon \OARpi^{\logPD} \twoheadrightarrow \calR[\varpi]$.
In addition, note that we have the following natural isomorphism of $\ARpi^{\PD}\textrm{-algebras}$:
\begin{equation}\label{eq:OARlogPD_explicit}
	\begin{aligned}
		\ARpi^{\PD}[y_1, \ldots, y_d]_{\textup{PD}}^{\wedge} &\isomorphic \OARpi^{\logPD}\\
		y_i^{[k]} &\longmapsto (1-\tfrac{[x_i^{\flat}]}{x_i})^{[k]},
	\end{aligned}
\end{equation}
where the left-hand-term denotes the $p\textrm{-adic}$ completion of the PD-polynomial algebra over $\ARpi^{\PD}$ in variables $\{y_1, \ldots, y_d\}$.
Then, by employing an argument similar to \cite[Lemma 3.33]{abhinandan-relative-wach-ii}, we obtain the following:
\begin{lem}\label{lem:OARlogpd_OAlogcrys}
	The natural $(\varphi, \GammaR \times \Delta)\textrm{-equivariant}$ injective homomorphism $\ARpi^+ \rightarrow A_{\inf}(\calR_{\infty})$ extends to natural injective homomorphisms $\ARpi^{\PD} \rightarrow \Alogcrys(\calR_{\infty})$ and $\OARpi^{\logPD} \rightarrow \OAlogcrys(\calR_{\infty})$.
	Moreover, the source rings are stable under the $(\varphi, \GammaR \times \Delta)\textrm{-action}$ on the target rings.
\end{lem}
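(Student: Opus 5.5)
The plan is to follow the strategy of \cite[Lemma 3.33]{abhinandan-relative-wach-ii} in three steps: build the maps from universal properties, prove injectivity through the explicit presentations and a reduction modulo $p$, and deduce stability from the fact that $(\varphi, \GammaR \times \Delta)$ preserves every ideal in sight.

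\textbf{Constructing the maps.} We have the inclusion $\ARpi^+ \subset A_{\inf}(\calR_{\infty})$, and $\theta$ on $\ARpi^+$ is the restriction of $\theta$ on $A_{\inf}(\calR_{\infty})$. Hence $\ker\theta$ on the source maps into $\ker\theta = \xi A_{\inf}(\calR_{\infty})$. By the universal property of divided power envelopes, we get a map from the PD envelope into $A_{\inf}(\calR_{\infty})[\{\xi^{[k]}\}]$. Passing to $p$-adic completions gives $\ARpi^{\PD} \rightarrow \Alogcrys(\calR_{\infty})$.

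For the logarithmic version, the map $R \otimes \ARpi^+ \rightarrow R \otimes A_{\inf}(\calR_{\infty})$ is compatible with the log-structures, which in both cases are defined by the same divisor $[x_{a+1}^{\flat}]\cdots[x_d^{\flat}]$. It is also compatible with $\theta_R$. The universal property of log-PD envelopes \cite[Definition 5.4]{kato-fontaine-illusie-i} then yields $\OARpi^{\logPD} \rightarrow \OAlogcrys(\calR_{\infty})$. Under the presentations \eqref{eq:OARlogPD_explicit} and \eqref{eq:OAcryslog_explicit} (the latter taken over $\calR_{\infty}$), this map sends $y_i^{[k]} \mapsto y_i^{[k]}$. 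So injectivity of the second map reduces to injectivity of the first.

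\textbf{Injectivity, the main obstacle.} First I will make $\ARpi^{\PD}$ explicit. Note that $\theta(\varphi^{-r}(q)) = \zeta_{p^r}$. Using the basis $\{q^{j/p^r}\}$, I expect $\ker\theta$ on $\ARpi^+ \isomorphic \calR\llbracket \mu\rrbracket[q^{1/p^r}]$ to be generated by $\xi_r \coloneq \varphi^{-r}$ of the minimal polynomial relation. Concretely, I expect it to be principal, generated by $\tilde{p}_r = \tfrac{q-1}{q^{1/p^r}-1}$, which is an Eisenstein-type element of degree $\varphi(p^r)$ in $q^{1/p^r}-1$. Then $\ARpi^{\PD}$ is the $p$-completion of $\ARpi^+[\tilde{p}_r^{[k]}]$.

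In $A_{\inf}(\calR_{\infty})$, the element $\tilde{p}_r$ differs from a generator $\xi$ of $\ker\theta$ by a unit. Both rings are $p$-torsion free, so it suffices to check injectivity modulo $p$. Modulo $p$, the source is $(\ARpi^+/(p,\tilde{p}_r^{p}))[\delta_1,\delta_2,\ldots]/(\delta_i^p)$ and the target is $(\calR_{\infty}^{\flat}/\overline{\xi}^{p})[\delta_i]/(\delta_i^p)$, both with the same divided-power variables. This reduces the claim to injectivity of $\ARpi^+/(p,\tilde{p}_r^p) \rightarrow \calR_{\infty}^{\flat}/\overline{\xi}^p \isomorphic \calR_{\infty}/p$.

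Since $\tilde{p}_r^p \equiv \mu^{e}$ modulo $p$ (up to a unit) for an explicit $e$, the source is $(\calR/p)[\mu^{1/p^r}]/(\mu^e)$. Its map to $\calR_{\infty}/p$ is injective by Proposition \ref{prop:Rinftym_in_Sinftym}(1), via the identification $\ARpi^+/(p,\tilde p_r)\isomorphic \calR[\varpi]/p$, followed by an induction on powers of $\varpi$. This explicit computation is where I expect the most care to be needed.

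\textbf{Stability.} The target rings carry the $(\varphi, \GammaR \times \Delta)$-action. The subring $\ARpi^+$ is stable, and so is the ideal $\ker\theta$: $\GammaR \times \Delta$ commutes with $\theta$, and $\varphi(\tilde p_r) \in \tilde{p}_r\ARpi^+ + p\ARpi^+$, with $\varphi(x^{[k]})$ landing in the PD-envelope in the usual way. Divided powers are preserved by ring automorphisms, and by $\varphi$ thanks to the standard formula $\varphi(\xi) = \xi^p + p\,\delta(\xi)$. It follows that the images of $\ARpi^{\PD}$ and $\OARpi^{\logPD}$ are stable. For $y_i$, one checks $g(y_i) = 1-(1-y_i)q^{\psi_i(g)}$, which is expressible using divided powers in the source, and $\varphi(1-y_i) = (1-y_i)^p \cdot (x_i^p/\varphi(x_i)) = (1-y_i)^p$. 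Both therefore stay in $\OARpi^{\logPD}$.
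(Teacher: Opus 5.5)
Your strategy is the one the paper intends. The lemma is justified there only by citing \cite[Lemma 3.33]{abhinandan-relative-wach-ii}, and the proof of Lemma \ref{lem:OASpiPD_OAlogcrys_intersect} uses your picture of $\ARpi^{\PD}/p$ as a truncated divided-power polynomial algebra mapping coefficientwise into $\Acrys(\calR_\infty)/p$. Your reduction of the $\pazocal{O}$-version via \eqref{eq:OARlogPD_explicit}, and your stability argument, are fine.

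One step is wrong as written. Your element $\tilde p_r=\tfrac{q-1}{q^{1/p^r}-1}=\prod_{s=1}^{r}\Phi_{p^s}(q^{1/p^r})$ generates $\ker\theta$ only when $r=1$. For $p=2$ we have $r=2$ and $\tilde p_2=(1+q^{1/4})(1+q^{1/2})$. Here $1+q^{1/4}$ is not a unit in $\ARpi^+$ or in $A_{\inf}(\calR_\infty)$, because $\theta(1+q^{1/4})=1+\zeta_4$ and $(1+\zeta_4)^2=2\zeta_4$. So $\tilde p_2$ generates neither kernel, it does not differ from $\xi$ by a unit, and the PD envelope of $(\tilde p_2)$ is not $\ARpi^{\PD}$. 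This also contradicts your own description, since $\tilde p_r$ has degree $p^r-1$, not $p^{r-1}(p-1)$, in $q^{1/p^r}-1$.

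The fix is to take $\xi=\Phi_{p^r}(q^{1/p^r})=\tfrac{q-1}{q^{1/p}-1}=\varphi^{-1}([p]_q)$. It is distinguished of degree $p^{r-1}(p-1)$ in $\calR\llbracket q^{1/p^r}-1\rrbracket$, so Weierstrass division shows it generates $\ker\theta$ on $\ARpi^+$. It is also the standard generator of $\ker\theta$ on $A_{\inf}(\calR_\infty)$, and $\xi^p\equiv\mu^{p-1}$ modulo $p$. With this $\xi$ your mod-$p$ presentations of source and target, with the same divided-power variables, are correct and the rest of your argument goes through.

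Two smaller points on the coefficient step. The induction should run over powers of $\xi$, not of $\varpi$:
\begin{itemize}
\item filter $\ARpi^+/(p,\xi^p)$ and $\calR_\infty^\flat/\bar\xi^{p}$ $\xi$-adically;
\item use that $\calR_\infty^\flat$ is $\bar\xi$-torsion free;
\item reduce to injectivity on the first graded piece, which via $\theta$ is $\calR[\zeta_{p^r}]/p\to\widehat{\calR}_\infty/p$.
\end{itemize}
That injectivity is not literally Proposition \ref{prop:Rinftym_in_Sinftym}(1), which concerns $R_{n,m}$. It follows by the same argument from the normality of $\calR[\zeta_{p^r}]$ (proved as in Proposition \ref{prop:Rnm_Snm_normal}) together with Proposition \ref{prop:properties_Rbarhat}(1). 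The paper packages this step differently: it identifies the coefficient ring, via $\theta\circ\varphi^{-1}$, with $\calR_1[\zeta_{p^{r+1}}]/p\subset\calR_\infty/p$, so only a single intersection statement is needed.
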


Using the injectivity of the map $\ARpi^{\PD} \hookrightarrow \Alogcrys(\calR_{\infty})$ (resp.\ $\OARpi^{\logPD} \hookrightarrow \OAlogcrys(\calR_{\infty})$) from Lemma \ref{lem:OARlogpd_OAlogcrys}, we equip the source rings with structures induced from the target rings, in particular, a Frobenius endomorphism $\varphi$, a continuous action of $\GammaR \times \Delta$, and an induced $(\GammaR \times \Delta)\textrm{-stable}$ decreasing, separated and exhaustive filtration.
In particular, the natural homomorphism $\ARpi^{\PD} \rightarrow \OARpi^{\logPD}$ is injective and compatible with all the structures.
Moreover, as the filtration on $\Alogcrys(\calR_{\infty})$ (resp.\ $\OAlogcrys(\calR_{\infty})$) is given by the divided powers of the ideal $\textup{ker }\theta \subset \Alogcrys(\calR_{\infty})$ (resp.\ $\textrm{ker } \theta_R \subset \OAlogcrys(\calR_{\infty})$), therefore, the induced filtration on $\ARpi^{\PD}$ (resp.\ $\OARpi^{\logPD}$) is also given by the divided powers of the ideal $\textrm{ker } \theta \subset \ARpi^{\PD}$ (resp.\ $\textrm{ker } \theta_{\calR} \subset \OARpi^{\logPD}$).
Furthermore, the $\Alogcrys(\calR_{\infty})\textrm{-linear}$ integrable log-connection on $\OAlogcrys(\calR_{\infty})$, denoted as $\partial$, induces a $(\GammaR \times \Delta)\textrm{-equivariant}$ $\ARpi^{\PD}\textrm{-linear}$ integrable log-connection $\partial_A \colon \OARpi^{\logPD} \rightarrow \OARpi^{\logPD} \otimes \Omega^1_{\calR}$.
Note that the connection $\partial_A$ satisfies Griffiths transversality with respect to the induced filtration: indeed, from definitions we have that
\begin{equation*}
	\begin{aligned}
		\partial_A(\Fil^k \OARpi^{\logPD}) &\subset (\OARpi^{\logPD} \otimes \Omega^1_{\calR}) \cap \partial(\Fil^k \OAlogcrys(\calR_{\infty})) \\
			&\subset (\OARpi^{\logPD} \cap \Fil^{k-1} \OAlogcrys(\calR_{\infty})) \otimes \Omega^1_{\calR} = \Fil^{k-1} \OARpi^{\logPD} \otimes \Omega^1_{\calR},
	\end{aligned}
\end{equation*}
where the second inclusion follows because the connection on $\OAlogcrys(\calR_{\infty})$ satisfies Griffiths transversality with respect to the filtration (see Section \ref{subsubsec:Bcris}).
Additionally, we have that $\OAlogcrys(\calR_{\infty})^{\partial = 0} = \Alogcrys(\calR_{\infty})$, and so it follows that $(\OARpi^{\logPD})^{\partial_A=0} = \ARpi^{\PD}$.

\begin{rem}\label{rem:fil1ar+_intersect}
	Note that for the composition $\AR^+ \hookrightarrow A_{\inf}(\calR_{\infty}) \xrightarrow{\hspace{1mm}\theta\hspace{1mm}} \widehat{\calR}_{\infty}$, we have that $\theta(\AR^+) = \calR$, and therefore, $\textrm{ker } \theta = \mu \AR^+ \subset \AR^+$.
	So, it follows that $\Fil^1 \OARpi^{\logPD} \cap \AR^+ = \Fil^1 \ARpi^{\logPD} \cap \AR^+ = \mu \AR^+$ inside $\OARpi^{\logPD}$.
\end{rem}

Analogously, by replacing $\calR$ with $\calS$ above and equipping it with the trivial log structure, we obtain variants of all the period rings described (see \cite[Section 3.6]{abhinandan-relative-wach-ii}).
In particular, we have period rings $\ASpi^+ \hookrightarrow  A_{\inf}(\calS_{\infty})$, $\ASpi^{\PD} \hookrightarrow \Acrys(\calS_{\infty})$ and $\OASpi^{\PD} \hookrightarrow \OAcrys(\calS_{\infty})$, and the source rings are stable under the $(\varphi, \GammaS \times \Delta)\textrm{-action}$ on the target rings; we equip these rings with induced structures.
Note that the $(\varphi, \GammaS \times \Delta)\textrm{-equivariant}$ injective homomorphism of rings $A_{\inf}(\calR_{\infty}) \hookrightarrow A_{\inf}(\calS_{\infty})$ from Lemma \ref{lem:Ainf_Rinftym_in_Sinftym} restricts to a $(\varphi, \GammaS \times \Delta)\textrm{-equivariant}$ injective homomorphism of rings $\ARpi^+ \hookrightarrow \ASpi^+$.
Similarly, the $(\varphi, \GammaS \times \Delta)\textrm{-equivariant}$ injective homomorphism of rings $\Alogcrys(\calR_{\infty}) \hookrightarrow \Acrys(\calS_{\infty})$ (resp.\ $\OAlogcrys(\calR_{\infty}) \hookrightarrow \OAcrys(\calS_{\infty})$) from Lemma \ref{lem:Acrys_Rinftym_in_Sinftym} restricts to a $(\varphi, \GammaS \times \Delta)\textrm{-equivariant}$ injective homomorphism of rings $\ARpi^{\PD} \hookrightarrow \ASpi^{\PD}$ (resp.\ $\OARpi^{\logPD} \hookrightarrow \OASpi^{\PD}$).

\begin{lem}\label{lem:OASpiPD_OAlogcrys_intersect}
	The following natural $(\varphi, \GammaS \times \Delta)\textrm{-equivariant}$ inclusion of rings
	\begin{equation}\label{eq:OASpiPD_OAlogcrys_intersect}
		\OARpi^{\logPD} \longhookrightarrow \OASpi^{\PD} \cap \OAlogcrys(\calR_{\infty}),
	\end{equation}
	where the intersection takes place inside $\OAcrys(\calS_{\infty})$, is bijective.
\end{lem}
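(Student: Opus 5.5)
Injectivity of \eqref{eq:OASpiPD_OAlogcrys_intersect} is already known from the discussion before the lemma, so the plan is to prove surjectivity by reducing to explicit descriptions of all three rings. By \eqref{eq:OARlogPD_explicit}, every element of $\OARpi^{\logPD}$ is a convergent sum $\sum_{\mathbf{k}} a_{\mathbf{k}} y^{[\mathbf{k}]}$ with $a_{\mathbf{k}} \in \ARpi^{\PD}$ tending $p$-adically to $0$. By \eqref{eq:OAcryslog_explicit} (over $\calR_{\infty}$), elements of $\OAlogcrys(\calR_{\infty})$ have the same shape with coefficients in $\Alogcrys(\calR_{\infty})$. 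For $\OASpi^{\PD}$ and $\OAcrys(\calS_{\infty})$, the variables $z_i = x_i - [x_i^{\flat}]$ and $y_i = 1-[x_i^{\flat}]/x_i = z_i/x_i$ differ by the unit $x_i$ of $\calS$, so $\OASpi^{\PD} = \ASpi^{\PD}[y_1,\ldots,y_d]^{\wedge}_{\PD}$ and $\OAcrys(\calS_\infty) = \Acrys(\calS_\infty)[y_1,\ldots,y_d]^{\wedge}_{\PD}$, with the same variables $y_i$. Thus all four rings are $p$-completed PD-polynomial algebras in the same $y_i$, compatibly with the inclusions of Lemmas \ref{lem:Acrys_Rinftym_in_Sinftym} and \ref{lem:OARlogpd_OAlogcrys}.

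With this, an element of the intersection has a unique expansion $\sum_{\mathbf{k}} c_{\mathbf{k}} y^{[\mathbf{k}]}$ in $\OAcrys(\calS_{\infty})$. Uniqueness of coefficients (one can recover $c_{\mathbf{k}}$ by iterating the operators $N_i$, which satisfy $N_i(y_j)=\delta_{ij}(1-y_j)$, and evaluating at $y=0$ via the $\Acrys$-linear section $y_i\mapsto 0$) shows that each $c_{\mathbf{k}}$ lies in $\ASpi^{\PD} \cap \Alogcrys(\calR_{\infty})$ inside $\Acrys(\calS_{\infty})$. The convergence of $c_{\mathbf{k}}\to 0$ inside the intersection also needs $p^n\Acrys(\calS_\infty)\cap(\ASpi^{\PD}\cap\Alogcrys(\calR_\infty)) = p^n(\ldots)$, which the next step provides. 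The statement is therefore reduced to the coefficient ring claim $\ARpi^{\PD} = \ASpi^{\PD} \cap \Alogcrys(\calR_{\infty})$.

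For this I would imitate \cite[Lemma 3.33]{abhinandan-relative-wach-ii} and its $S$-analogue, where $\ASpi^{\PD}$ is described explicitly as the $p$-completion of $\ASpi^+[\{\xi^{[k]}\}]$. Here $\xi$ can be taken to be $[p]_{q^{1/p^r}}$ or a generator of $\ker\theta$ on $\ARpi^+$, which is the same element for $\calR$ and $\calS$. Hence $\ARpi^{\PD}$ and $\ASpi^{\PD}$ are $p$-completions of $\ARpi^+[\xi^{[k]}]$ and $\ASpi^+[\xi^{[k]}]$, and the problem becomes: working modulo $p^n$, show that an element of $\ASpi^{+}[\xi^{[k]}]/p^n$ lying in $\Alogcrys(\calR_\infty)/p^n$ comes from $\ARpi^+[\xi^{[k]}]/p^n$. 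Writing elements in the $\ARpi^+$-basis of PD-powers, this reduces to $\ASpi^+ \cap A_{\inf}(\calR_{\infty}) = \ARpi^+$ together with the mod $p^n$ injectivity of $A_{\inf}(\calR_\infty)/p^n \to A_{\inf}(\calS_\infty)/p^n$. The former follows from $\AS^+ \cap A_{\inf}(\calR_{\infty}) = \AR^+$, obtained modulo $(p,\mu)$ from $\calS/p \cap \calR_\infty^{\flat}/\overline{\mu} = \calR/p$, which is the normality argument already used in the proof of Proposition \ref{prop:Rinftym_in_Sinftym}(3). The latter comes from Proposition \ref{prop:Rinftym_in_Sinftym}(2). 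Passing to the limit over $n$ gives the claim.

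The main obstacle is this last step, namely controlling the divided powers modulo $p^n$. The PD-algebras are not free over their integral parts, so the step "an element of $\ASpi^{\PD}$ lying in $\Alogcrys(\calR_\infty)$ has coefficients in $\ARpi^+$" requires a normal form. I would try the presentation $\ARpi^{\PD}/p^n \cong \bigoplus_{k} (\ARpi^+/(p^n,\xi^{p}))\,\xi^{[pk]}$-type decomposition (as in Brinon's description of $\Acrys$), apply it uniformly to all four rings, and then compare component by component.
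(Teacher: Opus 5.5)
Your first step matches the paper and is fine: you pass from the $\pazocal{O}$-rings to the coefficient rings via the PD-polynomial descriptions, using $y_i=z_i/x_i$ with $x_i\in\calS^\times$. The gap is in how you then treat $\ASpi^{\PD}\cap\Alogcrys(\calR_\infty)$.

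You reduce it to the integral identity $\ASpi^+\cap A_{\inf}(\calR_\infty)=\ARpi^+$ plus injectivity modulo $p^n$, and this is not enough. Let $A$ be any of $\ARpi^+$, $\ASpi^+$, $A_{\inf}(\calR_\infty)$, $A_{\inf}(\calS_\infty)$. The divided-power coordinates of an element of $A[\{\xi^{[k]}\}_k]^\wedge_p$ are only defined modulo $(p,\xi^p)$, since $A[\{\xi^{[k]}\}_k]/p\cong\bigoplus_{k\geq 0}(A/(p,\xi^p))\,\xi^{[pk]}$. A component-by-component comparison therefore needs
\[
\ARpi^+/(p,\xi^p)\;=\;\ASpi^+/(p,\xi^p)\,\cap\,A_{\inf}(\calR_\infty)/(p,\xi^p)\quad\text{inside } A_{\inf}(\calS_\infty)/(p,\xi^p).
\]
An intersection identity for rings does not pass to their quotients. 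Given $a\in\ASpi^+$ and $b\in A_{\inf}(\calR_\infty)$ congruent modulo $(p,\xi^p)A_{\inf}(\calS_\infty)$, the integral identity does not produce an element of $\ARpi^+$ congruent to $a$.

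The normal form you propose modulo $p^n$ also does not exist for $n\geq 2$. In $\ARpi^{\PD}/p^2$ one has $\xi^p=p!\,\xi^{[p]}\neq 0$, because $\xi^{[p]}\notin p\,\ARpi^{\PD}$. So $\xi^p$ does not kill the summand spanned by $1$, as your decomposition would require. A smaller point: $\ker\theta\subset\ARpi^+$ is generated by $[p]_{q^{1/p}}=\varphi^{-1}([p]_q)$, and for $p=2$ the element $[2]_{q^{1/4}}$ is not in $\ker\theta$.

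The fix, which is the paper's route, is to work only modulo $p$. Under $\theta\circ\varphi^{-1}$, which sends $q^{1/p^r}\mapsto\zeta_{p^{r+1}}$ and $[x_i^{\flat}]\mapsto x_i^{1/p}$, the rings $A/(p,\xi^p)$ become $\calR_1[\zeta_{p^{r+1}}]/p$, $\calS_1[\zeta_{p^{r+1}}]/p$, $\calR_\infty/p$ and $\calS_\infty/p$. The four PD-rings modulo $p$ are then the truncated polynomial algebras $(\,\cdot\,)[Y_0,Y_1,\ldots]/(Y_k^p)_{k\geq 0}$ over these, compatibly. The needed input is therefore
\[
\calR_1[\zeta_{p^{r+1}}]/p=\calR_\infty/p\cap\calS_1[\zeta_{p^{r+1}}]/p\quad\text{inside } \calS_\infty/p .
\]
This follows from the normality argument in the proof of Proposition \ref{prop:Rinftym_in_Sinftym}(3). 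That is where your mod-$(p,\mu)$ normality idea belongs, rather than in proving an integral identity.

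Finally, lift by d\'evissage. All the rings are $p$-torsion free and $p$-adically complete, and their reductions modulo $p$ inject into $\Acrys(\calS_\infty)/p$. So an element $c$ of the intersection can be written $c=a_0+pc_1$ with $a_0\in\ARpi^{\PD}$ and $c_1$ again in the intersection. Iterating gives $c\in\ARpi^{\PD}$.
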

\begin{proof}
	For the PD rings appearing in \eqref{eq:OASpiPD_OAlogcrys_intersect}, we have their explicit description from \eqref{eq:OAcryslog_explicit}, \eqref{eq:OARlogPD_explicit} and \cite[Lemma 4.20]{abhinandan-relative-wach-i}, using which we easily see that it is enough to show that the following natural $(\varphi, \GammaS \times \Delta)\textrm{-equivariant}$ inclusion of rings
	\begin{equation}\label{eq:ASpiPD_OAlogcrys_intersect}
		\ARpi^{\logPD} \longhookrightarrow \ASpi^{\PD} \cap \Acrys(\calR_{\infty}),
	\end{equation}
	where the intersection takes place inside $\Acrys(\calS_{\infty})$, is bijective.
	Now, similar to \cite[Corollaire 6.1.2]{brinon-relatif} and \cite[Lemma 3.33]{abhinandan-relative-wach-i} we compute that 
	\begin{equation*}
		\begin{tikzcd}
			\ARpi^{\PD}/p \isomorphic (\calR_1[\zeta_{p^{r+1}}]/p)[Y_0, Y_1, \ldots]/(Y_k^p)_{k \geqslant 0} & (\calR_{\infty}/p)[Y_0, Y_1, \ldots]/(Y_k^p)_{k \geqslant 0} \lisomorphic \Acrys(\calR_{\infty})/p \\
			\ASpi^{\PD}/p \isomorphic (\calS_1[\zeta_{p^{r+1}}]/p)[Y_0, Y_1, \ldots]/(Y_k^p)_{k \geqslant 0} & (\calS_{\infty}/p)[Y_0, Y_1, \ldots]/(Y_k^p)_{k \geqslant 0} \lisomorphic \Acrys(\calS_{\infty})/p,
			\arrow[hook, from=1-1, to=1-2]
			\arrow[hook, from=1-1, to=2-1]
			\arrow[hook, from=1-2, to=2-2]
			\arrow[hook, from=2-1, to=2-2]
		\end{tikzcd}
	\end{equation*}
	where the top left horizontal isomorphism sends $q^{1/p^r} \mapsto \zeta_{p^{r+1}}$ and $[x_i^{\flat}] \mapsto x_i^{1/p}$ for $1 \leqslant i \leqslant d$, and similarly for the other isomorphisms.
	Then, an easy and explicit computation shows that we have $\calR_1[\zeta_{p^{r+1}}]/p = \calR_{\infty}/p \cap \calS_1[\zeta_{p^{r+1}}]/p \subset \calS_{\infty}/p$.
	Therefore, from the description of the rings in \eqref{eq:ASpiPD_OAlogcrys_intersect} modulo $p$ above, it follows that inside $\Acrys(\calS_{\infty})/p$ we have a $(\varphi, \GammaS \times \Delta)\textrm{-equivariant}$ identification of rings
	\begin{equation}
		\ARpi^{\logPD}/p = \ASpi^{\PD}/p \cap \Acrys(\calR_{\infty})/p.
	\end{equation}
	As both the source and the target of \eqref{eq:ASpiPD_OAlogcrys_intersect} are $p\textrm{-adically}$ complete and $p\textrm{-torsion}$ free, therefore, using the preceding identification, we conclude that \eqref{eq:ASpiPD_OAlogcrys_intersect} is bijective.
\end{proof}

\begin{lem}\label{lem:OARpilogPD_AS+_intersect}
	The following natural $(\varphi, \GammaR \times \Delta)\textrm{-equivariant}$ inclusion of rings
	\begin{equation}\label{eq:OARpilogPD_AS+_intersect}
		\AR^+ \longhookrightarrow \OARpi^{\logPD} \cap \AS^+,
	\end{equation}
	where the intersection takes place inside $\OASpi^{\PD}$, is bijective.
\end{lem}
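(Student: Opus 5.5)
We show that $\OARpi^{\logPD} \cap \AS^+ \subset \AR^+$ inside $\OASpi^{\PD}$. The first reduction is to $A_{\inf}(\calR_{\infty})$. By Lemma \ref{lem:OASpiPD_OAlogcrys_intersect} we have $\OARpi^{\logPD} = \OASpi^{\PD} \cap \OAlogcrys(\calR_{\infty})$ inside $\OAcrys(\calS_{\infty})$. Hence
\[
\OARpi^{\logPD} \cap \AS^+ = \AS^+ \cap \OAlogcrys(\calR_{\infty}),
\]
since $\AS^+ \subset \OASpi^{\PD}$. The elements of $\AS^+$ lie in $A_{\inf}(\calS_{\infty})$ and are killed by the connection. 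Using \eqref{eq:OAcryslog_explicit} together with $\OAlogcrys(\calR_{\infty})^{\partial=0} = \Alogcrys(\calR_{\infty})$, the intersection becomes $\AS^+ \cap \Alogcrys(\calR_{\infty})$. I then intend to check that this equals $\AS^+ \cap A_{\inf}(\calR_{\infty})$. For this I would use that $A_{\inf}(\calS_{\infty}) \cap \Alogcrys(\calR_{\infty}) = A_{\inf}(\calR_{\infty})$ inside $\Acrys(\calS_{\infty})$. I would argue it modulo $p$ with the explicit descriptions $\Acrys(\cdot)/p \cong (\cdot/p)[Y_k]/(Y_k^p)$ from the proof of Lemma \ref{lem:OASpiPD_OAlogcrys_intersect}, combined with $\calR_\infty/p = \calR_\infty/p\cap \calS_\infty/p$ from Proposition \ref{prop:Rinftym_in_Sinftym}. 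A $p$-torsion-freeness and completeness induction then lifts the statement.

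It then remains to show that $\AS^+ \cap A_{\inf}(\calR_{\infty}) = \AR^+$ inside $A_{\inf}(\calS_{\infty})$, and I would do this modulo $(p,\mu)$. Both $\AR^+ \cong \calR\llbracket\mu\rrbracket$ and $\AS^+ \cong \calS\llbracket\mu\rrbracket$ are $(p,\mu)$-adically complete, and $\{p,\mu\}$ is a regular sequence on them. So an induction on the powers of $p$ and $\mu$, in the style of the proof of Proposition \ref{prop:Rinftym_in_Sinftym}(3), reduces the claim to two facts:
\[
\calR/p = \calS/p \cap \calR_{\infty}^{\flat}/\overline{\mu},
\]
taken inside $\calS_\infty^\flat/\overline{\mu}$, and the injectivity of $\AR^+/(p,\mu)^n \to A_{\inf}(\calR_\infty)/(p,\mu)^n$, compatibly with the same map for $\calS$. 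For the first fact I would use $\calR_\infty^\flat/\overline{\mu} \cong \calR_\infty/(\zeta_p-1)^p$, as in Lemma \ref{lem:regsec_D+}, together with $\calR/p=\calR_{\infty}/p\cap \calS/p$. The latter follows from Proposition \ref{prop:Rinftym_in_Sinftym}(3) and its mod-$p^r$ version established in that proof, namely $R_{n,m}/p^r=(R_{\infty,m}/p^r)\cap(S_{n,m}/p^r)$. An alternative, if the mod-$(p,\mu)$ route is messy, is to apply Proposition \ref{prop:Rinftym_in_Sinftym}(3) directly to each coefficient. Write an element of $\AS^+$ as $\sum_k s_k\mu^k$ with $s_k\in\calS$, and use $\theta$ and the operator $(\gamma_0-1)/\mu$-type filtrations to show inductively that each $s_k$ lies in $\calR$ once the sum lies in $A_{\inf}(\calR_\infty)$. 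The key input is that $\theta(\AS^+\cap A_{\inf}(\calR_\infty)) \subset \calS\cap\widehat{\calR}_\infty=\calR$. After subtracting $\iota(s_0)\in \AR^+$, the difference is divisible by $\mu$ in both rings; here $\ker\theta$ on $A_{\inf}(\calR_\infty)$ is $\xi$, and $\mu$-divisibility follows since the element is also killed by $\theta\circ\varphi^{-n}$. This gives a completeness argument in $\mu$.

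I expect the main obstacle to be the step $\AS^+\cap\Alogcrys(\calR_\infty)\subset A_{\inf}(\calR_\infty)$, that is, controlling the divided powers. Concretely, one must show that an element of $\AS^+$ lying in the log-PD ring over $\calR$ already lies in the integral ring. The coefficient-wise argument via $\theta$ iterated along $\varphi^{-n}$ seems the most robust fallback there, because $\theta\circ\varphi^{-n}$ maps both $\Alogcrys(\calR_\infty)$ and $\AS^+$ compatibly into $\widehat{\calS}_\infty$ with images in $\widehat{\calR}_\infty$ and $\calS_n$ respectively. Their intersection is $R_{n,m}$ by Proposition \ref{prop:Rinftym_in_Sinftym}(3).
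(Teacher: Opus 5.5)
Your first two reductions are sound. By Lemma~\ref{lem:OASpiPD_OAlogcrys_intersect}, and because $\AS^+$ is horizontal, it does suffice to show $\AS^+\cap\Alogcrys(\calR_{\infty})\subset\AR^+$ inside $\Acrys(\calS_{\infty})$. The gap is exactly at the step you flag, and neither of your two routes closes it.

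The mod $p$ argument for $A_{\inf}(\calS_{\infty})\cap\Alogcrys(\calR_{\infty})=A_{\inf}(\calR_{\infty})$ fails. Lifting from mod $p$ by torsion-freeness and completeness needs $A_{\inf}(\calS_{\infty})\cap p\Acrys(\calS_{\infty})=pA_{\inf}(\calS_{\infty})$, and this is false: $\xi^p=p\cdot(p-1)!\,\xi^{[p]}$, so $A_{\inf}(\calS_{\infty})/p\to\Acrys(\calS_{\infty})/p$ kills $\overline{\xi}^{\,p}$. Modulo $p$ you therefore only learn $z\in A_{\inf}(\calR_{\infty})+(p,\xi^p)A_{\inf}(\calS_{\infty})$. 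Write $z-r_0=pw+\xi^p t$ with $t\in A_{\inf}(\calS_{\infty})$ unconstrained. The next step of the induction would then have to be run on $w+(p-1)!\,\xi^{[p]}t$, which no longer lies in $A_{\inf}(\calS_{\infty})$, so the induction does not close.

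Your fallback is also wrong as stated, because $\theta\circ\varphi^{-n}$ does not extend to $\Alogcrys(\calR_{\infty})$ for $n\geqslant 1$. With $\xi=\mu/\varphi^{-1}(\mu)$ one has $\theta(\varphi^{-1}(\xi))=(\zeta_p-1)/(\zeta_{p^2}-1)$, which has valuation $1/p<1/(p-1)$. Hence the would-be values $\theta(\varphi^{-1}(\xi))^k/k!$ on the divided powers $\xi^{[k]}$ are unbounded. Separately, what vanishes on $\mu A_{\inf}$ is $\theta\circ\varphi^{n}$ for $n\geqslant 0$, not $\theta\circ\varphi^{-n}$.

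The repair is to use $\theta$ alone and to compare coefficients after inverting $p$; this is also the mechanism of the paper's proof. Write $z=\sum_k\iota(s_k)\mu^k$ with $s_k\in\calS$.

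\begin{itemize}
\item $\BdR^+(\calR_{\infty})$ is $\xi$-saturated in $\BdR^+(\calS_{\infty})$. This holds because $\ker\theta=\xi\BdR^+$ on both sides, $\xi$ is a nonzerodivisor, and $\widehat{\calR}_{\infty}[1/p]\hookrightarrow\widehat{\calS}_{\infty}[1/p]$ (use Lemma~\ref{lem:BdR_Rinftym_in_Sinftym}).
\item Suppose $s_0,\ldots,s_{k-1}\in\calR$. Then $\mu^k z_k\in\BdR^+(\calR_{\infty})$ for $z_k=\sum_{j\geqslant k}\iota(s_j)\mu^{j-k}$.
\item Since $\mu=\xi\varphi^{-1}(\mu)$, saturation gives $\varphi^{-1}(\mu)^k z_k\in\BdR^+(\calR_{\infty})$.
\item Applying $\theta$ gives $(\zeta_p-1)^k s_k\in\widehat{\calR}_{\infty}[1/p]$. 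Hence $s_k\in\widehat{\calR}_{\infty}[1/p]\cap\calS=\calR$, by Proposition~\ref{prop:Rinftym_in_Sinftym}(3) together with $\calR[1/p]\cap\calS=\calR$.
\end{itemize}

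Thus $z\in\calR\llbracket\mu\rrbracket=\AR^+$, and your passage through $A_{\inf}(\calR_{\infty})$ becomes unnecessary.

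The paper makes the same comparison more directly, without going through $\OAcrys(\calS_{\infty})$. It embeds $\OARpi^{\logPD}\subset\OASpi^{\PD}$ into $\calR[1/p]\llbracket\varphi^{-r}(\mu),y_1,\ldots,y_d\rrbracket\subset\calS[1/p]\llbracket\varphi^{-r}(\mu),y_1,\ldots,y_d\rrbracket$, where $\AS^+=\calS\llbracket\mu\rrbracket$ is the $y$-constant part. It then reduces to $\calR[1/p]\llbracket\mu\rrbracket\cap\calS\llbracket\mu\rrbracket=\calR\llbracket\mu\rrbracket$. In both versions the divided-power denominators become harmless once $p$ is inverted, and that is exactly what your integral mod $p$ approach could not handle.
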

\begin{proof}
	It is clear that the homomorphism in \eqref{eq:OARpilogPD_AS+_intersect} is $(\varphi, \GammaR \times \Delta)\textrm{-equivariant}$ and injective.
	To show that it is surjective, let us recall that we have isomorphisms of rings $\AR^+ \isomorphic \calR\llbracket \mu \rrbracket$, $\AS^+ \isomorphic \calS\llbracket \mu \rrbracket$ and $\ARpi^+ \isomorphic \calR\llbracket \varphi^{-r}(\mu)\rrbracket$ (it is enough to check these modulo $p$, whence the result follows from the discussion in Section \ref{subsec:period_rings} and at the beginning of Section \ref{subsec:logPD_rings}).
	So, using the isomorphism in \eqref{eq:OARlogPD_explicit} (resp.\ \cite[Lemma 4.20]{abhinandan-relative-wach-i}) we obtain an inclusion of rings $\OARpi^{\logPD} \hookrightarrow \calR[1/p]\llbracket\varphi^{-r}(\mu), y_1, \ldots, y_d\rrbracket \eqcolon C_{\calR}$ (resp.\ $\OASpi^{\PD} \hookrightarrow \calS[1/p]\llbracket\varphi^{-r}(\mu), y_1, \ldots, y_d\rrbracket \eqcolon C_{\calS}$), sending $\varphi^{-r}(\mu) \mapsto \varphi^{-r}(\mu)$, and for each $1 \leqslant i \leqslant d$, we have that $[x_i^{\flat}] \mapsto x_i$ and $(1-\tfrac{[x_i^{\flat}]}{x_i})^{[k]} \mapsto \tfrac{y_i^k}{k!}$.
	Then, it suffices to show that $\calR\llbracket \mu \rrbracket \isomorphic C_{\calR} \cap \calS\llbracket \mu \rrbracket \hookrightarrow C_{\calS}$, as rings.
	But it is clear that we have 
	\begin{equation*}
		C_{\calR} \cap \calS\llbracket \mu \rrbracket = \calR[1/p]\llbracket \varphi^{-r}(\mu) \rrbracket \cap \calS\llbracket \mu \rrbracket = \calR[1/p]\llbracket \mu \rrbracket \cap \calS\llbracket \mu \rrbracket = \calR\llbracket \mu \rrbracket,
	\end{equation*}
	because we have $\calR[1/p] \cap \calS = \calR$, or equivalently, we have $\calR/p\calR \hookrightarrow (\calR/p\calR)[1/(x_{a+1}\cdots x_d)] = \calS/p\calS$.
	Hence, it follows that \eqref{eq:OARpilogPD_AS+_intersect} is bijective.
\end{proof}

Note that we have a $(\varphi, \GammaR \times \Delta)\textrm{-equivariant}$ isomorphism of rings $A_{R,\varpi}^+[\zeta_m, [x_1^{\flat}]^{1/m}, \ldots, [x_d^{\flat}]^{1/m}] \isomorphic \ARpi^+$.
In particular, $\ARpi^+$ is finite free as a module over $A_{R,\varpi}^+$ with a basis given by the elements $\{\zeta_m^{i_0} [x_1^{\flat}]^{i_1} \cdots [x_d^{\flat}]^{i_d}\}_{\mathfrak{i} \in I}$ for $I = \{\mathfrak{i} = (i_0, i_1, \ldots, i_d) \textrm{ such that } 0 \leqslant i_j \leqslant m-1 \textrm{ for all } j\}$.
So, we obtain the following:
\begin{lem}\label{lem:OARpiPD_descent}
	There exists a natural $(\varphi, \GammaR \times \Delta)\textrm{-equivariant}$ isomorphism of rings
	\begin{equation}\label{eq:OARpiPD_descent}
		\pazocal{O}A_{R,\varpi}^{\logPD}[[x_1^{\flat}]^{1/m}, \ldots, [x_d^{\flat}]^{1/m}] \isomorphic \OARpi^{\logPD}.
	\end{equation}
	In particular, we have a $(\varphi, \GammaR)\textrm{-equivariant}$ isomorphism of rings $\pazocal{O}A_{R,\varpi}^{\logPD} \isomorphic (\OARpi^{\logPD})^{\Delta}$.
\end{lem}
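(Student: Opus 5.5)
The plan is to first prove the integral statement $A_{R,\varpi}^+[\zeta_m,[x_1^{\flat}]^{1/m},\ldots,[x_d^{\flat}]^{1/m}] \isomorphic \ARpi^+$, then propagate it through the PD envelope and the log-PD envelope, and finally take $\Delta\textrm{-invariants}$.

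\textbf{Step 1: PD envelopes.} The ring $\ARpi^+$ is finite free over $A_{R,\varpi}^+$ on the basis $\{\zeta_m^{i_0}[x_1^{\flat}]^{i_1/m}\cdots[x_d^{\flat}]^{i_d/m}\}_{\mathfrak{i}\in I}$. This extension is étale: it is obtained by adjoining $m\textrm{-th}$ roots of $[x_i^{\flat}]$ and $\zeta_m$ with $m$ prime to $p$, and it is étale over the log locus and log-étale along $\{[x_{a+1}^{\flat}]\cdots[x_d^{\flat}]=0\}$. The map $\theta$ on $\ARpi^+$ restricts on $A_{R,\varpi}^+$ to $\theta\colon A_{R,\varpi}^+\twoheadrightarrow R[\varpi]$. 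Because the basis elements are mapped by $\theta$ to the corresponding basis $\zeta_m^{i_0}x_1^{i_1/m}\cdots$ of $\calR[\varpi]$ over $R[\varpi]$, we get
\begin{equation*}
	\ker\theta_{\ARpi^+} = \ker\theta_{A_{R,\varpi}^+}\cdot \ARpi^+ .
\end{equation*}
Since $\ARpi^+$ is flat over $A_{R,\varpi}^+$, the PD envelope commutes with this base change (Berthelot's flat base change for divided powers). Taking $p\textrm{-adic}$ completion of a finite free extension then gives $A_{R,\varpi}^{\PD}\otimes_{A_{R,\varpi}^+}\ARpi^+ \isomorphic \ARpi^{\PD}$.

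\textbf{Step 2: log-PD envelopes.} I would use the explicit description \eqref{eq:OARlogPD_explicit} for both $\calR$ and $R$. Here $\OARpi^{\logPD}$ is the completed PD-polynomial algebra over $\ARpi^{\PD}$ in the variables $y_i = 1-[x_i^{\flat}]/x_i$, and $\pazocal{O}A_{R,\varpi}^{\logPD}$ has the same description over $A_{R,\varpi}^{\PD}$ with the same $y_i$. Combining this with Step 1 yields \eqref{eq:OARpiPD_descent}; the element $\zeta_m$ is already contained in $\pazocal{O}A_{R,\varpi}^{\logPD}$ by Hensel's lemma. The main obstacle is this identification, because $R\otimes\ARpi^+$ is not literally the base change of $R\otimes A_{R,\varpi}^+$, since the $R$ factor is unchanged. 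I would avoid working with the envelopes abstractly. Instead I would verify the statement modulo $p$ through the explicit presentations, as in the proof of Lemma \ref{lem:OASpiPD_OAlogcrys_intersect}, and then conclude by $p\textrm{-completeness}$ and $p\textrm{-torsion-freeness}$. The compatibility of the maps with $(\varphi,\GammaR\times\Delta)$ follows from the injectivity into $\OAlogcrys(\calR_\infty)$ given in Lemma \ref{lem:OARlogpd_OAlogcrys}.

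\textbf{Step 3: $\Delta\textrm{-invariants}$.} The group $\Delta$ acts on the basis elements through distinct characters, with $\zeta_m$ and $[x_i^{\flat}]^{1/m}$ scaled by roots of unity. Since $m$ is invertible, the averaging projector $\frac{1}{|\Delta|}\sum_{\delta\in\Delta}\delta$ is defined, and it kills every nontrivial isotypic component. Hence the invariants equal the component of the trivial basis element, namely $\pazocal{O}A_{R,\varpi}^{\logPD}$. This identification is $(\varphi,\GammaR)\textrm{-equivariant}$ because $\Delta$ commutes with $(\varphi,\GammaR)$.
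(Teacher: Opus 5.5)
Your route (push the finite free extension $A_{R,\varpi}^+ \to \ARpi^+$ through the PD and log-PD envelopes via \eqref{eq:OARlogPD_explicit}, then take invariants) is what the paper intends. The paper only records that $\ARpi^+ = A_{R,\varpi}^+[\zeta_m, [x_1^{\flat}]^{1/m}, \ldots, [x_d^{\flat}]^{1/m}]$ is finite free and lets the lemma follow. Step 1 and the base-change part of Step 2 are sound, up to two small corrections. First, the exponent of $\zeta_m$ in your basis must run over $0 \leqslant i_0 < [F(\zeta_m):F]$, since $1, \zeta_m, \ldots, \zeta_m^{m-1}$ are linearly dependent. Second, your ``main obstacle'' is not one: $R \otimes_{\mathbb{Z}} \ARpi^+ = (R \otimes_{\mathbb{Z}} A_{R,\varpi}^+) \otimes_{A_{R,\varpi}^+} \ARpi^+$ is literally a base change.

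\textbf{Error in Step 2.} You claim $\zeta_m \in \pazocal{O}A_{R,\varpi}^{\logPD}$ by Hensel's lemma. This fails whenever $\zeta_m \notin F$. The group $\Delta$ acts trivially on $\pazocal{O}A_{R,\varpi}^{\logPD}$, which is built from $R$ and $A_{\inf}(R_{\infty})$, but it moves $\zeta_m$ through its quotient $\Gal(F(\zeta_m)/F)$. Hensel only lifts roots that already exist modulo $p$, and $\mu_m \not\subset \kappa$ in general. Consequently the displayed isomorphism holds only with $\zeta_m$ among the adjoined elements, as in the sentence preceding the lemma. Indeed, for $R = O_F$, $d = 0$ the right-hand side is free of rank $[F(\zeta_m):F]$ over the left-hand side. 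What your Steps 1--2 actually establish, and what you should state, is $\pazocal{O}A_{R,\varpi}^{\logPD} \otimes_{A_{R,\varpi}^+} \ARpi^+ \cong \OARpi^{\logPD}$. The assertion about $\Delta$-invariants is unaffected.

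\textbf{Error in Step 3.} The projector $\frac{1}{|\Delta|}\sum_{\delta \in \Delta}\delta$ need not exist integrally. We have $|\Delta| = m^d\,[F(\zeta_m):F]$, and the second factor can be divisible by $p$; for $\kappa = \mathbb{F}_3$ and $m = 7$ one gets $[F(\zeta_7):F] = 6$. So invertibility of $m$ does not suffice. Nor does $\Delta$ act on your basis through characters. An element of $\Gal(F(\zeta_m)/F)$ sends $\zeta_m$ to $\zeta_m^{a}$, which by the previous point is not an $\pazocal{O}A_{R,\varpi}^{\logPD}$-multiple of $\zeta_m$; moreover $\Delta$ is not abelian.

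\textbf{Fix for Step 3.} Use the extension $1 \to J \to \Delta \to G \to 1$ from the appendix.
\begin{enumerate}
	\item[(1)] The group $J \cong (\mathbb{Z}/m\mathbb{Z})^d$ has order prime to $p$ and fixes $\pazocal{O}A_{R,\varpi}^{\logPD} \otimes_R R[\zeta_m]$. It acts on the basis $\{[x_1^{\flat}]^{i_1/m} \cdots [x_d^{\flat}]^{i_d/m}\}_{0 \leqslant i_j < m}$ of $\OARpi^{\logPD}$ over that ring by pairwise distinct characters. Averaging over $J$ therefore gives $(\OARpi^{\logPD})^{J} = \pazocal{O}A_{R,\varpi}^{\logPD} \otimes_R R[\zeta_m]$.
	\item[(2)] The $G$-invariants of this are $\pazocal{O}A_{R,\varpi}^{\logPD}$, by Galois descent along the finite \'etale Galois extension $R \to R[\zeta_m]$. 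Concretely, if $c \in R[\zeta_m]$ has trace $1$, then $x \mapsto \sum_{g \in G} g(cx)$ is an integral projector onto the invariants (compare Lemma \ref{lem:descent_G}).
\end{enumerate}
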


\subsection{Associating Wach modules to log-crystalline represetations}

The main goal of this section is to show the following:
\begin{thm}\label{prop:descent Wach modules}\label{thm:Delta_descent_wach_mods}
	Let $T$ be a finite free $\mathbb{Z}_p\textrm{-representation}$ of $G_R$ such that $T[1/p]$ is log-crystalline.
	Then, there exists a Wach module $N_R(T)$ over $A_R^+$, functorially associated to $T$.
\end{thm}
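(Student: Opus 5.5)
The plan follows the strategy sketched in the introduction: construct a Wach module with $\Delta$-action over $\AR^+$ by glueing, and then descend it along $A_R^+ \rightarrow \AR^+$.

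First, by Theorem \ref{thm:GR_rep_GR0m} we choose $m$ coprime to $p$ such that the $G_R$-action on $T$ factors through $G_{\calR/R}^{\etale}$ for $\calR = R_{0,m}$, and set $\Delta = \Delta_{\calR/R}$. Theorem \ref{thm:padic_classify_decomp} then gives a finite projective étale $(\varphi, \GammaR \times \Delta)$-module $\DR(T)$ over $\AR$. By Corollary \ref{cor:crys_base_change}, $V = T[1/p]$ is crystalline as a $G_S$-representation, so \cite[Theorem 1.5]{abhinandan-relative-wach-ii} supplies the Wach module $N_S(T) \subset D_S(T)$ over $A_S^+$. Its base change $\NS(T) \coloneq \AS^+ \otimes_{A_S^+} N_S(T)$ is a Wach module over $\AS^+$, and it carries a $\Delta$-action because $\AS^+$ is finite étale Galois over $A_S^+$ with group $\Delta$. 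Remark \ref{rem:phiGamma_comp_R0mS0m} identifies $\AS \otimes_{\AR} \DR(T) \simeq \AS \otimes_{A_S} D_S(T)$, which gives $\NS(T) \subset \DS(T)$ compatibly with all structures. Theorem \ref{thm:wachmod_existence} then shows that $\NR(T) \coloneq \DR(T) \cap \NS(T)$ is a Wach module with $\Delta$-action over $\AR^+$. Moreover, Proposition \ref{prop:wach_phigamm_comp} and Corollary \ref{cor:wachmod_base_change} show that it recovers $\DR(T)$ and $\NS(T)$ after base change.

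Next, we descend along $\Delta$ by setting $N_R(T) \coloneq \NR(T)^{\Delta}$. Since $\Delta$ is finite of order prime to $p$ and $A_R^+ = (\AR^+)^{\Delta}$, taking invariants is exact. The main step is to show that $\AR^+ \otimes_{A_R^+} N_R(T) \rightarrow \NR(T)$ is an isomorphism. The difficulty is that $A_R^+ \rightarrow \AR^+$ is only Kummer-ramified along $[x_{a+1}^{\flat}] \cdots [x_d^{\flat}] = 0$ and along $\zeta_m$, so étale Galois descent does not apply directly. Our approach is to compare with the log-crystalline data. Using Lemmas \ref{lem:OARlogpd_OAlogcrys}, \ref{lem:OASpiPD_OAlogcrys_intersect} and \ref{lem:OARpilogPD_AS+_intersect}, and the crystalline comparison over $\OASpi^{\PD}$ from \cite[Section 3.6]{abhinandan-relative-wach-ii}, we would establish a $(\varphi, \GammaR \times \Delta)$-equivariant comparison isomorphism
\[
\OARpi^{\logPD} \otimes_{\AR^+} \NR(T)[1/p] \simeq \OARpi^{\logPD} \otimes_{R} \ODlogcrys(V)[1/p],
\]
obtained by intersecting the $\calS$-side comparison with $\OAlogcrys(\calR_{\infty}) \otimes \ODlogcrys(V)$. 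We expect to extract the log-crystalline module from $\OAlogcrys(\overline{R})^{H_{\calR}}$ as in the cited argument.

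The right-hand side is defined over $R$ with trivial $\Delta$-action on $\ODlogcrys(V)$. Taking $\Delta$-invariants and using Lemma \ref{lem:OARpiPD_descent} therefore yields the descended comparison over $\pazocal{O}A_{R,\varpi}^{\logPD}$. Reducing modulo $\mu$ via Remark \ref{rem:fil1ar+_intersect} then gives $\NR(T)/\mu \simeq \calR \otimes_R M$, where $M$ is a finite projective $R$-module with $M[1/p] = \ODlogcrys(V)$. Hence $\NR(T)/\mu$ descends along the Kummer cover $R \rightarrow \calR$. We would then lift this descent to $\AR^+ = \calR\llbracket\mu\rrbracket$ by $\mu$-adic successive approximation, using exactness of $\Delta$-invariants and the regularity of $\{p, \mu\}$ on $\NR(T)$. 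This yields the isomorphism $\AR^+ \otimes_{A_R^+} N_R(T) \simeq \NR(T)$.

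Once the descent isomorphism holds, the Wach axioms for $N_R(T)$ follow from those for $\NR(T)$ by faithful flatness of $A_R^+ \rightarrow \AR^+$ together with taking $\Delta$-invariants. Functoriality and independence of $m$ follow from Proposition \ref{prop:wach_etale_ff_relative}, by comparing with $m' = mm''$. We expect the main obstacle to be the descent step, specifically controlling integrality of the log-crystalline comparison mod $\mu$ near the divisor so that the ramified extension $R \rightarrow \calR$ descends without denominators.
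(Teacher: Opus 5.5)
Your setup (the choice of $m$ and the glueing $\NR(T)=\DR(T)\cap\NS(T)$ via Theorem \ref{thm:wachmod_existence}) matches the paper. So do the comparison over $\OARpi^{\logPD}$ and its $\Delta$-invariant version: these are Proposition \ref{prop:OA_comp_iso} and Corollaries \ref{cor:NR_modmu}, \ref{cor:OA_comp_iso_Delta}, after twisting so that $\NS(T)$ is effective. Your final $\mu$-adic Nakayama step would also be fine.

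The gap is the step ``reducing modulo $\mu$ gives $\NR(T)/\mu\simeq\calR\otimes_R M$''. All the comparison isomorphisms exist only after inverting $p$. What you actually obtain is $\calR[1/p]\otimes_{R[1/p]}\ODlogcrys(V)\simeq\NR(V)/\mu\NR(V)$. That only says the cokernel of $\calR\otimes_R(\NR/\mu\NR)^{\Delta}\to\NR/\mu\NR$ is $p$-power torsion. Turning this into an integral statement is exactly the ``descent without denominators'' you name as the main obstacle, and your plan has no mechanism for it. There is also no reason for $M$ to be projective over $R$; only $\ODlogcrys(V)$ is projective, over $R[1/p]$. Moreover, the natural torsion-freeness argument for Kummer descent, run modulo $\mu$, would need $[x_i^{\flat}]$ to be regular on $\NR/(p,\mu)$. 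The paper explicitly leaves this open (the Question after Proposition \ref{prop:wachmod_almost_comp}).

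The missing idea is to work with $\NR$ itself and with $a=p$.
\begin{itemize}
\item Since $\NR/p\hookrightarrow\NS/p$ and $[x_i^\flat]$ is a unit in $\AS^+$, the sequences $\{p,[x_i^\flat]\}$ are regular on $\NR$.
\item Proposition \ref{prop:descent_Delta} then applies. Via Lemma \ref{lem:descent_J}, after inverting $x_{a+1}\cdots x_d$ the cover is \'etale Galois, and regularity lets you clear the denominator. So $f^+_{\calR}\colon\AR^+\otimes_{A_R^+}N_R\to\NR$ is injective, and its cokernel $Q$ is $p$-torsion free if nonzero.
\item It remains to show $f^+_{\calR}[1/p]$ is bijective as a whole map, not just modulo $\mu$. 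Rational information modulo $\mu$ does not by itself control $Q[1/p]$: for example, $\AR^+/[p]_q$ is $p$- and $\mu$-torsion free, yet vanishes modulo $\mu$ after inverting $p$.
\item The paper gets this from diagram \eqref{eq:fR+_basechange}. The two comparison isomorphisms make $\OARpi^{\logPD}\otimes f^+_{\calR}[1/p]$ bijective. Lemma \ref{lem:NS_descent} gives the \'etale descent on the $\calS$-side. Intersecting inside $\OASpi^{\PD}$ using $\OARpi^{\logPD}\cap\AS^+=\AR^+$ (Lemma \ref{lem:OARpilogPD_AS+_intersect}) shows $f^+_{\calR}[1/p]$ is bijective.
\item Hence $Q$ is $p$-power torsion, and therefore $Q=0$.
\end{itemize}

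A smaller slip: $\Delta$ need not have order prime to $p$, since the factor $\Gal(F(\zeta_m)/F)$ can have order divisible by $p$. Exactness of $\Delta$-invariants comes instead from Lemma \ref{lem:descent_G}, that part of $\Delta$ being unramified.
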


We begin by comparing $\ODlogcrys(T[1/p])$ and Wach modules associated to $T$.

\subsubsection{A comparison isomorphism for log-crystalline representations}\label{subsubsec:OA_comp_iso}

Let $T$ be a $\mathbb{Z}_p\textrm{-representation}$ of $G_R$ such that $V \coloneq T[1/p]$ is log-crystalline in the sense of Definition \ref{defi:logcrys_rep}.
Let $\ODlogcrys(V)$ denote the associated filtered $\varphi\textrm{-module}$ over $R[1/p]$ equipped with a quasi-nilpotent integrable log-connection satisfying Griffiths transversality with respect to the filtration from Section \ref{subsec:logcrys_reps}.
From Corollary \ref{cor:crys_base_change} we know that $V$ is a crystalline representation of $G_S$, and we have a natural isomorphism of $S[1/p]\textrm{-modules}$ (see \eqref{eq:crys_base_change})
\begin{equation*}
	S[1/p] \otimes_{R[1/p]} \ODlogcrys(V) \isomorphic \ODcrysS(V),
\end{equation*}
compatible with the respective Frobenii, filtrations and connections, where the left-hand-term is equipped with the tensor-product Frobenius and connection, and the filtration on $S[1/p]$ is trivial.

Let $m \geqslant 1$ be an integer coprime to $p$ such that the action of $G_R$ on $T$ factors through $G_{\calR/R}^{\etale}$ for $\calR \coloneq R_{0,m}$ (see Theorem \ref{thm:GR_rep_GR0m}).
Then, in the notation of Section \ref{sec:wachmods}, we have the associated \'etale $(\varphi, \GammaR \times \Delta)\textrm{-module}$ $\DR(T)$ over $\AR$.
Additionally, as $V = T[1/p]$ is a crystalline representation of $G_S$, we have the Wach module $N_S(T)$ over $A_S^+$ associated to $T$.

Set $\NS(T) \coloneq \AS^+ \otimes_{A_S^+} N_S(T)$, and note that it is a Wach module with $\Delta\textrm{-action}$ over $\AS^+$ associated to $T$.
Moreover, by the uniqueness of the \'etale $(\varphi, \GammaS \times \Delta)\textrm{-module}$ over $\AS$ associated to $T$, it follows that $\DS(T) \coloneq \AS \otimes_{\AS^+} \NS(T) \isomorphic \AS \otimes_{\AR} \DR(T)$ as \'etale $(\varphi, \GammaS \times \Delta)\textrm{-modules}$ over $\AS$.
Here we have used the identification $\GammaR \times \Delta \isomorphic \GammaS \times \Delta$.
Consequently, from Theorem \ref{thm:wachmod_existence}, we obtain that 
\begin{equation*}
	\NR(T) \coloneq \NS(T) \cap \DR(T) \subset \DS(T)
\end{equation*}
is the Wach module with $\Delta\textrm{-action}$ over $\AR^+$ associated to $T$.
Set $\NR(V) \coloneq \NR(T)[1/p]$ as a module over $\BR^+ = \AR^+[1/p]$ equipped with the induced action of $\varphi$ and $\GammaR \times \Delta$.
Similarly, set $N_S(V) \coloneq N_S(T)[1/p]$ (resp.\ $\NS(V) \coloneq \NS(T)[1/p]$) as a module over $B_S^+ = A_S^+[1/p]$ (resp.\ $\BS^+ = \AS^+[1/p]$) equipped with the induced action of $\varphi$ and $\GammaS \times \Delta$.

The goal of this section is to prove the following claim:
\begin{prop}\label{prop:OA_comp_iso}
	Let $T$ be a $\mathbb{Z}_p\textrm{-representation}$ of $G_R$ such that $V \coloneq T[1/p]$ is log-crystalline, and the associated Wach module $\NS(T)$ over $\AS^+$ is effective.
	Let $\ODR \coloneq (\OARpi^{\logPD} \otimes_{\AR^+} \NR(V))^{\GammaR}$ as a $(\varphi, \Delta)\textrm{-module}$ over $\calR[1/p]$.
	Then, we have a natural $(\varphi, \Delta)\textrm{-equivariant}$ isomorphism of $\calR[1/p]\textrm{-modules}$
	\begin{equation}\label{eq:Dlogcrys_ODR_comp}
		\calR[1/p] \otimes_{R[1/p]} \ODlogcrys(V) \isomorphic \ODR.
	\end{equation}
	Additionally, we have the following $\OARpi^{\logPD}\textrm{-linear}$ and $(\varphi, \GammaR \times \Delta)\textrm{-equivariant}$ natural isomorphisms 
	\begin{equation}\label{eq:OA_comp_iso}
		\OARpi^{\logPD} \otimes_R \ODlogcrys(V) \isomorphic \OARpi^{\logPD} \otimes_{\calR} \ODR \isomorphic \OARpi^{\logPD} \otimes_{\AR^+} \NR(V).
	\end{equation}
\end{prop}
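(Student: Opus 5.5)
The plan is to deduce \eqref{eq:OA_comp_iso} from the analogous comparison over $\calS$ in \cite[Section 3.6]{abhinandan-relative-wach-ii} by an intersection argument, and then obtain \eqref{eq:Dlogcrys_ODR_comp} by taking $\GammaR$-invariants. Over $\calS$, since $V$ is crystalline for $G_S$ (Corollary \ref{cor:crys_base_change}) and $\NS(T)$ is effective, the results of op.\ cit.\ give $(\varphi,\GammaS\times\Delta)$-equivariant isomorphisms
\begin{equation*}
	\OASpi^{\PD} \otimes_{S} \ODcrysS(V) \isomorphic \OASpi^{\PD} \otimes_{\calS} \pazocal{O}D_{\calS} \isomorphic \OASpi^{\PD} \otimes_{\AS^+} \NS(V),
\end{equation*}
with $\pazocal{O}D_{\calS} = (\OASpi^{\PD}\otimes_{\AS^+}\NS(V))^{\GammaS} \isomorphic \calS[1/p]\otimes_{S[1/p]}\ODcrysS(V)$. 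By \eqref{eq:crys_base_change} I may replace $\ODcrysS(V)$ by $S[1/p]\otimes_{R[1/p]}\ODlogcrys(V)$.

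First I construct the map $\OARpi^{\logPD}\otimes_R \ODlogcrys(V) \rightarrow \OARpi^{\logPD}\otimes_{\AR^+}\NR(V)$ directly. Over $\OAlogcrys(\overline{R})[1/p]$ both sides embed into $\OBlogcrys(\overline{R})\otimes_{\mathbb{Q}_p}V$: the left by $\alpha_{\textrm{log-cris},R}(V)$, the right by Proposition \ref{prop:wachmod_comp_relative} (the comparison $A_{\inf}(\overline{R})[1/\mu]\otimes N \isomorphic A_{\inf}(\overline{R})[1/\mu]\otimes T$, using that $\mu$ is invertible in $\OBlogcrys$ after inverting $t$). Working inside $\OAcrys(\overline{S})[1/p,1/t]\otimes V$, I check that the images of $\ODlogcrys(V)$ land in $\OASpi^{\PD}[1/p]\otimes_{\AS^+}\NS(V)$ (from the $\calS$-comparison) and in $\OAlogcrys(\calR_\infty)[1/p]\otimes_{\AR^+}\NR(V)$ (taking $H_{\calR}$-invariants, via $\OAlogcrys(\calR_\infty)=\OAlogcrys(\overline{R})^{H_{\calR}}$). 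Intersecting and using Lemma \ref{lem:OASpiPD_OAlogcrys_intersect} together with $\NR(V)$ being finite projective over $\BR^+$ (Proposition \ref{prop:wachmod_proj_pmu}) and $\NS(V)=\BS^+\otimes\NR(V)$ (Corollary \ref{cor:wachmod_base_change}), the intersection is $\OARpi^{\logPD}[1/p]\otimes_{\AR^+}\NR(V)$. This yields an injective $\OARpi^{\logPD}$-linear, $(\varphi,\GammaR\times\Delta)$-equivariant map.

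For bijectivity I argue symmetrically: the $\calS$-isomorphism expresses $\NR(V)$ inside $\OASpi^{\PD}\otimes \ODlogcrys(V)$, and $\NR(V)$ also lies in $\OAlogcrys(\calR_\infty)[1/p]\otimes_R\ODlogcrys(V)$ by the log-crystalline comparison after restricting to $H_{\calR}$-invariants (using that $\ODlogcrys(V)$ is finite projective over $R[1/p]$, so invariants commute with the tensor). Intersecting again with Lemma \ref{lem:OASpiPD_OAlogcrys_intersect} shows $\NR(V)\subset \OARpi^{\logPD}\otimes_R\ODlogcrys(V)$, so the map is surjective. Then $\GammaR$-invariants, using $(\OARpi^{\logPD})^{\GammaR}\supseteq\calR[1/p]$ with equality after $p$ inverted (as for $\OASpi^{\PD}$ in op.\ cit.), give \eqref{eq:Dlogcrys_ODR_comp} and the middle term of \eqref{eq:OA_comp_iso}.

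The main obstacle is the integrality bookkeeping: the intersection lemmas are integral while the modules here are rational, and $\mu$, $t$ are inverted in the period rings. I expect to handle this by clearing a bounded denominator $p^k$ (finite generation of $\ODlogcrys(V)$ and $\NR(T)$) and applying Lemma \ref{lem:OASpiPD_OAlogcrys_intersect} to $p^k$-scaled lattices, and by using effectivity of $\NS(T)$ so that no negative powers of $\mu$ arise, exactly as in the $\calS$ case.
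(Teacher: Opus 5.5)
Your proposal is essentially the paper's proof. The same ingredients are used in the same order: log-crystalline admissibility composed with Proposition~\ref{prop:wachmod_comp_relative} on the $\calR$-side, and \cite[Theorem 3.35, Proposition 3.36]{abhinandan-relative-wach-ii} transported via \eqref{eq:crys_base_change} and Corollary~\ref{cor:wachmod_base_change} on the $\calS$-side. These are intersected inside the crystalline rings over $\calS$ using Lemma~\ref{lem:OASpiPD_OAlogcrys_intersect} and projectivity of $\NR(V)$, and then $\GammaR$-invariants are taken. The only difference is that you descend along $H_{\calR}$ to $\calR_{\infty}$, which matches that lemma as stated, while the paper descends along $H_{R,\infty}$ to $R_{\infty,\infty}$.

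One correction: those invariants land in $\OBlogcrys(\calR_{\infty}) \otimes_{\BR^+} \NR(V)$, with $t$ inverted, not in $\OAlogcrys(\calR_{\infty})[1/p] \otimes_{\AR^+} \NR(V)$. So the intersection step really uses $\OASpi^{\PD}[1/p] \cap \OBlogcrys(\calR_{\infty}) = \OARpi^{\logPD}[1/p]$. This is a $t$-saturation statement that the paper also only asserts, and it follows neither from the integral Lemma~\ref{lem:OASpiPD_OAlogcrys_intersect} by clearing $p$-power denominators nor from effectivity.
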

\begin{proof}
	Let us first consider the following $(\varphi, G_R)\textrm{-equivariant}$ and $\OBlogcrys(\overline{R})\textrm{-linear}$ isomorphism
	\begin{equation*}
		\OBlogcrys(\overline{R}) \otimes_{R[1/p]} \ODlogcrys(V) \isomorphic \OBlogcrys(\overline{R}) \otimes_{\mathbb{Q}_p} V \lisomorphic \OBlogcrys(\overline{R}) \otimes_{\BR^+} \NR(V),
	\end{equation*}
	where the first isomorphism holds because $V$ is a log-crystalline representation of $G_R$ (see \eqref{eq:crys_admis} in Definition \ref{defi:logcrys_rep}), and the second isomorphism is the extension of scalars along the $(\varphi, G_R)\textrm{-equivariant}$ map $A_{\inf}(\overline{R})[1/\mu] \hookrightarrow \OBlogcrys(\overline{R})$ of the isomorphism \eqref{eq:wachmod_comp_relative_ainf} from Proposition \ref{prop:wachmod_comp_relative}.
	By taking the $H_{R,\infty}\textrm{-invariants}$ of the composition above, and using that $\OBlogcrys(R_{\infty,\infty}) = \OBlogcrys(\overline{R})^{H_{R,\infty}}$ (see Section \ref{subsubsec:Bcris}), we obtain the following $(\varphi, \Gamma_{R,\infty})\textrm{-equivariant}$ and $\OBlogcrys(R_{\infty,\infty})\textrm{-linear}$ isomorphism:
	\begin{equation}\label{eq:Dlogcrys_NR_comp}
		\OBlogcrys(R_{\infty,\infty}) \otimes_{R[1/p]} \ODlogcrys(V) \isomorphic \OBlogcrys(R_{\infty,\infty}) \otimes_{\BR^+} \NR(V).
	\end{equation}

	Similarly, we have the following $(\varphi, G_S)\textrm{-equivariant}$ and $\OBcrys(\overline{S})\textrm{-linear}$ isomorphism:
	\begin{equation*}
		\begin{aligned}
			\OBcrys(\overline{S}) \otimes_{R[1/p]} \ODlogcrys(V) &\isomorphic \OBcrys(\overline{S}) \otimes_{S[1/p]} \ODcrysS(V)\\
				&\isomorphic \OBcrys(\overline{S}) \otimes_{\mathbb{Q}_p} V\\
				&\lisomorphic \OBcrys(\overline{S}) \otimes_{B_S^+} N_S(V)\\
				&\isomorphic \OBcrys(\overline{S}) \otimes_{\BS^+} \NS(V) \lisomorphic \OBcrys(\overline{S}) \otimes_{\BR^+} \NR(V),
		\end{aligned}
	\end{equation*}
	where the first isomorphism is the extension of scalars, along the $(\varphi, G_S)\textrm{-equivariant}$ map $S[1/p] \hookrightarrow \OBcrys(\overline{S})$, of the isomorphism \eqref{eq:crys_base_change} from Corollary \ref{cor:crys_base_change}; the second isomorphism holds because $V$ is a crystalline representation of $G_S$ (see \cite[Section 8.2]{brinon-relatif}); the third isomorphism is the extension of scalars, along the $(\varphi, G_S)\textrm{-equivariant}$ map $A_{\inf}(\overline{S})[1/\mu] \hookrightarrow \OBcrys(\overline{S})$, of the isomorphism in \cite[Proposition 3.14]{abhinandan-relative-wach-ii}; the fourth isomorphism holds because we have $\NS(V) = \BS^+ \otimes_{B_S^+} N_S(V)$; the last isomorphism is the extension of scalars, along the $(\varphi, G_S)\textrm{-equivariant}$ map $\AS^+ \hookrightarrow \OBcrys(\overline{S})$, of the $(\varphi, \GammaS \times \Delta)\textrm{-equivariant}$ and $\AS^+\textrm{-linear}$ isomorphism $\AS^+ \otimes_{\AR^+} \NR(T) \isomorphic \NS(T)$ in Corollary \ref{cor:wachmod_base_change}.
	By taking the $H_{S,\infty}\textrm{-invariants}$ of the composition above, and using that $\OBcrys(S_{\infty,\infty}) = \OBcrys(\overline{S})^{H_{S,\infty}}$ (see Section \ref{subsubsec:Bcris}), we obtain the following $(\varphi, \Gamma_{S,\infty})\textrm{-equivariant}$ and $\OBcrys(S_{\infty,\infty})\textrm{-linear}$ isomorphism:
	\begin{equation}\label{eq:DcrysS_NS_comp}
		\OBcrys(S_{\infty,\infty}) \otimes_{R[1/p]} \ODlogcrys(V) \isomorphic \OBcrys(S_{\infty,\infty}) \otimes_{\BR^+} \NR(V).
	\end{equation}

	Next, let us recall that from \cite[Theorem 3.35 and Proposition 3.36]{abhinandan-relative-wach-ii}, we have the following $(\varphi, \Gamma_R)\textrm{-equivariant}$ and $\pazocal{O}A_{S,\varpi}^{\PD}\textrm{-linear}$ isomorphism:
	\begin{equation*}
		\pazocal{O}A_{S,\varpi}^{\PD} \otimes_S \ODcrysS(V) \isomorphic \pazocal{O}A_{S,\varpi}^{\PD} \otimes_{A_S^+} N_S(V).
	\end{equation*}
	Extending scalars of the preceding isomorphism along the $(\varphi, \GammaR \times \Delta)\textrm{-equivariant}$ map $\pazocal{O}A_{S,\varpi}^{\PD} \hookrightarrow \OASpi^{\PD}$, we obtain the second isomorphism in the following:
	\begin{equation}\label{eq:Dlogcrys_NR_Scomp}
		\begin{aligned}
			\OASpi^{\PD} \otimes_R \ODlogcrys(V) &\isomorphic \OASpi^{\PD} \otimes_S \ODcrysS(V)\\
				&\isomorphic \OASpi^{\PD} \otimes_{A_S^+} N_S(V)\\
				&\isomorphic \OASpi^{\PD} \otimes_{\AR^+} \NR(V),
		\end{aligned}
	\end{equation}
	and where the first isomorphism follows from the isomorphism \eqref{eq:crys_base_change} from Corollary \ref{cor:crys_base_change}, and the last isomorphism holds because because we have a $(\varphi, \GammaS \times \Delta)\textrm{-equivariant}$ and $\BS^+\textrm{-linear}$ isomorphism $\BS^+ \otimes_{B_S^+} N_S(V) = \NS(V) \lisomorphic \BS^+ \otimes_{\BR^+} \NR(V)$ (see Corollary \ref{cor:wachmod_base_change}).
	It may easily be checked that \eqref{eq:DcrysS_NS_comp} is the base change of \eqref{eq:Dlogcrys_NR_Scomp} along the $(\varphi, \Gamma_{R,\infty})\textrm{-equivariant}$ map $\OASpi^{\PD} \hookrightarrow \OBcrys(S_{\infty,\infty})$ (see the discussion before Lemma \ref{lem:OASpiPD_OAlogcrys_intersect}).

	Now, consider the following diagram, where the vertical arrows are natural $(\varphi, \Gamma_{R,\infty})\textrm{-equivariant}$ maps:
	\begin{equation}\label{eq:Dlogcrys_NR_comps}
		\begin{tikzcd}
			\OBlogcrys(R_{\infty,\infty}) \otimes_{R[1/p]} \ODlogcrys(V) & \OBlogcrys(R_{\infty,\infty}) \otimes_{\BR^+} \NR(V)\\
			\OBcrys(S_{\infty,\infty}) \otimes_{R[1/p]} \ODlogcrys(V) & \OBcrys(S_{\infty,\infty}) \otimes_{\BR^+} \NR(V)\\
			\OASpi^{\PD} \otimes_R \ODlogcrys(V) & \OASpi^{\PD} \otimes_{\AR^+} \NR(V).
			\arrow["\sim", "\eqref{eq:Dlogcrys_NR_comp}"', from=1-1, to=1-2]
			\arrow[hook, from=1-1, to=2-1]
			\arrow[hook, from=1-2, to=2-2]
			\arrow["\sim", "\eqref{eq:DcrysS_NS_comp}"', from=2-1, to=2-2]
			\arrow[hook', from=3-1, to=2-1]
			\arrow["\sim", "\eqref{eq:Dlogcrys_NR_Scomp}"', from=3-1, to=3-2]
			\arrow[hook', from=3-2, to=2-2]
		\end{tikzcd}
	\end{equation}
	The injectivity of the top vertical arrows follow from Lemma \ref{lem:Acrys_Rinftym_in_Sinftym}, and the fact that $\ODlogcrys(V)$ is a finite projective $R[1/p]\textrm{-module}$ (see after Definition \ref{defi:logcrys_rep}) and $\NR(V)$ is a finite projective $\BR^+\textrm{-module}$ (see Proposition \ref{prop:wachmod_proj_pmu}).
	Similarly, the injectivity of the bottom vertical arrows follow from the discussion before Lemma \ref{lem:OASpiPD_OAlogcrys_intersect}, and the fact that $\ODlogcrys(V)$ is a finite projective $R[1/p]\textrm{-module}$ and $\NR(V)$ is a finite projective $\BR^+\textrm{-module}$.
	From the discussion preceding the diagram, it is easy to see that the top and the bottom squares commute because the constructions of this paper are compatible with that of \cite{brinon-relatif} and \cite{abhinandan-relative-wach-ii}.

	In the commutative diagram \eqref{eq:Dlogcrys_NR_comps}, taking the intersection of the top row with the bottom row inside the middle row yields the following $\OARpi^{\logPD}\textrm{-linear}$ and $(\varphi, \GammaR \times \Delta)\textrm{-equivariant}$ natural isomorphism
	\begin{equation}\label{eq:Dlogcrys_NR_Rcomp}
		\OARpi^{\logPD} \otimes_R \ODlogcrys(V) \isomorphic \OARpi^{\logPD} \otimes_{\AR^+} \NR(V),
	\end{equation}
	where we used Lemma \ref{lem:OASpiPD_OAlogcrys_intersect} to obtain that $\OASpi^{\PD} \cap \OBcrys(R_{\infty,\infty}) = \OASpi^{\PD} \cap \OAcrys(R_{\infty,\infty}) = \OARpi^{\logPD}$ inside $\OBcrys(S_{\infty,\infty})$.
	Taking the $\GammaR\textrm{-invariant}$ elements in the isomorphism \eqref{eq:Dlogcrys_NR_Rcomp}, we get the natural $(\varphi, \Delta)\textrm{-equivariant}$ and $\calR[1/p]\textrm{-linear}$ isomorphism claimed in \eqref{eq:Dlogcrys_ODR_comp}.
	Moreover, by extending scalars along the $(\varphi, \GammaR \times \Delta)\textrm{-equivariant}$ map $\calR \hookrightarrow \OARpi^{\PD}$, of the isomorphism in \eqref{eq:Dlogcrys_ODR_comp}, and using \eqref{eq:Dlogcrys_NR_Rcomp}, we obtain the second isomorphism in \eqref{eq:OA_comp_iso}.
\end{proof}

\begin{cor}\label{cor:NR_modmu}
	There exists natural $(\varphi, \Delta)\textrm{-equivariant}$ isomorphisms of $\calR[1/p]\textrm{-modules}$:
	\begin{equation}\label{eq:NR_modmu}
		\calR[1/p] \otimes_{R[1/p]} \ODlogcrys(V) \isomorphic \ODR \isomorphic \NR(V)/\mu\NR(V).
	\end{equation}
\end{cor}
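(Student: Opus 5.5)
The first isomorphism in \eqref{eq:NR_modmu} is exactly \eqref{eq:Dlogcrys_ODR_comp} from Proposition \ref{prop:OA_comp_iso}, so it remains to produce a natural $(\varphi,\Delta)$-equivariant isomorphism $\ODR \isomorphic \NR(V)/\mu\NR(V)$. My plan is to reduce the second isomorphism in \eqref{eq:OA_comp_iso} modulo a suitable ideal of $\OARpi^{\logPD}$ that kills $\mu$ and the PD-variables. Concretely, I use the $\calR$-linear map
\begin{equation*}
	\theta_{\calR} \colon \OARpi^{\logPD} \longrightarrow \calR[\varpi],
\end{equation*}
which sends $\varphi^{-r}(\mu)\mapsto \zeta_{p^r}-1$ and $y_i^{[k]}\mapsto 0$. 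I compose it with the further reduction sending $q^{1/p^r}\mapsto 1$, i.e.\ $\varphi^{-r}(\mu)\mapsto 0$. This yields a $(\varphi,\Gamma_{\calR}\times\Delta)$-equivariant surjection $\pi\colon \OARpi^{\logPD}\to \calR$. It is obtained from the explicit description \eqref{eq:OARlogPD_explicit}: modulo the PD-ideal generated by the $y_i^{[k]}$ and the divided powers of $\varphi^{-r}(\mu)$, the ring becomes $\calR$, and $\Gamma_{\calR}$ acts trivially on the target. Restricted to $\AR^+\cong\calR\llbracket\mu\rrbracket$, $\pi$ is reduction modulo $\mu$ (compare Remark \ref{rem:fil1ar+_intersect}). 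Also, $\pi$ restricted to $\calR\subset \OARpi^{\logPD}$ is the identity.

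The steps are as follows.
\begin{enumerate}
	\item Apply $\calR[1/p]\otimes_{\pi,\OARpi^{\logPD}}(-)$ to \eqref{eq:OA_comp_iso}. This gives $\calR[1/p]\otimes_{\calR}\ODR \isomorphic \calR[1/p]\otimes_{\AR^+}\NR(V)=\NR(V)/\mu\NR(V)$. The left-hand side is $\ODR$, since $\ODR$ is already an $\calR[1/p]$-module and $\pi|_{\calR}=\mathrm{id}$.
	\item Check equivariance. Frobenius and $\Delta$ commute with $\pi$, so the isomorphism is $(\varphi,\Delta)$-equivariant. $\Gamma_{\calR}$ acts trivially on $\NR(V)/\mu$ by the Wach module axiom, which is consistent with $\ODR$ being defined as $\Gamma_{\calR}$-invariants.
	\item Verify naturality. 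The composite $\ODR\hookrightarrow \OARpi^{\logPD}\otimes\NR(V)\to \NR(V)/\mu$ should be the map just obtained, which is natural in $T$.
\end{enumerate}

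The main obstacle is making $\pi$ well defined and $\varphi$-compatible on the log-PD envelope, where $\varphi(y_i)$ is not simply $y_i^p$. I would handle this using the formula $1-\varphi(y_i)=(1-y_i)^p\cdot[x_i^\flat]^p/\varphi(x_i)\cdot x_i^p/[x_i^\flat]^p$ together with \eqref{eq:OARlogPD_explicit}. With these, one checks that the ideal $J$ topologically generated by $\varphi^{-r}(\mu)$, its divided powers, and the $y_i^{[k]}$ is stable under $\varphi$ and $\Gamma_{\calR}\times\Delta$, and that $\OARpi^{\logPD}/J\cong\calR$.

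If the twist $[x_i^\flat]\ne x_i$ obstructs $\pi$ being $\varphi$-compatible on $\calR$, I would instead compare both sides after the base change to $\OASpi^{\PD}$, where \cite[Proposition 3.36]{abhinandan-relative-wach-ii} gives $\ODcrysS(V)\cong N_S(V)/\mu$. I would then descend along $\calR\subset\calS$, using $\calR[1/p]\cap\calS=\calR$ as in Lemma \ref{lem:OARpilogPD_AS+_intersect} and the intersection $\NR=\NS\cap\DR$.
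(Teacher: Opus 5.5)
Your argument is correct in outline and takes a genuinely different route from the paper, but the construction of $\pi$ has to be redone.

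\textbf{Comparison with the paper.} The paper reduces \eqref{eq:OA_comp_iso} modulo $\Fil^1$, i.e.\ along $\theta_R\colon\OARpi^{\logPD}\to\calR[\varpi]$. Using $\Fil^1\OARpi^{\logPD}\cap\AR^+=\mu\AR^+$ (Remark \ref{rem:fil1ar+_intersect}) and a diagram with exact rows, it obtains $\calR[\varpi]\otimes_{\calR}\ODR\isomorphic\calR[\varpi]\otimes_{\calR}(\NR(V)/\mu\NR(V))$. It then takes invariants under $\Gal(F(\zeta_{p^r})/F)$.

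You specialise instead at $q^{1/p^r}=1$, which lands directly in $\calR[1/p]$. So you need neither the filtration computation nor the Galois descent. Moreover, $\pi$ commutes with $\varphi$ while $\theta_R$ does not, so Frobenius-equivariance of your isomorphism is immediate.

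In fact the two isomorphisms coincide. One checks $\theta_R\circ\varphi^r=\varphi^r\circ\pi$ on generators of $\OARpi^{\logPD}$. Also, $\ODR$ is spanned over $\calR[1/p]$ by $\varphi^r(\ODR)$, by \eqref{eq:Dlogcrys_ODR_comp} and bijectivity of the linearised Frobenius of $\ODlogcrys(V)$. So your route also explains why the paper's map is $\varphi$-equivariant.

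\textbf{Constructing $\pi$.} This is the price of your route, and your description of it does not work. There is no ring map $\calR[\varpi]\to\calR$ with $\zeta_{p^r}\mapsto 1$: it would send $0=\Phi_{p^r}(\zeta_{p^r})$ to $\Phi_{p^r}(1)=p$. So $\pi$ is not a composite through $\theta_R$. Also, $\varphi^{-r}(\mu)$ does not lie in the PD ideal $\ker\theta$, so ``its divided powers'' are not what makes $\pi$ exist.

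Instead, proceed as follows.
\begin{enumerate}
\item On $\ARpi^+\isomorphic\calR\llbracket\varphi^{-r}(\mu)\rrbracket$, take reduction modulo $\varphi^{-r}(\mu)$.
\item The ideal $\ker\theta\subset\ARpi^+$ is generated by $\varphi^{-1}([p]_q)=\Phi_{p^r}(q^{1/p^r})$, which goes to $p$, i.e.\ into the PD ideal $p\calR$.
\item By the universal property of the divided power envelope and $p$-adic completeness of $\calR$, the map extends to $\ARpi^{\PD}\to\calR$.
\item Via \eqref{eq:OARlogPD_explicit}, extend further by $y_i^{[k]}\mapsto 0$ for $k\geqslant 1$.
\end{enumerate}
Compatibility with $\varphi$, $\GammaR$ and $\Delta$ then follows by checking on the topological generators $\iota(\calR)$, $\varphi^{-r}(\mu)$ and $y_i$. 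For example, $\varphi(y_i)=1-(1-y_i)^p$ and $g(y_i)=1-q^{\psi_i(g)}(1-y_i)$ both map to $0$. Divided powers are then automatic because $\calR$ is $p$-torsion free. In particular, your fallback through $\OASpi^{\PD}$ is unnecessary.

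Finally, in Step 1 the relevant copy of $\calR$ is the $\GammaR$-invariant one over which $\ODR$ is a module, not $\iota(\calR)\subset\AR^+$. Since $\pi$ restricts to the identity on both, your identification of the left-hand side with $\ODR$ is fine.
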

\begin{proof}
	Let us first note that we have $(\Fil^1 \OARpi^{\logPD} \otimes_{\AR^+} \NR(V)) \cap \NR(V) = (\Fil^1 \OARpi^{\logPD} \cap \AR^+) \otimes_{\AR^+} \NR(V) = \mu \NR(V)$, where the first equality follows because $\NR(V)$ is finite projective over $\BR^+$, in particular flat over $\AR^+$, and the second equality follows from Remark \ref{rem:fil1ar+_intersect}.
	So, let us consider the following diagram with exact rows:
	\begin{equation*}
		\begin{tikzcd}
			0 & \mu\NR(V) & \NR(V) & \NR(V)/\mu\NR(V) & 0 \\
			0 & (\Fil^1 \OARpi^{\logPD}) \otimes_{\AR^+} \NR(V) & \OARpi^{\PD} \otimes_{\AR^+} \NR(V) & \calR[\varpi] \otimes_{\calR} (\NR(V)/\mu \NR(V)) & 0 \\
			0 & (\Fil^1 \OARpi^{\logPD}) \otimes_{\calR} \ODR & \OARpi^{\PD} \otimes_{\calR} \ODR & \calR[\varpi] \otimes_{\calR} \ODR & 0 
			\arrow[from=1-1, to=1-2]
			\arrow[from=1-2, to=1-3]
			\arrow[from=1-2, to=2-2]
			\arrow[from=1-3, to=1-4]
			\arrow[from=1-3, to=2-3]
			\arrow[from=1-4, to=1-5]
			\arrow[from=1-4, to=2-4]
			\arrow[from=2-1, to=2-2]
			\arrow[from=2-2, to=2-3]
			\arrow[from=2-3, to=2-4]
			\arrow[from=2-4, to=2-5]
			\arrow[from=3-1, to=3-2]
			\arrow["\wr"', from=3-2, to=2-2]
			\arrow[from=3-2, to=3-3]
			\arrow["\wr"', "\eqref{eq:OA_comp_iso}", from=3-3, to=2-3]
			\arrow[from=3-3, to=3-4]
			\arrow["\wr"', from=3-4, to=2-4]
			\arrow[from=3-4, to=3-5]
		\end{tikzcd}
	\end{equation*}
	where from the exactness of the second row and the discussion above, it follows that the vertical maps from the first to the second row are natural inclusions.
	Moreover, the middle vertical arrow from the third to the second row is the isomorphism \eqref{eq:OA_comp_iso} in Proposition \ref{prop:OA_comp_iso}, and the left vertical arrow is the tensor product of the $\OARpi^{\logPD}\textrm{-linear}$ isomorphism in the middle vertical arrow with the $\OARpi^{\logPD}\textrm{-module}$ $\Fil^1 \OARpi^{\logPD}$, in particular, the left vertical arrow is bijective as well.
	So, we conclude that the right vertical arrow is also an isomorphism.
	Taking the $\Gal(\calR[\varpi][1/p]/\calR[1/p]) = \Gal(F(\zeta_{p^i})/F)\textrm{-invariants}$ (recall that $i = 1$, for $p \geqslant 3$, and $i = 2$, for $p = 2$) of the right vertical arrows gives a natural $\calR[1/p]\textrm{-linear}$ and $(\varphi, \Delta)\textrm{-equivariant}$ isomorphism
	\begin{equation*}
		\ODR \isomorphic \NR(V)/\mu\NR(V).
	\end{equation*}
	Combining the preceding isomorphism with the isomorphism in \eqref{eq:Dlogcrys_ODR_comp} yields the claimed isomorphisms in \eqref{eq:NR_modmu}.
\end{proof}

\begin{cor}\label{cor:OA_comp_iso_Delta}
	Let $\pazocal{O}D_R \coloneq (\OARpi^{\logPD} \otimes_{\AR^+} \NR(V))^{\GammaR \times \Delta}$ as a $\varphi\textrm{-module}$ over $R[1/p]$.
	Then, we have a natural $\varphi\textrm{-equivariant}$ isomorphism of $R[1/p]\textrm{-modules}$
	\begin{equation}\label{eq:Dlogcrys_ODR_comp_Delta}
		\ODlogcrys(V) \isomorphic \pazocal{O}D_R.
	\end{equation}
	Additionally, let $N_R(V) \coloneq \NR(V)^{\Delta}$ as a $(\varphi, \Gamma_R)\textrm{-module}$ over $B_R^+$.
	Then, we have the following $\pazocal{O}A_{R,\varpi}^{\logPD}\textrm{-linear}$ and $(\varphi, \GammaR)\textrm{-equivariant}$ natural isomorphisms 
	\begin{equation}\label{eq:OA_comp_iso_Delta}
		\pazocal{O}A_{R,\varpi}^{\logPD} \otimes_R \ODlogcrys(V) \isomorphic \pazocal{O}A_{R,\varpi}^{\logPD} \otimes_R \pazocal{O}D_R \isomorphic \pazocal{O}A_{R,\varpi}^{\logPD} \otimes_{A_R^+} N_R(V).
	\end{equation}
	Furthermore, there exists a natural $\varphi\textrm{-equivariant}$ isomorphism of $R[1/p]\textrm{-modules}$:
	\begin{equation}\label{eq:NR_modmu_Delta}
		\ODlogcrys(V) \isomorphic \pazocal{O}D_R \isomorphic (N_R(V)/\mu N_R(V)).
	\end{equation}
\end{cor}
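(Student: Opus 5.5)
The approach is to take $\Delta$-invariants in Proposition~\ref{prop:OA_comp_iso} and Corollary~\ref{cor:NR_modmu}, using Galois descent along the finite étale Galois extension $R[1/p]\rightarrow\calR[1/p]$ with group $\Delta$. Throughout, I reduce first to the case where $\NS(T)$ is effective. Replacing $T$ by $T(-r)$ multiplies the Wach modules by $\mu^r$ and twists $\ODlogcrys(V)$ by $t^{-r}$, so all three statements are insensitive to such a twist; this reduction is routine.

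For \eqref{eq:Dlogcrys_ODR_comp_Delta}, I would take $\Delta$-invariants of \eqref{eq:Dlogcrys_ODR_comp}. By construction $\pazocal{O}D_R = \ODR^{\Delta}$, and the left-hand side has $\Delta$ acting on the first factor only. Since $\calR[1/p]$ is finite étale Galois over $R[1/p]$ with group $\Delta$ (Proposition~\ref{prop: galois theory for R}), we have $(\calR[1/p]\otimes_{R[1/p]}M)^{\Delta}=M$ for any $R[1/p]$-module $M$, and this gives \eqref{eq:Dlogcrys_ODR_comp_Delta}. By Galois descent, \eqref{eq:Dlogcrys_ODR_comp} also shows that $\calR[1/p]\otimes_{R[1/p]}\pazocal{O}D_R\isomorphic\ODR$.

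For \eqref{eq:OA_comp_iso_Delta}, I would take $\Delta$-invariants of \eqref{eq:OA_comp_iso}. Lemma~\ref{lem:OARpiPD_descent} exhibits $\OARpi^{\logPD}$ as finite free over $\pazocal{O}A_{R,\varpi}^{\logPD}$, with basis the monomials in $[x_i^{\flat}]^{1/m}$ (and $\zeta_m$), permuted up to units by $\Delta$, and with $(\OARpi^{\logPD})^{\Delta}=\pazocal{O}A_{R,\varpi}^{\logPD}$. Consequently, for a projective $R[1/p]$-module $M$ we get $(\OARpi^{\logPD}\otimes_R M)^{\Delta}=\pazocal{O}A_{R,\varpi}^{\logPD}\otimes_R M$. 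This handles the first two terms.

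The third term, \emph{the main obstacle}, is showing that $(\OARpi^{\logPD}\otimes_{\AR^+}\NR(V))^{\Delta}=\pazocal{O}A_{R,\varpi}^{\logPD}\otimes_{A_R^+}N_R(V)$. This amounts to proving that $\NR(V)$ descends, i.e.\ that $\BR^+\otimes_{B_R^+}N_R(V)\isomorphic\NR(V)$. Here $\AR^+$ over $A_R^+$ is only Kummer-type ramified along $[x_{a+1}^{\flat}]\cdots[x_d^{\flat}]$, so it is not étale. My plan is to avoid descending $\NR(V)$ directly and instead read it off from the already-descended side. Inverting $p$, the isomorphism \eqref{eq:OA_comp_iso} identifies $\OARpi^{\logPD}[1/p]\otimes\NR(V)$ with $\OARpi^{\logPD}[1/p]\otimes_{\calR}\ODR$, which equals $\OARpi^{\logPD}[1/p]\otimes_R\pazocal{O}D_R$. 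Taking $\Delta$-invariants then gives
\begin{equation*}
	(\OARpi^{\logPD}\otimes_{\AR^+}\NR(V))^{\Delta}\isomorphic \pazocal{O}A_{R,\varpi}^{\logPD}\otimes_R\pazocal{O}D_R,
\end{equation*}
and this object is stable under $\varphi$ and $\GammaR$.

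To identify $N_R(V)$ inside it, I would intersect with $\NR(V)$ inside $\OARpi^{\logPD}\otimes\NR(V)$, using Lemma~\ref{lem:OARpilogPD_AS+_intersect} and Remark~\ref{rem:fil1ar+_intersect} to control the intersection. The key check is that $\NR(V)$ admits a basis coming from $\Delta$-invariant elements. I would try to obtain this as follows: Corollary~\ref{cor:NR_modmu} gives $\NR(V)/\mu\isomorphic\calR[1/p]\otimes_{R[1/p]}\pazocal{O}D_R$ $\Delta$-equivariantly, so $\Delta$-invariant generators exist modulo $\mu$. I would then lift them $\mu$-adically, using that $\Delta$ has order prime to $p$ and hence averaging over $\Delta$ is available on $p$-adically complete modules.

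Finally, \eqref{eq:NR_modmu_Delta} follows from \eqref{eq:NR_modmu} by taking $\Delta$-invariants. Since $|\Delta|$ is invertible, $\Delta$-invariants commute with reduction modulo $\mu$, so $\NR(V)^{\Delta}/\mu\isomorphic(\NR(V)/\mu)^{\Delta}$. Combining this with $(\calR[1/p]\otimes_{R[1/p]}\pazocal{O}D_R)^{\Delta}=\pazocal{O}D_R$ gives \eqref{eq:NR_modmu_Delta}.
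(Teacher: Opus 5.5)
The gap is in the third term of \eqref{eq:OA_comp_iso_Delta}. The rest of your proposal is fine: you prove \eqref{eq:Dlogcrys_ODR_comp_Delta}, the first isomorphism of \eqref{eq:OA_comp_iso_Delta} and \eqref{eq:NR_modmu_Delta} by taking $\Delta$-invariants, which is also what the paper does. One correction there: $\calR[1/p]$ is Kummer-ramified along $x_{a+1}\cdots x_d=0$, not finite \'etale, over $R[1/p]$. What you actually use is only that $|\Delta|$ is invertible in $R[1/p]$ and $\calR[1/p]^{\Delta}=R[1/p]$.

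For that third term, you claim the identification amounts to descending $\NR(V)$, and you plan to prove this by lifting $\Delta$-invariant generators of $\NR(V)/\mu$ by averaging. That step fails, because $\mu$ is not in the Jacobson radical of $\BR^+=\AR^+[1/p]$. For example, $p+\mu\in\AR^+$ is $\Delta$-invariant and generates $\BR^+/\mu=\calR[1/p]$. It is nevertheless not a unit in $\BR^+$, since it vanishes under $\mu\mapsto -p$. So generation modulo $\mu$ after inverting $p$ tells you nothing.

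An integral version of your argument would need two things you do not have. First, it would need $\NR(T)/\mu\NR(T)\isomorphic\calR\otimes_R(\NR(T)/\mu)^{\Delta}$, but Corollary~\ref{cor:NR_modmu} is only rational. Second, it would need integral averaging, and $|\Delta|$ can be divisible by $p$ through the factor $\Gal(F(\zeta_m)/F)$. Moreover, the descent $\AR^+\otimes_{A_R^+}N_R\isomorphic\NR$ is Theorem~\ref{thm:Delta_descent_wach_mods}. The paper deduces that theorem \emph{from} this corollary, together with Lemma~\ref{lem:OARpilogPD_AS+_intersect} and Lemma~\ref{lem:NS_descent}, so your plan reverses the logical order.

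The missing observation is that no descent of $\NR(V)$ is needed; the identification is formal. By Lemma~\ref{lem:OARpiPD_descent}, $\OARpi^{\logPD}$ is obtained from $\pazocal{O}A_{R,\varpi}^{\logPD}$ by the same finite free extension that produces $\AR^+$ from $A_R^+$, namely adjoining $\zeta_m$ and the $[x_i^{\flat}]^{1/m}$. Hence $\OARpi^{\logPD}\isomorphic\pazocal{O}A_{R,\varpi}^{\logPD}\otimes_{A_R^+}\AR^+$, and so
\begin{equation*}
	\OARpi^{\logPD}\otimes_{\AR^+}\NR(V)\isomorphic\pazocal{O}A_{R,\varpi}^{\logPD}\otimes_{A_R^+}\NR(V),
\end{equation*}
with $\Delta$ acting only through $\NR(V)$, and $A_R^+$-linearly there. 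Since $\NR(V)$ is a $\mathbb{Q}_p$-vector space, $|\Delta|^{-1}\sum_{g\in\Delta}g$ is an $A_R^+$-linear projector onto $N_R(V)$. The argument of Lemma~\ref{lem:tensorprod_fixedpoints} then gives $(\pazocal{O}A_{R,\varpi}^{\logPD}\otimes_{A_R^+}\NR(V))^{\Delta}=\pazocal{O}A_{R,\varpi}^{\logPD}\otimes_{A_R^+}N_R(V)$. Taking $\Delta$-invariants of the $\OARpi^{\logPD}$-linear, $\Delta$-equivariant isomorphism \eqref{eq:OA_comp_iso} now yields \eqref{eq:OA_comp_iso_Delta} directly; this is the paper's proof.
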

\begin{proof}
	The isomorphism in \eqref{eq:Dlogcrys_ODR_comp_Delta} follows by taking the $\Delta\textrm{-invariants}$ of the isomorphism \eqref{eq:Dlogcrys_ODR_comp} from Proposition \ref{prop:OA_comp_iso}.
	The isomorphism in \eqref{eq:OA_comp_iso_Delta} follows by taking the $\Delta\textrm{-invariants}$ of \eqref{eq:OA_comp_iso_Delta} from Proposition \ref{prop:OA_comp_iso} and using the $(\varphi, \GammaR \times \Delta)\textrm{-equivariant}$ isomorphism \eqref{eq:OARpiPD_descent} from Lemma \ref{lem:OARpiPD_descent}.
	The isomorphism in \eqref{eq:NR_modmu_Delta} follows by taking the $\Delta\textrm{-invariants}$ of the isomorphism \eqref{eq:NR_modmu} from Corollary \ref{cor:NR_modmu}.
\end{proof}

\subsubsection{Descent of Wach modules for log-crystalline representations}

Our goal in this section is to show that for log-crystalline representations of $G_R$ Wach modules descend over to the ring $A_R^+$.

Let $T$ be a finite free $\mathbb{Z}_p\textrm{-representation}$ of $G_R$ such that $T[1/p]$ is log-crystalline.
For a fixed integer $r$ observe that $V = T[1/p]$ is log-crystalline if and only if $V(-r)$ is log-crystalline.
Additionally, for large enough integers $r$, we have that $N_S(T(-r)) = \mu^r N_S(T)(-r)$ is effective.
Therefore, in the following we shall assume that $N_S(T)$ is effective.

Let $m \geqslant 1$ be an integer coprime to $p$ such that the action of $G_R$ on $T$ factors through $G_{\calR/R}^{\etale}$ for $\calR \coloneq R_{0,m}$ (see Theorem \ref{thm:GR_rep_GR0m}).
Then, from the discussion in Section \ref{subsubsec:OA_comp_iso}, we have an associated Wach module with $\Delta\textrm{-action}$ $\NR \coloneq \NR(T)$ over $\AR^+$, where $\Delta \coloneq \Delta_{\calR/R}$.
As indicated above, without loss of generality, we shall assume that $\NR$ is effective.

Let us set $N_R \coloneq N_R(T) \coloneq \NR(T)^{\Delta}$ as an $A_R^+\textrm{-module}$, equipped with the induced action of $(\varphi, \Gamma_R)$.
We begin with an observation.
\begin{lem}\label{lem:ratWachmod_proj_Delta}
	The $B_R^+\textrm{-module}$ $N_R[1/p] = \NR[1/p]^{\Delta}$ is finite projective.
\end{lem}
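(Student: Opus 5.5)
The plan is to exploit that $\Delta = \Delta_{\calR/R}$ is a finite group whose order is invertible in $\mathbb{Z}_p$. The averaging operator will then split off the $\Delta$-invariants as a direct summand of a module we already know to be finite projective.

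First, I would record the relevant facts. From the description $\Delta \isomorphic (\mathbb{Z}/m\mathbb{Z})^d \rtimes \Gal(F(\zeta_m)/F)$, the order $|\Delta|$ divides $m^d\cdot\varphi(m)$. Since $m$ is coprime to $p$, $|\Delta|$ is a unit in $\mathbb{Z}_p$, and hence in $B_R^+$. Next, taking $\Delta$-invariants commutes with inverting $p$: an element $x/p^k$ is $\Delta$-invariant iff $x$ is, because $\NR$ is $p$-torsion free (Remark \ref{rem:wachmod_torsionfree}). This justifies $N_R[1/p] = \NR[1/p]^{\Delta}$. By Proposition \ref{prop:wachmod_proj_pmu}, $\NR[1/p]$ is finite projective over $\BR^+$.

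Second, I would show that $\BR^+$ is finite free over $B_R^+$. By the analogue over $R$ of the discussion preceding Lemma \ref{lem:OARpiPD_descent}, there is a $(\varphi,\GammaR\times\Delta)$-equivariant identification $\AR^+ \isomorphic A_R^+[\zeta_m,[x_1^{\flat}]^{1/m},\ldots,[x_d^{\flat}]^{1/m}]$. This makes $\AR^+$ finite free over $A_R^+$, with basis the monomials $\zeta_m^{i_0}[x_1^{\flat}]^{i_1/m}\cdots[x_d^{\flat}]^{i_d/m}$. Concretely, this reduces via $\AR^+\isomorphic \calR\llbracket\mu\rrbracket$ and $A_R^+\isomorphic R\llbracket\mu\rrbracket$ to $\calR = R\otimes_{R_\square}R_{\square,0,m}$ being free over $R$. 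The base change of $R_\square\to R_{\square,0,m}$ along the ($p$-completed) étale map $R_\square\to R$ is free with the same monomial basis. Combining with the previous step, $\NR[1/p]$ is finite projective over $\BR^+$, which is finite free over $B_R^+$, so $\NR[1/p]$ is finite projective as a $B_R^+$-module.

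Third, I would use the operator $e \coloneq |\Delta|^{-1}\sum_{\delta\in\Delta}\delta$ on $\NR[1/p]$. It is well defined since $|\Delta|$ is invertible. Because $\Delta$ acts $\BR^+$-semilinearly and trivially on $B_R^+\subset(\BR^+)^{\Delta}$, the operator $e$ is $B_R^+$-linear. It is idempotent, and its image is exactly $\NR[1/p]^{\Delta}$. Hence $\NR[1/p]= \NR[1/p]^{\Delta}\oplus(1-e)\NR[1/p]$ as $B_R^+$-modules. A direct summand of a finite projective module is finite projective, so $N_R[1/p]$ is finite projective over $B_R^+$.

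The only step requiring care is the second one, namely the finite freeness of $\AR^+$ over $A_R^+$ (equivalently, of $\calR$ over $R$). If the normalisation defining $\calR=R_{0,m}$ caused trouble, I would instead argue with finite projectivity. The map $\calR[1/p]$ over $R[1/p]$ is finite flat, being a base change of the finite flat $R_\square[1/p]\to R_{\square,0,m}[1/p]$ along the flat map $R_\square\to R$. This already suffices after inverting $p$, since only finite projectivity of $\BR^+$ over $B_R^+$ is needed in the argument above.
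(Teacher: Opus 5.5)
Your proposal is correct and follows essentially the same route as the paper: $\NR[1/p]$ is finite projective over $\BR^+$ by Proposition~\ref{prop:wachmod_proj_pmu}, $\AR^+$ is finite free over $A_R^+$, and the averaging idempotent of Lemma~\ref{lem:Gfixed_trace} makes $N_R[1/p]$ a $B_R^+$-direct summand of $\NR[1/p]$. One justification should be fixed: $m$ being prime to $p$ does not force $|\Delta|$ to be prime to $p$, because the factor $\Gal(F(\zeta_m)/F)$ can have order divisible by $p$ (e.g.\ $p=2$, $m=3$, $F=\mathbb{Q}_2$). This is harmless here, since $|\Delta|$ is a nonzero integer and hence invertible in $B_R^+\supset\mathbb{Q}_p$.
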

\begin{proof}
	From Proposition \ref{prop:wachmod_proj_pmu}, recall that $\NR[1/p]$ is a finite projective $\BR^+\textrm{-module}$.
	Moreover, $\AR^+$ is finite free as a module over $A_R^+$.
	So, it follows that $\NR[1/p]$ is finite projective as a module over $B_R^+$.
	Additionally, from Lemma \ref{lem:Gfixed_trace}, we know that $N_R[1/p]$ is a direct summand of $\NR[1/p]$ as a $B_R^+\textrm{-module}$.
	Therefore, we conclude that $N_R[1/p]$ is a finite projective module over $B_R^+$.
\end{proof}

We are now ready to prove Theorem \ref{thm:Delta_descent_wach_mods}.
\begin{proof}[Proof of Theorem \ref{thm:Delta_descent_wach_mods}]
	To show that $N_R$ is a Wach module over $A_R^+$, let us first note that $A_R^+ = (\AR^+)^{\Delta}$, and $\AR^+$ is finite as a module over $A_R^+$.
	As $\NR$ is a finitely generated $\AR^+\textrm{-module}$, we get that it is also finitely generated over $A_R^+$.
	Moreover, we have that $N_R \subset \NR$ and $A_R^+$ is noetherian, therefore, $N_R$ is also finitely generated over $A_R^+$, and hence $(p, \mu)\textrm{-adically}$ complete.
	It is clear that $N_R$ is $p\textrm{-torsion}$ free and $\mu\textrm{-torsion}$ free.
	Therefore, from Lemma \ref{lem:descent_Delta}, we obtain that $N_R/p^nN_R \isomorphic (\NR/p^n\NR)^{\Delta}$ for each $n \geqslant 1$.
	In particular, we have that $N_R/pN_R$ is $\mu\textrm{-torsion}$ free, and thus the sequences $\{p, \mu\}$ and $\{\mu, p\}$ are regular on $N_R$.
	Similarly, using Lemma \ref{lem:descent_Delta}, it also follows that we have a $\varphi\textrm{-equivariant}$ isomorphism of $R\textrm{-modules}$ $N_R/\mu N_R \isomorphic (\NR/\mu\NR)^{\Delta}$, in particular, the action of $\Gamma_R \isomorphic \GammaR$ is trivial modulo $\mu$.

	Next, we will show that the following natural $(\varphi, \GammaR \times \Delta)\textrm{-equivariant}$ homomorphism of $\AR^+\textrm{-modules}$ is bijective:
	\begin{equation}\label{eq:fR+}
		\begin{aligned}
			f_{\calR}^+ \colon \AR^+ \otimes_{A_R^+} N_R &\longrightarrow \NR\\
				a \otimes n &\longmapsto an.
		\end{aligned}
	\end{equation}
	Let $Q$ denote the cokernel of $f_{\calR}^+$, and note that it is a finitely generated $\AR^+\textrm{-module}$.
	From Section \ref{subsec:constructing_wach} recall that we have a natural $(\varphi, \GammaR \times \Delta)\textrm{-equivariant}$ and $\AR^+/p\AR^+\textrm{-linear}$ inclusion $\NR^+/p\NR^+ \hookrightarrow \NS^+/p\NS^+$.
	In particular, we observe that $[x_i^{\flat}]$ is regular on $\NR/p\NR$.
	Then, from Proposition \ref{prop:descent_Delta} it follows that if the cokernel of $f_{\calR}^+$, i.e.\ $Q$ is nonzero, then it must be $p\textrm{-torsion}$ free.
	So, to show that $Q = 0$ it is enough to show that $Q$ is killed by some power of $p$.
	
	To that end, using that $\GammaR \isomorphic \GammaS$, let us consider the base change of the map $f_{\calR}^+$ along the $(\varphi, \GammaS \times \Delta)\textrm{-equivariant}$ flat homomorphism of rings $\AR^+ \rightarrow \AS$, and note that it yields the following $(\varphi, \GammaS \times \Delta)\textrm{-equivariant}$ injective homomorphism of $\AS^+\textrm{-modules}$:
	\begin{equation}\label{eq:fS+}
		f_{\calS}^+ \colon \AS^+ \otimes_{A_R^+} N_R \longrightarrow \AS^+ \otimes_{\AR^+} \NR \isomorphic \NS \coloneq \NS(T),
	\end{equation}
	where we used Corollary \ref{cor:wachmod_base_change} for the isomorphism.
	Then, from Lemma \ref{lem:NS_descent} below, we have that $f_{\calS}^+$ is bijective.

	Now, let us consider the following diagram:
	\begin{equation}\label{eq:fR+_basechange}
		\begin{tikzcd}[column sep=5mm]
			& \OARpi^{\logPD} \otimes_{\AR^+} \NR[1/p] & \OASpi^{\PD} \otimes_{\AR^+} \NR[1/p] & \AS^+ \otimes_{\AR^+} \NR[1/p] \\
			\OARpi^{^\logPD} \otimes_R \ODlogcrys(V) \\
			& \OARpi^{\logPD} \otimes_{A_R^+} N_R[1/p] & \OASpi^{\PD} \otimes_{A_R^+} N_R[1/p] & \AS^+ \otimes_{A_R^+} N_R[1/p],
			\arrow[hook, from=1-2, to=1-3]
			\arrow[hook', from=1-4, to=1-3]
			\arrow["\eqref{eq:OA_comp_iso}"', "\sim", from=2-1, to=1-2]
			\arrow["\eqref{eq:OA_comp_iso_Delta}", "\sim"', from=2-1, to=3-2]
			\arrow["\eqref{eq:fR+}"', "\wr", from=3-2, to=1-2]
			\arrow[hook, from=3-2, to=3-3]
			\arrow["\wr", from=3-3, to=1-3]
			\arrow["\eqref{eq:fS+}"', "\wr", from=3-4, to=1-4]
			\arrow[hook', from=3-4, to=3-3]
		\end{tikzcd}
	\end{equation}
	where the top (resp.\ bottom) left horizontal arrow is obtained by tensoring the natural $(\varphi, \GammaR \times \Delta)\textrm{-equivariant}$ inclusion $\OARpi^{\logPD} \hookrightarrow \OASpi^{\PD}$ (see before Lemma \ref{lem:OASpiPD_OAlogcrys_intersect}) with the finite projective $\BR^+\textrm{-module}$ $\NR[1/p]$, see Proposition \ref{prop:wachmod_proj_pmu} (resp.\ $B_R^+\textrm{-module}$ $N_R[1/p]$, see Lemma \ref{lem:ratWachmod_proj_Delta}), and the top (resp.\ bottom) right horizontal arrow is obtained by tensoring the natural $(\varphi, \GammaR \times \Delta)\textrm{-equivariant}$ inclusion $\AS^+ \hookrightarrow \OASpi^{\PD}$ with the finite projective $\BR^+\textrm{-module}$ $\NR[1/p]$ (resp.\ $B_R^+\textrm{-module}$ $N_R[1/p]$).
	In diagram \eqref{eq:fR+_basechange}, the top (resp.\ bottom) slanted arrow is the isomorphism \eqref{eq:OA_comp_iso} (resp.\ \eqref{eq:OA_comp_iso_Delta} extended along the $(\varphi, \GammaR \times \Delta)\textrm{-equivariant}$ homomorphism $\pazocal{O}A_{R,\varpi}^{\logPD} \hookrightarrow \OARpi^{\logPD}$) from Proposition \ref{prop:OA_comp_iso} (resp.\ Corollary \ref{cor:OA_comp_iso_Delta}).
	The left vertical arrow is the scalar extension of $f_{\calR}^+$ in \eqref{eq:fR+} along the $(\varphi, \GammaR \times \Delta)\textrm{-equivariant}$ homomorphism of rings $\AR^+ \rightarrow \OARpi^{\logPD}$, and the left triangle commutes by definition which implies that the left vertical arrow is bijective.
	The middle vertical arrow is the base change of the left vertical arrow along the $(\varphi, \GammaR \times \Delta)\textrm{-equivariant}$ homomorphism of rings $\OARpi^{\logPD} \rightarrow \OASpi^{\PD}$, and the right square commutes by definition.

	In diagram \eqref{eq:fR+_basechange}, taking the intersection of the left vertical isomorphism with the right vertical isomorphism inside the middle vertical isomorphism, we obtain the following isomorphism of $\BR^+\textrm{-modules}$:
	\begin{equation*}
		f_{\calR}^+[1/p] \colon \AR^+ \otimes_{A_R^+} N_R[1/p] \isomorphic \NR[1/p],
	\end{equation*}
	where we have used that inside $\OASpi^{\PD}$ we have a $(\varphi, \GammaR \times \Delta)\textrm{-equivariant}$ identification of rings $\OARpi^{\logPD} \cap \AS^+ \isomorphic \AR^+$ (see Lemma \ref{lem:OARpilogPD_AS+_intersect}).
	In particular, it follows that the cokernel $Q$ of the map $f_{\calR}^+$ from \eqref{eq:fR+} is killed by a power of $p$.
	Hence, we conclude that $Q$ must be zero, i.e.\ \eqref{eq:fR+} is bijective.

	Finally, let us show that $N_R$ admits a Frobenius structure.
	Recall that the Frobenius structure on $\NR$ is given as a $(\GammaR \times \Delta)\textrm{-equivariant}$ and $\AR^+\textrm{-linear}$ isomorphism $\varphi_N \colon (\varphi^*\NR)[1/[p]_q] \isomorphic \NR[1/[p]_q]]$.
	Using the isomorphism in \eqref{eq:fR+}, noting that the natural homomorphism $A_R^+ \rightarrow \AR^+$ is Frobenius-equivariant, and using the Frobenius structure on $\NR$, we obtain the following $(\GammaR \times \Delta)\textrm{-equivariant}$ and $\AR^+\textrm{-linear}$ isomorphisms:
	\begin{equation*}
		(\AR^+ \otimes_{A_R^+} \varphi^* N_R)[1/[p]_q] \isomorphic (\varphi^*\NR)[1/[p]_q] \xrightarrow[\varphi_N]{\sim} \NR[1/[p]_q] \isomorphic \AR^+ \otimes_{A_R^+} N_R[1/[p]_q],
	\end{equation*}
	where $\varphi^* N_R = A_R^+ \otimes_{\varphi, A_R^+} N_R$.
	Taking the $\Delta\textrm{-invariants}$ in the isomorphism above and using Lemma \ref{lem:tensorprod_fixedpoints}, we obtain a $\GammaR\textrm{-equivariant}$ and $A_R^+\textrm{-linear}$ isomorphism:
	\begin{equation*}
		\varphi^* N_R[1/[p]_q] \isomorphic N_R[1/[p]_q].
	\end{equation*}
	Hence, we conclude that $N_R$ is a Wach module over $A_R^+$.
\end{proof}

The following observation was used above:
\begin{lem}\label{lem:NS_descent}
	Let $N_S \coloneq N_S(T)$ be the Wach module over $A_S^+$ associated to $T$.
	Then, we have a natural $(\varphi, \Gamma_R)\textrm{-equivariant}$ isomorphism of $A_S^+\textrm{-modules}$ $A_S^+ \otimes_{A_R^+} N_R \isomorphic N_S$.
	Consequently, the map $f_{\calS}^+$ from \eqref{eq:fS+} is bijective.
\end{lem}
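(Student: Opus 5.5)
The plan is to reduce to Galois descent for $\Delta$ along the finite étale extension $A_S^+ \to \AS^+$, and then use that $N_S$ is itself the $\Delta$-invariants of $\NS$. First, $\NS \coloneq \AS^+ \otimes_{A_S^+} N_S(T)$ by construction. Since $\AS^+$ is finite free over $A_S^+$ with basis $\{\zeta_m^{i_0}[x_1^{\flat}]^{i_1/m}\cdots[x_d^{\flat}]^{i_d/m}\}$ (the $\calS$-analogue of the basis before Lemma \ref{lem:OARpiPD_descent}), and $\Delta$ acts trivially on $A_S^+$ and on $N_S(T)$, we get $(\AS^+)^{\Delta} = A_S^+$. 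Because $N_S(T)$ is finite projective over $A_S^+$ (a Wach module over $A_S^+$ in the sense of \cite{abhinandan-relative-wach-ii}; at least its torsion-freeness and flatness after inverting $p$ suffice), Lemma \ref{lem:tensorprod_fixedpoints} gives $\NS^{\Delta} = N_S(T)$.

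Next, by Corollary \ref{cor:wachmod_base_change}, $\NS \simeq \AS^+ \otimes_{\AR^+} \NR$, and the map $\AR^+ \to \AS^+$ is $\Delta$-equivariant and flat, being the $(p,\mu)$-completed localisation at $[x_{a+1}^{\flat}]\cdots[x_d^{\flat}]$. Write $x \coloneq [x_{a+1}^{\flat}]\cdots[x_d^{\flat}]$, which is $\Delta$-invariant up to the unit $\zeta_m$-power twists; more precisely, use $x$ itself, which lies in $A_R^+$. Then $\AS^+ = A_S^+ \otimes_{A_R^+} \AR^+$ (completed), and $A_S^+ = A_R^+[1/x]^{\wedge}$. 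Taking $\Delta$-invariants commutes with localisation at a $\Delta$-invariant element and with $(p,\mu)$-adic completion of finitely generated modules, using Lemma \ref{lem:descent_Delta} modulo $(p,\mu)^n$ and passing to the limit. Hence
\[
N_S = \NS^{\Delta} = \bigl(\NR[1/x]^{\wedge}\bigr)^{\Delta} = \bigl(\NR^{\Delta}[1/x]\bigr)^{\wedge} = A_S^+ \otimes_{A_R^+} N_R,
\]
where the last equality uses that $N_R$ is finitely generated over the noetherian ring $A_R^+$ and that $A_S^+$ is flat over $A_R^+$. I would check that the resulting map is the natural multiplication map, so that it is $(\varphi,\Gamma_R)$-equivariant.

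For the consequence, tensor the isomorphism $A_S^+ \otimes_{A_R^+} N_R \simeq N_S$ with $\AS^+$ over $A_S^+$. This identifies $\AS^+ \otimes_{A_R^+} N_R$ with $\NS$, and unwinding the definitions shows that this identification is exactly $f_{\calS}^+$.

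The main obstacle is the interchange of $\Delta$-invariants with the completed localisation. Since $|\Delta|$ is prime to $p$, the averaging idempotent $e = |\Delta|^{-1}\sum_{\delta}\delta$ exists over $\AR^+$, so taking invariants is an exact direct-summand functor. It therefore commutes with $\otimes_{A_R^+} A_S^+$, because $e$ is $A_R^+$-linear; this is what makes the interchange work. I would justify this step via Lemma \ref{lem:Gfixed_trace} rather than by fiddling with limits.
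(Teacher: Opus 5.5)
Your route is correct in outline. It is essentially the paper's argument, but carried out integrally rather than modulo $p$.

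\textbf{Comparison with the paper.} The paper reduces to $j \bmod p$ and runs the chain $A_S^+/p \otimes_{A_R^+/p} (\NR/p)^{\Delta} = (A_S^+/p \otimes_{A_R^+/p} \NR/p)^{\Delta} \cong (\NS/p)^{\Delta} = N_S/pN_S$. For this it first has to identify $(\NR/p)^{\Delta}$ with $N_R/pN_R$ via Lemma~\ref{lem:descent_Delta}. Your chain $N_S = \NS^{\Delta} = (A_S^+ \otimes_{A_R^+} \NR)^{\Delta} = A_S^+ \otimes_{A_R^+} N_R$ needs neither the reduction mod $p$ nor any vanishing of $H^1$.

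\textbf{The gap.} Your justification of the interchange of $\Delta$-invariants with base change is not valid as written.
\begin{itemize}
\item You assert that $|\Delta|$ is prime to $p$. But $\Delta \cong (\mathbb{Z}/m\mathbb{Z})^d \rtimes \Gal(F(\zeta_m)/F)$, and the second factor can have order divisible by $p$. For $F = \mathbb{Q}_3$ and $m = 13$ one has $3^3 \equiv 1 \pmod{13}$, so $[F(\zeta_{13}):F] = 3$.
\item In that case $|\Delta|^{-1}\sum_{\delta}\delta$ does not exist on $\mathbb{Z}_p$-modules. Lemmas~\ref{lem:Gfixed_trace} and~\ref{lem:tensorprod_fixedpoints}, which assume order prime to $p$, therefore cannot be applied to $\Delta$.
\item The same objection hits your use of Lemma~\ref{lem:tensorprod_fixedpoints} to get $\NS^{\Delta} = N_S(T)$.
\item Your fallback, projectivity of $N_S(T)$ over $A_S^+$, is not available. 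What is established is projectivity of $N_S(T)[1/p]$ and $N_S(T)[1/\mu]$, and that does not give the integral statement.
\end{itemize}

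\textbf{How to repair both steps.}
\begin{itemize}
\item For the first step, note that $M^{\Delta}$ is the kernel of the $A_R^+$-linear map $M \to \bigoplus_{\delta \in \Delta} M$, $v \mapsto (\delta v - v)_{\delta}$. This is a finite limit, so it commutes with the flat base change $A_R^+ \to A_S^+$, whose flatness you already noted. Apply it to $M = \NR$, using $A_S^+ \otimes_{A_R^+} \NR \cong \AS^+ \otimes_{\AR^+} \NR$ (valid since $\AR^+$ is finite over $A_R^+$). This gives $(A_S^+ \otimes_{A_R^+} \NR)^{\Delta} = A_S^+ \otimes_{A_R^+} N_R$ directly, with no separate treatment of localisation and completion.
\item For the second step, use that $m$ and all $x_i$ are units in $S$. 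Hence $A_S^+ \to \AS^+ = A_S^+ \otimes_S \calS$ is finite \'etale and Galois with group $\Delta$. Faithfully flat descent then gives $(\AS^+ \otimes_{A_S^+} M)^{\Delta} = M$ for every $A_S^+$-module $M$, in particular $\NS^{\Delta} = N_S(T)$.
\end{itemize}
With these two replacements your argument goes through, including the deduction that $f_{\calS}^+$ is bijective.
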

\begin{proof}
	Using that $\NS = \AS^+ \otimes_{A_S^+} N_S$, it is clear that the second claim easily follows from the first claim.
	For the first claim, note that we have a natural $(\varphi, \Gamma_R)\textrm{-equivariant}$ identification of $\AR^+\textrm{-modules}$ $N_R = \NR^{\Delta} = \NR \cap \NS^{\Delta} = \NR \cap N_S$, where the intersection takes place inside $\NS$.
	Therefore, the $A_S^+\textrm{-linearisation}$ of the $(\varphi, \Gamma_R)\textrm{-equivariant}$ and $A_R^+\textrm{-linear}$ injective map $N_R \hookrightarrow N_S$, yields a natural $(\varphi, \Gamma_R)\textrm{-equivariant}$ map 
	\begin{equation*}
		j \colon A_S^+ \otimes_{A_R^+} N_R \longrightarrow N_S,
	\end{equation*}
	compatible with the natural $(\varphi, \Gamma_R)\textrm{-equivariant}$ inclusion of the source and the target into $\NS$.
	As both the source and the target of the preceding map are $p\textrm{-adically}$ complete and $p\textrm{-torsion}$ free, it is enough to show that the modulo $p$ map, i.e.\
	\begin{equation*}
		j \textrm{ mod } p \colon A_S^+/pA_S^+ \otimes_{A_R^+/pA_R^+} N_R/pN_R \longrightarrow N_S/pN_S,
	\end{equation*}
	is bijective.

	From the disucssion above (see before the definition of the map $f$), recall that we have a natural $(\varphi, \Gamma_R)\textrm{-equivariant}$ identification $N_R/pN_R = (\NR/p\NR)^{\Delta}$.
	Moreover, we have a $(\varphi, \Gamma_R \times \Delta)\textrm{-equivariant}$ injective homomorphism
	\begin{equation*}
		\NR/p\NR \hookrightarrow \AS^+/p\AS^+ \otimes_{\AR^+/p\AR^+} \NR/p\NR \isomorphic \NS/p\NS,
	\end{equation*}
	where the isomorphism is reduction modulo $p$ of the natural isomorphism $\AS^+ \otimes_{\AR^+} \NR \isomorphic \NS$ from Corollary \ref{cor:wachmod_base_change}.
	Therefore, it follows that inside $\NS/p\NS$, we have natural $(\varphi, \Gamma_R)\textrm{-equivariant}$ identifications
	\begin{equation*}
		N_R/pN_R = (\NR/p\NR)^{\Delta} \isomorphic \NR/p\NR \cap (\NS/p\NS)^{\Delta} = \NR/p\NR \cap N_S/pN_S,
	\end{equation*}
	where we have used that $(\NS/p\NS)^{\Delta} = (\AS^+/p\AS^+ \otimes_{A_S^+/pA_S^+} N_S/pN_S)^{\Delta} = N_S/pN_S$.
	In particular, the map $j \textrm{ mod }p$ is injective.

	Now, consider the following $(\varphi, \Gamma_R \times \Delta)\textrm{-equivariant}$ commutative diagram
	\begin{equation*}
		\begin{tikzcd}[column sep=large]
			A_S^+/pA_S^+ \otimes_{A_R^+/pA_R^+} N_R/pN_R & N_S/pN_S \\
			A_S^+/pA_S^+ \otimes_{A_R^+/pA_R^+} \NR/p\NR & \NS/p\NS,
			\arrow[hookrightarrow, "j \textrm{ mod } p", from=1-1, to=1-2]
			\arrow[hookrightarrow, from=1-1, to=2-1]
			\arrow[hookrightarrow, from=1-2, to=2-2]
			\arrow["\sim", from=2-1, to=2-2]
		\end{tikzcd}
	\end{equation*}
	where the right vertical arrow is the natural inclusion, the left vertical arrow is the extension along the flat homomorphism $\AR^+ \rightarrow \AS^+$ of the natural inclusion $N_R/pN_R \hookrightarrow \NR/p\NR$, and the bottom horizontal arrow is reduction modulo $p$ of the natural isomorphism $A_S^+ \otimes_{A_R^+} \NR = \AS^+ \otimes_{\AR^+} \NR \isomorphic \NS$ from Corollary \ref{cor:wachmod_base_change}.
	From the diagram above, it follows that inside $\NS/p\NS$, we have natural $(\varphi, \Gamma_R)\textrm{-equivariant}$ identifications
	\begin{equation*}
		\begin{aligned}
			A_S^+/pA_S^+ \otimes_{A_R^+/pA_R^+} N_R/pN_R &\xrightarrow{\hspace{0.5mm}=\hspace{0.5mm}} A_S^+/pA_S^+ \otimes_{A_R^+/pA_R^+} (\NR/p\NR)^{\Delta}\\
				&\xrightarrow{\hspace{0.5mm}=\hspace{0.5mm}} (A_S^+/pA_S^+ \otimes_{A_R^+/pA_R^+} \NR/p\NR)^{\Delta}\\
				&\isomorphic (\NS/p\NS)^{\Delta} = N_S/pN_S,
		\end{aligned}
	\end{equation*}
	and the preceding composition coincides with $j \textrm{ mod } p$.
	Hence, the map $j$ is bijective.
\end{proof}

\subsection{A categorical equivalence}

The main goal of this section is to show the following:
\begin{thm}\label{thm:logcrys_wach_equiv}
	There exists a natural equivalence of categories:
	\begin{equation}\label{eq:logcrys_wach_equiv}
		\begin{aligned}
			\textup{Rep}_{\mathbb{Z}_p}^{\textup{log-cris}}(G_R) &\isomorphic (\varphi, \Gamma_R)\textup{-Mod}_{A_R^+}^{[p]_q}\\
				T &\longmapsto N_R(T),
		\end{aligned}
	\end{equation}
	with a quasi-inverse functor given as $N \mapsto T_R(N) \coloneq (\tilde{A}(\overline{R}) \otimes_{A_R^+} N)^{\varphi=1}$.
\end{thm}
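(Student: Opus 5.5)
We obtain the equivalence by assembling three ingredients: the construction of $N_R(T)$ in Theorem \ref{thm:Delta_descent_wach_mods}, a converse statement saying that Wach modules over $A_R^+$ give log-crystalline representations, and full faithfulness inherited from the étale side.

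\textbf{Step 1: the functor is well defined and fully faithful.} For $T$ with $T[1/p]$ log-crystalline, Theorem \ref{thm:Delta_descent_wach_mods} gives a Wach module $N_R(T)=\NR(T)^{\Delta}$. The proof there shows $\AR^+\otimes_{A_R^+}N_R(T)\isomorphic\NR(T)$ and $\AR\otimes_{\AR^+}\NR(T)\isomorphic\DR(T)$. Hence
\[
(\tilde{A}(\overline{R})\otimes_{A_R^+}N_R(T))^{\varphi=1}\isomorphic(\tilde{A}(\overline{R})\otimes_{\AR}\DR(T))^{\varphi=1}\isomorphic T,
\]
by Remark \ref{rem:TRD_GRrep} and Theorem \ref{thm:padic_classify_decomp}, which gives $T_R\circ N_R\simeq\mathrm{id}$.

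Functoriality and full faithfulness go as follows. A morphism of Wach modules over $A_R^+$ base changes to a morphism of Wach modules with $\Delta$-action over $\AR^+$ (same $m$ for both, taking a common multiple). Proposition \ref{prop:wach_etale_ff_relative} together with Theorem \ref{thm:padic_classify_decomp} shows that $\mathrm{Hom}(N,N')\to\mathrm{Hom}_{G_R}(T_R(N),T_R(N'))$ is injective. Surjectivity follows by taking $\Delta$-invariants: a $G_R$-map induces a $\Delta$-equivariant map $\NR\to\NR'$, which restricts to $N\to N'$. In the other direction, a $G_R$-map $T\to T'$ yields a map $\DR(T)\to\DR(T')$, then a map $\NR(T)=\NS\cap\DR\to\NR(T')$ by functoriality of $N_S$, and then a map on $\Delta$-invariants.

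\textbf{Step 2: essential surjectivity (Proposition \ref{prop:wach_is_logcrys}).} Given a Wach module $N$ over $A_R^+$, we take $m=1$ (no $\Delta$), so $T=T_R(N)$ is a $G_R$-representation by \eqref{eq:wach_reps_relative}. We must show $V=T[1/p]$ is log-crystalline, following \cite[Section 3.6]{abhinandan-relative-wach-ii}.

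First, after a Tate twist we may assume $N$ is effective. Set $M\coloneq(\pazocal{O}A_{R,\varpi}^{\logPD}\otimes_{A_R^+}N[1/p])^{\Gamma_R}$. Using the triviality of $\Gamma_R$ on $N/\mu N$, we construct, by the usual successive-approximation argument with the operators $\log\gamma_i$ (which converge on $\pazocal{O}A^{\logPD}$), a $\Gamma_R$-equivariant isomorphism
\[
\pazocal{O}A_{R,\varpi}^{\logPD}\otimes_R M\isomorphic\pazocal{O}A_{R,\varpi}^{\logPD}\otimes_{A_R^+}N[1/p],
\]
with $M\isomorphic N/\mu N[1/p]$ finite projective over $R[1/p]$.

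Second, we extend scalars to $\OBlogcrys(\overline{R})$ and combine with Proposition \ref{prop:wachmod_comp_relative}, which gives $A_{\inf}(\overline{R})[1/\mu]\otimes N\isomorphic A_{\inf}(\overline{R})[1/\mu]\otimes T$. Since $\mu$ is invertible in $\OBlogcrys(\overline{R})$ (as $t$ is), this yields
\[
\OBlogcrys(\overline{R})\otimes_{R[1/p]}M\isomorphic\OBlogcrys(\overline{R})\otimes_{\mathbb{Q}_p}V,
\]
and taking $G_R$-invariants identifies $M$ with $\ODlogcrys(V)$. This shows $\alpha_{\textup{log-cris},R}(V)$ is an isomorphism. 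Finally, $N_R(T)\isomorphic N$ follows from full faithfulness (both have the same $\DR$) together with the uniqueness $\NR=\NS\cap\DR$.

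\textbf{Main obstacle.} The hardest step is the construction of $M$ and the period isomorphism over $\pazocal{O}A_{R,\varpi}^{\logPD}$ in the logarithmic setting. Convergence of the $\Gamma_R$-invariant projection requires that $\gamma_i-1$ act on the $y_i$ compatibly with log-divided powers. I would first attempt this over $\calS$, where it is \cite[Proposition 3.36]{abhinandan-relative-wach-ii}. I would then descend the result to $\calR$ by intersecting inside $\OASpi^{\PD}$, using Lemma \ref{lem:OASpiPD_OAlogcrys_intersect} and the fact that $N\subset N_S$, mirroring the proof of Proposition \ref{prop:OA_comp_iso}.
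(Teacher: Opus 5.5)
Your overall architecture matches the paper's. You get well-definedness from Theorem \ref{thm:Delta_descent_wach_mods}. For Proposition \ref{prop:wach_is_logcrys} you twist to the effective case, form the module $\pazocal{O}D_R=(\OARpi^{\logPD}\otimes_{A_R^+}N[1/p])^{\Gamma_R}$, extend scalars to $\OBlogcrys(\overline{R})$ combined with \eqref{eq:wachmod_comp_relative_ainf}, and take $G_R$-invariants as in \eqref{eq:logcrys_admis_wach}. Your treatment of full faithfulness and of the two composites is fine.

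The gap is in how you propose to prove the one step with real content, namely the comparison $\OARpi^{\logPD}\otimes_R\pazocal{O}D_R\isomorphic\OARpi^{\logPD}\otimes_{A_R^+}N[1/p]$. Your plan is to prove it over $\calS$ by \cite[Proposition 3.36]{abhinandan-relative-wach-ii} and then descend by intersecting inside $\OASpi^{\PD}$, ``mirroring Proposition \ref{prop:OA_comp_iso}''. This is circular.
\begin{itemize}
\item In Proposition \ref{prop:OA_comp_iso} the intersection works only because the top row of \eqref{eq:Dlogcrys_NR_comps}, i.e.\ \eqref{eq:Dlogcrys_NR_comp} over $\OBlogcrys(R_{\infty,\infty})$, is supplied by the hypothesis that $V$ is log-crystalline. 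Lemma \ref{lem:OASpiPD_OAlogcrys_intersect} then turns ``defined over $\OASpi^{\PD}$ and over $\OAlogcrys(\calR_{\infty})$'' into ``defined over $\OARpi^{\logPD}$''.
\item Here log-crystallinity is the conclusion, so you have nothing on the $\OAlogcrys$ side.
\item Since $\OARpi^{\logPD}\otimes_{A_R^+}N[1/p]\subset\OASpi^{\PD}\otimes_{A_R^+}N[1/p]$, intersecting with the $S$-comparison just returns the former.
\item That intersection says nothing about whether the $\GammaS$-invariant lifts of elements of $N/\mu N$ in $\OASpi^{\PD}\otimes_{A_R^+}N[1/p]$ actually lie in $\OARpi^{\logPD}\otimes_{A_R^+}N[1/p]$. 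In other words, it does not tell you whether $\pazocal{O}D_R$ spans.
\end{itemize}

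The paper instead redoes \cite[Lemmas 3.38 and 3.39]{abhinandan-relative-wach-ii} intrinsically over $R$.
\begin{itemize}
\item Let $A_k^{\PD}$ be the $p$-adic completion of $A_R^+[\mu^i/(i!p^{ik})]_{i\geqslant 1}$, with the log-structure of $[x_{a+1}^{\flat}]\cdots[x_d^{\flat}]$. Let $\pazocal{O}A_k^{\logPD}$ be its $p$-completed log-PD envelope over $R$.
\item One checks that the $(\varphi,\Gamma_R)$-action and the integrable log-connection extend to $\pazocal{O}A_k^{\logPD}$.
\item One checks that the analogue of the explicit formula \cite[Equation (3.35)]{abhinandan-relative-wach-ii} converges in $D_k=(\pazocal{O}A_k^{\logPD}\otimes_{A_R^+}N[1/p])^{\Gamma_R}$. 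This gives the comparison over $\pazocal{O}A_k^{\logPD}$.
\item The Frobenius structure is then used, as in \cite[Proposition 3.36]{abhinandan-relative-wach-ii}, to bring the comparison back to $\OARpi^{\logPD}$.
\end{itemize}
Your sentence about successive approximation with the $\log\gamma_i$ points in this direction, but it leaves exactly this convergence question open, and it omits both the passage through the larger ring and the Frobenius step. That question has to be settled by an intrinsic computation in the log-PD rings over $R$; it cannot be replaced by descent from $\calS$.
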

\begin{proof}
	The functor in \eqref{eq:logcrys_wach_equiv} is well defined by Theorem \ref{thm:Delta_descent_wach_mods}.
	For the quasi-inverse functor, from Proposition \ref{prop:wach_is_logcrys} below it follows that for a Wach module $N$ over $A_R^+$, the associated $p\textrm{-adic}$ representation $T_R(N)[1/p]$ of $G_R$ is log-crystalline.
	Then, using the isomorphism \eqref{eq:wachmod_comp_relative_ainf} in Proposition \ref{prop:wachmod_comp_relative}, it is easy to see that we have $T_R \circ N_R \isomorphic id$ and $N_R \circ T_R \isomorphic id$.
\end{proof}

\begin{prop}\label{prop:wach_is_logcrys}
	Let $N$ be a Wach module over $A_R^+$, and set $T_R(N) \coloneq (\tilde{A}(\overline{R}) \otimes_{A_R^+} N)^{\varphi=1}$ as a finite free $\mathbb{Z}_p\textrm{-representation}$ of $G_R$.
	Then, $T_R(N)[1/p]$ is log-crystalline in the sense of Definition \ref{defi:logcrys_rep}.
\end{prop}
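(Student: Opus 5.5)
The plan is to imitate \cite[Section 3.6]{abhinandan-relative-wach-ii}: build $\ODlogcrys(V)$ directly out of $N$ by means of the logarithmic PD rings of Section \ref{subsec:logPD_rings}, and then show that the comparison map becomes an isomorphism after base change to $\OBlogcrys(\overline{R})$. Put $T \coloneq T_R(N)$ and $V \coloneq T[1/p]$. Since $V$ is log-crystalline if and only if $V(-r)$ is, and $\mu^r N(-r)$ is effective for $r \gg 0$, we may assume throughout that $N$ is effective.

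First we base change. Take $m=1$, i.e.\ $\calR = R$ and $\Delta$ trivial; Proposition \ref{prop:wachmod_comp_relative} still applies. It gives the $G_R$-equivariant isomorphism $A_{\inf}(\overline{R})[1/\mu] \otimes_{A_R^+} N \isomorphic A_{\inf}(\overline{R})[1/\mu] \otimes_{\mathbb{Z}_p} T$, compatible with $\varphi$ after passing to $\tilde{A}(\overline{R})$. Since $\mu$ is invertible in $\Blogcrys(\overline{R})$ (indeed $\mu$ divides $t$ in $\Acrys$ up to a unit times $p$-power issues, exactly as in the classical case), extending scalars along $A_{\inf}(\overline{R})[1/\mu] \to \OBlogcrys(\overline{R})$ yields
\[
\OBlogcrys(\overline{R}) \otimes_{B_R^+} N[1/p] \isomorphic \OBlogcrys(\overline{R}) \otimes_{\mathbb{Q}_p} V,
\]
which is $(\varphi,G_R)$-equivariant. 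It therefore suffices to produce a finite projective $R[1/p]$-module $D$, together with an isomorphism $\OBlogcrys(\overline{R}) \otimes_{R[1/p]} D \isomorphic \OBlogcrys(\overline{R}) \otimes_{B_R^+} N[1/p]$ compatible with $G_R$, such that $D$ lands in the $G_R$-invariants. By Lemma \ref{lem:alpha_injective} and the regularity of Proposition \ref{prop:GR_regularity}, this forces $D = \ODlogcrys(V)$ and makes $\alpha_{\textrm{log-cris},R}(V)$ surjective, hence an isomorphism.

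The candidate is $D \coloneq \big(\pazocal{O}A_{R,\varpi}^{\logPD} \otimes_{A_R^+} N[1/p]\big)^{\Gamma_R}$, and the key step is the isomorphism
\[
\pazocal{O}A_{R,\varpi}^{\logPD} \otimes_R D \isomorphic \pazocal{O}A_{R,\varpi}^{\logPD} \otimes_{A_R^+} N[1/p].
\]
I would follow \cite[Theorem 3.35]{abhinandan-relative-wach-ii}. Because $\Gamma_R$ acts trivially on $N/\mu N$ and $\log\gamma_i$ converges on $\pazocal{O}A_{R,\varpi}^{\logPD} \otimes N$, one solves for $\Gamma_R$-invariant lifts of elements of $N/\mu N[1/p]$. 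Concretely, take $x \in N$ lifting a basis element of the finite projective module $N/\mu N[1/p]$ and apply
\[
\textstyle\sum_{\mathbf{k}} \prod_i (\log\gamma_i)^{k_i}(x)\,\tfrac{(-\beta_i)^{[k_i]}}{t^{k_i}}\cdots,
\]
where the $\beta_i$ (equivalently the variables $y_i$ of \eqref{eq:OARlogPD_explicit}) serve as logarithmic coordinates with $\gamma_i(\beta_i) = \beta_i + t$. The divided powers of $y_i$ are exactly what make these series converge. This is the step that replaces $z_i = x_i - [x_i^{\flat}]$ by $y_i = 1 - [x_i^{\flat}]/x_i$, and so it is where the log-structure enters. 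One then checks that $D \isomorphic N/\mu N[1/p]$ (so $D$ is finite projective over $R[1/p]$), and that the linear map above is an isomorphism by reducing modulo $\Fil^1$ and using Nakayama together with completeness, as in Corollary \ref{cor:NR_modmu}. Finally, the isomorphism is extended along the $(\varphi,\Gamma_R)$-equivariant inclusion $\pazocal{O}A_{R,\varpi}^{\logPD} \hookrightarrow \OAlogcrys(R_\infty) \subset \OBlogcrys(\overline{R})$ from Lemma \ref{lem:OARlogpd_OAlogcrys}, after which the $G_R$-equivariance identifies $D$ with a submodule of the $G_R$-invariants, as required.

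The main obstacle is convergence in the key step: the operator series must converge $p$-adically and the resulting elements must genuinely be $\Gamma_R$-invariant and generate. I would first handle the derivative-type operators $(\gamma_i - 1)/\mu$ on the effective module $N$, using the $[p]_q$-height bound to control denominators exactly as in loc.\ cit. A fallback is to avoid the direct computation entirely: verify admissibility over $S$ and $L$ via Corollary \ref{cor:crys_base_change}-type reasoning, and then intersect inside $\OBcrys(S_{\infty,\infty})$ using Lemma \ref{lem:OASpiPD_OAlogcrys_intersect}.
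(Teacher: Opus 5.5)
Your global architecture is the paper's. You reduce to effective $N$, set $D = (\pazocal{O}A_{R,\varpi}^{\logPD}\otimes_{A_R^+}N[1/p])^{\Gamma_R}$, prove that $\pazocal{O}A_{R,\varpi}^{\logPD}\otimes_R D\to\pazocal{O}A_{R,\varpi}^{\logPD}\otimes_{A_R^+}N[1/p]$ is an isomorphism, and compare with $\alpha_{\textrm{log-cris},R}(V)$ over $\OBlogcrys(\overline{R})$ using \eqref{eq:wachmod_comp_relative_ainf}. Your way of concluding (the composite is an isomorphism, so $\alpha_{\textrm{log-cris},R}(V)$ is surjective, and Lemma \ref{lem:alpha_injective} gives injectivity) is a fine substitute for the paper's taking $G_R$-invariants of the square \eqref{eq:logcrys_admis_wach}.

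The gap is in the key isomorphism, which carries all the content, and the mechanism you describe for it does not work as stated. First, the divided powers of $y_i$ only make the individual terms of your series meaningful. Convergence in the $p$-adically completed PD-polynomial ring \eqref{eq:OARlogPD_explicit} requires $p$-adic decay of the coefficients (after re-expanding in the $y_i^{[n]}$). That is a quasi-nilpotence property of the operators $t^{-1}\log\gamma_i$ on $N[1/p]$, and nothing in the log-PD envelope supplies it. Second, your series only involves $\gamma_1,\ldots,\gamma_d$. Since $\gamma_0$ fixes each $\beta_i$, and $\gamma_0\circ t^{-1}\log\gamma_i\circ\gamma_0^{-1}=t^{-1}\log\gamma_i$ (because $\gamma_0\gamma_i\gamma_0^{-1}=\gamma_i^{\chi(\gamma_0)}$ and $\gamma_0(t)=\chi(\gamma_0)t$), the element $y(x)$ you build satisfies $\gamma_0(y(x))=y(\gamma_0(x))$. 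So it is not $\Gamma_R$-invariant for a general lift $x\in N$. The cyclotomic direction needs its own argument; for $d=0$ it is the whole theorem of Wach and Berger.

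This is where the paper uses an ingredient you are missing, following \cite[Lemma 3.39 and Proposition 3.36]{abhinandan-relative-wach-ii}. It first works over the larger ring $\pazocal{O}A_k^{\logPD}$: the log-PD envelope over the $p$-adic completion of $A_R^+[\mu^i/(i!p^{ik})]_{i\geqslant 1}$, with $k=1$ if $p\neq 2$ and $k=2$ if $p=2$. There it shows that the analogue of \cite[Equation (3.35)]{abhinandan-relative-wach-ii} converges in $D_k=(\pazocal{O}A_k^{\logPD}\otimes_{A_R^+}N[1/p])^{\Gamma_R}$, giving $\pazocal{O}A_k^{\logPD}\otimes_R D_k\isomorphic\pazocal{O}A_k^{\logPD}\otimes_{A_R^+}N[1/p]$. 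Only afterwards does it use the Frobenius structure on $D_k$ to obtain \eqref{eq:OARpilogPD_N_rat_iso}.

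Your fallback does not bypass this. In Proposition \ref{prop:OA_comp_iso}, Lemma \ref{lem:OASpiPD_OAlogcrys_intersect} intersects the $S$-level isomorphism with an isomorphism over $\OBlogcrys(R_{\infty,\infty})$. That second isomorphism comes from the log-crystallinity hypothesis, which is exactly what you are trying to prove here. Crystallinity over $G_S$ only gives the $\pazocal{O}A_{S,\varpi}^{\PD}$-level isomorphism. Intersecting it with $\pazocal{O}A_{R,\varpi}^{\logPD}\otimes_{A_R^+}N[1/p]$ produces a submodule of $\ODcrysS(V)$ with no reason to generate it. The information that $N$ lives over $A_R^+$ has to enter through the construction of invariant lifts at the $R$-level.
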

\begin{proof}
	The proof works by adapting the proof of crystalline toric setting from \cite[Theorem 3.35]{abhinandan-relative-wach-ii} to the log-crystalline non-toric setting.
	Let us briefly sketch the argument.

	Let us first note that for $r$ in $\mathbb{N}$ large enough, the Wach module $\mu^rN(-r)$ is always effective and we have that $T_R(\mu^rN(-r)) = T_R(N)(-r)$.
	Therefore, it is enough to show the claim for effective Wach modules, and we shall assume such in the rest of the proof.

	Let us set $\pazocal{O}D_R \coloneq (\OARpi^{\logPD} \otimes_{A_R^+} N[1/p])^{\Gamma_R}$ as an $R[1/p]\textrm{-module}$ equipped with an induced tensor-product Frobenius and a logarithmic connection induced from the logarithmic connection on $\OARpi^{\logPD}$.
	Then, we claim that the following natural homomomorphism
	\begin{equation}\label{eq:OARpilogPD_N_rat_iso}
		\OARpi^{\logPD} \otimes_R \pazocal{O}D_R \longrightarrow \OARpi^{\logPD} \otimes_{A_R^+} N[1/p],
	\end{equation}
	is an isomorphism compatible with respective Frobenii, logarithmic connections and actions of $\Gamma_R$.
	Indeed, to obtain the isomorphism above, one may proceed as in the proof of \cite[Proposition 3.36]{abhinandan-relative-wach-ii} by first showing that the analogue of \cite[Lemma 3.39]{abhinandan-relative-wach-ii} holds in our setting, where one replaces the ring $\pazocal{O}S_m^{\PD}$ of loc.\ cit.\ with the ring $\pazocal{O}A_k^{\logPD}$ (for $k = 1$ if $p \neq 2$, and $k = 2$ if $p = 2$) as follows:
	define $A_k^{\PD}$ to be the $p\textrm{-adic}$ completion of the ring $A_R^+[\tfrac{\mu}{p^k}, \tfrac{\mu^2}{2!p^{2k}}, \cdots, \tfrac{\mu^i}{i!p^{ik}}, \cdots]$ equipped with a Frobenius, a continuous action of $\Gamma_R$, and a log-structure induced from the log-structure on $A_R^+$, i.e.\ induced by the divisor $\{[x_{a+1}^{\flat}] \cdots [x_d^{\flat}] = 0\}$.
	Then, we set $\pazocal{O}A_k^{\logPD}$ to be the $p\textrm{-adic}$ completion of the logarithmic divided power envelope of the surjection $R \otimes_{O_F} A_k^{\logPD} \twoheadrightarrow A_k^{\logPD}$.

	Similar to \cite[Lemma 3.38]{abhinandan-relative-wach-ii}, after replacing the de Rham period ring in loc.\ cit.\ with the logarithmic de Rham period ring $\OBlogdR(\overline{R})$), we see that the $(\varphi, \Gamma_R)\textrm{-action}$ on $A_k^{\logPD}$ naturally extends to a $(\varphi, \Gamma_R)\textrm{-action}$ on $\pazocal{O}A_k^{\logPD}$ , and the latter is naturally equipped with an integrable logarithmic connection.
	Proceeding exactly as in the proof of \cite[Lemma 3.39]{abhinandan-relative-wach-ii} (after replacing the divided power rings of loc.\ cit.\ with their respective logarithmic divided power analogues), one may obtain the following natural $\Gamma_R\textrm{-equivariant}$ isomorphism
	\begin{equation*}
		\pazocal{O}A_k^{\logPD} \otimes_R D_k \isomorphic \pazocal{O}A_k^{\logPD} \otimes_{A_R^+} N[1/p],
	\end{equation*}
	where $D_k \coloneq (\pazocal{O}A_k^{\logPD} \otimes_{A_R^+} N[1/p])^{\Gamma_R}$ is equipped with an induced Frobenius structure and an integrable logarithmic connection.
	Note that the surjectivity of the preceding map is obtained by showing that, in our setting, the formula \cite[Equation (3.35)]{abhinandan-relative-wach-ii} converges in $D_k$.
	Using the preceding isomorphism and the Frobenius structure on $D_k$, and arguing as in the proof of \cite[Proposition 3.36]{abhinandan-relative-wach-ii}, one obtains that the homomorphism in \eqref{eq:OARpilogPD_N_rat_iso} is bijective.

	Now, let $V \coloneq T_R(N)[1/p]$ and consider the following diagram:
	\begin{equation}\label{eq:logcrys_admis_wach}
		\begin{tikzcd}
			\OBlogcrys(\overline{R}) \otimes_{R[1/p]} \pazocal{O}D_R & \OBlogcrys(\overline{R}) \otimes_{B_R^+} N[1/p] \\
			\OBlogcrys(\overline{R}) \otimes_{R[1/p]} \ODlogcrys(V) & \OBlogcrys(\overline{R}) \otimes_{\mathbb{Q}_p} V,
			\arrow["\sim", "\eqref{eq:OARpilogPD_N_rat_iso}"', from=1-1, to=1-2]
			\arrow[from=1-1, to=2-1]
			\arrow["\wr", "\eqref{eq:wachmod_comp_relative_ainf}"', from=1-2, to=2-2]
			\arrow["\eqref{eq:crys_admis}", from=2-1, to=2-2]
		\end{tikzcd}
	\end{equation}
	where the top horizontal arrow is the extension of scalars of \eqref{eq:OARpilogPD_N_rat_iso} along the natural $(\varphi, G_R)\textrm{-equivariant}$ injective homomorphism $\pazocal{O}A_k^{\logPD} \hookrightarrow \OBlogcrys(\overline{R})$; the left vertical arrow is induced by tensoring the preceding map with the finite projective $B_R^+\textrm{-module}$ $N[1/p]$ (see Proposition \ref{prop:wachmod_proj_pmu}), taking $G_R\textrm{-invariants}$ and extending scalars along the $(\varphi, G_R)\textrm{-equivariant}$ faithfully flat map $R[1/p] \rightarrow \OBlogcrys(\overline{R})$ (see Proposition \ref{prop:OBlogcrys_ff}); the right vertical arrow is the extension of scalars of $\eqref{eq:wachmod_comp_relative_ainf}$ along the $(\varphi, G_R)\textrm{-equivariant}$ homomorphism $A_{\inf}(\overline{R})[1/\mu] \rightarrow \OBlogcrys(\overline{R})$.
	An easy diagram chase shows that the diagram in \eqref{eq:logcrys_admis_wach} is commutative, and from the preceding discussion it is clear that \eqref{eq:logcrys_admis_wach} is compatible with the respective $(\varphi, G_R)\textrm{-actions}$ and logarithmic connections.
	Taking the invariants of the diagram under the $G_R\textrm{-action}$ yields a Frobenius-equivariat isomorphism of $R[1/p]\textrm{-modules}$
	\begin{equation*}
		\pazocal{O}D_R \isomorphic \ODlogcrys(V),
	\end{equation*}
	compatible with the respective logarithmic connections.
	In particular, it follows that the left vertical arrow in \eqref{eq:logcrys_admis_wach} is bijective, and hence the bottom arrow is bijective as well.
	Hence, we conclude that $V = T_R(N)[1/p]$ is a log-crystalline representation of $G_R$.
\end{proof}

\appendix

\section{Appendix}\label{sec:appendix}

\subsection{Some commutative algebra}\label{subsec:commutative_algebra}

\begin{lem}\label{lem:local_criterion_flatness}
	Let $A$ be a noetherian $a\textrm{-adically}$ complete ring for a nonzerodivisor $a$ in $A$.
	Let $M$ be an $a\textrm{-torsion}$ free and $a\textrm{-adically}$ complete $A\textrm{-module}$.
	Then, $M/aM$ is flat over $A/aA$ if and only if $M$ is flat over $A$.
\end{lem}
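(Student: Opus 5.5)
The forward direction is formal. If $M$ is flat over $A$, then $M/aM = A/aA \otimes_A M$ is flat over $A/aA$, since flatness is preserved under base change. So the content lies in the converse, and for that I would use the local criterion of flatness in its $a$-adically complete form, reducing everything to finitely generated modules.

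Assume $M/aM$ is flat over $A/aA$. Since $A$ is noetherian, it suffices to show $\mathrm{Tor}_1^A(A/I, M) = 0$ for every ideal $I \subset A$, or equivalently that $N \otimes_A M \to N' \otimes_A M$ is injective for each inclusion of finitely generated modules $N \subset N'$.

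\textbf{Step 1.} I will show by induction on $n$ that $M/a^nM$ is flat over $A/a^nA$. The key input is that $a$ is a nonzerodivisor on both $A$ and $M$, so multiplication by $a^{n}$ gives isomorphisms $A/aA \cong a^nA/a^{n+1}A$ and $M/aM \cong a^nM/a^{n+1}M$. Hence $\mathrm{Tor}_1^{A/a^{n+1}}(A/a^n, M/a^{n+1}M) = 0$. With this vanishing, the standard local flatness criterion for the nilpotent ideal $(a)$ in $A/a^{n+1}$ lifts flatness from $A/a^nA$ to $A/a^{n+1}A$. Concretely, one can run the five lemma on $0 \to a^nM/a^{n+1}M \to M/a^{n+1}M \to M/a^nM \to 0$ after tensoring with $N$ over $A/a^{n+1}$.

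\textbf{Step 2.} Let $N$ be a finitely generated $A$-module. I want $\mathrm{Tor}_1^A(N, M) = 0$. Take a resolution $F_\bullet \to N$ by finite free modules. Because $M$ is $a$-torsion free, $M \otimes_A^{\mathbb{L}} A/a^n = M/a^nM$. So $\mathrm{Tor}_i^A(N, M) \otimes^{\mathbb{L}}$-reductions relate to $\mathrm{Tor}_i^{A/a^n}(N/a^n\text{-derived}, M/a^nM)$, and these vanish for $i \geqslant 1$ by Step 1. It is cleaner to phrase this directly: $F_\bullet \otimes_A M$ is a complex of finite direct sums of $M$, hence of $a$-adically complete, $a$-torsion-free modules, and its reduction modulo $a^n$ computes $(N \otimes^{\mathbb{L}}_A A/a^n) \otimes^{\mathbb{L}}_{A/a^n} M/a^n$. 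By Step 1 this complex has homology only $N \otimes_A^{\mathbb{L}} A/a^n$ tensored with $M/a^n$. Its homology is $\mathrm{Tor}_i^A(N, A/a^n) \otimes M/a^n$, which for $i = 1$ is the $a^n$-torsion of $N$ tensored with $M$.

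\textbf{Step 3.} Pass to the limit over $n$. Here Artin--Rees, which applies because $A$ is noetherian, shows that the inverse systems $\{N[a^n]\}$ are pro-zero, and that the $\lim^1$ terms vanish by Mittag-Leffler. Since $F_\bullet \otimes_A M$ is derived $a$-complete termwise, $\lim_n$ of its reductions recovers it. This yields $H_1(F_\bullet \otimes_A M) = 0$, i.e.\ $\mathrm{Tor}_1^A(N, M) = 0$.

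I expect this limit argument to be the main obstacle. It needs care with Mittag-Leffler and with the pro-zero torsion systems. A shortcut I would try first is to cite the stacks-project criterion [Tag 0912] or Tag 06LE, which gives flatness of an $a$-adically complete, $a$-torsion-free module over a noetherian ring from flatness modulo $a$.
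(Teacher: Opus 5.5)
Your proposal is correct and follows the paper's route. The paper also uses $a$-torsion-freeness to get $\mathrm{Tor}_1^A(A/aA,M)=0$, invokes the local criterion of flatness (Tag 051C) to obtain flatness of every $M/a^nM$ over $A/a^nA$, and then cites Tag 0912 to conclude that the complete module $M$ is flat. Your Steps 2--3 (bounded $a$-power torsion, pro-zero $\mathrm{Tor}_1$-systems, and a Mittag-Leffler argument) simply unpack the proof of that cited lemma, so the shortcut you propose to try first is exactly what the paper does.
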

\begin{proof}
	Assume that $M/aM$ is flat over $A/aA$.
	Note that $\textup{Tor}_1^{A}(A/aA, M) = 0$ because $M$ is $a\textrm{-torsion}$ free.
	So, by the local criterion of flatness (see \cite[\href{https://stacks.math.columbia.edu/tag/051C}{Tag 051C}]{stacks-project}), it follows that $M/a^nM$ is flat over $A/a^nA$.
	Consequently, $M = \lim_n M/a^nM$ is $a\textrm{-completely}$ flat over $M$, hence flat by \cite[\href{https://stacks.math.columbia.edu/tag/0912}{Tag 0912}]{stacks-project}.
	The converse is obvious.
\end{proof}

Let $A$ be a $\mathbb{Z}_p\textrm{-algebra}$, and $G$ a finite group of order prime to $p$ acting trivially on $A$.
Let $M$ be an $A\textrm{-module}$ admitting an $A\textrm{-linear}$ action of $G$.
\begin{lem}\label{lem:invariants_modulo}
	We have that $H^1(G, M) = 0$.
	If $a$ in $A$ be a nonzerodivisor such that it is regular on $M$, then we have a natural identification $M^G/aM^G \isomorphic (M/aM)^G$ of modules over $A/aA$.
\end{lem}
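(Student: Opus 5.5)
The plan is to use the averaging operator attached to $G$. Since $|G|$ is prime to $p$ and $A$ is a $\mathbb{Z}_p$-algebra, the integer $|G|$ is a unit in $\mathbb{Z}_p$, hence in $A$. Define the $A$-linear projector
\begin{equation*}
	e \colon M \longrightarrow M, \qquad e(x) \coloneq \tfrac{1}{|G|}\textstyle\sum_{g \in G} g(x).
\end{equation*}
Since $G$ acts trivially on $A$ and $A$-linearly on $M$, one checks directly that $e$ is $A$-linear, that $e(M) \subset M^G$, and that $e$ restricts to the identity on $M^G$. In particular $M = M^G \oplus \ker e$ as $A$-modules, and the same construction applies to any $A[G]$-module, for instance $M/aM$.

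For the vanishing of $H^1(G,M)$ I would use the standard computation. Let $c \colon G \to M$ be a $1$-cocycle, so that $c(gh) = c(g) + g\,c(h)$. Set $m \coloneq \tfrac{1}{|G|}\sum_{h \in G} c(h)$. Summing the cocycle identity over $h$ gives
\begin{equation*}
	\textstyle\sum_h c(gh) = |G|\,c(g) + g\sum_h c(h),
\end{equation*}
and the left side equals $\sum_h c(h)$. Hence $c(g) = m - g(m)$, so $c$ is a coboundary. Here the only input is the invertibility of $|G|$. Alternatively, one can observe that multiplication by $|G|$ kills $H^1(G,M)$ and is also an automorphism of it.

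For the identification modulo $a$, consider the short exact sequence $0 \to M \xrightarrow{a} M \to M/aM \to 0$. It is $G$-equivariant because $a \in A$ is $G$-invariant, and it is exact on the left because $a$ is regular on $M$. Taking $G$-invariants yields the exact sequence
\begin{equation*}
	0 \to M^G \xrightarrow{a} M^G \to (M/aM)^G \to H^1(G,M),
\end{equation*}
and the last term vanishes by the first part. This gives $M^G/aM^G \isomorphic (M/aM)^G$, and the isomorphism is $A/aA$-linear by construction. As a sanity check, surjectivity can also be seen directly: if $\bar{x} \in (M/aM)^G$ lifts to $x$, then $e(x) \in M^G$ still lifts $\bar{x}$, because $e$ commutes with reduction modulo $a$.

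No step is a genuine obstacle. The only point needing care is that invertibility of $|G|$ in $A$ follows from $p \nmid |G|$, since $A$ is a $\mathbb{Z}_p$-algebra. Nothing beyond left-exactness of invariants and the cocycle computation is required.
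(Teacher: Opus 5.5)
Your proposal is correct and follows essentially the same route as the paper. Both deduce $H^1(G,M)=0$ from the invertibility of $|G|$: the paper cites Weibel, Proposition 6.1.10, while you give the averaging computation explicitly. Both then take $G$-invariants of the $G$-equivariant exact sequence $0 \to M \xrightarrow{a} M \to M/aM \to 0$ to obtain $M^G/aM^G \cong (M/aM)^G$.
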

\begin{proof}
	Note that $H^1(G, M) = 0$ because $|G|$ is invertible on $M$ (see \cite[Proposition 6.1.10]{weibel}).
	Now, consider the following $A\textrm{-linear}$ and $G\textrm{-equivariant}$ exact sequence:
	\begin{equation*}
		0 \longrightarrow M \xrightarrow{\hspace{1mm} a \hspace{1mm}} M \longrightarrow M/aM \longrightarrow 0.
	\end{equation*}
	Taking the $G\textrm{-invariants}$ in the sequence above and using that $H^1(G, M) = 0$, we obtain that $M^G/aM^G \isomorphic (M/aM)^G$.
\end{proof}

Let $\textrm{tr} \colon M \rightarrow M$ denote the map $x \mapsto \textrm{tr}(x) = \sum_{g \in G} g(x)$, and note that $\textrm{tr}(M)$ is an $A\textrm{-submodule}$ of $M$.
\begin{lem}\label{lem:Gfixed_trace}
	The following claims are true:
	\begin{enumerate}
		\item[\textup{(1)}] We have an $A\textrm{-linear}$ identification $M^G = \textup{tr}(M)$.

		\item[\textup{(2)}] The module $M^G$ is a direct summand of $M$ as an $A\textrm{-module}$.
	\end{enumerate}
\end{lem}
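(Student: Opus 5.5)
The plan is to use the Reynolds-type averaging operator. Since $|G|$ is prime to $p$ and $A$ is a $\mathbb{Z}_p$-algebra, the integer $|G|$ is invertible in $\mathbb{Z}_p$. Hence it is invertible in $A$, and multiplication by $|G|$ is bijective on $M$. I would first record that $\textup{tr}$ is $A$-linear, because $G$ acts $A$-linearly on $M$. I would also record that $\textup{tr}(M) \subset M^G$: for $h \in G$ we have $h(\textup{tr}(x)) = \sum_{g} hg(x) = \textup{tr}(x)$, since left multiplication by $h$ permutes $G$.

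For (1), take $x \in M^G$. Then $\textup{tr}(x) = |G| x$, so $x = \textup{tr}(|G|^{-1}x)$, and therefore $M^G \subset \textup{tr}(M)$. Combined with the previous inclusion, this gives $M^G = \textup{tr}(M)$ as $A$-submodules of $M$.

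For (2), define $e \coloneq |G|^{-1}\textup{tr} \colon M \rightarrow M$. This map is $A$-linear and has image $M^G$ by (1). It restricts to the identity on $M^G$, because $e(x) = |G|^{-1}|G|x = x$ for $x \in M^G$. Hence $e$ is an idempotent $A$-linear projection onto $M^G$, and we obtain the decomposition $M = M^G \oplus \ker(e)$: each $x$ equals $e(x) + (x - e(x))$, and $M^G \cap \ker(e) = 0$. So $M^G$ is a direct summand of $M$ as an $A$-module.

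There is no real obstacle here. The only point needing care is that $|G|^{-1}$ exists in $A$, which uses $A$ being a $\mathbb{Z}_p$-algebra together with $(|G|,p)=1$. The triviality of the $G$-action on $A$ is what makes $\textup{tr}$, and hence $e$, $A$-linear rather than merely additive.
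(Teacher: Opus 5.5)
Your proof is correct and is essentially the paper's argument. The paper also gets (1) by a direct trace computation, and gets (2) by splitting the surjection $|G|^{-1}\textup{tr} \colon M \rightarrow M^G$ with the inclusion $M^G \hookrightarrow M$; that splitting is exactly your idempotent $e$, with $\ker(e) = \ker(\textup{tr})$.
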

\begin{proof}
	The first claim follows by a direct computation.
	For the second claim, consider the following $A\textrm{-linear}$ exact sequence:
	\begin{equation*}
		0 \longrightarrow \textup{ker(tr)} \longrightarrow M \xrightarrow{\hspace{1mm}|G|^{-1}\textrm{tr}\hspace{1mm}} M^G \longrightarrow 0,
	\end{equation*}
	and observe that the natural inclusion $M^G \hookrightarrow M$ provides an $A\textrm{-linear}$ splitting of the surjective arrow.
\end{proof}

\begin{lem}\label{lem:tensorprod_fixedpoints}
	Let $N$ be an $A\textrm{-module}$ equipped with the trivial action of $G$.
	Then, we have a natural identification of $A\textrm{-modules}$ $(M \otimes_A N)^G \isomorphic M^G \otimes_A N$.
\end{lem}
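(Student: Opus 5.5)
The natural map is $\iota \colon M^G \otimes_A N \to (M \otimes_A N)^G$, sending $m \otimes n \mapsto m \otimes n$; it lands in the invariants because $G$ acts trivially on $N$, so $g(m\otimes n) = g(m)\otimes n = m \otimes n$ for $m \in M^G$. I will show $\iota$ is bijective using only the splitting from Lemma \ref{lem:Gfixed_trace}. No flatness of $N$ is needed, because $M^G$ is a direct summand of $M$ in a $G$-compatible way.

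The key step is to use the idempotent $e \coloneq |G|^{-1}\mathrm{tr} \colon M \to M$. This makes sense because $|G|$ is prime to $p$ and $A$ is a $\mathbb{Z}_p$-algebra, so $|G|$ is invertible in $A$. By Lemma \ref{lem:Gfixed_trace}, $e$ is an $A$-linear projection with image $M^G$, so $M = M^G \oplus \ker(e)$ as $A$-modules. Since $\ker(e)$ is $G$-stable, this is a decomposition of $A[G]$-modules.

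Tensoring with $N$ (with trivial $G$-action) gives
\[
M \otimes_A N = (M^G \otimes_A N) \oplus (\ker(e) \otimes_A N),
\]
again compatibly with $G$. In particular $M^G\otimes_A N \to M\otimes_A N$ is injective, being the inclusion of a direct summand, so $\iota$ is injective.

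For surjectivity, consider the endomorphism $e \otimes 1$ of $M \otimes_A N$, which equals $|G|^{-1}\sum_{g} g$ acting on $M\otimes_A N$. It fixes every $G$-invariant element $x$, since it averages $x$ over $G$. Its image is $e(M)\otimes N = M^G \otimes_A N$. Hence any $x \in (M\otimes_A N)^G$ satisfies $x = (e\otimes 1)(x) \in M^G \otimes_A N$. This gives $(M \otimes_A N)^G = M^G \otimes_A N$, and naturality in $M$ and $N$ is clear.

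There is no real obstacle. The only subtle point is to avoid assuming $N$ flat, and the averaging argument handles this. One should also check that $e\otimes 1$ really coincides with the averaging operator for the diagonal (equivalently, left-factor) action, which follows because $G$ acts trivially on $N$.
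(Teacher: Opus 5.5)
Your proposal is correct and follows essentially the same route as the paper. Injectivity of $M^G \otimes_A N \to M \otimes_A N$ comes from the direct-summand splitting of Lemma \ref{lem:Gfixed_trace}, and surjectivity onto the invariants comes from applying the averaging operator $|G|^{-1}\mathrm{tr}$ to an invariant tensor; your idempotent $e\otimes 1$ is just a cleaner packaging of the paper's elementwise computation $z = \sum_i |G|^{-1}\mathrm{tr}(x_i)\otimes y_i$.
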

\begin{proof}
	Note that the natural homomorphism of $A\textrm{-modules}$ $M^G \otimes_A N \rightarrow M \otimes_A N$ is injective because $M^G$ is an $A\textrm{-linear}$ direct summand of $M$ (see Lemma \ref{lem:Gfixed_trace}).
	To get that it is surjective, note that for any $z = \sum_{i \in I} x_i \otimes y_i$ in $(M \otimes_A N)^G$, we have that $z = |G|^{-1}\textrm{tr}(z) = \sum_{i \in I} |G|^{-1}\textrm{tr}(x_i) \otimes y_i$ is in $M^G \otimes_A N$.
\end{proof}

In this rest of this section, we will prove some technical lemmas on Galois descent.
We will use the notation of Section \ref{subsubsec:imperfect_rings_mixedchar}.

Let $m \geqslant 1$ be an integer coprime to $p$.
Let us set $A \coloneq R\llbracket \mu \rrbracket$ and $B \coloneq A[\zeta_m]$, where $\zeta_m$ is a primitive $m^{\textrm{th}}$ root of unity.
Let us also set $C \coloneq \calR\llbracket \mu \rrbracket$.
Let $G \coloneq \Gal(\Fr(B)/\Fr(A)) \isomorphic \Gal(\Fr(R[\zeta_m])/\Fr(R)) \isomorphic \Gal(\mathbb{Q}_p(\zeta_m)/\mathbb{Q}_p)$ and $\Delta \coloneq \Gal(\Fr(C)/\Fr(A)) \isomorphic \Gal(\Fr(\calR)/\Fr(R))$.
For $1 \leqslant i \leqslant d$, let us set $J_i \coloneq \Gal(\Fr(B[x_i^{1/m}])/\Fr(B)) \isomorphic \Gal(\Fr(R[\zeta_m, x_i^{1/m}])/\Fr(R[\zeta_m]))$, and note it is a cyclic group of order $m$.
Set $J \coloneq \Gal(\Fr(C)/\Fr(B)) \isomorphic \Gal(\Fr(\calR)/\Fr(R[\zeta_m]))$, and note that it is naturally isomorphic to the direct product of groups $J_1 \times \cdots \times J_d$.
The groups discussed above sit in the following exact sequence:
\begin{equation*}
	1 \longrightarrow J \longrightarrow \Delta \longrightarrow G \longrightarrow 1.
\end{equation*}

\begin{lem}\label{lem:descent_G}
	Let $M$ be a $p\textrm{-torsion}$ free and $p\textrm{-adically}$ complete $\mathbb{Z}_p[\zeta_m]\textrm{-module}$ equipped with a semilinear action of $G$.
	Then, the natural $G\textrm{-equivariant}$ homomorphism $\mathbb{Z}_p[\zeta_m] \otimes_{\mathbb{Z}_p} M^G \rightarrow M$ is an isomorphism.
	Additionally, we have that $H^1(G,M)=0$, and we have a natural identification $M^G/p^nM^G \isomorphic (M/p^nM)^G$ of modules over $\mathbb{Z}/p^n\mathbb{Z}$.
\end{lem}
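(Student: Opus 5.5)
This is Galois descent along the finite étale extension $\mathbb{Z}_p \to \mathbb{Z}_p[\zeta_m]$, with Galois group $G$. I will first prove the statement modulo $p^n$ and then pass to the limit.

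\textbf{Structure of the base.} Since $m$ is coprime to $p$, the extension $\mathbb{Z}_p \to \mathbb{Z}_p[\zeta_m]$ is finite étale. It is Galois with group $G \isomorphic \Gal(\mathbb{Q}_p(\zeta_m)/\mathbb{Q}_p)$, which is cyclic of order $f$, the order of $p$ modulo $m$. Note that $|G|$ need not be prime to $p$, so Lemma \ref{lem:invariants_modulo} does not apply directly. Instead I use that the extension is unramified. Concretely, $\mathbb{Z}_p[\zeta_m] = W(\mathbb{F}_{p^f})$ is free over $\mathbb{Z}_p$ with a normal basis, i.e.\ $\mathbb{Z}_p[\zeta_m] \isomorphic \mathbb{Z}_p[G]$ as $G$-modules. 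This follows by lifting a normal basis of $\mathbb{F}_{p^f}/\mathbb{F}_p$ via Nakayama. In particular $\mathbb{Z}_p[\zeta_m]\otimes_{\mathbb{Z}_p}\mathbb{Z}_p[\zeta_m] \isomorphic \prod_{g\in G}\mathbb{Z}_p[\zeta_m]$.

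\textbf{Descent modulo $p^n$.} Set $M_n := M/p^nM$, a module over $W_n(\mathbb{F}_{p^f})$ with semilinear $G$-action. By faithfully flat (étale Galois) descent, as recalled in the proof of Proposition \ref{prop:modp_classify_decomp} (via \cite{bosch-lutkebohmert-raynaud}), the natural map
\[
\mathbb{Z}_p[\zeta_m]\otimes_{\mathbb{Z}_p} M_n^G \longrightarrow M_n
\]
is an isomorphism. This needs no finiteness: the descent datum given by the $G$-action is effective for arbitrary modules. For the cohomology, $H^1(G, M_n) = 0$ holds by the Amitsur-complex argument in Lemma \ref{lem:H_coh_Eet}. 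Alternatively, from the isomorphism above, $M_n \isomorphic \mathbb{Z}_p[G]\otimes_{\mathbb{Z}_p} M_n^G$ is induced, so Shapiro's lemma gives $H^i(G,M_n)=0$ for $i\geqslant 1$.

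\textbf{Passage to the limit.} Apply $G$-invariants to $0\to M\xrightarrow{p^n} M\to M_n\to 0$, which is exact because $M$ is $p$-torsion free. This gives
\[
0\to M^G/p^nM^G\to M_n^G\to H^1(G,M)[p^n]\to 0 .
\]
To show $H^1(G,M)=0$, I compute it with the standard (finite) cochain complex of $G$. Since $M=\lim_n M_n$, each cochain group satisfies $C^i(G,M)=\lim_n C^i(G,M_n)$. The transition maps $M_{n+1}^G\to M_n^G$ are surjective, because $H^1(G,M_n)=0$ applied to $0\to M_1\to M_{n+1}\to M_n\to 0$. So the systems are Mittag-Leffler, and $H^1(G,M)=\lim_n H^1(G,M_n)=0$. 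The same surjectivity handles the $\lim^1$ term coming from $H^0$.

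Consequently $M^G/p^nM^G \isomorphic (M/p^nM)^G$. Moreover $M^G$ is $p$-adically complete: it is closed in $M$, and $M^G\cap p^nM=p^nM^G$ by the displayed sequence. Tensoring the finite free $\mathbb{Z}_p$-module $\mathbb{Z}_p[\zeta_m]$ commutes with the limits, so taking $\lim_n$ of the mod $p^n$ isomorphisms yields $\mathbb{Z}_p[\zeta_m]\otimes_{\mathbb{Z}_p}M^G\isomorphic M$.

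\textbf{Main obstacle.} The only delicate point is that $|G|$ may be divisible by $p$, which rules out the averaging argument of Lemma \ref{lem:invariants_modulo}. This is why I rely on étale descent (the normal basis) to get vanishing of $H^1$ modulo $p^n$, and then on the Mittag-Leffler argument to lift it to $M$.
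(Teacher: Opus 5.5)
Your proof is correct, but it runs in the opposite order from the paper's.

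\textbf{The paper's route.} It uses that $M$, being torsion-free over the discrete valuation ring $\mathbb{Z}_p[\zeta_m]$, is the union of its finite free $G$-stable submodules $M_i$. It applies classical Galois descent (triviality of $H^1(G,\mathrm{GL}_k(\mathbb{Z}_p[\zeta_m]))$) to each $M_i$, which gives the isomorphism $\mathbb{Z}_p[\zeta_m]\otimes_{\mathbb{Z}_p}M^G\cong M$ first. Only afterwards does it deduce $H^1(G,M)=H^1(G,\mathbb{Z}_p[\zeta_m])\otimes_{\mathbb{Z}_p}M^G=0$ and the mod $p^n$ identification.

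\textbf{Your route.} You descend modulo $p^n$, get $H^1(G,M/p^nM)=0$ from the normal basis, and lift this vanishing to $M$ by the Mittag-Leffler argument. You then recover the isomorphism from completeness of $M^G$.

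\textbf{Comparison.} The paper's argument needs no completeness and no limits. However, it leaves implicit why $H^1(G,\mathbb{Z}_p[\zeta_m])$ vanishes and why $H^1$ commutes with $\otimes_{\mathbb{Z}_p}M^G$. Your explicit normal basis $\mathbb{Z}_p[\zeta_m]\cong\mathbb{Z}_p[G]$ makes exactly that step transparent. You are also right that averaging is unavailable here, since $p$ may divide $|G|$.

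\textbf{A shortcut.} Your detour through $M/p^nM$ is not actually needed. The extension $\mathbb{Z}_p\to\mathbb{Z}_p[\zeta_m]$ is itself finite \'etale Galois with group $G$, so the same effective \'etale descent applied directly to $M$ gives the isomorphism. This works for any $\mathbb{Z}_p[\zeta_m]$-module with semilinear $G$-action. Shapiro's lemma then gives $H^1(G,M)=0$. Finally, the sequence $0\to M\xrightarrow{p^n}M\to M/p^nM\to 0$ gives $M^G/p^nM^G\cong(M/p^nM)^G$, using only $p$-torsion-freeness and not completeness.
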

\begin{proof}
	Note that $M$ is a torsion-free module over a complete discrete valuation ring $\mathbb{Z}_p[\zeta_m]$, in particular, $M$ is flat over $\mathbb{Z}_p[\zeta_m]$.
	So we may write $M = \colim_{i \in I} M_i$, a filtered colimit of finite free modules over $\mathbb{Z}_p[\zeta_m]$ over an indexing set $I$.
	Upto replacing $M_i$ by its image inside $M$ via the natural map $M_i \rightarrow M$, and taking the stabilisation in $M$ of the image under the action of finite group $G$, we may assume that $M_i$ is a finite free $G\textrm{-stable}$ $\mathbb{Z}_p[\zeta_m]\textrm{-submodule}$ of $M$, for each $i$ in $I$.
	As the colimit is filtered, we may further write $M = \cup_{i \in I} M_i$, and thus we have that $M^G = \cup_{i \in I} M_i^G$.
	Since tensor product commutes with colimit, we are reduced to showing that the natural $G\textrm{-equivariant}$ homomorphism $\mathbb{Z}_p[\zeta_m] \otimes_{\mathbb{Z}_p} M_i^G \rightarrow M_i$ is an isomorphism for each $i$ in $I$.
	But the reduced claim holds by Galois descent (or equivalently, by the triviality of the non-abelian $H^1(G, \textup{GL}_k(\mathbb{Z}_p[\zeta_m]))$).
	Hence, we obtain the claimed $G\textrm{-equivariant}$ isomorphism $\mathbb{Z}_p[\zeta_m] \otimes_{\mathbb{Z}_p} M^G \isomorphic M$.
	Additionally, the preceding isomorphism implies that $H^1(G, M) = H^1(G, \mathbb{Z}_p[\zeta_m] \otimes_{\mathbb{Z}_p} M^G) = H^1(G, \mathbb{Z}_p[\zeta_m]) \otimes_{\mathbb{Z}_p} M^G = 0$.
	Finally, using that $H^1(G,M) = 0$, it is easy to verify that we have $M^G/p^nM^G \isomorphic (M/p^nM)^G$ for each $n \geqslant 1$ (see the proof of Lemma \ref{lem:invariants_modulo}).
\end{proof}

\begin{lem}\label{lem:descent_Delta}
	Let $M$ be a $p\textrm{-adically}$ complete and $p\textrm{-torsion}$ free $C\textrm{-module}$ equipped with a semilinear action of $\Delta$.
	Then, $H^1(\Delta,M)=0$ and we have a natural identification $M^G/p^nM^G \isomorphic (M/p^nM)^G$ of modules over $A/p^nA$.
\end{lem}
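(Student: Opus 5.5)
The plan is to reduce the claim to the two group-theoretic pieces of the extension $1 \to J \to \Delta \to G \to 1$. The group $J \cong J_1 \times \cdots \times J_d$ has order $m^d$, and $G$ is a quotient of $(\mathbb{Z}/m\mathbb{Z})^{\times}$. Here $J$ acts $B$-linearly on $C$ and hence on $M$. Its order is prime to $p$, so it is invertible in $\mathbb{Z}_p \subset C$. By Lemma \ref{lem:invariants_modulo} (applied with base ring $B$, on which $J$ acts trivially, and $a = p^n$, which is regular on $M$ since $M$ is $p$-torsion free) we get $H^1(J, M) = 0$ and $M^J/p^n M^J \isomorphic (M/p^nM)^J$. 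Moreover $M^J$ is again $p$-torsion free. It is also $p$-adically complete, because $M^J = \frac{1}{|J|}\mathrm{tr}(M)$ is a direct summand of $M$ by Lemma \ref{lem:Gfixed_trace}. Finally $M^J$ is a module over $C^J = B$, with a residual semilinear action of $G = \Delta/J$.

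Next I handle $G$, which acts semilinearly through $\mathbb{Z}_p[\zeta_m] \subset B$ and trivially on $A$. Since $M^J$ is a $p$-torsion free, $p$-adically complete $\mathbb{Z}_p[\zeta_m]$-module with semilinear $G$-action, Lemma \ref{lem:descent_G} gives $H^1(G, M^J) = 0$ and $(M^J)^G/p^n (M^J)^G \isomorphic (M^J/p^nM^J)^G$. Using $M^J/p^nM^J \isomorphic (M/p^nM)^J$, we obtain
\[
M^{\Delta}/p^n M^{\Delta} = (M^J)^G/p^n(M^J)^G \isomorphic \big((M/p^nM)^J\big)^G = (M/p^nM)^{\Delta}.
\]
This is the claimed identification, which I read as being with $\Delta$-invariants; it is linear over $A/p^nA$ because $A$ is $\Delta$-fixed. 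For cohomology, the inflation–restriction sequence $0 \to H^1(G, M^J) \to H^1(\Delta, M) \to H^1(J, M)$ shows that $H^1(\Delta, M) = 0$, since both outer terms vanish.

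I expect the only delicate point to be the hypotheses of Lemma \ref{lem:descent_G}: we must check that $M^J$ really is $p$-adically complete and carries a semilinear $G$-action over $\mathbb{Z}_p[\zeta_m]$. Completeness follows from the direct-summand argument above. Semilinearity holds because $\Delta$ acts on $\zeta_m \in C$ through its quotient $G$. Continuity is not an issue, since all the groups involved are finite.
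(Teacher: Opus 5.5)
Your proof is correct and follows essentially the same route as the paper: it uses the extension $1 \to J \to \Delta \to G \to 1$, Lemma~\ref{lem:invariants_modulo} for $J$, Lemma~\ref{lem:descent_G} for $G$ acting on $M^J$, and inflation--restriction to get $H^1(\Delta,M)=0$, and you rightly read the invariants in the statement as $\Delta$-invariants. The differences are only cosmetic: the paper deduces $M^{\Delta}/p^nM^{\Delta} \cong (M/p^nM)^{\Delta}$ directly from $H^1(\Delta,M)=0$ and the sequence $0 \to M \xrightarrow{p^n} M \to M/p^nM \to 0$ rather than by composing the $J$- and $G$-level identifications, and it leaves implicit the $p$-adic completeness of $M^J$ that you check via Lemma~\ref{lem:Gfixed_trace}.
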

\begin{proof}
	Note that we have $\Delta = J \rtimes G$.
	Then, the triviality of $H^1(G,M)$ follows from Lemmas \ref{lem:invariants_modulo} and \ref{lem:descent_G}, and the inflation-restriction exact sequence.
	Using that $H^1(G,M) = 0$, it is easy to verify that we have $M^G/p^nM^G \isomorphic (M/p^nM)^G$ for each $n \geqslant 1$ (see the proof of Lemma \ref{lem:invariants_modulo}).
\end{proof}

\begin{lem}\label{lem: descent lemma 2}\label{lem:descent_J}
	Let $a$ in $A$ be either $p$ or $\mu$, and let $N$ be any torsion-free module over $C$ equipped with a semilinear action of $J$.
	Assume that for each $1 \leqslant i \leqslant d$, the sequence $\{a, x_i\}$ is regular on $N$.
	Then, the following natural $J\textrm{-equivariant}$ homomorphism of $C\textrm{-modules}$ is injective:
	\begin{equation*}
		\begin{aligned}
			f \colon C \otimes_B N^J &\longrightarrow N \\
				\textstyle\sum_{i=1}^d c_i \otimes n_i &\longmapsto \textstyle\sum_{i=1}^d c_in_i,
		\end{aligned}
	\end{equation*}
	and if its cokernel is nonzero, then it is $a\textrm{-torsion}$ free.
\end{lem}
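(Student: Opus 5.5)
We would prove Lemma \ref{lem:descent_J} by reducing to Kummer theory, one variable at a time. Since $J \isomorphic J_1 \times \cdots \times J_d$ and $C = B[x_1^{1/m}, \ldots, x_d^{1/m}]$ is free over $B$ with basis $x^{\mathbf{k}/m} \coloneq x_1^{k_1/m} \cdots x_d^{k_d/m}$ for $0 \leqslant k_j \leqslant m-1$, we decompose $N$ into isotypic pieces. The group $J$ is abelian of order prime to $p$, and $B$ contains $\zeta_m$ and $1/m$. Hence the idempotents $e_\chi = |J|^{-1}\sum_{g} \chi(g)^{-1} g$ for characters $\chi \colon J \to \mu_m$ give a $B$-linear decomposition $N = \oplus_\chi N_\chi$, with $N_{\mathbf{1}} = N^J$. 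Each character has the form $\chi_{\mathbf{k}}$, namely the character by which $J$ acts on $x^{\mathbf{k}/m}$. Multiplication by $x^{\mathbf{k}/m}$ maps $N^J$ into $N_{\chi_{\mathbf{k}}}$, and $C \otimes_B N^J = \oplus_{\mathbf{k}} x^{\mathbf{k}/m} N^J$. So $f$ decomposes as the direct sum of the maps $f_{\mathbf{k}} \colon N^J \to N_{\chi_{\mathbf{k}}}$, $n \mapsto x^{\mathbf{k}/m} n$.

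Injectivity then follows from torsion-freeness: $x^{\mathbf{k}/m}$ is a nonzero element of the domain $C$, and $N$ is torsion-free over $C$, so each $f_{\mathbf{k}}$ is injective.

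For the cokernel, we have $\mathrm{coker} f = \oplus_{\mathbf{k}} N_{\chi_{\mathbf{k}}}/x^{\mathbf{k}/m}N^J$. Fix $\mathbf{k}$ and put $y \coloneq x^{(m-\mathbf{k})/m}$, taking exponent $0$ where $k_j = 0$. Then $y N_{\chi_{\mathbf{k}}} \subset N^J$ and $x^{\mathbf{k}/m} y = \prod_{k_j \neq 0} x_j$. Suppose $v \in N_{\chi_{\mathbf{k}}}$ satisfies $a v = x^{\mathbf{k}/m} w$ with $w \in N^J$. We must show $v \in x^{\mathbf{k}/m} N^J$. Multiplying by $y$ gives $a(yv) = (\prod x_j) w$. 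By hypothesis $\{a, x_j\}$ is regular on $N$ for each $j$, and iterating this product by product shows $\prod x_j$ is regular on $N/aN$. Hence $w \in aN$, say $w = a w'$. Since $a$ is $J$-invariant and $N$ has no $a$-torsion, projecting with $e_{\mathbf{1}}$ shows $w' \in N^J$. Then $a(v - x^{\mathbf{k}/m} w') = 0$, and $a$-torsion-freeness gives $v = x^{\mathbf{k}/m} w'$, as required.

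The step that needs care is this last one. Regularity of $\{a, x_j\}$ on $N$ must be transported to the isotypic summands $N_\chi$, and to products of the $x_j$. Both are fine: $N_\chi$ is a $B$-direct summand of $N$ stable under $a$ and under the $x_j$, which lie in $A$, and regular sequences on a module restrict to its direct summands. Note also that $N^J$ is a $B$-module, so the tensor product in the definition of $f$ is over $B$. This justifies the decomposition used above.
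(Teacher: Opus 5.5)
Your argument is correct, but it takes a different route from the paper.

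The paper never decomposes $N$. It sets $x = x_{a+1}\cdots x_d$ and uses that $C[1/x]$ is finite \'etale Galois over $B[1/x]$ with group $J$, so Galois descent makes $f[1/x]$ an isomorphism. Injectivity of $f$ then follows from the injectivity of $C\otimes_B N^J \to C \otimes_B N^J[1/x]$ and of $N \to N[1/x]$. For the cokernel, from $f(y) = az$ the paper deduces $y \in a\,(C\otimes_B N^J)[1/x]$. In a $B$-basis of $C$ this reads $n_i \in aN^J[1/x]$, and it concludes $n_i \in aN^J[1/x]\cap N^J = aN^J$. That last step needs $\{a, x_i\}$ regular on $N^J$, which the paper derives from $H^1(J,N)=0$, giving $N^J/aN^J = (N/aN)^J$.

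Your eigenspace decomposition is Kummer descent done by hand. Because $J$ is abelian, $|J|$ is invertible and $\zeta_m \in B$, you can split $f = \bigoplus_{\mathbf{k}} f_{\mathbf{k}}$ explicitly. The complementary monomial $y$ then shows directly that $\mathrm{coker}\, f = \bigoplus_{\mathbf{k}} N_{\chi_{\mathbf{k}}}/x^{\mathbf{k}/m}N^J$ is killed by $x_1\cdots x_d$. This is exactly the information the paper extracts from descent after inverting $x$.

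A small bonus is that you only use regularity on $N$ itself, recovering $w' \in N^J$ from $a$-torsion-freeness. So the transport of regularity to the summands $N_\chi$ in your last paragraph is never actually used. The same trick would also replace the paper's appeal to $H^1(J,N)=0$.

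The paper's version is shorter and more conceptual: it relies only on the Galois-cover structure after inverting $x$ and on freeness of $C$ over $B$. Yours is more elementary and explicit, at the cost of using the specific Kummer structure (abelian $J$, roots of unity in $B$, monomial basis $x^{\mathbf{k}/m}$).
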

\begin{proof}
	To get the injectivity of $f$, set $x = x_{a+1} \cdots x_d$ and consider the following $J\textrm{-equivariant}$ diagram
	\begin{equation}\label{eq:invx_Jdescent}
		\begin{tikzcd}
			{C \otimes_B N^J} & {C \otimes_B N^J[1/x]} \\
			N & {N[1/x]},
			\arrow[hookrightarrow, from=1-1, to=1-2]
			\arrow["f"', from=1-1, to=2-1]
			\arrow["\wr", from=1-2, to=2-2]
			\arrow[hookrightarrow, from=2-1, to=2-2]
		\end{tikzcd}
	\end{equation}
	where the horizontal arrows are naturally injective, and the right vertical arrow is an isomorphism by observing that $C \otimes_B N^J[1/x] = C[1/x] \otimes_{B[1/x]} (N[1/x])^J$ and using Galois descent.
	
	Let us now assume that the cokernel of $f$ is nonzero and we claim that it is $a\textrm{-torsion}$ free.
	Note that $C$ is free as a module over $B$, and let $\{c_i\}_{i \in I}$ denote a $B\textrm{-basis}$ of $C$ for an indexing set $I$ of order equal to $\textrm{rk}_B C$.
	Let $y$ in $C \otimes_B N^J$ such that $f(y) = az$, and observe that it is enough to show that $y$ is in $a(C \otimes_B N^J)$.
	Writing $y = \sum_{i \in I} c_i \otimes n_i$ for some unique $n_i$ in $N^J$, and using diagram \eqref{eq:invx_Jdescent}, we observe that $y = \sum_{i \in I} c_i \otimes n_i$ is in $a(C \otimes_B N^{\Delta})[1/x]$.
	Therefore, it follows that $n_i$ is in $aN^J[1/x]$ for each $i \in I$.
	But, note that the sequence $\{a, x_i\}$ is regular on $N^J$ for each $1 \leqslant i \leqslant d$.
	Indeed, we have $H^1(J,N) = 0$ by \cite[Proposition 6.1.10]{weibel} because $m$ is invertible in $C$, and using this fact, it is easy to verify that we have $N^J/aN^J = (N/aN)^J$ implying that $\{a, x_i\}$ is regular on $N^J$ because $\{a, x_i\}$ is regular on $N$.
	Hence, we conclude that $n_i$ is in $aN^J[1/x] \cap N^J = aN^J$, where the intersection takes place inside $N^J[1/x]$.
	This allows us to conclude.
\end{proof}

\begin{prop}\label{prop:descent_Delta}
	Let $a$ in $A$ be either $p$ or $\mu$, and let $N$ be a $p\textrm{-adically}$ complete and torsion-free module over $C$ equipped with a semilinear action of $\Delta$.
	Assume that for each $1 \leqslant i \leqslant d$, the sequence $\{a, x_i\}$ is regular on $N$.
	Then, the following natural $\Delta\textrm{-equivariant}$ homomorphism of $C\textrm{-modules}$ is injective:
	\begin{equation*}
		\begin{aligned}
			C \otimes_A N^{\Delta} &\longrightarrow N \\
			\textstyle\sum_{i=1}^d c_i \otimes n_i &\longmapsto \textstyle\sum_{i=1}^d c_in_i,
		\end{aligned}
	\end{equation*}
	and if its cokernel is nonzero, then it is $a\textrm{-torsion}$ free.
\end{prop}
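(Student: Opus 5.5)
The plan is to factor the descent along the exact sequence $1 \rightarrow J \rightarrow \Delta \rightarrow G \rightarrow 1$. The map in question is the composite
\begin{equation*}
	C \otimes_A N^{\Delta} = C \otimes_B (B \otimes_A (N^J)^G) \longrightarrow C \otimes_B N^J \xrightarrow{\hspace{1mm} f \hspace{1mm}} N,
\end{equation*}
where the first arrow is $C \otimes_B (-)$ applied to $g \colon B \otimes_A (N^J)^G \rightarrow N^J$, and $f$ is the map of Lemma \ref{lem:descent_J}.

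\textbf{Step 1.} By Lemma \ref{lem:descent_J}, $f$ is injective, and its cokernel, if nonzero, is $a$-torsion free. Its proof also shows that $N^J/aN^J = (N/aN)^J$ and that $\{a, x_i\}$ is regular on $N^J$.

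\textbf{Step 2.} I will show that $g$ is an isomorphism. Here $G$ acts on $B = A \otimes_{\mathbb{Z}_p} \mathbb{Z}_p[\zeta_m]$ through its action on $\mathbb{Z}_p[\zeta_m]$, and $N^J$ is a $B$-module with a semilinear $G$-action. The module $N^J$ is $p$-torsion free. It is also $p$-adically complete: it is closed in $N$, being the kernel of the maps $n \mapsto \sigma(n) - n$. Lemma \ref{lem:descent_G}, applied to $N^J$ viewed as a $\mathbb{Z}_p[\zeta_m]$-module, then gives the isomorphism $\mathbb{Z}_p[\zeta_m] \otimes_{\mathbb{Z}_p} (N^J)^G \isomorphic N^J$. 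Since $B \otimes_A (N^J)^G = \mathbb{Z}_p[\zeta_m] \otimes_{\mathbb{Z}_p} (N^J)^G$, this is exactly the statement that $g$ is an isomorphism. Finally, $(N^J)^G = N^{\Delta}$.

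\textbf{Step 3.} Because $g$ is an isomorphism, $C \otimes_B g$ is an isomorphism as well. Hence the composite map equals $f$ up to this isomorphism, so it is injective and has the same cokernel as $f$. By Step 1 that cokernel, if nonzero, is $a$-torsion free.

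\textbf{Main obstacle.} The delicate point is Step 2. I need to justify the identification $B \otimes_A M \cong \mathbb{Z}_p[\zeta_m] \otimes_{\mathbb{Z}_p} M$, which uses $B = A[\zeta_m] = A \otimes_{\mathbb{Z}_p} \mathbb{Z}_p[\zeta_m]$; this holds because $R$ is unramified and $m$ is prime to $p$. I also need to check that the $G$-action on $N^J$ is semilinear with respect to $\mathbb{Z}_p[\zeta_m]$, and that the completeness hypothesis of Lemma \ref{lem:descent_G} is inherited by $N^J$.

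If completeness of $N^J$ turns out to be awkward, I would instead rerun the argument of Lemma \ref{lem:descent_G} directly. That argument only uses flatness of $N^J$ over the discrete valuation ring $\mathbb{Z}_p[\zeta_m]$ together with finite Galois descent on finite free $G$-stable submodules, and neither of these needs completeness.
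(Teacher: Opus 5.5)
Your proposal is correct and follows the paper's proof: you split along $1 \to J \to \Delta \to G \to 1$, apply Lemma \ref{lem:descent_G} to $N^J$ to get $B \otimes_A N^{\Delta} \isomorphic N^J$, and then conclude with Lemma \ref{lem:descent_J}. The only difference is cosmetic: the paper gets $p$-adic completeness of $N^J$ from $H^1(J,N)=0$, which gives $N^J/p^nN^J = (N/p^nN)^J$, whereas your closedness argument also works once you note that $p$-torsion-freeness of $N$ gives $p^nN \cap N^J = p^nN^J$.
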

\begin{proof}
	Note that $H^1(J,N) = 0$ by \cite[Proposition 6.1.10]{weibel} because $m$ is invertible in $C$.
	Using this fact, it is easy to verify that we have $N^J/p^nN^J = (N/p^nN)^J$ for each $n \geqslant 1$.
	Thus, it follows that $N^J$ is $p\textrm{-adically}$ complete, and it is clearly $p\textrm{-torsion}$ free.
	By applying Lemma \ref{lem:descent_G} to $N^J$, we obtain that the natural $G\textrm{-equivariant}$ homomorphism $B \otimes_A N^{\Delta} = \mathbb{Z}_p[\zeta_m] \otimes_{\mathbb{Z}_p} N^{\Delta} \rightarrow N^J$ is an isomorphism.
	Then, from Lemma \ref{lem:descent_J}, it follows that the natural $\Delta\textrm{-equivariant}$ homomorphism of $C\textrm{-modules}$ $C \otimes_A N^{\Delta} \isomorphic C \otimes_B N^J \rightarrow N$ is injective, and if its cokernel is nonzero, then it is $a\textrm{-torsion}$ free.
\end{proof}

\subsection{Period rings of Kedlaya--Liu}\label{subsec:Kedlaya_Liu_ii}

In this section, our goal is to apply the decompletion techniques of Kedlaya--Liu \cite{kedlaya-liu1, kedlaya-liu2} to the perfect period rings studied in Section \ref{subsubsec:perfect_period_rings}.
We will keep the notation and conventions of Section \ref{sec:prelims} and \ref{subsubsec:perfect_period_rings}.
In particular, we have a $p\textrm{-adically}$ complete ring $R$, and for any $m \geqslant 0$ coprime to $p$ we set $\calR = R_{0,m} = R[\zeta_m, x_1^{1/m}, \ldots, x_d^{1/m}]$ and $\calR_n = R_{n,m} = R[\zeta_{mp^n}, x_1^{1/mp^n}, \ldots, x_d^{1/mp^n}]$ for $n \geqslant 0$.

\subsubsection{Decompletion}

To apply the decompletion results of \cite{kedlaya-liu1, kedlaya-liu2}, let us first recall some standard notation. 
For any element $a$ in $\tilde{E}^+(\overline{R}) = \overline{R}^{\flat}$ we may write $a = (a_0, a_1, \ldots)$, with $a_m$ in $\overline{R}/p\overline{R}$ for all $m \geqslant 0$, and we set $a^{(0)} \coloneq \lim_{m \geqslant 0} b_m^{p^m}$, where $b_m$ in $\overline{R}$ is any lift of $a_m$; note that $a^{(0)}$ is well defined and does not depend on the choice of the lifts $b_m$ (see \cite[Section 1.2.2]{fontaine-corps-periodes}).
Choose a compatible system $\varepsilon \coloneq (1, \zeta_p, \zeta_{p^2}, \ldots)$ of primitive roots of unity in $O_{F_{\infty}}^{\flat}$, which we shall also consider as an element of $\tilde{E}^+(\overline{R})$, and set $\overline{\mu} \coloneq \varepsilon-1$.
Then $\tilde{E}(\overline{R}) = \tilde{E}^+(\overline{R})[1/\overline{\mu}]$, and for any $a/\overline{\mu}^n$ in $\tilde{E}(\overline{R})$, we set $(a/\overline{\mu}^n)^{(0)} \coloneq a^{(0)}/(\overline{\mu}^{(0)})^n$.

Following \cite[Section 4.1]{andreatta-brinon}, we define a map $\tilde{v} \colon \tilde{E}(\overline{R}) \rightarrow \mathbb{Q} \cup \{+\infty\}$ by setting
\begin{equation*}
	\tilde{v}(a) \coloneq \tfrac{p}{p-1} \max \{n \in \mathbb{Q} \textrm{ such that } a \textup{ is in } \overline{\mu}^{n}\tilde{E}^+(\overline{R})\}.
\end{equation*}
Note that $\tilde{v}(\overline{\mu}^{(0)}) = \tfrac{p}{p-1}$, and the map above satisfies the following properties:
\begin{itemize}
	\item $\tilde{v}(a+b) \geqslant \min\{\tilde{v}(a), \tilde{v}(b)\}$.
	\item $\widetilde{v}(ab) \geqslant \tilde{v}(a) + \tilde{v}(b)$.
	\item $\widetilde{v}(a\overline{\mu}) = \widetilde{v}(a) + \tfrac{p}{p-1}$.
\end{itemize}
The map $\tilde{v}$ naturally restricts to a map $\tilde{v} \colon \tilde{E}(\calR_{\infty}) \rightarrow \mathbb{Q} \cup \{+\infty\}$ satisfying the analogous properties.

\begin{defi}
	For any $r$ in $\mathbb{Q}_{>0}$, let us define the ring
	\begin{equation*}
		\tilde{A}^{\dagger,r}_{\calR} \coloneq \{a = \textstyle\sum_{k \geqslant 0} p^k[a_k] \in \tilde{A}_{\calR} \textrm{ such that } \tilde{v}(a_k)+\tfrac{pr}{p-1}k \rightarrow +\infty \textrm{ as } k \to +\infty\}.
	\end{equation*}
\end{defi}

\begin{rem}
	Let $a$ be an element of $\tilde{A}^{\dagger,r}_{\calR}$.
	As we have that $\tilde{v}(\overline{\mu}^{(0)}) = \frac{p}{p-1}$, therefore the series $\theta(a) \coloneq \sum_{k \geqslant 0} p^k a_k^{(0)}$ converges in $\widehat{\overline{R}}[1/p]$ if $r \leqslant \tfrac{p-1}{p}$.
	More generally, it is easy to see that $\theta(\varphi^{-n}(a))$ converges for all $n \geqslant \log_p(\tfrac{pr}{p-1})$.
	Consequenty, the resulting map
	\begin{equation*}
		\theta \circ \varphi^{-n} \colon \tilde{A}^{\dagger,r}_{\calR} \longrightarrow \widehat{\overline{R}},
	\end{equation*}
	is a homomorphism of rings for $n \geqslant \log_p(\tfrac{pr}{p-1})$.
\end{rem}

\begin{defi}\label{defi:decompleted_rings}
	For any $r$ in $\mathbb{Q}_{>0}$, let us define the ring
	\begin{equation*}
		\AR^{\dagger,r} \coloneq \{x \in \tilde{A}_{\calR}^{\dagger,r} \textrm{ such that } \theta(\varphi^{-n}(x)) \textrm{ is in } \calR_n[1/p] \textrm{ for each } n \geqslant \log_p(\tfrac{pr}{p-1})\}.
	\end{equation*}
	Set $\AR^{\dagger} \coloneq \cup_{r > 0} \AR^{\dagger,r}$.
	Let $\mathscr{A}_{\calR}$ denote the $p\textrm{-adic}$ completion of $\AR^{\dagger}$, and set $\mathscr{E}_{\calR} \coloneq \mathscr{A}_{\calR}/p\mathscr{A}_{\calR}$.
\end{defi}

\begin{lem}\label{lem:decompleting_tower}
	The following claims hold true:
	\begin{enumerate}
		\item[\textup{(1)}] There exists some $r > 0$ such that the projection $\AR^{\dagger,r} \rightarrow \mathscr{E}_{\calR}$ is strict surjective.

		\item[\textup{(2)}] We have that $\mathscr{E}_{\calR}^{\textup{rad}} \coloneq \cup_{n \geqslant 0} \varphi^{-n}(\mathscr{E}_{\calR})$ is dense in $\tilde{E}_{\calR}$.
	\end{enumerate}
	In other words, the tower $\{\calR_n\}_{n \geqslant 0}$ is weakly decompleting in the sense of \cite[Definition 5.2.3]{kedlaya-liu2}.
\end{lem}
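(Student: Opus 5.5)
The plan is to exhibit an explicit candidate for the decompleted ring and verify both conditions by hand. The candidate is $\AR^+ \cong \calR\llbracket \mu \rrbracket$ and its $p$-adically completed localisation $\AR = \AR^+[1/\mu]^{\wedge}$ from Section \ref{subsubsec:imperfect_rings_mixedchar}, where $\calR$ is embedded via $\iota$ sending $x_i^{1/m} \mapsto [x_i^{\flat}]^{1/m}$. I will show that $\AR$ sits inside $\mathscr{A}_{\calR}$ with reduction $\ER$, and that already $\ER$ satisfies (1) and (2). This is the analogue of \cite[Section 4]{andreatta-brinon} and \cite{andreatta-phigamma}.

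\emph{Step 1: overconvergence and the $\theta\circ\varphi^{-n}$ condition on the square part.} On $A_{\calR,\square}^+$ one computes directly:
\begin{itemize}
\item $\theta(\varphi^{-n}([x_i^{\flat}]^{1/m})) = x_i^{1/mp^n} \in \calR_n$;
\item $\theta(\varphi^{-n}(q)) = \zeta_{p^n}$;
\item $\mu^{-1}$ is overconvergent for some $r>0$, since $\mu = [\overline{\mu}] + p(\cdots)$ with $[\overline{\mu}]$ invertible in $\tilde A^{\dagger,r}$ for small $r$, and $\theta(\varphi^{-n}(\mu)) = \zeta_{p^n}-1$, which is invertible in $\calR_n[1/p]$.
\end{itemize}
Hence the polynomial ring $O_F[\zeta_m][\mu^{\pm1},[x_i^{\flat}]^{\pm 1/m}]$ (with only non-negative powers for $i>a$) lies in $\AR^{\dagger,r}$ for a fixed $r$, with the condition holding for all $n \geqslant \log_p(pr/(p-1))$.

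\emph{Step 2: extension along the étale map $\calR_\square \to \calR$.} Since $\calR$ is $p$-completely étale over $\calR_\square$, I will lift it to overconvergent rings by Henselianity of $(\AR^{\dagger,r},p)$ for weakly complete rings (Elkik-type lifting). Concretely:
\begin{itemize}
\item take a standard étale presentation $Q_j(z)$;
\item solve $Q_j=0$ in $\tilde A^{\dagger,r}_{\calR}$ by Newton iteration from the image of $\iota$;
\item check that $\theta\circ\varphi^{-n}$ carries the solution to the unique solution in $\calR_n[1/p]$ lifting the corresponding root. This uses uniqueness of étale lifts, together with the fact that $\theta\circ\varphi^{-n}$ intertwines the Frobenius-twisted presentations.
\end{itemize}
This gives $\iota(\calR) \subset \AR^{\dagger,r}$, and $p$-adic completion gives $\AR \subset \mathscr{A}_{\calR}$.

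\emph{Step 3: the converse inclusion, giving (1).} I must show $\mathscr{E}_{\calR}=\ER$, i.e.\ no extra elements appear modulo $p$. For this, reduce modulo $p$: an element $x$ of $\AR^{\dagger,r}$ gives $\bar x \in \tilde E_{\calR}$. The conditions $\theta\varphi^{-n}(x)\in \calR_n[1/p]$ for all large $n$ mean that the coordinates $x^{(n)}$ of $\bar x$ lie in $\calR_n$ modulo $p$, after bounding $\mu$-powers using $\tilde v$. I then need a Tate-trace style statement:
\[
\{\,y\in\tilde E_{\calR}^+ \;:\; y^{(n)}\in \calR_n/p \ \text{for all } n\gg0\,\} \;=\; \varphi^{-k}(\ER^+) \text{ up to bounded } \overline{\mu}\text{-denominators}.
\]
This should follow from the explicit description $\calR_n/\mathfrak p_{n,m}$ in Proposition \ref{prop:Rnm_Snm_normal}, which gives $\calR_\infty^\flat$ as the completed perfection of $(\calR/p)\llbracket\overline\mu\rrbracket$, and from Proposition \ref{prop:Rinftym_in_Sinftym}. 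Strict surjectivity of $\AR^{\dagger,r}\to\mathscr{E}_{\calR}$ then holds because the subring $\AR^+[1/\mu]\cap\tilde A^{\dagger,r}$ constructed in Steps 1–2 already surjects with norm control.

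\emph{Step 4: density, giving (2).} Density of $\cup_n\varphi^{-n}(\ER)$ in $\tilde E_{\calR}$ reduces to density of $\cup\varphi^{-n}(\ER^+)$ in $\calR_\infty^\flat$ for the $\overline\mu$-adic topology. Modulo $\overline\mu^{p}$, one has $\calR_\infty^\flat/\overline\mu^{p}\cong \calR_\infty/(\zeta_p-1)^p$, which is the union of the images of $\calR_n$. Each $\calR_n$ is reached by $\varphi^{-n}(\ER^+)$ via the roots $[x_i^\flat]^{1/mp^n}$ and $q^{1/p^n}$, so the density follows by successive approximation.

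The main obstacle is Step 3: controlling that the overconvergent elements with $\theta\varphi^{-n}$-values in $\calR_n$ reduce exactly into $\ER$, not into a larger imperfect ring. For this I would first try the Andreatta–Brinon argument, using normalized traces and the normality results of Lemma \ref{lem:Rinftyhat_normal}.
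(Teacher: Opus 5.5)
Your route differs from the paper's. The paper gives no hands-on argument here: it deduces the lemma directly from \cite[Lemma 7.4.3]{kedlaya-liu2}, and only later, in Proposition \ref{prop:ar+_decomplete}, identifies $\mathscr{E}_{\calR}^+$ with $\ER^+$. Proving (1) and (2) by showing $\mathscr{E}_{\calR}=\ER$ directly is a viable alternative. It is self-contained and gives that identification at the same time. Your Steps 1--2 amount to Lemma \ref{lem:ar+_decomp_map}. No Newton iteration is needed there: $\iota(\calR)\subset A_{\inf}(\calR_\infty)\subset\tilde{A}^{\dagger,r}_{\calR}$ by Remark \ref{rem:ar+_alt_desc}, and you only have to check $\theta\varphi^{-n}\iota(\calR)\subset\calR_n$ by uniqueness of \'etale lifts. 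However, as written, Step 3, which carries the whole content of (1), is a genuine gap, and Step 4 rests on a false isomorphism.

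\textbf{Step 3.} You never prove $\mathscr{E}_{\calR}\subseteq\ER$, and the inputs you name would not give it. Density of the completed perfection of $(\calR/p)\llbracket\overline{\mu}\rrbracket$ says nothing about which elements of $\tilde{E}_{\calR}$ satisfy the integrality conditions, and normalized traces are not needed. Your intermediate claim is also too strong: $\theta\varphi^{-n}(x)$ agrees with $(\bar{x}^{1/p^n})^{(0)}$ only modulo $p^{1-\epsilon}$, because the higher Teichm\"uller coefficients of $x$ may have $\overline{\mu}$-denominators. The missing ingredients are the ring-of-norms identification $\lim_{\phi}\calR_j/p\cong\ER^+$ of \eqref{eq:lim_Rmn_phi} and a Frobenius bootstrap. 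Here is the argument.
\begin{enumerate}
\item Take $x=\sum_k p^k[a_k]\in\AR^{\dagger,r'}$ with $r'$ arbitrary. Replacing $x$ by $\mu^Nx$ is harmless, since $\theta\varphi^{-n}(\mu)=\zeta_{p^n}-1$, so you may assume $a_0\in\tilde{E}_{\calR}^+$.
\item Overconvergence gives $\tilde{v}(a_k)\geqslant C-\tfrac{pr'}{p-1}k$. So for $n\gg0$ every term $p^k(a_k^{1/p^n})^{(0)}$ with $k\geqslant1$ lies in $p^{1-\epsilon}\widehat{\calR}_\infty$.
\item Hence $\theta\varphi^{-n}(x)\in\calR_n[1/p]\cap\widehat{\calR}_\infty=\calR_n$ by Proposition \ref{prop:Rinftym_in_Sinftym} (1), and therefore $(a_0^{1/p^n})^{(0)}\in\calR_n+p^{1-\epsilon}\widehat{\calR}_\infty$.
\item Taking a $p$-th power kills the error modulo $p$, and Frobenius carries $\calR_n/p$ into $\calR_{n-1}/p$ (Lemma \ref{lem:lim_ring_of_norms}). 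So every coordinate of $a_0$ lies in $\calR_j/p$, i.e.\ $a_0\in\lim_{\phi}\calR_j/p\cong\ER^+$.
\end{enumerate}
Thus $\mathscr{E}_{\calR}$ is exactly $\ER$, not $\varphi^{-k}(\ER^+)$ up to bounded denominators. This is the same mechanism as the paper's proof of Proposition \ref{prop:ar+_decomplete}. For strictness you also need $\ER^+\cap\overline{\mu}\tilde{E}_{\calR}^+=\overline{\mu}\ER^+$, so that the obvious lifts from $\AR^+[1/\mu]$ have controlled norm.

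\textbf{Step 4.} The isomorphism $\calR_\infty^{\flat}/\overline{\mu}^{p}\cong\calR_\infty/(\zeta_p-1)^p$ is false: the right-hand side has characteristic $p^2$. The tilting isomorphism $A^{\flat}/\varpi^{\flat}\cong A/(\varpi^{\flat})^{\sharp}$ requires $(\varpi^{\flat})^{\sharp}\mid p$, whereas $\overline{\mu}^{(0)}$ has valuation $\tfrac{p}{p-1}$. Work modulo $\varphi^{-1}(\overline{\mu})$ instead: $\calR_\infty^{\flat}/\varphi^{-1}(\overline{\mu})\cong\calR_\infty/(\zeta_p-1)$ via $y\mapsto y_0 \bmod (\zeta_p-1)$. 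We have $(\varphi^{-N}w)_0=w_N$, and the transition maps of $\lim_{\phi}\calR_j/p$ are surjective (see the proof of Lemma \ref{lem:lim_ring_of_norms}). Hence $\bigcup_N\varphi^{-N}(\ER^+)$ surjects onto $\calR_\infty/(\zeta_p-1)$, and successive approximation in powers of $\varphi^{-1}(\overline{\mu})$ gives the density claimed in (2).
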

\begin{proof}
	The claim follows from \cite[Lemma 7.4.3]{kedlaya-liu2}.
\end{proof}

\subsubsection{Decompleted integral period rings}

In this section, we shall explicitly describe the rings $\mathscr{E}_{\calR}$ and $\mathscr{A}_{\calR}$ by relating them to the rings studied in Sections \ref{subsubsec:imperfect_rings_charp} and \ref{subsubsec:imperfect_rings_mixedchar}.
We begin by defining the following rings:
\begin{equation*}
	\begin{aligned}
		\mathscr{A}_{\calR}^+ &\coloneq \mathscr{A}_{\calR} \cap A_{\inf}(\calR_{\infty}) \subset \tilde{A}_{\calR},\\
		\mathscr{E}_{\calR}^+ &\coloneq \mathscr{E}_{\calR} \cap \tilde{E}_{\calR}^+ \subset \tilde{E}_{\calR}.
	\end{aligned}
\end{equation*}
It is easy to check that the ring $\mathscr{A}_{\calR}^+$ (resp.\ $\mathscr{E}_{\calR}^+$) is $(p, \mu)\textrm{-adically}$ (resp.\ $\overline{\mu}\textrm{-adically}$) complete, and we have that $\mathscr{A}_{\calR} = \mathscr{A}_{\calR}^+[1/\mu]_p^{\wedge}$ using that $\mathscr{E}_{\calR}^+ = \mathscr{A}_{\calR}^+/p\mathscr{A}_{\calR}^+$.

\begin{rem}\label{rem:ar+_alt_desc}
	Note that $\varphi \colon A_{\inf}(\calR_{\infty}) \isomorphic A_{\inf}(\calR_{\infty})$, and therefore for any $a$ in $A_{\inf}(\calR_{\infty})$ and $n \geqslant 0$ we have that $\theta(\varphi^{-n}(a))$ belongs to $\widehat{R}_{\infty}$.
	In particular, the natural injective homomorphism of rings $A_{\inf}(\calR_{\infty}) \hookrightarrow \tilde{A}_{\calR}$ factors through $A_{\inf}(\calR_{\infty}) \hookrightarrow \tilde{A}^{\dagger,r}_{\calR}$ for all $r > 0$.
	Then, from Definition \ref{defi:decompleted_rings}, we obtain the following description:
	\begin{equation*}
		\mathscr{A}_{\calR}^+ = \{x \in A_{\inf}(\calR_{\infty}) \textrm{ such that } \theta(\varphi^{-n}(x)) \textrm{ is in } \calR_n \textrm{ for each } n \geqslant 0\}.
	\end{equation*}
\end{rem}

The goal of this section is to show the following claim:
\begin{prop}\label{prop:ar+_decomplete}
	The subrings $\AR^+$ (see Section \ref{subsubsec:imperfect_rings_mixedchar}) and $\mathscr{A}_{\calR}^+$ of $A_{\inf}(\calR_{\infty})$ are naturally identified, i.e.\ there exists a natural isomorphism
	\begin{equation*}
		\AR^+ \isomorphic \mathscr{A}_{\calR}^+,
	\end{equation*}
	as subrings of $A_{\inf}(\calR_{\infty})$.
	In particular, we also get that $\ER^+ \isomorphic \mathscr{E}_{\calR}^+$ as subrings of $\tilde{E}_{\calR}^+$.
\end{prop}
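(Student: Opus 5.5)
We will prove the two inclusions $\AR^+ \subset \mathscr{A}_{\calR}^+$ and $\mathscr{A}_{\calR}^+ \subset \AR^+$ inside $A_{\inf}(\calR_{\infty})$, using the description of $\mathscr{A}_{\calR}^+$ in Remark \ref{rem:ar+_alt_desc}. The statement for $\ER^+$ and $\mathscr{E}_{\calR}^+$ then follows by reducing modulo $p$, since $\AR^+/p = \ER^+$ and $\mathscr{A}_{\calR}^+/p = \mathscr{E}_{\calR}^+$.

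\emph{Inclusion $\AR^+ \subset \mathscr{A}_{\calR}^+$.} By Remark \ref{rem:ar+_alt_desc} it suffices to check that $\theta(\varphi^{-n}(x)) \in \calR_n$ for every $x \in \AR^+$ and $n \geqslant 0$. We first treat the generators of $A_{\calR,\square}^+$:
\begin{equation*}
	\varphi^{-n}([x_i^{\flat}]^{1/m}) = [x_i^{\flat}]^{1/mp^n} \mapsto x_i^{1/mp^n}, \qquad \varphi^{-n}(1+\mu) = [\varepsilon^{1/p^n}] \mapsto \zeta_{p^n},
\end{equation*}
and both images lie in $\calR_n$. Since $\theta\circ\varphi^{-n}$ is a continuous ring map from $A_{\inf}(\calR_\infty)$ to $\widehat{\calR}_\infty$, and $\calR_n$ is $p$-adically complete and closed in $\widehat{\calR}_{\infty}$ by Proposition \ref{prop:Rinftym_in_Sinftym} (1), the $(p,\mu)$-adic completion $A_{\calR,\square}^+$ is mapped into $\calR_n$. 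To pass to the \'etale extension, note that $\varphi^{-n}(\AR^+)$ is the unique lift of the $p$-completely \'etale map $\calR_\square \to \calR$ over $\varphi^{-n}(A^+_{\calR,\square})$. Composing with $\theta$ gives a map of $\calR_{\square,n}$-algebras $\calR \to \widehat{\calR}_\infty$. By uniqueness of \'etale lifts, this map agrees with the composite $\calR \xrightarrow{\varphi^{-n}\text{-twist}} \calR_n \subset \widehat{\calR}_\infty$; concretely, $\calR_n = \calR \otimes_{\calR_\square}\calR_{\square,n}$ up to normalisation, and $\calR_n$ is normal by Proposition \ref{prop:Rnm_Snm_normal}. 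Hence $\theta\varphi^{-n}(\AR^+) \subset \calR_n$.

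\emph{Inclusion $\mathscr{A}_{\calR}^+ \subset \AR^+$.} Both rings are $p$-adically complete and $p$-torsion free, and $\AR^+$ is $p$-adically saturated in $A_{\inf}(\calR_\infty)$ (because $\ER^+ \hookrightarrow \tilde{E}_{\calR}^+$ is injective). So by a successive-approximation argument it suffices to prove the claim modulo $p$, that is, $\mathscr{E}_{\calR}^+ \subset \ER^+$. Let $\bar x \in \mathscr{E}_{\calR}^+ \subset \tilde{E}^+_{\calR} = \calR_\infty^\flat$. Writing $\bar x = (x_0, x_1, \ldots)$ with $x_n \in \calR_\infty/p$, the defining condition forces $x_n \in \calR_n/p$, via Proposition \ref{prop:Rinftym_in_Sinftym} (1) and the compatibility of $\theta\varphi^{-n}$ modulo $p$ with the $n$-th projection. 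We then compare with the explicit description $\ER^+ \cong (\calR/p)\llbracket\overline{\mu}\rrbracket$. Modulo $\overline{\mu}^{p^n}$, the ring $\tilde{E}^+_\calR/\overline{\mu}^{p^{n}}$ is identified with $\calR_\infty/(\zeta_p-1)^{p}$ after applying $\varphi^{-n}$. The elements with $n$-th component in $\calR_n/p$ correspond to $\calR_n/(\zeta_{p^{n+1}}-1)^{p}$-type quotients, and these are exactly the images of $(\calR/p)[\overline{\mu}]/\overline{\mu}^{p^n}$. Here we use that $\calR_n/\pi_n$ is free over $\calR/p$ with the expected monomial basis, by the computation in Proposition \ref{prop:Rnm_Snm_normal} (2).

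The main obstacle is this last identification: showing that ``$n$-th component in $\calR_n$'' for all $n$ cuts out precisely $(\calR/p)\llbracket\overline{\mu}\rrbracket$, and not something larger, such as elements involving $x_i^{1/p^k}$. We would first do this for $\calR_\square$, where everything is a monomial computation: a monomial $x^{\alpha}\varepsilon^{\beta}$ with non-integral exponents $\alpha$ fails the condition at small $n$. We would then transfer the result to $\calR$ by the étale base change, using that $\calR/p$ is étale over $\calR_\square/p$ and hence flat. If this direct route is messy, the fallback is to cite \cite[Lemma 7.4.3]{kedlaya-liu2}, which identifies the decompletion for standard towers, as in Lemma \ref{lem:decompleting_tower}.
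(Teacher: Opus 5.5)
Your architecture is the paper's: the inclusion $\AR^+\subset\mathscr{A}^+_{\calR}$ via Remark~\ref{rem:ar+_alt_desc}, the reduction modulo $p$, and the fact that $\theta\circ\varphi^{-n}$ modulo $p$ is the $n$-th projection $\tilde{E}^+_{\calR}\to\calR_\infty/p$. The last point shows that elements coming from $\mathscr{A}^+_{\calR}$ have $n$-th component in $\calR_n/p\calR_n$.

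\textbf{The gap.} The step that carries the proposition is only sketched, and your sketch would not close it. That step is that $\lim_\phi\calR_n/p\calR_n\subset\tilde{E}^+_{\calR}$ is exactly $\overline{\iota}(\calR/p\calR)\llbracket\overline{\mu}\rrbracket=\ER^+$. The missing idea is the relative Frobenius of the \'etale map $B_0=\calR_\square/p\to C_0=\calR/p$: the map $B_0\otimes_{\varphi,B_0}C_0\to C_0$ is an isomorphism.

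Combine this with $\phi\colon B_{n+1}\isomorphic B_n$ for $B_n=\calR_\square[x_1^{1/mp^n},\ldots,x_d^{1/mp^n}]/p$, which uses that $\kappa$ is perfect. Then Frobenius induces isomorphisms $\phi\colon C_{n+1}\isomorphic C_n$ for $C_n=B_n\otimes_{B_0}C_0=\calR[x_1^{1/mp^n},\ldots,x_d^{1/mp^n}]/p$. Since $\calR_n/p=C_n\otimes_{\mathbb{F}_p}\mathbb{Z}[\zeta_{p^n}]/p$, the maps $\phi^n\otimes 1$ identify the system $(\calR_n/p,\phi)$ with $(C_0\otimes_{\mathbb{F}_p}\mathbb{Z}[\zeta_{p^n}]/p,\,1\otimes\phi)$. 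Its limit is $C_0\llbracket\overline{\mu}\rrbracket$, with $c\in C_0$ corresponding to $(\phi^{-n}(c))_n$. This is Lemma~\ref{lem:lim_ring_of_norms} and \eqref{eq:lim_Rmn_phi}.

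Neither ``\'etale hence flat'' base change from the toric case nor the freeness of $\calR_n/\pi_n$ over $\calR/p$ supplies this. The real question is whether compatible $p$-power roots of elements of $\calR/p$ already lie in $C_n$. Equivalently, $\calR/p$ must be free over its subring of $p$-th powers with basis the monomials $x^{\gamma/m}$, $0\leqslant\gamma_i<p$. That is what \'etaleness gives, and flatness does not.

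The same fact is silently needed in your first inclusion. Uniqueness of \'etale lifts only says that a map $\calR\to\widehat{\calR}_\infty$ over the given map on $\calR_\square$ is determined by its reduction modulo $p$. So you still have to show that the reduction of $\theta\circ\varphi^{-n}\circ\iota$, namely the $n$-th component of $\overline{\iota}$, lands in $\calR_n/p$. The existence of your twisted map $\calR\to\calR_n$ rests on the same point.

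\textbf{Smaller points.}
\begin{itemize}
\item The successive approximation needs $\mathscr{A}^+_{\calR}\cap pA_{\inf}(\calR_\infty)=p\mathscr{A}^+_{\calR}$, i.e.\ saturation of $\mathscr{A}^+_{\calR}$, not of $\AR^+$. This follows from Remark~\ref{rem:ar+_alt_desc} and Proposition~\ref{prop:Rinftym_in_Sinftym}~(1).
\item $\tilde{E}^+_{\calR}/\overline{\mu}^{p^n}$ cannot be identified with $\calR_\infty/(\zeta_p-1)^p$. The former has characteristic $p$, whereas $p\neq 0$ in the latter.
\item In the paper, \cite[Lemma 7.4.3]{kedlaya-liu2} is used only to see that the tower is weakly decompleting (Lemma~\ref{lem:decompleting_tower}). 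It does not by itself identify $\mathscr{A}^+_{\calR}$ with the explicitly constructed ring $\AR^+$, which is the content of the present statement, so it does not work as a fallback.
\end{itemize}
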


Let us first show that there exists a natural homomorphism of rings
\begin{equation*}
	f \colon \AR^+ \longrightarrow \mathscr{A}_{\calR}^+,
\end{equation*}
compatible with their respective inclusions in $A_{\inf}(\calR_{\infty})$.
To construct the map $f$, we begin with an observation:
\begin{lem}\label{lem:lim_ring_of_norms}
	Let $\{B_n\}_{n \geqslant 0}$ denote a family of $\mathbb{F}_p\textrm{-algebras}$ satisfying the following conditions:
	\begin{itemize}
		\item For each $n \geqslant 0$, there exists a natural inclusion $B_n \hookrightarrow B_{n+1}$.
		\item For each $n \geqslant 0$ the Frobenius endomorphism $\varphi \colon B_{n+1} \rightarrow B_{n+1}$ factors through an isomorphism $B_{n+1} \xrightarrow[\phi_B]{\hspace{1mm}\sim\hspace{1mm}} B_n \hookrightarrow B_{n+1}$.
	\end{itemize}
	Let $C_0$ be an \'etale algebra over $B_0$.
	Set $C_n \coloneq B_n \otimes_{B_0} C_0$ and $D_n \coloneq C_n \otimes_{\mathbb{F}_p} \mathbb{Z}[\zeta_{p^n}]/p\mathbb{Z}[\zeta_{p^n}]$ for each $n \geqslant 0$.
	\begin{enumerate}
		\item[\textup{(1)}] For each $n \geqslant 0$ the Frobenius endomorphism $\varphi \colon C_{n+1} \rightarrow C_{n+1}$ factors through an isomorphism $C_{n+1} \xrightarrow[\phi_C]{\hspace{1mm}\sim\hspace{1mm}} C_n \hookrightarrow C_{n+1}$.
			In particular, we have a natural isomorphism of rings
			\begin{equation*}
				\textstyle\lim_{\phi_C} C_n \isomorphic C_0.
			\end{equation*}

		\item[\textup{(2)}] For each $n \geqslant 0$ the Frobenius endomorphism $\varphi \colon D_{n+1} \rightarrow D_{n+1}$ factors through a homomorphism $D_{n+1} \xrightarrow{\hspace{1mm}\phi_D\hspace{1mm}} D_n \hookrightarrow D_{n+1}$.
			Additionally, we have a natural isomorphism of rings
			\begin{equation*}
				\textstyle\lim_{\phi_D} D_n \isomorphic D_0\llbracket \overline{\mu} \rrbracket = C_0\llbracket \overline{\mu} \rrbracket
			\end{equation*}
	\end{enumerate}
\end{lem}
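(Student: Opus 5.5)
For (1), the plan is to reduce everything to the structure of relative Frobenius for étale maps in characteristic $p$. Since $C_0$ is étale over $B_0$, the relative Frobenius $B_0 \otimes_{\varphi, B_0} C_0 \rightarrow C_0$ is an isomorphism. I will use the factorisation $\varphi = \iota \circ \phi_B$ on $B_{n+1}$, where $\iota \colon B_n \hookrightarrow B_{n+1}$, with $\phi_B$ an isomorphism. Then I define $\phi_C \colon C_{n+1} = B_{n+1} \otimes_{B_0} C_0 \rightarrow C_n = B_n \otimes_{B_0} C_0$ by $b \otimes c \mapsto \phi_B(b) \otimes c^p$. The step to justify is that this is well defined. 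On $B_0$ the map $\phi_B$ agrees with $\varphi$, since the diagram of inclusions is compatible with Frobenius, so $\phi_B$ is $\varphi$-semilinear over $B_0$, which is exactly what is needed for $(b, c) \mapsto \phi_B(b) \otimes c^p$ to descend to the tensor product. By construction, composing $\phi_C$ with $C_n \hookrightarrow C_{n+1}$ gives Frobenius. (The inclusion $C_n \hookrightarrow C_{n+1}$ is injective because étale implies flat.)

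Bijectivity of $\phi_C$ comes from rewriting it as the composite
\begin{equation*}
C_{n+1} = B_{n+1} \otimes_{B_0} C_0 \xrightarrow{\phi_B \otimes 1} B_n \otimes_{\varphi, B_0} C_0 = B_n \otimes_{B_0} (B_0 \otimes_{\varphi, B_0} C_0) \isomorphic B_n \otimes_{B_0} C_0,
\end{equation*}
where the last isomorphism is the relative Frobenius isomorphism for the étale map $B_0 \rightarrow C_0$. The limit statement then follows formally: all transition maps are isomorphisms, so $\lim_{\phi_C} C_n \isomorphic C_0$.

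For (2), the plan is to treat the cyclotomic factor $O_n \coloneq \mathbb{Z}[\zeta_{p^n}]/p \cong \mathbb{F}_p[\pi_n]/(\pi_n^{e_n})$ separately. Here $e_n = p^{n-1}(p-1)$ and $\pi_n = \zeta_{p^n}-1$, and Frobenius maps $O_{n+1}$ onto $O_n \subset O_{n+1}$ via $\pi_{n+1} \mapsto \pi_{n+1}^p = \pi_n$, using that $\pi_{n+1}^p \equiv \pi_n \bmod p$. Write $\phi_O \colon O_{n+1} \twoheadrightarrow O_n$ for the surjection $\pi_{n+1} \mapsto \pi_n$, whose kernel is $(\pi_{n+1}^{e_n})$. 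Then set $\phi_D \coloneq \phi_C \otimes \phi_O$; this factors Frobenius on $D_{n+1}$ because both tensor factors do. Compatibility over $\mathbb{F}_p$ is automatic.

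For the limit, note that $\phi_C$ is an isomorphism, so $\lim_{\phi_D} D_n \cong C_0 \widehat{\otimes} \lim_{\phi_O} O_n$. The key identification is $\lim_{\phi_O} O_n \cong \mathbb{F}_p\llbracket \overline{\mu} \rrbracket$, via $\overline{\mu} = \varepsilon - 1 \mapsto (\pi_n)_n$. This is the classical tilt computation $O_{F_\infty}^\flat \supset \mathbb{F}_p\llbracket \overline{\mu} \rrbracket$ restricted to the unramified-over-$\mathbb{F}_p$ part: an element of the limit is a compatible system of truncated polynomials $f_n(\pi_n)$ with $f_{n+1}(\pi_{n+1}) \mapsto f_{n}(\pi_n)$, and because $e_n \to \infty$ these glue to a single power series in $\overline{\mu}$. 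Finally, since $C_0$ is flat over $\mathbb{F}_p$ (as it is a vector space) and the systems are Mittag-Leffler with surjective transitions, $\lim (C_0 \otimes O_n) \cong C_0\llbracket \overline{\mu} \rrbracket$. The identification $D_0 = C_0$ holds since $O_0 = \mathbb{F}_p$.

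The main obstacle I expect is this last step: checking carefully that $\lim_n (C_n \otimes O_n)$ equals the power series ring rather than a completed tensor with some subtlety. I would handle it by using the $\phi_C$ isomorphisms to identify each $D_n$ with $C_0 \otimes_{\mathbb{F}_p} \mathbb{F}_p[\overline{\mu}]/(\overline{\mu}^{e_n})$ compatibly, after which the limit is visibly $C_0\llbracket \overline{\mu} \rrbracket$.
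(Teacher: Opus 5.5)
Your proposal is correct and follows essentially the same route as the paper. In part (1) you write $\phi_C$ as $\phi_B\otimes 1$ followed by the relative Frobenius isomorphism of the étale algebra $C_0$ over $B_0$, and in part (2) you use $\phi_C^n\otimes 1$ to replace $\{D_n\}$ by the system $\{C_0\otimes_{\mathbb{F}_p}\mathbb{Z}[\zeta_{p^n}]/p\}$ with transition maps $1\otimes\phi$; your explicit identification $\mathbb{Z}[\zeta_{p^n}]/p\cong\mathbb{F}_p[\pi_n]/(\pi_n^{e_n})$, with transitions $\pi_{n+1}\mapsto\pi_n$, simply spells out the final limit computation that the paper asserts without detail.
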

\begin{proof}
	It is straightforward to verify that for each $n \geqslant 0$ the Frobenius endomorphism $\varphi \colon C_{n+1} \rightarrow C_{n+1}$ (resp.\ $\varphi \colon D_{n+1} \rightarrow D_{n+1}$) factors through a homomorphism $C_{n+1} \xrightarrow{\hspace{1mm}\phi_C\hspace{1mm}} C_n \hookrightarrow C_{n+1}$ (resp.\ $D_{n+1} \xrightarrow{\hspace{1mm}\phi_D\hspace{1mm}} D_n \hookrightarrow D_{n+1}$).

	To show part (1), let us first note that the relative Frobenius map $\varphi^* C_0 = B_0 \otimes_{\varphi,B_0} C_0 \rightarrow C_0$, given by $b \otimes c \mapsto b\varphi(c)$, is a $B_0\textrm{-linear}$ isomorphism because $C_0$ is \'etale over $B_0$ (for example, see \cite[\href{https://stacks.math.columbia.edu/tag/0EBS}{Tag 0EBS}]{stacks-project}).
	Then, by extending scalars of the preceding isomorphism along the natural inclusion $B_0 \hookrightarrow B_n$ and using the commutative diagram
	\begin{center}
		\begin{tikzcd}
			{B_0} & {B_{n+1}} & {B_n} \\
			{B_0} && {B_n},
			\arrow[hook, from=1-1, to=1-2]
			\arrow["\varphi"', from=1-1, to=2-1]
			\arrow["{\phi_B}", from=1-2, to=1-3]
			\arrow[equal, from=1-3, to=2-3]
			\arrow[hook, from=2-1, to=2-3]
		\end{tikzcd}
	\end{center}
	we obtain $B_n\textrm{-linear}$ isomorphisms
	\begin{equation*}
		\phi_B^* C_{n+1} = B_n \otimes_{\phi_B,B_{n+1}} B_{n+1} \otimes_{B_0} C_0 \isomorphic B_n \otimes_{B_0} B_0 \otimes_{\varphi,B_0} C_0 = B_n \otimes_{B_0} \varphi^* C_0 \isomorphic B_n \otimes_{B_0} C_0 = C_n,
	\end{equation*}
	sending $b \otimes c \mapsto b\phi_C(c)$.
	Precomposing the preceding isomorphism with the isomorphism $C_{n+1} \isomorphic B_n \otimes_{\phi_B,B_{n+1}} C_{n+1} = \phi_B^* C_{n+1}$ linear over $\phi_B \colon B_{n+1} \isomorphic B_n$, yields an isomorphism $C_{n+1} \isomorphic \phi_B^* C_{n+1} \isomorphic C_n$.
	Then, from the discussion above we see that the composition $C_{n+1} \isomorphic \phi_B^* C_{n+1} \isomorphic C_n$ coincides with the homomorphism $\phi_C \colon C_{n+1} \rightarrow C_n$, hence the latter is bijective.
	Passing to the limit, we obtain that $\lim_{\phi_C} C_n \isomorphic C_0$.

	For part (2), note that we have the following commutative diagram:
	\begin{equation*}
		\begin{tikzcd}
			D_{n+1} = C_{n+1} \otimes_{\mathbb{F}_p} \mathbb{Z}[\zeta_{p^{n+1}}]/p\mathbb{Z}[\zeta_{p^{n+1}}] \arrow[r, "\phi \otimes \phi"] \arrow[d, "\wr", "\phi^{n+1} \otimes 1"'] & C_n \otimes_{\mathbb{F}_p} \mathbb{Z}[\zeta_{p^n}]/p\mathbb{Z}[\zeta_{p^n}] = D_n \arrow[d, "\phi^n \otimes 1", "\wr"']\\
			C_0 \otimes_{\mathbb{F}_p} \mathbb{Z}[\zeta_{p^{n+1}}]/p\mathbb{Z}[\zeta_{p^{n+1}}] \arrow[r, "1 \otimes \phi"] & C_0 \otimes_{\mathbb{F}_p} \mathbb{Z}[\zeta_{p^n}]/p\mathbb{Z}[\zeta_{p^n}],
		\end{tikzcd}
	\end{equation*}
	Therefore, upon passing to the limit we get that $\lim_{\phi_D} D_n \isomorphic \lim_{1 \otimes \phi} C_0[\zeta_{p^{n+1}}] = C_0\llbracket \overline{\mu} \rrbracket = D_0\llbracket \overline{\mu} \rrbracket$.
\end{proof}

Let $\calR_{\square} = R_{\square,0,m}$.
For each $n \geqslant 0$, in the notation of Lemma \ref{lem:lim_ring_of_norms}, set $B_n = \calR_{\square}[x_1^{1/mp^n}, \ldots, x_d^{1/mp^n}]/p$, $C_n = \calR[x_1^{1/mp^n}, \ldots, x_d^{1/mp^n}]/p$ and $D_n = C_n[\zeta_{p^n}] = \calR_n/p\calR_n$.
Then, from Lemma \ref{lem:lim_ring_of_norms}, it follows that we have the following natural isomorphism of rings:
\begin{equation}\label{eq:lim_Rmn_phi}
	\textstyle\lim_{\phi} \calR_n/p\calR_n \isomorphic (\calR/p\calR)\llbracket \overline{\mu} \rrbracket = \ER^+.
\end{equation}

\begin{lem}\label{lem:ar+_decomp_map}
	The natural injective homomorphism $\AR^+ \hookrightarrow A_{\inf}(R_{\infty})$ factors through $\AR^+ \xrightarrow{\hspace{1mm} f \hspace{1mm}} \mathscr{A}_{\calR}^+ \hookrightarrow A_{\inf}(R_{\infty})$.
	In particular, $f$ is injective.
\end{lem}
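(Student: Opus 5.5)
To prove that $\AR^+ \hookrightarrow A_{\inf}(\calR_{\infty})$ factors through $\mathscr{A}_{\calR}^+$, we use the explicit description of Remark \ref{rem:ar+_alt_desc}: $\mathscr{A}_{\calR}^+$ consists of those $x \in A_{\inf}(\calR_{\infty})$ with $\theta(\varphi^{-n}(x)) \in \calR_n$ for all $n \geqslant 0$. So it suffices to show that for every $x \in \AR^+$ and every $n \geqslant 0$ the element $\theta(\varphi^{-n}(x))$ lies in $\calR_n \subset \widehat{\calR}_{\infty}$. The set of such $x$ is a subring of $A_{\inf}(\calR_{\infty})$ (both $\theta$ and $\varphi^{-n}$ are ring maps and $\calR_n$ is a ring). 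It is also closed in the $(p,\mu)$-adic topology: $\theta\circ\varphi^{-n}$ is continuous, the ideal $(p,\mu)$ maps into $(p, \zeta_{p^n}-1)\widehat{\calR}_{\infty}$, and $\calR_n = \widehat{\calR}_{\infty} \cap \calR_n[1/p]$ is closed, with $\calR_n$ $p$-adically complete and $p^r\widehat{\calR}_{\infty}\cap\calR_n = p^r\calR_n$ by Proposition \ref{prop:Rinftym_in_Sinftym}(1). Hence it is enough to check the condition on topological generators of $\AR^+$.

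For the generators of $A^+_{\calR,\square}$ this is a direct computation. We have $\theta(\varphi^{-n}(q)) = \theta([\varepsilon^{1/p^n}]) = \zeta_{p^n} \in \calR_n$, so $\mu$ maps to $\zeta_{p^n}-1$. Also $\theta(\varphi^{-n}([x_i^{\flat}]^{\pm 1/m})) = x_i^{\pm 1/mp^n} \in \calR_n$, where the inverse is only needed for $i \leqslant a$. Finally $\zeta_m$ and $O_F$ map to themselves under $\theta\circ\varphi^{-n}$, up to $\varphi^{-n}$ on $O_F$ and $\zeta_m$, which stays in $O_F[\zeta_m]$. Hence $A^+_{\calR,\square}$, the $(p,\mu)$-adic completion of the polynomial ring on these generators, maps into $\mathscr{A}_{\calR}^+$.

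The main obstacle is passing from $A^+_{\calR,\square}$ to $\AR^+$, which was defined as the completed unique extension along the $p$-completed étale map $\calR_{\square}\to\calR$. My plan is to use the uniqueness of étale lifts. The ring $\mathscr{A}_{\calR}^+$ is $(p,\mu)$-adically complete and contains $A^+_{\calR,\square}$. Via $\theta\circ\varphi^{-n}$, the map $\calR_{\square}\to \calR_{\square,n}$, $x_i^{1/m}\mapsto x_i^{1/mp^n}$, extends uniquely, by $p$-complete étaleness, to $\calR \to \calR_n = \calR\otimes_{\calR_{\square}}\calR_{\square,n}$ (suitably twisted by $\varphi^{-n}$). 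For an element $z\in\AR^+$ coming from $\iota(\calR)$, the image $\theta(\varphi^{-n}(z))\in\widehat{\calR}_{\infty}$ therefore solves the same étale system, with $\det(\partial Q_i/\partial z_j)$ a unit, as this lift in $\calR_n$. By Hensel uniqueness in the $p$-adically complete ring $\widehat{\calR}_{\infty}$, with the approximation taken modulo $p$ where $\calR/p\to\calR_n/p$ is the relative Frobenius-twisted map and is injective, the two coincide. Hence $\theta(\varphi^{-n}(\iota(\calR)))\subset\calR_n$. Combined with $\mu$, this gives generators of $\AR^+\cong\calR\llbracket\mu\rrbracket$, and the closure argument finishes the factorisation. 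If the reduction to the étale presentation is awkward, one may instead argue modulo $p$ using \eqref{eq:lim_Rmn_phi}: $\ER^+\cong\lim_\phi \calR_n/p$ maps compatibly into $\tilde E^+_{\calR}$, and one lifts using $p$-adic completeness and torsion-freeness.

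Injectivity of $f$ is immediate, since the composite $\AR^+\to\mathscr{A}_{\calR}^+\hookrightarrow A_{\inf}(\calR_{\infty})$ is the given injective inclusion.
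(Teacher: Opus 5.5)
Your proposal is correct and is essentially the paper's argument. Both reduce, via Remark \ref{rem:ar+_alt_desc}, to checking $\theta\circ\varphi^{-n}$ on $\mu$ and on $\iota(\calR)$, and both control $\iota(\calR)$ through the inverse relative Frobenius of the \'etale map $\calR_{\square}\to\calR$, which is what Lemma \ref{lem:lim_ring_of_norms} encodes; your Hensel-uniqueness step is a fair way to make the paper's ``easy to check'' precise.

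The one input you assert without proof is that $\theta\circ\varphi^{-n}\circ\iota$ agrees modulo $p$ with the inverse relative Frobenius $\calR/p\to\calR_n/p$. It follows from $\theta\circ\iota$ being the inclusion $\calR\subset\widehat{\calR}_{\infty}$, together with the fact that a ring map $g$ out of $\calR/p$ is determined by $g|_{\calR_{\square}/p}$ and $\varphi^{n}\circ g$ (relative Frobenius being an isomorphism for $\calR_{\square}/p\to\calR/p$). Your mod-$p$ fallback via \eqref{eq:lim_Rmn_phi} needs this same input, and on its own it does not lift from mod-$p$ membership to integral membership.
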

\begin{proof}
	Let $a$ be an element of $\AR^+$, and using Remark \ref{rem:ar+_alt_desc} note that to obtain the claim it suffices to show that $\theta(\varphi^{-n}(a))$ belongs to $\calR_n$ for each $n \geqslant 0$.
	Using the isomorphism $\iota \colon \calR\llbracket \mu \rrbracket \isomorphic \AR^+$ from Section \ref{subsubsec:imperfect_rings_mixedchar}, we may write $a = \sum_{i \geqslant 0} \iota(a_i)\mu^i$, for some $a_i$ in $\calR$.
	Then, using Lemma \ref{lem:lim_ring_of_norms}, it is easy to check that $\theta(\varphi^{-n}(\iota(a_i)))$ is in $\calR_n$ for each $i, n \geqslant 0$.
	Since $\theta(\varphi^{-n}(\mu)) = \zeta_{p^n}-1$ for $n \geqslant 0$, therefore, we conclude that $\theta(\varphi^{-n}(a))$ belongs to $\calR_n$ for each $n \geqslant 0$.
\end{proof}

\begin{proof}[Proof of Proposition \ref{prop:ar+_decomplete}]
	From Lemma \ref{lem:ar+_decomp_map}, we have an injective homomorphism of rings $f \colon \AR^+ \rightarrow \mathscr{A}_{\calR}^+$ compatible with their respective inclusions into $A_{\inf}(\calR_{\infty})$.
	As the source and the target of $f$ are $p\textrm{-adically}$ complete and $p\textrm{-torsion}$ free, it suffices to show that $f$ is bijective modulo $p$.
	Since we have that $\ER^+ \xrightarrow{\hspace{1mm} f \hspace{1mm}} \mathscr{E}_{\calR}^+ \hookrightarrow \tilde{E}_{\calR}^+$, and the composition is also injective, therefore, it follows that $f$ is injective modulo $p$.

	To show the surjectivity of $f$ modulo $p$, let $b$ be an element of $\mathscr{E}_{\calR}^+$, and write $b = (b_0, b_1, \ldots)$ in $\tilde{E}_{\calR}^+$ with $b_n$ in $\calR_{\infty}/p\calR_{\infty}$ satisfying $b_{n+1}^p = b_n$ for each $n \geqslant 0$.
	Let $a$ in $\mathscr{A}_{\calR}^+$ denote a lift of $b$.
	From Remark \ref{rem:ar+_alt_desc} we have that $\theta(\varphi^{-n}(a))$ belongs to $\calR_n$ for each $n \geqslant 0$.
	As the equality $b_n = \theta(\varphi^{-n}(a)) \textrm{ mod } p$ holds in $\calR_{\infty}/p\calR_{\infty}$, therefore, we conclude that $b_n$ belongs to $\calR_n/p\calR_n$ for each $n \geqslant 0$.
	Hence, $b = (b_0, b_1, \ldots)$ belongs to $\lim_{\phi} \calR_n/p\calR_n \isomorphic \ER^+$, where the isomorphism is from \eqref{eq:lim_Rmn_phi}.
	This completes our proof.
\end{proof}

\subsubsection{Galois groups}

In this section, we will relate the Galois groups of various period rings considered in Section \ref{subsec:period_rings}.

From Proposition \ref{prop:ar+_decomplete}, let us first note that the ring $\mathscr{A}_{\calR}$ (resp.\ $\mathscr{E}_{\calR}$) coincides with the ring $\AR$ (resp.\ $\ER$) described in Section \ref{subsubsec:imperfect_rings_mixedchar} (resp.\ Section \ref{subsubsec:imperfect_rings_charp}).
We shall confuse the two notation in the following.

\begin{lem}\label{lem:normal_domain_check}
	The rings $\AR$, $\ER$, $\tilde{E}_{\calR}$, $\widehat{\calR}_{\infty}[1/p]$ and $\calR_{\infty}[1/p]$ are normal domains.
\end{lem}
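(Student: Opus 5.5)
We treat the five rings in order of increasing difficulty, reducing each to a statement about a regular or perfectoid ring already established in the paper.

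\textbf{The rings $\calR_{\infty}[1/p]$ and $\widehat{\calR}_{\infty}[1/p]$.} By Proposition~\ref{prop:Rnm_Snm_normal}(3), each $\calR_n = R_{n,m}$ is a noetherian regular, hence normal, domain, and the transition maps are injective. A filtered union of normal domains along injective maps is a normal domain: integrality of an element of $\Fr(\calR_\infty)$ over $\calR_\infty$ is witnessed at some finite level. Hence $\calR_{\infty}$ is a normal domain, and so is its localisation $\calR_\infty[1/p]$. For the completion, Lemma~\ref{lem:Rinftyhat_normal} gives that $\widehat{R}_{\infty,m}=\widehat{\calR}_\infty$ is a normal domain, so again its localisation $\widehat{\calR}_\infty[1/p]$ is one.

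\textbf{The ring $\ER$.} By Section~\ref{subsubsec:imperfect_rings_charp}, $\ER^+\simeq(\calR/p\calR)\llbracket\overline{\mu}\rrbracket$, and $\ER=\ER^+[1/\overline\mu]$. The ring $\calR/p\calR$ is étale over $\calR_{\square}/p$, which is smooth over $\kappa[\zeta_m]$. It is also a domain, because $p\calR$ is the prime $\mathfrak p_{0,m}$ of Proposition~\ref{prop:Rnm_Snm_normal}(1); the case $n=0$ of part (2) of that proposition then shows $\calR/p\calR$ is a noetherian regular domain. A power series ring over a noetherian regular domain is again a noetherian regular domain, so $\ER^+$ is one, and therefore $\ER$ is a noetherian regular, hence normal, domain.

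\textbf{The ring $\AR$.} Here $\AR^+\simeq\calR\llbracket\mu\rrbracket$ is a regular domain, so $\AR^+[1/\mu]$ is a noetherian regular domain, and $\AR$ is its $p$-adic completion. We use that $\AR$ is noetherian, $p$-adically separated and complete, $p$-torsion free, and satisfies $\AR/p\simeq\ER$, a domain. A domain modulo a nonzerodivisor plus completeness makes $\AR$ a domain: if $xy=0$, write $x=p^a x'$ and $y=p^b y'$ with $x',y'\notin p\AR$ using separatedness; then $x'y'=0$, and reducing mod $p$ contradicts integrality of $\ER$. For normality, we would show $\AR$ is regular: at any maximal ideal $\mathfrak m$ one has $p\in\mathfrak m$ by completeness, and $\AR_{\mathfrak m}/p$ is a localisation of the regular ring $\ER$, with $p$ a nonzerodivisor. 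Then $\AR_{\mathfrak m}$ is regular by the Stacks criterion already used in Proposition~\ref{prop:Rnm_Snm_normal}(3) (Tag 00NU). Regular implies normal.

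\textbf{The ring $\tilde{E}_{\calR}$.} This is the main obstacle, since $\tilde{E}_\calR$ is non-noetherian. We have $\tilde{E}_{\calR}^+=\calR_\infty^\flat$, and the tilting isomorphism $\calR_\infty^\flat\simeq\lim_{x\mapsto x^p}\widehat{\calR}_\infty$ is multiplicative. Since $\widehat{\calR}_\infty$ is a domain, a product $ab$ that vanishes must vanish in every coordinate of this limit, so one of $a,b$ is zero; thus $\tilde E^+_\calR$ is a domain. For normality we would first try the following. Let $z\in\Fr(\tilde E_\calR)$ be integral over $\tilde E_\calR$, and clear denominators to place $\overline\mu^kz$ integral over $\tilde E^+_\calR$. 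Since $\tilde E^+_\calR$ is perfect, $z^{1/p^n}$ exists in the fraction field and is integral as well. Transport everything via $\sharp$ and the identification $\tilde E^+_\calR/\overline\mu^{\,p}\simeq\widehat{\calR}_\infty/p$ (Scholze, Proposition 5.17). Integral closedness of $\widehat\calR_\infty$ in $\widehat\calR_\infty[1/p]$, which follows from normality of $\widehat{\calR}_\infty$, then yields each coordinate in $\widehat\calR_\infty$ up to bounded $\overline\mu$-power. Alternatively, we could use that perfectoid tilting preserves integral closedness of $\widehat\calR_\infty$ in $\widehat\calR_\infty[1/p]$, so that $\tilde E^+_\calR$ is integrally closed in $\tilde E_\calR$, and combine this with normality of $\widehat\calR_\infty[1/p]$, invoking the tilting equivalence of finite étale algebras to transfer normality. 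Making this transfer precise is the step we expect to require the most care.
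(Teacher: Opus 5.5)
Your arguments for $\calR_\infty[1/p]$, $\widehat{\calR}_\infty[1/p]$, $\ER$ and $\AR$ are fine. For $\ER$ and $\AR$ you even supply the regularity argument that the paper dismisses as clear.

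The gap is the normality of $\tilde{E}_{\calR}$. You do not actually prove it, and neither of your two sketches can be completed as written.

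In the first sketch, $w=\overline{\mu}^k z$ is merely an element of $\Fr(\tilde{E}^+_{\calR})$. It has no coordinates in $\lim_{x\mapsto x^p}\widehat{\calR}_\infty$, no image under $\sharp$, and no reduction modulo a power of $\overline{\mu}$, unless you already know that $w\in\tilde{E}^+_{\calR}[1/\overline{\mu}]$. That is precisely what has to be shown. Moreover, the input you invoke (integral closedness of $\widehat{\calR}_\infty$ in $\widehat{\calR}_\infty[1/p]$) can at best give that $\tilde{E}^+_{\calR}$ is integrally closed in $\tilde{E}_{\calR}$. That says nothing about elements of $\Fr(\tilde{E}_{\calR})\smallsetminus\tilde{E}_{\calR}$. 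Also, the correct identification is $\tilde{E}^+_{\calR}/\overline{\mu}\simeq\widehat{\calR}_\infty/(\zeta_p-1)^p$, not $\tilde{E}^+_{\calR}/\overline{\mu}^{\,p}\simeq\widehat{\calR}_\infty/p$.

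In the second sketch, the tilting equivalence of finite \'etale algebras cannot detect normality. An element $z\in\Fr(\tilde{E}_{\calR})\smallsetminus\tilde{E}_{\calR}$ integral over $\tilde{E}_{\calR}$ generates a nontrivial finite birational extension $\tilde{E}_{\calR}[z]$. A finite \'etale extension of a domain contained in its fraction field is trivial, so this extension is not \'etale, and Theorem~\ref{thm:fet_equivalence} says nothing about it. Perfectness alone does not help either, since perfect domains need not be normal (e.g.\ the perfection of a nodal curve in odd characteristic).

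The missing idea is to prove normality intrinsically in characteristic $p$, using the same completion criterion that gives Lemma~\ref{lem:Rinftyhat_normal}. This is how the paper argues.
\begin{itemize}
\item By Lemma~\ref{lem:decompleting_tower}(2), $\tilde{E}_{\calR}$ is the completion of $\mathscr{E}^{\textup{rad}}_{\calR}=\bigcup_{n\geqslant 0}\varphi^{-n}(\ER)$. Work with the integral version: $\tilde{E}^+_{\calR}$ is the $\overline{\mu}$-adic completion of $\bigcup_n\varphi^{-n}(\ER^+)$, and then invert $\overline{\mu}$.
\item Via $\varphi^n$, each $\varphi^{-n}(\ER^+)$ is isomorphic to $\ER^+\simeq(\calR/p\calR)\llbracket\overline{\mu}\rrbracket$. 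This is a noetherian normal domain, by your own argument. Hence the union is a normal domain.
\item This tower satisfies the conditions of \cite[A.1]{andreatta-phigamma}. Then \cite[Lemma A.3 and Proposition A.6]{andreatta-phigamma} show that its completion $\tilde{E}^+_{\calR}$ is a normal domain, and hence so is its localisation $\tilde{E}_{\calR}=\tilde{E}^+_{\calR}[1/\overline{\mu}]$.
\end{itemize}
In other words, do not try to transfer normality from $\widehat{\calR}_\infty$ across tilting. Apply the ``completion of a union of normal domains'' criterion directly to the perfection of $\ER^+$.
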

\begin{proof}
	From the discussion in Section \ref{subsubsec:imperfect_rings_charp} (resp.\ Section \ref{subsubsec:imperfect_rings_mixedchar}), it is clear that $\ER$ (resp.\ $\AR$) is a normal domain.
	In the notation of Lemma \ref{lem:decompleting_tower}, note that $\mathscr{E}_{\calR}^{\textup{rad}} \coloneq \cup_{n \geqslant 0} \varphi^{-n}(\mathscr{E}_{\calR})$ is a normal domain satisfying \cite[A.1, Conditions (i)-(iv)]{andreatta-phigamma}, and $\tilde{E}_{\calR}$ is its completion, hence a normal domain by \cite[Lemma A.3 and Proposition A.6]{andreatta-phigamma}.
	Similarly, we have that $\calR_{\infty} = \colim_n \calR_n$ is a normal domain satisfying \cite[A.1, Conditions (i)-(iv)]{andreatta-phigamma}, and $\widehat{\calR}_{\infty}$ is its completion, hence a normal domain by \cite[Lemma A.3 and Proposition A.6]{andreatta-phigamma}.
\end{proof}

\begin{nota}
	For a ring $A$, let $\textup{F\'Et}(A)$ denote the category of finite \'etale algebras over $A$.
\end{nota}

\begin{thm}\label{thm:fet_equivalence}
	The following categories are naturally equivalent: 
	\begin{equation}\label{eq:fet_equivalence}
		\textup{F\'Et}(A_{\calR}) \isomorphic \textup{F\'Et}(E_{\calR}) \isomorphic \textup{F\'Et}(\tilde{E}_{\calR}) \lisomorphic \textup{F\'Et}(\widehat{\calR}_{\infty}[1/p]) \lisomorphic \textup{F\'Et}(\calR_{\infty}[1/p]).
	\end{equation}
\end{thm}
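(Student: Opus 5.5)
We establish the four equivalences in \eqref{eq:fet_equivalence} one at a time, from left to right, each as an instance of a standard mechanism.

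\emph{First equivalence.} $\textup{F\'Et}(\AR) \isomorphic \textup{F\'Et}(\ER)$ is given by reduction modulo $p$. The ring $\AR$ is $p\textrm{-adically}$ complete and $p\textrm{-torsion}$ free, with $\AR/p\AR \isomorphic \ER$ (Section \ref{subsubsec:imperfect_rings_mixedchar}). Since $\AR$ is noetherian, the pair $(\AR, p\AR)$ is henselian. By the topological invariance of the finite \'etale site for henselian pairs (Gabber, or equivalently Elkik's theorem), base change along $\AR \to \AR/p$ is then an equivalence on finite \'etale algebras.

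\emph{Second equivalence.} $\textup{F\'Et}(\ER) \isomorphic \textup{F\'Et}(\tilde{E}_{\calR})$ is the main step, and it is where the decompletion input enters. By Lemma \ref{lem:decompleting_tower} together with Proposition \ref{prop:ar+_decomplete}, the tower $\{\calR_n\}$ is weakly decompleting and $\mathscr{E}_{\calR} = \ER$. The plan is to factor the functor through the perfection:
\begin{equation*}
	\textup{F\'Et}(\ER) \longrightarrow \textup{F\'Et}(\mathscr{E}_{\calR}^{\textup{rad}}) \longrightarrow \textup{F\'Et}(\tilde{E}_{\calR}).
\end{equation*}
The first arrow is an equivalence because the Frobenius $\varphi$ is a universal homeomorphism in characteristic $p$, so base change along each $\varphi^{-n}$ preserves the finite \'etale site, and the site of a colimit is the colimit of the sites. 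The second arrow is base change to the completion. Here I would invoke the Kedlaya--Liu result that, for a weakly decompleting tower, finite \'etale algebras over the perfect completion descend to the radical decompletion \cite[Theorem 5.5.10]{kedlaya-liu2}, \cite[Theorem 8.4.8]{kedlaya-liu1}. Concretely, this approximates a projector or idempotent defining a finite \'etale algebra by one defined over $\mathscr{E}_{\calR}^{\textup{rad}}$, using density of $\mathscr{E}_{\calR}^{\textup{rad}}$ in $\tilde{E}_{\calR}$. If a direct reference in our exact setting is unavailable, I would fall back on Elkik-type approximation. The relevant fact is that $(\mathscr{E}_{\calR}^{\textup{rad},+}, \overline{\mu})$ is henselian, since it is a colimit of $\overline{\mu}\textrm{-adically}$ complete rings, and it has completion $\tilde{E}_{\calR}^+$. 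With Lemma \ref{lem:normal_domain_check}, normality lets us pass between integral models and their generic fibres. I expect this step to be the main obstacle, because the tower is not of the standard perfectoid-field type: $\calR$ is relative and has log-ramified coordinates, so the strict surjectivity in Lemma \ref{lem:decompleting_tower}(1) is exactly what is needed to control the approximation.

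\emph{Third equivalence.} $\textup{F\'Et}(\widehat{\calR}_{\infty}[1/p]) \isomorphic \textup{F\'Et}(\tilde{E}_{\calR})$ is the tilting equivalence of \cite[Theorem 7.12]{scholze-perfectoid}, also covered by Kedlaya--Liu. This applies because $\widehat{\calR}_{\infty}$ is perfectoid (Lemma \ref{lem:Rbarhat_perfectoid}), and its tilt generic fibre is $\widehat{\calR}_{\infty}[1/p]^{\flat} = \tilde{E}_{\calR}$, obtained by inverting the pseudo-uniformizer $\overline{\mu}$.

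\emph{Fourth equivalence.} $\textup{F\'Et}(\calR_{\infty}[1/p]) \isomorphic \textup{F\'Et}(\widehat{\calR}_{\infty}[1/p])$ is given by completion. Since $\calR_{\infty} = \colim_n \calR_n$ is a colimit of $p\textrm{-adically}$ complete noetherian rings, the pair $(\calR_{\infty}, p)$ is henselian. Elkik's theorem for henselian pairs, as used for $H_{R,\infty}$ before Proposition \ref{prop:padic_classify_Rinftyinfty} \cite[Lemma 5.8]{colmez-niziol} and recorded in \cite[Proposition 5.4.53]{gabber-ramero-almostv5}, then gives the equivalence on generic fibres. Normality from Lemma \ref{lem:normal_domain_check} again identifies finite \'etale algebras with their normal integral models.

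Composing the four equivalences yields \eqref{eq:fet_equivalence}. Each functor is base change along the natural maps $\AR \to \ER \to \tilde{E}_{\calR}$, or tilting, so the resulting equivalence is natural and compatible with Frobenius and with the $\GammaR \times \Delta$ actions.
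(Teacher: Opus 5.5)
Your proposal is correct and follows the same route as the paper. The paper proves \eqref{eq:fet_equivalence} with exactly your four links: henselian lifting along $\AR \to \AR/p = \ER$, passage from $\ER$ to its completed perfection $\tilde{E}_{\calR}$, Scholze's tilting equivalence, and the Gabber--Ramero/Elkik comparison between $\calR_{\infty}[1/p]$ and $\widehat{\calR}_{\infty}[1/p]$, although it only cites Kedlaya--Liu and Scholze for these rather than arguing them. Your second step is also less of an obstacle than you expect: the fallback (Frobenius is a universal homeomorphism, then Gabber--Ramero for the henselian pair $(\mathscr{E}_{\calR}^{\textup{rad},+}, \overline{\mu})$, whose completion is $\tilde{E}_{\calR}^+$ by Lemma \ref{lem:decompleting_tower}(2) and Proposition \ref{prop:ar+_decomplete}) is already complete, and the strict surjectivity of Lemma \ref{lem:decompleting_tower}(1) plays no role there.
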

\begin{proof}
	In \eqref{eq:fet_equivalence}, the first equivalence follows from \cite[Theorem 1.2.8]{kedlaya-liu1}, the second equivalence follows from \cite[Theorem 3.1.15]{kedlaya-liu1}, the third equivalence follows from \cite[Theorems 5.25 and 7.9]{scholze-perfectoid}, and the fourth equivalence follows from \cite[Theorem 2.6.8]{kedlaya-liu1}.
\end{proof}

As all the rings appearing in the statement of Theorem \ref{thm:fet_equivalence} are normal domains by Lemma \ref{lem:normal_domain_check}, therefore, we may relate their absolute Galois groups.
Let $\pazocal{C}$ denote the full subcategory of $\textup{F\'Et}(\calR_{\infty}[1/p])$ whose objects are contained in $\overline{R}[1/p]$; by abusing notation we also let $\pazocal{C}$ denote the corresponding equivalent subcategory of each of the equivalent categories in \eqref{eq:fet_equivalence}.
Let us set 
\begin{equation*}
	\begin{aligned}
		A_{\calR}^{\etale,\circ} &\coloneq \colim_{A_{\calR}' \in \pazocal{C}} A_{\calR}', \qquad E_{\calR}^{\etale} \coloneq \colim_{E_{\calR}' \in \pazocal{C}} E_{\calR}',\\
		\tilde{E}_{\calR}^{\etale,\circ} &\coloneq \colim_{\tilde{E}_{\calR}' \in \pazocal{C}} \tilde{E}_{\calR}', \qquad \widehat{\calR}_{\infty}^{\etale,\circ} \coloneq \colim_{\widehat{\calR}_{\infty}'[1/p] \in \pazocal{C}} \widehat{\calR}_{\infty}'.
	\end{aligned}
\end{equation*}
Note that $A_{\calR}^{\etale}$ denotes the $p\textrm{-adic}$ completion of $A_{\calR}^{\etale,\circ}$, and by continuity we have that $\textup{Aut}(A_{\calR}^{\etale}/\AR) \isomorphic \Gal(A_{\calR}^{\etale,\circ}/\AR)$.
Similarly, the ring $\tilde{E}_{\calR}^{\etale}$ denotes the completion for the $\overline{\mu}\textrm{-adic}$ topology on $\tilde{E}_{\calR}^{\etale,\circ}$, and by continuity we obtain that $\textup{Aut}(\tilde{E}_{\calR}^{\etale}/\tilde{E}_{\calR}) \isomorphic \Gal(\tilde{E}_{\calR}^{\etale,\circ}/\tilde{E}_{\calR})$.
Furthermore, the ring $\widehat{\calR}_{\infty}^{\etale}$ denotes the $p\textrm{-adic}$ completion of $\widehat{\calR}_{\infty}^{\etale,\circ}$, so again by continuity we get that $\textup{Aut}(\widehat{\calR}_{\infty}^{\etale}[1/p]/\widehat{\calR}_{\infty}[1/p]) \isomorphic \Gal(\widehat{\calR}_{\infty}^{\etale,\circ}[1/p]/\widehat{\calR}_{\infty}[1/p])$.

From the discussion above and the categorical equivalence in \eqref{eq:fet_equivalence} of Theorem \ref{thm:fet_equivalence}, we obtain the following:
\begin{cor}\label{cor:Gal_isoms}
	There exist natural isomorphisms of Galois groups:
	\begin{equation*}
		\begin{tikzcd}[column sep=4mm]
			\Gal(A_{\calR}^{\etale,\circ}/A_{\calR}) & \Gal(E_{\calR}^{\etale}/E_{\calR}) & \Gal(\tilde{E}_{\calR}^{\etale,\circ}/\tilde{E}_{\calR}) & \Gal(\widehat{\calR}_{\infty}^{\etale,\circ}[1/p]/\widehat{\calR}_{\infty}[1/p]) & \Gal(\calR_{\infty}^{\etale}[1/p]/\calR_{\infty}[1/p]) \\
			\textup{Aut}(A_{\calR}^{\etale}/\AR) && \textup{Aut}(\tilde{E}_{\calR}^{\etale}/\tilde{E}_{\calR}) & \textup{Aut}(\widehat{\calR}_{\infty}^{\etale}[1/p]/\widehat{\calR}_{\infty}[1/p]).
			\arrow["\sim"', from=1-2, to=1-1]
			\arrow["\sim"', from=1-3, to=1-2]
			\arrow["\sim", from=1-3, to=1-4]
			\arrow["\sim", from=1-4, to=1-5]
			\arrow["\wr", from=2-1, to=1-1]
			\arrow["\wr", from=2-3, to=1-3]
			\arrow["\wr", from=2-4, to=1-4]
		\end{tikzcd}
	\end{equation*}
\end{cor}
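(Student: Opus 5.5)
The isomorphisms in the top row come from the chain of equivalences \eqref{eq:fet_equivalence} of Theorem \ref{thm:fet_equivalence}. The vertical isomorphisms are the continuity statements recorded just before the corollary.

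The plan is as follows. By Lemma \ref{lem:normal_domain_check}, all the base rings $\AR$, $\ER$, $\tilde{E}_{\calR}$, $\widehat{\calR}_{\infty}[1/p]$ and $\calR_{\infty}[1/p]$ are normal domains. For such a domain, the Galois theory of finite \'etale covers (Grothendieck's formalism as in Proposition \ref{prop: galois theory for S}) identifies a chosen fibre functor's automorphism group with the profinite group $\lim \Gal(A'/A)$. Here $A'$ runs over the finite \'etale Galois domains in the corresponding filtered system.

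First, fix the subcategory $\pazocal{C} \subset \textup{F\'Et}(\calR_{\infty}[1/p])$ of objects contained in $\overline{R}[1/p]$. This is stable under finite products, subobjects and quotients among connected objects, and it is cofinal in the Galois-closed objects. Transport $\pazocal{C}$ along each equivalence in \eqref{eq:fet_equivalence}. Since the equivalences are exact and compatible with tensor products (base change), they send Galois objects with group $G'$ to Galois objects with the same group $G'$. They also send connected objects to connected objects, so domains correspond to domains because the bases are normal.

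Next, for each connected Galois object $A'$ in $\pazocal{C}$, we obtain compatible isomorphisms $\Gal(A'_{\AR}/\AR) \cong \Gal(E'/\ER) \cong \cdots \cong \Gal(\calR'[1/p]/\calR_{\infty}[1/p])$. These isomorphisms are natural in transition maps, by functoriality of the equivalences. Passing to the inverse limit over $\pazocal{C}$ then gives the top row.

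The vertical arrows come from the fact that each completion map $A^{\circ} \to A$ ($p$-adic, respectively $\overline{\mu}$-adic) is dense. Consequently, any automorphism of the completion over the base is determined by its restriction to the dense colimit, and that restriction preserves each finite \'etale piece, since each piece is the integral closure/idempotent-determined subalgebra. Conversely, every element of $\Gal(A^{\circ}/A)$ is continuous and extends uniquely.

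The main obstacle is checking that the equivalences respect the "contained in $\overline{R}[1/p]$" condition, so that the colimits match. I would first handle this by noting that the fourth equivalence is literally restriction and completion inside $\overline{R}[1/p]$, and that the other categories are \emph{defined} via transport of $\pazocal{C}$. The remaining work is then only the Galois-group compatibility of exact tensor equivalences.
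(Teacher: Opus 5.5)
Your proposal is correct and is essentially the paper's argument. You transport the subcategory $\pazocal{C}$ along the chain of equivalences in Theorem \ref{thm:fet_equivalence}, and you use Lemma \ref{lem:normal_domain_check} so that connected objects are domains and Galois groups are preserved. You then get the vertical isomorphisms from the same density and continuity remarks the paper records just before the corollary. There is one minor slip. $\pazocal{C}$ is not stable under finite products, since its objects are subrings of the domain $\overline{R}[1/p]$. What you actually use is that $\pazocal{C}$ is directed under compositum, with its Galois objects cofinal, and that is true.
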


\phantomsection
\printbibliography[heading=bibintoc, title={References}]
\Addresses

\end{document}